\address{Simons Center for Geometry and Physics,
State University of New York, Stony Brook, NY 11794-3636 U.S.A. \&
Center for Geometry and Physics, Institute for Basic Sciences (IBS), Pohang, Korea} \email{kfukaya@scgp.stonybrook.edu}
\def\R{\ifmmode{\mathbb R}\else{$\mathbb R$}\fi} 
\def\C{\ifmmode{\mathbb C}\else{$\mathbb C$}\fi} 
\def\Z{\ifmmode{\mathbb Z}\else{$\mathbb Z$}\fi} 
\theoremstyle{theorem}
\newtheorem{thm}{Theorem}[section]
\newtheorem{cor}[thm]{Corollary}
\newtheorem{lem}[thm]{Lemma}
\newtheorem{sublem}[thm]{Sublemma}
\newtheorem{subsublemma}[thm]{Subsublemma}
\newtheorem{prop}[thm]{Proposition}
\newtheorem{lemdef}[thm]{Lemma-Definition}
\theoremstyle{definition}
\newtheorem{defn}[thm]{Definition}
\newtheorem{rem}[thm]{Remark}
\newtheorem{cons}[thm]{\rm\bfseries{Construction}}
\newtheorem{exm}[thm]{Example}
\newtheorem{conds}[thm]{Condition}
\newtheorem{assump}[thm]{Assumption}
\numberwithin{equation}{section}
\begin{document}

\title[Equivariant Kuranishi charts]{Lie groupoid, deformation of 
unstable curves, and construction of equivariant Kuranishi charts}
\author{Kenji Fukaya}

\thanks{Supported partially by NSF Grant No. 1406423,
and Simons Collaboration on Homological Mirror Symmetry.}

\begin{abstract}
In this paper we give detailed construction 
of $G$-equivariant Kuranishi chart of 
moduli spaces of pseudo holomorphic curves 
to a symplectic manifold with $G$-action,
for an arbitrary compact Lie group $G$.
\par
The proof is based on the deformation theory 
of {\it unstable} marked curves using the language 
of Lie groupoid 
(which is {\it not} necessarily \'etale) and the Riemannian center of mass technique.
\par
This proof is actually similar to \cite[Sections 13 and 15]{FO}
except the usage of the language of Lie groupoid makes the argument 
more transparent.
\end{abstract}

\par
\date{Octorbor 18th, 2017}

\keywords{pseudo holomorphic curve, Gromov-Witten invariant, 
Kuranishi structure, Lie groupoid, unstable curve}
\subjclass[2010]{53D35,	53D45,  14D23}
\maketitle

\tableofcontents

\section{Introduction}
\label{sec:intro}

Let $(X,\omega)$ be a symplectic manifold 
which is compact or convex at infinity.
We assume that a compact Lie group $G$ acts on $X$ 
preserving the symplectic form $\omega$.
We take a $G$-invariant almost complex structure $J$ which is 
compatible with $\omega$.
We consider the moduli space 
$\mathcal M_{g,\ell}((X,\omega),\alpha)$ of $J$-stable maps 
with given genus $g$ and $\ell$ marked points 
and of homology class $\alpha \in H_2(X;\Z)$.
This space has an obvious $G$ action.
\par
The problem we address in this paper is 
to associate an equivariant 
virtual fundamental class to this 
moduli space.
It then gives an equivariant version of 
Gromov-Witten invariant.
(The corresponding problem was solved in the case when 
$(X,J,\omega)$ is projective algebraic variety.
(See \cite{Gi,GP}.))
\par
In the symplectic case, the virtual fundamental 
class $[\mathcal M_{g,\ell}((X,\omega),\alpha)]$
was 
established in the year 1996 by several 
groups of mathematicians
(\cite{FO,LiTi98,Rua99,Siebert,LiuTi98}.) 
Its equivariant version is discussed by various people.
However the foundation of such equivariant version is not 
so much transparent in the literature.
\par
In case $L$ is a Lagrangian submanifold 
which is $G$-invariant, 
we can discuss a similar problem 
to define a virtual fundamental chain 
of the moduli space
of bordered $J$-holomorphic curves, 
especially disks.
Equivariant virtual fundamental chain is used to define an equivariant version of Lagrangian 
Floer theory.
Equivariant Kuranishi structures on the moduli space 
of pseudo holomorphic curve in a manifold with group 
action, have been used in several 
places already. 
For example it is used in a series of 
papers the author wrote with 
joint authors 
\cite{fooo08},\cite{foootoric32}
and etc. which study the case when 
$(X,\omega)$ is a toric manifold 
and $G$ is the torus.
See also \cite{Liu}.
The construction of equivariant Kuranishi structure 
in such a situation is written 
in detail in \cite[Sections 4-3,4-4,4-5]{foootoric32}.
The construction there uses the fact 
that the Lagrangian submanifold $L$ is a single orbit of the group action, which is free
on $L$, and also the fact that the group $G$ is abelian.
The argument there is rather ad-hoc and by this reason seems 
to be rather complicated, though it is correct.
\par
In this paper the author provides a result 
which is the most important part of 
the construction of $G$-equivariant virtual fundamental 
cycle and chain on the moduli space $\mathcal M_{g,\ell}((X,\omega),\alpha)$.
\par
We will prove the following:
\begin{thm}\label{thm11}
For each $\frak p \in \mathcal M_{g,\ell}((X,\omega),\alpha)$ 
there exists $(V_{\frak p},\mathcal E_{\frak p},s_{\frak p},\psi_{\frak p})$
such that:
\begin{enumerate}
\item
$V_{\frak p}$ is a finite dimensional smooth and effective orbifold.
The group $G$ has a smooth action  on it.
\item
$\mathcal E_{\frak p}$ is a smooth vector bundle (orbibundle) on $V_{\frak p}$.
The $G$ action on $V_{\frak p}$ lifts to a $G$ action on the vector bundle $\mathcal E_{\frak p}$.
\item
$s_{\frak p}$ is a $G$-invariant section of $\mathcal E_{\frak p}$.
\item
$\psi_{\frak p}$ is a $G$-equivariant homeomorphism 
from $s_{\frak p}^{-1}(0)$ onto an open neighborhood of the $G$ orbit of 
$\frak p$.
\end{enumerate}
\end{thm}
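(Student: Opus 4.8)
The plan is to reduce the theorem to a local deformation problem carried out on a suitable (non-\'etale) Lie groupoid, and then to transport the $G$-action and the Riemannian center of mass technique through that construction, following the scheme of \cite[Sections 13 and 15]{FO}.

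Represent $\frak p$ by a stable map $((\Sigma,\vec z),u)$, where $\Sigma$ is a prestable (possibly nodal) curve, $\vec z=(z_1,\dots,z_\ell)$ the marked points, and $u\colon\Sigma\to X$ a $J$-holomorphic map with $u_*[\Sigma]=\alpha$. Let $\Gamma_{\frak p}=\mathrm{Aut}((\Sigma,\vec z),u)$ be its finite automorphism group and $G_{\frak p}=\{g\in G: g\cdot\frak p=\frak p\}$ its isotropy, a compact and in general positive-dimensional Lie subgroup of $G$; these two groups act compatibly on the deformation data. The essential difficulty is that the domain $(\Sigma,\vec z)$ need not be stable: on the components where $u$ is non-constant there may be a positive-dimensional automorphism group, and there is no $G_{\frak p}$-invariant way to stabilize. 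Following \cite{FO}, I would add auxiliary marked points $\vec w=(w_1,\dots,w_k)$ in the non-constant locus so that $(\Sigma,\vec z\cup\vec w)$ is stable, together with codimension-two submanifolds $N_j\subset X$ with $u\pitchfork N_j$ and $u^{-1}(N_j)$ meeting a neighborhood of $w_j$ exactly in $w_j$; if necessary one adds further such points so that $\Gamma_{\frak p}$ acts effectively. The role of the Lie groupoid language is to treat all admissible choices of $(\vec w,\vec N)$ at once.

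The first main step is therefore to build a Lie groupoid $\mathcal X_{\frak p}$ whose objects are the admissible auxiliary data (a choice of $(\vec w,\vec N)$ in a fixed configuration type, together with local coordinates near the nodes and near the special points used for gluing) and whose morphisms are the reparametrizations relating them; because of the positive-dimensional automorphisms of the unstable domain this groupoid is genuinely non-\'etale, and the $G$-action on the orbit of $\frak p$ induces a $G$-action on $\mathcal X_{\frak p}$ permuting the $N_j$. Fixing a $G$-invariant metric on $X$ (by averaging) and a $G_{\frak p}$-invariant metric on $\Sigma$, I would use the Riemannian center of mass to produce $G_{\frak p}$-equivariant exponential-type identifications of nearby domains with $\Sigma$ and of nearby maps with sections of $u^*TX$, so that every analytic construction below is canonical up to $\mathcal X_{\frak p}$ and strictly $G_{\frak p}$-equivariant. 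The second main step is the deformation analysis over the object space of $\mathcal X_{\frak p}$: choose a finite-dimensional space $E_{\frak p}$ of smooth $(0,1)$-forms valued in $u^*TX$, supported away from the nodes and from $\vec z\cup\vec w$ and invariant under $\Gamma_{\frak p}$ and $G_{\frak p}$ (average, or spread an arbitrary choice over the $G_{\frak p}$-orbit), whose span together with the image of the linearized operator $D_u\bar\partial$ covers the cokernel in the deformation complex that also varies the domain. One then considers triples: a deformed domain, the points $\vec w$ cut out by the $N_j$ on it, and a map $u'$ near $u$ with $\bar\partial u'\in E_{\frak p}$ (parallel transported via the center-of-mass identifications). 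The implicit function theorem, applied $G_{\frak p}$-equivariantly, realizes the solution set as a smooth manifold carrying a smooth $G_{\frak p}$-action and a proper action of $\mathcal X_{\frak p}$ with only the finite isotropy $\Gamma_{\frak p}$; its quotient is $V_{\frak p}$, the trivial bundle with fiber $E_{\frak p}$ descends to $\mathcal E_{\frak p}$, the section $u'\mapsto\bar\partial u'$ descends to $s_{\frak p}$, and forgetting $(\vec w,\vec N)$ gives the homeomorphism $\psi_{\frak p}$ from $s_{\frak p}^{-1}(0)$ onto a neighborhood of $G\cdot\frak p$. The residual $G$-action on $\mathcal X_{\frak p}$, combining the $G$-action near $\frak p$ with the way $G$ moves the auxiliary data, then descends to the asserted $G$-action on $V_{\frak p}$ and $\mathcal E_{\frak p}$, making $s_{\frak p}$ and $\psi_{\frak p}$ $G$-equivariant.

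I expect the main obstacle to be exactly the interaction between the groupoid $\mathcal X_{\frak p}$, which encodes the domain-stabilization ambiguity together with the finite automorphisms $\Gamma_{\frak p}$, and the possibly positive-dimensional isotropy $G_{\frak p}$: one must verify that the combined structure is a smooth, \emph{effective} orbifold with a genuinely smooth $G$-action rather than merely a topological $G$-space, and that the $\bar\partial$-section together with the gluing and center-of-mass maps can be made simultaneously smooth and $G_{\frak p}$-equivariant. A subsidiary but real technical point is that the Riemannian center of mass is only defined for data already $C^0$-close, so the size of $V_{\frak p}$ and the scale of the $N_j$ must be tuned accordingly; once the groupoid and the equivariant exponential maps are set up, the remaining gluing estimates run parallel to those of \cite[Sections 13 and 15]{FO}.
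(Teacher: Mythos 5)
Your proposal correctly identifies several ingredients (a non-\'etale Lie groupoid, a $\mathcal G_c$-invariant obstruction space supported away from nodes, the Riemannian center of mass, and the fact that a fixed choice of auxiliary marked points $\vec w$ breaks the $G$-symmetry), but it assigns the Lie groupoid a different and, I believe, unworkable role, and in doing so misses the central mechanism of the paper.

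In the paper the Lie groupoid is not a groupoid of stabilization data $(\vec w,\vec N)$. It is the \emph{universal deformation of the (unstable) marked curve} $(\Sigma,\vec z)$ \emph{itself}: objects form a neighborhood $\mathcal V=\mathcal V_0\times\mathcal V_1$ of gluing/deformation parameters, and morphisms are isomorphisms between fibers of the associated family $\pi:\mathcal C\to\mathcal V$ (Theorem~\ref{them35}, Construction~\ref{const212}). The whole point is that this object involves \emph{no} auxiliary points and \emph{no} cutting divisors, so it carries the compact group $\mathcal G_c\subset\mathrm{Aut}(\Sigma,\vec z)$ action on the nose (Proposition~\ref{prop318}). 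Given a nearby $((\Sigma',\vec z^{\,\prime}),u')$, the obstruction space $E((\Sigma',\vec z^{\,\prime}),u')$ is then defined by minimizing the $\widehat{\mathcal G}_c$-invariant strictly convex function $\overline{\mathrm{meandist}}$ over the space $\mathcal W(\epsilon_1;\cdot)/\widehat{\mathcal G}_c$ of pairs $(\varphi,g)\in\mathcal{MOR}\times G$ (Definition~\ref{defn6666}, Proposition~\ref{prop69}, Lemma~\ref{lem610}); the minimizer is unique, so $E$ is canonical and $G$-equivariant by Lemma~\ref{lem613233}. The $(\vec w,\vec N)$ stabilization appears only afterwards, in Section~\ref{sec:decay}, to prove smoothness of structures that are already defined and already $G$-equivariant; the paper explicitly observes that breaking the $G$-symmetry at that stage is harmless for exactly that reason.

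Your version instead makes $(\vec w,\vec N)$ part of the definition from the start and tries to absorb the ensuing non-canonicity into a groupoid of stabilization choices. The concrete gap is here: for a nearby $((\Sigma',\vec z^{\,\prime}),u')$, parallel transport of $E_{\frak p}$ requires an identification of a compact part of $\Sigma'$ with $\Sigma$, and if this identification is produced from a choice of $(\vec w,\vec N)$, then \emph{a priori} different choices yield different subspaces of $C^\infty(\Sigma';(u')^*TX\otimes\Lambda^{01})$. You never explain why the resulting bundle is well defined (independent of the choice), and in fact this independence is exactly what the paper's $\overline{\mathrm{meandist}}$-minimization is engineered to deliver. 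Without such a mechanism one cannot even state $g_*E((\Sigma',\vec z^{\,\prime}),u')=E((\Sigma',\vec z^{\,\prime}),gu')$, because the auxiliary choice on the left and the one on the right are unrelated when $G_{\frak p}$ is positive dimensional. Relatedly, the claim that the quotient by your groupoid $\mathcal X_{\frak p}$ is an effective orbifold with a smooth $G$-action needs the fiber-direction of the non-\'etale groupoid to be killed by the transversality constraints $u'(w_j)\in\mathcal N_j$ with finite residual isotropy, and you give no argument for that; in the paper's route there is nothing to kill, since the positive-dimensional direction has already been collapsed by the unique convex minimum.
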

In short $(V_{\frak p},\mathcal E_{\frak p},s_{\frak p},\psi_{\frak p})$ is 
a $G$-equivariant Kuranishi chart of $\mathcal M_{g,\ell}((X,\omega),\alpha)$ 
at $\frak p$.
See Section \ref{sec:eqkurachar} Theorem \ref{thm54} for the precise statement.
\par
We can glue those charts and obtain a $G$-equivariant Kuranishi structure.
We can also prove a similar result in the case of the 
moduli space of pseudo holomorphic maps from bordered curves.
However in this paper we focus on the construction of the 
$G$-equivariant Kuranishi chart on $\mathcal M_{g,\ell}((X,J),\alpha)$. In fact 
this is the part where we need something novel compared to the 
case without $G$ action. 
Once we obtain a $G$-equivariant Kuranishi chart at each 
point, the rest of the construction is fairly analogous to the 
case without $G$ action.
(See for example \cite{foooconstr}.)
So to reduce the length of this paper we do not address the problem 
of constructing global $G$-equivariant Kuranishi structure but restrict 
ourselves to the construction of a $G$-equivariant Kuranishi  
chart.
(Actually the argument of Subsection \ref{indsmoothchart} 
contains a large portion of the arguments needed for the construction of 
global Kuranishi structure.)
\begin{rem}
Joyce's approach \cite{joyce2} on virtual
fundamental chain, especially the idea using 
certain kinds of universality to 
construct finite dimensional reduction, which Joyce explained 
in his talk \cite{joyce4}, when it will be worked out 
successfully, has advantage
in establishing the equivariant version, (since in this approach 
the Kuranishi chart obtained is `canonical' in certain 
sense and so its $G$-equivariance could be automatic.)
\par
If one takes infinite dimensional approach 
for virtual
fundamental chain
such as those in \cite{LiTi98,hwze}, one does not need the 
process to take finite dimensional reduction.
So the main issue of this paper (to perform finite dimensional
reduction in a $G$-equivariant way) may be absent.
On the other hand, then one needs to develop certain 
frame work to study equivariant cohomology in such infinite 
dimensional situation.
\end{rem}
The main problem to resolve to construct $G$-equivariant Kuranishi
charts is the following.
Let $\frak p = [(\Sigma,\vec z),u]$ be an element of 
$\mathcal M_{g,\ell}((X,\omega),\alpha)$.
In other words, $(\Sigma,\vec z)$ is a marked 
pre-stable curve and $u : \Sigma \to X$ is a 
$J$-holomorphic map.
We want to find an orbifold $U_{\frak p}$
on which $G$ acts and such that the $G$ orbit $G\frak p$ 
is contained in $U_{\frak p}$.
$U_{\frak p}$ is obtained as the set of isomorphism classes 
of the solutions 
of certain differential equation
$$
\overline{\partial}u' \in E((\Sigma',\vec z^{\,\prime}),u')
$$
where $((\Sigma',\vec z^{\,\prime}),u')$ is an object 
which is `close' to $((\Sigma,\vec z),u)$ in certain 
sense (See Definition \ref{defn41}.) and 
$E((\Sigma',\vec z^{\,\prime}),u')$ is a finite dimensional 
vector subspace of
$$
C^{\infty}(\Sigma';(u')^*TX \otimes \Lambda^{01}).
$$
We want our space of solutions $U_{\frak p}$
has a $G$ action. For this purpose we need the family 
of vector spaces $E((\Sigma',\vec z^{\,\prime}),u')$ 
to be $G$-equivariant, that is,
\begin{equation}\label{form11}
g_*E((\Sigma',\vec z^{\,\prime}),u') = E((\Sigma',\vec z^{\,\prime}),gu').
\end{equation}
A possible way to construct such a family 
$E((\Sigma',\vec z^{\,\prime}),u')$ is as follows.
\begin{enumerate}
\item
We first 
take a subspace 
$$
E((\Sigma,\vec z),u)
\subset C^{\infty}(\Sigma;u^*TX \otimes \Lambda^{01}).
$$
which is invariant under the action of the isotropy group at 
$[(\Sigma,\vec z),u]$
of $G$ action on 
$\mathcal M_{g,\ell}((X,\omega),\alpha)$.
\item
For each $((\Sigma',\vec z^{\,\prime}),u')$ 
which is `close' to the $G$-orbit of 
$((\Sigma,\vec z),u)$ we find $g\in G$ such that 
the distance between $u'$ and $gu$ is smallest.
\item
We move $E((\Sigma,\vec z),u)$ to a subspace of 
$C^{\infty}(\Sigma;(gu)^*TX \otimes \Lambda^{01})$
by $g$ action and then move it to
$
C^{\infty}(\Sigma';(u')^*TX \otimes \Lambda^{01})
$
by an appropriate parallel transportation.
\end{enumerate}
There are problems to carry out Step (2) and Step (3).
Note that we need to consider the equivalence class of  
$((\Sigma,\vec z),u)$ with respect to an appropriate 
isomorphisms.
By this reason the parametrization of the source curve 
$\Sigma$ is well-defined only up to a certain isomorphism
group. This causes a problem in defining the notion 
of closeness in (2) and defining the way how to move 
our obstruction bundle $E((\Sigma,\vec z),u)$
by a parallel transportation in (3).
\par
In case $(\Sigma,\vec z)$ is stable, 
the ambiguity, that is, the group of automorphisms 
of this marked curve, is a finite group.
Using the notion of multisection (or multivalued 
perturbation) which was introduced in \cite{FO},
we can go around the problem of this ambiguity 
of the identification of the source curve.
\par
In the case when $(\Sigma,\vec z)$ is unstable
(but $((\Sigma,\vec z),u)$ is stable), 
the problem is more nontrivial.
In \cite{FO}, Fukaya-Ono provide two methods 
to resolve this problem.
One of the methods, which is discussed in \cite[appendix]{FO},
uses additional marked points $\vec w$ so that 
$(\Sigma,\vec z \cup \vec w)$ becomes stable.
The moduli space (including $\vec w$) 
does not have a correct dimension, because of the extra 
parameter to move $\vec w$.
Then \cite[appendix]{FO} uses a codimension 2 submanifold 
${\mathcal N}_i$ and require that $u(w_i) \in {\mathcal N}_i$ to kill 
this extra dimension.
\par
In our situation where we have $G$ action,
including extra marked points  $\vec w$ breaks the symmetry 
of $G$ action.
For example suppose there is $S^1 \subset G$
and a $g \in S^1$ parametrized family of automorphisms $\gamma_g$
of $(\Sigma,\vec z)$
such that 
$$
h(\gamma_g(z)) = gu(z).
$$
Then we can not take $\vec w$ which is invariant 
under this $S^1$ action.
This causes a trouble to define obstruction spaces 
$E((\Sigma,\vec z),u)$ satisfying
(\ref{form11}).
\par
In this paper we use a different way to 
resolve the problem appearing in the case when 
$(\Sigma,\vec z)$ is unstable.
This method was written in \cite{FO}
especially in its Sections 13 and 15.
During these 20 years after \cite{FO} had been written
the authors of \cite{FO} did not use this method so much 
since it seems easier to use the method of
\cite[appendix]{FO}.
The author of this paper however recently realized that 
for the purpose of constructing a family of 
obstruction spaces $E((\Sigma,\vec z),u)$
in a $G$-equivariant way, 
the method of \cite[Sections 13 and 15]{FO}
is useful.
\par
Let us briefly explain this second method.
We fix $\Sigma$ and take an 
obstruction space $E((\Sigma,\vec z),u)$
on it.
Let $((\Sigma',\vec z^{\,\prime}),u')$ 
be an element which is `close' to 
$((\Sigma,\vec z),gu)$ for some $g \in G$.
To carry out steps (2)(3) we need to 
find a way to fix a diffeomorphism
$\Sigma \cong \Sigma'$ at least on the 
support of $E((\Sigma,\vec z),u)$.
If $(\Sigma,\vec z)$ is stable 
we can find such identification 
$\Sigma \cong \Sigma'$ up to finite ambiguity.
In case $(\Sigma,\vec z)$ is unstable
the ambiguity is actually controlled 
by the group of automorphisms of 
$(\Sigma,\vec z)$, which has positive dimension.
The idea is to choose 
certain identification $\Sigma \cong \Sigma'$
together with $g$ such that 
the distance between 
$u'$ with this identification and $gu$ is smallest
among all the choices of  the 
identification $\Sigma \cong \Sigma'$ and $g \in G$.
\par
To work out this idea, we need to make precise
what we mean by 
`the ambiguity is controlled 
by the group of automorphisms'.
In \cite{FO} certain `action' of a group germ 
is used for this purpose.
Here `action' is in a quote since it is not 
actually an action.
(In fact, $g_1(g_2x) = (g_1g_2)(x)$ does {\it not} hold.
See  \cite[3 lines above Lemma 13.22]{FO}.)
Though the statements and the proofs 
(of \cite[Lemmata 13.18 and 13.22]{FO}) provided there 
are rigorous and correct, as was written there, 
the notion of 
```action' of group germ''
is rather confusing.
Recently the author realized that 
the notion of ```action' of group germ''
can be nicely reformulated by using the language of 
Lie groupoid.
In our generalization to the $G$-equivariant case, 
which is related to a rather delicate problem 
of equivariant  transversality, 
rewriting the method of \cite[Sections 13 and 15]{FO}
using the language of Lie groupoid seems meaningful
for the author.
\par
In Section \ref{sec:groupoid} we review 
the notion of Lie groupoid in the form we use.
Then in Section \ref{sec:universal}
we construct a `universal family of deformation 
of a marked curve' including the case when 
the marked curve is unstable.
Such universal family does not exist in the usual sense
for an unstable curve.
However we can still show the unique existence 
of such a universal family in the sense 
of deformation parametrized by a Lie groupoid.
\begin{thm}\label{thn12}
For any marked nodal curve $(\Sigma,\vec z)$
(which is not necessarily stable)
there exists uniquely a universal family of  deformations of 
$(\Sigma,\vec z)$
parametrized by a Lie groupoid.
\end{thm}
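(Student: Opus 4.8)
The plan is to construct one universal family explicitly --- using the classical deformation theory of the stable locus together with an explicit bookkeeping of the continuous ambiguity carried by the unstable components --- and then to deduce versality and uniqueness by a Yoneda‑type argument in the $2$‑category of Lie groupoids set up in Section~\ref{sec:groupoid}.

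For the construction I would first choose finitely many auxiliary marked points $\vec w=(w_1,\dots,w_m)$ in the smooth locus of $\Sigma$, disjoint from $\vec z$ and from the nodes, putting enough of them on each unstable irreducible component so that $(\Sigma,\vec z\cup\vec w)$ is stable. Let $\pi_{\mathrm{st}}\colon\mathcal C_{\mathrm{st}}\to W$, with its tautological sections $\vec{\mathbf z},\vec{\mathbf w}$, be the classical universal deformation of the stable marked curve $(\Sigma,\vec z\cup\vec w)$, whose base $W$ is a ball acted on by the finite group $\Gamma=\mathrm{Aut}(\Sigma,\vec z\cup\vec w)$, with base point $o\in W$. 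I would then take the object space to be
\[
\mathcal G_0=\bigl\{\,(y,\vec\zeta)\ :\ y\in W,\ \vec\zeta=(\zeta_1,\dots,\zeta_m)\text{ smooth, pairwise distinct, disjoint from }\vec{\mathbf z}(y),\ (\pi_{\mathrm{st}}^{-1}(y),\vec{\mathbf z}(y),\vec\zeta)\text{ stable}\,\bigr\},
\]
an open subset of the $m$‑fold fibre product of $\mathcal C_{\mathrm{st}}$ over $W$, with distinguished object $(o,\vec{\mathbf w}(o))$; over $\mathcal G_0$ I place the family $\mathcal C$ whose fibre at $(y,\vec\zeta)$ is the $n$‑marked curve $(\pi_{\mathrm{st}}^{-1}(y),\vec{\mathbf z}(y))$, that is, I \emph{forget} $\vec{\mathbf w}$ but retain $\vec\zeta$ only as an auxiliary stabilizing datum. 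For the morphism space I take the isomorphisms of these $n$‑marked curves,
\[
\mathcal G_1=\bigl\{\,(p_1,p_2,\phi)\ :\ p_i=(y_i,\vec\zeta_i)\in\mathcal G_0,\ \phi\colon(\pi_{\mathrm{st}}^{-1}(y_1),\vec{\mathbf z}(y_1))\xrightarrow{\ \sim\ }(\pi_{\mathrm{st}}^{-1}(y_2),\vec{\mathbf z}(y_2))\,\bigr\},
\]
with the evident source, target, units, inverses and composition, letting $\phi$ act on $\mathcal C$ as the isomorphism $\mathcal C_{p_1}\xrightarrow{\sim}\mathcal C_{p_2}$ itself. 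Unwinding this, a morphism records precisely the ambiguity in reconstructing an $(n+m)$‑marked \emph{stable} curve from the underlying $n$‑marked one: this ambiguity is governed by $\mathrm{Aut}(\Sigma,\vec z)$, not by the finite group $\Gamma$, and the above is the Lie‑groupoid reformulation of the ``group‑germ action'' used in \cite[\S13, \S15]{FO}.

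The main work is to put a smooth structure on $\mathcal G_1$ making $\mathcal G=(\mathcal G_1\rightrightarrows\mathcal G_0)$ a genuine Lie groupoid, in particular with $s,t$ submersive and composition smooth. Given $\phi$ as above I would transport the auxiliary points: $(\pi_{\mathrm{st}}^{-1}(y_2),\vec{\mathbf z}(y_2),\phi(\vec{\mathbf w}(y_1)))$ is a stable $(n+m)$‑marked curve which, via $\phi^{-1}$, is isomorphic to the fibre of $\mathcal C_{\mathrm{st}}$ over $y_1$; by the universal property of $\mathcal C_{\mathrm{st}}\to W$ and the rigidity of stable curves, the datum of $\phi$ (with this identification) is equivalent to a point of the relative configuration bundle of $\mathcal C_{\mathrm{st}}\to W$ lying over $y_2$ together with an element of $\Gamma$, and then $y_1$ is determined. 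This should realise $\mathcal G_1$, near the unit section, as an open subset of $\Gamma$ times a fibre product of the smooth relative configuration bundles of $\mathcal C_{\mathrm{st}}\to W$, with $s,t$ the two evident --- hence submersive --- projections. The delicate point, already present in \cite{FO}, is to organise these local charts so that composition is genuinely associative (the naive group‑germ ``action'' is \emph{not}), and this is exactly where the groupoid language pays off. Note that the isotropy group of the distinguished object comes out to be $\mathrm{Aut}(\Sigma,\vec z)$, which in general has positive dimension, so $\mathcal G$ is not \'etale, as it must be. Versality I would prove objectwise: given any family of deformations of $(\Sigma,\vec z)$ parametrized by a Lie groupoid, I shrink a chart on the base, choose $m$ fibrewise‑stabilizing sections through $\vec w$, classify the resulting $(n+m)$‑marked family by a map to $W$ (well defined up to $\Gamma$), and combine it with the sections to get a map to $\mathcal G_0$ pulling back $\mathcal C$; the transition isomorphisms over overlaps together with the action of the parametrizing groupoid then supply the map to $\mathcal G_1$, and one checks that these assemble into a morphism of Lie groupoids, well defined up to $2$‑isomorphism in the $2$‑category of Lie groupoids, carrying $\mathcal C$ to the given family.

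For uniqueness, applying versality to two such constructions yields morphisms in both directions whose composites are $2$‑isomorphic to the identities; hence the universal family is unique up to Morita equivalence of Lie‑groupoid germs --- and, with the evident minimality normalization (no superfluous deformation parameters), up to isomorphism of germs --- in particular independent of the choices of $\vec w$. I expect the genuine obstacles to lie not in this formal Yoneda argument but in its two geometric inputs: controlling how the automorphism groups of the unstable components degenerate, so that $\mathcal G_1$ is truly a smooth manifold with submersive source and target and composition is associative on the nose; and checking that all the relative configuration spaces entering the construction stay smooth across the node‑smoothing strata.
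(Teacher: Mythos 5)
Your plan follows essentially the same route as the paper's Subsection~3.2: stabilize by auxiliary marked points, invoke the classical universal family of the resulting stable curve, take $\mathcal{MOR}$ to be the isomorphisms of fibres with the auxiliary markings forgotten, and establish smoothness of $\mathcal{MOR}$ by transporting the auxiliary points and appealing to the universal property and rigidity of the stable family. Your ``transport $\phi(\vec{\mathbf w}(y_1))$ and classify'' step plays exactly the role of the paper's map $\Psi:U\times\mathcal V'\to\mathcal{MOR}$ in the proof of Proposition~\ref{prop312}, and the two ``genuine obstacles'' you flag at the end are precisely what Lemma~\ref{lam317} and Sublemma~\ref{sublem321} resolve.

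There is, however, a gap: the groupoid you construct is not minimal at the distinguished object, and minimality is Item~(1) of Definition~\ref{defn34} --- it is part of what it means to be a universal deformation. Two separate choices cause this. First, the $\vec\zeta$-parameters in $\mathcal G_0$ are pure redundancy: the family $\mathcal C$ is constant in the $\vec\zeta$-direction, and nothing in the rest of your argument uses $\vec\zeta$ (in your chart for $\mathcal G_1$ you transport the tautological section $\vec{\mathbf w}(y_1)$, not $\vec\zeta_1$). So for any admissible $\vec\zeta\neq\vec{\mathbf w}(o)$ the identity automorphism of $(\Sigma,\vec z)$ gives a morphism from $(o,\vec{\mathbf w}(o))$ to $(o,\vec\zeta)$, which violates minimality. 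Second, you ask only for ``enough'' extra points rather than the \emph{minimal} stabilization (two on each (US.1)-component, one on each (US.2)-component). The minimal choice matters: with it, every previously unstable component becomes a three-pointed sphere --- rigid --- so $W$ acquires no spurious ``slide the $\vec w$'' directions and $\dim_\C W=3g-3+\ell$; with an excessive stabilization $W$ itself is non-minimal. Minimality is not a formality: it is the hypothesis of Lemma~\ref{lem37}, which is what upgrades ``morphisms in both directions'' to a germ \emph{iso}morphism in the uniqueness step. You correctly diagnose this (``up to Morita equivalence \ldots\ with the evident minimality normalization''), but Theorem~\ref{them35} asserts isomorphism in the sense of Definition~\ref{def218}, which is strictly stronger than Morita equivalence, and the normalization is the step you still need to carry out. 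The fix is simply to drop $\vec\zeta$, insist on the minimal stabilization, and set $\mathcal{OB}=W$: this recovers the paper's Construction~\ref{cost314}, after which the remaining substance is exactly the smoothness and submersivity of $\mathcal{MOR}$ established in Proposition~\ref{prop312}.
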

See Section \ref{sec:universal} Theorem \ref{them35} 
for the precise statement.
This result may have independent interest other than its 
application to the proof of Theorem \ref{thm11}.
We remark that the Lie groupoid appearing in Theorem \ref{thn12}
is \'etale if and only if  $(\Sigma,\vec z)$ is stable.
So in the case of our main interest where  $(\Sigma,\vec z)$ is not 
stable, the Lie groupoid we study is {\it not} an \'etale groupoid
or an orbifold.
\par
The universal family in Theorem \ref{thn12}
should be related to a similar universal family defined
in algebraic geometry
based on Artin stack.
\par
Theorem \ref{thn12}
provides the precise formulation of the fact
that `identification of $\Sigma$ with $\Sigma'$
is well defined up to automorphism group of
$(\Sigma,\vec z)$'.
\par
Using Theorem \ref{thn12} we carry out the idea mentioned above 
and construct a family of 
obstruction spaces $E((\Sigma',\vec z^{\,\prime}),u')$
satisfying (\ref{form11}) in Sections \ref{sec:obstruction} and 
\ref{sec:main}.
\par
Once we obtain $E((\Sigma',\vec z^{\,\prime}),u')$
the rest of the construction is similar 
to the case without $G$ action.
However since the problem of constructing 
equivariant Kuranishi chart is rather delicate 
one, 
we provide detail of the process of constructing 
equivariant Kuranishi chart 
in Section \ref{sec:decay}.
Most of the argument of Section \ref{sec:decay}
is taken from  \cite{foootech}.
Certain exponential decay estimate of the 
solution of pseudo holomorphic curve equation
(especially the exponential decay estimate
of its derivative with respect to the 
gluing parameter) is 
crucial to obtain a smooth Kuranishi structure.
(In our equivariant situation, obtaining 
{\it smooth} Kuranishi structure is more 
essential than the case without group action.
This is because in the $G$-equivariant case it is 
harder to apply certain tricks 
of algebraic or differential topology to reduce 
the construction to the study of $0$ or $1$ dimensional 
moduli spaces.)
This exponential decay estimate is proved in detail in 
\cite{foooexp}.
Other than this point, our discussion is 
independent of the papers we have written 
on the foundation of virtual fundamental chain 
technique and is selfcontained.
\par
The author is planning to apply the result of this 
paper to several problems in the papers \cite{Fu2, Fu3, Fu4} in preparation.
It includes, the definition of equivariant Lagrangian 
Floer homology and of equivariant Gromov-Witten invariant,
relation of equivariant Lagrangian 
Floer theory
to the Lagrangian 
Floer theory of the symplectic quotient.
The author also plan to apply it to study 
some gauge theory related problems,
especially it is likely that we can use it 
to provide a rigorous mathematical definition of the
symplectic geometry side of Atiyah-Floer conjecture.
(Note Atiyah-Floer conjecture concerns a relation between 
Lagrangian Floer homology and Instanton (gauge theory) Floer homology.)
See \cite{DF}.
However in this paper we do not discuss those applications 
but concentrate on establishing the foundation of such 
study.
\par
Several material of this paper is taken from joint works 
of the author with other mathematicians.
Especially Section \ref{sec:decay} and several related places 
are taken from a joint work with Oh-Ohta-Ono such as 
\cite{foootech}.
Also the main novel part of this paper (the contents of Sections 
\ref{sec:universal} and  \ref{sec:main} and related places) are 
$G$-equivariant version of a
rewritted version of a part 
(Sections 13 and 15) of a joint paper \cite{FO} with Ono. 
\par
The author thanks anonymous refere for careful reading, pointing out several 
errors in the earlier version of this paper and many useful comments to improve
presentations.

\section{Lie groupoid and deformation of 
complex structure}
\label{sec:groupoid}

\subsection{Lie groupoid: Review}
\label{Liegroupoidrev}

The notion of Lie groupoid has been used in symplectic 
and Poisson geometry. (See for example \cite{cdw}.)  
We use the notion of Lie groupoid to formulate deformation theory 
of marked (unstable) curves.
Usage of the language of groupoid to study moduli problem 
is well established in algebraic geometry. (See for example \cite{keelmori}.)
To fix the notation etc. 
we start with defining a version of Lie groupoid which we use in this 
paper.
We work in complex analytic category.
So in this and the next sections manifolds are complex manifolds and 
maps between them are holomorphic maps,
unless otherwise mentioned.
(In later sections we study real $C^{\infty}$ manifolds.)
We assume all the manifolds are Hausdorff and paracompact
in this paper.
In the next definition the sentence in the [\dots] is an 
explanation of the condition and is not a part 
of the condition.

\begin{defn}\label{defn2121}
A {\it Lie groupoid} \index{Lie groupoid} is a system 
$\frak G = (\mathcal{OB},\mathcal{MOR},{\rm Pr}_s,{\rm Pr}_t,\frak{comp},\frak{inv},
\mathcal{ID})$ with the following properties.
\begin{enumerate}
\item
$\mathcal{OB}$ \index{00O3B3frak@$\mathcal{OB}$} is a complex manifold, which we call the {\it space of 
objects}.\index{space of 
objects}
\item
$\mathcal{MOR}$ \index{00M33OR@$\mathcal{MOR}$} is a complex manifold, which we call the {\it space of 
morphisms}.\index{space of 
morphisms}
\item
${\rm Pr}_s$ (resp. ${\rm Pr}_t$) is a map
\index{00P11Rs@${\rm Pr}_s$} \index{00P11rt@${\rm Pr}_t$} 
$$
{\rm Pr}_s : \mathcal{MOR} \to \mathcal{OB}
$$
(resp.
$$
{\rm Pr}_t : \mathcal{MOR} \to \mathcal{OB})
$$
which we call the {\it source projection}, (resp. the 
{\it target projection}).\index{source projection}\index{target projection}
[This is a map which assigns the source and the target to a 
morphism.]
\item
We require ${\rm Pr}_s$ and ${\rm Pr}_t$ are both submersions.
(We however do {\it not} assume the map
$
({\rm Pr}_s,{\rm Pr}_t) : \mathcal{MOR} \to \mathcal{OB}^2
$
is a submersion.)
\item
The {\it composition map}, 
\index{composition map}
$\frak{comp}$ is a map 
\index{00C44OMP@$\frak{comp}$}
\begin{equation}\label{comp}
\frak{comp} : \mathcal{MOR} \,\,{}_{{\rm Pr}_t}\times_{{\rm Pr}_s}
\mathcal{MOR}
\to \mathcal{MOR}.
\end{equation}
We remark that the fiber product in (\ref{comp}) is 
transversal and gives a smooth (complex) manifold, 
because of Item (3).
[This is a map which defines the composition of morphisms.]
\item
The next diagram commutes.
\begin{equation}
\begin{CD}
\mathcal{MOR} \,\,{}_{{\rm Pr}_t}\times_{{\rm Pr}_s}\mathcal{MOR} @ >\frak{comp}>>
\mathcal{MOR} \\
@ V({\rm Pr}_s,{\rm Pr}_t)VV @ VV({\rm Pr}_s,{\rm Pr}_t)V\\
\mathcal{OB}^2 @ >{=}>> \mathcal{OB}^2.
\end{CD}
\end{equation}
Here ${\rm Pr}_t$ (resp. ${\rm Pr}_s$) in the left vertical 
arrow is ${\rm Pr}_t$ (resp. ${\rm Pr}_s$) of the 
second factor (resp. the first factor).
\item
The next diagram commutes
\begin{equation}
\begin{CD}
\mathcal{MOR} \,\,{}_{{\rm Pr}_t}\times_{{\rm Pr}_s}\mathcal{MOR} 
\,\,{}_{{\rm Pr}_t}\times_{{\rm Pr}_s}\mathcal{MOR} @ >{{\rm id} \times \frak{comp}}>>
\mathcal{MOR} \,\,{}_{{\rm Pr}_t}\times_{{\rm Pr}_s}\mathcal{MOR} \\
@ V{\frak{comp} \times {\rm id}}VV @ VV\frak{comp}V\\
\mathcal{MOR} \,\,{}_{{\rm Pr}_t}\times_{{\rm Pr}_s}\mathcal{MOR} 
@>{\frak{comp}}>> \mathcal{MOR}.
\end{CD}
\end{equation}
[This means that the composition of morphisms is associative.]
\item
The {\it identity section}
\index{identity section}
\index{00I3D@$\mathcal{ID}$}
$\mathcal{ID}$ is a map
\begin{equation}\label{identity}
\mathcal{ID} : \mathcal{OB}\to \mathcal{MOR}.
\end{equation}
[This is a map which assigns the identity morphism to each object.]
\item
The next diagram commutes.
\begin{equation}
\xymatrix{
&&\mathcal{MOR}  \ar[dd]^{({\rm Pr}_t,{\rm Pr}_s)}   \\
& 
\mathcal{OB} \ar[ru]^{\mathcal{ID}}\ar[rd]_{{\Delta}}
\\
&& 
\mathcal{OB}^2 
}
\end{equation}
Here $\Delta$ is the diagonal embedding.
\item
The next diagram commutes.
\begin{equation}
\begin{CD}
\mathcal{MOR}\,\,{}_{{\rm Pr}_t}\times_{{\rm Pr}_s} \mathcal{MOR}
@ <{({\rm id},\mathcal{ID}\circ {\rm Pr}_t)}<<
\mathcal{MOR} @ >{(\mathcal{ID}\circ {\rm Pr}_s,{\rm id})}>>
\mathcal{MOR}\,\,{}_{{\rm Pr}_t}\times_{{\rm Pr}_s} \mathcal{MOR} \\
@ VV{\frak{comp}}V @ V{{\rm id}}VV @ VV{\frak{comp}}V\\
\mathcal{MOR}  @ <{\rm id}<< \mathcal{MOR} @ >{\rm id}>> \mathcal{MOR}.
\end{CD}
\end{equation}
[This means that the composition with the identity morphism gives the 
identity map.]
\item
The {\it inversion map}
$\frak{inv}$ is a map
\index{inversion map}
\index{00I4NV@$\frak{inv}$}
\begin{equation}\label{inv}
\frak{inv} : \mathcal{MOR}
\to \mathcal{MOR}.
\end{equation}
such that
$
\frak{inv} \circ \frak{inv} = {\rm id}.
$
[This map assigns an inverse to a morphism.
In particular all the morphisms are invertible.]
\item
The next diagram commutes.
\begin{equation}
\begin{CD}
\mathcal{MOR} @ >\frak{inv}>>
\mathcal{MOR} \\
@ V{{\rm Pr}_t}VV @ VV{{\rm Pr}_s}V\\
\mathcal{OB} @ >{=}>> \mathcal{OB}.
\end{CD}
\end{equation}
\item
The next diagrams commute
\begin{equation}
\begin{CD}
\mathcal{MOR} @ >{({\rm id},\frak{inv})}>>
\mathcal{MOR}\,\,{}_{{\rm Pr}_t}\times_{{\rm Pr}_s} \mathcal{MOR}
 \\
@ V{{\rm Pr}_s}VV @ VV{\frak{comp}}V\\
\mathcal{OB} @ >{\mathcal{ID}}>> \mathcal{MOR}.
\end{CD}
\end{equation}
\begin{equation}
\begin{CD}
\mathcal{MOR} @ >{(\frak{inv},{\rm id})}>>
\mathcal{MOR}\,\,{}_{{\rm Pr}_t}\times_{{\rm Pr}_s} \mathcal{MOR}
 \\
@ V{{\rm Pr}_t}VV @ VV{\frak{comp}}V\\
\mathcal{OB} @ >{\mathcal{ID}}>> \mathcal{MOR}.
\end{CD}
\end{equation}
[This means that the composition with the inverse becomes an identity map.]
\end{enumerate}
\end{defn}
Note that we assume all the maps in Definition \ref{defn2121}
are holomorphic. (We do not repeat this remark from 
now on.)
\begin{exm}\label{exaGaction}
Let $\frak X$ be a complex manifold and $G$ a complex Lie group
which has a holomorphic action on $\frak X$.
(We use right action for the consistency of notation.)
\par
We define
$\mathcal{OB} = \frak X$, $\mathcal{MOR} = \frak X \times G$, 
${\rm Pr}_s(x,g) = x$, ${\rm Pr}_t(x,g) = xg$,
$\frak{comp}((x,g),(y,h)) = (x,gh)$,
$\mathcal{ID}(x) = (x,e)$ (where $e$ is the unit of $G$),
$\frak{inv}(x,g) = (xg,g^{-1})$.
\par
It is easy to see that they satisfy the axiom of Lie groupoid.
\end{exm}
\begin{defn}
Let $\frak G^{(i)} = (\mathcal{OB}^{(i)},\mathcal{MOR}^{(i)},{\rm Pr}^{(i)}_s,{\rm Pr}^{(i)}_t,\frak{comp}^{(i)},\frak{inv}^{(i)},
\mathcal{ID}^{(i)})$  be a Lie groupoid for $i=1,2$.
A {\it morphism} 
\index{morphism of Lie groupoid} $\mathscr F$ from $\frak G^{(1)}$ to $\frak G^{(2)}$
is a pair $(\mathscr F_{ob},\mathscr F_{mor})$ such that
the maps
$$
\mathscr F_{ob} 
: \mathcal{OB}^{(1)} \to \mathcal{OB}^{(2)},
\quad
\mathscr F_{mor}: \mathcal{MOR}^{(1)}
\to \mathcal{MOR}^{(2)}
$$
\index{00F6OB@$\mathscr F_{ob}$}  \index{00F6MOR@$\mathscr F_{mor}$}
are holomorphic and commute with 
${\rm Pr}^{(i)}_s,{\rm Pr}^{(i)}_t,\frak{comp}^{(i)},\frak{inv}^{(i)},
\mathcal{ID}^{(i)}$ in an obvious sense.
We call $\mathscr F_{ob}$ (resp. $\mathscr F_{mor}$) 
the {\it object part}\index{object part} (resp. the {\it morphism part}) 
\index{morphism part} of the morphism.
\par
We can compose two morphisms in an obvious way.
The pair of identity maps defines a morphism 
from $\frak G = (\mathcal{OB},\mathcal{MOR},{\rm Pr}_s,{\rm Pr}_t,\frak{comp},\frak{inv},
\mathcal{ID})$ to itself, which we call the {\it identity morphism}.
\index{identity morphism}
\par
Thus the set of all Lie groupoids consists a category.
Therefore the notion of isomorphism and the two Lie groupoids being
isomorphic are defined.
\end{defn}
\begin{defn}
Let $\frak G = (\mathcal{OB},\mathcal{MOR},{\rm Pr}_s,{\rm Pr}_t,\frak{comp},\frak{inv},
\mathcal{ID})$ be a Lie groupoid and 
$\mathcal U \subset \mathcal{OB}$ an open subset.
We define the {\it restriction} 
\index{restriction of Lie gropoid} $\frak G\vert_{\mathcal U}$ of $\frak G$ to $\mathcal U$ 
as follows.
\par
The space of objects is $\mathcal U$.
The space of morphisms is 
${\rm Pr}_s^{-1}(\mathcal U) \cap {\rm Pr}_t^{-1}(\mathcal U)$.
${\rm Pr}_s,{\rm Pr}_t,\frak{comp},\frak{inv},
\mathcal{ID}$ of $\frak G\vert_{\mathcal U}$ are restrictions of corresponding objects of 
$\frak G$.
\par
It is easy to see that axioms are satisfied.
\par
The inclusions  $\mathcal U \to \mathcal{OB}$, 
${\rm Pr}_s^{-1}(\mathcal U) \cap {\rm Pr}_t^{-1}(\mathcal U) 
\to \mathcal{MOR}$ defines a morphism 
$\frak G\vert_{\mathcal U} \to \frak G$.
We call it an {\it open embedding}.
\index{open embedding of Lie groupoid}
\end{defn}
\begin{lemdef}\label{lemdef25}
Let $\frak G = (\mathcal{OB},\mathcal{MOR},{\rm Pr}_s,{\rm Pr}_t,\frak{comp},\frak{inv},
\mathcal{ID})$ be a Lie groupoid and 
$\mathcal T : \mathcal{OB} \to \mathcal{MOR}$  a (holomorphic) map 
with ${\rm Pr}_t\circ \mathcal T = {\rm id}$.
\par
It defines a morphism $\frak{conj}^{\mathcal T}$ from $\frak G$ to itself as follows.
\begin{enumerate}
\item
$\frak{conj}^{\mathcal T}_{ob} = {\rm Pr}_s\circ \mathcal T : 
\mathcal{OB} \to \mathcal{OB}$.
\item
We write $\varphi \circ \psi = \frak{comp}(\varphi,\psi)$
in case ${\rm Pr}_s(\varphi) = {\rm Pr}_t(\psi)$.
Now for $\varphi \in \mathcal{MOR}$ with 
${\rm Pr}_s(\varphi) = x$, ${\rm Pr}_t(\varphi) = y$,
we define
$$
\frak{conj}^{\mathcal T}_{mor}(\varphi)
= \frak{inv}(\mathcal T(y)) \circ \varphi \circ \mathcal T(x).
$$
\end{enumerate}
It is easy to see that $(\frak{conj}^{\mathcal T}_{ob},
\frak{conj}^{\mathcal T}_{mor})$ is a morphism from $\frak G$ to $\frak G$.
\end{lemdef}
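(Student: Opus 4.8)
The plan is to verify the three requirements in the definition of a morphism of Lie groupoids for the pair $\mathscr F = (\frak{conj}^{\mathcal T}_{ob},\frak{conj}^{\mathcal T}_{mor})$: that the two components are holomorphic, and that $\mathscr F$ commutes with ${\rm Pr}_s$, ${\rm Pr}_t$, $\frak{comp}$, $\frak{inv}$ and $\mathcal{ID}$. Throughout I use the shorthand $\varphi\circ\psi = \frak{comp}(\varphi,\psi)$ from the statement, and write $1_x = \mathcal{ID}(x)$, $\varphi^{-1} = \frak{inv}(\varphi)$, and $\varphi : x\to y$ for ${\rm Pr}_s(\varphi) = x$, ${\rm Pr}_t(\varphi) = y$. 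Note first that $\mathcal T(x) : \frak{conj}^{\mathcal T}_{ob}(x)\to x$ for every object $x$, since ${\rm Pr}_t\circ\mathcal T = {\rm id}$ and $\frak{conj}^{\mathcal T}_{ob} = {\rm Pr}_s\circ\mathcal T$ by definition.

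Holomorphy of the object part is immediate, since $\frak{conj}^{\mathcal T}_{ob} = {\rm Pr}_s\circ\mathcal T$ is a composite of holomorphic maps. For the morphism part the point to watch is that $\frak{comp}$ is defined only on a fiber product, not on all of $\mathcal{MOR}\times\mathcal{MOR}$; so I would realize $\frak{conj}^{\mathcal T}_{mor}$ as a composite of holomorphic maps passing through the iterated fiber product $\mathcal{MOR}\,\,{}_{{\rm Pr}_t}\times_{{\rm Pr}_s}\mathcal{MOR}\,\,{}_{{\rm Pr}_t}\times_{{\rm Pr}_s}\mathcal{MOR}$, which is the complex manifold already appearing in Item (7) of the definition (the iterated fiber product is transversal because ${\rm Pr}_s$ and ${\rm Pr}_t$ are submersions, as used in Items (5) and (7)). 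Concretely, $\varphi\mapsto\bigl(\mathcal T({\rm Pr}_s\varphi),\,\varphi,\,\frak{inv}(\mathcal T({\rm Pr}_t\varphi))\bigr)$ is holomorphic, each component being so because ${\rm Pr}_s$, ${\rm Pr}_t$, $\mathcal T$ and $\frak{inv}$ are, and it takes values in that iterated fiber product, the required matchings ${\rm Pr}_t(\mathcal T({\rm Pr}_s\varphi)) = {\rm Pr}_s(\varphi)$ and ${\rm Pr}_t(\varphi) = {\rm Pr}_s(\frak{inv}(\mathcal T({\rm Pr}_t\varphi)))$ being immediate from ${\rm Pr}_t\circ\mathcal T = {\rm id}$ and from ${\rm Pr}_s\circ\frak{inv} = {\rm Pr}_t$ (Item (12)). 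Composing this map with two applications of $\frak{comp}$ gives $\frak{conj}^{\mathcal T}_{mor}(\varphi) = \frak{inv}(\mathcal T({\rm Pr}_t\varphi))\circ\varphi\circ\mathcal T({\rm Pr}_s\varphi)$, which is therefore holomorphic.

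The remaining compatibilities are one-line computations with the groupoid axioms. For $\varphi : x\to y$, the composite $\mathcal T(y)^{-1}\circ\varphi\circ\mathcal T(x)$ has source the source of $\mathcal T(x)$, namely $\frak{conj}^{\mathcal T}_{ob}(x)$, and target the target of $\mathcal T(y)^{-1}$, namely $\frak{conj}^{\mathcal T}_{ob}(y)$; this is compatibility with ${\rm Pr}_s$ and ${\rm Pr}_t$. Compatibility with $\mathcal{ID}$ follows from $\frak{conj}^{\mathcal T}_{mor}(1_x) = \mathcal T(x)^{-1}\circ 1_x\circ\mathcal T(x) = \mathcal T(x)^{-1}\circ\mathcal T(x) = 1_{\frak{conj}^{\mathcal T}_{ob}(x)}$, using the unit and inverse axioms. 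For $\frak{comp}$: given $\varphi : x\to y$ and $\psi : y\to z$, associativity (Item (7)) and the fact that $\mathcal T(y)\circ\mathcal T(y)^{-1}$ is an identity morphism give
$$
\frak{conj}^{\mathcal T}_{mor}(\psi)\circ\frak{conj}^{\mathcal T}_{mor}(\varphi)
= \mathcal T(z)^{-1}\circ\psi\circ\bigl(\mathcal T(y)\circ\mathcal T(y)^{-1}\bigr)\circ\varphi\circ\mathcal T(x)
= \mathcal T(z)^{-1}\circ(\psi\circ\varphi)\circ\mathcal T(x)
= \frak{conj}^{\mathcal T}_{mor}(\psi\circ\varphi).
$$
Finally, for $\frak{inv}$: after recording the standard consequences $\frak{inv}\circ\frak{inv} = {\rm id}$ and $(\alpha\circ\beta)^{-1} = \beta^{-1}\circ\alpha^{-1}$ of the axioms, I would compute, for $\varphi : x\to y$,
$$
\frak{inv}\bigl(\frak{conj}^{\mathcal T}_{mor}(\varphi)\bigr)
= \bigl(\mathcal T(y)^{-1}\circ\varphi\circ\mathcal T(x)\bigr)^{-1}
= \mathcal T(x)^{-1}\circ\varphi^{-1}\circ\mathcal T(y)
= \frak{conj}^{\mathcal T}_{mor}\bigl(\frak{inv}(\varphi)\bigr),
$$
the last equality because $\varphi^{-1} : y\to x$.

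I do not expect any serious obstacle: the whole argument is bookkeeping with the axioms of a Lie groupoid. The one step calling for genuine care is the holomorphy of $\frak{conj}^{\mathcal T}_{mor}$, since $\frak{comp}$ is not a map defined on the whole of $\mathcal{MOR}\times\mathcal{MOR}$; this is handled by factoring the map through the iterated fiber product above, which is legitimate precisely because ${\rm Pr}_s$ and ${\rm Pr}_t$ are submersions.
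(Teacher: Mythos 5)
Your proof is correct, and it fills in what the paper simply asserts as ``easy to see'' without providing any verification. You check exactly the data required by the paper's definition of a morphism of Lie groupoids (holomorphy of the two components and compatibility with ${\rm Pr}_s$, ${\rm Pr}_t$, $\frak{comp}$, $\frak{inv}$, $\mathcal{ID}$), and the computations are straightforward applications of the groupoid axioms. The one place requiring genuine care — that $\frak{comp}$ is defined only on the fiber product, so that holomorphy of $\frak{conj}^{\mathcal T}_{mor}$ must be established by factoring through the iterated fiber product $\mathcal{MOR}\,\,{}_{{\rm Pr}_t}\times_{{\rm Pr}_s}\mathcal{MOR}\,\,{}_{{\rm Pr}_t}\times_{{\rm Pr}_s}\mathcal{MOR}$ (which is a manifold thanks to the submersion hypothesis) — is exactly the right thing to flag, and you handle it correctly. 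The appeal to the standard groupoid identities $(\alpha\circ\beta)^{-1}=\beta^{-1}\circ\alpha^{-1}$ and uniqueness of inverses is acceptable at this level of detail, though strictly these also need to be derived once from Items (7), (10), (13).
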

We can generalize this construction as follows.
\begin{defn}
Let $\frak G^{(i)} = (\mathcal{OB}^{(i)},\mathcal{MOR}^{(i)},{\rm Pr}^{(i)}_s,{\rm Pr}^{(i)}_t,\frak{comp}^{(i)},\frak{inv}^{(i)},
\mathcal{ID}^{(i)})$  be a Lie groupoid for $i=1,2$
and $\mathscr F^{(j)} = (\mathscr F^{(j)}_{ob},\mathscr F^{(j)}_{mor})$  
a morphism from $\frak G^{(1)}$ to $\frak G^{(2)}$, 
for $j=1,2$.
\par
A {\it natural transformation} \index{natural transformation} 
from $\mathscr F^{(1)}$ to 
$\mathscr F^{(2)}$ is a (holomorphic) map:
$\mathcal T : \mathcal{OB}^{(1)} \to \mathcal{MOR}^{(2)}$
with the following properties.
\begin{enumerate}
\item
${\rm Pr}^{(2)}_s \circ \mathcal T = \mathscr F^{(1)}_{ob}$
and ${\rm Pr}^{(2)}_t \circ \mathcal T = \mathscr F^{(2)}_{ob}$.
\item
$
\frak{comp}(\mathcal T\circ {\rm Pr}^{(1)}_t,\mathscr F^{(1)}_{mor})
=
\frak{comp}(\mathscr F^{(2)}_{mor},\mathcal T\circ {\rm Pr}^{(1)}_s).
$
In other words the next diagram commutes for $\varphi \in \mathcal{MOR}^{(1)}$
with ${\rm Pr}^{(1)}_s(\varphi) = x$, ${\rm Pr}^{(1)}_t(\varphi) = y$.
\begin{equation}
\begin{CD}
\mathscr F^{(2)}_{ob}(x) @ >{\mathscr F^{(2)}_{mor}(\varphi)}>> \mathscr F^{(2)}_{ob}(y)
 \\
@ A{\mathcal T(x)}AA @ AA{\mathcal T(y)}A\\
\mathscr F^{(1)}_{ob}(x) @ >{\mathscr F^{(1)}_{mor}(\varphi)}>> \mathscr F^{(1)}_{ob}(y).
\end{CD}
\end{equation}
\end{enumerate}
We say $\mathscr F^{(2)}$ is {\it conjugate}\index{conjugate} to $\mathscr F^{(1)}$, 
if there is a natural transformation from  $\mathscr F^{(1)}$ to 
$\mathscr F^{(2)}$.
\end{defn}
\begin{lem}
\begin{enumerate}
\item
If $\mathcal T$ is a natural transformation from $\mathscr F^{(1)}$
to $\mathscr F^{(2)}$ then $\frak{inv} \circ \mathcal T$ 
is a  natural transformation from $\mathscr F^{(2)}$
to $\mathscr F^{(1)}$.
\item
If $\mathcal T$ (resp. $\mathcal S$) is a 
natural transformation from $\mathscr F^{(1)}$
to $\mathscr F^{(2)}$ (resp. $\mathscr F^{(2)}$
to $\mathscr F^{(3)}$) then
$\frak{comp} \circ (\mathcal T,\mathcal S)$ is a 
natural transformation from $\mathscr F^{(1)}$
to $\mathscr F^{(3)}$.
\item
Being conjugate is an equivalence relation.
\end{enumerate}
\end{lem}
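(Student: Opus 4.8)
The plan is to verify all three assertions by direct diagram chases from the two conditions in the definition of a natural transformation, using nothing beyond the groupoid axioms already listed: associativity of $\frak{comp}$, the axioms for the identity section $\mathcal{ID}$, and the axioms for the inversion $\frak{inv}$ — in particular that ${\rm Pr}_s\circ\frak{inv}={\rm Pr}_t$ and that $\frak{comp}(\varphi,\frak{inv}(\varphi))$ and $\frak{comp}(\frak{inv}(\varphi),\varphi)$ are the identity morphisms at the relevant objects. Throughout I write $\psi\circ\varphi$ for $\frak{comp}(\psi,\varphi)$ when ${\rm Pr}_s(\psi)={\rm Pr}_t(\varphi)$, as in Lemma-Definition \ref{lemdef25}, and I use freely that composing a morphism with an identity morphism at a matching object leaves it unchanged.

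First I would do (1). The source/target condition for $\frak{inv}^{(2)}\circ\mathcal T$ is immediate: ${\rm Pr}^{(2)}_s\circ(\frak{inv}^{(2)}\circ\mathcal T)={\rm Pr}^{(2)}_t\circ\mathcal T=\mathscr F^{(2)}_{ob}$ and ${\rm Pr}^{(2)}_t\circ(\frak{inv}^{(2)}\circ\mathcal T)={\rm Pr}^{(2)}_s\circ\mathcal T=\mathscr F^{(1)}_{ob}$, which is what a transformation from $\mathscr F^{(2)}$ to $\mathscr F^{(1)}$ must satisfy. For the naturality condition I would take $\varphi\in\mathcal{MOR}^{(1)}$ with source $x$ and target $y$, start from the given identity $\mathcal T(y)\circ\mathscr F^{(1)}_{mor}(\varphi)=\mathscr F^{(2)}_{mor}(\varphi)\circ\mathcal T(x)$, compose on the left with $\frak{inv}(\mathcal T(y))$ and on the right with $\frak{inv}(\mathcal T(x))$, and cancel the identity morphisms produced; the outcome is $\mathscr F^{(1)}_{mor}(\varphi)\circ\frak{inv}(\mathcal T(x))=\frak{inv}(\mathcal T(y))\circ\mathscr F^{(2)}_{mor}(\varphi)$, which is exactly the naturality condition for $\frak{inv}^{(2)}\circ\mathcal T$ as a transformation from $\mathscr F^{(2)}$ to $\mathscr F^{(1)}$. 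Then (2) is similar but easier: the source/target condition reads off from ${\rm Pr}^{(2)}_s(\mathcal S(x)\circ\mathcal T(x))={\rm Pr}^{(2)}_s(\mathcal T(x))$ and ${\rm Pr}^{(2)}_t(\mathcal S(x)\circ\mathcal T(x))={\rm Pr}^{(2)}_t(\mathcal S(x))$, and the naturality condition for the pointwise composite $x\mapsto\mathcal S(x)\circ\mathcal T(x)$ follows by inserting the two given naturality identities one after the other and using associativity, giving $\mathscr F^{(3)}_{mor}(\varphi)\circ\mathcal S(x)\circ\mathcal T(x)=\mathcal S(y)\circ\mathcal T(y)\circ\mathscr F^{(1)}_{mor}(\varphi)$.

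Finally (3): reflexivity is witnessed by the transformation $\mathcal{ID}^{(2)}\circ\mathscr F_{ob}$ from a morphism $\mathscr F$ to itself, whose two defining conditions fall straight out of the $\mathcal{ID}$-axioms; symmetry is (1); transitivity is (2). I do not expect a genuine obstacle — the lemma is purely formal once the conventions of the two preceding definitions are respected. The only steps calling for any care are bookkeeping ones: keeping the argument order of $\frak{comp}$ aligned with functional composition, and, in (1), making sure not to invoke implicitly the anti-homomorphism identity $\frak{inv}(\psi\circ\chi)=\frak{inv}(\chi)\circ\frak{inv}(\psi)$ — the left/right composition manoeuvre above avoids needing it, although one could alternatively establish that identity from the axioms first and then simply invert both sides of the given naturality square.
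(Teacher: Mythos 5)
Your proof is correct and amounts to a careful write-out of what the paper treats as immediate: Fukaya's proof of this lemma is the single line ``(1)(2) are obvious from definition. (3) follows from (1) and (2).'' Your diagram chases for (1) and (2) verify exactly what ``obvious'' is standing for, and in (3) you are slightly more careful than the paper, since symmetry and transitivity alone do not give reflexivity in general — you correctly supply the missing reflexivity witness $\mathcal{ID}\circ\mathscr F_{ob}$, whose two defining conditions reduce at once to the identity-section axioms.
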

\begin{proof}
(1)(2) are obvious from definition. (3) follows from (1) and (2).
\end{proof}
\begin{lem}\label{lem28}
A morphism $\mathscr F$ from $\frak G$ to itself is 
conjugate to the identity morphism 
if and only if it is $\frak{conj}^{\mathcal T}$
for some $\mathcal T$ as in Lemma-Definition \ref{lemdef25}.
\end{lem}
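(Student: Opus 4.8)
The plan is to verify the two implications by direct computation; the only real content is that the involution $\frak{inv}$ translates between the normalization built into the definition of a natural transformation and the normalization ${\rm Pr}_t\circ\mathcal T={\rm id}$ used in Lemma-Definition \ref{lemdef25}. Spelling out the definitions: a natural transformation $\mathcal S$ witnessing that $\mathscr F$ is conjugate to the identity morphism is a holomorphic map $\mathcal S\colon\mathcal{OB}\to\mathcal{MOR}$ with ${\rm Pr}_s\circ\mathcal S={\rm id}$ (the object part of the identity morphism being ${\rm id}$) and ${\rm Pr}_t\circ\mathcal S=\mathscr F_{ob}$, and for which the commuting square of condition (2) reads $\mathcal S(y)\circ\varphi=\mathscr F_{mor}(\varphi)\circ\mathcal S(x)$ whenever ${\rm Pr}_s(\varphi)=x$, ${\rm Pr}_t(\varphi)=y$; while the datum $\mathcal T$ of Lemma-Definition \ref{lemdef25} satisfies instead ${\rm Pr}_t\circ\mathcal T={\rm id}$. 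Since ${\rm Pr}_s\circ\frak{inv}={\rm Pr}_t$ and ${\rm Pr}_t\circ\frak{inv}={\rm Pr}_s$, the assignment $\mathcal S\mapsto\mathcal T:=\frak{inv}\circ\mathcal S$ is an involutive bijection between maps of the first kind and maps of the second kind; explicitly $\mathcal S(x)\colon x\to\mathscr F_{ob}(x)$ is sent to $\mathcal T(x)\colon\mathscr F_{ob}(x)\to x$. I would prove that, under this bijection, $\frak{conj}^{\mathcal T}$ is precisely the morphism $\mathscr F$ fitting into the naturality square, and conversely.

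For the ``if'' direction, suppose $\mathscr F=\frak{conj}^{\mathcal T}$ with ${\rm Pr}_t\circ\mathcal T={\rm id}$, and put $\mathcal S:=\frak{inv}\circ\mathcal T$. Then ${\rm Pr}_s\circ\mathcal S={\rm Pr}_t\circ\mathcal T={\rm id}$ and ${\rm Pr}_t\circ\mathcal S={\rm Pr}_s\circ\mathcal T=\frak{conj}^{\mathcal T}_{ob}=\mathscr F_{ob}$, which is condition (1) for a natural transformation from the identity to $\mathscr F$. For condition (2), for $\varphi\colon x\to y$ one inserts the definition of $\frak{conj}^{\mathcal T}_{mor}$ and computes
\[
\mathscr F_{mor}(\varphi)\circ\mathcal S(x)=\frak{inv}(\mathcal T(y))\circ\varphi\circ\mathcal T(x)\circ\frak{inv}(\mathcal T(x))=\frak{inv}(\mathcal T(y))\circ\varphi=\mathcal S(y)\circ\varphi,
\]
the second equality using the groupoid axiom $\mathcal T(x)\circ\frak{inv}(\mathcal T(x))=\mathcal{ID}(x)$ together with composition with the identity. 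Hence $\mathcal S$ is a natural transformation and $\mathscr F$ is conjugate to the identity.

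For the ``only if'' direction, run the computation backwards: given a natural transformation $\mathcal S$ from the identity to $\mathscr F$, set $\mathcal T:=\frak{inv}\circ\mathcal S$, so that ${\rm Pr}_t\circ\mathcal T={\rm Pr}_s\circ\mathcal S={\rm id}$ (hence $\frak{conj}^{\mathcal T}$ is defined) and $\frak{conj}^{\mathcal T}_{ob}={\rm Pr}_s\circ\mathcal T={\rm Pr}_t\circ\mathcal S=\mathscr F_{ob}$. For the morphism part, using $\frak{inv}\circ\frak{inv}={\rm id}$, for $\varphi\colon x\to y$,
\[
\frak{conj}^{\mathcal T}_{mor}(\varphi)=\frak{inv}(\mathcal T(y))\circ\varphi\circ\mathcal T(x)=\mathcal S(y)\circ\varphi\circ\frak{inv}(\mathcal S(x)),
\]
and inserting the naturality square $\mathcal S(y)\circ\varphi=\mathscr F_{mor}(\varphi)\circ\mathcal S(x)$ followed by $\mathcal S(x)\circ\frak{inv}(\mathcal S(x))=\mathcal{ID}(\mathscr F_{ob}(x))$ and composition with the identity yields $\frak{conj}^{\mathcal T}_{mor}(\varphi)=\mathscr F_{mor}(\varphi)$. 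Thus $\mathscr F=\frak{conj}^{\mathcal T}$.

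The argument is entirely formal, so I do not expect a substantive obstacle; the only place to be careful is bookkeeping — tracking the source and target of every morphism entering each composition so that every instance of $\frak{comp}$ is legal and the correct variant of the inverse and identity axioms is invoked. Concretely, the one asymmetry to keep in mind is that $\mathcal S(x)$ runs from $x$ to $\mathscr F_{ob}(x)$ whereas $\mathcal T(x)=\frak{inv}(\mathcal S(x))$ runs from $\mathscr F_{ob}(x)$ to $x$, which is exactly why passing through $\frak{inv}$ is forced and not merely cosmetic.
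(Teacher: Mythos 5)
Your proof is correct. The paper gives no argument here beyond declaring the lemma obvious from the definitions, and your proof supplies exactly the verification that is being left to the reader: you identify the asymmetry between the normalization ${\rm Pr}_s\circ\mathcal S={\rm id}$ built into a natural transformation from the identity to $\mathscr F$ and the normalization ${\rm Pr}_t\circ\mathcal T={\rm id}$ in Lemma-Definition \ref{lemdef25}, note that $\frak{inv}$ gives an involutive bijection between the two, and check both directions of the equivalence by a direct computation in which the groupoid inverse and unit axioms absorb the terms $\mathcal T(x)\circ\frak{inv}(\mathcal T(x))$ and $\mathcal S(x)\circ\frak{inv}(\mathcal S(x))$. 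One small remark: an alternative route is to first apply Lemma \ref{lemdef25}'s companion Lemma 2.7 (conjugacy is symmetric via $\frak{inv}\circ\mathcal T$) and reformulate the hypothesis as the existence of a natural transformation from $\mathscr F$ to the identity, whose normalization ${\rm Pr}_t\circ\mathcal T={\rm id}$ then matches Lemma-Definition \ref{lemdef25} on the nose and removes one layer of $\frak{inv}$ from the bookkeeping; this is only a cosmetic rearrangement of the same computation and does not change its content.
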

This is obvious from the definition.
\begin{lem}\label{lemdef29}
Let $\frak G^{(i)} = (\mathcal{OB}^{(i)},\mathcal{MOR}^{(i)},{\rm Pr}^{(i)}_s,{\rm Pr}^{(i)}_t,\frak{comp}^{(i)},\frak{inv}^{(i)},
\mathcal{ID}^{(i)})$  be a Lie groupoid for $i=1,2,3$
and $\mathscr F = (\mathscr F_{ob},\mathscr F_{mor})$,
$\mathscr F^{(j)} = (\mathscr F^{(j)}_{ob},\mathscr F^{(j)}_{mor})$
a morphism from $\frak G^{(1)}$ to $\frak G^{(2)}$, for $j=1,2$ .
Let $\mathscr G = (\mathscr G_{ob},\mathscr G_{mor})$,
$\mathscr G^{(j)} = (\mathscr G^{(j)}_{ob},\mathscr G^{(j)}_{mor})$
be a
morphism from $\frak G^{(2)}$ to $\frak G^{(3)}$, for $j=1,2$.
\begin{enumerate}
\item If $\mathscr F^{(1)}$ is conjugate to $\mathscr F^{(2)}$
then $\mathscr G\circ \mathscr F^{(1)}$ is conjugate to $\mathscr G\circ \mathscr F^{(2)}$.
\item
If $\mathscr G^{(1)}$ is conjugate to $\mathscr G^{(2)}$
then $\mathscr G^{(1)}\circ \mathscr F$ is conjugate to $\mathscr G^{(2)}\circ \mathscr F$.
\end{enumerate}
\end{lem}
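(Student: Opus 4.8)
The plan is to recognize both assertions as instances of the standard fact that conjugacy of morphisms is preserved by whiskering with a fixed morphism, and simply to write down the whiskered natural transformation in each case and check the two conditions in the definition of a natural transformation.

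For (1), suppose $\mathcal T : \mathcal{OB}^{(1)} \to \mathcal{MOR}^{(2)}$ is a natural transformation from $\mathscr F^{(1)}$ to $\mathscr F^{(2)}$. I would set $\mathcal T' := \mathscr G_{mor} \circ \mathcal T$, a holomorphic map $\mathcal{OB}^{(1)} \to \mathcal{MOR}^{(3)}$. The source/target conditions ${\rm Pr}^{(3)}_s \circ \mathcal T' = (\mathscr G \circ \mathscr F^{(1)})_{ob}$ and ${\rm Pr}^{(3)}_t \circ \mathcal T' = (\mathscr G \circ \mathscr F^{(2)})_{ob}$ follow from the corresponding conditions for $\mathcal T$ together with the identities ${\rm Pr}^{(3)}_s \circ \mathscr G_{mor} = \mathscr G_{ob} \circ {\rm Pr}^{(2)}_s$ and ${\rm Pr}^{(3)}_t \circ \mathscr G_{mor} = \mathscr G_{ob} \circ {\rm Pr}^{(2)}_t$, which hold because $\mathscr G$ is a morphism of Lie groupoids. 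The commuting-square condition is obtained by applying $\mathscr G_{mor}$ to the commuting square for $\mathcal T$ and using that $\mathscr G_{mor}$ intertwines $\frak{comp}^{(2)}$ and $\frak{comp}^{(3)}$: concretely $\frak{comp}^{(3)}(\mathcal T' \circ {\rm Pr}^{(1)}_t,\, \mathscr G_{mor}\circ\mathscr F^{(1)}_{mor}) = \mathscr G_{mor}\circ\frak{comp}^{(2)}(\mathcal T \circ {\rm Pr}^{(1)}_t,\, \mathscr F^{(1)}_{mor}) = \mathscr G_{mor}\circ\frak{comp}^{(2)}(\mathscr F^{(2)}_{mor},\, \mathcal T \circ {\rm Pr}^{(1)}_s) = \frak{comp}^{(3)}(\mathscr G_{mor}\circ\mathscr F^{(2)}_{mor},\, \mathcal T' \circ {\rm Pr}^{(1)}_s)$, where the middle equality is the naturality of $\mathcal T$.

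For (2), suppose $\mathcal S : \mathcal{OB}^{(2)} \to \mathcal{MOR}^{(3)}$ is a natural transformation from $\mathscr G^{(1)}$ to $\mathscr G^{(2)}$. I would set $\mathcal S' := \mathcal S \circ \mathscr F_{ob}$, a holomorphic map $\mathcal{OB}^{(1)} \to \mathcal{MOR}^{(3)}$. The source/target conditions are immediate from those for $\mathcal S$ since $(\mathscr G^{(i)}\circ\mathscr F)_{ob} = \mathscr G^{(i)}_{ob}\circ\mathscr F_{ob}$. For the commuting square, given $\varphi \in \mathcal{MOR}^{(1)}$ with source $x$ and target $y$, I would push it forward to $\mathscr F_{mor}(\varphi) \in \mathcal{MOR}^{(2)}$, whose source and target are $\mathscr F_{ob}(x)$ and $\mathscr F_{ob}(y)$ because $\mathscr F$ is a morphism; then $\mathcal S'(x) = \mathcal S(\mathscr F_{ob}(x))$, $\mathcal S'(y) = \mathcal S(\mathscr F_{ob}(y))$, and after noting $(\mathscr G^{(i)}\circ\mathscr F)_{mor}(\varphi) = \mathscr G^{(i)}_{mor}(\mathscr F_{mor}(\varphi))$ the required square for $\mathcal S'$ and $\varphi$ is precisely the commuting square of $\mathcal S$ evaluated at the morphism $\mathscr F_{mor}(\varphi)$.

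I would close by remarking that there is essentially no obstacle here: the content is a routine diagram chase using only the axioms for morphisms of Lie groupoids and the definition of natural transformation, and every map produced is holomorphic as a composite of holomorphic maps. (One could equally phrase (1) and (2) as the familiar $2$-categorical statement that $\mathscr G \mapsto \mathscr G_{mor}\circ(-)$ and $\mathscr F \mapsto (-)\circ\mathscr F_{ob}$ carry natural transformations to natural transformations.)
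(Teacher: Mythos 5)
Your proposal is correct and takes essentially the same approach as the paper: the paper's proof simply declares that $\mathscr G_{mor}\circ\mathcal T$ and $\mathcal S\circ\mathscr F_{ob}$ are the required natural transformations, and you have supplied the (straightforward) verifications that the paper leaves to the reader.
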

\begin{proof}
If $\mathcal T$ is a natural transformation from $\mathscr F^{(1)}$
to $\mathscr F^{(2)}$ then 
$\mathscr G_{mor} \circ \mathcal T$ is a natural transformation from 
$\mathscr G\circ \mathscr F^{(1)}$ is conjugate to $\mathscr G\circ \mathscr F^{(2)}$.
\par
If $\mathcal S$ is a natural transformation from $\mathscr G^{(1)}$
to $\mathscr G^{(2)}$ then
$\mathcal S \circ \mathscr F_{ob}$ is a natural transformation from 
$\mathscr G^{(1)}\circ \mathscr F$ to $\mathscr G^{(2)}\circ \mathscr F$.
\end{proof}

\subsection{Family of complex varieties parametrized by a Lie groupoid}
\label{familypara}

\begin{defn}\label{defn210}
Let $\frak G = (\mathcal{OB},\mathcal{MOR},{\rm Pr}_s,{\rm Pr}_t,\frak{comp},\frak{inv},
\mathcal{ID})$  be a Lie groupoid.
A {\it family of complex analytic spaces parametrized by  $\frak G$}, is 
\index{family of complex analytic spaces parametrized by  $\frak G$}
a pair $(\widetilde{\frak G},\mathscr F)$ of a Lie groupoid 
$\widetilde{\frak G} = (\widetilde{\mathcal{OB}},\widetilde{\mathcal{MOR}},
\widetilde{{\rm Pr}}_s,\widetilde{{\rm Pr}}_t,\widetilde{\frak{comp}},\widetilde{\frak{inv}},
\widetilde{\mathcal{ID}})$ 
and a morphism $\mathscr F : \widetilde{\frak G} \to \frak G$,
such that next two diagrams are cartesian squares, and $\mathscr F_{ob}$, $\mathscr F_{mor}$ 
are flat and surjective.
\begin{equation}\label{diagram212}
\begin{CD}
\widetilde{\mathcal{MOR}} @ >{\widetilde{\rm Pr}_t}>> \widetilde{\mathcal{OB}}
\\
@ V{\mathscr F_{mor}}VV @ V{\mathscr F_{ob}}VV\\
\mathcal{MOR} @ >{{\rm Pr}_t}>> \mathcal{OB}.
\end{CD}
\qquad
\begin{CD}
\widetilde{\mathcal{MOR}} @ >{\widetilde{\rm Pr}_s}>> \widetilde{\mathcal{OB}}
\\
@ V{\mathscr F_{mor}}VV @ V{\mathscr F_{ob}}VV\\
\mathcal{MOR} @ >{{\rm Pr}_s}>> \mathcal{OB}.
\end{CD}
\end{equation}
\end{defn}
\begin{rem}
Note a diagram 
$$
\begin{CD}
A @ >{f}>> B
\\
@ V{g}VV @ V{g'}VV\\
C @ >{f'}>> D.
\end{CD}
$$
is said to be a {\it cartesian square} 
\index{cartesian square} if it commutes and the induced morphism
$A \to B \times_{D} C$ is an isomorphism.
\end{rem}

We elaborate on this definition below.
For $x \in \mathcal{OB}$ we write
$X_x = \mathscr F_{ob}^{-1}(x)$.
It is a complex analytic space, which is in general singular.
Let $\varphi \in \mathcal{MOR}$ and 
$x =  {\rm Pr}_s(\varphi)$ and $y = {\rm Pr}_t(\varphi)$.
Since (\ref{diagram212}) is a cartesian square 
we have isomorphisms:
\begin{equation}\label{form213}
\begin{CD}
X_x @<{{\rm Pr}_s}<< \mathscr F_{mor}^{-1}(\varphi) @>{{\rm Pr}_t}>> X_y.
\end{CD}
\end{equation}
Here the  arrows are restrictions of ${\rm Pr}_s$ and
${\rm Pr}_t$. They are isomorphisms.
Thus $\varphi$ induces an isomorphism $X_x \cong X_y$, 
which we write $\widetilde{\varphi}$.
Then using the compatibility of $\mathscr F_{mor}$ with compositions 
we can easily show
\begin{equation}
\widetilde{\varphi} \circ \widetilde{\psi}
= \widetilde{\varphi\circ \psi},
\end{equation}
if 
${\rm Pr}_s(\varphi) = {\rm Pr}_t(\psi)$.
(Here the right hand side is $\widetilde{\frak{comp}(\varphi,\psi)}$.)
\begin{exm}
Let $\frak X,\frak Y$ be complex manifolds 
on which a complex Lie group $G$ acts.
Let $\pi : \frak Y \to \frak X$ be a 
holomorphic map which is $G$-equivariant.
By Example \ref{exaGaction} we have
Lie groupoids whose  spaces of objects are 
$\frak X$ and $\frak Y$,
and whose  spaces of morphisms are 
$\frak X \times G$ and $\frak Y \times G$ respectively.
We denote them by $(\frak X,G)$ and $(\frak Y,G)$
\par
The projections define a morphism $(\frak Y,G)
\to (\frak X,G)$.
It is easy to see that by this morphism 
$(\frak Y,G)$ becomes a family of complex analytic spaces parametrized by  $(\frak X,G)$.
\end{exm}
\begin{cons}\label{const212}
Let $\pi : \frak Y \to \frak X$ be a proper, surjective and flat
holomorphic map between complex manifolds.
We put $X_x = \pi^{-1}(x)$ for $x \in X$.
We consider the set of triples:
\begin{equation}\label{mortal}
\aligned
\{ (x,y,\varphi) \mid x,y \in X, &\,\,\,\varphi : X_x \to X_y \\
&\text{is an isomorphism of complex analytic spaces.}\}
\endaligned
\end{equation}

We {\it assume} the space 
(\ref{mortal}) is a complex manifold and write it as 
$\mathcal{MOR}$.
We {\it assume} moreover the maps 
$\mathcal{MOR} \to \frak X$, $(x,y,\varphi) \mapsto x$ 
and $\mathcal{MOR} \to \frak X$, $(x,y,\varphi) \mapsto y$
are both submersions.
(See also Remark \ref{rem214}.)
We then define a Lie groupoid  
$$
\frak G = (\mathcal{OB},\mathcal{MOR},{\rm Pr}_s,{\rm Pr}_t,\frak{comp},\frak{inv},
\mathcal{ID})$$ and a family of complex analytic spaces parametrized by  $\frak G$
as follows.
\par
We first put $\mathcal{OB} = \frak X$,
$\mathcal{MOR} = (\ref{mortal})$, 
${\rm Pr}_s(x,y,\varphi) = x$, ${\rm Pr}_t(x,y,\varphi) = y$,
$\frak{comp}((x,y,\varphi),(y,z,\psi)) = (x,z,\psi\circ\varphi)$,
$\mathcal{ID}(x) = (x,x,{\rm id})$,
$\frak{inv}(x,y,\varphi) = (y,x,\varphi^{-1})$.
It is easy to see that we obtain Lie groupoid $\frak G$ in this way.
\par 
We next define $\widetilde{\frak G}$ as follows.
We put $\widetilde{\mathcal{OB}} = \frak Y$, 
$$
\aligned
\widetilde{\mathcal{MOR}}
=
\{(\tilde x,\tilde y,\varphi) \mid \tilde x,\tilde y \in \frak Y,
\,\,
&
\varphi(\tilde x) = \tilde y, \,\text{and}\,\,\varphi : \pi^{-1}(\pi(\tilde x)) \to \pi^{-1}(\pi(\tilde y))
\\
&\text{is an isomorphism of complex analytic spaces.}\}
\endaligned
$$
${\rm Pr}_s(\tilde x,\tilde y,\varphi) = \tilde x$, ${\rm Pr}_t(\tilde x,\tilde y,\varphi) = \tilde y$,
$\frak{comp}((\tilde x,\tilde y,\varphi),
(\tilde y,\tilde z,\psi)) = (\tilde x,\tilde z,\psi\circ\varphi)$
$\mathcal{ID}(\tilde x) = (\tilde x,\tilde x,{\rm id})$,
$\frak{inv}(\tilde x,\tilde y,\varphi) = (\tilde y,\tilde x,\varphi^{-1})$.
It is easy to see that we obtain a Lie groupoid $\widetilde{\frak G}$ in this way.
\par
The map $\pi : \frak Y \to \frak X$ together with 
$(\tilde x,\tilde y,\varphi) \mapsto (\pi(\tilde x),\pi(\tilde y),\varphi)$
defines a morphism $\mathscr F : \widetilde{\frak G} \to {\frak G}$.
\par
It is easy to check that (\ref{diagram212}) is a cartesian square in this case.
\par
We call $(\widetilde{\frak G},{\frak G},\mathscr F)$ the family associated to the map 
$\pi : \frak Y \to \frak X$.
\end{cons}
\begin{rem}\label{rem214}
Note by assumption ${\rm Pr}_s^{-1}(x) \cap {\rm Pr}_t^{-1}(y) \subset 
\mathcal{MOR}$ has a structure of complex variety.
For the construction to work we  need certain 
compatibility condition for this structure with 
one on $\{\varphi \mid \varphi : \pi^{-1}(x) \to \pi^{-1}(y),
\,\,\, \text{bi-holomorphic}\}.$ We do not discuss this point here.
We will discuss this point in the situation we use 
Construction \ref{const212} during the proof of Theorem 
\ref{them35}.
(See Remark \ref{rem321333}.)
\end{rem}

The assumptions that (\ref{mortal}) is a complex manifold 
and ${\rm Pr}_s$, ${\rm Pr}_t$ are submersions, are not 
necessarily satisfied in general. Here is a counter example.
Let $\Sigma = \Sigma_2 \cup_p S^2$. In other words,  we glue 
a genus 2 Riemann surface and $S^2$ at one point $p$.
We take coordinates of a neighborhood of  $p$ in $\Sigma_2$ and 
in $S^2$ and denote them by $z$ and $w$ respectively.
We assume $w^{-1} : D^2 \to S^2$ is a holomorphic map 
which extends to a bi-holomorphic map $S^2 \to S^2$.
We smooth the node by equating $zw = \rho$ for each 
$\rho \in D^2({\epsilon})$.
In this way we obtain a $D^2({\epsilon})$ parametrized family 
of nodal curves which gives a map $\pi : \mathcal C \to D^2({\epsilon})$
such that $\pi^{-1}(0) = \Sigma$ and $\pi^{-1}(\rho)$ is 
isomorphic to $\Sigma_2$ for $\rho \ne 0$.
(This is a consequence of our choice of the coordinate $w$.)
\begin{rem}
Here and hereafter we put \index{00D22r@$D^2(r)$}
$$
D^2(r) = \{ z \in \C \mid \vert z\vert < r\}. 
$$
\end{rem}
\par
We may take $\mathcal C$ to be a complex manifold of dimension $2$.
(See Subsection \ref{unideformationexi}.)
\begin{figure}[h]
\centering
\includegraphics[scale=0.3]{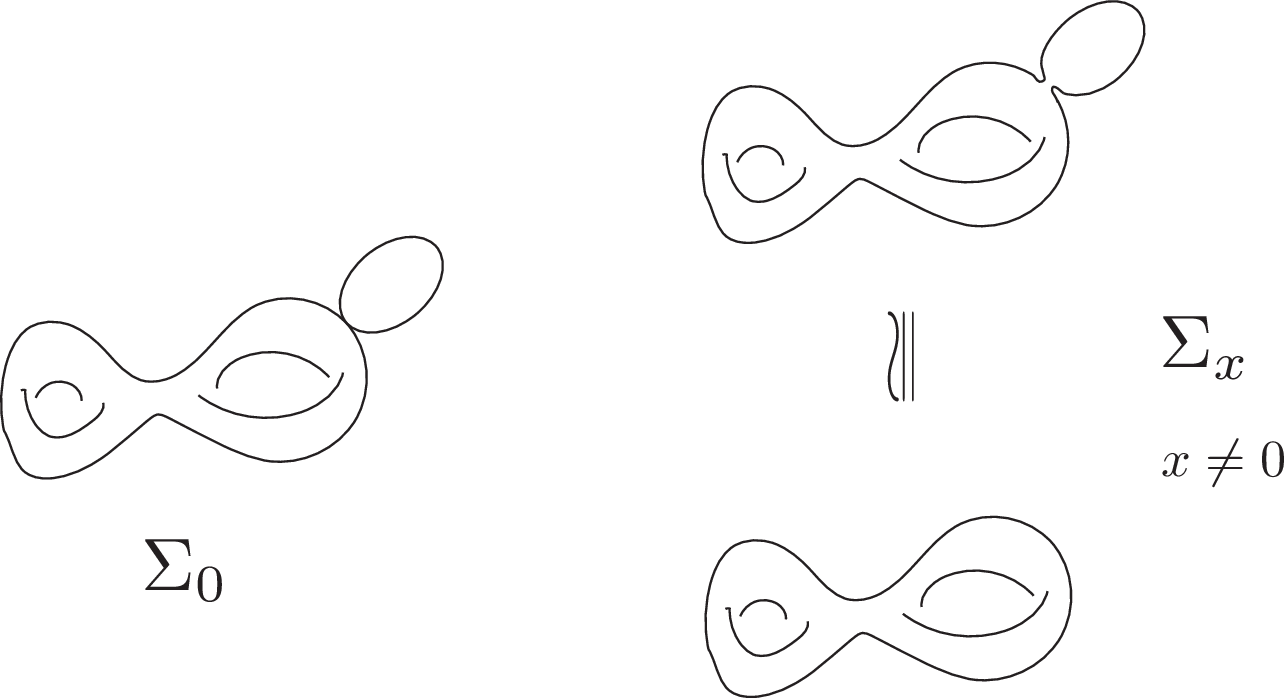}
\vskip1cm
\caption{$\Sigma_x$.}
\label{Figure1}
\end{figure}
Let up take $\frak Y = \mathcal C$ and $\frak X = D^2({\epsilon})$.
For $x \in \frak X$ we put $X_x = \pi^{-1}(x) \subset \mathcal C$.
Note:
\begin{enumerate}
\item
If $x,y \ne 0$ then there exists a unique bi-holomorphic map 
$X_x \to X_y$.
\item
If $x=y=0$ then the set of bi-holomorphic maps 
$X_x \to X_y$ is identified with the set of all affine 
transformations of $\C$, (that is, the maps of the form 
$z \to az +b$).
\item
If $x=0$, $y\ne 0$ then there exist no 
bi-holomorphic map 
$X_x \to X_y$.
\end{enumerate}
\par
For $x \in \frak X$ we consider the set of 
the pairs $(\varphi,y)$ such that $y \in \frak X$
and $\varphi : X_x \to X_y$ is a bi-holomorphic map.
(1)(2)(3) above imply that the complex dimension of 
the space of such 
pairs is $2$ if $x = 0$ and $1$ if $x\ne 0$.
Therefore in this case the map ${\rm Pr}_s$ 
can not be a submersion from a complex manifold.
\par\medskip
We will study moduli spaces of marked curves. 
So we include marking to Definition \ref{defn210}
as follows.

\begin{defn}\label{defn21022}
A {\it marked family of complex analytic spaces parametrized by  $\frak G$}, 
\index{marked family of complex analytic spaces parametrized by  $\frak G$}
is 
a triple $(\widetilde{\frak G},\mathscr F,\vec{\frak T})$,
where $(\widetilde{\frak G},\mathscr F)$ is a family of complex analytic spaces parametrized by  $\frak G$
and 
$\vec{\frak T} = (\frak T_1,\dots,\frak T_{\ell})$
such that $\frak T_i : \mathcal{OB} \to \widetilde{\mathcal{OB}}$ 
\index{00T4I@$\frak T_i$}
are holomorphic maps with the following properties.
\begin{enumerate}
\item
$\mathscr F_{ob}\circ\frak T_i = {\rm id}$.
\item
Let $\tilde\varphi \in \widetilde{\mathcal{MOR}}$
and $\tilde x = \widetilde{\rm Pr}_s(\tilde\varphi)$,
$x = \mathscr F_{ob}(\tilde x)$.
Suppose
$\tilde x = \frak T_i(x)$. 
Then 
$$
\widetilde{\rm Pr}_t(\tilde\varphi) = \frak T_i({\rm Pr}_t(\varphi)).
$$
\end{enumerate}
\end{defn}
Condition (2) is rephrased as the commutativity of the next diagram.
\begin{equation}
\begin{CD}
\mathscr F^{-1}_{ob}(x) @ >{\tilde\varphi}>> \mathscr F^{-1}_{ob}(y)
 \\
@ A{\frak T_i}AA @ AA{\frak T_i}A\\
x @ >{\varphi}>> y.
\end{CD}
\end{equation}
\begin{cons}\label{const2124}
Let $\pi : \frak Y \to \frak X$ be a proper, surjective and flat
holomorphic map between complex manifolds
and $\frak T_i : \frak X \to \frak Y$ 
holomorphic sections for $i=1,\dots,\ell$.
We put $X_x = \pi^{-1}(x)$ for $x \in X$.
We replace (\ref{mortal}) by 
\begin{equation}\label{mortal2}
\aligned
\{ (x,y,\varphi) \mid x,y \in X, &\,\,\,\varphi : X_x \to X_y,
\varphi(\frak T_i(x)) = \frak T_i(y), i=1,\dots,\ell\,\, \text{and} \\
&\text{$\varphi$ is an isomorphism of complex analytic spaces.}\}
\endaligned
\end{equation}
We define $\mathcal{MOR} = (\ref{mortal2})$.
We assume that it is a complex manifold. 
The maps ${\rm Pr}_s$, ${\rm Pr}_t$, which are defined by the same formula 
as  Construction \ref{const212}, are 
assumed to be submersions.
We then obtain $\frak G$, $\widetilde{\frak G}$ 
and $\mathscr F$ in the same way.
\par
Then together with $\frak T_i$, the pair
$(\widetilde{\frak G},\mathscr F)$ defines a 
marked family of complex analytic spaces parametrized by  $\frak G$.
\end{cons}
We next define a morphism between families of complex analytic spaces.
\begin{defn}
Let $(\widetilde{\frak G}^{(j)},\mathscr F^{(j)})$
be a family of complex analytic spaces
parametrized by ${\frak G}^{(j)}$ for $j=1,2$.
A morphism from 
$(\widetilde{\frak G}^{(1)},\mathscr F^{(1)},{\frak G}^{(1)})$
to
$(\widetilde{\frak G}^{(2)},\mathscr F^{(2)},{\frak G}^{(2)})$
is by definition a 
pair $(\widetilde{\mathcal H},\mathcal H)$ 
such that:
\begin{enumerate}
\item
$\widetilde{\mathcal H} : \widetilde{\frak G}^{(1)}
\to \widetilde{\frak G}^{(2)}$ and 
${\mathcal H} : {\frak G}^{(1)}
\to {\frak G}^{(2)}$ are morphisms such that the next diagram commutes.
\begin{equation}
\begin{CD}
\widetilde{\frak G}^{(1)} @ >{\widetilde{\mathcal H}}>> \widetilde{\frak G}^{(2)}
 \\
@ V{\mathscr F^{(1)}}VV @ VV{\mathscr F^{(2)}}V\\
{\frak G}^{(1)} @ >{\mathcal H}>> {\frak G}^{(2)}.
\end{CD}
\end{equation}
\item
The next diagram is a cartesian square.
\begin{equation}
\begin{CD}
\widetilde{\mathcal{OB}}^{(1)} @ >{\widetilde{\mathcal H}_{ob}}>> \widetilde{\mathcal{OB}}^{(2)}
 \\
@ V{\mathscr F^{(1)}_{ob}}VV @ VV{\mathscr F^{(2)}_{ob}}V\\
{\mathcal{OB}}^{(1)} @ >{\mathcal H_{ob}}>> {\mathcal{OB}}^{(2)}.
\end{CD}
\end{equation}
\end{enumerate}
Note Item (2) implies that for each $x \in {\mathcal{OB}}^{(1)}$, 
the restriction of $\widetilde{\mathcal H}_{ob}$ induces an isomorphism
$$
({\mathscr F^{(1)}_{ob}})^{-1}(x)
\cong
({\mathscr F^{(2)}_{ob}})^{-1}(\mathcal H_{ob}(x))
$$
\par
In case $(\widetilde{\frak G}^{(j)},\mathscr F^{(j)},\vec{\frak T}^{(j)})$
is a family of marked complex analytic spaces
parametrized by ${\frak G}^{(j)}$ for $j=1,2$,
a {\it morphism} between them is a pair $(\widetilde{\mathcal H},\mathcal H)$
satisfying (1)(2) and 
\begin{enumerate}
\item[(3)]
$
\widetilde{\mathcal H}_{ob}\circ \vec{\frak T}^{(1)}_i
=
\vec{\frak T}^{(2)}_i \circ {\mathcal H}_{ob}.
$
\end{enumerate}
\end{defn}
\begin{exm}
Let $(\widetilde{\frak G},\mathscr F)$
be a family of complex analytic spaces
parametrized by ${\frak G}$
and $\mathcal U$ an open set of 
$\mathcal{OB}$.
We put $\widetilde{\mathcal U}= \mathscr F_{ob}^{-1}(\mathcal U)
\subset \widetilde{\mathcal{OB}}
$.
We consider restrictions ${\frak G}\vert_{\mathcal U}$ of  
${\frak G}$ and $\widetilde{\frak G}\vert_{\widetilde{\mathcal U}}$
of   $\widetilde{\frak G}$.
\par
The restriction of $\mathscr F$ defines a morphism
$\mathscr F\vert_{\widetilde{\mathcal U}} :
\widetilde{\frak G}\vert_{\widetilde{\mathcal U}} \to 
\widetilde{\frak G}$.
\par
The pair 
$(\widetilde{\frak G}\vert_{\widetilde{\mathcal U}},
\mathscr F\vert_{\widetilde{\mathcal U}})$
becomes a family of complex analytic spaces
parametrized by ${\frak G}\vert_{\mathcal U}$.
We call it the {\it restriction} of $(\widetilde{\frak G},\mathscr F,\mathcal G)$
to $\mathcal U$.
\par
The obvious inclusion defines a morphism 
$(\widetilde{\frak G}\vert_{\widetilde{\mathcal U}},
\mathscr F\vert_{\widetilde{\mathcal U}},{\frak G}\vert_{\mathcal U})
\to 
(\widetilde{\frak G},
\mathscr F,{\frak G})$
of families of complex analytic spaces.
We call it an {\it open inclusion} of 
families of complex analytic varieties.
\par
The version with marking is similar.
\end{exm}
\begin{exm}
Let $\pi : \frak Y \to \frak X$ be a 
holomorphic map and $\frak X' \to \frak X$
a holomorphic map. We put
$\frak Y' = \frak Y \times_{\frak X} \frak X'$
and assume $\frak Y'$ is a complex manifold.
Suppose the assumptions in Construction \ref{const212}
is satisfied both for $\pi : \frak Y \to \frak X$
and $\pi' : \frak Y' \to \frak X'$.
\par
Then the morphism from the families induced by 
$\pi' : \frak Y' \to \frak X'$
to the families induced by 
$\pi : \frak Y \to \frak X$
is obtained in an obvious way.
\end{exm}
\begin{lem}\label{lem218}
Let $(\widetilde{\frak G}^{(j)},\mathscr F^{(j)})$
be a family of complex analytic spaces
parametrized by ${\frak G}^{(j)}$ for $j=1,2$,
and $(\widetilde{\mathcal H}^{(k)},\mathcal H^{(k)})$ 
a morphism from
$(\widetilde{\frak G}^{(1)},\mathscr F^{(1)},{\frak G}^{(1)})$
to
$(\widetilde{\frak G}^{(2)},\mathscr F^{(2)},{\frak G}^{(2)})$
for $k=1,2$.
\par
Suppose $\mathcal H^{(1)}$ is conjugate to $\mathcal H^{(2)}$.
Then $\widetilde{\mathcal H}^{(1)}$ is conjugate to $\widetilde{\mathcal H}^{(2)}$.
\end{lem}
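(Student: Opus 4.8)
The plan is to unwind both the hypothesis and the conclusion to the level of objects, produce the natural transformation on objects by base change, and then check the morphism-level compatibility using the cartesian squares. So let $\mathcal S : \mathcal{OB}^{(1)} \to \mathcal{MOR}^{(2)}$ be a natural transformation from $\mathcal H^{(1)}$ to $\mathcal H^{(2)}$, i.e.\ ${\rm Pr}^{(2)}_s \circ \mathcal S = \mathcal H^{(1)}_{ob}$, ${\rm Pr}^{(2)}_t \circ \mathcal S = \mathcal H^{(2)}_{ob}$, and the intertwining square of Definition of natural transformation commutes. I want to build $\widetilde{\mathcal S} : \widetilde{\mathcal{OB}}^{(1)} \to \widetilde{\mathcal{MOR}}^{(2)}$. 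The point is that each $\varphi \in \mathcal{MOR}^{(2)}$ with ${\rm Pr}^{(2)}_s(\varphi) = x$, ${\rm Pr}^{(2)}_t(\varphi) = y$ induces, via the cartesian squares \eqref{diagram212}, an isomorphism $\widetilde\varphi : ({\mathscr F^{(2)}_{ob}})^{-1}(x) \to ({\mathscr F^{(2)}_{ob}})^{-1}(y)$, and this assignment is the restriction of the universal maps, hence holomorphic in families. Concretely: $\widetilde{\mathcal{MOR}}^{(2)} \cong \mathcal{MOR}^{(2)} \times_{{\rm Pr}^{(2)}_s, \mathcal{OB}^{(2)}, \mathscr F^{(2)}_{ob}} \widetilde{\mathcal{OB}}^{(2)}$ by the second cartesian square. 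So to define $\widetilde{\mathcal S}(\tilde x)$ for $\tilde x \in \widetilde{\mathcal{OB}}^{(1)}$, set $x = \mathscr F^{(1)}_{ob}(\tilde x)$, take $\mathcal S(x) \in \mathcal{MOR}^{(2)}$, note ${\rm Pr}^{(2)}_s(\mathcal S(x)) = \mathcal H^{(1)}_{ob}(x) = \mathscr F^{(2)}_{ob}(\widetilde{\mathcal H}^{(1)}_{ob}(\tilde x))$ using the cartesian square (2) in the definition of a morphism of families, and therefore the pair $(\mathcal S(x), \widetilde{\mathcal H}^{(1)}_{ob}(\tilde x))$ is a valid point of the fiber product, i.e.\ a point $\widetilde{\mathcal S}(\tilde x) \in \widetilde{\mathcal{MOR}}^{(2)}$. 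Holomorphy of $\widetilde{\mathcal S}$ is immediate since it is assembled from $\mathscr F^{(1)}_{ob}$, $\mathcal S$, $\widetilde{\mathcal H}^{(1)}_{ob}$ and the isomorphism with the fiber product.

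Next I would verify the two conditions making $\widetilde{\mathcal S}$ a natural transformation from $\widetilde{\mathcal H}^{(1)}$ to $\widetilde{\mathcal H}^{(2)}$. Condition (1), namely $\widetilde{\rm Pr}^{(2)}_s \circ \widetilde{\mathcal S} = \widetilde{\mathcal H}^{(1)}_{ob}$ and $\widetilde{\rm Pr}^{(2)}_t \circ \widetilde{\mathcal S} = \widetilde{\mathcal H}^{(2)}_{ob}$, follows by construction for the source, and for the target it amounts to the identity $\widetilde{\mathcal S(x)}\bigl(\widetilde{\mathcal H}^{(1)}_{ob}(\tilde x)\bigr) = \widetilde{\mathcal H}^{(2)}_{ob}(\tilde x)$ of points in $({\mathscr F^{(2)}_{ob}})^{-1}(\mathcal H^{(2)}_{ob}(x))$; this is exactly the fiberwise content of the natural-transformation square for $\mathcal S$, read through the cartesian identification of fibers. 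That is, the square in the definition of natural transformation, restricted to the fiber over $\tilde x$, says that the isomorphism $\widetilde{\mathcal S(x)}$ carries the image of $\tilde x$ under $\widetilde{\mathcal H}^{(1)}_{ob}$ to its image under $\widetilde{\mathcal H}^{(2)}_{ob}$. Condition (2), the intertwining $\widetilde{\frak{comp}}(\widetilde{\mathcal S}\circ\widetilde{\rm Pr}^{(1)}_t, \widetilde{\mathcal H}^{(1)}_{mor}) = \widetilde{\frak{comp}}(\widetilde{\mathcal H}^{(2)}_{mor}, \widetilde{\mathcal S}\circ\widetilde{\rm Pr}^{(1)}_s)$, reduces after applying $\mathscr F^{(2)}_{mor}$ to the corresponding identity downstairs for $\mathcal S$, $\mathcal H^{(1)}$, $\mathcal H^{(2)}$; since $\mathscr F^{(2)}_{mor}$ is, fiberwise over morphisms, an isomorphism onto its image by the cartesian square, and since $\widetilde{\rm Pr}^{(2)}_s$ and $\widetilde{\rm Pr}^{(2)}_t$ of both sides already agree, equality upstairs follows. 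The bookkeeping that makes both sides actually lie in the relevant fiber product $\widetilde{\mathcal{MOR}}^{(2)} \,{}_{\widetilde{\rm Pr}^{(2)}_t}\!\times_{\widetilde{\rm Pr}^{(2)}_s} \widetilde{\mathcal{MOR}}^{(2)}$ is again just matching of source/target projections, which we have already arranged.

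The one place that needs genuine care — and the step I expect to be the main obstacle — is checking that $\widetilde{\mathcal S}$ really is a well-defined holomorphic map into $\widetilde{\mathcal{MOR}}^{(2)}$, i.e.\ that the induced isomorphisms $\widetilde\varphi$ on fibers fit together holomorphically and that the fiber-product description of $\widetilde{\mathcal{MOR}}^{(2)}$ is used correctly. This is exactly the content of \eqref{form213} and the discussion following Definition \ref{defn210}: the cartesian square \eqref{diagram212} says precisely that $\widetilde{\mathcal{MOR}}^{(2)} \to \mathcal{MOR}^{(2)} \times_{\mathcal{OB}^{(2)}} \widetilde{\mathcal{OB}}^{(2)}$ is an isomorphism of complex manifolds, so "assigning to $\varphi$ the isomorphism $\widetilde\varphi$" is not an extra construction but literally the inverse of this isomorphism followed by a projection; holomorphy is automatic. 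Once that identification is in hand, everything else is diagram chasing with the three commuting squares (the two cartesian squares of the families and the natural-transformation square of $\mathcal S$), and the verification of the natural-transformation axioms for $\widetilde{\mathcal S}$ becomes routine. I would organize the write-up so that this fiber-product identification is stated once at the start and then invoked; the rest is then short.
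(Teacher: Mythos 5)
Your proof is correct and follows essentially the same route as the paper's: both construct $\widetilde{\mathcal S}(\tilde x)$ as the unique lift of $\mathcal S(\mathscr F^{(1)}_{ob}(\tilde x))$ with prescribed source $\widetilde{\mathcal H}^{(1)}_{ob}(\tilde x)$, using exactly the cartesian square (\ref{diagram212}) (equivalently, your fiber-product identification $\widetilde{\mathcal{MOR}}^{(2)} \cong \mathcal{MOR}^{(2)} \times_{\mathcal{OB}^{(2)}} \widetilde{\mathcal{OB}}^{(2)}$), and then check the natural-transformation axioms by diagram chasing in the same squares. If anything, your write-up is a touch more careful with the indices (the paper writes $\widetilde{\rm Pr}_s(\widetilde{\mathcal T}(\tilde x))=\tilde x$ where it should be $\widetilde{\mathcal H}^{(1)}_{ob}(\tilde x)$), but the underlying argument is identical.
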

\begin{proof}
Let $\mathcal T$ be a natural transformation from 
$\mathcal H^{(1)}$ to $\mathcal H^{(2)}$.
\par
Let $\tilde x \in \widetilde{\mathcal{OB}}$.
We put $x = \mathscr F_{ob}(\tilde x)$ and 
$y = {\rm Pr}_t(\mathcal T(x))$
Then $\mathcal T(x)$
induces
$$
\mathcal T(x) : \mathscr F_{ob}^{-1}(x) \to \mathscr F_{ob}^{-1}(y).
$$
by (\ref{form213}).
We put
$$
\tilde y = \mathcal T(x)(\tilde x).
$$
Using the cartesian square (\ref{diagram212})
there exists a unique $\widetilde{\mathcal T}(\tilde x)
\in \widetilde{\mathcal{MOR}}$ such that
$$
\mathscr F_{mor}(\widetilde{\mathcal T}(\tilde x))
= \mathcal T(x) \qquad
\widetilde{{\rm Pr}_s}(\widetilde{\mathcal T}(\tilde x)) = \tilde x, \qquad
\widetilde{{\rm Pr}_t}(\widetilde{\mathcal T}(\tilde x)) = \tilde y.
$$
Using the cartesian square (\ref{diagram212}) again 
it is easy to check that $\widetilde{\mathcal T}$ is a 
natural transformation from $\widetilde{\mathcal H}^{(1)}$ to 
$\widetilde{\mathcal H}^{(2)}$.
\end{proof}
\begin{defn}
We say $(\widetilde{\frak G}^{(1)},\mathscr F^{(1)})$
is conjugate to $(\widetilde{\frak G}^{(2)},\mathscr F^{(2)})$
if the assumption of Lemma \ref{lem218} is satisfied.
\end{defn}
\par
Our main interest in this paper is local theory.
We define the next notion for this purpose.
\begin{defn}\label{def218}
Let $(X,\vec z)$ be a pair of complex analytic space $X$ 
and an $\ell$-tuple of mutually distinct smooth points $\vec z = (z_1,\dots,z_{\ell})$.
A {\it deformation} \index{deformation} 
of $(X,\vec z)$ is by definition 
an object $(\widetilde{\frak G},
\mathscr F,{\frak G},\vec{\frak T},o,\iota)$
with the following properties.
\begin{enumerate}
\item
The triple $(\widetilde{\frak G},
\mathscr F,\vec{\frak T})$ is a marked family of 
complex variety 
parametrized by ${\frak G}$.
\item $o\in \mathcal{OB}$.
\item
$\iota : X \to \mathscr F_{ob}^{-1}(o)$ is a bi-holomorphic 
map.
\item
$\frak T_i(o) = \iota(z_i)$.
\end{enumerate}
\par
Let $\mathcal G^{(j)} = (\widetilde{\frak G}^{(j)},
\mathscr F^{(j)},{\frak G}^{(j)},\vec{\frak T}^{(j)},o^{(j)},\iota^{(j)})$
be a deformation of $(X,\vec z)$ for $j=1,2$.
A {\it strict morphism} from $\mathcal G^{(1)}$  to $\mathcal G^{(2)}$ 
is a morphism from  $(\widetilde{\mathcal H},\mathcal H)$  
from $\mathscr F^{(1)},{\frak G}^{(1)},\vec{\frak T}^{(1)}$
to 
$\mathscr F^{(2)},{\frak G}^{(2)},\vec{\frak T}^{(2)}$
such that:
\begin{enumerate}
\item[(i)]
$
{\frak T}^{(2)}_i \circ \mathcal H
= \widetilde{\mathcal H} \circ {\frak T}^{(1)}_i.
$
\item[(ii)]
$o^{(2)} = \mathcal H_{ob}(o^{(1)})$.
\item[(iii)]
$
\widetilde{\mathcal H}_{ob} \circ \iota^{(1)} = \iota^{(2)}.
$ 
\end{enumerate}
Let $\mathcal G = (\widetilde{\frak G},
\mathscr F,{\frak G},\vec{\frak T},o,\iota)$
be a deformation of $(X,\vec z)$
and $\mathcal U$ is an open neighboorhood
$o$ in $\mathcal{OB}$.
We define the restriction of $\mathcal G$ 
to $\mathcal U$ in an obvious way and 
denote it by $\mathcal G\vert_{\mathcal U}$.
\par
A {\it morphism} from $\mathcal G^{(1)}$
to $\mathcal G^{(2)}$ 
is a strict morphism from $\mathcal G^{(1)}\vert_{\mathcal U}$
to $\mathcal G^{(2)}$ for a certain open neighboorhood
$o$ in $\mathcal{OB}^{(1)}$.
\par
Two morphisms are said to {\it coincide as germs} 
if they coincides after further restricting to a smaller neighborhood 
of  $o$ in $\mathcal{OB}^{(1)}$.
\par
We can compose two strict morphisms or two morphisms in an obvious way.
There is an identity morphism from  $\mathcal G$ to itself.
\par
A morphism $\mathscr H = (\widetilde{\mathcal H},\mathcal H;\mathcal U)$ from $\mathcal G^{(1)}$
to $\mathcal G^{(2)}$ is said to be an {\it isomorphism},
if there exists a 
morphism  $\mathscr H' = (\widetilde{\mathcal H}',\mathcal H';\mathcal U')$
from $\mathcal G^{(1)}$ to $\mathcal G^{(2)}$ 
such that the compositions $\mathscr H' \circ \mathscr H$
and $\mathscr H \circ \mathscr H'$ coincides with the identity morphisms 
as germs.
\par
A {\it germ of deformation} of $(X,\vec z)$ is an isomorphism class 
with respect to the isomorphism defined above.
\end{defn}
\begin{defn}
Let $\mathcal G^{(j)} = (\widetilde{\frak G}^{(j)},
\mathscr F^{(j)},{\frak G}^{(j)},\vec{\frak T}^{(j)},o^{(j)},\iota^{(j)})$
be a deformation of $(X,\vec z)$ for $j=1,2$.
Two strict morphisms $(\widetilde{\mathcal H},\mathcal H)$,
$(\widetilde{\mathcal H}',\mathcal H')$
from $\mathcal G^{(1)}$ to $\mathcal G^{(2)}$ 
are said to be {\it conjugate} if there exists a pair of 
natural transformation $(\widetilde{\mathcal T},\mathcal T)$ from $(\widetilde{\mathcal H},\mathcal H)$
as in Lemma \ref{lem218} such that:
\begin{enumerate}
\item 
${\frak T}^{(2)}_i \circ \mathcal T = \widetilde{\mathcal T} \circ {\frak T}^{(2)}_i$.
\item
$\mathcal T_{ob}(o^{(1)}) = o^{(2)}$.
\item
$\widetilde{\mathcal T}_{ob} \circ \iota^{(1)} = \iota^{(2)}$.
\end{enumerate}
\par
Two morphisms from $\mathcal G^{(1)}$ to $\mathcal G^{(2)}$ 
are said to be conjugate if they are conjugate as strict morphisms 
after restricting to a certain open neighborhood of $o^{(1)}$.
\end{defn}

\section{Universal deformations of unstable marked curves}
\label{sec:universal}

In this section we specialize to the case of family of 
one dimensional complex varieties and show existence and 
uniqueness of a universal family for certain class of deformations.

\subsection{Universal deformation and its uniqueness}
\label{unideformation}

Let $\pi : \frak Y \to \frak  X$ be a holomorphic map
and $x \in \frak X$. 
We put $\Sigma_x = \pi^{-1}(x)$.

\begin{defn}
We say $\pi : \frak Y \to \frak  X$ is a {\it nodal family} and $\Sigma_x$ is a {\it nodal curve} \index{nodal curve} if
for each $y \in \Sigma_x$ one of the following holds.
\begin{enumerate}
\item
$D_y\pi : T_y\frak Y \to T_x\frak X$ is surjective.
$\dim_{\C}{\rm Ker}\,D_x\pi =1$.
\item
Let $\mathscr I_x$ be the ideal of germs of holomorphic functions
on $X$ at $x$ which vanish at $x$.
Then we have
$$
\frac{\mathcal O_y}{\pi^* \mathscr I_x} = \frac{\C\{z,w\}}{(zw)}.
$$ 
Here $\mathcal O_y$ is the ring of germs of holomorphic functions 
of $\frak Y$ at $y$. The ring $\C\{z,w\}$ is the convergent power
series ring of two variables. 
\end{enumerate}
We say $y$ is a {\it regular point} \index{regular point} if Item (1) happens 
and $y$ is a {\it nodal point} \index{nodal point} if Item (2) happens.
\end{defn}
\begin{defn}
Let $(\widetilde{\frak G},\mathscr F)$
be a family of complex analytic varieties
parametrized by ${\frak G}$.
We say that 
$(\widetilde{\frak G},\mathscr F,{\frak G})$
is a {\it family of nodal curves} \index{family of nodal curves}
if  $\mathscr F_{ob} : \widetilde{\mathcal{OB}}
\to \mathcal{OB}$ is a nodal family.
\par
A marked family 
$(\widetilde{\frak G},\mathscr F,\vec{\frak T})$
of complex analytic spaces parametrized by  $\frak G$ is 
said to be a {\it family of marked nodal curves}\index{family of marked nodal curves}
if $(\widetilde{\frak G},\mathscr F,{\frak G})$ is a family of nodal curves 
and the following holds.
\begin{enumerate}
\item
For any $x \in \mathcal{OB}$ the point $\frak T_i(x) 
\in \mathscr F_{ob}^{-1}(x)$ is a regular point
of $\mathscr F_{ob}^{-1}(x)$.
\item
If $i\ne j$ and $x \in \mathcal{OB}$,
then $\frak T_i(x)\ne \frak T_j(x)$.
\end{enumerate}
\end{defn}
\begin{defn}
Let $(\widetilde{\frak G},\mathscr F)$
be a family of complex analytic spaces
parametrized by ${\frak G}$.
We say that $(\widetilde{\frak G},\mathscr F,{\frak G})$
is {\it minimal at $o$} \index{minimal at $o$}
if the following holds.
\par
If $\varphi \in \mathcal{MOR}$ with ${\rm Pr}_s(\varphi) = o$
then ${\rm Pr}_t(\varphi) = o$.
\end{defn}
\begin{defn}\label{defn34}
Let $(\Sigma,\vec z)$ be a marked nodal curve
and $\mathcal G = (\widetilde{\frak G},
\mathscr F,{\frak G},\vec{\frak T},o,\iota)$
a deformation of $(\Sigma,\vec z)$.
We say that $\mathcal G$ is a 
{\it universal deformation} \index{universal deformation}
of $(\Sigma,\vec z)$
if the following holds.
\begin{enumerate}
\item
$(\widetilde{\frak G},
\mathscr F,{\frak G},\vec{\frak T})$
is a family of nodal curves and is minimal at $o$.
\item
For any deformation $\mathcal G' = (\widetilde{\frak G'},
\mathscr F',{\frak G}',\vec{\frak T}',o',\iota')$
of $(\Sigma,\vec z)$ such that $(\widetilde{\frak G}',
\mathscr F',{\frak G}',\vec{\frak T}')$
is a family of nodal curves,
there exists a morphism 
$(\widetilde{\mathcal H},\mathcal H)$ 
(Definition \ref{def218}) from 
$\mathcal G'$ to $\mathcal G$.
\item
In the situation of Item (2) if 
$(\widetilde{\mathcal H}',\mathcal H')$
is another morphism then $\mathcal H'$ is conjugate 
to $\mathcal H$.
\end{enumerate}
\end{defn}
The main result of this section is the following.
\begin{thm}\label{them35}
For any marked nodal curve $(\Sigma,\vec z)$
there exists its universal deformation 
$\mathcal G = (\widetilde{\frak G},
\mathscr F,{\frak G},\vec{\frak T},o,\iota)$.
\par
If  
$\mathcal G^{(j)} = (\widetilde{\frak G}^{(j)},
\mathscr F^{(j)},{\frak G}^{(j)},\vec{\frak T}^{(j)},o^{(j)},\iota^{(j)})$,
$j=1,2$ are both universal deformations of $(\Sigma,\vec z)$
then they are isomorphic as germs in the sense of Definition \ref{def218}.
\end{thm}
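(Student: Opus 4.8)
The plan is to construct the universal deformation explicitly by the following two-step procedure: first produce the classical "deformation of the complex structure with nodes smoothed" family for the underlying curve $\Sigma$ — which in the stable case is the familiar Kuranishi family — and then enlarge it by a groupoid of reparametrizations coming from the automorphism pseudogroup of the fibers so as to absorb the positive-dimensional ambiguity present when $(\Sigma,\vec z)$ is unstable. Concretely, I would pick a sufficiently fine triangulation / charts of $\Sigma$, smooth each node $zw=0$ by the parameter $zw=\rho_i$, and add parameters for deforming the complex structure on the thick part; this gives a proper holomorphic map $\pi\colon \mathfrak Y\to\mathfrak X$ with $\pi^{-1}(o)=\Sigma$, together with sections $\frak T_i$ through the marked points, exactly as in Construction~\ref{const2124}. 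The key point, proved in Subsection~\ref{unideformationexi}, is that with a careful choice of the chart data the space \eqref{mortal2} of fiberwise marked isomorphisms $(x,y,\varphi)$ is a complex manifold and the two projections to $\mathfrak X$ are submersions; one then lets $\frak G$ be the associated groupoid and $\widetilde{\frak G}$ its pullback to $\mathfrak Y$, giving the family $(\widetilde{\frak G},\mathscr F,\frak G,\vec{\frak T},o,\iota)$. Minimality at $o$ — if $\mathrm{Pr}_s(\varphi)=o$ then $\mathrm{Pr}_t(\varphi)=o$ — is arranged because an isomorphism $\Sigma\to\Sigma_y$ forces $\Sigma_y$ to have the same dual graph and hence $y$ to lie in the "most degenerate" stratum, which after shrinking $\mathfrak X$ is just $\{o\}$.

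\emph{Existence of the classifying morphism (Definition~\ref{defn34}(2))}: given another nodal deformation $\mathcal G'$, I would use the local structure of a family of nodal curves near $\Sigma$: over a small neighborhood of $o'$ the fibers $\Sigma'_{x'}$ differ from $\Sigma$ only by smoothing some nodes and deforming the complex structure slightly, so by the standard theory there is a holomorphic map $\mathcal H_{ob}\colon \mathcal U'\to \mathfrak X$ classifying the underlying family of curves together with the marked sections, together with a fiberwise biholomorphism covering it. This produces $\widetilde{\mathcal H}_{ob}\colon \widetilde{\mathcal{OB}}{}'\to \mathfrak Y$ with the cartesian-square property; extending it to morphisms requires sending a morphism $\varphi'\in\mathcal{MOR}'$ (a fiberwise isomorphism $\Sigma'_{x'}\to\Sigma'_{y'}$) to the induced isomorphism $\Sigma_{\mathcal H_{ob}(x')}\to\Sigma_{\mathcal H_{ob}(y')}$, i.e. to the triple $(\mathcal H_{ob}(x'),\mathcal H_{ob}(y'),\overline{\varphi'})\in$\eqref{mortal2}. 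Compatibility with $\frak{comp},\frak{inv},\mathcal{ID}$ and with the markings is then formal.

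\emph{Uniqueness up to conjugacy (Definition~\ref{defn34}(3)) and uniqueness of the universal family}: two classifying morphisms $(\widetilde{\mathcal H},\mathcal H)$ and $(\widetilde{\mathcal H}',\mathcal H')$ induce, for each $x'$, two biholomorphisms $\Sigma'_{x'}\xrightarrow{\sim}\Sigma_{\mathcal H_{ob}(x')}$ and $\Sigma'_{x'}\xrightarrow{\sim}\Sigma_{\mathcal H'_{ob}(x')}$; composing gives a fiberwise marked isomorphism $\Sigma_{\mathcal H_{ob}(x')}\xrightarrow{\sim}\Sigma_{\mathcal H'_{ob}(x')}$, i.e. an element $\mathcal T(x')\in\mathcal{MOR}$ with $\mathrm{Pr}_s\circ\mathcal T=\mathcal H_{ob}$, $\mathrm{Pr}_t\circ\mathcal T=\mathcal H'_{ob}$, and the naturality square in the definition of natural transformation is precisely the statement that these identifications are compatible over morphisms of $\mathcal G'$; holomorphicity of $\mathcal T$ follows from that of everything in sight together with the manifold structure on \eqref{mortal2}. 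Hence $\mathcal H$ is conjugate to $\mathcal H'$. For the final uniqueness assertion: given two universal deformations $\mathcal G^{(1)},\mathcal G^{(2)}$, apply (2) both ways to get morphisms $\mathcal G^{(1)}\to\mathcal G^{(2)}$ and $\mathcal G^{(2)}\to\mathcal G^{(1)}$; by (3) their composites are conjugate to the identity morphisms, so by Lemma~\ref{lem28} they are of the form $\frak{conj}^{\mathcal T}$, and by Lemma~\ref{lem218} the object-level conjugacies lift to the total spaces. After restricting to suitable neighborhoods $\mathcal U^{(j)}$ of $o^{(j)}$ (using minimality to control where the morphisms land and the cartesian-square conditions to adjust by the $\frak{conj}$'s), one obtains an honest isomorphism of deformations in the sense of Definition~\ref{def218}.

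\textbf{The hard part} will be the existence claim in Subsection~\ref{unideformationexi}: namely showing that the chart/triangulation data can be chosen so that the space \eqref{mortal2} of fiberwise marked isomorphisms is genuinely a smooth complex manifold with the two projections submersive. This is exactly the point where the counterexample before Remark~2.13 (the genus-$2$ curve with an $S^2$ tail whose gluing coordinate extends to a global automorphism) must be excluded — so one needs a careful choice of coordinates near each node and each unstable component, essentially encoding "the reparametrization freedom is governed by the automorphism group of $(\Sigma,\vec z)$ and nothing more," and then a deformation-theoretic computation (à la \cite[Sections 13 and 15]{FO}) showing that the resulting map of manifolds has surjective differential. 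Everything after that — the groupoid axioms, the cartesian squares, the classifying morphism, conjugacy — is bookkeeping that propagates formally from the cartesian-square definitions and Lemmata~\ref{lem28}, \ref{lem218}.
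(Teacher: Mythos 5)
Your overall architecture matches the paper's: build $\pi\colon\mathcal C\to\mathcal V$ by gluing and deforming the thick part, apply Construction~\ref{const2124} to get the groupoid, reduce everything to showing the space~\eqref{mortal2} is a manifold with submersive projections, and then run the universality and conjugacy bookkeeping via Lemmata~\ref{lem28} and~\ref{lem218}. The uniqueness half (your third paragraph) is the same argument as the paper's, including the observation that the composition of the two classifying morphisms is conjugate to the identity and hence, by minimality and Lemma~\ref{lem37}, an isomorphism near $o$.

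The gap is in the existence half, and you flag it yourself: you defer the proof that~\eqref{mortal2} is a manifold with submersive projections to ``a careful choice of coordinates\ldots and a deformation-theoretic computation showing that the resulting map has surjective differential.'' That is not the argument the paper runs, and I don't see how a direct surjectivity-of-the-differential computation would go, since there is no ambient family with respect to which one could linearize. The missing idea is the \emph{minimal stabilization} trick: choose finitely many additional points $\vec w$ on the unstable components so that $(\Sigma,\vec z\cup\vec w)$ is stable (Definition~3.14 in the paper), observe that the family $\mathcal C\to\mathcal V$ together with the sections $\frak t_j$ and $\mathcal S_j$ is then the \emph{classical} universal family of the stable curve (Lemma~\ref{lem315}), and build a chart of $\mathcal{MOR}$ near a point $(\varphi,o,o)$ as $U\times\mathcal V'$, where $U$ is an open neighborhood of $\varphi$ in $\operatorname{Aut}(\Sigma,\vec z)$, by applying universality of the stabilized family to the family over $U\times\mathcal V$ whose sections at $\psi\in U$ pass through $\psi(w_j)$. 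Surjectivity of the chart (Lemma~\ref{lam317}) then comes from the fact that $\psi\mapsto(\psi(w_j))_j$ is a local diffeomorphism onto its image, not from a Fredholm-type computation.

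The same omission affects your construction of the classifying morphism in the second paragraph. For an \emph{unstable} $(\Sigma,\vec z)$ there is no ``standard theory'' producing a classifying map $\mathcal H_{ob}\colon\mathcal U'\to\mathfrak X$: one must first extend the stabilizing sections $\mathcal S'_j\colon\mathcal{OB}'\to\widetilde{\mathcal{OB}}{}'$ over a neighborhood of $o'$ (possible since these are sections through non-nodal points of the central fiber), thereby turning $\mathcal G'$ into a family of $(k+\ell)$-pointed \emph{stable} curves, and only then invoke the classical universal property of $\mathcal C\to\mathcal V$ from Lemma~\ref{lem315}. Without making this step explicit, the assertion ``there is a holomorphic map $\mathcal H_{ob}$ classifying the underlying family'' is unjustified in the unstable case, which is precisely the case the theorem is about.
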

\begin{rem}
If $(\Sigma,\vec z)$ is marked stable curve, \index{stable curve} that is, the group of its automorphisms 
is a finite group, the universal deformation 
$\mathcal G = (\widetilde{\frak G},
\mathscr F,{\frak G},\vec{\frak T},o,\iota)$ is \'etale. Namely
${\rm Pr}_s : \mathcal{MOR} \to \mathcal{OB}$ is a local diffeomorphism.
Theorem \ref{them35} in this case follows from the classical result that the moduli space 
of marked stable curve is an orbifold.
(In some case this orbifold is not effective.)
Orbifold is a classical and well-established notion in differential geometry
\cite{satake}. The fact that orbifold  can be studied using the 
language of  \'etale groupoid is also classical \cite{he}.
\par
In the case when $(\Sigma,\vec z)$ is not stable, 
$\dim \mathcal{MOR} > \dim \mathcal{OB}$ and so ${\frak G}$ is not \'etale.
Therefore using the language of Lie groupoid is more important in this case
than the case of orbifold.
\par
It seems unlikely that there is a literature which proves a similar 
result as Theorem \ref{them35} by the method of differential geometry.
Something equivalent to Theorem \ref{them35} 
is known in algebraic geometry using the terminology of Artin Stack
(\cite{artin}).
See \cite[Chapter V 3.2.1 and 5.5.3]{manin}.
For our purpose of proving Theorem \ref{thm54}, differential geometric 
approach is important. So we provide a detailed proof of 
Theorem \ref{them35} below.
\end{rem}
\begin{proof}
In this subsection we prove the uniqueness.
The existence will be proved in the next subsection.
\par
Suppose 
$\mathcal G^{(j)} = (\widetilde{\frak G}^{(j)},
\mathscr F^{(j)},{\frak G}^{(j)},\vec{\frak T}^{(j)},o^{(j)},\iota^{(j)})$,
$j=1,2$ are both universal deformations of $(\Sigma,\vec z)$.
Then by Definition \ref{defn34} (2), there exists a morphism 
$(\widetilde{\mathcal H},\mathcal H)$ from 
$\mathcal G^{(1)}$ to $\mathcal G^{(2)}$
and also 
a morphism 
$(\widetilde{\mathcal H}',\mathcal H')$ from 
$\mathcal G^{(2)}$ to $\mathcal G^{(1)}$.
\par
The composition 
$(\widetilde{\mathcal H}',\mathcal H')
\circ (\widetilde{\mathcal H},\mathcal H)$
is a morphism from $\mathcal G^{(1)}$
to itself.
By Definition \ref{defn34} Item (3)
it is conjugate to the identity morphism.
\begin{lem}\label{lem37}
A morphism from $(\widetilde{\frak G},
\mathscr F,{\frak G})$
to itself which is conjugate to the identity 
morphism is necessarily an isomorphism in a neighborhood of $o$, 
if $(\widetilde{\frak G},
\mathscr F,{\frak G})$ is minimal at $o$.
\end{lem}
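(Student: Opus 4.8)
The plan is to put the underlying groupoid morphism in the normal form supplied by Lemma \ref{lem28}, reduce the whole assertion to the single statement that its object part is a local biholomorphism at $o$ (the only place minimality is used), and then carry the conclusion over to the total family by cartesian-square bookkeeping.

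First, write the underlying morphism $\mathcal H:\frak G\to\frak G$ as $\frak{conj}^{\mathcal T}$ for some holomorphic $\mathcal T:\mathcal{OB}\to\mathcal{MOR}$ with ${\rm Pr}_t\circ\mathcal T={\rm id}$, using Lemma \ref{lem28} and Lemma-Definition \ref{lemdef25}. Then $\mathcal H_{ob}(o)={\rm Pr}_s(\mathcal T(o))=o$: since ${\rm Pr}_t(\mathcal T(o))=o$, the morphism $\frak{inv}(\mathcal T(o))$ has source $o$, so minimality at $o$ forces its target, namely ${\rm Pr}_s(\mathcal T(o))$, to equal $o$. Moreover $\mathcal T(o)\in{\rm Pr}_s^{-1}(o)\subset{\rm Pr}_t^{-1}(o)$ by minimality, and near $\mathcal T(o)$ both fibres are submanifolds of the same dimension $\dim\mathcal{MOR}-\dim\mathcal{OB}$ because ${\rm Pr}_s$ and ${\rm Pr}_t$ are submersions; hence ${\rm Pr}_s^{-1}(o)$ is open in ${\rm Pr}_t^{-1}(o)$ around $\mathcal T(o)$, and therefore
$$
\ker D_{\mathcal T(o)}{\rm Pr}_s=\ker D_{\mathcal T(o)}{\rm Pr}_t .
$$

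The key step is that $\mathcal H_{ob}$ is a local biholomorphism at $o$. From $D_o\mathcal H_{ob}=D_{\mathcal T(o)}{\rm Pr}_s\circ D_o\mathcal T$ and the displayed equality: if $D_o\mathcal H_{ob}(w)=0$ then $D_o\mathcal T(w)\in\ker D_{\mathcal T(o)}{\rm Pr}_s=\ker D_{\mathcal T(o)}{\rm Pr}_t$, so $w=D_{\mathcal T(o)}{\rm Pr}_t(D_o\mathcal T(w))=0$ using ${\rm Pr}_t\circ\mathcal T={\rm id}$. Thus $D_o\mathcal H_{ob}$ is injective, hence an isomorphism, and $\mathcal H_{ob}$ restricts to a biholomorphism $\mathcal U\to\mathcal U':=\mathcal H_{ob}(\mathcal U)$ for some open $\mathcal U\ni o$, with $o\in\mathcal U'$. (Without minimality $\frak{conj}^{\mathcal T}$ need not be an isomorphism, as one already sees for the action groupoids of Example \ref{exaGaction}, so this is where the content of the lemma lies.) Once this is known, the inverse of $\mathcal H_{mor}$ over $\mathcal U'$ is forced: for $\psi\in\mathcal{MOR}$ with ${\rm Pr}_s(\psi),{\rm Pr}_t(\psi)\in\mathcal U'$ set $x=\mathcal H_{ob}^{-1}({\rm Pr}_s\psi)$, $y=\mathcal H_{ob}^{-1}({\rm Pr}_t\psi)$, and $\Phi(\psi)=\mathcal T(y)\circ\psi\circ\frak{inv}(\mathcal T(x))$; these compositions are legitimate because ${\rm Pr}_s(\mathcal T(x))=\mathcal H_{ob}(x)={\rm Pr}_s(\psi)$ and ${\rm Pr}_s(\mathcal T(y))=\mathcal H_{ob}(y)={\rm Pr}_t(\psi)$. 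A direct check with the identity and inverse axioms gives $\mathcal H_{mor}\circ\Phi={\rm id}$ and $\Phi\circ\mathcal H_{mor}={\rm id}$ on the morphism spaces of $\frak G\vert_{\mathcal U'}$ and $\frak G\vert_{\mathcal U}$ respectively, $\Phi$ is holomorphic, and ${\rm Pr}_s(\Phi(\psi)),{\rm Pr}_t(\Phi(\psi))\in\mathcal U$. Since $\mathcal H$ intertwines all structure maps, $\mathcal H$ restricts to an isomorphism of Lie groupoids $\frak G\vert_{\mathcal U}\to\frak G\vert_{\mathcal U'}$.

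Finally, to treat $\widetilde{\mathcal H}$ put $\widetilde{\mathcal U}=\mathscr F_{ob}^{-1}(\mathcal U)$ and $\widetilde{\mathcal U'}=\mathscr F_{ob}^{-1}(\mathcal U')$. The cartesian square in part (2) of the definition of a morphism of families makes $\widetilde{\mathcal H}_{ob}\vert_{\widetilde{\mathcal U}}$ the base change of the biholomorphism $\mathcal H_{ob}\vert_{\mathcal U}$, hence a biholomorphism onto $\widetilde{\mathcal U'}$; and the cartesian squares (\ref{diagram212}) exhibit $\widetilde{\mathcal{MOR}}$ as the fibre product of $\widetilde{\mathcal{OB}}$ and $\mathcal{MOR}$ over $\mathcal{OB}$, so $\widetilde{\mathcal H}_{mor}$ is the fibre product of $\widetilde{\mathcal H}_{ob}$ and $\mathcal H_{mor}$ over $\mathcal H_{ob}$ and, all three being biholomorphisms on the relevant restrictions, is itself a biholomorphism on the restricted morphism spaces. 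Hence $(\widetilde{\mathcal H},\mathcal H)$, restricted over $\widetilde{\mathcal U}$, $\mathcal U$, is an isomorphism of families onto its restriction over $\widetilde{\mathcal U'}$, $\mathcal U'$, which is the assertion. The main obstacle is precisely the differential computation that $\mathcal H_{ob}$ is a local biholomorphism at $o$: this is the only non-formal point, and it is what fails in the absence of minimality. Everything afterwards is forced — the inverse of $\mathcal H_{mor}$ is dictated by the groupoid axioms, and the passage to $\widetilde{\mathcal H}$ is cartesian-square bookkeeping.
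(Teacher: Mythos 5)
Your proof is correct, and the heart of it — using Lemma \ref{lem28} to write $\mathcal H=\frak{conj}^{\mathcal T}$, reducing to the claim that $\mathcal H_{ob}$ is a local biholomorphism at $o$, and noting that this is the only place minimality enters — is exactly what the paper does. The differential computation is packaged slightly differently: the paper trivializes ${\rm Pr}_s$ near $\mathcal T(o)$ by the implicit function theorem, writes $\mathcal T$ in coordinates, and observes that minimality forces the derivative of ${\rm Pr}_t$ in the fiber direction to vanish (so the base direction must be invertible since ${\rm Pr}_t$ is a submersion); you instead note that minimality gives ${\rm Pr}_s^{-1}(o)\subset{\rm Pr}_t^{-1}(o)$ with equal fiber dimensions, hence $\ker D_{\mathcal T(o)}{\rm Pr}_s=\ker D_{\mathcal T(o)}{\rm Pr}_t$, and then kill the kernel of $D_o\mathcal H_{ob}$ directly using ${\rm Pr}_t\circ\mathcal T={\rm id}$. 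These are the same idea stated dually; yours avoids choosing a product chart, which is a mild simplification.

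Where you genuinely depart from the paper is in handling $\widetilde{\mathcal H}$. The paper invokes Lemma \ref{lem218} to get that $\widetilde{\mathcal H}$ is conjugate to the identity and then says to run the same argument again, which tacitly requires a minimality-type condition on $\widetilde{\frak G}$ at some point over $o$; you instead exploit the cartesian squares defining a morphism of families and (\ref{diagram212}) to base-change the biholomorphism $\mathcal H_{ob}\vert_{\mathcal U}$ and then realize $\widetilde{\mathcal H}_{mor}$ as a fiber product of biholomorphisms. This is cleaner and does not lean on any property of $\widetilde{\frak G}$ beyond the cartesian structure already postulated in Definitions 2.10 and 2.16, so it is the more airtight way to finish. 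Your explicit formula $\Phi(\psi)=\mathcal T(y)\circ\psi\circ\frak{inv}(\mathcal T(x))$ for the inverse of $\mathcal H_{mor}$ is also spelled out where the paper only says ``it is easy to see''; that is worth having written down.

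One small remark, not a flaw in your argument: the paper's formula $\mathcal H_{ob}(x)={\rm Pr}_t(\mathcal T(x))$ in the proof of Sublemma \ref{sublem38} is inconsistent with the convention of Lemma-Definition \ref{lemdef25} (which gives $\frak{conj}^{\mathcal T}_{ob}={\rm Pr}_s\circ\mathcal T$ and ${\rm Pr}_t\circ\mathcal T={\rm id}$); you have resolved this in favor of the convention of Lemma-Definition \ref{lemdef25} and used it consistently throughout, which is the right call.
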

Postponing the proof of the lemma we continue the 
proof.
\par
By the lemma we replace 
$(\widetilde{\mathcal H}',\mathcal H')$ if 
necessary and may assume that
$(\widetilde{\mathcal H}',\mathcal H')
\circ (\widetilde{\mathcal H},\mathcal H)
= {\rm id}$.
\par
By the same argument 
the composition 
$(\widetilde{\mathcal H},\mathcal H)
\circ (\widetilde{\mathcal H}',\mathcal H')$
is an isomorphism.
We may replace 
$(\widetilde{\mathcal H}',\mathcal H')$
by 
$(\widetilde{\mathcal H}'',\mathcal H'')$
and find that
$(\widetilde{\mathcal H},\mathcal H)
\circ (\widetilde{\mathcal H}'',\mathcal H'')
= {\rm id}$.
\par
Then by a standard argument 
$(\widetilde{\mathcal H}',\mathcal H')
=(\widetilde{\mathcal H}'',\mathcal H'').$
\par
Thus to complete the proof of uniqueness 
it remains to prove the lemma.
\end{proof}
\begin{proof}[Proof of Lemma \ref{lem37}]
Let $(\widetilde{\mathcal H},\mathcal H)$ 
be a morphism from $(\widetilde{\frak G},
\mathscr F,{\frak G})$
to itself, which is conjugate to the identity.
\par
By Lemma \ref{lem28} there exists 
$\mathcal T : \mathcal{OB} \to \mathcal{MOR}$
such that $\mathcal H = \frak{conj}^{\mathcal T}$.
\begin{sublem}\label{sublem38}
The map $\mathcal H_{ob} : \mathcal{OB} \to \mathcal{OB}$
is a diffeomorphism on a neighborhood of $o$.
\end{sublem}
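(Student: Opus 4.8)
The plan is to show that the differential $D_o\mathcal{H}_{ob}$ is a linear isomorphism and then invoke the holomorphic inverse function theorem (recall we work in the complex analytic category, so ``diffeomorphism'' means biholomorphism).

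First I would use Lemma \ref{lem28} to write $\mathcal{H} = \frak{conj}^{\mathcal{T}}$ for a holomorphic map $\mathcal{T} : \mathcal{OB}\to\mathcal{MOR}$ with ${\rm Pr}_t\circ\mathcal{T} = {\rm id}$, so that $\mathcal{H}_{ob} = {\rm Pr}_s\circ\mathcal{T}$ by Lemma-Definition \ref{lemdef25}. Put $\gamma = \mathcal{T}(o)$; then ${\rm Pr}_t(\gamma) = o$, and applying minimality at $o$ to $\frak{inv}(\gamma)$ (whose source is ${\rm Pr}_t(\gamma) = o$) yields ${\rm Pr}_s(\gamma) = o$. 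In particular $\mathcal{H}_{ob}(o) = o$. Since ${\rm Pr}_s$ is a submersion, $\mathcal{G}_o := {\rm Pr}_s^{-1}(o)$ is a complex submanifold of $\mathcal{MOR}$ of dimension $\dim\mathcal{MOR}-\dim\mathcal{OB}$, and it contains $\gamma$; by minimality $\mathcal{G}_o\subseteq{\rm Pr}_t^{-1}(o)$.

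The heart of the argument is a tangent-space computation at $\gamma$. Because ${\rm Pr}_s$ is a submersion, $\ker D_\gamma{\rm Pr}_s = T_\gamma\mathcal{G}_o$; and since ${\rm Pr}_t$ is constant on $\mathcal{G}_o$, we get $T_\gamma\mathcal{G}_o\subseteq\ker D_\gamma{\rm Pr}_t$. Both kernels have dimension $\dim\mathcal{MOR}-\dim\mathcal{OB}$ (as ${\rm Pr}_s$ and ${\rm Pr}_t$ are submersions), hence
\[
\ker D_\gamma{\rm Pr}_s \;=\; \ker D_\gamma{\rm Pr}_t \;=\; T_\gamma\mathcal{G}_o.
\]
On the other hand, differentiating ${\rm Pr}_t\circ\mathcal{T} = {\rm id}$ shows that $D_o\mathcal{T}$ is injective and that $\mathrm{im}\,D_o\mathcal{T}$ is a linear complement of $\ker D_\gamma{\rm Pr}_t$ in $T_\gamma\mathcal{MOR}$, hence also a complement of $\ker D_\gamma{\rm Pr}_s$. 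Therefore $D_\gamma{\rm Pr}_s$ restricts to an isomorphism from $\mathrm{im}\,D_o\mathcal{T}$ onto $T_o\mathcal{OB}$, so $D_o\mathcal{H}_{ob} = D_\gamma{\rm Pr}_s\circ D_o\mathcal{T}$ is an isomorphism, and the inverse function theorem gives that $\mathcal{H}_{ob}$ is a biholomorphism near $o$.

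I expect the only subtle point to be making the minimality hypothesis do its job: it is used both to locate $\gamma$ in ${\rm Pr}_s^{-1}(o)\cap{\rm Pr}_t^{-1}(o)$ (via the inversion axiom) and to produce the inclusion $T_\gamma\mathcal{G}_o\subseteq\ker D_\gamma{\rm Pr}_t$, which together with the two submersion hypotheses forces $\ker D_\gamma{\rm Pr}_s=\ker D_\gamma{\rm Pr}_t$. Without minimality this equality can fail and then $\mathcal{H}_{ob}$ need not be a local diffeomorphism, so this is exactly where the hypothesis enters. Everything else — that $\mathcal{G}_o$ is a submanifold, that $D_o\mathcal{T}$ is an injective section of $D{\rm Pr}_t$ — is routine once the submersion properties of ${\rm Pr}_s$ and ${\rm Pr}_t$ are invoked.
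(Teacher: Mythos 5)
Your proof is correct and is essentially the same as the paper's: both use minimality at $o$ to force the kernels of $D{\rm Pr}_s$ and $D{\rm Pr}_t$ to coincide at $\mathcal T(o)$, combine this with the two submersion hypotheses and the section property of $\mathcal T$ to conclude that $D_o\mathcal H_{ob}$ is invertible, and finish with the inverse function theorem. The only cosmetic difference is that the paper phrases the tangent-space computation in product coordinates $\mathcal U\times\mathcal V$ supplied by the implicit function theorem (and in that local picture uses the convention ${\rm Pr}_s\circ\mathcal T={\rm id}$, so it reads off $\mathcal H_{ob}(o)=o$ directly without the $\frak{inv}$ step), whereas you argue intrinsically via kernels and complements while following the convention of Lemma-Definition \ref{lemdef25}.
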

\begin{proof}
By minimality at $o$,
we find $\mathcal H_{ob}(o) = o$.
Let $\varphi_o = \mathcal T(o)$.
We have
\begin{equation}\label{form31}
\mathcal H_{ob}(x) = {\rm Pr}_t(\mathcal T(x)).
\end{equation}
Using implicit function theorem we may identify
a neighborhood of $\mathcal T(o)$ in $\mathcal{MOR}$
with $\mathcal U \times \mathcal V$ such that
$\mathcal V \subset {\rm Pr}_s^{-1}(o)$ is an 
open neighborhood of $\mathcal T(o)$,
$\mathcal U$ is an open neighborhood of $o$ 
in $\mathcal{OB}$
and 
$
{\rm Pr}_s : \mathcal U \times \mathcal V \to \mathcal{OB}
$
is the projection.
We remark that the derivative in the $\mathcal V$
direction of 
$
{\rm Pr}_t : \mathcal U \times \mathcal V \to \mathcal{OB}
$
is zero on $\{o\} \times \mathcal V$ by minimality.
On the other hand the derivative in the $\mathcal U$ direction 
of ${\rm Pr}_t$ at $(o,\mathcal T(o))$ is invertible.
This is because ${\rm Pr}_t$ is a submersion and 
the derivative in the $\mathcal V$ direction is zero.
\par
Thus derivative of (\ref{form31}) is invertible at $o$.
In fact
$$
\mathcal H_{ob}(x) = {\rm Pr}_t(x,\mathcal T'(x))
$$
for some $\mathcal T' : \mathcal U \to \mathcal V$.
Sublemma \ref{sublem38} now follows from inverse function theorem.
\end{proof}
Thus we proved that $\mathcal H_{ob}$ is invertible.
It is easy to see that it implies that $\mathcal H$ is invertible.
\par
By Lemma \ref{lem218}, $\widetilde{\mathcal H}$ is conjugate 
to the identity morphism. We can use it in the same way as 
above to show that $\widetilde{\mathcal H}$ is an 
isomorphism if we restrict to a smaller neighborhood of $o$.
The proof of Lemma \ref{lem37} is complete.
\end{proof}

\subsection{Existence of the universal deformation}
\label{unideformationexi}

In this section we prove the existence part of 
Theorem \ref{them35}.
We use the existence of universal deformation 
of {\it stable} marked nodal curve, 
which was well established long time ago 
and by now well-known, and use it to 
study unstable case.
\par
Let $(\Sigma,\vec z)$ be a marked nodal curve.
We decompose $\Sigma$ into irreducible components \index{00A3@$\mathcal A$}
\begin{equation}
\Sigma = \bigcup_{a \in \mathcal A} \Sigma_a.
\end{equation}
We regard the intersection $\vec z \cap \Sigma_a$ 
and all the nodal points on $\Sigma_a$ as marked points of 
$\Sigma_a$ and denote it by $\vec z_a$.
We put $\vec z_a = (z_{a,1},\dots,z_{a,\ell_a})$.
We recall that $(\Sigma_a,\vec z_a)$ is {\it stable}
\index{stable; irreducible component}\index{unstable; irreducible component} unless one of the following 
holds:
\begin{enumerate}
\item[(US.0)] The genus of $\Sigma_a$ is $0$ and $\# \vec z_a = 0$.
\item[(US.1)] The genus of $\Sigma_a$ is $0$ and $\# \vec z_a = 1$.
\item[(US.2)] The genus of $\Sigma_a$ is $0$ and $\# \vec z_a = 2$.
\item[(US.3)] The genus of $\Sigma_a$ is $1$ and $\# \vec z_a = 0$.
\end{enumerate}
Note in case (US.0), $\Sigma = \Sigma_a$ and it is easy to construct a universal deformation.
(In fact $\mathcal{OB}$ consists of one point, 
$\mathcal{MOR} = PSL(2;\C)$.
$\widetilde{\frak{G}}$ is defined by using $PSL(2;\C)$
action on $S^2$.)
In case (US.3), again $\Sigma = \Sigma_a$.
We can define universal deformation easily also.
($\mathcal{OB}$ is an open subset of the moduli space of elliptic curves.
Other objects can be obtained by applying 
Construction \ref{const2124} to the universal family of elliptic curves.)
\par
Therefore we consider the case when all the unstable 
components are either of type (US.1) or (US.2).
\par
Let $\mathcal A_s$ be the subset of $\mathcal A$
\index{00A3S@$\mathcal A_s$}\index{00A3U@$\mathcal A_u$} 
consisting of elements $a$ such that $(\Sigma_a,\vec z_a)$ is stable.
We put $\mathcal A_u = \mathcal A \setminus \mathcal A_s$.
\par
Suppose $(\Sigma_a,\vec z_a)$ is a stable curve.
Let $g_a$ be its genus and $\ell_a = \#\vec z_a$.
We consider the moduli space $\mathcal M_{g_a,\ell_a}$
\index{00M3Fwll@$\mathcal M_{g,\ell}$}
of stable curves with genus $g_a$ and with $\ell_a$ 
marked points.
$\mathcal M_{g_a,\ell_a}$ is an orbifold. (In some exceptional 
case it is not effective.)
Let 
$\mathcal V_a/\mathcal G_a$ be a neighborhood of $(\Sigma_a,\vec z_a)$
in $\mathcal M_{g_a,\ell_a}$. Here $\mathcal G_a$ is a finite group 
which is the group of automorphisms of $(\Sigma_a,\vec z_a)$.
Namely 
$$
\mathcal G_a = \{ v : \Sigma_a \to \Sigma_a \mid 
\text{$v$ is bi-holomorphic}, \,\, v(z_{a,i}) = z_{a,i}\}.
$$
$\mathcal V_a$ is a smooth complex manifold on which 
a finite group $\mathcal G_a$ acts. 
We have a universal family 
\begin{equation}
\pi_a : \mathcal C_a \to \mathcal V_a
\end{equation}
where $\mathcal C_a$ is a complex manifold and $\pi_a$ is a 
proper submersion.
The group $\mathcal G_a$ acts on $\mathcal C_a$ and $\pi_a$ is $\mathcal G_a$
equivariant.
We also have holomorphic maps
\begin{equation}
\frak t_{a,i} : \mathcal V_a \to \mathcal C_a
\end{equation}
for $i=1,\dots,\ell_a$, 
such that
$\pi_a \circ \frak t_{a,i} = {\rm id}$ and 
$\frak t_{a,i}$ is $\mathcal G_a$ equivariant.
Moreover $\frak t_{a,i}(x) \ne \frak t_{a,j}(x)$
for $x \in \mathcal V_a$, $i\ne j$.
Finally the marked Riemannn surface
$$
(\pi_a^{-1}(x),(\frak t_{a,1}(x),\dots,\frak t_{a,\ell_a}(x)))
$$
is a representative of the element $[x] \in \mathcal V_a/\mathcal G_a \subset \mathcal M_{g_a,\ell_a}$.
Existence of such 
$\mathcal G_a,\mathcal V_a,\mathcal C_a,\pi_a,\frak t_{a,i}$ is classical.
(See \cite{alcorGri} for example. This one dimensional and local version 
of deformation theory of complex structure 
had been known in 19th century.)
\par
Suppose $(\Sigma_a,\vec z_a)$ is unstable. 
We put $\mathcal V_a = {\rm point}$. The group of 
automorphisms $\mathcal G_a$ is $\C^* = \C \setminus \{0\}$ if 
(US.1) is satisfied. The group of 
automorphisms $\mathcal G_a$ consists of affine maps 
$z \mapsto az + b$ in case (US.2) is satisfied.
(Here we identify $(\Sigma_a,\vec z_a) = (\C\cup \{\infty\},\infty)$.
\par
We put
\begin{equation}\label{form3535}
\mathcal G = \{v : \Sigma \to \Sigma 
\mid \text{$v$ is bi-holomorphic}, \,\, v(z_{i}) = z_{i}\}.
\end{equation}
We then have an exact sequence of groups:
\begin{equation}
1 \to \prod_{a \in \mathcal A}\mathcal G_a 
\to \mathcal G \to \mathscr H \to 1.
\end{equation}
Here $\mathscr H$ is a finite group.
The group $\mathscr H$ is a subgroup of the automorphism group of 
the dual graph \index{dual graph} of $\Sigma$. (Here the dual graph is defined as follows.
We associate a vertex to each of the irreducible components of $\Sigma$.
We associate an edge to each of the nodal points. The vertices of 
an edge is one associated to the irreducible components 
containing that nodal points.
See Figure \ref{Figure2} below.)
\begin{figure}[h]
\centering
\includegraphics[scale=0.4]{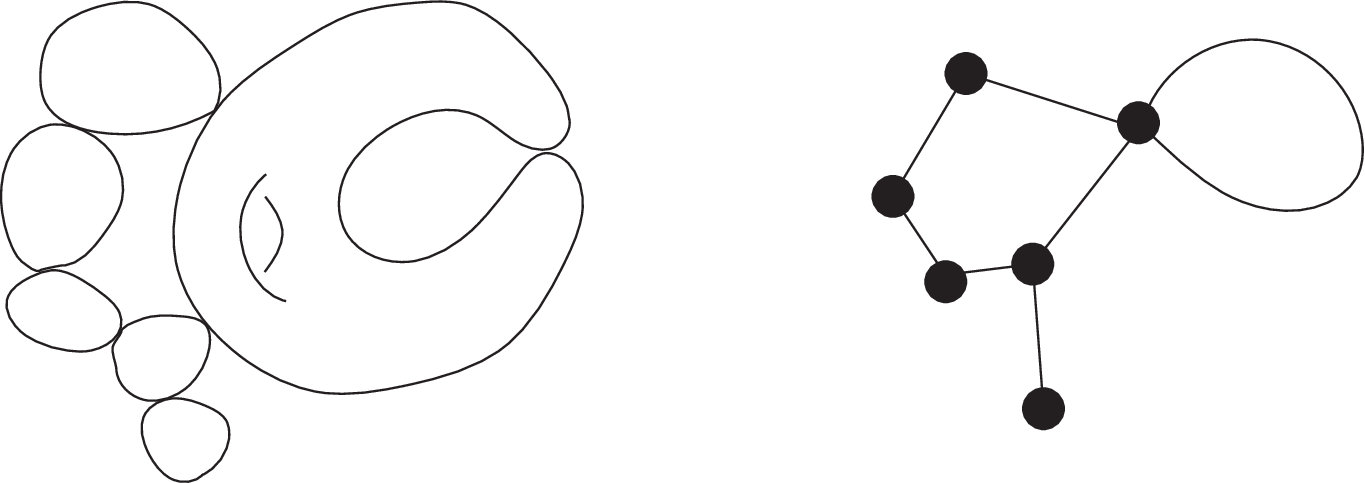}
\caption{Dual graph to nodal curve}
\label{Figure2}
\end{figure}
We put $\overline{\mathcal G} = \pi_0(\mathcal G)$.
Then we have an exact sequence 
\begin{equation}
1 \to \prod_{a \in \mathcal A_s}\mathcal G_a 
\to \overline{\mathcal G} \to \mathscr H \to 1.
\end{equation}
We put
\begin{equation}
\mathcal V_0 = \prod_{a \in \mathcal A_s} \mathcal V_a.
\end{equation}
Let $o \in \mathcal V_0$ be an element corresponding to 
$\Sigma$.\index{00V30delta@$\mathcal V_0$}
\par
The group $\overline{\mathcal G}$ acts on $\mathcal V_0$ in an obvious way.
For each $x= (x_a)_{a\in \mathcal A_s}$ we define 
$\Sigma(x)$ as follows.
We take $\Sigma(x_a) = \pi^{-1}(x_a)$ for $a \in \mathcal A_s$.
If $a \in \mathcal A_u$ we take $\Sigma_a$. We glue
$$
\coprod_{a \in \mathcal A_s} \Sigma(x_a)
\sqcup
\coprod_{a \in \mathcal A_u} \Sigma_a
$$
at their marked points in exactly the same way as $\Sigma$.
We then obtain a nodal curve $\Sigma(x)$.
We define
$$
\mathcal C_0 = \coprod_{x \in \mathcal V_0} \Sigma(x) \times \{x\}.
$$
We have an obvious projection
\begin{equation}\label{form39}
\pi : \mathcal C_0 \to \mathcal V_0.
\end{equation}
$\overline{\mathcal G}$ acts on $\mathcal C_0$ in an obvious way
and then (\ref{form39}) is a deformation of $\Sigma$ while 
keeping singularities.
Later in (\ref{defmathcalC}) we will 
embed $\mathcal C_0$ to a complex manifold 
so that $\mathcal C_0$ is a complex subvariety. The choice of 
complex structure 
of $\mathcal C_0$ then will become clear.
Using the map $\frak t_{a,i}$ which does not correspond to the 
nodal point of  $\Sigma(x)$ we obtain maps \index{00T4J@$\frak t_{j}$}
$$
\frak t_{j} : \mathcal V_0 \to \mathcal C_0
$$
for $j=1,\dots,\ell$ such that $\pi\circ \frak t_{j} = {\rm id}$
and that $\frak t_{j}$ is $\overline{\mathcal G}$ equivariant.
\par
We next include the parameter to smooth nodal points of $\Sigma(x)$.
We need to choose a coordinate at each nodal points, in the 
following sense.
Let $D^2(r)$ be the open ball of radius $r$ centered at $0$ in $\C$.
\begin{defn}{\rm (\cite[Definition 8.1]{foootech})}\label{cooddinateatinf}
An {\it  analytic family of coordinates} 
\index{analytic family of coordinates} at $\frak t_{a,i}$ is a 
holomorphic map 
$$
\varphi_{a,i} : \mathcal V_{a} \times D^2(2) \to \mathcal C_a
$$
such that:
\begin{enumerate}
\item
$\pi(\varphi_{a,i}(x,z)) = x$, for all $z \in D^2(2)$.
\item
$\varphi_{a,i}(x,0) = \frak t_{a,i}(x)$.
\item
For each $x$ the map $z \mapsto \varphi_{a,i}(x,z)$ 
is a bi-holomorphic map from $D^2(2)$ to a neighborhood of 
$\frak t_{a,i}(x)$ in $\pi_a^{-1}(x)$.
\end{enumerate}
We say that a system $(\varphi_{a,i})_{a\in \mathcal A_s,i=1,\dots,\ell_a}$
of analytic families of  coordinates are $\overline{\mathcal G}$ equivariant
if the following holds.
Let $\gamma \in \overline{\mathcal G}$ and $[\gamma] \in {\mathscr H}$.
We consider
$$
{\rm Node} = \{(a,i) \mid \text{$z_{a,i}$ corresponds to a nodal point 
of $\Sigma$ on $\Sigma_a$.}\}
$$
Since ${\mathscr H}$ acts on the dual graph of $\Sigma$ it acts on 
${\rm Node}$ also. Now we require:
\begin{enumerate}
\item[(*)] If $[\gamma](a,i) = (a',i')$ then \index{00P59hiAI@$\varphi_{a,i}(x,z)$}
$$
\gamma(\varphi_{a,i}(x,z)) = \exp(\theta_{\gamma,a,i} \sqrt{-1})  \varphi_{a',i'}(x,z).
$$
Here $\theta_{a,i} \in \R$.
\end{enumerate}
\end{defn}
\begin{lem}\label{lem399}
There exists a $\overline{\mathcal G}$ equivariant 
analytic families of  coordinates.
\end{lem}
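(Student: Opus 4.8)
The plan is to construct the coordinates on one representative of each $\mathscr H$-orbit in ${\rm Node}$, to arrange equivariance under the (finite) stabilizer of that representative, and then to transport them along the orbit.

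\emph{Step 1: set-up and a non-equivariant choice.} First I would shrink all the $\mathcal V_a$, $a\in\mathcal A_s$, so that: (i) each $\mathcal V_a$ is biholomorphic to a polydisc; (ii) whenever $a,a'$ lie in the same $\mathscr H$-orbit the families $\pi_a,\pi_{a'}$ are identified by a chosen element of $\overline{\mathcal G}$ (possible because the universal deformation of a stable component is unique up to the action of its finite automorphism group); (iii) each $\mathcal V_a$ is invariant under the subgroup of $\overline{\mathcal G}$ fixing the vertex $a$; and (iv) over each $\mathcal V_a$ there is a single holomorphic chart around the divisor $\frak t_{a,i}(\mathcal V_a)$. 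Point (iv) is elementary: near $\frak t_{a,i}(o)$ the submersion $\pi_a$ has a section, so there are holomorphic coordinates $(u,w)$ on $\mathcal C_a$ with $u=\pi_a$ and $\{w=0\}=\frak t_{a,i}(\mathcal V_a)$, and after the shrinking and a rescaling of $w$ one gets an analytic family of complex coordinates $\varphi^{0}_{a,i}:\mathcal V_a\times D^2(2)\to\mathcal C_a$ as in Definition \ref{cooddinateatinf}. No equivariance is claimed yet.

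\emph{Step 2: equivariance under the stabilizer.} Fix $(a_0,i_0)\in{\rm Node}$ and let $\Gamma\le\overline{\mathcal G}$ be its stabilizer; by the choices above $\Gamma$ is finite and acts on $(\mathcal C_{a_0},\mathcal V_{a_0})$ preserving the section $\frak t_{a_0,i_0}(\mathcal V_{a_0})$. In the chart of Step 1 write the action of $\gamma$ as $\gamma\cdot(u,w)=(\gamma u,\rho_\gamma(u,w))$ with $\rho_\gamma(u,0)=0$, and set $A_\gamma(u)=\partial_w\rho_\gamma(u,0)$, a nowhere-vanishing holomorphic function on $\mathcal V_{a_0}$; composing the maps for $\gamma$ and $\delta$ gives the cocycle identity $A_{\gamma\delta}(u)=A_\gamma(\delta u)A_\delta(u)$. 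Since analytic families of coordinates do not form a vector space, one cannot simply average $\varphi^{0}_{a_0,i_0}$; instead I would linearize in the fibre direction by the Bochner--Cartan average
\[
\Phi(u,w)=\frac{1}{\#\Gamma}\sum_{\gamma\in\Gamma}A_\gamma(u)^{-1}\rho_\gamma(u,w).
\]
Because $D^2$ is convex $\Phi$ maps into a disc, $\partial_w\Phi(u,0)=1$, and the cocycle identity gives $\Phi(\gamma\cdot(u,w))=A_\gamma(u)\Phi(u,w)$; so in the coordinate $w'=\Phi(u,w)$ each $\gamma\in\Gamma$ acts by $(u,w')\mapsto(\gamma u,A_\gamma(u)w')$, with $(A_\gamma)_\gamma$ a holomorphic $1$-cocycle on $\mathcal V_{a_0}$. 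To upgrade the multiplier to a \emph{constant} rotation (as required by condition $(*)$ of Definition \ref{cooddinateatinf}), I would use that on the polydisc $\mathcal V_{a_0}$ every nowhere-vanishing holomorphic function is an exponential, so $\mathcal O^{*}/\C^{*}$ is a $\C$-vector space and $H^{1}(\Gamma,\mathcal O^{*}/\C^{*})=0$; hence $(A_\gamma)$ is cohomologous to a cocycle of constants $\lambda_\gamma\in\C^{*}$, and after a further reparametrisation $w''=c(u)w'$ each $\gamma$ acts by $w''\mapsto\lambda_\gamma w''$. Since $\Gamma$ is finite, $\gamma\mapsto\lambda_\gamma$ is a homomorphism into the roots of unity, so $\lambda_\gamma=\exp(\theta_\gamma\sqrt{-1})$. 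Letting $\varphi_{a_0,i_0}$ be the inverse of $(u,w'')$ rescaled to disc radius $2$ gives a $\Gamma$-equivariant analytic family of complex coordinates at $\frak t_{a_0,i_0}$.

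\emph{Step 3: propagation and conclusion; the main difficulty.} For $(a,i)$ in the $\mathscr H$-orbit of $(a_0,i_0)$, choose $\gamma_0\in\overline{\mathcal G}$ with $[\gamma_0](a_0,i_0)=(a,i)$ and define $\varphi_{a,i}$ by transporting $\varphi_{a_0,i_0}$ through $\gamma_0$ and the identification of families in Step 1(ii); by the $\Gamma$-equivariance of Step 2 a different choice of $\gamma_0$ only rotates the target disc, which is harmless. For an arbitrary $\gamma\in\overline{\mathcal G}$ with $[\gamma](a,i)=(a',i')$, writing $\gamma$ as a composition of such transports and an element of $\Gamma$ shows that $\gamma(\varphi_{a,i}(x,z))$ and $\varphi_{a',i'}(x,z)$ differ by a constant rotation, which is exactly $(*)$; for $(a,i)\notin{\rm Node}$ one may keep $\varphi^{0}_{a,i}$ (or run the same averaging), since $(*)$ imposes nothing there. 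The crux of the whole argument is Step 2: the naive average of biholomorphic coordinate charts is not biholomorphic, and the remedy is the linearization average $\tfrac1{\#\Gamma}\sum A_\gamma^{-1}\rho_\gamma$, which works because the disc is convex and because the $A_\gamma$-cocycle identity forces the averaged map to intertwine the $\Gamma$-action; the secondary subtlety---turning the fibrewise-linear multiplier into the genuinely constant rotation $\exp(\theta_{\gamma,a,i}\sqrt{-1})$ of Definition \ref{cooddinateatinf}---is where one needs $\mathcal V_{a_0}$ to be a polydisc and $\Gamma$ to be finite. Everything else (equivariant shrinking of the bases, the elementary local chart, transport along orbits) is routine.
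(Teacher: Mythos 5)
Your proof is correct, and it takes a genuinely different route from the one in the paper. The paper's proof is two sentences: fix a $\overline{\mathcal G}$-equivariant Hermitian metric on $\Sigma$, take an isometric identification $T_{z_{a,i}}\Sigma_a\cong\C$; then the derivative $D_{z_{a,i}}\gamma$ is automatically the constant rotation $z\mapsto e^{\theta_{\gamma,a,i}\sqrt{-1}}z$, and the charts themselves are produced ``by the implicit function theorem in the complex analytic category.'' In effect, the equivariant metric normalizes the $1$-jet of each chart once and for all, so that an averaging over the (finite) stabilizer already preserves the $1$-jet and hence local biholomorphy, and there is nothing left to fix. Your argument reaches the same conclusion without choosing an auxiliary metric, by doing the normalization in two stages: first the linearize-and-average $\Phi=\frac{1}{\#\Gamma}\sum_\gamma A_\gamma^{-1}\rho_\gamma$, which exploits the cocycle relation $A_{\gamma\delta}(u)=A_\gamma(\delta u)A_\delta(u)$ to produce an intertwining fibre coordinate with a \emph{holomorphic} multiplier $A_\gamma(u)$; and second the vanishing $H^{1}(\Gamma,\mathcal O^*/\C^*)=0$ on a simply connected, $\Gamma$-invariant base, which upgrades the multiplier to a genuine constant $\lambda_\gamma=\exp(\theta_\gamma\sqrt{-1})$. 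This is the step the paper sidesteps by appealing to the metric. The transport along $\mathscr H$-orbits in your Step 3, and the observation that a different choice of $\gamma_0$ only rotates the target disc, are both correct. What your route buys is an argument that is self-contained and explicit about \emph{why} averaging biholomorphisms can be made to work (the $A_\gamma^{-1}$ weighting, the disc shrinking since $\partial_w\Phi(u,0)=1$); what the paper's route buys is brevity, since the metric hands you the constants $\theta_{\gamma,a,i}$ canonically, at the price of leaving the equivariance of the implicit-function-theorem output implicit. Two small points worth tidying: (i) ``$\Phi$ maps into a disc by convexity'' is not quite the right justification --- what you actually use is that $\partial_w\Phi(u,0)=1$ together with a shrinking of the disc, which you do state; and (ii) for the cohomological step you only need $\mathcal V_{a_0}$ simply connected and $\Gamma$-invariant, which follows by linearizing the finite $\Gamma$-action near the fixed point and taking a ball, rather than a polydisc specifically.
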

See \cite[Lemma 8.4]{foooexp} for the proof of this lemma.
\begin{rem}\label{rem31215}
We use analytic family of coordinates at the marked 
points corresponding to the nodal points only.
\end{rem}
For each $(z,i) \in {\rm Node}$ we take a copy of $\C$ and denote it 
by $\C_{(z,i)}$.
We fix an orientation of the edges of the dual graph $\Gamma(\Sigma)$ 
\index{00G5ammaSigma@$\Gamma(\Sigma)$ } of 
$\Sigma$. For each edge ${\rm e}$ of $\Gamma(\Sigma)$, that corresponds 
to the nodal points, let $z_{-,{\rm e}}, z_{+,{\rm e}} \in {\rm Node}$
such that the orientation of ${\rm e}$ goes from the vertex 
corresponding to $z_{-,{\rm e}}$ to the vertex corresponding to $z_{+,{\rm e}}$.
\begin{defn}\label{defn313}
We put \index{00V31@$\mathcal V_1$}
$$
\mathcal V_1 = \bigoplus
\C_{-,{\rm e}} \otimes \C_{+,{\rm e}},
$$
where the direct sum is taken over all the edges ${\rm e}$ of $\Gamma(\Sigma)$.
The element $\gamma \in \overline{\mathscr G}$ acts on $\mathcal V_1$ 
by sending $w \in \C_{-,{\rm e}}^*$
(resp. $w \in \C^*_{+,{\rm e}}$)  to 
$\exp(\theta_{\gamma,a,i} \sqrt{-1})w$ \index{00T5hetagamma@$\theta_{\gamma,a,i}$}
if $z_{-,{\rm e}} = z_{a,i}$
(resp. $\exp(\theta_{\gamma,a',i'} \sqrt{-1})w$ 
if $z_{+,{\rm e}} = z_{a',i'}$).
\end{defn}
\begin{cons}\label{cost314}
We put $\mathcal V_+ = \mathcal V_0 \times \mathcal V_1$.
We are going to define 
a neighborhood $\mathcal V$ of $(0,0)$ in $\mathcal V_+$,
a complex manifold $\mathcal C$, and a map 
$\mathcal C \to \mathcal V$ as follows.
\par
For each 
$\vec x = (x_a)_{a\in \mathcal A_s} \in \mathcal V_0 = \prod \mathcal V_{a}$
we take
$$
\bigcup_{a \in \mathcal A_s}\Sigma(x_a) \cup \bigcup_{a \in \mathcal A_u}\Sigma_a.
$$
We remove the union of 
$\varphi_{a,i}(D^2)$ for all $\varphi_{a,i}$ corresponding to the nodal 
point.
We denote it as $\Sigma(\vec x)_0$.
Let
$$
\mathcal C_0 = \bigcup_{\vec x \in \mathcal V_0} 
\left(\Sigma(\vec x)_0 \times\{\vec x\}\right)
$$
and $
\mathcal C_0 \to \mathcal V_0
$
the obvious projection. $\mathcal C_0$ is a complex manifold 
and the projection is holomorphic.
We compactify the fibers of $
(\mathcal C_0 \times \mathcal V_1) \to \mathcal V
$ as follows.
Let $\vec\rho = (\rho_{\rm e})_{{\rm e} \in \Gamma(\Sigma)}
\in \mathcal V_1$. We put $z_{-,{\rm e}} = z_{a,i}$, 
$z_{+,{\rm e}} = z_{a',i'}$ and $r_{\rm e} = \vert \rho_{\rm e} \vert$.
We consider 
$$
\left(D^2(2) \setminus D^2(r_{\rm e})\right) \cup 
\left(D^2(2) \setminus D^2(r_{\rm e})\right)
$$
and identify $z$ in the first summand with $w$ in the second summand 
if $zw = \rho_{\rm e}$.
\par
We also identify $z$ with $\varphi_{i,a}(z)$ if $\vert z \vert >1$ 
and $w$ with $\varphi_{i',a'}(w)$ if $\vert w \vert >1$.
\par
Performing this gluing at all the nodal points we obtain 
$\Sigma(\vec x,\vec\rho)$.
We put
\begin{equation}\label{defmathcalC}
\mathcal C = \bigcup_{\vec x \in \mathcal V_0,\vec\rho \in 
\mathcal V_1} \Sigma(\vec x,\vec\rho) \times \{(\vec x,\vec\rho)\}.
\end{equation}
\index{00C3ca@$\mathcal C$}
The natural projection induces a map $\pi : \mathcal C \to \mathcal V$.
It is easy to see from the construction that $\mathcal C$ is a 
complex manifold and $\pi$ is holomorphic.
Moreover the fiber of $\pi$ are nodal curves.
$
\frak t_{j} : \mathcal V_0 \to \mathcal C_0
$ can be regarded as a map 
$
\frak t_{j} : \mathcal V \to \mathcal C
$
by which $(\pi^{-1}(x);\frak t_1(x),\dots,\frak t_{\ell}(x))$
becomes a marked nodal curve.
\end{cons}
\par
The most important part of the proof of Theorem \ref{them35} is 
the following:
\begin{prop}\label{prop312}
Let
$\mathcal{MOR}$ be the set of triples 
$(\varphi,x,y)$ where $x,y \in \mathcal V$ and
$\varphi : \pi^{-1}(x) \to \pi^{-1}(y)$ an isomorphism such that
$\varphi(\frak t_j(x)) = \frak t_j(y)$ for $j=1,\dots,\ell$.
Then
\begin{enumerate}
\item
$\mathcal{MOR}$ has a structure of a smooth complex manifold.
\item
The two projections $\mathcal{MOR} \to \mathcal V$,
$(\varphi,x,y) \mapsto x$, $(\varphi,x,y) \mapsto y$
are both submersions.
\end{enumerate}
\end{prop}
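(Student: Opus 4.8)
The plan is to construct, around an arbitrary point $\frak m_0 = (x_0, y_0; \varphi_0)$ of $\mathcal{MOR}$, an explicit holomorphic coordinate chart, and to read off submersivity of ${\rm Pr}_s$ and ${\rm Pr}_t$ directly from its shape. The first step is to pin down the combinatorial data carried by $\frak m_0$: the isomorphism $\varphi_0 \colon \pi^{-1}(x_0) \to \pi^{-1}(y_0)$ sends irreducible components to irreducible components and nodes to nodes, hence induces an isomorphism of dual graphs; since the dual graph of any fiber $\pi^{-1}(x)$, $x = (\vec x, \vec\rho)$, is $\Gamma(\Sigma)$ with the edges $\{{\rm e} : \rho_{\rm e} \neq 0\}$ contracted, this isomorphism amounts to an element $\sigma_0 \in \mathscr H$ together with the matching it prescribes between the not-yet-resolved nodes. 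As $\mathscr H$ is finite and the component/node matching is locally constant near $\frak m_0$, every $(x, y; \varphi)$ in a small neighborhood of $\frak m_0$ induces the same $\sigma_0$, so it is enough to describe the nearby morphisms of this fixed combinatorial type.

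For such morphisms I would introduce, besides the coordinate $x \in \mathcal V$, an ``isomorphism datum'' attached to each vertex $a$ of $\Gamma(\Sigma)$. Decompose $\pi^{-1}(x)$ into its core $K(x)$ (a small deformation, inside the universal families $\pi_a$ of Subsection \ref{unideformationexi}, of the stable cores, together with the fixed unstable surfaces $\Sigma_a$ minus small discs around their nodal marked points) and its necks $N_{\rm e}(x)$ coming from $zw = \rho_{\rm e}$. A unique-continuation argument shows $\varphi$ near $\varphi_0$ is determined by $\varphi|_{K(x)}$, and, component by component, the possibilities for $\varphi|_{K(x)}$ form: over a stable vertex $a$, a finite set (this is the orbifold, i.e. \'etale groupoid, structure of $\mathcal M_{g_a, \ell_a}$: once the deformation parameter of the component is fixed, the isomorphism is pinned down up to the finite automorphism group $\mathcal G_a$); and over an unstable vertex $a$, where $\mathcal V_a$ is a point, the connected complex Lie group $\mathcal G_a$, namely $\C^*$ in case (US.1) and the affine group $\C^* \ltimes \C$ in case (US.2). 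It is precisely these positive-dimensional factors that force $\dim \mathcal{MOR} > \dim \mathcal V$.

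It then remains to see that, given $x$ and such a tuple of isomorphism data, there is a unique target $y$ and a unique isomorphism $\varphi$ realizing it; the only condition is that the datum extends holomorphically across each neck. At a resolved edge ${\rm e}$ with ends $z_{-,{\rm e}}$ and $z_{+,{\rm e}}$ the datum has well-defined, nonvanishing derivatives $c_{-,{\rm e}}$ and $c_{+,{\rm e}}$ there in the analytic coordinates of Lemma \ref{lem399}, and extendability across $zw = \rho_{\rm e}$ forces the target's gluing parameter to equal a prescribed holomorphic function of the data, of the form $c_{-,{\rm e}}\, c_{+,{\rm e}}\, \rho_{\rm e} (1 + O(\rho))$ (vanishing exactly when $\rho_{\rm e}$ does); the target $\mathcal V_0$-coordinates are likewise prescribed. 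Thus, for the fixed type $\sigma_0$, $\mathcal{MOR}$ near $\frak m_0$ is the graph of a holomorphic map $(x, \text{data}) \mapsto (y, \varphi)$, hence a complex manifold, of dimension $\dim \mathcal V + \sum_{a \in \mathcal A_u} \dim_{\C} \mathcal G_a$. In this chart ${\rm Pr}_s$ is the projection forgetting the data, so a submersion; for ${\rm Pr}_t$, the $\mathcal V_0$-component of $y$ is $\gamma \cdot \vec x$ for $\gamma$ in a finite group and its $\mathcal V_1$-component is $(c_{-,{\rm e}} c_{+,{\rm e}} \rho_{\rm e} (1 + O(\rho)))_{\rm e}$, so already varying $(\vec x, \vec\rho)$ with the data frozen surjects onto a neighborhood of $y_0$ (the map $\vec x \mapsto \gamma \cdot \vec x$ is a local biholomorphism and $\vec\rho \mapsto (c_- c_+ \rho(1+O(\rho)))$ has invertible linearization); hence ${\rm Pr}_t$ is a submersion too.

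The main obstacle is the analytic content of the last step: making precise, and uniform in the combinatorial type, the passage from the leading-order multiplicative relation $\rho' \approx c_{-} c_{+} \rho$ to an honest holomorphic parametrization, and checking that the neck-extension problem has a solution which is unique and depends holomorphically on all parameters with exactly the claimed rank. This requires the gluing construction and exponential-decay estimates in the spirit of \cite[Section 13]{FO}, and in particular one must verify that the extra $\dim_{\C} \mathcal G_a$ directions contributed by the unstable components act on the neck parameters in a way that is compatible with --- rather than obstructs --- submersivity of ${\rm Pr}_t$. This is exactly the phenomenon that makes $\frak G$ non-\'etale precisely when $(\Sigma, \vec z)$ is not stable.
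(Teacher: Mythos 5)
Your plan — parametrize $\mathcal{MOR}$ near $\frak m_0 = (x_0,y_0;\varphi_0)$ by the source point $x$ together with an ``isomorphism datum'' at each vertex, and then solve a plumbing problem to determine the target $y$ — is a genuinely different route from the paper's. The paper makes no plumbing argument at all: it first attaches the minimal stabilization $\vec w$ so that $(\Sigma,\vec z \cup \vec w)$ is stable and, via Lemma \ref{lem315}, observes that $\pi : \mathcal C \to \mathcal V$ together with the sections $\mathcal S_j$ is a \emph{classical} local universal family of stable curves. For $\psi$ in a neighborhood $U$ of $\varphi$ in ${\rm Aut}(\Sigma,\vec z)$ it records $\psi$ by where it moves the added sections, $\vec w \mapsto \psi(\vec w)$, and then quotes the universal property to produce holomorphic maps $F : U \times \mathcal V \to \mathcal V$ and $\tilde F$, hence a holomorphic $\Psi : U \times \mathcal V' \to \mathcal{MOR}$; Lemma \ref{lam317} shows $\Psi$ covers a neighborhood of $(\varphi,o,o)$, and the general $(\varphi,x,y)$ is reduced to this case by the forgetful-map trivialization of Sublemma \ref{sublem321}. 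Since ${\rm Aut}(\Sigma,\vec z)$ fits in $1 \to \prod_a \mathcal G_a \to \mathcal G \to \mathscr H \to 1$ with $\mathscr H$ finite, the chart $U \times \mathcal V'$ is locally the same germ as your $\mathcal V \times \prod_{a\in\mathcal A_u}\mathcal G_a$; what differs is the proof that this parametrization is holomorphic and onto a neighborhood. The paper's approach buys the ability to cite known universality rather than re-derive it; your chart has the virtue of being a graph, making the two submersion claims transparent once the plumbing step is in place.

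The step you flag at the end is a genuine gap, and I would state it a little more sharply than you do. The per-component isomorphism data — an automorphism $\alpha_a \in \mathcal G_a$ on each unstable $\Sigma_a$, a choice of target parameter and isomorphism on each stable one — will \emph{not} literally glue across a resolved neck: writing the data near a node as $z' = \alpha(z)$ on one side and $w' = \beta(w)$ on the other, the relation $z'w' = \rho'$ given $zw = \rho_{\rm e}$ forces $\alpha(z)\beta(\rho_{\rm e}/z)$ to be constant in $z$, which fails for generic $\alpha,\beta$ once any quadratic term is present. So the claim ``given $x$ and such a tuple of isomorphism data, there is a unique target $y$ and a unique isomorphism $\varphi$ realizing it'' is really the nontrivial assertion that $\varphi|_{K(x)}$ (the restriction to the cores only) extends across each neck for a unique $\rho'_{\rm e}$, and that $(y,\varphi)$ depends holomorphically on $(x,\text{data})$ with the leading-order law $\rho'_{\rm e} = c_{-,{\rm e}}c_{+,{\rm e}}\rho_{\rm e}(1+O(\rho))$. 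This is the classical complex-analytic plumbing / opening-of-nodes argument (in the circle of \cite{alcorGri}, \cite{delignemumford}) and it is substantive. It does \emph{not}, however, need the nonlinear $\overline\partial$-gluing and exponential-decay machinery of \cite{FO} or \cite{foooexp} that you invoke at the end; those are needed for the pseudo-holomorphic problem, whereas here all curves and maps are already holomorphic, so the plumbing lemma can be proved by elementary complex analysis (e.g.\ an implicit function theorem for the compatibility of annular extensions). Until that lemma is actually supplied, the proposal is incomplete; the paper's stabilization trick is precisely what lets one avoid having to supply it.
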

Note by Proposition \ref{prop312} and Constructions \ref{const212} 
and \ref{const2124}
we obtain a family of marked nodal curves.

\begin{proof}
We first define a topology (metric) on $\mathcal{MOR}$.
Note $\mathcal C$ and $\mathcal V$ are obviously metrizable. We take 
its metric. 

\begin{defn}
We say $d((\varphi,x,y),(\varphi',x',y')) \le \epsilon$ if
$$
d(x,x') \le \epsilon, \quad d(y,y') \le \epsilon
$$
and 
$$
\vert d(\varphi(a),\varphi'(b)) - d(a,b)\vert \le \epsilon
$$
for $a \in \pi^{-1}(x)$, $b \in \pi^{-1}(x')$.
\par
It is easy to see that $d$ defines a metric on $\mathcal{MOR}$.
\end{defn}
\begin{defn}
The {\it minimal stabilization} $\vec w_a$ \index{minimal stabilization} of an unstable component 
$(\Sigma_a,\vec z_a)$ is as follows.
\par
In case (US.1), $\vec w_a$ consists of (distinct) two points which do not intersect 
with $\vec z_a$.
\par
In case (US.2), $\vec w_a$ consists of one point which does not intersect 
with $\vec z_a$.
\end{defn}
Note $(\Sigma_a,\vec z_a\cup \vec w_a)$ becomes stable. In fact it is 
a sphere with three marked points and so there is no deformation and no 
automorphism.
The choice of minimal stabilization is unique up to isomorphism.
\par
We add minimal stabilization to each unstable components 
and obtain a stable marked curve $(\Sigma,\vec z\cup \vec w)$.
The next lemma is obvious.
\begin{lem}
$\overline{\mathcal G}$ acts on $\Sigma$ such that it preserves $\vec w$ as 
a set.
\end{lem}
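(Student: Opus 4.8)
The plan is to read the statement off from the construction of $\mathcal C\to\mathcal V$, using only combinatorics of the dual graph $\Gamma(\Sigma)$ together with the finiteness of $\overline{\mathcal G}$. The latter holds because of the exact sequence $1\to\prod_{a\in\mathcal A_s}\mathcal G_a\to\overline{\mathcal G}\to\mathscr H\to 1$, in which both $\prod_{a\in\mathcal A_s}\mathcal G_a$ and $\mathscr H$ are finite; and the action of $\overline{\mathcal G}$ on $\Sigma$ is the one obtained by restricting, to the fibre over the base point $o$, the action of $\overline{\mathcal G}$ on $\mathcal C$ used above.

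First I would record that this action permutes the irreducible components of $\Sigma$ through its action on $\Gamma(\Sigma)$, preserving the genus and the number of special points at each vertex; hence it carries $\mathcal A_u$ to itself and respects the partition of $\mathcal A_u$ according to type (US.1)/(US.2). Next I would pin down the induced action on an individual unstable component $\Sigma_a$. If $\gamma\in\overline{\mathcal G}$ moves the vertex $a$, then $\gamma$ identifies $\Sigma_a$ with $\Sigma_{\gamma(a)}$ via the identification built into $\mathcal C$. If $\gamma$ fixes $a$, then $\gamma\vert_{\Sigma_a}$ is an automorphism of $\Sigma_a$ preserving the set of its special points, well defined only modulo the (positive-dimensional, connected) group of automorphisms of $\Sigma_a$ fixing those points pointwise; since that group lies inside the identity component of $\mathcal G$, it is trivial in $\overline{\mathcal G}=\pi_0(\mathcal G)$. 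Consequently $\gamma\vert_{\Sigma_a}$ lands in $\pi_0$ of the automorphism group of $\Sigma_a$ preserving the set of special points: this is trivial when $\Sigma_a$ is of type (US.1), and is at most $\Z/2$ — the involution exchanging the two special points — when $\Sigma_a$ is of type (US.2).

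Now I would choose $\vec w$ equivariantly. Pick a set of representatives for the $\overline{\mathcal G}$-orbits in $\mathcal A_u$; on each representative $a_0$ choose a minimal stabilisation $\vec w_{a_0}\subset\Sigma_{a_0}\setminus\vec z_{a_0}$ that is invariant under the (cyclic, by the previous paragraph) induced action of the stabiliser of $a_0$ — two arbitrary points in the (US.1) case, and a fixed point of the exchanging involution $z\mapsto\zeta/z$, i.e. a point with $z^2=\zeta$, in the (US.2) case; this is possible because those fixed points lie off $\{0,\infty\}=\vec z_{a_0}$. Then transport $\vec w_{a_0}$ to the remaining components of the orbit by the identifications above, and set $\vec w=\bigcup_{a\in\mathcal A_u}\vec w_a$. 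By construction $\gamma(\vec w_a)=\vec w_{\gamma(a)}$ for all $\gamma\in\overline{\mathcal G}$ and all $a\in\mathcal A_u$, hence $\gamma(\vec w)=\vec w$; and $(\Sigma,\vec z\cup\vec w)$ is stable because every component is now stable.

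The one step that requires genuine thought, as opposed to bookkeeping, is the description of $\gamma\vert_{\Sigma_a}$ in the second paragraph — and this is exactly the place where the positive dimension of the automorphism groups of unstable components helps: those continuous automorphisms are precisely what is killed in passing from $\mathcal G$ to $\overline{\mathcal G}$, so that only a finite residual symmetry survives, and that symmetry is constrained as stated by the elementary classification of finite-order elements of $\mathrm{Aff}(\C)$ and of $\C^*\rtimes\Z/2$. Once this is granted, both the equivariance of the choice of $\vec w$ and the stability of $(\Sigma,\vec z\cup\vec w)$ are immediate, which is why the author can call the lemma obvious.
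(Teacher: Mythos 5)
Your argument is correct, and since the paper simply declares the lemma ``obvious'' and supplies no proof, there is no argument in the paper to compare against line by line. Your route --- decompose the $\overline{\mathcal G}$-action via the dual graph, identify the residual action on a fixed unstable component as lying in $\pi_0$ of its full automorphism group (trivial for type (US.1), at most $\Z/2$ swapping the two special points for type (US.2)), choose an invariant $\vec w_{a_0}$ on orbit representatives and transport --- is valid, and it correctly isolates the key mechanism, namely that the connected automorphism groups of the unstable components are exactly what is killed in passing to $\pi_0(\mathcal G)$.

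That said, you are doing more work than the lemma requires, and the sentence the paper places immediately before it (``The choice of minimal stablization is unique up to isomorphism'') already contains the whole argument, with no equivariant choice of $\vec w$ needed. Concretely: take \emph{any} minimal stabilization $\vec w$ and any $\gamma\in\mathcal G$. Since $\gamma$ fixes each $z_i$ and permutes the unstable components among themselves respecting their type, $\gamma(\vec w)$ is again a minimal stabilization. Because ${\rm Aff}(\C)$ acts transitively on two-element subsets of $\C$ and $\C^{*}$ acts transitively on $\C^{*}$, there is $\eta\in\prod_{a\in\mathcal A_u}\mathcal G_a$ with $\eta(\vec w)=\gamma(\vec w)$; and $\prod_{a\in\mathcal A_u}\mathcal G_a$ is precisely the identity component $\mathcal G^{0}$ of $\mathcal G$. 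Hence $\eta^{-1}\gamma$ lies in the same component of $\mathcal G$ as $\gamma$ and preserves $\vec w$ as a set, so every class in $\overline{\mathcal G}=\pi_0(\mathcal G)$ has a representative in $\{\gamma\in\mathcal G:\gamma(\vec w)=\vec w\}$. Your finer analysis buys a cleaner structural picture --- in particular it exhibits the stabilizer of $\vec w$ inside $\mathcal G^{0}$ as a product of $\Z/2$'s over the (US.1) components --- but the one-line ``uniqueness up to isomorphism'' observation is what makes the lemma genuinely obvious, and it works for an arbitrary $\vec w$, which is presumably why the author did not bother to make an equivariant choice.
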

We denote $\vec w = \{w_1,\dots w_k\}$.
By construction we have sections $\mathcal S_{j,0} : \mathcal V_0 \to \mathcal C_0$
such that $\mathcal S_{j,0}(x)$ is identified with $w_j$.
Using the description of $\Sigma(\vec x,\vec\rho)$ we gave above 
we obtain a marked point $w_j(\vec x,\vec\rho)$.
Thus we obtain holomorphic sections   $\mathcal S_j : \mathcal V \to \mathcal C$.
The next lemma is a consequence of a standard result of the deformation theory 
of stable nodal curve.
(See \cite{alcorGri}.)
Let $\overline{\mathcal G}_0$ be a subgroup of $\overline{\mathcal G}$
consisting of elements which fix each point $w_j$.
\begin{lem}\label{lem315}
$((\mathcal C,\mathcal V),\pi,(\frak T_j;j=1,\dots,\ell)
\cup (\mathcal S_j;j=1,\dots,k))$ \index{00S3J@$\mathcal S_j$}
divided by $\overline{\mathcal G}_0$ is a local universal family 
of genus $g$ stable curves with $k+\ell$ marked points.
\end{lem}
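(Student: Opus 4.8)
The plan is to deduce the lemma from the classical existence theorem for versal families of stable marked nodal curves (\cite{alcorGri}, \cite{delignemumford}), applied to the stabilized curve $(\Sigma,\vec z\cup\vec w)$. First I would record that $(\Sigma,\vec z\cup\vec w)$ is a stable marked nodal curve of genus $g$ with $k+\ell$ marked points -- this is the whole point of the minimal stabilization, and it has already been noted that each $(\Sigma_a,\vec z_a\cup\vec w_a)$ with $a\in\mathcal A_u$ is a three-pointed sphere, hence rigid with trivial automorphism group. Then I would recall the standard plumbing description of a local universal family near a stable curve: one fixes a universal deformation $\mathcal C_a\to\mathcal V_a$ of each component (with all of its special points, i.e. its marked points together with the branches of the nodes lying on it, regarded as marked points), fixes an analytic family of coordinates at each node-branch, forms the fiberwise connected sum over $\mathcal V_0=\prod_a\mathcal V_a$, and smooths the node attached to an edge $\mathrm e$ by imposing $zw=\rho_{\mathrm e}$ with $\rho_{\mathrm e}$ in a disc; the resulting family over $\mathcal V_0\times\C^{\#\mathrm{Node}}$ is versal, and after dividing by $\mathrm{Aut}(\Sigma,\vec z\cup\vec w)$ it is an orbifold chart of $\mathcal M_{g,k+\ell}$ at $[(\Sigma,\vec z\cup\vec w)]$.

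Next I would match this with Construction \ref{cost314} equipped with the sections $\mathcal S_j$. For $a\in\mathcal A_s$ the component $(\Sigma_a,\vec z_a)$ is already stable and untouched by $\vec w$, so its universal deformation is exactly the $\mathcal V_a$ used in Construction \ref{cost314}; for $a\in\mathcal A_u$ the stabilized component is rigid, so its universal deformation is a point, matching $\mathcal V_a=\mathrm{point}$; the analytic families of coordinates at the node-branches are the ones produced in Lemma \ref{lem399}; the plumbing parameters $\rho_{\mathrm e}$ are precisely the coordinates on $\mathcal V_1$; and $(\frak T_j)_{j=1}^{\ell}$ together with $(\mathcal S_j)_{j=1}^{k}$ are the $k+\ell$ marked sections. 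Thus $((\mathcal C,\mathcal V),\pi,(\frak T_j)\cup(\mathcal S_j))$ is the plumbing family of $(\Sigma,\vec z\cup\vec w)$, up to the identifications allowed in the classical statement. A dimension count $\dim\mathcal V=\sum_{a\in\mathcal A_s}(3g_a-3+\ell_a)+\#\mathrm{Node}=3g-3+(k+\ell)$, using $\sum_a g_a=g-b_1(\Gamma(\Sigma))$, $\#\mathrm{Node}=b_1(\Gamma(\Sigma))+\#\mathcal A-1$ and $\sum_a\ell_a=(k+\ell)+2\,\#\mathrm{Node}$, confirms it has the expected size.

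Finally I would identify the quotient group, claiming $\overline{\mathcal G}_0\cong\mathrm{Aut}(\Sigma,\vec z\cup\vec w)$. Any automorphism of $(\Sigma,\vec z\cup\vec w)$ is in particular an automorphism of $(\Sigma,\vec z)$ fixing each $w_j$, hence yields an element of $\overline{\mathcal G}=\pi_0(\mathcal G)$ lying in $\overline{\mathcal G}_0$; conversely, since the identity component of $\mathcal G$ is $\prod_{a\in\mathcal A_u}\mathcal G_a$ and adding $\vec w_a$ rigidifies each unstable component, every element of $\overline{\mathcal G}_0$ is represented by a unique automorphism of $\Sigma$ fixing $\vec z\cup\vec w$; so the two groups coincide and the $\overline{\mathcal G}_0$-action on $(\mathcal C,\mathcal V)$ is the action by the automorphism group of $(\Sigma,\vec z\cup\vec w)$. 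Applying the classical versality/universality theorem to $(\Sigma,\vec z\cup\vec w)$ then gives the lemma.

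The step I expect to be the main obstacle is the precise matching in the second paragraph: one must check that the particular connected-sum recipe of Construction \ref{cost314} and the particular $\overline{\mathcal G}$-equivariant coordinates from Lemma \ref{lem399} lie within the class of plumbing data for which the classical statement is proved, and that passing from the unstable components to their stabilizations $(\Sigma_a,\vec z_a\cup\vec w_a)$ and then recording this only through the sections $\mathcal S_j$ and the group $\overline{\mathcal G}_0$ neither creates nor destroys deformations or automorphisms. The remaining items -- holomorphy and properness of $\pi$, associativity of the gluing, the dimension count -- are routine given the constructions already carried out in this section.
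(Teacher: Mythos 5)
Your proof is correct and takes essentially the same route as the paper: Lemma \ref{lem315} is stated there without an explicit argument, the text merely saying it ``is a consequence of a standard result of the deformation theory of stable nodal curve (See [ACG])''; your three steps (stabilize, match Construction \ref{cost314} to the classical plumbing family, identify $\overline{\mathcal G}_0$ with ${\rm Aut}(\Sigma,\vec z\cup\vec w)$) are precisely the reduction that makes that citation apply, and the dimension count is a reasonable sanity check.
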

See for example \cite{alcorGri}, \cite{manin} for the definition of universal 
family of genus $g$ stable curves with $k+\ell$ marked points.
Actually it is a special case of Definition \ref{defn34} 
where ${\rm Pr}_s$ and ${\rm Pr}_t$ are local diffeomorphisms.
\par
We now start constructing a chart of $\mathcal{MOR}$.
We first consider  $(\varphi,o,o)$, that is 
the case when $\varphi : (\Sigma,\vec z) \to (\Sigma,\vec z)$ is an automorphism.
\par
Let $U$ be a neighborhood of $\varphi$ in the group of automorphisms of 
$(\Sigma,\vec z)$.
Let $\mathcal V'$ be a sufficiently small neighborhood of $o$ in $\mathcal V$.
We put $\mathcal C' = \pi^{-1}(\mathcal V') \subset \mathcal C$.
We will construct a bijection between $U \times \mathcal V'$ 
to a neighborhood of $\varphi$ in $\mathcal{MOR}$.
We consider
$$
\Pi : U \times \mathcal C \to U \times \mathcal V
$$
which is a direct product of $\pi : \mathcal C \to \mathcal V$ and the 
identity map. $\frak T_j$ induces its sections. 
\par
For $\psi \in U$ we consider $w_j(\psi) = \psi^{-1}(w_j)$. 
Using $w_j(\psi)$ instead of $w_j$ we can construct  
$\widetilde{\mathcal S_j}(\psi; \cdot) : \mathcal V \to \mathcal C$,
such that
$(\psi,x) \mapsto \widetilde{\mathcal S_j}(\psi; x)$,
$j=1,\dots,k$, are holomorphic sections and that 
\begin{equation}
\mathcal S_j(\psi,o) =\psi^{-1}(w_j).
\end{equation}  
We denote this section by $\mathcal S^{U}_j$.
\par
Then $((U \times \mathcal C,U \times \mathcal V),\Pi,\{\frak T_j\},\mathcal S^{U}_j)$ 
is a family of marked nodal curves of genus $g$ and with $k+\ell$ marked points.
Therefore by the universality in Lemma \ref{lem315},
there exist maps
$$
F : U \times \mathcal V' \to \mathcal V,
\quad
\tilde F : U \times \mathcal C' \to \mathcal C
$$
such that:
\begin{enumerate}
\item
$\pi \circ \tilde F = F\circ \Pi$ as maps
$U \times \mathcal C' \to \mathcal V$.
\item
For $(\psi,x) \in U \times \mathcal V'$ we have:
\begin{enumerate}
\item
$(\tilde F\circ \frak T_j) (\psi,x)= 
\frak T_j(F(\psi,x))$,
\item
$(\tilde F\circ \widetilde{\mathcal S_j}) (\psi,x)= \mathcal S_j(F(\psi,x))$.
\end{enumerate}
\end{enumerate}
Now we define 
$$
\Psi : U \times \mathcal V' \to \mathcal{MOR}
$$
as follows.
Let $(\psi,x) \in U \times \mathcal V'$.
We put $y = F(\psi,x)$. 
We restrict $\tilde F$ to $\{\psi\} \times \pi^{-1}(x)$. Then 
by Item (1) above it defines a holomorphic map 
$\pi^{-1}(x) \to \pi^{-1}(y)$ which we denote 
$\tilde \psi$.
Since $\tilde F$ is a part of the morphism of family of 
marked nodal curves, 
we can show that
$\tilde \psi$ is an isomorphism.
Item (2)(a) implies that $\tilde \psi$ preserves marked points $\frak T_j$.
We put
$$
\Psi(\psi,x) = (\tilde\psi,x,y).
$$
\begin{lem}\label{lam317}
The image of $\Psi$ contains a neighborhood of $(\varphi,o,o)$
in $\mathcal{MOR}$.
\end{lem}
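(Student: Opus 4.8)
The plan is to produce, for every $(\varphi',x',y')\in\mathcal{MOR}$ sufficiently close to $(\varphi,o,o)$, an explicit element of $U\times\mathcal V'$ that $\Psi$ sends to it. Since the second entry of $\Psi(\psi,x)$ is always $x$, we are forced to take $x=x'$, so the task reduces to finding $\psi\in U$ with $F(\psi,x')=y'$ whose induced isomorphism $\widetilde\psi$ equals $\varphi'$. (The induced isomorphism at the base point is in fact $\varphi^{-1}$, so ``$(\varphi,o,o)$'' should be read as a neighborhood of $\Psi(\varphi,o)$; nothing below is affected, one simply replaces $\varphi$ by $\varphi^{-1}$ if needed.)

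First I would transport the stabilization sections: set $q_j:=(\varphi')^{-1}(\mathcal S_j(y'))\in\pi^{-1}(x')$ for $j=1,\dots,k$. At the base point $q_j$ is close to $\varphi(w_j)$, which lies on an unstable component of $\Sigma$ (an automorphism of $(\Sigma,\vec z)$ permutes the irreducible components by an element of $\mathscr H$, hence preserves the set of unstable ones). So for $(\varphi',x',y')$ close enough to the base point the tuple $(q_1,\dots,q_k)$ is a small perturbation of the minimal stabilization $\vec w$ on a fiber close to $\Sigma$; in particular the $q_j$ are distinct regular points avoiding the $\frak T_i(x')$, the curve $(\pi^{-1}(x');\frak T_1(x'),\dots,\frak T_\ell(x'),q_1,\dots,q_k)$ is stable of genus $g$ with $k+\ell$ marked points, and $\varphi'$ is an isomorphism of marked stable curves onto $(\pi^{-1}(y');\frak T_1(y'),\dots,\frak T_\ell(y'),\mathcal S_1(y'),\dots,\mathcal S_k(y'))$, since $\varphi'(\frak T_i(x'))=\frak T_i(y')$ and $\varphi'(q_j)=\mathcal S_j(y')$ by construction.

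The crux is to locate $\psi$, and this rests on the dimension identity $\dim\operatorname{Aut}(\Sigma,\vec z)=\#\vec w=k$. Indeed, in the exact sequence $1\to\prod_a\mathcal G_a\to\mathcal G\to\mathscr H\to1$ the stable components contribute nothing and each unstable component contributes $\dim\mathcal G_a=\#\vec w_a$ (two when $\#\vec z_a=1$, one when $\#\vec z_a=2$); moreover $\vec w_a$ rigidifies $\Sigma_a$, the relevant automorphism group acting simply transitively on the corresponding configuration of stabilization points (ordered pairs of distinct points of $\C$, resp.\ $\C\setminus\{0\}$), so the stabilizer of $\vec w$ in the identity component of $\mathcal G$ is trivial. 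Hence the holomorphic map over $\mathcal V$
$$
U\times\mathcal V\longrightarrow\mathcal C\times_{\mathcal V}\cdots\times_{\mathcal V}\mathcal C,
\qquad
(\psi,x)\longmapsto\bigl(\widetilde{\mathcal S_1}(\psi;x),\dots,\widetilde{\mathcal S_k}(\psi;x)\bigr)
$$
(the $k$-fold fiber product) has injective differential at $(\varphi,o)$ — its restriction to $U\times\{o\}$ is the orbit map $\psi\mapsto(\psi(w_j))_j$, an immersion by the rigidification just noted — while domain and target both have dimension $k+\dim\mathcal V$, so it is a local biholomorphism near $(\varphi,o)$. As $(q_1,\dots,q_k)$ (viewed over $x'$) is close to its value at $(\varphi,o)$, it has a unique preimage; the first entry of that preimage is necessarily $x'$, and we take $\psi\in U$ to be the second, so that $\widetilde{\mathcal S_j}(\psi;x')=q_j$ for all $j$.

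It remains to check $\Psi(\psi,x')=(\varphi',x',y')$, i.e.\ $\widetilde\psi=\varphi'$ (which then forces $F(\psi,x')=y'$). By definition $\widetilde\psi$ is the canonical isomorphism of $(\pi^{-1}(x');\frak T_\bullet(x'),q_\bullet)$ onto the fiber of the universal family at $F(\psi,x')$ furnished by the universality of Lemma \ref{lem315}; and $\varphi'$ is another isomorphism of this stable marked curve onto such a fiber, matching all $k+\ell$ marked points. Any two such differ by the composition of a deck transformation in $\overline{\mathcal G}_0$ with an automorphism of the fiber, both lying in finite groups that, on fibers close to $\Sigma$, meet a small neighborhood of the identity only in the identity; since $\widetilde\psi$ and $\varphi'$ are both close to $\widetilde F|_{\{\varphi\}\times\pi^{-1}(o)}$ at the base point, we get $\widetilde\psi=\varphi'$, completing the proof. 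The main obstacle I anticipate is precisely this last step — making rigorous that the isomorphism produced by universality is the unique one near $\varphi'$ — which is handled by shrinking $U$ and $\mathcal V'$ until every fiber in sight is so close to $\Sigma$ that its automorphism group embeds into $\operatorname{Aut}(\Sigma,\vec z\cup\vec w)$ and only the identity survives in the pertinent neighborhood; the same shrinking also secures the stability of $(\pi^{-1}(x');\frak T_\bullet(x'),q_\bullet)$ and the genericity of the $q_j$ used above.
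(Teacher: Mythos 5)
Your proof is correct and follows the same strategy as the paper's: apply the inverse function theorem to the stabilization sections $\widetilde{\mathcal S_j}$ to locate $\psi$ matching the transported stabilization points, then invoke rigidity (discreteness of the automorphism group of a stable marked curve near $(\Sigma,\vec z\cup\vec w)$) to identify $\tilde\psi$ with $\varphi'$. The paper phrases this for a sequence $(\varphi_i,x_i,y_i)\to(\varphi,o,o)$, picks $\psi_i$ so that $\widetilde{\mathcal S_j}(\psi_i,y_i)=\varphi_i(\mathcal S_j(x_i))$, and closes by adjusting $\psi_i$ by an element of $\overline{\mathcal G}_0$, whereas you work with a single $(\varphi',x',y')$, pull back by $(\varphi')^{-1}$ and evaluate at $x'$, package the inverse function theorem step as a map over $\mathcal V$ into a fiber product, and conclude directly from discreteness --- all equivalent; your parenthetical note about $\varphi$ versus $\varphi^{-1}$ correctly flags a small inconsistency in the paper's setup, and your explicit dimension count $\dim\operatorname{Aut}(\Sigma,\vec z)=\#\vec w$ fills in a step the paper leaves implicit.
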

\begin{proof}
Let $(\varphi_i,x_i,y_i)$ be a sequence of $\mathcal{MOR}$ converging to 
$(\varphi,o,o)$. 
Note
$$
\psi \mapsto (\psi^{-1}(w_j):j=1,\dots,k)
$$
is a diffeomorphism from $U$ onto an open subsets of 
$\Sigma^k$. Therefore by inverse function theorem, 
the map
$$
\psi \mapsto (\widetilde{\mathcal S_j}(\psi_i,x_i):j=1,\dots,k)
$$
is a diffeomorphism from an neighborhood of $\varphi$
onto an open subset of $\Sigma(x_i)^k$ for sufficiently large $i$. 
Since $\lim_{i\to \infty}\widetilde{\mathcal S_j}(\psi_i,x_i) = 
\widetilde{\mathcal S_j}(\psi,o) = \psi^{-1}(w_j)$
we may assume that this open subset contains $(\psi^{-1}(w_j):j=1,\dots,k)$ 
by taking $U$ small.
\par
On the other hand, 
$
\lim_{i\to \infty} S_j(y_i) = S_j(o) = w_j.
$
Hence
$
\lim_{i\to\infty} \varphi_i^{-1}(S_j(y_i)) = \psi^{-1}(w_j).
$
Therefore there exists unique $\psi_i \in U$
such that
\begin{equation}\label{form310}
\widetilde{\mathcal S_j}(\psi_i,x_i) = \varphi^{-1}_i(\mathcal S_j(y_i)).
\end{equation}
We next prove that 
$\Psi(\psi_i,x_i) = (\varphi_i,x_i,y_i)$ for 
sufficiently large $i$.
\par
We put $\Psi(\psi_i,x_i) = (\tilde\psi_i,x_i,y_i)$.
By definition 
$
\tilde\psi_i
$
is a restriction of $\tilde F$ to $\{\psi_i\} \times \pi^{-1}(x_i)$
and $y_i = F(\psi_i,x_i)$.
Therefore Item (2) (a),(b) implies
$$
\tilde\psi_i(\widetilde{\mathcal S_j}(\psi_i,x_i))
= \widetilde{\mathcal S_j}(y_i),  
\qquad \tilde\psi_i(\widetilde{\mathcal T_j}(\psi_i,x_i))
= \widetilde{\mathcal T_j}(y_i)
$$
On the other hand 
(\ref{form310}) implies 
$
\varphi_i(\widetilde{\mathcal S_j}(\psi_i,x_i))
= \widetilde{\mathcal S_j}(y_i)
$
Moreover
$
\varphi_i(\frak T_j(x_i)) = \frak T_j(y_i)
$
follows from definition.
\par
Since $\tilde\psi_i$, $\varphi_i$ 
are both contained in $U$ 
we have $\tilde\psi_i = \varphi_i$.
\par
The proof of Lemma \ref{lam317} is complete.
\end{proof}
We thus proved that $\mathcal{MOR}$ is a manifold 
and ${\rm Pr}_s$, ${\rm Pr}_t$ are submersions near the 
point of the form $(\varphi,o,o)$.
\par
We next consider the general case.
Let $(\varphi,x,y) \in \mathcal{MOR}$.
We consider the nodal curve $\Sigma_x = \pi^{-1}(x)$
(where $\pi : \mathcal C \to \mathcal V$)
together with marked points $\frak T_j(x)$, $j=1,\dots,\ell$.
We denote it by $(\Sigma_x,\vec z_x)$.
We start from $(\Sigma_x,\vec z_x)$ in place of 
$(\Sigma,\vec z)$ and obtain 
$\pi_x : \mathcal C_x \to \mathcal V_x$ 
and its sections 
$\frak T_{x,j}$, $j=1,\dots,\ell$.

\begin{sublem}\label{sublem321}
There exists an open neghborhood $W_x$ of $0$ in $\C^{d}$ for some $d$
and 
bi-holomorphic maps,
$$
\widetilde{\Phi}_x : W_x \times \mathcal C_x \to \mathcal C,
\qquad
{\Phi}_x : W_x \times \mathcal V_x \to \mathcal V,
$$
onto open subsets, with the following properties.
\begin{enumerate}
\item
The next diagram commutes.
\begin{equation}\label{diagram3150}
\begin{CD}
W_x \times \mathcal C_x @ >{\widetilde{\Phi}_x}>> \mathcal C \\
@ VV{{\rm id} \times \pi}V @ VV{\pi}V\\
W_x \times \mathcal V_x @ >{\Phi_x}>> \mathcal V.
\end{CD}
\end{equation}
\item
For $w \in W_x$, $\frak x \in \mathcal V_x$ we have
$$
\widetilde{\Phi}_x(w,\frak T_{x,j}(\frak x)) = \frak T_{j}({\Phi}_x(w,\frak x)).
$$
\item
$\Phi_x(0,o) = x$.
\end{enumerate}
\end{sublem}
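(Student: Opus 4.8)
The plan is to build $\Phi_x$ and $\widetilde\Phi_x$ out of Construction~\ref{cost314} applied to the fiber $(\Sigma_x,\vec z_x)$ in place of $(\Sigma,\vec z)$, the extra factor $W_x$ recording the automorphism freedom of those unstable irreducible components of $\Sigma$ that get absorbed when one passes to $\Sigma_x$. First I would describe the nearby fiber combinatorially. Writing $x=(\vec x^{0},\vec\rho^{0})\in\mathcal V\subset\mathcal V_0\times\mathcal V_1$, let $N_+=\{{\rm e}\in\Gamma(\Sigma):\rho^{0}_{\rm e}\ne 0\}$ be the set of smoothed nodes and $N_0$ the rest, so the nodes of $\Sigma_x$ are exactly those indexed by $N_0$, while the (normalized) irreducible components of $\Sigma_x$ are the $N_+$-clusters: for each connected component $C$ of the graph $(V(\Gamma(\Sigma)),N_+)$ one takes $\bigcup_{a\in V(C)}\Sigma_a$ with the nodes $N_+\cap C$ plumbed. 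Running Construction~\ref{cost314} for $(\Sigma_x,\vec z_x)$ then produces $\pi_x:\mathcal C_x\to\mathcal V_x$ with $\mathcal V_x=\mathcal V_{0,x}\times\mathcal V_{1,x}$, where $\mathcal V_{1,x}=\prod_{{\rm e}\in N_0}\C$ carries the plumbing parameters of the surviving nodes and $\mathcal V_{0,x}=\prod_C\mathcal V_{C,x}$ assembles the moduli of the clusters (a cluster that is not stable contributing a point, or, in the exceptional all-elliptic case, a neighbourhood in the moduli of elliptic curves, as in Construction~\ref{cost314}). Here I am free to choose the analytic families of complex coordinates (Definition~\ref{cooddinateatinf}) at the nodes of $\Sigma_x$, and I would choose them so that at each ${\rm e}\in N_0$ they coincide with the restrictions of the coordinates $\varphi_{a,i}$ already fixed for $\Sigma$; this will make a gluing below literally identical on the two sides.

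Next I would factor the base. Shrinking and reparametrizing $\mathcal V_1$ near $\vec\rho^{0}$ as $\prod_{{\rm e}\in N_+}(\text{neighbourhood of }\rho^{0}_{\rm e}\text{ in }\C\setminus\{0\})\times\prod_{{\rm e}\in N_0}(\text{neighbourhood of }0)$, the second factor is $\mathcal V_{1,x}$ on the nose, so it remains to identify a neighbourhood of $(\vec x^{0},\vec\rho^{0})$ inside $\mathcal V_0\times\prod_{N_+}\C$ with $W_x\times\mathcal V_{0,x}$. This I would do cluster by cluster. Put $Z_C=\prod_{a\in V(C)\cap\mathcal A_s}\mathcal V_a\times\prod_{{\rm e}\in N_+\cap C}\C$, so that $\mathcal V_0\times\prod_{N_+}\C=\prod_C Z_C$. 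Deforming the stable pieces $\Sigma_a$, $a\in V(C)$, and plumbing the nodes $N_+\cap C$ with (nonzero) parameter defines a holomorphic map $Z_C\to\mathcal V_{C,x}$; when $C$ is stable this lands near $[C]$ in the Deligne--Mumford chart and is a submersion, because that family is a complete (versal) deformation of $C$ --- the standard fact that near $[C]$ the base $\overline{\mathcal M}$ is, in suitable coordinates, the product of the relevant boundary stratum with the plumbing coordinates --- and when $C$ is not stable the target is as recalled above and the map is again submersive (trivially so when $\mathcal V_{C,x}$ is a point). Choosing, through the point representing the fibre $C$, a local holomorphic section of $Z_C\to\mathcal V_{C,x}$ and calling its image $W^{C}_x$ (with distinguished point $0$) splits $Z_C\cong W^{C}_x\times\mathcal V_{C,x}$ near that point. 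Taking $W_x:=\prod_C W^{C}_x$ then gives a biholomorphism of $W_x\times\mathcal V_x$ onto an open neighbourhood of $x$ in $\mathcal V$ carrying $(0,o)$ to $x$: this is $\Phi_x$, and it gives property~(3).

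Finally I would lift to the total spaces. Over the chart $Z_C$ the restriction of $\mathcal C$ is, for $C$ stable, the Deligne--Mumford universal family pulled back along $Z_C\to\mathcal V_{C,x}$, and, for $C$ not stable, the corresponding explicit family (constant with fibre $C$, or the universal elliptic family), which under the splitting $Z_C\cong W^{C}_x\times\mathcal V_{C,x}$ becomes $W^{C}_x\times\mathcal C_{C,x}$; these local pieces are then glued across the plumbing annuli indexed by $N_0$ in exactly the way $\mathcal C_x$ is assembled from the $\mathcal C_{C,x}$ in Construction~\ref{cost314}, the coordinate choice of the first paragraph making the two gluings agree. This yields $\widetilde\Phi_x:W_x\times\mathcal C_x\to\mathcal C$ covering $\Phi_x$, with diagram~(\ref{diagram3150}) commuting by construction and $\widetilde\Phi_x(w,\frak T_{x,j}(\frak x))=\frak T_j(\Phi_x(w,\frak x))$ because on both sides the $\frak T$'s are the marked points transported by the universal families and the plumbing. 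The one genuinely delicate point --- everything else being bookkeeping --- is the submersivity claim in the second paragraph together with the compatibility of the plumbing coordinates at the internal nodes $N_+\cap C$ with those entering the Deligne--Mumford universal family near $[C]$; both follow from the deformation theory of stable nodal curves recalled in \cite{alcorGri}, granted the alignment of coordinate families arranged in the first paragraph.
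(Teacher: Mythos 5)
Your argument is correct in outline and reaches the right conclusion, but it takes a genuinely different, much more hands-on route than the paper does. The paper's own proof is essentially two lines once one has Lemma~\ref{lem315}: one chooses $\frak J\subset\{1,\dots,k\}$ so that $\vec w_x:=\{\mathcal S_j(x)\mid j\in\frak J\}$ is a minimal stabilization of $(\Sigma_x,\vec z_x)$, which identifies $\pi_x:\mathcal C_x\to\mathcal V_x$ with the local universal family of the \emph{stable} curve $(\Sigma_x,\vec z_x\cup\vec w_x)$; the maps $\tilde\Pi:U(\mathcal C)\to\mathcal C_x$ and $\Pi:U(\mathcal V)\to\mathcal V_x$ that forget $\{\mathcal S_j(x)\mid j\notin\frak J\}$ are then holomorphic submersions (no contraction of unstable components is needed), so the implicit function theorem and the cartesian square $\pi\circ\tilde\Pi=\Pi\circ\pi$ produce $\Phi_x$ and $\widetilde\Phi_x$ in one step, with item~(2) coming from the section-compatibility of the forgetful map. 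You instead rebuild the identification from the plumbing picture: decomposing into $N_+$-clusters, constructing $Z_C\to\mathcal V_{C,x}$, proving it is a submersion, splitting off $W^C_x$ by a local section, and then lifting to the total spaces via the universal property. Everything you assert is true, but you are re-deriving precisely the ingredients (local product structure of moduli near the boundary, pullback structure of the universal family, compatible node coordinates between $\Sigma$ and $\Sigma_x$) that Lemma~\ref{lem315} already packages, and your argument has to invoke the deformation-theory literature at several points where the paper gets by on the single fact that forgetful maps of stable pointed curves are submersions. The trade-off is that your $W_x$ acquires a concrete description — a product of local sections of the cluster maps, one factor per stabilizing point absorbed in $\Sigma_x$ — whereas the paper's $W_x$ is whatever complement of $\ker D\Pi$ the implicit function theorem produces; but the paper's route is shorter and sidesteps the coordinate bookkeeping (your first-paragraph stipulation about node coordinates for $\Sigma_x$, in particular) entirely.
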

\begin{proof}
We consider the sections $\mathcal S_j$, $j=1,\dots,k$.
We can take a subset $\frak J$ of $\{1,\dots,k\}$
such that $\{\mathcal S_j(x) \mid j \in \frak J\}$
is a minimal stabilization of $(\Sigma_x,\vec z_x)$.
We put $\vec w_x = \{\mathcal S_j(x) \mid j \in \frak J\}$ and
$k' = \#\frak J$. We can identify 
$\pi_x : \mathcal C_x \to \mathcal V_x$ 
with the universal family of deformation of
the stable curve
$(\Sigma_x,\vec z_x \cup \vec w_x)$.
\par
Therefore forgetful map of the marked points $\{\mathcal S_j(x) 
\mid j \notin \frak J\}$ defines maps 
$$
\tilde\Pi : U(\mathcal C) \to \mathcal C_x, \qquad
\Pi : U(\mathcal V) \to \mathcal V_x.
$$
Here $U(\mathcal V)$ is a neighborhood of $x$ in $\mathcal V$ 
and $U(\mathcal C) = \pi^{-1}(U(\mathcal V)) \subset \mathcal C$.
By construction we have
$$
\pi \circ \tilde\Pi = \Pi \circ \pi.
$$
Since $(\Sigma_x,\vec z_x \cup \vec w_x)$ is stable,
the forgetful map $\Pi$ is defined simply by forgetting 
marked points and does not involve the process of shrinking the 
irreducible components which become unstable.
Therefore the maps $\tilde\Pi$ and $\Pi$ are both submersions.
Therefore, by implicit function theorem, we can find an open set $W_x$ and $\widetilde{\Phi}_x$, 
${\Phi}_x$ such that Diagram (\ref{diagram3150}) commutes.
\par
We also remark that 
$$
\frak T_j \circ \Pi = \tilde\Pi \circ \frak T_{x,j}.
$$
We can use it to prove Item (2) easily.
\end{proof}
We apply the same sublemma to $y$ and obtain 
$W_y$ and $\widetilde{\Phi}_y$, 
${\Phi}_y$.
We remark that $(\Sigma_x,\vec z_x \cup \vec w_x)$
is isomorphic to $(\Sigma_y,\vec z_y \cup \vec w_y)$.
Therefore a neighborhood of 
$(\varphi,x,y)$ in $\mathcal{MOR}$
is identified with a neighborhood of 
$(\varphi',o,o)$ in $\mathcal{MOR}_x$
times
$
W_x \times W_y
$.
Here $\mathcal{MOR}_x$ is obtained from 
$\pi_x : \mathcal C_x \to \mathcal V_x$ 
in the same way as 
$\mathcal{MOR}$ is obtained from 
$\pi : \mathcal C \to \mathcal V$.
The morphism 
$\varphi'$ is an element of $\mathcal{MOR}_x$ with 
${\rm Pr}_s(\varphi') = {\rm Pr}_t(\varphi') = o$.
Therefore using the case of 
$(\varphi',o,o)$ which we already proved,
we have proved Proposition \ref{prop312} in the general case.
\end{proof}
\begin{rem}\label{rem321333}
The smooth and complex structure of the chart of $\mathcal{MOR}$ we gave 
here is characterized by the following properties.
\par
We consider the case of ${\rm Pr}_s(\varphi) = x$, ${\rm Pr}_t(\varphi) = y$.
Let $S(x)$ (resp. $S(y)$) be the set of nodal points of $\Sigma_x$ 
(resp. $\Sigma_y$.) 
We take an open neighborhood $U_x$ (resp. $U_y$) of $x$ (resp. $y$)
in $\mathcal{V}$ and compact subsets $K_x$ (resp. $K_y$) of 
$\Sigma_x \setminus S(x)$ 
(resp. $\Sigma_y \setminus S(y)$), which 
is a complement of a sufficiently small 
neighborhood of $S(x)$ in $\Sigma_x$ (resp. $S(y)$ in $\Sigma_y$). There exist 
complex structures of $K_x \times U_x$ and of $K_y \times U_y$ 
and open holomorphic embeddings $\Phi_1 : K_x \times U_x
\to \mathcal C$ and $\Phi_2 :  K_y \times U_y
\to \mathcal C$
such that the next diagram commutes:
\begin{equation}
\begin{CD}
K_x \times U_x @ >>> \mathcal C \\
@ VVV @ VVV\\
U_x @ >>> \mathcal{V}
\end{CD}
\quad
\begin{CD}
K_y \times U_y @ >>> \mathcal C \\
@ VVV @ VVV\\
U_y @ >>> \mathcal{V}
\end{CD}
\end{equation}
We also require that the restriction to $K_x \times\{x\}$
(resp. $K_y \times\{y\}$) is the canonical embedding 
$K_x \subset \Sigma_x \subset \mathcal C$ (resp. $K_y \subset \Sigma_y
\subset \mathcal C$).
\par
Let $\mathcal U$ be a small neighborhood of $\varphi$ in $\mathcal{MOR}$.
By shrinking $K_x$ a bit we have a map
$$
\Psi : \mathcal U \times K_x \to K_y
$$
as follows. Let $\varphi' \in \mathcal U$ and 
${\rm Pr}_s(\varphi') = x'$,  ${\rm Pr}_t(\varphi') = y'$.
Now $\Psi$ is defined by
$$
\Phi_2(\Psi(\varphi',z),y') = \varphi(\Phi_1(z,x')).
$$
We require that $\Psi$ is a smooth and holomorphic map with respect to the 
given smooth and complex structure of $\mathcal U$.
Moreover there exist a finite number of points 
$z_1,\dots,z_n \in K_x$ such that 
$\mathcal U \to K_x^n$ $\varphi \mapsto (\varphi(z_1),\dots,\varphi(z_n))$ 
is a smooth embedding.
\par
It is easy to see from the definition that the structures we gave 
satisfies this condition.
We can use this characterization to show that the coordinate 
changes are smooth and holomorphic. 
\end{rem}
The construction of the deformation  
$\mathscr G = (\widetilde{\frak G},
\mathscr F,{\frak G},\vec{\frak T},o,\iota)$
is complete.
We will prove that it is universal.
The minimality at $o$ is obvious from construction.
\par
Let 
$\mathscr G' = (\widetilde{\frak G}',
\mathscr F',{\frak G}',\vec{\frak T}',o',\iota')$
be another deformation.
We will construct a morphism $(\widetilde{\mathcal H},\mathcal H)$
from $\mathscr G'$ to $\mathscr G$.
\par
Note we took a minimal stabilization $\vec w$ 
of $(\Sigma,\vec z)$. 
Since $\mathcal G'$ is a deformation of  $(\Sigma,\vec z)$,
there exists 
$$
\mathcal S'_j : \mathcal{OB}' \to \widetilde{\mathcal{OB}}'
$$
for $j=1,\dots,k$,
after replacing $\mathcal{OB}'$ by a smaller neighborhood of $o'$
if necessary,
such that the following holds.
\begin{enumerate}
\item
$\mathcal S'_j$ is holomorphic, for $j=1,\dots,k$.
\item
$\pi' \circ \mathcal S'_j
={\rm id} : \mathcal{OB}' \to \mathcal{OB}'$, for $j=1,\dots,k$.
\item
At $o' \in \mathcal{OB}'$ we have
$$
\mathcal S'_j(o') = \iota'(w_j),
$$
for $j=1,\dots,k$.
\end{enumerate}

Thus we have an 
$\mathcal{OB}'$ parametrized family of 
stable marked curves of genus $g$ with $k+\ell$
marked points as 
$$
x' \mapsto ((\pi')^{-1}(x'),\{\frak T'_j(x')\}\cup \{\mathcal S'_j(x')\}).
$$
Therefore by the universality of the family of marked {\it stable} curves in 
Lemma \ref{lem315}
we have a map (by shrinking $\mathcal{OB}'$ if necessary)
$$
(\mathfrak F,\widetilde{\mathfrak F}) :
(\mathcal{OB}',\widetilde{\mathcal{OB}}') \to (\mathcal{OB},\widetilde{\mathcal{OB}}).
$$
such that\footnote{Note $\mathcal{OB} = \mathcal V$ 
and $\widetilde{\mathcal{OB}} = \mathcal C$ by the 
construction of our family $\mathcal G$.}
$
\mathfrak F : \mathcal{OB}' \to \mathcal{OB}
$
and 
$
\widetilde{\mathfrak F} : \widetilde{\mathcal{OB}}' \to \widetilde{\mathcal{OB}}
$
are holomorphic,
the next diagram commutes and is a cartesian square:
\begin{equation}\label{diagram313}
\begin{CD}
\widetilde{\mathcal{OB}}' @ >{\widetilde{\mathfrak F}}>> \widetilde{\mathcal{OB}} \\
@ VVV @ VVV\\
\mathcal{OB}' @ >{\mathfrak F}>> \mathcal{OB}.
\end{CD}
\end{equation}
Moreover
\begin{equation}
\frak T_j \circ \mathfrak F
= \widetilde{\mathfrak F}\circ \frak T'_j
\qquad
\mathcal S_j \circ \mathfrak F
= \widetilde{\mathfrak F}\circ \mathcal S'_j.
\end{equation}
\par
We define $\mathcal H$.  Its object part is 
$\mathfrak F$. We define 
its morphism part.
Let $\tilde\varphi \in \mathcal{MOR}'$.
Suppose ${\rm Pr}_s(\tilde\varphi) = x'$, 
${\rm Pr}_t(\tilde\varphi) = y'$.  
Using the fact that Diagram (\ref{diagram313}) is a cartesian square
there exists a unique bi-holomorphic map $\varphi$ such that the next 
diagram commutes:
\begin{equation}\label{diagram315}
\begin{CD}
({\pi'})^{-1}(x') @ >{\tilde\varphi}>> ({\pi'})^{-1}(y') \\
@ VV{\widetilde{\mathfrak F}\vert_{{({\pi'})^{-1}(x')}}}V @ VV{\widetilde{\mathfrak F}\vert_{{({\pi'})^{-1}(y')}}}V\\
({\pi})^{-1}(x) @ >{\varphi}>> ({\pi})^{-1}(y).
\end{CD}
\end{equation}
Here $x = \mathfrak F(x')$, $y = \mathfrak F(y')$.
In fact all the arrows (except $\varphi$) is defined and are isomorphisms.
We define the morphism part of $\mathcal H$ by 
$\tilde\varphi \mapsto \varphi$. 
It is easy to see that this map is holomorphic and 
has other required properties.
We thus defined 
$\mathcal H: {\frak G}' \to {\frak G}$.
\par
We next define 
$\widetilde{\mathcal H}: \widetilde{\frak G}' \to \widetilde{\frak G}$.
Its object part is $\widetilde{\mathfrak F}$.
The morphism part is defined 
from $\widetilde{\mathfrak F}$ and the morphism part of ${\mathcal H}$,
by using the fact
$$
\widetilde{\mathcal{MOR}}' = {\mathcal{MOR}}' \,\,{}_{{\rm Pr}_s}\times_{\mathscr F} \widetilde{\mathcal{OB}}',
\quad
\widetilde{\mathcal{MOR}} = {\mathcal{MOR}} \,\,{}_{{\rm Pr}_s}\times_{\mathscr F} \widetilde{\mathcal{OB}}.
$$
\par
We thus obtain $\widetilde{\mathcal H}$.
\par
It is straight forward to check that $(\widetilde{\mathcal H},{\mathcal H})$
has the required properties.
\par
We finally prove the uniqueness part of the universality property 
of our deformation.
Let
$\mathcal G' = (\widetilde{\frak G}',
\mathscr F',{\frak G}',\vec{\frak T}',o',\iota')$
be another deformation
and $(\widetilde{\mathcal H},\mathcal H)$,
$(\widetilde{\mathcal H}',\mathcal H')$ be two morphisms 
from $\mathcal G'$ to $\mathcal G$.
We will prove that $(\widetilde{\mathcal H},\mathcal H)$ is conjugate to
$(\widetilde{\mathcal H}',\mathcal H')$.
\par
Let $x' \in \mathcal{OB}'$. By definition 
there exists a biholomorphic map 
$$
\mathcal T(x') : \pi^{-1}(\mathcal H'_{ob}(x'))
\to \pi^{-1}(\mathcal H_{ob}(x'))
$$
such that the next diagram commutes.
\begin{equation}\label{diagram316}
\begin{CD}
({\pi'})^{-1}(x') @ >{\rm id}>> ({\pi'})^{-1}(x') \\
@ VV{\mathcal H'\vert_{({\pi'})^{-1}(x')}}V @ VV{\mathcal H\vert_{({\pi'})^{-1}(x')}}V\\
\pi^{-1}(\mathcal H'_{ob}(x')) @ >{\mathcal T(x')}>> \pi^{-1}(\mathcal H_{ob}(x')).
\end{CD}
\end{equation}
In fact two vertical arrows are isomorphisms.
Moreover 
$$
\mathcal T(x')(\frak T_j(\mathcal H'_{ob}(x')))
=
\mathcal T(x')(\mathcal H'(\frak T'_j(x')))
=
\mathcal H_{ob}(\frak T'_j(x'))
=
\frak T_j(\mathcal H_{ob}(x')).
$$
Namely $\mathcal T(x')$ preserves marked points.
Therefore by definition 
$\mathcal T(x') \in \mathcal{MOR}$.
It is easy to see that $x' \mapsto \mathcal T(x')$ is the 
required natural transformation.
\par
The proof of Theorem \ref{them35} is now complete.
\qed\par\medskip
For our application of Theorem  \ref{them35} we need the following 
additional properties of our universal family.
\begin{prop}\label{prop318}
Let $\mathcal G_c$ be a compact subgroup of the group
$\mathcal G$ in  (\ref{form3535}).
Then $\mathcal G_c$ acts on our universal family 
$\mathscr G = (\widetilde{\frak G},
\mathscr F,{\frak G},\vec{\frak T},o,\iota)$
in the following sense.
\begin{enumerate}
\item
$\mathcal G_c$ acts on the spaces of objects and of morphisms 
of $\widetilde{\frak G}$ and  ${\frak G}$.
The action is a smooth action.
\item
The action of each element of $\mathcal G_c$ in (1) is holomorphic.
\item
Maps appearing in $\mathscr G$ are all $\mathcal G_c$ equivariant.
In particular 
$\iota: \Sigma \to \widetilde{\mathcal{OB}}
$ 
is $\mathcal G_c$ equivariant.
\end{enumerate}
\end{prop}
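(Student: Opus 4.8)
The plan is to revisit the explicit construction of the universal deformation in Subsection~\ref{unideformationexi} (Construction~\ref{cost314} and Proposition~\ref{prop312}) and run it $\mathcal G_c$-equivariantly, in the same spirit in which that construction is already $\overline{\mathcal G}$-equivariant. The one genuinely new ingredient is that a compact group acting holomorphically on a Riemann surface preserves some Hermitian metric (average any metric over $\mathcal G_c$ with Haar measure), so that for $\gamma\in\mathcal G_c$ the derivative $D_p\gamma$ at any point $p$ fixed by the stabilizer of $p$ is an isometry of the relevant tangent line, i.e.\ a rotation. This is precisely what is needed to make the neck model $zw=\rho_{\rm e}$, which carries only the rotational $S^1$-symmetry, equivariant. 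Concretely, I would fix a $\mathcal G_c$-invariant Hermitian metric on $\Sigma$, use it in the proof of Lemma~\ref{lem399} to choose the identifications $T_{z_{a,i}}\Sigma_a\cong\C$ at the nodal marked points to be isometric, and upgrade Lemma~\ref{lem399} to the statement that there is a system $(\varphi_{a,i})$ of analytic families of coordinates which is $\mathcal G_c$-equivariant: if $\gamma$ sends the node $z_{a,i}$ to $z_{a',i'}$, then $\gamma(\varphi_{a,i}(x,z))=\exp(\theta_{\gamma,a,i}\sqrt{-1})\,\varphi_{a',i'}(\gamma x,z)$ with $\theta_{\gamma,a,i}\in\R$ depending real-analytically on $\gamma$; by the chain rule $\gamma\mapsto(\theta_{\gamma,a,i})$ is a crossed homomorphism over the action of $\mathcal G_c$ on ${\rm Node}$.

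Next I would build the action on $\mathcal{OB}=\mathcal V=\mathcal V_0\times\mathcal V_1$. On $\mathcal V_1=\bigoplus_{\rm e}\C_{-,{\rm e}}\otimes\C_{+,{\rm e}}$ let $\mathcal G_c$ act linearly, permuting the summands through the action on the dual graph and rescaling the factor attached to the node $z_{a,i}$ by $\exp(\theta_{\gamma,a,i}\sqrt{-1})$, exactly as in Definition~\ref{defn313}; the crossed-homomorphism property makes this a genuine action. On $\mathcal V_0=\prod_{a\in\mathcal A_s}\mathcal V_a$, the element $\gamma$ permutes the stable components and restricts to an isomorphism $(\Sigma_a,\vec z_a)\cong(\Sigma_{\gamma a},\vec z_{\gamma a})$ (matching the labels of the nodal marked points via the dual graph); by the uniqueness of the classifying map for the universal deformation of a stable marked curve, this induces a unique isomorphism $\mathcal V_a\cong\mathcal V_{\gamma a}$ of universal deformation spaces, and these compose functorially, so $\mathcal G_c$ acts holomorphically on $\mathcal V_0$, hence on $\mathcal V$. (The identity component of $\mathcal G_c$ acts trivially on $\mathcal V_0$, since $\mathrm{Aut}(\Sigma_a,\vec z_a)$ is finite for $a\in\mathcal A_s$, and acts on $\mathcal V_1$ only through a torus of phases.) Shrinking $\mathcal V$ to a $\mathcal G_c$-invariant neighborhood of $o$ — possible as $\mathcal G_c$ is compact — we obtain the action on $\mathcal{OB}$, and $o$ is fixed.

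Then I would lift the action to $\widetilde{\mathcal{OB}}=\mathcal C$ by running Construction~\ref{cost314}: on a stable fiber $\pi_a^{-1}(x_a)$ use the isomorphism $\mathcal C_a\cong\mathcal C_{\gamma a}$ induced by $\gamma|_{\Sigma_a}$ through universality, on an unstable component $\Sigma_a$ use $\gamma|_{\Sigma_a}$ itself, and on the gluing necks use the $\mathcal G_c$-equivariance of the $\varphi_{a,i}$ together with the phase action on the $\rho_{\rm e}$. These patch to a biholomorphism $\gamma_x\colon\pi^{-1}(x)\to\pi^{-1}(\gamma x)$ for each $x\in\mathcal V$, and the functoriality of the three ingredients makes $\gamma\mapsto\gamma_{(\cdot)}$ a genuine homomorphism into the group of biholomorphisms of $\mathcal C$ covering the action on $\mathcal V$; moreover $\mathscr F_{ob}=\pi$ and $\frak T_j=\frak t_j$ are equivariant by construction, and $\iota\colon\Sigma\to\mathscr F_{ob}^{-1}(o)$ intertwines the given $\mathcal G_c$-action on $\Sigma$ with the one just constructed on the central fiber. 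Finally, put on $\mathcal{MOR}$ the conjugation action $(x,y;\varphi)\mapsto(\gamma x,\gamma y;\gamma_y\circ\varphi\circ\gamma_x^{-1})$ and on $\widetilde{\mathcal{MOR}}$ the analogous one; one checks directly from the formulas of Constructions~\ref{const212} and~\ref{const2124} that it preserves $\mathcal{MOR}$ and $\widetilde{\mathcal{MOR}}$ and commutes with ${\rm Pr}_s$, ${\rm Pr}_t$, $\frak{comp}$, $\frak{inv}$, $\mathcal{ID}$ and $\mathscr F_{mor}$. Each $\gamma$ acts holomorphically, being built from holomorphic maps, and the joint action is smooth because the gluing data, the phases $\theta_{\gamma,a,i}$, and the maps induced on the $\mathcal V_a$ all depend real-analytically on $\gamma\in\mathcal G_c$. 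This gives items (1)--(3).

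The step I expect to demand the most care is the positive-dimensional part of $\mathcal G_c$: arranging the gluing coordinates, and hence $\mathcal C$, to be equivariant under a non-discrete compact group, and verifying that $\gamma\mapsto\gamma_{(\cdot)}$ is a genuine homomorphism rather than merely well defined up to conjugation. This is exactly what the invariant metric (forcing tangent-line derivatives at fixed points into $S^1$, compatibly with the symmetry $zw=\rho$ of the neck), the chain rule (making $\gamma\mapsto\theta_{\gamma,\cdot}$ a crossed homomorphism), and the uniqueness of the classifying map of a stable marked curve (making the $\mathcal V_0$-factor strictly functorial) are designed to provide. As an alternative route, each $g\in\mathcal G_c$, via the automorphism $g$ of $(\Sigma,\vec z)$, transports $\mathscr G$ to another universal deformation of $(\Sigma,\vec z)$, hence by Theorem~\ref{them35} back to $\mathscr G$ up to conjugation, and one could remove the conjugation ambiguity by averaging the choices over $\mathcal G_c$; but since that passes through the infinite-dimensional group of natural transformations $\mathcal T\colon\mathcal{OB}\to\mathcal{MOR}$, the direct construction above seems cleaner.
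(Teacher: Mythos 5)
Your proposal is correct and follows essentially the same route as the paper's proof: fix a $\mathcal G_c$-invariant metric (equivalently, as the paper does, choose coordinates on the unstable components so that $\mathcal G_{c,a}$ becomes the rotation group), upgrade Lemma~\ref{lem399} so the analytic families of coordinates at the nodes are $\mathcal G_c$-equivariant, let $\mathcal G_c$ act on $\mathcal V_1$ by the resulting phases and on $\mathcal V_0$ through the finite quotient $\mathscr H_c$, and lift to $\mathcal C$ and then to $\mathcal{MOR}$ by conjugation. Your write-up is more explicit than the paper's about why this gives a genuine (not merely projective) action, but the underlying argument is the same.
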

\begin{proof}
While constructing our universal family we take 
 analytic families of  coordinates at the nodal 
points so that it is invariant under $\overline{\mathcal G}$
action.
(Lemma \ref{lem399}.)
\par
We slightly modify the notion of invariance of 
analytic family of  coordinates and 
may assume that it is invariant under the $\mathcal G_c$ action as follows.
\par
We first remark that there exists an exact sequence
\begin{equation}\label{form317}
1 \to \prod_{a \in \mathcal A}\mathcal G_{c,a} 
\to \mathcal G_c \to \mathscr H_c \to 1.
\end{equation}
Here $\mathscr H_c$ is a finite group 
and $\mathcal G_{c,a}$ is a compact subgroup of 
$\mathcal G_{a}$.
In case $\Sigma_a$ is unstable, 
we consider the case  (US.1).
Then $\mathcal G_{c,a}$ is a compact subgroup of 
the group of transformations of the form $z \mapsto az + b$.
(Here $a\in \C\setminus \{0\}$, $b \in \C$. We may take 
the coordinate of $S^2 = \C \cup \{\infty\}$ such that 
$\mathcal G_{c,a}$ consists of elements of the form $z \mapsto az$ with 
$\vert a \vert =1$. Then we take $w= 1/z$ as the coordinate at infinity
(= the node).
\par
In case (US.2), we may take $\vec z_a = \{0,\infty\}$.
So $\mathcal G_{c,a}$ consists of elements of the form $z \mapsto az$ with 
$\vert a \vert =1$. Then take $1/z$ as the coordinate at infinity
(= the node).
Thus in all the cases we may assume that $\gamma \in \mathcal G_{c,a}$
acts in the form Definition \ref{cooddinateatinf} (*).
\par
Now $\mathcal G_{c}$ acts on $\mathcal V_1$ so that 
$\mathcal G_{c,a}$ acts by using (*) and $\mathscr H_c$ 
acts by exchanging the factors.
$\mathcal G_{c}$ also acts on $\mathcal V_0$.
Therefore $\mathcal G_{c}$ also acts on $\mathcal V$.
It is easy to see from construction that this action 
lifts to an action to $\mathcal C$.
The proposition follows. 
\end{proof}
\begin{exm}
Let $\Sigma$ be obtained by gluing  two copies of $S^2 = \C \cup \{\infty\}$ 
at $\infty$. (We put no marked point on it.) The group $\mathcal G$ of 
automorphisms of $\Sigma$ has an exact sequence,
$$
1 \to {\rm Aut}(S^2,\infty) \times {\rm Aut}(S^2,\infty)
\to \mathcal G \to \Z_2 \to 1,
$$
where ${\rm Aut}(S^2,\infty)$ is the group consisting  
of the transformations $z \mapsto az + b$
on $\C$. We embed 
$S^1 \to {\rm Aut}(S^2,\infty) \times {\rm Aut}(S^2,\infty)$ 
by $\sigma \mapsto (\sigma^2,\sigma^3)$.
Where $\sigma \in \{z \mid \vert z\vert =1\}$ and
$\sigma^k$ acts on $\C$ by $z \mapsto \sigma^kz$.
\par
The space $\mathcal V$ we obtain in this case is $D^2$ 
which consists of gluing parameter.
The action of $S^1$ is by $\sigma \mapsto (\rho \mapsto \sigma^5\rho)$.
\par
Let $z_1,z_2$ be the coordinates of the first and second irreducible components 
of $\Sigma$,
respectively. When we glue those two components by the parameter $\rho$, 
we equate $z_1 z_2 = \rho$. 
So if we define $z'_1 = \sigma^2 z_1$, $z'_2 = \sigma^3 z_2$,
then the equation turn out to be $z'_1 z'_2 = \sigma^5\rho$.
\par
Suppose $u : \Sigma \to S^2$ is the map which is $z_1 \mapsto z_1^3$, 
$z_2 \mapsto z_2^2$. We define an $S^1$ action on $S^2 = \C \cup \{\infty\}$ 
by $(\sigma,w) \mapsto \sigma w$. Then the above group $S^1$ is the 
isotropy group of this $S^1$ action.
(Which we write $\widehat{\mathcal G}_c$, (\ref{defnGc}).)
\end{exm}
The next example shows that the 
(noncompact) group $\mathcal G$ may  act 
on our universal family.
\begin{exm}
We consider the case when $\Sigma = S^2_1 \cup S^2_2$
and $\vec z =$ 3 points.
We identify $S^2_1 = \C \cup \{\infty\}$ and 
$\vec z = (1,\sqrt{-1},-\sqrt{-1})$. 
$S^2_2 = \C \cup \{\infty\}$ also.
We use $z$ and $w$ as coordinates of $S^2_1$ and $S^2_2$.
They are glued at $0 \in S^2_1$ and $0 \in S^2_2$.
$\mathcal V_0$ is identified with the small neighborhood of $0$,
(that is, the coordinate of the node in $S^2_1$.)
We denote this coordinate of $\mathcal V_0$ by $v$.
$\rho$ is the parameter to glue $S^2_1$ and 
$S^2_2$. We use it to equate
$$
zw = \rho.
$$
We use $w' = 1/w$ as a parameter.
$\mathcal G$ is the group consisting of transformations 
of the form $w' \mapsto g_{a,b}(w') = aw' + b$.
\par
Now following the proof of Theorem \ref{them35}
we take two additional marked points on $S^2_2$, 
say, $w' =0,1$.
So after gluing we have 5 marked points,
$\vec z$ and $v, v + \rho$.
\par
When we first move $w' =0,1$ by $g_{a,b}$ and glue 
then the 5 marked points are
$\vec z$ and $v + \rho b$, $v + \rho(a+b)$.
(See Figure \ref{Figuresec3}.)
\par
Now $v, v + \rho$ may be identified with an element of $\mathcal V$.
The fiber ${\rm Pr}_s : \mathcal{MOR} \to \mathcal V 
= \mathcal{OB}$ is then identified with $\mathcal G$.
We consider $\varphi \in \mathcal{OB}$ corresponding to $((v, v + \rho),g_{a,b})$.
Then by the construction its target ${\rm Pr}_t(\varphi)$ is 
$v + \rho b$, $v + \rho(a+b)$.
Thus we can write
$$
(v, v + \rho)g_{a,b} = (v + \rho b,v + \rho(a+b)).
$$
See Figure \ref{Figuresec3}.
Note
$$
g_{a,b}g_{a',b'} = g_{aa',b+ab'}
$$
We can check
$$
((v, v + \rho)g_{a,b})g_{a',b'} = 
(v+ \rho(b + a b'), v + \rho(aa' + b + ab'))
= (v, v + \rho)(g_{a,b}g_{a',b'}).
$$
So this is a genuine action. However we can define this action 
only on the part where $v$ is small. In fact we use the coordinate 
$z \mapsto z +v$ around $v$ in the above construction.
We can not use this coordinate when $v$ gets closer to $\vec z$.
\end{exm}
\begin{figure}[h]
\centering
\includegraphics[scale=0.4]{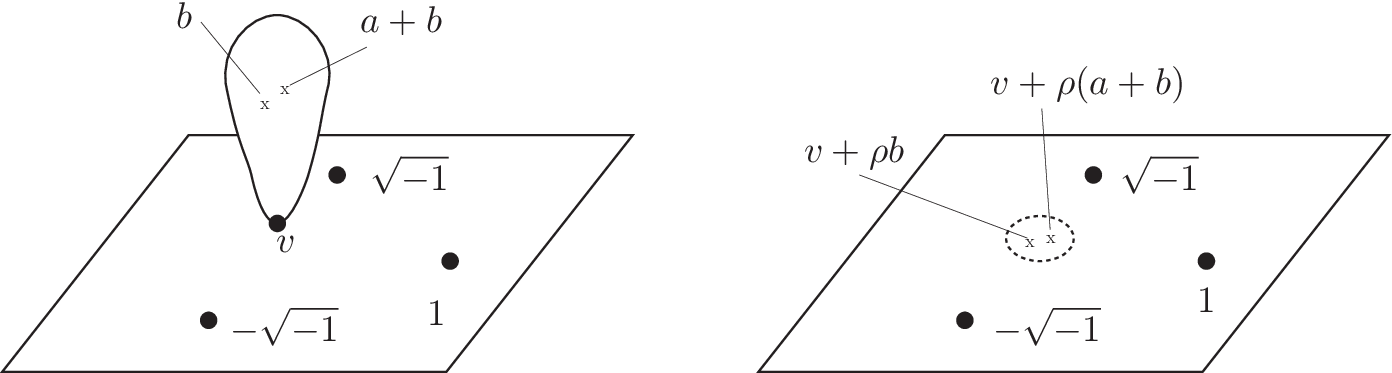}
\caption{Universal family of deformation of $S^2$ with 3 marked points 
and one sphere bubble}
\label{Figuresec3}
\end{figure}

\begin{rem}
In the situation of Theorem \ref{them35} we consider a 
neighborhood of the image of $\mathcal{ID} : \mathcal{OB} \to \mathcal{MOR}$.
Since ${\rm Pr}_t$ is a submersion we may identify this neighborhood with a 
direct product $\frak H \times \mathcal{OB}$.
We assign to $(\varphi,{\bf x}) \in \frak H \times \mathcal{OB}$ the element 
${\rm Pr}_s(\varphi)$. We thus obtain a map
\begin{equation}\label{form320}
\frak H \times \mathcal{OB} \to \mathcal{OB}.
\end{equation}
If $(\varphi,{\bf x}) \in \frak H \times \mathcal{OB}$ are sent to ${\bf y}$ 
then $\varphi$ induces an isomorphism between two marked nodal
curves represented by ${\bf x}$ and by ${\bf y}$.
The map (\ref{form320}) is nothing but the map $act$ appearing in 
\cite[page 990]{FO}.
Since the product decomposition $\frak H \times \mathcal{OB}$ of the neighborhood 
of the image of $\mathcal{ID}$ is not canonical, 
this is not really an action as we mentioned in \cite[page 990]{FO}.
\end{rem}

\section{$\epsilon$-closeness and obstruction bundle}
\label{sec:obstruction}

Let $((\Sigma,\vec z),u)$ be a stable map \index{stable map} of genus $g$ with $\ell$ marked 
points in a symplectic manifold $(X,\omega)$
on which $G$ acts preserving $\omega$. 
See for example \cite[Definition 7.4]{FO} for the definition of stable map. 
We take the universal family of deformations
$\mathscr G = (\widetilde{\frak G},
\mathscr F,{\frak G},\vec{\frak T},o,\iota)$
of  $(\Sigma,\vec z)$. We fix Riemannian metrics on 
the spaces of morphisms and objects of 
$\widetilde{\frak G}$, ${\frak G}$.\index{00G4frak@$\widetilde{\frak G}$}
We also choose a $G$-invariant Riemannian metric on $X$.
We put \index{00G3C@$\widehat{\mathcal G}_c$}
\begin{equation}\label{defnGc}
\widehat{\mathcal G}_c = \left\{(\gamma,g) \,\, \left\vert
\aligned
&  \gamma : (\Sigma,\vec z) \to 
(\Sigma,\vec z), 
\text{$\gamma$ is bi-holomorphic}, \\
&g \in G \quad u(\gamma x) = g u(x)
\endaligned
\right\}\right..
\end{equation}
\index{00G3C@$\widehat{\mathcal G}_c$}
We define its group structure by
\begin{equation}
(\gamma_1,g_1) \cdot (\gamma_2,g_2)
=
(\gamma_1\gamma_2,g_1g_2).
\end{equation}

We define a group homomorphism $\widehat{\mathcal G}_c \to 
\mathcal G$ by $(\gamma,g) \mapsto \gamma$
and denote by ${\mathcal G}_c$ the image.
Note $\mathcal G$ is defined by (\ref{form3535}).
This is a compact subgroup of $\mathcal G$. Using
Proposition \ref{prop318} we may assume that 
$\mathscr G$ has $\mathcal G_c$ \index{00G3C@$\mathcal G_c$}
action in the sense stated in 
Proposition \ref{prop318}.
\par
We will next fix a `trivialization' of the `bundle'
$\mathscr F_{ob} : \widetilde{\mathcal{OB}} \to \mathcal{OB}$.
Note this `bundle' coincides with $\pi : \mathcal C \to \mathcal V$ using the 
notation we used during the proof of Theorem \ref{them35}.
We first recall that we take universal families 
$\mathcal C_{a} \to \mathcal V_a$ 
of deformations of $(\Sigma_a,\vec z_a)$ for each stable irreducible component
$a \in \mathcal A_s$.
They are fiber bundles. Therefore we obtain their $C^{\infty}$
trivialization by choosing $\mathcal V_a$ small.
It gives a diffeomorphism
$$
\phi_a : \mathcal V_a \times \Sigma_a \to \mathcal C_{a}
$$
onto an open subset such that the next diagram commutes:
\begin{equation}\label{diagram51333}
\begin{CD}
\mathcal V_a \times \Sigma_a @ >{\phi_a}>> \mathcal C_{a} \\
@ VVV @ VV{\pi}V\\
\mathcal V_a @ >{\rm id}>> \mathcal V_a.
\end{CD}
\end{equation}

We require the following properties:
\begin{enumerate}
\item[(Tri.1)]
$\phi_a$ is $\mathcal G_a$ equivariant. \index{Tri}
\item[(Tri.2)]
$$
(\phi_a)^{-1}(\frak T_{a,j}(x)) = (x,z_{a,j}).
$$
Namely by this trivialization the sections $\frak T_{a,j}$ 
becomes a constant map to $z_{a,j}$ (that is, the $j$-th 
marked point of $(\Sigma_a,\vec z_a)$).
\item[(Tri.3)] Let $
\varphi_{a,i} : \mathcal V_{a} \times D^2(2) \to \mathcal C_a
$ be the analytic family of  coordinates  
as in Definition \ref{cooddinateatinf}.
Then we have
$$
(\phi_a)^{-1}(\varphi_{a,i}(x,z)) = (x,\varphi_{a,i}(0,z)).
$$
Here $0 \in \mathcal V_{a}$ corresponds to the point $\Sigma_a$.
\item[(Tri.4)]
Let $\mathscr H_c$ be as in (\ref{form317}). Then the next diagram commutes
for $\gamma \in \mathscr H_c$.
Note $\mathscr H_c$ acts on the dual graph of $\Sigma$. So 
for $a \in \mathcal A_s$ we obtain
$\gamma a \in \mathcal A_s$.
\begin{equation}\label{diagram516}
\begin{CD}
\mathcal V_a \times \Sigma_a @ >{\phi_a}>> \mathcal C_{a} \\
@ VV{\gamma}V @ VV{\gamma}V\\
\mathcal V_{\gamma a} \times \Sigma_{\gamma a} @ >{\Phi_{\gamma a}}>> \mathcal C_{\gamma a}.
\end{CD}
\end{equation}
\end{enumerate}
Existence of such trivialization in $C^{\infty}$ category 
is standard. (It is nothing but the local smooth triviality of 
fiber bundles, which is a consequence of local contractibility of the 
group of diffeomorphisms.)
\par
The above trivialization is defined on $\mathcal V_0 \subset \mathcal V$.
We extend it including the gluing parameter as follows.
\par
Let $\delta > 0$. We put \index{00V31delta@$\mathcal V_1(\delta)$}
\begin{equation}\label{form45555}
\mathcal V_1(\delta) 
= \{(\rho_{\rm e})_{{\rm e} \in \Gamma(\Sigma)}
\mid \forall {\rm e}, \vert\rho_{\rm e}\vert < \delta\}.
\end{equation}
Let ${\bf x} = ((x_a)_{a \in\mathcal A_s},(\rho_{\rm e})_{{\rm e} \in \Gamma(\Sigma)})
\in \mathcal V_0 
\times \mathcal V_1(\delta) \subset \mathcal{OB}$.
We put  \index{00S5igmax@$\Sigma({\bf x})$}
\begin{equation}\label{sigmaxxx}
\Sigma({\bf x}) = \mathscr F_{ob}^{-1}({\bf x})
\end{equation}
(= $\pi^{-1}(\bf x) \subset \mathcal C$) 
and $\vec z({\bf x}) = (\frak T_j({\bf x}))_{j=1}^{\ell}$.
\par
We also put \index{00S5igmadela@$\Sigma(\delta)$}
\begin{equation}\label{formula46}
\Sigma(\delta) =  \bigcup_{a \in \mathcal A} 
(\Sigma_a \setminus \bigcup \varphi_{a,j}(D^2(\delta))),
\end{equation}
where the union $\bigcup \varphi_{a,j}(D^2(\delta))$ is taken 
over all nodal points contained in $\Sigma_a$.
\begin{figure}[h]
\centering
\includegraphics[scale=0.3]{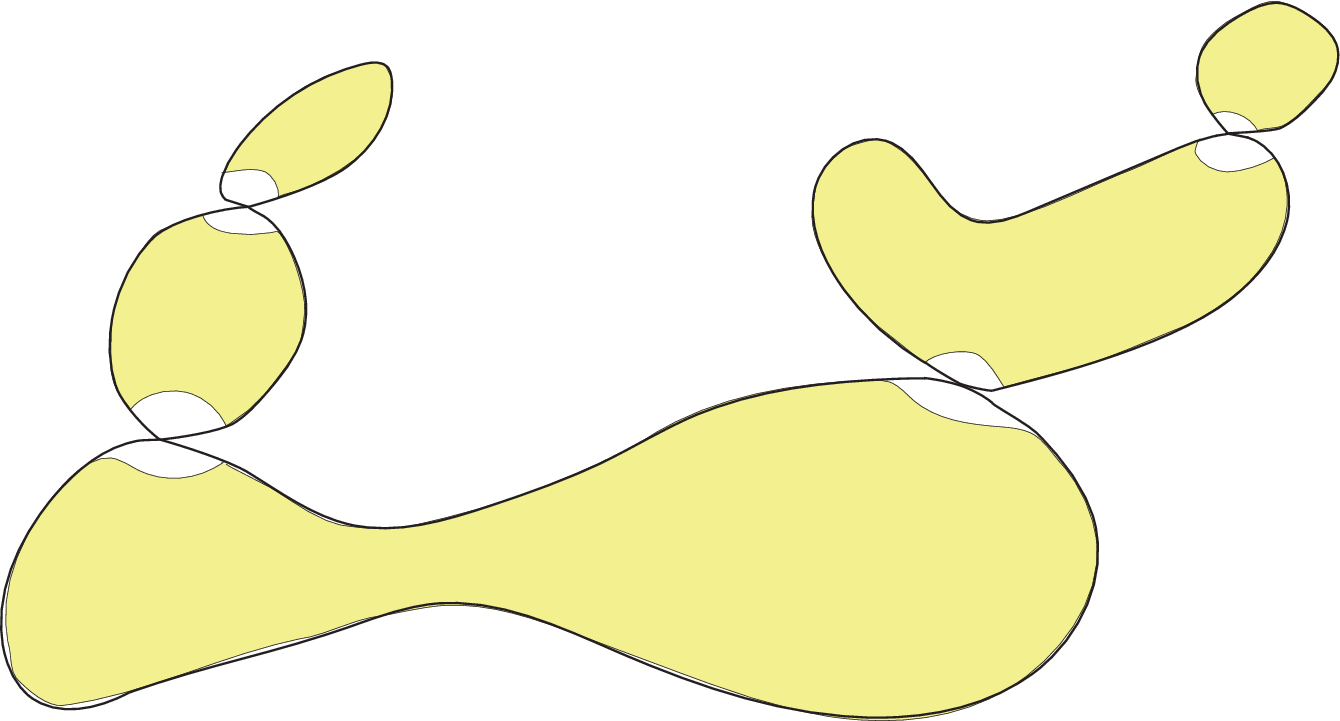}
\caption{$\Sigma(\delta)$}
\label{Figure3}
\end{figure}
We will construct a smooth embedding \index{00P5hixdelta@$\Phi_{{\bf x},\delta}$}
\begin{equation}\label{loctrimap}
\Phi_{{\bf x},\delta} : \Sigma(\delta) \to \Sigma({\bf x})
\end{equation}
below.
Let $\Sigma_a(x_a) = \pi^{-1}(x_a) \subset \mathcal C_a$.
We put
\begin{equation}
\Sigma(\delta;(x_a)_{a \in\mathcal A_s})) =  \bigcup_{a \in \mathcal A} 
(\Sigma_a(x_a) \setminus \bigcup \varphi_{a,j}(\{x_a\} \times D^2(\delta))),
\end{equation}
The maps $\phi_a$ for $a\in \mathcal A_s$ define a diffeomorphism 
$$
\Phi_{((x_a)_{a \in\mathcal A_s})} :
\Sigma(\delta) \to  \Sigma(\delta;(x_a)_{a \in\mathcal A_s}).
$$
(Note for an unstable component $\Sigma_a$ the corresponding component 
of $\Sigma(\delta;(x_a)_{a \in\mathcal A_s})$ is 
identified with $\Sigma_a$ itself. 
In this case $\Phi_{{\bf x},\delta}$ on this component is the identity map.)
\par
The $C^{\infty}$ embedding 
$$
\Sigma(\delta;(x_a)_{a \in\mathcal A_s})
\to \Sigma({\bf x})
$$
is obtained by construction.
(In fact $\Sigma({\bf x})$ is obtained by gluing
$\Sigma_a(x_a) \setminus \bigcup \varphi_{a,j}(\{x_a\} \times D^2(\vert\rho_a\vert))$.)
Thus we obtain an open embedding of $C^{\infty}$ class
\begin{equation}
\Phi_{\bf x,\delta}  : \Sigma(\delta) \to \Sigma({\bf x}) 
\end{equation}
by composing them.
\begin{defn}\label{defn410}
Let $F : A \to X$ be a continuous map from a topological space to a metric space.
We say {\it $F$ has diameter $< \epsilon$ on $A$}
\index{has diameter $< \epsilon$ on}
if for each connected component $A_a$ of $A$ the 
diameter of $F(A_a)$ is smaller than $\epsilon$.
\end{defn}
\begin{defn}\label{defn41}
We consider a triple $((\Sigma',\vec z^{\,\prime}),u')$ where 
$(\Sigma',\vec z^{\,\prime})$ is a nodal curve of genus $g$ with $\ell$ marked points, 
$u' : \Sigma' \to X$ is a smooth map.
\par
We say that $((\Sigma',\vec z^{\,\prime}),u')$ is {\it $G$-$\epsilon$-close} to
\index{00G2psiolonGclose@$G$-$\epsilon$-clos}
$((\Sigma,\vec z),u)$ if there exist $g \in G$, $\delta > 0$, 
${\bf x} = ((x_a)_{a \in\mathcal A_s},(\rho_{\rm e})_{{\rm e} \in \Gamma(\Sigma)})
\in \mathcal V_0 
\times \mathcal V_1(\delta) \subset \mathcal{OB}$,
and a bi-holomorphic map $\phi : (\Sigma({\bf x}),\vec z({\bf x}))
\cong (\Sigma',\vec z^{\,\prime})$ with 
the following properties.
\begin{enumerate}
\item The $C^{2}$ difference between
$u' \circ \phi \circ \Phi_{\bf x,\delta}$ and $g \circ u\vert_{\Sigma(\delta)}$
is smaller than $\epsilon$.
\item The distance between $\bf x$ and $o \in \mathcal{OB}$
is smaller than $\epsilon$. Moreover $\delta < \epsilon$.
\item
The map $u'\circ \phi$ has diameter $< \epsilon$ on $\Sigma({\bf x}) \setminus 
{\rm Image}(\Phi_{\bf x,\delta})$.
\end{enumerate}
\begin{figure}[h]
\centering
\includegraphics[scale=0.4]{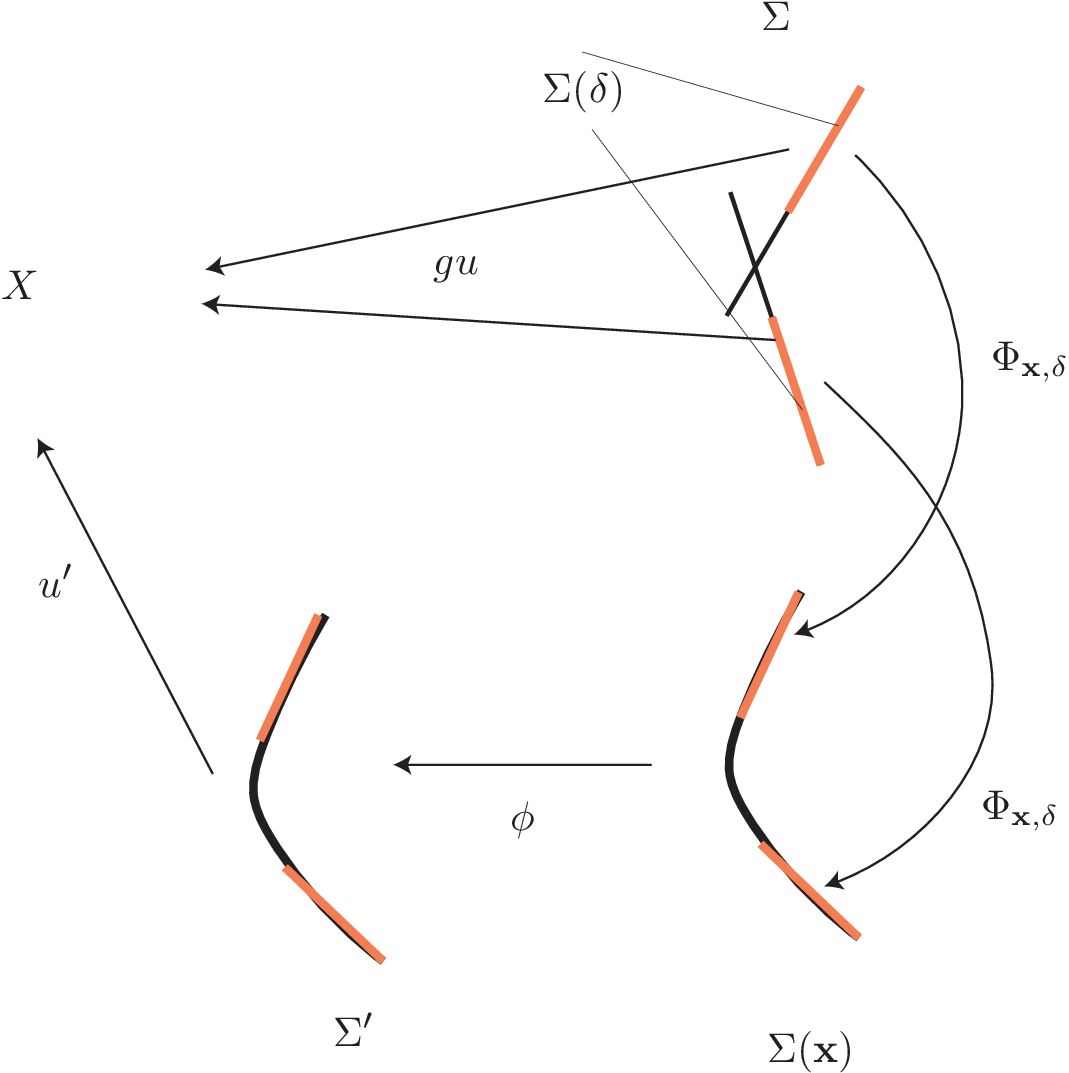}
\caption{$((\Sigma',\vec z^{\,\prime}),u')$ is $G$-$\epsilon$-close 
to
$((\Sigma,\vec z),u)$}
\label{Figure4-1}
\end{figure}
In case we need to specify $g$, $\bf x$, $\phi$ we say 
$((\Sigma',\vec z^{\,\prime}),u')$ is $G$-$\epsilon$-close to
$((\Sigma,\vec z),u)$
by $g$, $\bf x$, $\phi$.
\par
We say that $((\Sigma',\vec z^{\,\prime}),u')$ is {\it $\epsilon$-close} to
\index{00E5spiloncllse@$\epsilon$-close}
$((\Sigma,\vec z),u)$ if (2),(3) are satisfied and 
(1) is satisfied with $g=1$.
In case  we need to specify $\bf x$, $\phi$ we say 
$((\Sigma',\vec z^{\,\prime}),u')$ is $\epsilon$-close to
$((\Sigma,\vec z),u)$
by $\bf x$, $\phi$.
\end{defn}
The main part of the construction of our Kuranishi chart is 
to associate a finite dimensional subspace
$$
E((\Sigma',\vec z^{\,\prime}),u')
\subset
C^{\infty}(\Sigma({\bf x});(u')^*TX\otimes \Lambda^{01})
$$
to each $((\Sigma',\vec z^{\,\prime}),u')$ which is $G$-$\epsilon$-close 
to $((\Sigma,\vec z),u)$
such that 
$$
E((\Sigma',\vec z^{\,\prime}),gu') = g_*E((\Sigma',\vec z^{\,\prime}),u')
$$
holds for $g \in G$.
\par
The construction of such $E((\Sigma',\vec z^{\,\prime}),u')$ will 
be completed in Section \ref{sec:main}
using center of mass technique which we review in Section 
\ref{sec:centerofmass}.

\begin{defn}\label{defn42}
We say a subspace 
$$
E((\Sigma,\vec z),u)
\subset 
C^{\infty}(\Sigma({\bf x});u^*TX\otimes \Lambda^{01})
$$
an {\it obstruction space at origin}\index{obstruction space at origin} if the following is 
satisfied.
\begin{enumerate}
\item
$E((\Sigma,\vec z),u)$ is a finite dimensional 
linear subspace.
\item
The support of each element of 
$E((\Sigma,\vec z),u)$ is 
contained in the complement of the image of $\varphi_{a,i} : D^2(2) \to 
\Sigma_a$ for all $a$ and $i$ corresponding to a nodal 
point. 
\item
$E((\Sigma,\vec z),u)$ is invariant under the  
$\widehat{\mathcal G}_c$ action, which we explain below.
\item
$E((\Sigma,\vec z),u)$ satisfies the transversality 
condition, Condition \ref{conds45} below.
\end{enumerate}
\end{defn}
We define $\widehat{\mathcal G}_c$ action on 
$C^{\infty}(\Sigma({\bf x});u^*TX\otimes \Lambda^{01})$.
Let $(\gamma,g) \in \widehat{\mathcal G}_c$  be as in (\ref{defnGc})
and 
$v \in C^{\infty}(\Sigma({\bf x});u^*TX\otimes \Lambda^{01})$.
Using the differential of $g$ we have
$$
g_*v \in C^{\infty}(\Sigma({\bf x});(g\circ u)^*TX\otimes \Lambda^{01}).
$$
Since $g\circ u = u\circ \gamma$ we may regard
$$
g_*v \in C^{\infty}(\Sigma({\bf x});(u\circ \gamma)^*TX\otimes \Lambda^{01}).
$$
Since $\gamma : \Sigma \to \Sigma$ is bi-holomorphic we have
$$
(g,\gamma)_* v \in 
C^{\infty}(\Sigma({\bf x});u^*TX\otimes \Lambda^{01}).
$$
We thus defined $\widehat{\mathcal G}_c$ action on 
$C^{\infty}(\Sigma({\bf x});u^*TX\otimes \Lambda^{01})$.
Item (3) above requires that the subspace 
$E((\Sigma,\vec z),u)$ is invariant under this action.
\par
We next define transversality conditions in Item (4).
We decompose $\Sigma$ into irreducible components 
$\Sigma_a$ ($a \in \mathcal A$).
We consider
$$
L^2_{m+1}(\Sigma_a;u^*TX)
$$
the Hilbert space of sections of $u^*TX$ of $L^2_{m+1}$ class 
on $\Sigma_a$.
(We take $m$ sufficiently large and fix it.)
For each $z_{a,j}$ we have an evaluation map:
$$
{\rm Ev}_{z_{a,j}} : L^2_{m+1}(\Sigma_a;u^*TX) 
\to T_{u(z_{a,j})} X.
$$
\index{00E1Vzaj@${\rm Ev}_{z_{a,j}}$}
(Since $m$ is large elements of $L^2_{m+1}(\Sigma_a;u^*TX)$ 
are continuous and ${\rm Ev}_{z_{a,j}}$ is well-defined and continuous.)
\begin{defn}\label{defn43}
The Hilbert space 
$$
W^2_{m+1}(\Sigma;u^*TX)
$$
is the subspace of the direct sum
\begin{equation}
\bigoplus_{a \in \mathcal A} L^2_{m+1}(\Sigma_a;u^*TX)
\end{equation}
consisting of elements $(v_a)_{a\in \mathcal A}$
such that the following holds.
\par
For each edge ${\rm e}$ of $\Gamma(\Sigma)$, that corresponds 
to the nodal points, let $z_{-,{\rm e}}, z_{+,{\rm e}} \in {\rm Node}$
such that the orientation of ${\rm e}$ goes from the vertex 
corresponding to $z_{-,{\rm e}}$ to the vertex corresponding to $z_{+,{\rm e}}$.
Let $a({\rm e},-)$ and $a({\rm e},+)$ be the irreducible components 
containing $z_{-,{\rm e}}$, $z_{+,{\rm e}}$, respectively.
We then require
\begin{equation}
{\rm Ev}_{z_{-,{\rm e}}}(v_{a({\rm e},-)})
= {\rm Ev}_{z_{+,{\rm e}}}(v_{a({\rm e},+)}).
\end{equation}
\par
Note $\vec z$ (the set of marked points of $\Sigma$) is 
a subset of $\bigcup_{a\in \mathcal A} \vec z_a$.
Therefore we obtain an evaluation maps
\begin{equation}
{\rm Ev}_{z_i} : W^2_{m+1}(\Sigma;u^*TX) \to T_{u(z_i)}X.
\end{equation}
\par
We put
$$
L^2_{m}(\Sigma;u^*TX\otimes \Lambda^{01})
= 
\bigoplus_{a \in \mathcal A} L^2_{m}(\Sigma_a;u^*TX\otimes \Lambda^{01})
$$
The linearization of the equation $\overline{\partial} u =0$
defines a linear differential operator of first order:
\begin{equation}\label{operator410}
D_u\overline{\partial} 
:
W^2_{m+1}(\Sigma;u^*TX)
\to L^2_{m}(\Sigma;u^*TX\otimes \Lambda^{01}).
\end{equation}
It is well-known that (\ref{operator410}) is a 
Fredholm operator.
\par
In fact the operator
\begin{equation}\label{operator411}
D_{u_a}\overline{\partial} :
L^2_{m+1}(\Sigma_a;{u_a}^*TX)
\to L^2_{m}(\Sigma_a;{u_a}^*TX\otimes \Lambda^{01})
\end{equation}
is Fredholm by ellipticity. The source of (\ref{operator410}) 
is a space of finite codimension of the direct sum of the
sources of (\ref{operator411}).
\end{defn}
\begin{rem}
In Definition \ref{defn43} we considered the compact 
spaces (manifold) $\Sigma_a$. Instead we may take 
$\Sigma_a \setminus \vec z_a$ and put cylindrical metric
(which is isometric to $S^1 \times [0,\infty)$ at 
the neighborhood of each nodal points), and use 
appropriate weighted Sobolev-norm. (See \cite[Section 4]{foooexp}
for example.)
The resulting transversality conditions are equivalent 
to one in Condition \ref{conds45}.
\end{rem}
\begin{conds}\label{conds45}
We require the next two transversality conditions.
\begin{enumerate}
\item The sum of the image of the operator 
$D_u\overline{\partial}$ (\ref{operator410}) and the 
subspace $E((\Sigma,\vec z),u)$ is 
$L^2_{m}(\Sigma;u^*TX\otimes \Lambda^{01})$.
\item
We consider \index{00K1erDU@${\rm Ker}^+ D_u\overline{\partial}$}
$$
{\rm Ker}^+ D_u\overline{\partial}
=
\{
v \in W^2_{m+1}(\Sigma;u^*TX)
\mid  D_u\overline{\partial}(v)
\in E((\Sigma,\vec z),u).
\}
$$
Then the restriction of ${\rm Ev}_{z_i}$ defines a 
{\it surjective} map
\begin{equation}
\bigoplus_{i=1}^{\ell} {\rm Ev}_{z_i}
:
{\rm Ker}^+ D_u\overline{\partial}
\to \bigoplus_{i=1}^{\ell} T_{u(z_i)}X.
\end{equation}
\end{enumerate}
\end{conds}
\begin{rem}
In certain situation we relax the condition (2) 
and require surjectivity of one of ${\rm Ev}_{z_i}$ only.
(See \cite{Fu1}.)
\end{rem}
\begin{prop}
There exists an obstruction space at origin as in Definition 
\ref{defn42}.
\end{prop}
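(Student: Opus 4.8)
The plan is to produce $E((\Sigma,\vec z),u)$ as a sum $E_1+E_2$ of two finite dimensional spaces of smooth sections of $u^*TX\otimes\Lambda^{01}$ over $\Sigma$, each supported away from the nodal neighbourhoods $\varphi_{a,i}(D^2(2))$ of Definition \ref{defn42}(2) and each invariant under the group $\widehat{\mathcal G}_c$ of (\ref{defnGc}), with $E_1$ providing the surjectivity in Condition \ref{conds45}(1) and $E_2$ the evaluation surjectivity in Condition \ref{conds45}(2). The recurring device is Haar averaging over $\widehat{\mathcal G}_c$, which is a compact group: it is an extension of a compact subgroup of $G$ by the finite automorphism group of the stable map $((\Sigma,\vec z),u)$, and its image in $\mathcal G$ is the compact group $\mathcal G_c$ acting on the universal family $\mathscr G$. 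Before averaging I would record three preliminary facts. (i) The operator $D_u\overline{\partial}$ of (\ref{operator410}) is Fredholm, so ${\rm Image}(D_u\overline{\partial})$ is closed of finite codimension; write $\pi$ for the projection onto the finite dimensional quotient $C = L^2_m(\Sigma;u^*TX\otimes\Lambda^{01})/{\rm Image}(D_u\overline{\partial})$. (ii) Because $J$ is $G$-invariant and each $\gamma$ appearing in $\widehat{\mathcal G}_c$ is biholomorphic, the operator $\overline{\partial}$ intertwines the $\widehat{\mathcal G}_c$-actions on maps and on $(0,1)$-forms; linearizing at the $J$-holomorphic $u$, the operator $D_u\overline{\partial}$ commutes with the $\widehat{\mathcal G}_c$-actions on $L^2_{m+1}(\Sigma;u^*TX)$ and on $L^2_m(\Sigma;u^*TX\otimes\Lambda^{01})$ (the latter being the action appearing in Definition \ref{defn42}(3)); hence ${\rm Image}(D_u\overline{\partial})$ is invariant, $C$ is a finite dimensional $\widehat{\mathcal G}_c$-representation, and $\pi$ is equivariant. (iii) Since the coordinates $\varphi_{a,i}$ were chosen $\overline{\mathcal G}$-equivariantly (Lemma \ref{lem399}) and $\mathscr G$ was arranged in Proposition \ref{prop318} so that these coordinates transform as in Definition \ref{cooddinateatinf}(*), every element of $\widehat{\mathcal G}_c$ permutes the sets $\varphi_{a,i}(D^2(2))$ corresponding to nodal points and hence preserves their union; so the action preserves the subspace of sections whose support misses this union, and smooth sections supported away from it are dense in $L^2_m(\Sigma;u^*TX\otimes\Lambda^{01})$.

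To construct $E_1$ I would first choose, using (iii) and $\dim C<\infty$, a linear map $s_0 : C \to C^\infty(\Sigma;u^*TX\otimes\Lambda^{01})$ with values in sections supported away from the nodal neighbourhoods and with $\pi\circ s_0 = {\rm id}_C$, and then average it:
\[
s(c) \;=\; \int_{\widehat{\mathcal G}_c} g\cdot s_0(g^{-1}\cdot c)\,dg,
\]
with respect to normalised Haar measure, $g\cdot$ denoting the actions of (ii). Compactness and smoothness of the action make $s(c)$ a smooth section supported away from the nodal neighbourhoods; $s$ is $\widehat{\mathcal G}_c$-equivariant by the usual change of variable, and $\pi\circ s = {\rm id}_C$ since $\pi$ is equivariant. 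Put $E_1 = s(C)$. Then $E_1$ is finite dimensional, being the image of a linear map out of the finite dimensional space $C$ --- this is exactly where one averages a \emph{splitting} rather than spanning the $\widehat{\mathcal G}_c$-orbit of a finite dimensional subspace, which for positive dimensional $\widehat{\mathcal G}_c$ may fail to be finite dimensional; it is $\widehat{\mathcal G}_c$-invariant; it consists of smooth sections supported away from the nodal neighbourhoods; and since $\pi(E_1) = C$ we get ${\rm Image}(D_u\overline{\partial}) + E_1 = L^2_m(\Sigma;u^*TX\otimes\Lambda^{01})$, so Condition \ref{conds45}(1) holds for every $E \supseteq E_1$.

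To construct $E_2$ I would use that the marked points $z_i$ are regular points of $\Sigma$, so they lie away from the nodal neighbourhoods and are fixed by every $\gamma$ occurring in $\widehat{\mathcal G}_c$; thus $W = \bigoplus_{i=1}^{\ell} T_{u(z_i)}X$ is a finite dimensional $\widehat{\mathcal G}_c$-representation, $(\gamma,g)$ acting by $dg : T_{u(z_i)}X \to T_{gu(z_i)}X = T_{u(\gamma z_i)}X = T_{u(z_i)}X$. Using disjoint bump sections near the $z_i$ one obtains a linear $\sigma_0 : W \to L^2_{m+1}(\Sigma;u^*TX)$ with values in smooth sections supported in small neighbourhoods of the $z_i$ away from the nodal neighbourhoods and with $\bigoplus_i {\rm Ev}_{z_i}\circ\sigma_0 = {\rm id}_W$; averaging over $\widehat{\mathcal G}_c$ as above gives a $\widehat{\mathcal G}_c$-equivariant $\sigma$ with the same support property and $\bigoplus_i {\rm Ev}_{z_i}\circ\sigma = {\rm id}_W$, and I would set $E_2 = D_u\overline{\partial}(\sigma(W))$, which is finite dimensional, $\widehat{\mathcal G}_c$-invariant (as $D_u\overline{\partial}$ is equivariant) and consists of smooth sections supported away from the nodal neighbourhoods. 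Taking $E((\Sigma,\vec z),u) = E_1 + E_2$: properties (1)--(3) of Definition \ref{defn42} are then immediate; Condition \ref{conds45}(1) holds because $E \supseteq E_1$; and for Condition \ref{conds45}(2), $\sigma(W) \subseteq {\rm Ker}^+ D_u\overline{\partial}$ because $D_u\overline{\partial}(\sigma(W)) = E_2 \subseteq E$, while $\bigoplus_i {\rm Ev}_{z_i}$ maps $\sigma(W)$ onto $W = \bigoplus_i T_{u(z_i)}X$, so its restriction to ${\rm Ker}^+ D_u\overline{\partial}$ is surjective. This is property (4).

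The main obstacle is keeping the three structural constraints --- finite dimensionality, support away from the nodes, and $\widehat{\mathcal G}_c$-invariance --- in force simultaneously while still surjecting onto the cokernel $C$ and onto the evaluation target $W$. What resolves it is, in decreasing order of importance: that $\widehat{\mathcal G}_c$ is compact, so Haar averaging exists and sends smooth sections to smooth sections; that averaging a splitting --- whose image is automatically finite dimensional --- avoids the non-finiteness of group orbits of finite dimensional subspaces; and that the coordinate systems $\varphi_{a,i}$ at the nodal points were set up equivariantly in Proposition \ref{prop318}, which is precisely what makes ``supported away from the nodal neighbourhoods'' an invariant condition. Away from these points the argument is the equivariant counterpart of the standard construction of obstruction bundles for $\overline{\partial}$-operators.
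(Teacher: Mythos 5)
Your overall design is sound: you correctly identify that one must average a \emph{splitting} of the finite dimensional cokernel $C$ over the compact group $\widehat{\mathcal G}_c$ (rather than try to span an orbit, which for positive dimensional $\widehat{\mathcal G}_c$ need not be finite dimensional), that the equivariance of the coordinates $\varphi_{a,i}$ secured by Proposition \ref{prop318} is what makes the support condition of Definition \ref{defn42}(2) compatible with this averaging, and that $E_2$ can be produced from bump sections at the $\widehat{\mathcal G}_c$-fixed marked points $z_i$ followed by $D_u\overline{\partial}$. The equivariance computations are all correct.

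There is, however, one genuine gap, in your preliminary point (iii). You assert that smooth sections supported away from $\bigcup_{a,i}\varphi_{a,i}(D^2(2))$ are \emph{dense} in $L^2_m(\Sigma;u^*TX\otimes\Lambda^{01})$, and you rely on this to conclude that the splitting $s_0:C\to C^\infty$ can be chosen with image supported there and with $\pi\circ s_0={\rm id}_C$. The density assertion is false: every such section, and every $L^2_m$-limit of such sections, vanishes on the nonempty open set $\bigcup_{a,i}\varphi_{a,i}(D^2(2))$, so their closure is a proper closed subspace. What is actually needed — and what is true — is the weaker statement that this subspace together with ${\rm Image}(D_u\overline{\partial})$ spans $L^2_m$, i.e.\ that $\pi$ restricted to it is already surjective onto $C$. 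This does not follow from soft functional analysis; it needs an analytic input. Dually, one must show: if $\eta$ is $L^2$-orthogonal to ${\rm Image}(D_u\overline{\partial})$, hence a distributional (and by elliptic regularity a smooth) solution of $(D_u\overline{\partial})^\ast\eta=0$, and $\eta$ vanishes on the complement of the nodal neighbourhoods, then $\eta=0$. This follows from the unique continuation property of the Cauchy--Riemann type operator $(D_u\overline{\partial})^\ast$ on each irreducible component of $\Sigma$, applied to the open set where $\eta$ vanishes. That unique continuation step is precisely the ingredient the paper's own proof invokes (``mostly obvious using unique continuation,'' citing \cite[Lemma 4.3.5]{foootoric32}), and it cannot be replaced by a density argument. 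Once $s_0$ exists for this reason, the rest of your construction — the averaging, the definitions of $E_1$ and $E_2$, and the verification of Condition \ref{conds45} — goes through as you wrote it.
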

\begin{proof}
This is mostly obvious using 
Fredholm property of $D_u\overline{\partial}$ and the unique continuation.
See \cite[Lemma 4.3.5]{foootoric32} for example.
\end{proof}

\section{Definition of $G$-equivariant Kuranishi chart
and the statement of the main theorem}
\label{sec:eqkurachar}

We review the notion of $G$-equivariant Kuranishi chart.
In the case of finite group $G$ it is defined for example in 
\cite[Definition 7.5]{fooo:inv}.
The notion of $S^1$ equivariant Kuranishi structure is in 
\cite[Definition 28.1]{foootech}.
In fact we studied in \cite[Definition 28.1]{foootech}
the $S^1$ action on the moduli space induced by the $S^1$ action 
of the source curve. Such an $S^1$ action is {\it much easier} to 
handle than the target space action we are studying here.
(This $S^1$ action had been used in the study of periodic Hamiltonian 
system and  thorough detail of its construction 
and of its usage had been written in 
\cite[Part 5]{foootech}.)
\par
We first review the notion of group action on effective orbifolds.
For the definition of effective orbifolds and its 
morphisms etc. using coordinate we refer the reader to
\cite[Section 15]{foootech2}, \cite[Part 7]{fooospectr} or \cite[Section 23]{FOOOSpringer}.
\par
An orbifold\footnote{We assume that an orbifold is effective always in this 
paper} \index{orbifold} $M$ is a paracompact and 
Hausdorff topological space together with a system of local charts $(V,\Gamma,\phi)$, 
where $V$ is a manifold, $\Gamma$ is a finite group which acts on $V$ 
effectively and $\phi : V \to M$ is a smooth map which induces a
homeomorphism $V/\Gamma \to M$ onto an open neighborhood of $p$ in $M$.
When $M$ is covered by the images of several local charts  $(V_i,\Gamma_i,\phi_i)$
satisfying certain compatibility conditions (see 
\cite[Section 15]{foootech2}, \cite[Part 7]{fooospectr} or \cite[Section 23]{FOOOSpringer},
they give an orbifold structure of $M$.
An orbifold structure is the set of all charts 
$(V,\Gamma,\phi)$ which are compatible with the given charts.
\par
Let $M_1,M_2$ be orbifolds. A topological embedding 
$f : M_1 \to M_2$ is said to be an orbifold embedding if 
for each $p\in M_1$ we can take a chart $(V_1,\Gamma_1,\phi_1)$ 
of $p$ in $M_1$, $(V_2,\Gamma_2,\phi_2)$ 
of $f(p)$ in $M_2$ and $f_p : V_1 \to V_2$, 
$h_p : \Gamma_1 \to \Gamma_2$ such that:
\begin{enumerate}
\item
$f_p$ is a smooth embedding of manifolds.
\item
$h_p$ is an isomorphism of groups.
\item 
$f_p(g x) = h_p(g) f_p(x)$. 
\item 
$\phi_2\circ f_p = f \circ \phi_1$.
\end{enumerate}
Note two  orbifold embeddings are regarded as the same if 
they coincide set theoretically. 
(In other words, the existence of $f_p,h_p$ above is the condition for 
$f$ to be an orbifold embedding and is not a part of the data 
consisting of an orbifold embedding.)\footnote{If we 
include an orbifold, which  is not necessarily effective 
or consider a mapping between 
effective orbifolds which is not necessarily an embedding, then 
this point will be different. See \cite{ofdruan}.}
\par
A homeomorphism between orbifolds is said to be a diffeomorphism
if it is  an embedding of orbifold.\index{embedding of orbifold}
\par
The set of all diffeomorphisms of an orbifold $M$ becomes 
a group which we write ${\rm Diff}(M)$.
The group ${\rm Diff}(M)$ becomes a topological group 
by compact open topology.
\begin{defn}
Let $G$ be a Lie group. A {\it smooth action}
\index{smooth action on orbifold} of $G$ on $M$ 
is by definition a continuous group homomorphism 
$G \to {\rm Diff}(M)$ with the following properties.
Note $G \to {\rm Diff}(M)$ induces a continuous map 
$G \times M \to M$.
\par
For each $p \in M$ and $g \in G$ there exists 
a chart $(V_1,\Gamma_1,\phi_1)$ of $p$, 
a chart $(V_2,\Gamma_2,\phi_2)$ of $gp$, 
an open neighborhood $U$ of $g$, and 
maps $f_{p,g} : U \times V_1 \to V_2$,
$h_{p,g} : \Gamma_1 \to  \Gamma_2$
such that:
\begin{enumerate}
\item
$f_{p,g}$ is a smooth map.
\item
$h_{p,g}$ is a group isomorphism.
\item
$f_{p,g}$ is $h_{p,g}$ equivariant.
\item $\phi_2(gv) = g \phi_1(v)$. 
\end{enumerate}
\end{defn}
A (smooth) vector bundle $\mathcal E \to M$ on an orbifold is 
\index{vector bundle on an orbifold}
a pair of orbifolds $\mathcal E$, $M$ and a continuous map $\pi : 
\mathcal E\to M$ such that for each $\tilde p \in \mathcal E$ we can take a 
special choice of coordinates of $\tilde p$ and $\pi(p)$
as follows.
$(V,\Gamma,\phi)$ is a coordinate of $M$ at $p$.
$(V\times E,\Gamma,\tilde\phi)$ is a coordinate 
of $\mathcal E$ at $\tilde p$, where
$E$ is a vector space on which $G$ has a linear action.
Moreover the next diagram commutes,
\begin{equation}\label{diagram616}
\begin{CD}
V\times E @ >{\tilde\phi}>> \mathcal E \\
@ VVV @ VV{\pi}V\\
V @ >{\phi}>> M,
\end{CD}
\end{equation}
where the first vertical arrow is the obvious projection.
See \cite[Definition 15.7 (3)]{foootech2}, \cite[Definition 31.3]{fooospectr} 
or \cite[Definition 23.19]{FOOOSpringer} for the condition required to the coordinate change.
\par
Suppose $M$ has a $G$-action. A {\it $G$-action} on a vector 
\index{00G2actiohn@$G$-action on a vector bundle} bundle 
$\mathcal E \to M$ 
is by definition a $G$-action on $\mathcal E$ such that the projection 
$\mathcal E \to M$ is $G$-equivariant, (Here $G$-equivariance means 
that $\pi (g\tilde p) = g \pi(\tilde p)$, set theoretically.)
and that the local expression
$$
f_{p,g} : G \times (V_1 \times E_1) \to V_2 \times E_2
$$
of $G$ action
preserves the structure of vector space of $E_1$, $E_2$.
Namely for each $g \in G$, $v \in V_1$ the map
$$
V \mapsto \pi_{E_2}(f(g,v,V)), E_1 \to E_2
$$
is linear. (Here $\pi_{E_2} : V_2\times E_2 \to E_2$ is the projection.)
\par
If $\mathcal E \to M$ is a vector bundle on an orbifold, 
its section is by definition an orbifold embedding 
$s : M \to \mathcal E$ such that the composition $M \to \mathcal E \to M$ 
is the identity map (set theoretically).
If $s$ is a section then 
$
(gs)(p) = g(s(g^{-1}p)) 
$ 
defines a section $gs : M \to \mathcal E$.
We say $s$ is $G$-equivariant if $gs = s$.
If $s$ is a $G$-equivariant section then 
$$
s^{-1}(0) = \{x \in M \mid s(x) = 0\}
$$
is $G$-invariant subset of $M$.
(Here $0 \subset \mathcal E$ is the set such that 
by the coordinate $(V\times E,\Gamma,\tilde\phi)$
it corresponds to a point in $V\times \{0\}$.)
\par
Now we define the notion of $G$-equivariant Kuranishi chart as follows.
\begin{defn}
Let $X$ be a metrizable space on which a compact Lie group $G$ acts
and $p \in X$. 
A {\it $G$-equivariant Kuranishi chart} \index{00G2equiv@$G$-equivariant Kuranishi chart}
of $X$ at $p$ is an object $(U,\mathcal E,s,\psi)$ 
such that:
\begin{enumerate}
\item
We are given an orbifold $U$,  on which $G$ acts.
\item
We are given a $G$-equivariant vector bundle $\mathcal E$ on $U$.
\item
We are given  a $G$-equivariant smooth section $s$ of $\mathcal E$.
\item
We are given a $G$-equivariant homeomorphism $\psi : s^{-1}(0) \to X$ onto an open set.
\end{enumerate}
We call $U$ the {\it Kuranishi neighborhood}\index{Kuranishi neighborhood}, $\mathcal E$ the 
{\it obstruction bundle},\index{obstruction bundle}
$s$ the {\it Kuranishi map},\index{Kuranishi map} and $\psi$ the 
{\it parametrization}.\index{parametrization}
\end{defn}
Let $(X,\omega)$ be a compact symplectic manifold on which 
a compact Lie group $G$ acts preserving the symplectic structure $\omega$.
We define an equivalence relation on $\pi_2(X)$ by
$$
[v] \sim [v']  
\quad 
\Leftrightarrow
\qquad
\int v^* \omega = \int (v')^* \omega,
\quad v_*([S^2]) \cap c^1(X) = v'_*([S^2]) \cap c^1(X).
$$
We denote by $\Pi_2(X)$ the group of the equivalence classes of $\sim$.
Let $\alpha \in \Pi_2(X)$ and $g,\ell$ be nonnegative integers.
We take and fix a $G$-invariant compatible almost complex structure
$J$ on $X$.
Let $\mathcal M_{g,\ell}((X,J);\alpha)$ \index{00M3GELLXJ@$\mathcal M_{g,\ell}((X,J);\alpha)$} be the moduli space of 
$J$-holomorphic stable maps of genus $g$ with $\ell$ marked points and 
homology class is $\alpha$.
See for example \cite[Defnition 7.7]{FO} for its definition.
(The notion of stable map is introduced by Kontsevitch. 
Systematic study of the moduli space 
$\mathcal M_{g,\ell}((X,J);\alpha)$ 
in the semi-positive case was initiated by Ruan-Tian \cite{ruantian}
\cite{ruantian2}.
Studying $J$-holomorphic curve in symplectic geometry 
is a great invention by Gromov.
There is a nice account of genus zero case by McDuff-Salamon \cite{MSpaper}.)
The topology (stable map topology) on $\mathcal M_{g,\ell}((X,J);\alpha)$ was introduced 
by Fukaya-Ono (in the year 1996) in \cite[Defnition 10.3]{FO}
and 
they proved that $\mathcal M_{g,\ell}((X,J);\alpha)$ 
is compact (\cite[Theorem 11.1]{FO}) and 
Hausdorff (\cite[Lemma 10.4]{FO}), in this particular 
topology.
There exist evaluation maps
${\rm ev} : \mathcal M_{g,\ell}((X,J);\alpha) 
\to X^{\ell}$. (See  \cite[page 936, line 3]{FO}.)
\par
Since $J$ is $G$-equivariant it is easy to see that the group 
$G$ acts on the topological space $\mathcal M_{g,\ell}((X,J);\alpha)$.
\par
Now the main result of this paper is the following:
\begin{thm}\label{thm54}
For each $p \in \mathcal M_{g,\ell}((X,J);\alpha)$,
there exists a $G$-equivariant Kuranishi chart of 
$\mathcal M_{g,\ell}((X,J);\alpha)$ 
at $p$.
\par
The evaluation map ${\rm ev} : \mathcal M_{g,\ell}((X,J);\alpha) 
\to X^{\ell}$ is an underlying continuous map 
of a weakly submersive map.\footnote{See \cite[
Definition 3.38 (5)]{foootech2} or
\cite[Definition 32.1 (4)]{fooospectr} for the definition 
of this notion.}\index{weakly submersive map}
\end{thm}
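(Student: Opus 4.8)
The plan is to build the chart $(U_{\frak p},\mathcal E_{\frak p},s_{\frak p},\psi_{\frak p})$ as a thickened moduli space: $U_{\frak p}$ will be the set of isomorphism classes of triples $((\Sigma',\vec z^{\,\prime}),u')$ that are $\epsilon$-$G$-close to $((\Sigma,\vec z),u)$ in the sense of Definition \ref{defn41} and solve an inhomogeneous equation $\overline{\partial}u' \in E((\Sigma',\vec z^{\,\prime}),u')$, with $\mathcal E_{\frak p}$ the bundle whose fibers are the vector spaces $E((\Sigma',\vec z^{\,\prime}),u')$, with $s_{\frak p}=\overline{\partial}$, and with $\psi_{\frak p}$ the tautological map to $\mathcal M_{g,\ell}((X,J);\alpha)$. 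First I would fix $\frak p = [(\Sigma,\vec z),u]$, apply Theorem \ref{them35} to obtain the universal deformation $\mathscr G = (\widetilde{\frak G},\mathscr F,{\frak G},\vec{\frak T},o,\iota)$ of $(\Sigma,\vec z)$, form the compact group $\widehat{\mathcal G}_c$ of (\ref{defnGc}) with image $\mathcal G_c \subset \mathcal G$, and use Proposition \ref{prop318} to make $\mathcal G_c$ act on $\mathscr G$. Then I would choose an obstruction space at origin $E((\Sigma,\vec z),u)$ as in Definition \ref{defn42} --- finite dimensional, $\mathcal G_c$-invariant, supported away from the nodes, and satisfying the transversality Condition \ref{conds45} --- whose existence was established above.

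The core of the argument, and the step I expect to be the main obstacle, is to promote $E((\Sigma,\vec z),u)$ to a $G$-equivariant family $E((\Sigma',\vec z^{\,\prime}),u')$ over all triples $\epsilon$-$G$-close to $((\Sigma,\vec z),u)$, so that $E((\Sigma',\vec z^{\,\prime}),gu') = g_* E((\Sigma',\vec z^{\,\prime}),u')$ as in (\ref{form11}). Such a triple is $\epsilon$-$G$-close by some $g\in G$, ${\bf x}\in\mathcal{OB}$, $\phi$, but the pair $({\bf x},\phi)$ is ambiguous precisely up to a morphism of $\frak G$ --- this is exactly what Theorem \ref{them35} encodes. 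I would pin down this ambiguity jointly with $g$ by minimizing the distance between $u'\circ\phi\circ\Phi_{{\bf x},\delta}$ and $g\circ u$ over (morphism of $\frak G$) $\times\, G$; because the automorphism directions are governed by the compact group $\mathcal G_c$ and $G$ is compact, the minimizer is unique up to $\widehat{\mathcal G}_c$, and applying the Riemannian center of mass of Section \ref{sec:centerofmass} over $\widehat{\mathcal G}_c$ selects a canonical identification. Parallel transporting $E((\Sigma,\vec z),u)$ along this identification and applying $g_*$ defines $E((\Sigma',\vec z^{\,\prime}),u')$; replacing $g$ by $g'g$ translates the identification accordingly and yields (\ref{form11}). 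The subtlety throughout is that the near-identity germ of $\frak G$ does not act genuinely, so the minimization and averaging must be carried out uniformly over a neighborhood via the groupoid $\frak G$ rather than via a group action; this is where the groupoid reformulation of \cite[Sections 13 and 15]{FO} is essential, and is the content of Section \ref{sec:main}.

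Granting the family $E$, I would set $U_{\frak p}$ equal to the set of isomorphism classes of $((\Sigma',\vec z^{\,\prime}),u')$ that are $\epsilon$-$G$-close to $((\Sigma,\vec z),u)$ and solve $\overline{\partial}u'\in E((\Sigma',\vec z^{\,\prime}),u')$, with $G$ acting by $g\cdot[((\Sigma',\vec z^{\,\prime}),u')] = [((\Sigma',\vec z^{\,\prime}),gu')]$. This action lifts to $\mathcal E_{\frak p}$ by (\ref{form11}), makes $s_{\frak p}=\overline{\partial}$ a $G$-invariant section, and makes the tautological map $\psi_{\frak p}: s_{\frak p}^{-1}(0)\to\mathcal M_{g,\ell}((X,J);\alpha)$ a $G$-equivariant homeomorphism onto an open neighborhood of the orbit $G\frak p$, openness being read off from the stable-map topology. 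That $U_{\frak p}$ is a finite-dimensional effective orbifold carrying a smooth $G$-action, that $\mathcal E_{\frak p}$ is a smooth orbibundle, and that $s_{\frak p}$ is smooth, would follow from the standard gluing and finite-dimensional reduction: the chart is cut out from the family of deformations over $\mathcal{OB}$ together with the finite-dimensional approximate-solution space ${\rm Ker}^+D_u\overline{\partial}$ by the finitely many equations projecting $\overline{\partial}u'$ onto a complement of $E$, its isotropy is the finite automorphism group of the stabilized curve (hence effectivity), and the smoothness in the gluing parameter rests on the exponential decay estimates imported from \cite{foootech,foooexp}, recalled in Section \ref{sec:decay}.

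Finally, for the weakly submersive assertion, I would use that Condition \ref{conds45}(2) says $\bigoplus_{i=1}^{\ell}{\rm Ev}_{z_i}:{\rm Ker}^+D_u\overline{\partial}\to\bigoplus_{i=1}^{\ell}T_{u(z_i)}X$ is surjective; since this is the linearization at $\frak p$ of ${\rm ev}$ restricted to $U_{\frak p}$, after shrinking $U_{\frak p}$ the evaluation map is a submersion of orbifolds on the chart, which is precisely the condition for ${\rm ev}$ on $\mathcal M_{g,\ell}((X,J);\alpha)$ to underlie a weakly submersive map. Assembling the charts $(U_{\frak p},\mathcal E_{\frak p},s_{\frak p},\psi_{\frak p})$ into a global Kuranishi structure is not required for Theorem \ref{thm54} and is deferred.
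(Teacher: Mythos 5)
Your outline tracks the paper's argument closely: fix a $\mathcal G_c$-invariant obstruction space at origin satisfying Condition \ref{conds45}, use the universal deformation of Theorem \ref{them35} to parametrize the ambiguity in identifying $\Sigma$ with $\Sigma'$, minimize a mean-squared-distance functional over $\mathcal{MOR}\times G$ to pin down the identification, propagate $E$ by parallel transport and $g_*$, build the thickened moduli space with its tautological $G$-action, and invoke the exponential-decay gluing estimates of \cite{foooexp} for the smooth structure, finishing with Condition \ref{conds45}(2) for weak submersivity. This is essentially Sections \ref{sec:main} and \ref{sec:decay} of the paper. One clarification on the key step, since it could mislead a reader: the paper does \emph{not} apply a center-of-mass operation over $\widehat{\mathcal G}_c$ to extract a canonical element of the minimizing orbit (which is in general impossible for a nontrivial orbit). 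Rather, the functional $\overline{\rm meandist}$ on $\mathcal W/\widehat{\mathcal G}_c$ is shown strictly convex (via Proposition \ref{prop98}) and so has a unique minimum \emph{as an orbit}; the induced $E((\Sigma',\vec z^{\,\prime}),u')$ is then well defined not because a canonical representative is chosen, but because $E((\Sigma,\vec z),u)$ was taken $\mathcal G_c$-invariant (Definition \ref{defn42}(3)), so all representatives give the same subspace. The ``Riemannian center of mass'' in the title of Section \ref{sec:centerofmass} refers to the role of ${\rm meandist}$ itself (an integral of squared distances over $\Sigma$ playing the part of a Karcher mean) and to Lemma \ref{lem999}, which gives smooth dependence of the minimizer on parameters; it is not an averaging over $\widehat{\mathcal G}_c$.
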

\begin{rem}
Note since the parametrization $\psi$ is assumed to be 
$G$-equivariant its image necessary contains the $G$-orbit 
of $p$ in $\mathcal M_{g,\ell}((X,J);\alpha)$.
Therefore a $G$-equivariant Kuranishi chart cannot be 
completely local in $\mathcal M_{g,\ell}((X,J);\alpha)$.
\end{rem}
\begin{rem}
Once we proved Theorem \ref{thm54} we can 
construct a $G$-equivariant Kuranishi structure on 
$\mathcal M_{g,\ell}((X,J);\alpha)$ 
in the same way as the case without $G$ action.
(See for example \cite{foooconstr}, \cite{foooconstr2}.)
In this paper we focus on proving Theorem \ref{thm54} 
since this is the novel part in our $G$-equivariant situation. 
\end{rem}

\section{Proof of the main theorem}
\label{sec:main}

In this section we prove Theorem \ref{thm54} except a few points 
postponed to later sections.
Let $((\Sigma',\vec z^{\,\prime}),u')$ be an object which is 
G-$\epsilon_1$-close \index{00E5psilon@$\epsilon_1$} to $((\Sigma,\vec z),u)$. (We determine
the positive constant $\epsilon_1$ later.)
We fix ${\bf x}_0 \in \mathcal{OB}$ such that 
$(\Sigma',\vec z^{\,\prime})$ is bi-holomorphic to 
$(\Sigma({\bf x}_0),\vec z({\bf x}_0))$.
We also fix a bi-holomorphic map 
$\phi_0 : (\Sigma({\bf x}_0),\vec z({\bf x}_0)) \cong (\Sigma',\vec z^{\,\prime})$.
\begin{defn}\label{defn63}
We define  
$\mathcal W(\epsilon_1;{\bf x}_0,\phi_0;((\Sigma',\vec z^{\,\prime}),u'))$ 
as the set of pairs \index{00W3epsilon1@$\mathcal W(\epsilon_1;{\bf x}_0,\phi_0;((\Sigma',\vec z^{\,\prime}),u'))$}
$(\varphi,g)$ such that:
\begin{enumerate}
\item $\varphi \in \mathcal{MOR}$,
$g \in G$.
\item
${\rm Pr}_t(\varphi) = {\bf x}_0$.
\item
We put ${\bf x}' = {\rm Pr}_s(\varphi)$.
The morphism $\varphi$ defines a bi-holomorphic map 
$$
\varphi : (\Sigma({\bf x}'),\vec z({\bf x}')) 
\cong (\Sigma({\bf x}_0),\vec z({\bf x}_0)).
$$
We consider 
$\phi_0 \circ \varphi : (\Sigma({\bf x}'),\vec z({\bf x}')) 
\to (\Sigma',\vec z^{\,\prime})$.
Then 
$((\Sigma',\vec z^{\,\prime}),u')$ 
is 
$2\epsilon_1$-
close to $((\Sigma,\vec z),gu)$
by ${\bf x}'$, $\phi_0 \circ \varphi$.
\end{enumerate}
\end{defn}
\begin{lem}\label{lem6262}
The space $\mathcal W(\epsilon_1;{\bf x}_0,\phi_0;((\Sigma',\vec z^{\,\prime}),u'))$ 
has a structure of smooth manifold.
\end{lem}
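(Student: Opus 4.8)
The plan is to exhibit $\mathcal W(\epsilon_1;{\bf x}_0,\phi_0;((\Sigma',\vec z^{\,\prime}),u'))$ as an open subset of a product of a manifold with $G$, modulo a compact group action, using the universal family of Theorem \ref{them35} to linearize the condition ``close to $((\Sigma,\vec z),gu)$''. First I would observe that condition (2) of Definition \ref{defn63}, ${\rm Pr}_t(\varphi) = {\bf x}_0$, cuts out a submanifold of $\mathcal{MOR}$: since ${\rm Pr}_t$ is a submersion (Proposition \ref{prop312}), the fiber ${\rm Pr}_t^{-1}({\bf x}_0)$ is a smooth complex submanifold of $\mathcal{MOR}$, and its dimension equals $\dim \mathcal{MOR} - \dim \mathcal{OB}$. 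Thus before imposing condition (3), the set of pairs $(\varphi,g)$ satisfying (1)(2) is simply ${\rm Pr}_t^{-1}({\bf x}_0) \times G$, which is a smooth manifold.

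Next I would address condition (3). The point is that the $2\epsilon_1$-closeness of $((\Sigma',\vec z^{\,\prime}),u')$ to $((\Sigma,\vec z),gu)$ by ${\bf x}'$, $\phi_0 \circ \varphi$ is, according to Definition \ref{defn41}, the conjunction of three estimates: a $C^2$-smallness of the difference $u' \circ \phi_0 \circ \varphi \circ \Phi_{{\bf x}',\delta} - g\circ u|_{\Sigma(\delta)}$, a smallness of the distance between ${\bf x}'$ and $o$ together with $\delta < 2\epsilon_1$, and a diameter bound on the part of $\Sigma({\bf x}')$ outside the image of $\Phi_{{\bf x}',\delta}$. The key observation is that each of these is an \emph{open} condition on the pair $(\varphi,g) \in {\rm Pr}_t^{-1}({\bf x}_0) \times G$: the maps $\varphi \mapsto {\bf x}' = {\rm Pr}_s(\varphi)$, $(\varphi,g) \mapsto u'\circ\phi_0\circ\varphi\circ\Phi_{{\bf x}',\delta}$, and $(\varphi,g)\mapsto g\circ u$ all depend continuously (indeed smoothly) on $(\varphi,g)$ in the relevant topologies, because $\Phi_{{\bf x}',\delta}$ depends smoothly on ${\bf x}'$ by construction (Construction \ref{cost314}) and $G$ acts smoothly on $X$. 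Since ${\bf x}_0$ is itself $\epsilon_1$-$G$-close to $((\Sigma,\vec z),u)$, there is at least one pair $(\varphi,g)$ satisfying (3) with strict inequalities, so the set of such pairs is a nonempty open subset of the manifold ${\rm Pr}_t^{-1}({\bf x}_0) \times G$, hence itself a smooth manifold. One must be slightly careful that $\delta$ is not part of the data of $(\varphi,g)$ but is existentially quantified; however, for ${\bf x}'$ close enough to $o$ there is a canonical choice (e.g.\ $\delta = $ the maximum of $|\rho_{\rm e}|$ over the gluing parameters of ${\bf x}'$, slightly enlarged), which depends smoothly on ${\bf x}'$, so this causes no difficulty.

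The main obstacle I anticipate is verifying that the three defining conditions really do define an \emph{open} subset rather than merely a locally closed one --- in particular the $C^2$-estimate in Definition \ref{defn41}(1) involves the composition with the gluing map $\Phi_{{\bf x}',\delta}$, whose domain $\Sigma(\delta)$ itself varies with $\delta$ and hence with ${\bf x}'$; I would handle this by fixing once and for all a slightly larger $\delta_0$ and working with the fixed domain $\Sigma(\delta_0)$, noting that $\Phi_{{\bf x}',\delta_0}$ is defined and depends smoothly on ${\bf x}'$ for ${\bf x}'$ in a neighborhood of $o$, so that the $C^2$-norm in question is a continuous function of $(\varphi,g)$ on that neighborhood. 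Granting this, the argument is complete: $\mathcal W(\epsilon_1;{\bf x}_0,\phi_0;((\Sigma',\vec z^{\,\prime}),u'))$ is an open subset of the smooth manifold ${\rm Pr}_t^{-1}({\bf x}_0)\times G$, hence a smooth manifold. The remaining subtlety --- whether this manifold structure is the ``right'' one for the subsequent construction of the Kuranishi chart, e.g.\ whether the $\widehat{\mathcal G}_c$-action on it is smooth --- is presumably deferred to a later lemma and need not be addressed here.
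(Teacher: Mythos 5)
Your proposal is correct and follows exactly the same route as the paper: observe that conditions (1)(2) define ${\rm Pr}_t^{-1}({\bf x}_0)\times G$, a smooth manifold because ${\rm Pr}_t$ is a submersion, and then note that condition (3) is open. The paper treats the openness of (3) as immediate and does not spell out the continuity considerations you raise about $\delta$ and the varying domain $\Sigma(\delta)$, but that extra care is consistent with and supports the same argument.
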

\begin{proof}
The set of $(\varphi,g)$ satisfying Items (1)(2) has 
a structure of smooth manifold since $\mathcal{MOR}$ is a 
smooth manifold and ${\rm Pr}_t$ is a submersion.
Since the condition (3) is an open condition 
the space $\mathcal W(\epsilon_1;{\bf x}_0,\phi_0;((\Sigma',\vec z^{\,\prime}),u'))$ 
is an open set of a smooth manifold and so has a structure of 
smooth manifold.
\end{proof}
We used ${\bf x}_0,\phi_0$ to define 
$\mathcal W(\epsilon_1;{\bf x}_0,\phi_0;((\Sigma',\vec z^{\,\prime}),u'))$.
However this manifold is independent of the choice of
such ${\bf x}_0,\phi_0$ as the next lemma shows.
\begin{lem}\label{lem64}
Let ${\bf x}_1 \in \mathcal{OB}$ and
$\phi_1 : (\Sigma({\bf x}_1),\vec z({\bf x}_1)) \cong (\Sigma',\vec z^{\,\prime})$
be a bi-holomorphic map.
\par
The composition 
$\phi_1^{-1} \circ \phi_0$ determines an element 
$\psi \in \mathcal{MOR}$ such that
${\rm Pr}_s(\psi) =  {\rm Pr}_t(\psi) = {\bf x}_1$.
\par
Then the next two conditions are equivalent.
\begin{enumerate}
\item
$(\varphi,g) \in \mathcal W(\epsilon_1;{\bf x}_0,\phi_0;((\Sigma',\vec z^{\,\prime}),u'))$.
\item 
$(\psi\circ\varphi,g)
\in \mathcal W(\epsilon_1;{\bf x}_1,\phi_1;((\Sigma',\vec z^{\,\prime}),u'))$.
\end{enumerate}
\end{lem}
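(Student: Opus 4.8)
The plan is to show that the map $(\varphi,g)\mapsto(\psi\circ\varphi,g)$ carries the pairs defining $\mathcal W(\epsilon_1;{\bf x}_0,\phi_0;((\Sigma',\vec z^{\,\prime}),u'))$ bijectively onto those defining $\mathcal W(\epsilon_1;{\bf x}_1,\phi_1;((\Sigma',\vec z^{\,\prime}),u'))$, by verifying Items (1), (2), (3) of Definition \ref{defn63} one at a time. There is no analytic content: the only tool needed is the functoriality $\widetilde{\psi\circ\varphi}=\widetilde\psi\circ\widetilde\varphi$ of the isomorphisms between fibers induced by morphisms of $\frak G$ (see the discussion around (\ref{form213})), together with the fact that, by construction, $\psi$ is the morphism with ${\rm Pr}_s(\psi)={\bf x}_0$, ${\rm Pr}_t(\psi)={\bf x}_1$, and $\widetilde\psi=\phi_1^{-1}\circ\phi_0:(\Sigma({\bf x}_0),\vec z({\bf x}_0))\to(\Sigma({\bf x}_1),\vec z({\bf x}_1))$.

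First I would record the elementary bookkeeping giving Items (1) and (2). The composition $\psi\circ\varphi$ is defined precisely when ${\rm Pr}_t(\varphi)={\rm Pr}_s(\psi)={\bf x}_0$, which is exactly Item (2) for $(\varphi,g)$; and when this holds, ${\rm Pr}_s(\psi\circ\varphi)={\rm Pr}_s(\varphi)=:{\bf x}'$ while ${\rm Pr}_t(\psi\circ\varphi)={\rm Pr}_t(\psi)={\bf x}_1$, which is Item (2) for $(\psi\circ\varphi,g)$. Item (1) is trivially preserved since $g\in G$ is unchanged and $\psi\circ\varphi\in\mathcal{MOR}$. The assignment is a bijection, with inverse $(\varphi',g)\mapsto(\frak{inv}(\psi)\circ\varphi',g)$ (note ${\rm Pr}_s(\frak{inv}(\psi))={\bf x}_1$, so this composition is defined on pairs with target ${\bf x}_1$, and $\frak{inv}(\psi)\circ(\psi\circ\varphi)=\varphi$ by associativity and the inverse axiom).

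The heart of the proof is the identification of Item (3), and here the only thing requiring care is keeping the source/target conventions straight so that the two comparison maps literally coincide. By functoriality the fiberwise biholomorphism attached to $\psi\circ\varphi$ is $\widetilde{\psi\circ\varphi}=\widetilde\psi\circ\widetilde\varphi:\Sigma({\bf x}')\to\Sigma({\bf x}_0)\to\Sigma({\bf x}_1)$, so the biholomorphic map onto $(\Sigma',\vec z^{\,\prime})$ entering Item (3) for $(\psi\circ\varphi,g)$ with $({\bf x}_1,\phi_1)$ is
\[
\phi_1\circ\widetilde{\psi\circ\varphi}
=\phi_1\circ\widetilde\psi\circ\widetilde\varphi
=\bigl(\phi_1\circ\phi_1^{-1}\circ\phi_0\bigr)\circ\widetilde\varphi
=\phi_0\circ\widetilde\varphi ,
\]
which is exactly the map $\phi_0\circ\varphi:(\Sigma({\bf x}'),\vec z({\bf x}'))\to(\Sigma',\vec z^{\,\prime})$ used in Item (3) for $(\varphi,g)$ with $({\bf x}_0,\phi_0)$. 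Since Item (3) asserts $2\epsilon_1$-closeness (in the sense of Definition \ref{defn41}) of $((\Sigma',\vec z^{\,\prime}),u')$ to $((\Sigma,\vec z),gu)$ by the same ${\bf x}'$, the same $g$, and this one comparison map, the two instances of Item (3) are one and the same condition. Combining the three verifications gives the claimed equivalence. The expected "main obstacle" is thus purely notational — tracking which fiber and which composition order is meant — and once that is set up, Definition \ref{defn41}'s closeness condition is transported verbatim.
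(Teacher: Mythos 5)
Your proposal is correct and is exactly the unpacking the paper has in mind: the paper states only that the lemma is ``obvious from definition,'' and your verification — Items (1)--(2) by source/target bookkeeping, and Item (3) by the functoriality $\widetilde{\psi\circ\varphi}=\widetilde\psi\circ\widetilde\varphi$ from (\ref{form213}) together with $\widetilde\psi=\phi_1^{-1}\circ\phi_0$, so that $\phi_1\circ\widetilde{\psi\circ\varphi}=\phi_0\circ\widetilde\varphi$ and the two closeness conditions coincide verbatim — is precisely the content that makes it so.
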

The proof of Lemma \ref{lem64}
are obvious from definition.
\begin{defn}\label{defn64}
Let $\sigma$ be a positive number which we will fix during the proof of 
Lemma \ref{lem67}.
We take a smooth function $\chi : \Sigma_0(\sigma) \to [0,1]$
such that:
\begin{enumerate}
\item 
$\chi \equiv 1$ on $\Sigma_0(2\sigma)$.
\item
$\chi$ has compact support.
\item
$\chi$ is $\mathcal G_c$ invariant. Here $\mathcal G_c$ is defined 
by (\ref{defnGc}). 
\end{enumerate}
\end{defn}
\begin{defn}\label{defn6666}
We define a function ${\rm meandist} \index{00M1@${\rm meandist}$}
: \mathcal W(\epsilon_1;{\bf x}_0,\phi_0;((\Sigma',\vec z^{\,\prime}),u'))
\to \R$ as follows. \index{00M1@${\rm meandist}$}
\begin{equation}\label{form611}
{\rm meandist}(\varphi,g)
= 
\int_{z \in \Sigma(\sigma)}
\chi(z)\,\, d_X^2((u'\circ \phi_0 \circ \varphi
\circ \Phi_{{\bf x}',\sigma})(z),gu(z)\,\, \Omega_{\Sigma}.
\end{equation}
Here $\Omega_{\Sigma}$ is the volume element of $\Sigma$ 
and $d_X$ is the Riemannian distance function on $X$.
We assume $\Omega_{\Sigma}$ \index{00O5megasigma@$\Omega_{\Sigma}$} is invariant under 
${\mathcal G}_c$ action. 
\end{defn}
The main properties of this function is given below.
\begin{lem}\label{lem67}
The function ${\rm meandist}$ has the following 
properties if $\epsilon_1$ is sufficiently small.
\begin{enumerate}
\item
${\rm meandist}$ is a convex function.
\item
If $\mathcal W(\epsilon_1;{\bf x}_0,\phi_0;((\Sigma',\vec z^{\,\prime}),u'))
\cong \mathcal W(\epsilon_1;{\bf x}_1,\phi_1;((\Sigma',\vec z^{\,\prime}),u'))$
is the isomorphism given in Lemma \ref{lem64} 
then ${\rm meandist}$  is compatible with this isomorphism.
\end{enumerate}
\end{lem}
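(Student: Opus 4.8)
The plan is to prove (2) by directly unwinding the definitions and (1) by a center-of-mass (Karcher-type) convexity estimate. For (2): by Lemma \ref{lem64} the isomorphism $\mathcal W(\epsilon_1;{\bf x}_0,\phi_0;((\Sigma',\vec z^{\,\prime}),u'))\cong\mathcal W(\epsilon_1;{\bf x}_1,\phi_1;((\Sigma',\vec z^{\,\prime}),u'))$ is $(\varphi,g)\mapsto(\psi\circ\varphi,g)$ with $\psi=\phi_1^{-1}\circ\phi_0$. Since ${\rm Pr}_s(\psi\circ\varphi)={\rm Pr}_s(\varphi)=:{\bf x}'$, the data ${\bf x}'$, $\Phi_{{\bf x}',\delta}$ and $g$ entering (\ref{form611}) are unchanged, while $\phi_1\circ(\psi\circ\varphi)=(\phi_1\circ\phi_1^{-1})\circ\phi_0\circ\varphi=\phi_0\circ\varphi$; hence the integrand of ${\rm meandist}$, viewed as a function of $z\in\Sigma_0(\delta)$, together with $\chi$ and $\Omega_\Sigma$, is literally the same before and after the isomorphism. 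So ${\rm meandist}$ is compatible with it, and no smallness of $\epsilon_1$ is needed here.

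For (1), equip $\mathcal W$ (a smooth manifold by Lemma \ref{lem6262}) with the Riemannnian metric induced from the fixed metric on $\mathcal{MOR}$ and a bi-invariant metric on the compact group $G$, and — after shrinking $\epsilon_1$ — work on a geodesically convex neighbourhood. Let $t\mapsto\gamma(t)=(\varphi(t),g(t))$ be a unit-speed geodesic of $\mathcal W$ and set ${\bf x}'(t)={\rm Pr}_s(\varphi(t))$. For each fixed $z\in\Sigma_0(\delta)$, the $G$-invariance of the metric on $X$ lets us rewrite
\[
d_X^2\big((u'\circ\phi_0\circ\varphi(t)\circ\Phi_{{\bf x}'(t),\delta})(z),\,g(t)\,u(z)\big)=d_X^2\big(a_z(t),\,u(z)\big),
\]
with $a_z(t):=g(t)^{-1}\cdot(u'\circ\phi_0\circ\varphi(t)\circ\Phi_{{\bf x}'(t),\delta})(z)$, so the \emph{second} argument $u(z)$ no longer depends on $t$. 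By Definition \ref{defn63}(3), $d_X(a_z(t),u(z))<2\epsilon_1$, and the velocity $\dot a_z$ and covariant acceleration $\nabla_t\dot a_z$ in $X$ are bounded uniformly in $z$ by the fixed data ($u$, the $G$-action, the universal family $\mathscr G$, the chosen metrics). Standard Riemannnian comparison on the $O(\epsilon_1)$-ball about $u(z)$ gives ${\rm Hess}\,d_X^2(\cdot,u(z))\ge 2(1-C\epsilon_1^2){\rm Id}$ and $|\nabla d_X^2(\cdot,u(z))|\le C\epsilon_1$ there; differentiating (\ref{form611}) twice under the integral sign then yields
\[
\frac{d^2}{dt^2}{\rm meandist}(\gamma(t))\ \ge\ 2(1-C\epsilon_1^2)\!\int_{\Sigma_0(\delta)}\!\!\chi\,|\dot a_z|^2\,\Omega_\Sigma\ -\ C\epsilon_1\!\int_{\Sigma_0(\delta)}\!\!\chi\,|\nabla_t\dot a_z|\,\Omega_\Sigma .
\]

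The last integral is bounded by a constant depending only on the fixed data, so the error term is $O(\epsilon_1)$. The first integral is a nonnegative quadratic form $Q(\dot\gamma)$ in $\dot\gamma=(\dot\varphi,\dot g)$; in the model case $u'=u$ (where $\varphi$ is an automorphism of $(\Sigma,\vec z)$ and $g$ lies over $\widehat{\mathcal G}_c$) one computes $\dot a_z=du_z(\eta)-\xi_X(u(z))$, with $\eta$ the holomorphic vector field on $\Sigma$ generated by $\dot\varphi$, $\xi\in{\rm Lie}(G)$ the generator of $g(t)$, and $\xi_X$ the associated Killing field; hence the kernel of $Q$ is exactly $\{(\eta,\xi):du(\eta)=\xi_X\circ u\}={\rm Lie}(\widehat{\mathcal G}_c)$, where stability of $((\Sigma,\vec z),u)$ and unique continuation are used to see that $\eta$ is an honest infinitesimal automorphism. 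Writing $\dot\gamma=\dot\gamma^{\parallel}+\dot\gamma^{\perp}$ along ${\rm Lie}(\widehat{\mathcal G}_c)$ and a complement, $Q(\dot\gamma)\ge c\,|\dot\gamma^{\perp}|^2$ for a fixed $c>0$ (coercivity in the model case, preserved for small $\epsilon_1$ by continuity). On the other hand, combining the $\mathcal G_c$-equivariance of $\mathscr G$ (Proposition \ref{prop318}) with the $G$-isometry invariance on $X$, ${\rm meandist}$ is constant along a natural $\widehat{\mathcal G}_c$-action on $\mathcal W$, so $\frac{d^2}{dt^2}{\rm meandist}(\gamma(t))=0$ whenever $\dot\gamma\in{\rm Lie}(\widehat{\mathcal G}_c)$. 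Combining the two regimes, for $\epsilon_1$ small the displayed lower bound is $\ge(c-C'\epsilon_1)|\dot\gamma^{\perp}|^2\ge0$, so ${\rm meandist}$ is geodesically convex.

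The step I expect to be the main obstacle is this last one: producing the coercivity $Q(\dot\gamma)\ge c|\dot\gamma^{\perp}|^2$ with a \emph{uniform} $c>0$ and balancing it against the $O(\epsilon_1)$ acceleration error, so that the net second derivative is genuinely $\ge0$ rather than merely $\ge-O(\epsilon_1)$. This forces one to use that ${\rm meandist}$ is \emph{exactly} constant in the symmetry directions, which in turn rests on the $\widehat{\mathcal G}_c$-equivariance of the universal family from Section \ref{sec:universal} (Proposition \ref{prop318}) and on a careful identification of $T\mathcal W$ near the $\widehat{\mathcal G}_c$-orbit. The remaining ingredients — uniform bounds on the $X$-velocities and accelerations, the comparison estimates for $d_X^2$, and differentiation under the integral sign — are routine.
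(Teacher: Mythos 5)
Your argument for (2) is correct and is exactly what the paper means by ``obvious from construction'': under the isomorphism $(\varphi,g)\mapsto(\psi\circ\varphi,g)$ with $\psi=\phi_1^{-1}\circ\phi_0$ the source ${\bf x}'={\rm Pr}_s(\varphi)$ and the composite $\phi_1\circ(\psi\circ\varphi)=\phi_0\circ\varphi$ are unchanged, so the integrand of (\ref{form611}) is literally preserved.

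For (1), however, your argument has a genuine gap at precisely the place you flagged, and the resolution you propose does not close it. You assert that $\frac{d^2}{dt^2}{\rm meandist}(\gamma(t))=0$ whenever $\dot\gamma(0)\in{\rm Lie}(\widehat{\mathcal G}_c)$ because ${\rm meandist}$ is constant along $\widehat{\mathcal G}_c$-orbits. That is false in general: for a $G$-invariant function $F$ and a unit vector $V$ tangent to the orbit, ${\rm Hess}\,F(V,V)=-dF(\mathrm{II}(V,V))$, where $\mathrm{II}$ is the second fundamental form of the orbit, and this need not vanish since $dF$ may be nonzero in normal directions. Consequently, convexity of the descended function on the quotient together with $\widehat{\mathcal G}_c$-invariance does \emph{not} imply geodesic convexity upstairs (e.g.\ $F(r,\theta)=r$ on $\R\times S^1$ with metric $dr^2+(1+r^2)^2d\theta^2$ has convex descent but is not convex near $r<0$). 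So your estimate $\ge(c-C'\epsilon_1)|\dot\gamma^{\perp}|^2$ does not follow from what you have: when $\dot\gamma^{\perp}$ is small the $O(\epsilon_1)$ acceleration error is not controlled by the Hessian term, and invariance alone does not rescue the orbit direction unless one also arranges the orbits to be totally geodesic in the chosen metric (which you do not do). The paper avoids this issue entirely. Its proof of Lemma~\ref{lem67}(1) is a one-line deferral (``convexity of ${\rm meandist}$ follows from convexity of the distance function; we omit the detail since we will prove a stronger result in Proposition~\ref{prop69}''), and in Proposition~\ref{prop69}, which is the statement actually used downstream (Lemma~\ref{lem610}), the second-variation estimate is carried out only for geodesics \emph{perpendicular} to the $\widehat{\mathcal G}_c$-orbits, i.e.\ for the strict convexity of $\overline{\rm meandist}$ on the quotient; the problematic tangential direction never enters. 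Your approach is essentially a blend of Propositions \ref{prop98} and \ref{prop69}, which is the right toolkit, but as written it claims more than it proves.
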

\begin{proof}
(2) is obvious from construction.
The convexity of ${\rm meandist}$  follows from 
the convexity of distance function.
(We omit the detail of the proof of convexity here since 
we will prove a stronger result in Proposition \ref{prop69}.)
\end{proof}
The function ${\rm meandist}$ is not in general 
strictly convex. To obtain strictly convex function 
we need to take the quotient by the
$\widehat{\mathcal G}_c$ action as follows.
For each $\upsilon = (\gamma,h) \in \hat{\mathcal G}_c$
and ${\bf x} \in \mathcal{OB}$ we have
$\gamma {\bf x} \in \mathcal{OB}$
and a bi-holomorphic map
$\gamma_* : (\Sigma({\bf x}),\vec z({\bf x}))
\to (\Sigma(\gamma{\bf x}),\vec z(\gamma{\bf x}))$.
This is a consequence of Proposition \ref{prop318}.
(We write $\gamma {\bf x}$ and $\gamma_*$ since it is 
independent of $h$.)
By definition
\begin{equation}
u \circ \gamma_* = h \circ u
\end{equation}
where we consider the case ${\bf x} = 0$, that is, 
$\gamma_* : (\Sigma,\vec z)
\to (\Sigma,\vec z)$.
\begin{defn}\label{defn686861}
We define a right $\widehat{\mathcal G}_c$ action on 
$\mathcal W(\epsilon_1;{\bf x}_0,\phi_0;((\Sigma',\vec z^{\,\prime}),u'))$
as follows. 
Let
$(\varphi,g) \in \mathcal W(\epsilon_1;{\bf x}_0,\phi_0;((\Sigma',\vec z^{\,\prime}),u'))$.
Let $\upsilon = (\gamma,h) \in \widehat{\mathcal G}_c$.
We have ${\rm Pr}_s(\varphi) = {\bf x}$. Set
$\gamma^{-1} {\bf x} = {\bf y}$
and $\gamma_* : (\Sigma({\bf y}),\vec z({\bf y})) 
\cong (\Sigma({\bf x}),\vec z({\bf x}))$.
We may thus regard $\gamma_* \in \mathcal{MOR}$
with ${\rm Pr}_s(\gamma_*) = {\bf y}$
and ${\rm Pr}_t(\gamma_*) = {\bf x}$.
\par
We now put 
\begin{equation}\label{newform63}
\upsilon(\varphi,g)
= (\varphi \circ \gamma_*,gh).
\end{equation}
\par
It is easy to see that (\ref{newform63}) defines a {\it right}
$\widehat{\mathcal G}_c$ action on 
$\mathcal W(\epsilon_1;{\bf x}_0,\phi_0;((\Sigma',\vec z^{\,\prime}),u'))$.
We also observe that this action is free. 
In fact, if $(\gamma,e) \in \hat{\mathcal G}_c$ is not the unit
then $\varphi \circ \gamma_* \ne \varphi$.
(Here $e$ is the unit of $G$.)
\end{defn}

\begin{prop}\label{prop69}
For $\upsilon \in \hat{\mathcal G}_c$ we have
$$
{\rm meandist}(\upsilon(\varphi,g))
= {\rm meandist}(\varphi,g).
$$
Moreover the induced function \index{00M1over@$\overline{\rm meandist}$}
$$
\overline{\rm meandist} : 
\mathcal W(\epsilon_1;{\bf x}_0,\phi_0;((\Sigma',\vec z^{\,\prime}),u'))/\widehat{\mathcal G}_c 
\to \R
$$
is strictly convex if $\epsilon_1$ is sufficiently small.
\end{prop}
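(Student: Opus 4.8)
The plan is to reduce the statement to a pointwise convexity estimate on the target manifold $X$, combined with invariance under the $\widehat{\mathcal G}_c$ action, and then upgrade convexity to strict convexity by quotienting out the action.

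First I would prove the invariance ${\rm meandist}(\upsilon(\varphi,g)) = {\rm meandist}(\varphi,g)$ for $\upsilon = (\gamma, h) \in \widehat{\mathcal G}_c$. By Definition \ref{defn686861}, $\upsilon(\varphi,g) = (\varphi\circ\gamma_*, gh)$. Unwinding the integrand in (\ref{form611}), the term at a point $z$ becomes $d_X^2\big((u'\circ\phi_0\circ\varphi\circ\gamma_*)(z), (gh)(u(z))\big)$. Using $u\circ\gamma_* = h\circ u$ (valid at the base point, and more generally compatible with the family by Proposition \ref{prop318}), one has $(gh)(u(z)) = g(h(u(z))) = g(u(\gamma_* z))$, so the integrand equals $d_X^2\big((u'\circ\phi_0\circ\varphi)(\gamma_* z), g(u(\gamma_* z))\big)$. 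Thus the substitution $z \mapsto \gamma_* z$ transforms the integrand for $\upsilon(\varphi,g)$ into the integrand for $(\varphi,g)$. Since $\chi$ is $\mathcal G_c$-invariant and $\Omega_\Sigma$ is $\widehat{\mathcal G}_c$-invariant by assumption, the change of variables $z \mapsto \gamma_* z$ leaves the integral unchanged. Hence ${\rm meandist}$ descends to $\overline{\rm meandist}$ on the quotient.

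Next I would establish convexity. The key input is the standard fact from Riemannian center of mass theory: if $X$ has bounded geometry and we restrict to a ball of sufficiently small radius (which we arrange by taking $\epsilon_1$ small, so that all the maps $u'\circ\phi_0\circ\varphi$ and $g\circ u$ have images in a small ball and stay $C^2$-close to one another), then for any fixed point $q \in X$ the function $p \mapsto d_X^2(p, q)$ is strictly convex along geodesics, with a uniform positive lower bound on the Hessian. The variable here is $(\varphi, g) \in \mathcal W$; one must check that the "motion" of $(u'\circ\phi_0\circ\varphi)(z)$ and of $g(u(z))$ as $(\varphi,g)$ varies is, to leading order, a geodesic motion on $X$ — more precisely, that along a suitable family of paths (geodesics with respect to the chosen metrics on $\mathcal{MOR}$ and on $G$) the composite is a geodesic variation up to higher order terms controlled by $\epsilon_1$. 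Integrating the pointwise (semi-)convexity against the nonnegative weight $\chi\,\Omega_\Sigma$ gives convexity of ${\rm meandist}$, hence of $\overline{\rm meandist}$.

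The main obstacle — and the place deserving the most care — is promoting convexity to \emph{strict} convexity on the quotient. On $\mathcal W$ itself the function cannot be strictly convex: its Hessian degenerates exactly in the directions tangent to the $\widehat{\mathcal G}_c$ orbits, since moving by $\upsilon$ leaves ${\rm meandist}$ constant. So the argument must show that these orbit directions are the \emph{only} degenerate directions. This requires an infinitesimal unique-continuation type argument: if the derivative of ${\rm meandist}$ vanishes in a direction $(\dot\varphi, \dot g)$ at a point where $u'\circ\phi_0\circ\varphi$ and $g\circ u$ agree on $\Sigma_0(\delta)$ (the minimum locus), then the variation field of $u'\circ\phi_0\circ\varphi - g\circ u$ vanishes on the open set where $\chi > 0$; since these are (variations of) holomorphic maps, unique continuation forces the variation to come from an element of the Lie algebra of $\widehat{\mathcal G}_c$, i.e. from an infinitesimal automorphism $\gamma_*$ of $(\Sigma,\vec z)$ paired with the matching infinitesimal $G$-motion. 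Hence the Hessian of $\overline{\rm meandist}$ is positive definite transverse to the orbits, giving strict convexity after taking $\epsilon_1$ small enough that the $\epsilon_1$-dependent error terms do not overwhelm this positive lower bound. I would carry out this last step by comparing with the linearized operator $D_u\overline\partial$ and the kernel of the relevant evaluation/automorphism map, exactly paralleling the transversality setup of Section \ref{sec:obstruction}.
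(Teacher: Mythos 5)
Your invariance argument coincides with the paper's: change of variables $z\mapsto\gamma_*z$ together with $u\circ\gamma_*=h\circ u$ and the invariance of $\chi$ and $\Omega_\Sigma$. The convexity half, however, is where you depart from the paper and where a real gap appears. The paper proves strict convexity of $\overline{\rm meandist}$ at \emph{every} point of the quotient, by taking a unit-speed geodesic $t\mapsto(\varphi^t,h^t)$ perpendicular to the $\widehat{\mathcal G}_c$-orbit at $t=0$ and splitting into two cases according to how the unit velocity $(\dot\varphi^0,\dot h^0)$ is distributed: either the unstable components $\Sigma_a$ ($a\in\mathcal A_u$) absorb a definite fraction of $\|\frac{d}{dt}u(\varphi^t_a(z))-\frac{dh^t}{dt}u(z)\|^2$, in which case one gets a set $U$ of definite volume where this pointwise velocity difference is $\ge\rho$; or else perpendicularity to the orbit forces some \emph{stable} component (on which $\varphi^t_a\equiv\operatorname{id}$) to carry a definite amount of velocity from $h^t$ alone. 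In both cases the hypotheses of Proposition~\ref{prop98} are met on a set of definite measure (after replacing $u$ by $u'$, which costs only $o(\epsilon_1)$), and integration via Lemma~\ref{lem94} gives a uniform Hessian lower bound. This casewise decomposition of the perpendicular-to-orbit condition into pointwise derivative bounds is the key idea your proposal is missing.

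Your proposed alternative --- a unique-continuation/kernel argument à la $D_u\overline\partial$ --- has problems that go beyond being merely a different route. First, you only examine directions at the minimum locus ``where $u'\circ\phi_0\circ\varphi$ and $g\circ u$ agree on $\Sigma_0(\delta)$,'' but exact agreement never holds here; $u'$ is by hypothesis only $C^2$-close to $g u$ after reparametrization, and the whole point of the construction is to handle that mismatch. Second, non-degeneracy of the Hessian at the (approximate) minimum gives strict convexity only locally near that point, whereas Lemma~\ref{lem610} and the subsequent gluing analysis require strict convexity on the whole chart so that the minimum is unique after a $C^2$-small homotopy of $u'$. Third, it is not clear that vanishing of the second variation of the weighted integral forces the pointwise variation field $D(u'\circ\phi_0)\cdot\dot\varphi-\dot g\cdot u$ to vanish on $\{\chi>0\}$, nor, even granting that, that unique continuation applies: $\dot\varphi$ is a holomorphic vector field on the source and $\dot g$ is a Killing-type field on $X$, but $u'\neq u$, so the identity you would need to propagate is not the linearization of a single elliptic equation in a way that makes a unique-continuation principle immediate. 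The paper sidesteps all of this by working with uniform bounds (Proposition~\ref{prop98}) rather than with rigidity of the critical point.
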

\begin{proof}
The first half follows from
$$
d_X((u'\circ\phi_0\circ \varphi \circ \gamma_*)(z),ghu(z))
= 
d_X((u'\circ\phi_0\circ \varphi)(w),gu(w))
$$
where $(\gamma_*)(z) = w$. (Note $\Phi_{{\bf x},\sigma}^{-1} \circ \gamma_*\circ \Phi_{{\bf y},\sigma} : 
\Sigma(\sigma) \to 
\Sigma(\sigma)$ preserves $\Omega_{\Sigma}$ since it coincides with the 
action of $\gamma \in {\mathcal G}_c$.)
\par
We next prove the strict convexity.
Since $C^{2}$ difference between
$u' \circ \phi_0 \circ \varphi  \circ \Phi_{\bf x,\delta}$ and $g \circ u\vert_{\Sigma(\delta)}$
is smaller than $\epsilon$ and the strict convexity is preserved by a small 
$C^2$ perturbation, it suffices to show the case of
$((\Sigma',\vec z^{\,\prime}),u') = ((\Sigma,\vec z),u)$.
\par
Let $t \mapsto (\varphi^t,h^t)$ be a geodesic of unit speed
in the manifold 
$\mathcal W(\epsilon_1;o,{\rm id};((\Sigma,\vec z),u))$, 
which is perpendicular to the $\widehat{\mathcal G}_c$ orbits
at $t=0$.
We will prove
\begin{equation}\label{eq640}
\frac{d^2}{dt^2} {\rm meandist}(\varphi^t,h^t) \ge \tau > 0.
\end{equation}
Note that $dh_t/dt\vert_{t=0}$ can be regarded as a vector field 
on $X$, which we denote by $V_h$.
We consider the following 3 cases separately.
\par\medskip
\noindent(Case 1)  
We first assume that there exists $z \in \Sigma$ such that
$$
V_h(u(z)) \notin D_zu(T_z{\Sigma}).
$$
\par
Note that the set of such points $z$ is open. 
Therefore we may choose $\sigma$ in Definition \ref{defn64} 
so that we may assume $z \in \Sigma(\sigma) \cap \Sigma_a$.
Then
\begin{equation}\label{eq64}
\int_{\Sigma_a \cap \Sigma(\sigma)}
\chi(z)\,\, \left\Vert\frac{d}{dt}u(\varphi^t_a(z)) - \frac{dh^t}{dt} u(z)
\right\Vert^2\Omega_{\Sigma}
\ge \rho.
\end{equation}
at $t=0$,
where $\rho > 0$ is independent of $\epsilon_1$. 
Therefore Proposition \ref{prop98} implies
(\ref{eq640}) at $t=0$.
Since the third derivative of ${\rm meandist}$ is uniformly 
bounded we can choose $\epsilon_1$ small to conclude the 
required strict convexity.
\par\medskip
\noindent(Case 2)
We next assume 
$
V_h(u(z)) \in D_zu(T_z{\Sigma})
$
for all $z \in \Sigma$.
We also assume that $V_h(u(z)) = 0$ for all the nodal point $z$.
\par
If $D_zu = 0$ this implies that $V_h(u(z))=0$.
If $D_zu \ne 0$ then $u$ is an immersion at $z$. 
Therefore there exists a unique $\tilde V_h(z) \in T_z\Sigma$
such that 
$V_h(u(z)) = D_zu(\tilde V_h(z))$.
Putting $\tilde V_h(z) = 0$ when $D_zu = 0$,
we obtain a vector field $\tilde V_h$ of $C^0$ class on each $\Sigma_a$.
The vector field $\tilde V_h$ is smooth on the open subset where $u$ is an immersion.
Note that $G$ action preserves almost complex structure and $u$ 
is pseudo-holomorphic. We use this fact to show 
that $\tilde V_h$ is a holomorphic vector field (and in particular 
is of $C^{\infty}$ class) as follows.
Let $K$ be a compact subset of the set of $z$ with $D_zu \ne 0$.
It is easy to show that we can integrate $\tilde V_h$ to obtain a family of
embeddings $\psi_t$ of a neighborhood of $K$ to $\Sigma$ for small $t$
such that
$$
u \circ \psi_t = {\rm Exp}(V_h) \circ u.
$$
Here we regard $V_h$ as an element of the Lie algebra of $G$. Since ${\rm Exp}(V_h) \in G$ preserves almost complex structure
$\psi_t$ is a holomorphic embedding.
Therefore $\tilde V_h$ is a holomorphic vector field on $K$.
Since $K$ is an arbitrary compact subset of the set of $z$ with $D_zu \ne 0$,
$\tilde V_h$ is a holomorphic vector field outside its zero set.
Therefore by Riemann's removable singularity theorem 
$\tilde V_h$ is a holomorphic vector field on each $\Sigma_a$.
\par
By assumption $\tilde V_h$ vanishes at each nodal points.
\par
Thus $(\tilde V_h,V_h)$ is an element of the Lie algebra of $\widehat{\mathcal G}_c$.
On the other hand, by assumption $\frac{d}{dt}(\varphi^t,h^t)\vert_{t=0}$ is perpendicular to 
a $\widehat{\mathcal G}_c$-orbit. 
Therefore there exists $\Sigma_a$ such that 
\begin{equation}\label{form6565}
\int_{\Sigma_a \cap \Sigma(\sigma)}
\chi(z)\,\,\left\Vert\frac{d\varphi^t}{dt}(0) - \tilde V_h(z)
\right\Vert^2\Omega_{\Sigma}
\ge \rho.
\end{equation}
Since $V_h(u(z)) = D_zu(\tilde V_h(z))$ 
it implies (\ref{eq64}) at $t=0$.
The rest of the proof is the same as (Case 1).
\par\medskip
\noindent(Case 3) We finally consider the case when
$V_h(u(z))$ is non-zero at certain nodal point $z$.
Since strict convexity is an open property, we use (Case 2) and can 
assume that $\Vert V_h(u(z))\Vert \ge c_0$ for some positive constant 
$c_0$.
\par
Note $\frac{d}{dt}u(\varphi^t_a(z))$ is zero at the nodal 
point $z$. Therefore we can choose $\sigma$ small such that
(\ref{eq64}) holds at $t=0$.
The rest of the proof is the same as (Case 1).
\par
The proof of Proposition \ref{prop69} is complete. 
\end{proof}
\begin{lem}\label{lem610}
If $\epsilon_1$ is enough small then 
$\overline{\rm meandist}$ attains its local 
minimum at a unique point of 
$\mathcal W(\epsilon_1;{\bf x}_0,\phi_0;((\Sigma',\vec z^{\,\prime}),u'))/\widehat{\mathcal G}_c$.
\end{lem}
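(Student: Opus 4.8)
The plan is to combine the strict convexity of $\overline{\rm meandist}$ (Proposition \ref{prop69}) with a compactness/properness argument to get existence and uniqueness of the local minimum. First I would observe that since $((\Sigma',\vec z^{\,\prime}),u')$ is $\epsilon_1$-$G$-close to $((\Sigma,\vec z),u)$, the defining point $(\varphi,g)$ witnessing this closeness (coming from Definition \ref{defn41}, with ${\bf x}$, $\phi$, $g$) lies in $\mathcal W(\epsilon_1;{\bf x}_0,\phi_0;((\Sigma',\vec z^{\,\prime}),u'))$ after translating via Lemma \ref{lem64}; hence the space is nonempty, and at this point ${\rm meandist}$ is of order $\epsilon_1^2$, in particular small. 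So I would fix a constant $c_0$ (of order $\epsilon_1^2$) and work inside the sublevel set $\{\overline{\rm meandist} \le c_0\}$ in the quotient $\mathcal W/\widehat{\mathcal G}_c$.

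The key step is to show this sublevel set is nonempty and has compact closure inside $\mathcal W/\widehat{\mathcal G}_c$ (more precisely, that the infimum is attained in the interior and not approached only at the boundary of the region where $\mathcal W$ is defined). Nonemptiness is the previous paragraph. For properness I would argue as follows: an element $(\varphi,g) \in \mathcal W$ is, modulo the $\widehat{\mathcal G}_c$-action, determined by ${\bf x}' = {\rm Pr}_s(\varphi)$ together with the remaining data of $\varphi$ and $g$; the condition that $((\Sigma',\vec z^{\,\prime}),u')$ be $2\epsilon_1$-close to $((\Sigma,\vec z),gu)$ constrains ${\bf x}'$ to lie within distance $2\epsilon_1$ of $o$ and constrains the $C^2$-distance between $u'\circ\phi_0\circ\varphi\circ\Phi_{{\bf x}',\delta}$ and $gu$. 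On the locus where $\overline{\rm meandist}\le c_0$ with $c_0$ small, the integral $\int \chi\, d_X^2(\cdots)\,\Omega_\Sigma$ is small, which (together with the $C^2$-closeness that is built into membership of $\mathcal W$, and using that the stable components of $\Sigma$ are rigid) pins down $g$ up to the $\widehat{\mathcal G}_c$-action to within a compact neighborhood of a fixed element, and likewise pins down $\varphi$ in the fiber direction of ${\rm Pr}_t$. Quotienting by the free, proper $\widehat{\mathcal G}_c$-action (freeness was noted in Definition \ref{defn686861}; $\widehat{\mathcal G}_c$ is compact), the sublevel set descends to a set with compact closure in $\mathcal W/\widehat{\mathcal G}_c$ whose boundary avoids the frontier, since on the frontier ${\rm meandist}$ would be forced close to $c_0$ or larger. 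Hence $\overline{\rm meandist}$ attains a minimum there.

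Uniqueness then follows formally from strict convexity: along any geodesic in $\mathcal W/\widehat{\mathcal G}_c$ joining two distinct critical points of $\overline{\rm meandist}$, strict convexity (Proposition \ref{prop69}) forces $\tfrac{d^2}{dt^2}\overline{\rm meandist} > 0$, so $\overline{\rm meandist}$ is strictly convex along the geodesic and cannot have two distinct local minima; and a strictly convex function on a geodesically convex neighborhood has at most one critical point. Here I would shrink $\epsilon_1$ once more so that the relevant sublevel set is geodesically convex (this is possible because it sits in a small neighborhood of a single point, where the metric on $\mathcal W$, hence on the quotient, is close to Euclidean). That the minimum is a \emph{local} minimum in the sense of the statement, and unique among local minima, then follows since any local minimum is a critical point and strict convexity rules out a second critical point in the convex neighborhood.

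The main obstacle I expect is the properness argument: ensuring that the minimizing sequence for $\overline{\rm meandist}$ does not escape to the boundary of the region where $\mathcal W(\epsilon_1;{\bf x}_0,\phi_0;((\Sigma',\vec z^{\,\prime}),u'))$ is defined — i.e. that smallness of ${\rm meandist}$ genuinely confines $(\varphi,g)$ to a compact part of $\mathcal{MOR}\times G$ modulo $\widehat{\mathcal G}_c$. This requires carefully using the cutoff $\chi$ (which is supported away from the neck regions, so the integral really controls $u'\circ\phi_0\circ\varphi$ versus $gu$ on the thick part of the curve), the rigidity of the stable components, and the fact that on the unstable components the remaining freedom is exactly the $\widehat{\mathcal G}_c$-direction that has been quotiented out — this is precisely the role played by the $\widehat{\mathcal G}_c$-action in passing from ${\rm meandist}$ (merely convex) to $\overline{\rm meandist}$ (strictly convex and proper on sublevel sets). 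The convexity estimates themselves are handed to us by Proposition \ref{prop69}, so they are not the difficulty.
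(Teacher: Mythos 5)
Your strategy---existence via a properness/compactness argument on sublevel sets, then uniqueness via strict convexity---is a genuinely different route from the paper's, and the step you yourself flag as ``the main obstacle'' is a real gap. The paper instead uses a continuation argument from a model case: when $((\Sigma',\vec z^{\,\prime}),u')=((\Sigma,\vec z),gu)$ the minimizer is explicitly $({\rm id},g)$, an interior point; in general $u'$ differs from $gu$ by a $C^2$-small homotopy, and strict convexity of $\overline{\rm meandist}$ (Proposition \ref{prop69}) gives a nondegenerate Hessian at the minimizer, so by the implicit function theorem the unique local minimizer persists and varies continuously along the homotopy. This buys precisely what your approach is missing: no sublevel-set compactness is ever needed, because the minimizer is tracked from a known interior point and automatically stays in a small neighborhood of $({\rm id},g)$ for a $C^2$-small deformation.

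Concretely, your properness argument is not established. Even after quotienting by the compact group $\widehat{\mathcal G}_c$, the fiber of ${\rm Pr}_t$ containing $\varphi$ is modeled on the full automorphism group $\mathcal G$ of $(\Sigma,\vec z)$, which is non-compact when the curve is unstable; the quotient $\mathcal G/\mathcal G_c$ is still non-compact. You need to show that the $2\epsilon_1$-closeness constraint defining $\mathcal W$ together with smallness of $\overline{\rm meandist}$ actually confines $\varphi$ modulo $\widehat{\mathcal G}_c$ to a compact subset strictly inside the open domain, and that minimizing sequences cannot escape to the frontier where the $2\epsilon_1$ bound is saturated. Your sketch gestures at this (cutoff support on the thick part, rigidity of stable components) but does not close the argument; it would require something like an energy/bubbling analysis showing $\overline{\rm meandist}$ blows up, or at least stays bounded below by a definite constant, as $\varphi$ degenerates in the non-compact directions. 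The paper's homotopy-continuation proof simply never has to face this.
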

\begin{proof}
In case $((\Sigma',\vec z^{\,\prime}),u') =  ((\Sigma,\vec z),gu)$ 
the local 
minimum is attained only at the point $({\rm id},g)$.
In general $u'$ is $C^2$ close to $gu$ by reparametrization.
We can find a $C^2$-small homotopy between $gu$ and $u'$.
Strict convexity implies that uniqueness of minima does not change
during this homotopy.
\end{proof}
Now let 
$(\varphi,g) 
\in \mathcal W(\epsilon_1;{\bf x}_0,\phi_0;((\Sigma',\vec z^{\,\prime}),u'))$ 
be a representative of unique minimum of $\overline{\rm meandist}$.
We put
${\bf y} = {\rm Pr}_s(\varphi)$
then
\begin{equation}
d_X((u' \circ \phi_0 \circ \varphi)(\Phi_{{\bf y},\delta}(z)),
gu(z))
\le 2\epsilon_1,
\end{equation}
by Definitions \ref{defn63} and \ref{defn41}.
We define
$
\Psi : K' \to \Sigma
$
by
\begin{equation}\label{psiinSec6}
\Psi(w) =\Phi_{{\bf y},\delta}^{-1}(\varphi^{-1}(\phi_0^{-1}(w))).
\end{equation}
Here $K'\subset \Sigma' \setminus {\rm nodes}$ is a compact subset such that
$\varphi^{-1}(\phi_0^{-1}(K')) \subset {\rm Im}\,\Phi_{{\bf y},\delta}$.
We remark that
$$
d_X(gu(\Psi(w)),u'(w)) \le \epsilon_1.
$$
We define
\begin{equation}\label{formula65}
{\rm Pal} : C_0^{\infty}({\rm Int}\,K';(u')^*TX) \to 
C^{\infty}(\Sigma;(gu)^*TX)
\end{equation}
by the parallel transportation along the unique minimal 
geodesic joining $u'(w)$ and $gu(\Psi(w))$. 
(Here $C_0^{\infty}$ stands for the set of smooth sections with compact support.)
We take a ($G$-equivariant) unital connection of $TM$ to define 
the parallel transportation so that ${\rm Pal}$ is 
complex linear.\footnote{In various literature
people use Levi-Civita connection in a similar situations.
There is no particular reason to take Levi-Civita connection.}
\par
Note $\Psi$ is in general not holomorphic since 
$\Phi_{\bf y,\delta}$ is not holomorphic.
We decompose
$$
D\Psi : T_w \Sigma' \to T_{\Psi(w)} \Sigma
$$
into complex linear part and complex anti-linear part. 
Let $D^h\Psi : T_w \Sigma'(\delta) \to T_{\Psi(w)} \Sigma$
be the complex linear part. It induces 
\begin{equation}\label{formula66}
d^h\Psi : \Lambda_{\Psi(w)}^{01} \Sigma \to \Lambda_w^{01}\Sigma'.
\end{equation}
We use (\ref{formula65}) and (\ref{formula66})
to obtain \index{00I2X0phi@$I_{{\bf x}_0,\phi_0;((\Sigma',\vec z^{\,\prime}),u')}$}
\begin{equation}\label{formula663333}
I_{{\bf x}_0,\phi_0;((\Sigma',\vec z^{\,\prime}),u')}
: C^{\infty}(K;(gu)^*TX \otimes \Lambda^{01})
\to C^{\infty}(K';(u')^*TX \otimes \Lambda^{01})
\end{equation}
for a compact subset $K \subset \Sigma$. 
contained in the image of $
\Psi : K' \to \Sigma
$.
We may choose $K'$ and $K$ so that $K$ contains the support of 
elements of the obstruction space at origin $E((\Sigma,\vec z),u)$.
\begin{defn}
We define a finite dimensional linear subspace
$$
E({\bf x}_0,\phi_0;((\Sigma',\vec z^{\,\prime}),u'))
\subset C^{\infty}(\Sigma'(\delta);(u')^*TX \otimes \Lambda^{01})
$$
as the image of 
$$
g_*E((\Sigma,\vec z),u) \subset C^{\infty}(K;(gu)^*TX \otimes \Lambda^{01})
$$
by the map $I_{{\bf x}_0,\phi_0;((\Sigma',\vec z^{\,\prime}),u')}$.
\end{defn}
\begin{lem}
$E({\bf x}_0,\phi_0;((\Sigma',\vec z^{\,\prime}),u'))$ 
depends only on $((\Sigma',\vec z^{\,\prime}),u')$.
Namely:
\begin{enumerate}
\item 
It does not change when we replace $(\varphi,g)$
by an alternative representative of 
$\mathcal W(\epsilon_1;{\bf x}_0,\phi_0;((\Sigma',\vec z^{\,\prime}),u'))/\widehat{\mathcal G}_c$
in $\mathcal W(\epsilon_1;{\bf x}_0,\phi_0;((\Sigma',\vec z^{\,\prime}),u'))$.
\item
It does not change when we replace ${\bf x}_0,\phi_0$ by other choices.
\end{enumerate}
\end{lem}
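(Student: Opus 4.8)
The plan is to verify both independence statements by tracking how each building block in the construction of $E({\bf x}_0,\phi_0;((\Sigma',\vec z^{\,\prime}),u'))$ behaves under the relevant change, and then observing that the net effect is a reparametrization by an element of $\widehat{\mathcal G}_c$ which is absorbed by the $\widehat{\mathcal G}_c$-invariance of the obstruction space at origin (Definition \ref{defn42}\,(3)). The blocks feeding into $E$ are: the choice of a minimizing representative $(\varphi,g)$ of $\overline{\rm meandist}$, the identification $\Psi$ of \eqref{psiinSec6}, the parallel transport ${\rm Pal}$ of \eqref{formula65}, the complex-linear part $d^h\Psi$ of \eqref{formula66}, and finally the map $I=I_{{\bf x}_0,\phi_0;((\Sigma',\vec z^{\,\prime}),u')}$ of \eqref{formula663333}. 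Since statement (2) involves no genuine analysis I would settle it first.

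\emph{Statement (2).} Given a second representative $({\bf x}_1,\phi_1)$, Lemma \ref{lem64} supplies $\psi\in\mathcal{MOR}$ with ${\rm Pr}_s(\psi)={\bf x}_0$, ${\rm Pr}_t(\psi)={\bf x}_1$ induced by the biholomorphism $\phi_1^{-1}\circ\phi_0$, and the assignment $(\varphi,g)\mapsto(\psi\circ\varphi,g)$ is a $\widehat{\mathcal G}_c$-equivariant bijection $\mathcal W(\epsilon_1;{\bf x}_0,\phi_0;((\Sigma',\vec z^{\,\prime}),u'))\to\mathcal W(\epsilon_1;{\bf x}_1,\phi_1;((\Sigma',\vec z^{\,\prime}),u'))$ which by Lemma \ref{lem67}\,(2) intertwines the two functions ${\rm meandist}$, hence also $\overline{\rm meandist}$; so it carries minimizer to minimizer. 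If $(\varphi,g)$ represents the minimum for $({\bf x}_0,\phi_0)$, then $(\psi\circ\varphi,g)$ represents it for $({\bf x}_1,\phi_1)$; since ${\rm Pr}_s(\psi\circ\varphi)={\rm Pr}_s(\varphi)=:{\bf y}$, the map \eqref{psiinSec6} computed from the latter is $w\mapsto\Phi_{{\bf y},\delta}^{-1}\bigl(\varphi^{-1}(\psi^{-1}(\phi_1^{-1}(w)))\bigr)$, and as $\psi$ is the biholomorphism $\phi_1^{-1}\circ\phi_0$ we have $\psi^{-1}\circ\phi_1^{-1}=\phi_0^{-1}$, so this equals $\Psi$. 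Thus $\Psi$ and $g$ are literally unchanged, hence so are ${\rm Pal}$, $d^h\Psi$, $I$ and finally $E$; this gives (2).

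\emph{Statement (1).} By Lemma \ref{lem610} the minimum of $\overline{\rm meandist}$ is attained at a unique point of $\mathcal W(\epsilon_1;{\bf x}_0,\phi_0;((\Sigma',\vec z^{\,\prime}),u'))/\widehat{\mathcal G}_c$, so any two representatives are related by the action \eqref{newform63} of some $\upsilon=(\gamma,h)\in\widehat{\mathcal G}_c$: the second is $(\varphi\circ\gamma_*,gh)$, with $\gamma_*\colon\Sigma({\bf y}')\to\Sigma({\bf y})$ the primitive $\gamma$-action on fibers, ${\bf y}={\rm Pr}_s(\varphi)$ and ${\bf y}'=\gamma^{-1}{\bf y}$. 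I would then check in turn: (i) using the $\mathcal G_c$-equivariance of the trivialization, namely $\gamma_*\circ\Phi_{{\bf y}',\delta}=\Phi_{{\bf y},\delta}\circ\gamma$ on $\Sigma(\delta)$ (which is built into the construction of Section \ref{sec:obstruction} through (Tri.1), (Tri.3), (Tri.4) and the $\overline{\mathcal G}$-equivariant analytic families of coordinates of Proposition \ref{prop318}), that the identification \eqref{psiinSec6} of the second representative is $\Psi'=\gamma^{-1}\circ\Psi$; (ii) that $g'u(\Psi'(w))=ghu(\gamma^{-1}\Psi(w))=gu(\Psi(w))$, using $u(\gamma x)=hu(x)$ from \eqref{defnGc}, so that ${\rm Pal}'$ is parallel transport along the \emph{same} minimal geodesics as ${\rm Pal}$; (iii) that $d^h\Psi'=d^h\Psi\circ(D\gamma^{-1})^{*}$, since $\gamma$ is holomorphic so that the complex-linear part of $D\Psi'=D\gamma^{-1}\circ D\Psi$ is $D\gamma^{-1}\circ D^h\Psi$; and (iv) that $g'_*E((\Sigma,\vec z),u)=g_*h_*E((\Sigma,\vec z),u)=\gamma^{*}\bigl(g_*E((\Sigma,\vec z),u)\bigr)$, where the second equality is the $\widehat{\mathcal G}_c$-invariance of the obstruction space at origin read through $h\circ u=u\circ\gamma$, together with the commutation of $g_*$ (acting on the $TX$-slot) with the pullback $\gamma^{*}$ (acting on the source and on $\Lambda^{01}$). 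Feeding (iv) into $I'$ and using (i)--(iii), the pullback $\gamma^{*}$ cancels the derivative $D\gamma^{-1}$ entering $d^h\Psi'$, the $TX$-slot identification is common to both sides, and the parallel transports agree by (ii); hence $I'\bigl(g'_*E((\Sigma,\vec z),u)\bigr)=I\bigl(g_*E((\Sigma,\vec z),u)\bigr)$, i.e.\ the two subspaces coincide, which is (1).

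The step I expect to demand the most care is the final cancellation in (1): one must keep the three independent ``slots'' straight — the point of $X$ (moved only by $dg$ and $dh$), the point of $\Sigma$ (moved only by $\gamma$), and the $\Lambda^{01}$-covector (moved by $D\gamma$ and by $d^h\Psi$) — and check that they match the precise variance in which the $\widehat{\mathcal G}_c$-invariance of $E((\Sigma,\vec z),u)$ in Definition \ref{defn42} is stated; getting any one of these transpositions backwards would manufacture a spurious extra factor. A smaller point worth isolating beforehand is the exact $\mathcal G_c$-equivariance of the trivialization $\Phi_{{\bf x},\delta}$ over all of $\Sigma(\delta)$, including the gluing region: this is implicit in Section \ref{sec:obstruction} but should be recorded as an explicit property before it is invoked in (i). I note that the $G$-equivariance of the connection used to build ${\rm Pal}$ in \eqref{formula65} plays no role in this lemma — only the already-established $\widehat{\mathcal G}_c$-symmetry is used — and is instead the key ingredient in the companion assertion $E((\Sigma',\vec z^{\,\prime}),gu')=g_*E((\Sigma',\vec z^{\,\prime}),u')$.
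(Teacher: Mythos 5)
Your proof is correct and follows precisely the route the paper indicates: the paper's own proof is a one-line citation of Definition~\ref{defn42}~(3) for item~(1) and of Lemmata~\ref{lem64} and~\ref{lem67} for item~(2), and your argument is an accurate expansion of exactly those citations, with the bookkeeping of the three slots (point of $X$, point of $\Sigma$, $\Lambda^{01}$-covector) carried out correctly so that the $\gamma$-twist introduced by passing from $(\varphi,g)$ to $(\varphi\circ\gamma_*,gh)$ is absorbed by the $\widehat{\mathcal G}_c$-invariance of $E((\Sigma,\vec z),u)$. Your closing remark is also right: the $G$-equivariance of the connection is not used here (the endpoints of the geodesics coincide, so the two parallel transports agree tautologically); it is needed for the subsequent Lemma~\ref{lem613233}, not this one.
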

\begin{proof}
(1) is a consequence of $\widehat{\mathcal G}_c$ invariance of 
$E((\Sigma,\vec z),u)$. (Definition \ref{defn42} (3).)
(2) is a consequence of  Lemmata \ref{lem64}, \ref{lem67}.
\end{proof}
Hereafter we write 
$E((\Sigma',\vec z^{\,\prime}),u')$ \index{00E2Sigmaz@$E((\Sigma',\vec z^{\,\prime}),u')$}
in place of $E({\bf x}_0,\phi_0;((\Sigma',\vec z^{\,\prime}),u'))$.
We call $E((\Sigma',\vec z^{\,\prime}),u')$ {\it the obstruction space 
at $((\Sigma',\vec z^{\,\prime}),u')$}.
\begin{lem}\label{lem613233}
If $h \in G$ then
$$
E((\Sigma',\vec z^{\,\prime}),hu') = h_*E((\Sigma',\vec z^{\,\prime}),u').
$$
\end{lem}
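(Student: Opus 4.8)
The plan is to trace the $G$-action through every step of the construction of $E((\Sigma',\vec z^{\,\prime}),u')$ in Section \ref{sec:main}, and to check that replacing $u'$ by $hu'$ simply conjugates the data by $h$ everywhere. The key observation is that the whole construction depends on a choice of representative $(\varphi,g)$ of the minimum of $\overline{\rm meandist}$, together with the trivialization $\Phi_{{\bf y},\delta}$ and the parallel transport ${\rm Pal}$; nothing else in it sees $u'$ except through the distance function $d_X$ and the map $u'$ itself. Since $G$ acts by isometries on $X$ and we chose a $G$-invariant unitary connection on $TX$, each of these ingredients is $G$-equivariant.

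First I would observe that the domain data are unchanged: the source curve $(\Sigma',\vec z^{\,\prime})$, the chosen ${\bf x}_0$, and the bi-holomorphic map $\phi_0 : (\Sigma({\bf x}_0),\vec z({\bf x}_0)) \cong (\Sigma',\vec z^{\,\prime})$ may be kept the same for $hu'$ as for $u'$. Next I would show that $(\varphi,g) \mapsto (\varphi, hg)$ is a bijection
$$
\mathcal W(\epsilon_1;{\bf x}_0,\phi_0;((\Sigma',\vec z^{\,\prime}),u'))
\longrightarrow
\mathcal W(\epsilon_1;{\bf x}_0,\phi_0;((\Sigma',\vec z^{\,\prime}),hu')).
$$
Indeed condition (3) of Definition \ref{defn63} for $hu'$ with pair $(\varphi,hg)$ says $((\Sigma',\vec z^{\,\prime}),hu')$ is $2\epsilon_1$-close to $((\Sigma,\vec z),(hg)u)$, i.e. the $C^2$-distance between $hu'\circ\phi_0\circ\varphi\circ\Phi_{{\bf x}',\delta}$ and $hg u$ is $<2\epsilon_1$; since $h$ acts by isometries this is exactly the condition that $((\Sigma',\vec z^{\,\prime}),u')$ is $2\epsilon_1$-close to $((\Sigma,\vec z),gu)$. (Here one should take the Riemannian metric on $X$ to be $G$-invariant, which was done in Section \ref{sec:obstruction}, so that all $C^2$-estimates and diameters are preserved.) Moreover this bijection intertwines the $\widehat{\mathcal G}_c$-actions of Definition \ref{defn686861}, since that action modifies only the first component $\varphi$ and the $G$-component on the right, by right multiplication, which commutes with left multiplication by $h$.

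Then I would check that ${\rm meandist}$ is preserved: from \eqref{form611},
$$
{\rm meandist}_{hu'}(\varphi,hg)
= \int_{\Sigma_0(\delta)} \chi(z)\, d_X^2\big((hu'\circ\phi_0\circ\varphi)(z), (hg)(u(z))\big)\,\Omega_\Sigma
= {\rm meandist}_{u'}(\varphi,g),
$$
again because $h$ is an isometry. Hence the two strictly convex functions $\overline{\rm meandist}$ correspond under the bijection, so their unique minima (Lemma \ref{lem610}) correspond: if $(\varphi,g)$ represents the minimum for $u'$ then $(\varphi,hg)$ represents it for $hu'$. In particular ${\bf y} = {\rm Pr}_s(\varphi)$ and the map $\Psi$ of \eqref{psiinSec6} are literally the same for both. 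Finally I would compare the transport maps: for $u'$ we parallel-transport along the minimal geodesic from $u'(w)$ to $gu(\Psi(w))$, while for $hu'$ we transport along the geodesic from $hu'(w)$ to $(hg)u(\Psi(w)) = h\big(gu(\Psi(w))\big)$, which is the image under $h$ of the former geodesic. Since the connection is $G$-invariant, the parallel transport for $hu'$ equals $h_*$ composed with the parallel transport for $u'$ composed with $(h^{-1})_*$; and since $h$ is holomorphic the complex-linear part $d^h\Psi$ is unaffected. Therefore
$$
I_{{\bf x}_0,\phi_0;((\Sigma',\vec z^{\,\prime}),hu')}
= h_* \circ I_{{\bf x}_0,\phi_0;((\Sigma',\vec z^{\,\prime}),u')} \circ (h^{-1})_*
$$
as maps between the relevant section spaces. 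Applying this to the definition of the obstruction space, $E((\Sigma',\vec z^{\,\prime}),hu')$ is the image of $(hg)_* E((\Sigma,\vec z),u) = h_* g_* E((\Sigma,\vec z),u)$ under $I_{\cdots,hu'}$, which unwinds to $h_*\big(\text{image of } g_*E((\Sigma,\vec z),u) \text{ under } I_{\cdots,u'}\big) = h_* E((\Sigma',\vec z^{\,\prime}),u')$, as desired.

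The routine parts are the isometry bookkeeping; \textbf{the main obstacle} will be verifying cleanly that the minimizer really does transport to the minimizer — i.e. that $h$ induces an \emph{isometry} (or at least a convexity- and minimum-preserving diffeomorphism) of the manifolds $\mathcal W(\cdots)$, not merely a bijection — so that Lemma \ref{lem610}'s uniqueness can be invoked on both sides. This is where one must be careful that the Riemannian metrics chosen on $\mathcal{MOR}$, $\mathcal{OB}$ (and hence on $\mathcal W$), and the volume form $\Omega_\Sigma$, are taken $\widehat{\mathcal G}_c$-invariant (as stipulated in Definition \ref{defn6666}) and that $h$ acts compatibly; once the correspondence of minimizers is secured, everything else is functoriality of parallel transport under the isometric, holomorphic action of $h$.
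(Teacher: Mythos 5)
Your proposal is correct and follows essentially the same route as the paper: the paper's proof is exactly the observation that if $(\varphi,g)$ represents the unique minimum of $\overline{\rm meandist}$ for $u'$ then $(\varphi,hg)$ represents it for $hu'$, with the rest ``following immediately''; your write-up simply supplies the omitted details (invariance of ${\rm meandist}$, the $\widehat{\mathcal G}_c$-equivariance, and the conjugation of the parallel-transport map by $h_*$). One small simplification: you do not need $h$ to induce an isometry of $\mathcal W$ for the minimizers to correspond — since $(\varphi,g)\mapsto(\varphi,hg)$ is a diffeomorphism that preserves the value of $\overline{\rm meandist}$ pointwise, it carries the unique local minimum of one side to that of the other by Lemma \ref{lem610} alone, without any metric on $\mathcal W$ entering.
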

\begin{proof}
Let 
$(\varphi,g) 
\in \mathcal W(\epsilon_1;{\bf x}_0,\phi_0;((\Sigma',\vec z^{\,\prime}),u'))$ 
be a representative of the unique minimum of $\overline{\rm meandist}$.
Then 
$(\varphi,hg)\in \mathcal W(\epsilon_1;{\bf x}_0,\phi_0;((\Sigma',\vec z^{\,\prime}),hu')) 
$ is a representative of the unique minimum of $\overline{\rm meandist}$.
The lemma follows immediately.
\end{proof}
\begin{defn}
Let $((\Sigma',\vec z^{\,\prime}),u')$, $((\Sigma'',\vec z^{\,\prime\prime}),u'')$
be two objects which are $G$-$\epsilon_1$ close to 
$((\Sigma,\vec z),u)$. We say that 
$((\Sigma',\vec z^{\,\prime}),u')$ is {\it isomorphic}\index{isomorphic} to $((\Sigma'',\vec z^{\,\prime\prime}),u'')$
if there exists a bi-holomorphic map 
$\varphi : (\Sigma',\vec z^{\,\prime}) \to (\Sigma'',\vec z^{\,\prime\prime})$
such that $u'' \circ \varphi = u'$.
\end{defn}
\begin{defn}\label{defn61616}
We denote by $U(((\Sigma,\vec z),u);\epsilon_2)$  \index{00U2sigma@$U(((\Sigma,\vec z),u);\epsilon_2)$}
the set of all isomorphism classes of $((\Sigma',\vec z^{\,\prime}),u')$
which are $G$-$\epsilon_2$-close to $((\Sigma,\vec z),u)$ and
\begin{equation}\label{eqmain615}
\overline\partial u' \in E((\Sigma',\vec z^{\,\prime}),u'). \index{00E5psilon2@$\epsilon_2$}
\end{equation}
It is easy to see that if $((\Sigma',\vec z^{\,\prime}),u')$ is 
equivalent to $((\Sigma'',\vec z^{\,\prime\prime}),u'')$
then $((\Sigma',\vec z^{\,\prime}),u')$ satisfies (\ref{eqmain615}) 
if and only if $((\Sigma'',\vec z^{\,\prime\prime}),u'')$ satisfies (\ref{eqmain615}).
\par
Because of Lemma \ref{lem613233} 
there exists a $G$-action on $U(((\Sigma,\vec z),u);\epsilon_2)$
defined by $h((\Sigma',\vec z^{\,\prime}),u') = ((\Sigma',\vec z^{\,\prime}),hu')$.
\end{defn}
\begin{prop}\label{prop616}
If $\epsilon_2$ is small we have the following.
\begin{enumerate}
\item $U(((\Sigma,\vec z),u);\epsilon_2)$ has a structure of 
effective orbifold. 
The $G$-action defined above becomes a smooth action.
\item
There exists a smooth vector bundle $E(((\Sigma,\vec z),u);\epsilon_2)$
on \index{00E2sigmavec@$E(((\Sigma,\vec z),u);\epsilon_2)$} $U(((\Sigma,\vec z),u);\epsilon_2)$ whose fiber at 
$[((\Sigma',\vec z^{\,\prime}),u')]$ is identified with 
$E((\Sigma',\vec z^{\,\prime}),u')$.
The vector bundle
$E(((\Sigma,\vec z),u);\epsilon_2)$ has a smooth $G$-action.
\item
The Kuranishi map $s$ which assigns 
$\overline\partial u' \in E((\Sigma',\vec z^{\,\prime}),u')$
to $[((\Sigma',\vec z^{\,\prime}),u')]$  becomes a 
smooth section of $E((\Sigma',\vec z^{\,\prime}),u')$ and
is $G$-equivariant.
\item
The set
$$
s^{-1}(0) = \{[((\Sigma',\vec z^{\,\prime}),u')] \in U(((\Sigma,\vec z),u);\epsilon_2)
\mid s([(\Sigma',\vec z^{\,\prime}),u')]) = 0\}
$$
is homeomorphic (by an obvious map) to an open neighborhood of 
$[(\Sigma,\vec z),u]$ in $\mathcal M_{g,\ell}((X,J);\alpha)$,
$G$-equivariantly.
\item
The map which sends $[(\Sigma',\vec z^{\,\prime}),u')]$
to $((u'(z'_1),\dots,u'(z'_{\ell})),[\Sigma',\vec z^{\,\prime}])$ defines a 
$G$-equivariant smooth
submersion $U(((\Sigma,\vec z),u);\epsilon_2) \to X^{\ell}
\times \mathcal M_{g,\ell}$.
\end{enumerate}
\end{prop}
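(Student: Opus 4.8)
\emph{Setup and the thickened solution space.} The plan is to perform the finite-dimensional reduction in the standard way, but carrying along the source-automorphism groupoid $\frak G$ and the target group $G$ throughout. Fix the center $((\Sigma,\vec z),u)$, an obstruction space at origin $E_0=E((\Sigma,\vec z),u)$ (Definition \ref{defn42}, satisfying Condition \ref{conds45}), the universal deformation $\mathscr G=(\widetilde{\frak G},\mathscr F,{\frak G},\vec{\frak T},o,\iota)$ of Theorem \ref{them35}, and the $\mathcal G_c$-action on it from Proposition \ref{prop318}. For ${\bf x}\in\mathcal{OB}$ near $o$ and $u':\Sigma({\bf x})\to X$ which is $C^2$-close to $g\circ u$ through the smoothing $\Phi_{{\bf x},\delta}$ of (\ref{loctrimap}) for some $g\in G$, the obstruction space $E((\Sigma({\bf x}),\vec z({\bf x})),u')$ has been defined above in this section. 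I would first show that the thickened solution space
$$
\widehat{\mathcal U}=\{\,({\bf x},u')\mid ((\Sigma({\bf x}),\vec z({\bf x})),u')\ \text{is}\ \epsilon_2\text{-}G\text{-close to}\ ((\Sigma,\vec z),u),\ \ \overline\partial u'\in E((\Sigma({\bf x}),\vec z({\bf x})),u')\,\}
$$
is a smooth manifold. Writing $u'$ in exponential coordinates around $g\circ u$ obtained from the minimal-geodesic parallel transport used to define $I_{{\bf x}_0,\phi_0;((\Sigma',\vec z^{\,\prime}),u')}$ in (\ref{formula663333}), the quantity $\overline\partial u'$ modulo $E((\Sigma({\bf x}),\vec z({\bf x})),u')$ becomes a smooth section of a Banach bundle over a neighborhood of $({\bf o},u)$ in $\mathcal{OB}\times(\text{map deformations})$, and the implicit function theorem applies because its linearization at $({\bf o},u)$ is surjective by Condition \ref{conds45}(1); this being an open condition, it persists for $\epsilon_2$ small. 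Smoothness of $\widehat{\mathcal U}$ (and of all structures below) \emph{in the gluing-parameter directions} $\rho_{\rm e}$ is the one analytic input I would defer: it is the exponential-decay estimate of Section \ref{sec:decay} (see also \cite{foooexp,foootech}). The group $G$ acts on $\widehat{\mathcal U}$ by $u'\mapsto hu'$; the Lie groupoid $\frak G$ acts by transporting $({\bf x},u')$ along a morphism $\varphi$ through the isomorphism $\widetilde\varphi:\Sigma({\rm Pr}_s\varphi)\to\Sigma({\rm Pr}_t\varphi)$ given by the cartesian square (replacing $u'$ by $u'\circ\widetilde\varphi$); the two actions commute, and the $G$-equivariance of obstruction spaces (Lemma \ref{lem613233}) shows $G$ preserves $\widehat{\mathcal U}$. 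As sets, $U(((\Sigma,\vec z),u);\epsilon_2)=\widehat{\mathcal U}/\frak G$.

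\emph{The orbifold structure (the heart).} The main work is to make $\widehat{\mathcal U}/\frak G$ an effective orbifold. The difficulty is that $\frak G$ is not \'etale when $(\Sigma,\vec z)$ is unstable, so one cannot pass to the quotient groupoid directly; the remedy is to build, near each solution, a smooth slice for the $\frak G$-action by rigidifying the source parametrization. I would use $\overline{\rm meandist}$ of Proposition \ref{prop69}: for $({\bf x},u')\in\widehat{\mathcal U}$ near the $G$-orbit of $({\bf o},u)$, the space $\mathcal W(\epsilon_1;{\bf x}_0,\phi_0;((\Sigma({\bf x}),\vec z({\bf x})),u'))/\widehat{\mathcal G}_c$ carries a strictly convex function with a unique minimum (Proposition \ref{prop69}, Lemma \ref{lem610}), and requiring the distinguished representative of that minimum to be in normalized position cuts out in the parameter space a submanifold $V$ which is a slice for the $\frak G$-action, the slice transversality being exactly the strict convexity of $\overline{\rm meandist}$; independence of the auxiliary $({\bf x}_0,\phi_0)$ is Lemmata \ref{lem64} and \ref{lem67}(2). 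The finite group $\Gamma=\{\gamma:(\Sigma',\vec z^{\,\prime})\to(\Sigma',\vec z^{\,\prime})\mid u'\circ\gamma=u'\}$ (finite since the stable-map condition is open) acts on $V$, and $V/\Gamma\to U$ is a homeomorphism onto a neighborhood of $[(\Sigma',\vec z^{\,\prime}),u')]$. Effectivity of the $\Gamma$-action on $V$ — required since all orbifolds in the paper are effective — I would deduce from Condition \ref{conds45} as in \cite[Sections 13, 15]{FO}: a $\gamma$ acting trivially on $V$ would act trivially on the source deformations and on ${\rm Ker}^+D_u\overline\partial$, which the transversality and evaluation conditions exclude. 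Finally the $G$-action descends to $U$ because it commutes with $\frak G$ and carries meandist-minimizers to meandist-minimizers (first half of Proposition \ref{prop69} together with Lemma \ref{lem613233}); that in the above charts it has the local form in the definition of a smooth $G$-action on an orbifold follows from the smooth dependence of all data on the parameters. This proves (1).

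\emph{Obstruction bundle and Kuranishi map.} For (2): the assignment $({\bf x},u')\mapsto E((\Sigma({\bf x}),\vec z({\bf x})),u')$ is a smooth family of finite-dimensional subspaces, because the meandist-minimizer $(\varphi,g)$ depends smoothly on $({\bf x},u')$ (implicit function theorem for the strictly convex $\overline{\rm meandist}$) and the transport operator $I_{{\bf x}_0,\phi_0;((\Sigma',\vec z^{\,\prime}),u')}$ of (\ref{formula663333}), built from exponential and parallel-transport maps, depends smoothly on its data; hence $g_*E_0$ moves to a smoothly varying subspace. This defines a smooth vector bundle $E(((\Sigma,\vec z),u);\epsilon_2)\to U$ with the stated fibers; it is isomorphism-invariant, hence descends to $U$, and $E((\Sigma',\vec z^{\,\prime}),hu')=h_*E((\Sigma',\vec z^{\,\prime}),u')$ (Lemma \ref{lem613233}) makes it a $G$-equivariant vector bundle. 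Then (3) is immediate: $s([(\Sigma',\vec z^{\,\prime}),u')]=\overline\partial u'$ is a smooth section (of the ambient bundle, by elliptic regularity, factoring through the smooth subbundle $E(((\Sigma,\vec z),u);\epsilon_2)$), and it is $G$-equivariant because $G$ preserves $J$, so $\overline\partial(hu')=h_*(\overline\partial u')$, which under the identification of (2) is $h_*s$.

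\emph{Zero set, evaluation, and the main obstacle.} For (4): $s^{-1}(0)$ is the set of isomorphism classes of $J$-holomorphic stable maps which are $\epsilon_2$-$G$-close to $((\Sigma,\vec z),u)$, and the ``obvious map'' is inclusion into $\mathcal M_{g,\ell}((X,J);\alpha)$; it is a homeomorphism onto an open neighborhood of $[(\Sigma,\vec z),u]$ because $\epsilon$-$G$-closeness describes precisely the stable-map topology near that point — the definition of that topology together with Gromov compactness (\cite[Definition 10.3, Theorem 11.1]{FO}) — and $G$-equivariance is built into the notion of $\epsilon$-$G$-close. For (5): $[(\Sigma',\vec z^{\,\prime}),u')]\mapsto(u'(z'_1),\dots,u'(z'_{\ell}))$ is visibly smooth and $G$-equivariant ($G$ acting diagonally on $X^{\ell}$), and it is a submersion because its differential at $[(\Sigma,\vec z),u]$ factors through $\bigoplus_i{\rm Ev}_{z_i}:{\rm Ker}^+D_u\overline\partial\to\bigoplus_iT_{u(z_i)}X$, which is surjective by Condition \ref{conds45}(2); submersivity being an open condition, it holds for $\epsilon_2$ small. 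The step I expect to be the main obstacle is (1): on the analytic side, smoothness of the chart in the gluing directions is genuinely nontrivial and is why the exponential-decay analysis of Section \ref{sec:decay} is needed; on the structural side, realizing $U$ as a meandist-slice for the non-\'etale groupoid $\frak G$ and verifying effectivity from the transversality conditions is exactly where the Lie-groupoid reformulation of \cite[Sections 13, 15]{FO} does its work.
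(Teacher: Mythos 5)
Your proposal takes precisely the alternative route that the paper explicitly acknowledges at the opening of Section~\ref{sec:decay} but then declines: using Lemma~\ref{lem610} and the {\rm meandist}-minimizer to fix the source parametrization and hence to build a slice for the (non-\'etale) groupoid action, instead of the paper's method of adding extra marked points $\vec w_1$ and cutting down by transversal codimension-$2$ submanifolds $\mathcal N_j$ (Definition~\ref{defn71}, Definition~\ref{defn71474}). The paper chooses the latter ``for the consistency with the existing literature'' and because the entire gluing machinery of \cite{foooexp,foootech} is written in that language; Sections~\ref{smoothchart1}--\ref{subsec:Cinfinity} then carry out the stabilization approach in full. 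Your route buys a structurally cleaner $G$-equivariance: since {\rm meandist} and the transport $I_{{\bf x}_0,\phi_0;\cdot}$ are $G$-equivariant by construction (Lemma~\ref{lem613233}, Proposition~\ref{prop69}), the equivariance of (1)--(3) is essentially automatic, whereas the paper must argue separately (Lemma~\ref{lem72727}) that breaking $G$-symmetry by the auxiliary data $\mathcal N_j$ does not affect the smooth structure on the a priori $G$-equivariant set $U$. The paper's route buys a direct handoff to the published gluing estimates, which are formulated for marked-point-stabilized domains.

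Two points where your proposal, as written, would need to be filled in before it constitutes a proof. First, the slice construction is underspecified: saying that ``requiring the distinguished representative of that minimum to be in normalized position cuts out \dots a submanifold $V$'' does not say what the normalization is, nor why the resulting $V$ is smooth and transverse to $\frak G$-orbits. Note that the minimizer of $\overline{\rm meandist}$ is determined only up to the $\widehat{\mathcal G}_c$-action, and $\widehat{\mathcal G}_c$ can be positive-dimensional (Proposition~\ref{prop318}, the $S^1$-example after it); distinguishing the part of that ambiguity which is absorbed by the $G$-action from the finite part that survives as the orbifold isotropy $\Gamma$ is exactly the delicate bookkeeping that \cite[Sections 13, 15]{FO} undertake and that the paper avoids by using $\mathcal N_j$. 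The effectivity of $\Gamma$ on $V$ also needs the evaluation transversality in Condition~\ref{conds45}(2), as you note, but the verification that nontrivial $\gamma\in\Gamma$ really acts nontrivially on the slice (not only on the source or on the kernel separately) requires an argument. Second, the analytic input is deferred: you correctly isolate the smoothness in the gluing directions $\rho_{\rm e}$ as the hard step, but the exponential-decay estimates of Proposition~\ref{prop78}/Corollary~\ref{cor79} and the gluing construction of Proposition~\ref{prop710} are written for the marked-point-stabilized domains, so citing Section~\ref{sec:decay} for the {\rm meandist}-slice approach does require a nontrivial (if expected) adaptation. For (4) and (5) your argument agrees with the paper's: the identification of $s^{-1}(0)$ with a neighborhood in $\mathcal M_{g,\ell}$ is the stable-map topology of \cite{FO}, and the submersivity of the evaluation map is Condition~\ref{conds45}(2) at the center, propagated by openness.
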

Theorem \ref{thm54} follows immediately from Proposition \ref{prop616}.
The remaining part of the proof of Proposition \ref{prop616}
is gluing analysis. 
Actually gluing analysis is mostly the same as one we described in 
detail in \cite{foooexp}.
The new point we need to check is the behavior of the 
(family of) obstruction spaces $E((\Sigma',\vec z^{\,\prime}),u')$
while we move $((\Sigma',\vec z^{\,\prime}),u')$, especially 
while $\Sigma'$ becomes nodal in the limit.
We will describe this point in the next section 
(Subsection \ref{mainprop}).
We also provide detail of the way how
to use gluing analysis to prove 
Proposition \ref{prop616},
though this part is mostly the same 
as \cite[Part 4]{foootech} and \cite{foooconstr}.

\section{Gluing and smooth charts}
\label{sec:decay}

In this section, we show that the gluing analysis we detailed in \cite{foooexp}
can be applied to prove Proposition \ref{prop616}.
We remark that to work out gluing analysis we need to 
`stabilize' the domain curve. This is because we need to 
specify the coordinate of the source curve for gluing analysis.
We can use  the frame work of this paper, 
the universal family parametrized by a Lie groupoid,  
for this purpose also. In fact if we use Lemma \ref{lem610}
we can specify the coordinate of the source 
curve $\Sigma'$ (depending on the map $u'$.)
However here we do not take this way to prove our main theorem. 
We use another method to `stabilize' the domain curve,
that is, to add extra marked points and eliminate 
the extra parameter (of moving added marked points) 
by using transversal codimension 2 submanifolds.
This is the way taken in \cite[Appendix]{FO}.
The main reason why we use this method is the consistency with the 
existing literature. 
For example this method was used in \cite{foootech,foooconstr} to specify the 
coordinate of the source curve.
We remark that this way to stabilize the domain 
breaks the symmetry of $G$-action. 
This fact however does not affect the proof of 
Proposition \ref{prop616}. 
In fact the family of obstruction spaces 
$E((\Sigma',\vec z^{\,\prime}),u')$ and 
the solution set (the thickened moduli space) 
$U(((\Sigma,\vec z),u);\epsilon_2)$ are already 
defined and are $G$-equivariant.
The gluing analysis we describe below 
is used to establish certain {\it properties} of them
and is not used to {\it define} them.
By this reason we can break the $G$-equivariance of 
the construction here.
(See Subsection \ref{smoothchart2}, 
especially (the proof of) Lemma \ref{lem72727}, for more explanation 
on this point.)

\subsection{Construction of the smooth chart 1: The way how we adapt the result 
of \cite{foooexp}}
\label{smoothchart1}

For the purpose of proving Proposition \ref{prop616}
we construct 
a chart of $U(((\Sigma,\vec z),u);\epsilon_2)$ centered at each point 
$[((\Sigma_1,\vec z_1),u_1)]$ of $U(((\Sigma,\vec z),u);\epsilon_2)$.
Here $(\Sigma_1,\vec z_1)$ is a marked nodal curve of genus $g$ and 
with $\ell$ marked points and $u_1 : \Sigma_1 \to X$ is a map such that 
$(\Sigma_1,\vec z_1)$ is $G$-$\epsilon_2$ close to 
$((\Sigma,\vec z),u)$. We require the map $u_1$ to satisfy the equation
\begin{equation}
\overline{\partial} u_1 \in E(((\Sigma_1,\vec z_1),u_1)).
\end{equation}
Let \index{00G311@$\mathcal G_1$}
$$
\mathcal G_1 =
\mathcal G((\Sigma_1,\vec z_1),u_1) 
=
\{ v : (\Sigma_1,\vec z_1) 
\to (\Sigma_1,\vec z_1) 
\mid \text{$v$ is bi-holomorphic and $u_1 \circ v = u_1$}\}.
$$
Since $((\Sigma,\vec z),u)$ is a stable map 
$\mathcal G((\Sigma,\vec z),u)$ is a finite group.
We may choose $\epsilon_2$ small so that $\mathcal G_1$ 
is a subgroup of $\mathcal G((\Sigma,\vec z),u)$.
Therefore $\mathcal G_1$ is a finite group.

\begin{defn}\label{defn71}{\rm (See \cite[Definition 17.5]{foootech})}
{\it Stabilization data} \index{stabilization data} of the source curve of $((\Sigma_1,\vec z_1),u_1)$
are choices of $\vec w_1$ and $\vec{\mathcal N} = \{\mathcal N_j\}$ with the following 
properties.\index{00W2i@$\vec w_1$}\index{00N3Nj@$\mathcal N_j$}\index{00N3vect@$\vec{\mathcal N}$}
\begin{enumerate}
\item
$\vec w_1$ consists of finitely many ordered points $(w_{1,1},\dots,w_{1,k})$
of $\Sigma_1$. None of those points are nodal. 
$\vec w_1 \cap \vec z_1 = \emptyset$ and $w_{1,i} \ne w_{1,j}$
for $i \ne j$.
\item
The marked nodal curve
$(\Sigma_1,\vec z_1\cup \vec w_1)$ is stable.
Moreover its automorphism group is trivial.
\item
The map $u_1$ is an immersion at each added marked points $w_{1,i}$.
\item
For each $v \in \mathcal G_1$, 
there exists a permutation $\sigma_v : 
\{1,\dots,k\} \to \{1,\dots,k\}$
such that
$$
v(w_{1,i}) = w_{1,\sigma_v(i)}.
$$
\item
$\mathcal N_j$ is a codimension $2$ submanifold of $X$. 
\item
There exists a neighborhood $U_{1,j}$ of $w_{1,j}$
such that 
$$
u_1^{-1}(\mathcal N_j) \cap U_{1,j} = \{ w_{1,j}\}
$$
and $u_1(U_{1,j})$ intersects with $\mathcal N_j$ 
transversality at $u_1(w_{1,j})$.
\item
If $v \in \mathcal G_1$
then
$$
\mathcal N_{\sigma_v(i)} = \mathcal N_{i}.
$$
(Note $u_1(w_{1,\sigma_v(i)}) = u_1(w_{1,i})$
and $u_1 \circ v = u_1$ on a neighborhood of $w_{1,i}$,
by Item (4).)
\end{enumerate}
We also assume the following extra condition.
(The condition below implies that $w_{1,j}$ is  away from the neck region.)
\begin{enumerate}
\item[(8)]  
We decompose $\Sigma_1$ into irreducible components as 
(\ref{decomp7272}). 
\begin{enumerate}
\item Suppose the Euler number of $\Sigma_{1,a} \setminus (\Sigma_{1,a}\cap \vec z_1) \setminus$ nodal points of $\Sigma_1$ 
is negative.
We put complete Riemannian metric of 
constant negative curveture $-1$ and with finite volume 
on this space. Then the injectivity radius at 
$w_{1,j}$ is not smaller than some positive universal constant 
$\epsilon_0$. (In fact we may take $\epsilon_0$ to be the Margulis constant.
For example the number ${\rm arcsinh}(1)$ appearing in \cite[Chapter IV 4]{hu} 
is the Margulis constant.)
\item
Suppose the Euler number of $\Sigma_{1,a} \setminus (\Sigma_{1,a}\cap \vec z_1) \setminus$ nodal points of $\Sigma_1$ 
is non-negative.
By stability, the map $u_1$ is non-constant on $\Sigma_{1,a}$.
We require that 
$$
d(u_1(w_{1,j},u(z')) \ge \epsilon_X
$$
for any nodal or marked point $z'$ of $\Sigma$.
Here $\epsilon_X$ is a positive number depending on $X$ and is 
sufficiently small so that the fact $u_1$ is non-constant implies 
the existence of such $w_{1,j}$.
\end{enumerate}
\end{enumerate}
\par
Choices of $\vec w_i$ satisfying (1)(2)(3)(4)(8)
are called {\it weak stabilization data}.
\index{weak stabilization data}
$\vec{\mathcal N}$ (resp. $\mathcal N_j$) is called local transversals (a local 
transversal).
\end{defn}
It is easy to see that stabilization data exist.
We consider a neighborhood of 
$(\Sigma_1,\vec z_1 \cup \vec w_1)$
in the Deligne-Mumford compactification 
$\mathcal M_{g,\ell+k}$ consisting of stable 
curves of genus $g$ with $\ell+k$ marked points.
We consider a $\mathcal G_1$ action on 
$\mathcal M_{g,\ell+k}$ as follows.
An element of $\mathcal M_{g,\ell+k}$ 
is represented by 
$(\Sigma',\vec z^{\,\prime} \cup \vec w^{\,\prime})$
where $\Sigma'$ is a genus $g$ nodal curve 
and $\vec z^{\,\prime}$ (resp. $\vec w^{\,\prime}$)
are $\ell$ (resp. $k$) marked points on it.
Using $\sigma_v$ in item (4) we define:
$$
v \cdot (\Sigma',\vec z^{\,\prime} \cup \vec w^{\,\prime})
= 
(\Sigma',\vec z^{\,\prime} \cup (w'_{\sigma^{-1}_v(1)},\dots,
w'_{\sigma_v^{-1}(k)})).
$$
Namely the action is defined by permutation of the 
marked points $\vec w^{\,\prime}$ by $\sigma_v$.
This is a left action.
\par
Note $[\Sigma_1,\vec z_1\cup \vec w_1] \in \mathcal M_{g,\ell+k}$
is a fixed point of this $\mathcal G_1$-action.
We also remark that Definition \ref{defn71}
(2) implies that $[\Sigma_1,\vec z_1\cup \vec w_1]$
is a smooth point of the orbifold $\mathcal M_{g,\ell+k}$.
\par
In a way similar to the map (\ref{loctrimap}) we take a local `trivialization'
of the universal family in a neighborhood of 
$[\Sigma_1,\vec z_1\cup \vec w_1]$.
For this purpose, we need to fix two types of data, that is, 
local trivialization (Definition \ref{defn7272})
and analytic families of  coordinates
(Definition \ref{defn7372}).
\par
We decompose $\Sigma_1$ into irreducible components:
\begin{equation}\label{decomp7272}
\Sigma_1 = \bigcup_{a \in \mathcal A_1} \Sigma_{1,a},
\end{equation}
where $\mathcal A_1$ is a certain index set. \index{00S5igma1a@$\Sigma_{1,a}$}
The smooth Riemann surface $\Sigma_{1,a}$ together with the marked or nodal 
points of $\Sigma_1$ on $\Sigma_{1,a}$ defines an element 
$$
[\Sigma_{1,a},\vec z_{1,a}] \in \mathcal M_{g_{1,a},\ell_{1,a}+k_{1,a}}.
$$
Here marked points are by definition elements of 
$\vec z_1\cup \vec w_1$.
$k_{1,a} = \# (\vec w_1\cap\Sigma_{1,a})$ and 
$\ell_{1,a}$ is $\# (\vec z_1\cap\Sigma_{1,a})$ plus the number of nodal 
points on $\Sigma_a$.
\begin{defn}\label{defn7272}
A {\it local trivialization} \index{local trivializationt}
at $(\Sigma_1,\vec z_1\cup \vec w_1)$
consists of $\mathcal V_{1,a}$ and $\phi_{1,a}$
with the following properties.
\begin{enumerate}
\item
$\mathcal V_{1,a}$ is a neighborhood of 
$(\Sigma_{1,a},\vec z_{1,a})$ in 
$\mathcal M_{g_{1,a},\ell_{1,a}+k_{1,a}}$.
\item
Let $\pi : \mathcal C_{g_{1,a},\ell_{1,a}+k_{1,a}} \to \mathcal M_{g_{1,a},\ell_{1,a}+k_{1,a}}$  
\index{00C3Gell@$\mathcal C_{g,\ell}$}
be the universal family. \index{00P4hi1a@$\phi_{1,a}$}
$\phi_{1,a}$ is a diffeomorphism
$\phi_{1,a} : \mathcal V_{1,a} \times \Sigma_{1,a} \to \mathcal C_{g_1,\ell_a+k_{1,a}}$
onto the open subset $\pi^{-1}(\mathcal V_{1,a})
\subset \mathcal C_{g_1,\ell_a+k_{1,a}}$
such that the next diagram commutes.
\begin{equation}\label{diagram51622}
\begin{CD}
\mathcal V_{1,a} \times \Sigma_{1,a} @ >{\phi_{1,a}}>> \mathcal C_{g_{1,a},\ell_{1,a}+k_{1,a}} \\
@ VVV @ VV{\pi}V\\
\mathcal V_{1,a} @ >{{\rm id}}>> \mathcal M_{g_{1,a},\ell_{1,a}+k_{1,a}}.
\end{CD}
\end{equation}
Here the left vertical arrow is the projection to the first factor.
\item
Let $v \in \mathcal G_1$. 
We define $v(a)$ by $v(\Sigma_{1,a}) = \Sigma_{1,v(a)}$.
We can identify $[\Sigma_{1,a},\vec z_{1,a}]$
and $[\Sigma_{1,v(a)},\vec z_{1,v(a)}]$
using bi-holomorphic map $v$. Then 
$\mathcal V_{1,a} = \mathcal V_{1,v(a)}$
and the next diagram commutes.
\begin{equation}\label{diagram51699}
\begin{CD}
\mathcal V_{1,a} \times \Sigma_{1,a} @ >{\phi_{1,a}}>> \mathcal C_{g_{1,a},
\ell_{1,a}+k_{1,a}} \\
@ VV{v}V @ VV{v}V\\
\mathcal V_{1,v(a)} \times \Sigma_{1,v(a)} @ >{\phi_{1,v(a)}}>> \mathcal C_{g_{1,a},
\ell_{1,a}+k_{1,a}}.
\end{CD}
\end{equation}
Here the left vertical arrow is defined by the identification \index{00V31a@$\mathcal V_{1,a}$}
$\mathcal V_{1,a} = \mathcal V_{1,v(a)}$ via 
the map $v : \Sigma_{1,a} \to \Sigma_{1,v(a)}$.
The right vertical arrow is defined by identifying the 
marked points on $\Sigma_{1,a}$ and ones on $\Sigma_{1,v(a)}$
by using the map $v$.\footnote{Note $a = v(a)$ may occur. In that case 
the map $\mathcal V_{1,a} \to \mathcal V_{1,v(a)} = \mathcal V_{1,a}$ 
is defined by the permutation of the enumeration of the marked points of 
$\Sigma_{1,a}$.}
\item
Let $\frak t_{1,a,j}$ $(j=1,\dots,\ell_{1,a}+k_{1,a})$ be the 
sections of $\pi : \mathcal C_{g_{1,a},\ell_{1,a}} \to \mathcal M_{g_{1,a},\ell_{1,a}
+k_{1,a}}$
assigning the $j$-th marked point.
Suppose $\frak t_{1,a,j}({\bf x})$ corresponds to a nodal point of $\Sigma_1({\bf x}) = \pi^{-1}({\bf x})$.
Then
\begin{equation}\label{form7575}
\phi_{1,a}^{-1}(\frak t_{1,a,j}({\bf x})) = ({\bf x},z_{1,a,j})
\end{equation}
for ${\bf x} \in \mathcal V_{1,a}$.
(In other words, the $\Sigma_{1,a}$ factor of left hand side does not 
move when we move ${\bf x}$.)
\end{enumerate}
\end{defn}

Note Conditions (2)(3)(4) are similar to the commutativity of Diagram (\ref{diagram51333}),
(Tri.1)+(Tri.4), (Tri.2) respectively.
\begin{rem}
We assume (\ref{form7575}) only for the marked points 
corresponding to the nodal point.
See Remark \ref{rem756}.
\end{rem}
\begin{defn}\label{defn7372}
{\it Analytic families of  coordinates} 
\index{analytic families of coordinates}
on $\prod_{a \in \mathcal A_1} \mathcal V_{1,a}$
assigns $\varphi_{1,a,j} : \mathcal V_{1,a} \times 
D^2(2) \to \mathcal C_{g_{1,a},\ell_{1,a}+k_{1,a}}$
for all $a$ and some $j$ with the following properties.
\begin{enumerate}
\item
The map $\varphi_{1,a,j}$ is defined if $z_{1,a,j}$ 
is a nodal point contained in $\Sigma_{1,a}$.
\item
The map $\varphi_{1,a,j}$ defines an analytic family of  coordinates 
at $\frak t_{1,a,j}$ in the sense of Definition \ref{cooddinateatinf}.
Here $\frak t_{1,a,j}$ is the holomorphic section of 
$\pi : \mathcal C_{g_{1,a},\ell_{1,a}+k_{1,a}} \to \mathcal M_{g_{1,a},\ell_{1,a}+k_{1,a}}$
assigning the $j$-th marked point.
\item
The  analytic family of  coordinates  $\varphi_{1,a,j}$  is 
compatible with the trivialization data. Namely the equality
$$
(\phi_{1,a})^{-1}(\varphi_{1.a,i}({\bf x},z)) = ({\bf x},\varphi_{1,a,i}(o,z)).
$$
holds, were $o \in \mathcal V_{1,a}$ corresponds to the point $\Sigma_{1,a}$.
\item
Let $v \in \mathcal G_1$ and $v(z_{1,a,i})
= z_{1,a',a'}$. Then
$$
v(\varphi_{1,a,i}({\bf x},z)) = \exp(\theta_{v,a,i} \sqrt{-1})  \varphi_{1,a',i'}({\bf x},z).
$$
Here $\theta_{v,a,i} \in \R$.
\end{enumerate}
\end{defn}
Note Conditions (3),(4) above are similar to (Tri.3) 
and (*) right above Lemma \ref{lem399}, respectively.
(As we mentioned in Remark \ref{rem31215} we only use analytic families of 
coordinates at the nodal points.)
\begin{defn}\label{defn720}
Let $((\Sigma_1,\vec z_1),u_1) \in U(((\Sigma,\vec z),u);\epsilon_2)$.
{\it Strong stabilization data}\index{strong stabilization data} ${\frak W}^{(1)}$ 
\index{00W4frak@${\frak W}$} at $((\Sigma_1,\vec z_1),u_1)$
are by definition the  choices of the following data. \index{00W4.i@(${\frak W}$.i)}
\begin{enumerate}
\item[(${\frak W}$.1)]
Stabilization data $\vec w_1$, $\vec{\mathcal N}_{1} = \{\mathcal N^{(1)}_{j}\mid j=1,\dots,k_i\}$
of $((\Sigma_1,\vec z_1),u_1)$.
(Definition \ref{defn71}.)
\item[(${\frak W}$.2)]
A local trivialization (Definition \ref{defn7272})
at $(\Sigma_1,\vec z_1\cup \vec w_1)$.
\item[(${\frak W}$.3)]
Analytic families of coordinates of $((\Sigma_1,\vec z_1),u_1)$.
(Definition \ref{defn7372}).
\end{enumerate}
We denote the totality of those data by $({\frak W}^{(1)},\vec{\mathcal N}_1)$. \index{00W4N00@$({\frak W},\vec{\mathcal N})$}
\par
{\it Stabilization and trivialization data}\index{stabilization and trivialization data}
\index{00W400@${\frak W}$} ${\frak W}^{(1)}$ at $((\Sigma_1,\vec z_1),u_1)$ 
are weak stabilization data $\vec w_1$
together with (${\frak W}$.2) and (${\frak W}$.3).
\end{defn}
Suppose stabilization and trivialization data ${\frak W}$ are given. 
\begin{defn}
We put \index{00V310@$\mathcal V_{1,0}$}
$$
\mathcal V_{1,1} = \bigoplus_{{\rm e}\in \Gamma(\Sigma_1)}
\C_{-,{\rm e}} \otimes \C_{+,{\rm e}}.
$$
as in Definition \ref{defn313} 
and
$$
\mathcal V_{(1)} = \mathcal V_{1,0} \times \mathcal V_{1,1}
$$
with 
$$
\mathcal V_{1,0} = 
\prod_{a \in \mathcal A_1}\mathcal V_{1,a}.
$$
\index{00V311@$\mathcal V_{1,1}$}\index{00V31@$\mathcal V_{(1)}$}
We carry out the same construction as Construction \ref{cost314}
and obtain
\begin{equation}\label{defmathcalC2}
\mathcal C_{(1)} = \bigcup_{\vec x \in \mathcal V_{1,0},\vec\rho \in 
\mathcal V_{1,1}} \Sigma_1(\vec x,\vec\rho) \times \{(\vec x,\vec\rho)\}.
\end{equation}
We thus obtain \index{00C31ca@$\mathcal C_{(1)}$} a family of nodal curves:
\begin{equation}\label{familyon1}
\mathcal C_{(1)} \to \mathcal V_{(1)}
\end{equation}
together with sections $\frak t_j$ ($j=1,\dots,k+\ell$).
They consist a local universal family over 
$\mathcal V_{(1)}$, which is an open neighborhood of 
$[\Sigma',\vec z^{\,\prime} \cup \vec w^{\,\prime}]
\in \mathcal M_{g,k+\ell}$.
\par
Hereafter we write $\vec z_{1}({\bf x})
= (\frak t_1({\bf x}),\dots,\frak t_{\ell}({\bf x}))$
\index{00Z21x@$\vec z_{1}({\bf x})$} \index{00W21x@$\vec w_{1}({\bf x})$}
and $\vec w_{1}({\bf x})
= (\frak t_{\ell+1}({\bf x}),\dots,\frak t_{k+\ell}({\bf x}))$.
\par
Moreover (\ref{familyon1}) is acted by 
$\mathcal G_1$
such that
$$
z_{1,j}(v {\bf x}) = v(z_{1,j}({\bf x})),
\qquad
w_{1,\sigma_v(j)}(v {\bf x}) = v(w_{1,j}({\bf x})).
$$
\par
We define $\Sigma_1(\delta) \subset \Sigma_1$ \index{00S5igma1delta@$\Sigma_1(\delta)$}
in the same way as  (\ref{formula46}).
We define $\mathcal V_{1,1}(\delta)$
in the same way as (\ref{form45555}).
For ${\bf x} \in \mathcal V_{1,0} \times \mathcal V_{1,1}(\delta)$
we define $\Sigma_1({\bf x})$ in the same way as (\ref{sigmaxxx}).
We also define \index{00P5HI@$\Phi_{1,{\bf x},\delta}$} an open embedding
\begin{equation}\label{loctrimap3}
\Phi_{1,{\bf x},\delta} : \Sigma_1(\delta) \to \Sigma_1({\bf x})
\end{equation}
in the same way as (\ref{loctrimap}).
\end{defn}
\begin{rem}
Since $\mathcal V_{(1)}$, $\mathcal C_{(1)}$, 
$\Phi_{1,{\bf x},\delta}$ and ${\frak W}^{(1)}, \vec{\mathcal N}_1$ are objects related to 
$\Sigma_1$ we put suffix $(1)$ or $1$ to them.
In case when $\Sigma_2$ etc. appears 
(in Subsections \ref{indsmoothchart}, \ref{subsec:Cinfinity}) we write
$\mathcal V_{(2)}$, $\mathcal C_{(2)}$, 
$\Phi_{2,{\bf x},\delta}$, etc..
We also write its strong stabilization data (resp. stabilization and trivialization data)
by $({\frak W}^{(2)},\vec{\mathcal N}_{2})$ (resp. ${\frak W}^{(2)}$).
\end{rem}
We remark that we use Definition \ref{defn7272} (4) 
(which is assumed for the nodal point) and 
analytic family of 
coordinates at the nodal 
points to define (\ref{loctrimap3}).
(A similar data for marked points are not used.)
\par
Now in a similar way as Definition \ref{defn41}
we define as follows.

\begin{defn}\label{defn412}
{\rm (See \cite[Definition 17.12]{foootech}.)}
We consider a triple $((\Sigma',\vec z^{\,\prime}),u')$ where 
$(\Sigma',\vec z^{\,\prime})$ is a nodal curve of genus $g$ with $\ell$ marked points, 
$u' : \Sigma' \to X$ is a smooth map.
(Namely $u'$ is a continuous map which is smooth on each stratum.)
\par
We say that $((\Sigma',\vec z^{\,\prime}),u')$ is {\it $\epsilon$-close} to
$((\Sigma_1,\vec z_1),u_1)$ 
with respect to the given stabilization and trivialization data ${\frak W}$,
if the following holds.
\par
There exist $\vec w^{\,\prime}$ and
$\delta > 0$, 
${\bf x} = ((\rho_{\rm e})_{{\rm e} \in \Gamma(\Sigma)}),(x_a)_{a \in\mathcal A_s})
\in \mathcal V_{1,0}(\delta) 
\times \mathcal V_{1,1}$,
and a bi-holomorphic map $\phi : (\Sigma_1({\bf x}),\vec z_1({\bf x})
\cup \vec w_1({\bf x}))
\cong (\Sigma',\vec z^{\,\prime}\cup \vec w^{\,\prime})$ with 
the following properties.
\begin{enumerate}
\item The $C^{2}$ norm of the difference between
$u' \circ \phi \circ \Phi_{1,\bf x,\delta}$ and $u_1\vert_{\Sigma_1(\delta)}$
is smaller than $\epsilon$.
\item The distance between $\bf x$ and $[\Sigma_1,\vec z_1 \cup \vec w_1]$
in $\mathcal M_{g,k+\ell}$
is smaller than $\epsilon$. Moreover $\delta < \epsilon$.
\item
The map $u' \circ \phi$ has diameter $< \epsilon$ on $\Sigma_1({\bf x}) \setminus 
{\rm Im}(\Phi_{1,\bf x,\delta})$.
\footnote{See Definition \ref{defn410} for this terminology.}
\end{enumerate}
See Figure \ref{Figure5-1}.
\par
In case we specify $\delta$ we say 
$((\Sigma',\vec z^{\,\prime}),u')$ is {\it $\epsilon$-close} to
$((\Sigma_1,\vec z_1),u_1)$ with respect to $\delta$.
\begin{figure}[h]
\centering
\includegraphics[scale=0.4]{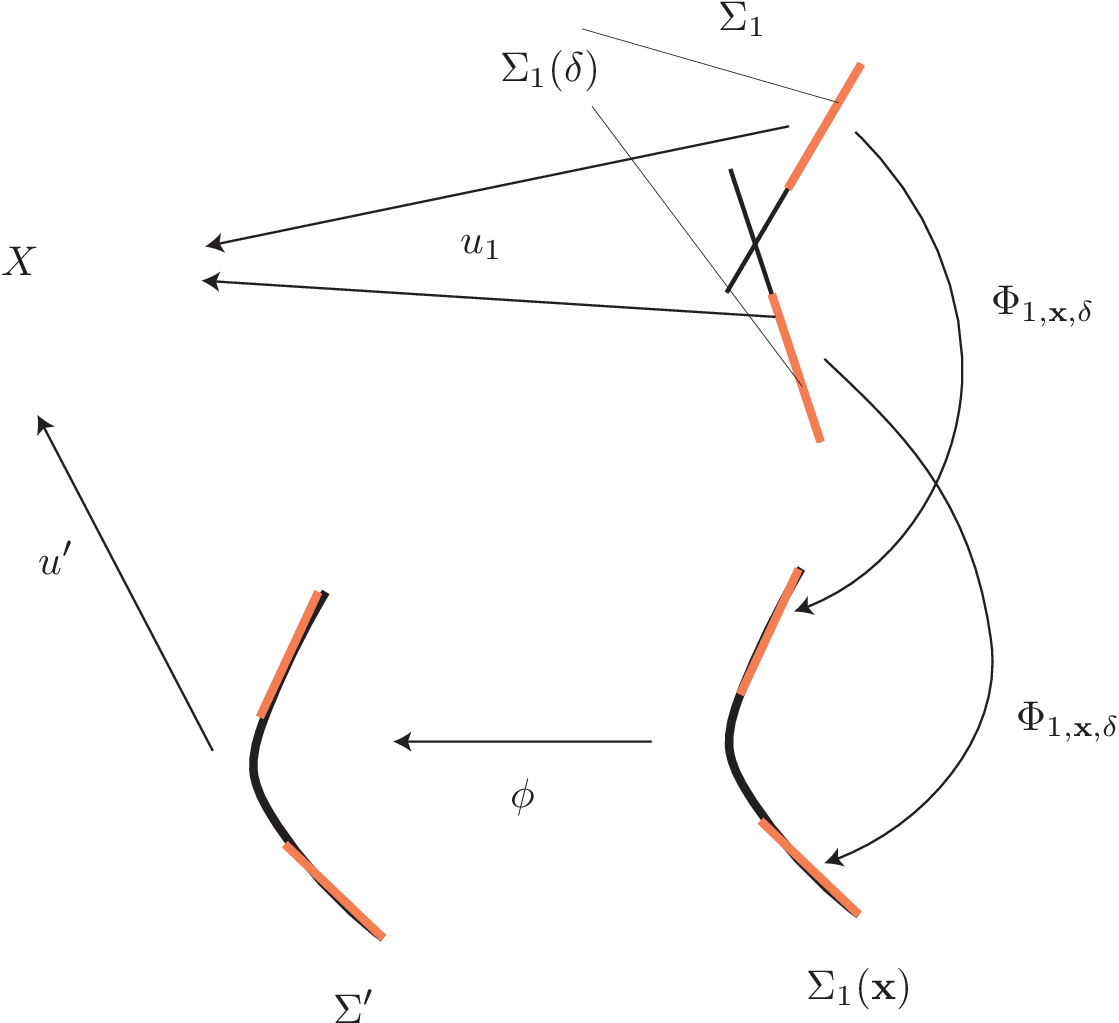}
\caption{$((\Sigma',\vec z^{\,\prime}),u')$ is $\epsilon$-close 
to
$((\Sigma_1,\vec z_1),u_1)$}
\label{Figure5-1}
\end{figure}
\end{defn}
This is the definition we used in \cite{foootech,foooconstr}.
(We remark that this definition and Definition \ref{defn41}
are similar to the definition of stable map topology
introduced in \cite{FO}.)
\par
The next lemma is sometimes useful to check Definition \ref{defn412}
Condition  (3).
\begin{lem}\label{lem7777}
Suppose $\delta > \delta'>0$ and 
$\phi : (\Sigma_1({\bf x}),\vec z_1({\bf x})
\cup \vec w_1({\bf x}))
\cong (\Sigma',\vec z^{\,\prime}\cup \vec w^{\,\prime})$
is an isomorphism with 
${\bf x} = ((\rho_{\rm e})_{{\rm e} \in \Gamma(\Sigma)}),(x_a)_{a \in\mathcal A_s})
\in (\mathcal V_{1,0}(\delta'/2))
\times \mathcal V_{1,1}$.
\par
We assume Conditions (1)(2)(3) are satisfied for $\delta$ and $\epsilon$
and the map $u' \circ \phi$ is holomorphic outside the image of $\Phi_{1,\bf x,\delta}$.
\par
Then Conditions (1)(2)(3) are satisfied for $\delta'$ and $o(\epsilon)$.
\end{lem}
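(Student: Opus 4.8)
The plan is to compare $\Sigma_1(\delta')$ with $\Sigma_1(\delta)$ directly, verifying Conditions (1)(2)(3) of Definition \ref{defn412} for $\delta'$ with the \emph{same} ${\bf x}$ and $\phi$. First I would record the elementary geometry of the truncation maps. Since ${\bf x}\in\mathcal V_{1,0}(\delta'/2)\times\mathcal V_{1,1}$ we have $\delta>\delta'>2|\rho_{\rm e}|$ for every edge ${\rm e}$, so both $\Phi_{1,{\bf x},\delta}$ and $\Phi_{1,{\bf x},\delta'}$ are defined, and by their construction $\Phi_{1,{\bf x},\delta'}$ restricts to $\Phi_{1,{\bf x},\delta}$ on $\Sigma_1(\delta)$, ${\rm Im}\,\Phi_{1,{\bf x},\delta}\subset{\rm Im}\,\Phi_{1,{\bf x},\delta'}$, and the ``new'' part $N:=\Sigma_1(\delta')\setminus\Sigma_1(\delta)$ (a union of annular collars, one inside each neck of $\Sigma_1({\bf x})$) maps under $\Phi_{1,{\bf x},\delta'}$ into $\Sigma_1({\bf x})\setminus{\rm Im}\,\Phi_{1,{\bf x},\delta}$; moreover, by (Tri.3) and the plumbing description, $\Phi_{1,{\bf x},\delta'}$ is biholomorphic on $N$, and, as always near the nodes, all norms are taken in the cylindrical metric (cf.\ the Remark after Definition \ref{defn43}), in which $N$, the neck of $\Sigma_1({\bf x})$, and $\Phi_{1,{\bf x},\delta'}$ have uniformly bounded geometry. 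Granting this, Condition (2) for $\delta'$ is immediate ($\delta'<\delta<\epsilon$ and ${\bf x}$ is unchanged), and Condition (3) for $\delta'$ follows from Condition (3) for $\delta$ because $\Sigma_1({\bf x})\setminus{\rm Im}\,\Phi_{1,{\bf x},\delta'}\subset\Sigma_1({\bf x})\setminus{\rm Im}\,\Phi_{1,{\bf x},\delta}$.

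The substantive step will be Condition (1) for $\delta'$, i.e.\ $C^2$-smallness of $u'\circ\phi\circ\Phi_{1,{\bf x},\delta'}-u_1$ on $\Sigma_1(\delta')$. On $\Sigma_1(\delta)$ this is Condition (1) for $\delta$ (and $\Phi_{1,{\bf x},\delta'}=\Phi_{1,{\bf x},\delta}$ there), so I only need to treat the collar $N$. Fix a node of $\Sigma_1$ and let $y\in X$ be the value of $u_1$ at it. On the one hand, since $((\Sigma_1,\vec z_1),u_1)\in U(((\Sigma,\vec z),u);\epsilon_2)$ we have $\overline{\partial}u_1\in E((\Sigma_1,\vec z_1),u_1)$, and this obstruction space is supported away from the nodes of $\Sigma_1$ (Definition \ref{defn42}(2), a property preserved by the construction of $E((\Sigma_1,\vec z_1),u_1)$ in Section \ref{sec:main}); hence $u_1$ is an honest $J$-holomorphic map on a fixed disk about the node, its ordinary $C^2$-norm there is finite, and because the corresponding collar component of $N$ sits at cylindrical distance $\ge -\log\delta$ from the node, the conformal factors relating the coordinate and cylindrical metrics give $\|u_1-y\|_{C^2(N)}\le C\delta\le C\epsilon$. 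On the other hand, on $\Phi_{1,{\bf x},\delta'}(N)$ the map $u'\circ\phi$ is holomorphic (hypothesis) with image of diameter $<\epsilon$ (Condition (3) for $\delta$), and on the adjacent boundary circles $\partial\,{\rm Im}\,\Phi_{1,{\bf x},\delta}$ it is within $\epsilon$ of $u_1$, hence within $C\epsilon$ of $y$; since $\Phi_{1,{\bf x},\delta'}$ is biholomorphic on $N$, the composite $u'\circ\phi\circ\Phi_{1,{\bf x},\delta'}$ is $J$-holomorphic on $N$, and a Cauchy/interior elliptic estimate on the bounded-geometry annuli $N$ bounds $\|u'\circ\phi\circ\Phi_{1,{\bf x},\delta'}-y\|_{C^2(N)}$ by $C\epsilon$. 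The triangle inequality then yields $\|u'\circ\phi\circ\Phi_{1,{\bf x},\delta'}-u_1\|_{C^2(\Sigma_1(\delta'))}\le C\epsilon$, which is the asserted $o(\epsilon)$ (understood, as usual here, as a quantity tending to $0$ with $\epsilon$); taking the constant in Conditions (2)(3) for $\delta'$ to be $C\epsilon$ as well finishes the proof.

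The step I expect to be the real obstacle is the second $C^2$-estimate on the collar $N$: one must arrange the Cauchy/elliptic estimate there with a constant independent of $\epsilon$ and of the gluing parameters. This is exactly where the hypotheses are used: (i) that $u'\circ\phi$ is holomorphic on \emph{all} of $\Sigma_1({\bf x})\setminus{\rm Im}\,\Phi_{1,{\bf x},\delta}$, so that interior points of $N$ see a definite holomorphic collar on the side toward the seam, while near the outward boundary of $N$ one instead invokes Condition (1) for $\delta$ directly; (ii) the bound $|\rho_{\rm e}|<\delta'/2$, which makes $\Phi_{1,{\bf x},\delta'}$ a well-behaved embedding and keeps the degenerate seam of each neck at definite cylindrical distance from $N$; and (iii) the cylindrical-metric convention near the nodes, without which the $C^1$- and $C^2$-norms of $u_1$ on $N$ would blow up as $\delta'\to0$ and the statement would be false. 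Everything else is routine unwinding of the definitions of $\Phi_{1,{\bf x},\delta}$ and $\Sigma_1(\delta)$.
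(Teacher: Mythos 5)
Your proof follows essentially the same route as the paper: Conditions (2)(3) for $\delta'$ are immediate, and Condition (1) is obtained by first deriving a $C^0$/diameter estimate on the annular collar (from holomorphicity of both $u_1$ and $u'\circ\phi$ near the seam together with their $C^0$-closeness on $\partial\Sigma_1(\delta)$), and then bootstrapping to a $C^2$ bound by interior elliptic/Cauchy estimates on annuli with a definite buffer on each side, using $|\rho_{\rm e}|<\delta'/2$ for the inward buffer and Condition (1) for $\delta$ near the outward boundary. Your explicit isolation of the cylindrical-metric convention near the nodes---which the paper leaves implicit when it bounds the $C^2$ distance on $\Sigma_1(\delta')\setminus\Sigma_1(\delta)$ by the $C^0$ distance on $\Sigma_1(\delta'/2)\setminus\Sigma_1(2\delta)$---is a correct and useful clarification rather than a departure, and the detour through the constant $y=u_1(\text{node})$ is only a cosmetic rearrangement of the same triangle inequality.
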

\begin{rem}
Here and hereafter $o(\epsilon)$ \index{00O2epsu@$o(\epsilon)$}
is a positive number depending on $\epsilon$ and 
such that $\lim_{\epsilon\to 0}o(\epsilon) = 0$.
\end{rem}
\begin{proof}
The conditions (2)(3) for $\delta'$ are obvious.
The $C^{2}$ norm of the difference between
$u' \circ \phi \circ \Phi_{1,\bf x,\delta}$ and $u_1\vert_{\Sigma_1(\delta)}$
is smaller than $\epsilon$ by assumption.
By (1)(3) for $\delta$, 
the map $u_1$ has diameter $< o(\epsilon)$ on $\partial\Sigma_1(\delta)$.
Since $u'\circ \phi$ is holomorphic on $\Sigma_1 \setminus \Sigma_1(\delta)$
it implies that
the map $u_1$ has diameter $< o(\epsilon)$ on
$\Sigma_1 \setminus \Sigma_1(\delta)$.
Therefore by (1)(3) again, 
$C^0$ distance between 
$u' \circ \phi \circ \Phi_{1,{\bf x},\delta'/2}$ and $u_1\vert_{\Sigma_1(\delta'/2)}$ 
is $o(\epsilon)$.
Since $u' \circ \phi \circ \Phi_{1,{\bf x},\delta'/2}$ and $u_1$ are 
both holomorphic on $\Sigma_1(\delta'/2) \setminus \Sigma_1(\delta)$
we can estimate $C^2$ distance between them on $\Sigma_1(\delta') \setminus \Sigma_1(\delta)$
by the $C^0$  distance between them on 
$u' \circ \phi \circ \Phi_{1,{\bf x},\delta'/2}$ and $u_1\vert_{\Sigma_1(\delta'/2)}$.
\end{proof}
\begin{defn}\label{defnUUUU}
Let $((\Sigma_1,\vec z_1),u_1) \in U(((\Sigma,\vec z),u);\epsilon_2)$.
Suppose we are given 
its  stabilization and trivialization data ${\frak W}$.
\par
We denote by $\mathcal U(\epsilon;(\Sigma_1,\vec z_1),u_1,{\frak W})$
\index{00U3epsigma1@$\mathcal U(\epsilon;(\Sigma_1,\vec z_1),u_1,{\frak W})$}
the set of the isomorphism classes of elements 
of $U(((\Sigma,\vec z),u);\epsilon_2)$ which is 
$\epsilon$-close to $((\Sigma_1,\vec z_1),u_1)$ with respect to ${\frak W}$.
\end{defn}
We will show that the set  $\mathcal U(\epsilon;(\Sigma_1,\vec z_1),u_1,{\frak W})$ has a structure of  
a smooth orbifold.
(We actually show that this is a quotient 
of a smooth manifold by the action of the group 
$\mathcal G_1$.)
\par
The proof is by gluing analysis.
To carry out gluing analysis, we 
study how the obstruction bundle 
$E((\Sigma',\vec z^{\,\prime}),u')$
behaves when we move 
$((\Sigma',\vec z^{\,\prime}),u')$.
We first take an appropriate 
parametrization of the set of the triples 
$((\Sigma',\vec z^{\,\prime}),u')$
which is $\epsilon$-close to 
$((\Sigma_1,\vec z_1),u_1)$.
\par
We first observe the following.
\begin{lem}\label{lem76}
The vector space $E((\Sigma',\vec z^{\,\prime}),u')$
depends only on 
$(\Sigma',\vec z^{\,\prime})$ and the restriction of 
$u'$ to the image of $\phi\circ \Phi_{1,\bf x,\delta} : 
\Sigma_1(\delta) \to \Sigma_1({\bf x}) \to \Sigma'$, 
if $\delta$ is small.
\end{lem}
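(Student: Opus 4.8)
The plan is to trace through the construction of $E((\Sigma',\vec z^{\,\prime}),u')$ given in Section \ref{sec:main} and verify that every ingredient only sees $(\Sigma',\vec z^{\,\prime})$ together with the restriction of $u'$ to the image of $\phi\circ\Phi_{\bf x,\delta}$. Recall that $E((\Sigma',\vec z^{\,\prime}),u')=E({\bf x}_0,\phi_0;((\Sigma',\vec z^{\,\prime}),u'))$ is defined as the image under the map $I_{{\bf x}_0,\phi_0;((\Sigma',\vec z^{\,\prime}),u')}$ of $g_*E((\Sigma,\vec z),u)$, where $(\varphi,g)$ is a representative of the unique minimum of $\overline{\rm meandist}$ on $\mathcal W(\epsilon_1;{\bf x}_0,\phi_0;((\Sigma',\vec z^{\,\prime}),u'))/\widehat{\mathcal G}_c$. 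So the first step is to observe that the space $\mathcal W(\epsilon_1;{\bf x}_0,\phi_0;((\Sigma',\vec z^{\,\prime}),u'))$ itself depends only on $(\Sigma',\vec z^{\,\prime})$ and on $u'$ restricted to the relevant region: indeed, Definition \ref{defn63} imposes conditions (1)(2) which involve only $\mathcal{MOR}$ and ${\bf x}_0$ (not $u'$ at all), and condition (3), which is $2\epsilon_1$-closeness of $((\Sigma',\vec z^{\,\prime}),u')$ to $((\Sigma,\vec z),gu)$ by ${\bf x}'$, $\phi_0\circ\varphi$. Unwinding Definition \ref{defn41}, that closeness condition (1) compares $u'\circ\phi_0\circ\varphi\circ\Phi_{{\bf x}',\delta'}$ with $g\circ u$ on $\Sigma(\delta')$, while conditions (2)(3)(4) are about ${\bf x}'$, $\delta'$ and the diameter of $u'\circ\phi_0\circ\varphi$ off the image of $\Phi_{{\bf x}',\delta'}$ — and all of these see $u'$ only through its values on (a neighborhood of) the image of $\phi_0\circ\varphi\circ\Phi_{{\bf x}',\delta'}$, which is contained in the image of $\phi_0\circ\Phi_{{\bf x}_0,\delta}$ up to shrinking $\delta$.

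The second step is the same observation for the function $\overline{\rm meandist}$, hence for its minimizer. By Definition \ref{defn6666}, ${\rm meandist}(\varphi,g)$ is the integral over $z\in\Sigma_0(\delta)$ of $\chi(z)\,d_X^2((u'\circ\phi_0\circ\varphi)(z),g(u(z)))$, and the cutoff $\chi$ is supported in $\Sigma_0(\delta)$; since $\varphi$ maps $\Sigma({\bf x}')$ to $\Sigma({\bf x}_0)$ and the relevant integrand only evaluates $u'\circ\phi_0$ on $\varphi(\Sigma_0(\delta))\subset\Sigma({\bf x}_0)$, the value ${\rm meandist}(\varphi,g)$ depends on $u'$ only through its restriction to $\phi_0(\Sigma({\bf x}_0))$, and in fact — after enlarging $\delta$ so that $\Sigma(\delta)$ already carries all the supports in play — only through $u'$ on the image of $\phi_0\circ\Phi_{{\bf x}_0,\delta}$. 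Therefore the unique minimizer $(\varphi,g)$ (Lemma \ref{lem610}) depends only on that data. The third step handles the map $I_{{\bf x}_0,\phi_0;((\Sigma',\vec z^{\,\prime}),u')}$ of \eqref{formula663333}: it is built from ${\rm Pal}$ (parallel transport along minimal geodesics between $u'(w)$ and $gu(\Psi(w))$, formula \eqref{formula65}) and from $d^h\Psi$ (formula \eqref{formula66}), where $\Psi=\Phi_{{\bf y},\delta}^{-1}\circ\varphi^{-1}\circ\phi_0^{-1}$ is defined on $K'\subset\Sigma'(\delta)$ with $\varphi^{-1}(\phi_0^{-1}(K'))\subset{\rm Im}\,\Phi_{{\bf y},\delta}$. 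The map $\Psi$ is purely a map of curves (it does not involve $u'$), and ${\rm Pal}$ involves $u'$ only through its values on $K'$ — which lies in the image of $\phi_0\circ\Phi_{{\bf x}_0,\delta}$ (again after possibly shrinking). Since $K$ is chosen to contain the support of the elements of $E((\Sigma,\vec z),u)$, the image $E((\Sigma',\vec z^{\,\prime}),u')=I_{\cdots}(g_*E((\Sigma,\vec z),u))$ is determined by the listed data. Combining the three steps gives the claim.

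The routine but slightly delicate bookkeeping point — the only place requiring genuine care — is the interplay of the various $\delta$'s and the nested neighborhoods $K'$, $K$, $\Sigma(\delta)$, ${\rm Im}\,\Phi_{{\bf x},\delta}$: I want to phrase the conclusion so that it is uniform, i.e. there is a fixed open set $\mathcal O\subset\Sigma'$ (namely the image of $\phi\circ\Phi_{\bf x,\delta}$ for the chart data in the statement) such that $E((\Sigma',\vec z^{\,\prime}),u')$ is unchanged if $u'$ is modified outside $\mathcal O$. For this one has to check that shrinking $\delta$ (needed, e.g., in the argument above to absorb supports) does not change $E$, which is essentially the content of Lemma \ref{lem7777} together with the fact, already used in the proof of Proposition \ref{prop616}'s setup, that the obstruction space construction is compatible with such shrinkings because the elements of $E((\Sigma,\vec z),u)$ have support away from the neck regions $\varphi_{a,i}(D^2(2))$ (Definition \ref{defn42}(2)). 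I expect no new idea is needed beyond assembling these already-established facts, so the main obstacle is purely organizational: making the statement "depends only on $u'|_{\mathcal O}$" precise enough that the nested-neighborhood arguments in Section \ref{sec:decay} (where this lemma is used to localize the gluing analysis) go through without circularity.
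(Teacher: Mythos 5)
Your proof follows the same line as the paper's: trace the dependence of each ingredient (the minimizer of $\overline{\rm meandist}$, the map $I_{{\bf x}_0,\phi_0;((\Sigma',\vec z^{\,\prime}),u')}$, the supports of the resulting sections) and observe that none of them sees $u'$ outside the image of $\phi\circ\Phi_{\bf x,\delta}$. The paper states this very tersely: the supports of elements of $E((\Sigma',\vec z^{\,\prime}),u')$ lie in that image by construction, and ${\rm meandist}(\varphi,g)$ is unchanged if $u'$ is modified off the image, so the unique minimizer and hence the whole construction are unchanged.

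One slip in your write-up deserves a flag. You claim that all the conditions in Definition~\ref{defn41} (hence the set $\mathcal W(\epsilon_1;{\bf x}_0,\phi_0;((\Sigma',\vec z^{\,\prime}),u'))$ itself) depend on $u'$ only through its values on a neighborhood of the relevant image. That is not true: condition~(3) of Definition~\ref{defn41} bounds the diameter of $u'\circ\phi$ on the \emph{complement} $\Sigma({\bf x})\setminus{\rm Image}(\Phi_{\bf x,\delta})$, and indeed you describe it as being ``off the image'' in the same sentence. So the set $\mathcal W$ genuinely does depend on $u'$ off the image. The reason this does not matter — and the paper's argument implicitly exploits this — is that the obstruction space is determined by the unique local minimizer of $\overline{\rm meandist}$ (Lemma~\ref{lem610}), and only $\overline{\rm meandist}$, not the precise boundary of its domain, enters; since $\overline{\rm meandist}$ reads $u'$ only through its values on the image (by Formula~(\ref{form611})), the minimizer is unaffected by changes off the image, for any $u''$ that is still a legitimate input. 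Your third step (that $I_{{\bf x}_0,\phi_0;((\Sigma',\vec z^{\,\prime}),u')}$ only involves $u'$ via ${\rm Pal}$ on $K'$, which lies in the image) and the final support observation match the paper; the $\delta$-bookkeeping worry at the end is harmless but not needed for the paper's version of the argument.
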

\begin{proof}
We remark that the support of elements of $E((\Sigma',\vec z^{\,\prime}),u')$
is in the image of $\phi\circ \Phi_{1,\bf x,\delta}$
by construction.
\par
Moreover
for $(\varphi,g) \in \mathcal W(\epsilon_1;{\bf x}_0,\phi_0;((\Sigma',\vec z^{\,\prime}),u'))$ the value 
${\rm meandist}(\varphi,g)$ of the function 
${\rm meandist}$ does not change when we change $u'$
outside the image of $\phi\circ \Phi_{1,\bf x,\delta}$. 
This is an immediate consequence of 
its definition (\ref{form611}).
\par
The lemma follows from these two facts.
\end{proof}
Suppose $((\Sigma_1,\vec z_1),u_1)$ 
is $G$-$\epsilon_2$-close to 
$((\Sigma,\vec z),u)$ by
$g_1$, ${\bf x}_1$, $\phi_1$.
For simplicity of notation we identify 
$\Sigma({\bf x}_1)$ with $\Sigma_1$ by $\phi_1$ 
and regard 
$\Sigma_1 = \Sigma({\bf x}_1)$.
\par
Let ${\bf x} = (\vec x,\vec \rho) \in 
\mathcal V_{(1)} = \mathcal V_{1,0} \times \mathcal V_{1,1}(\delta)$
where 
$(x_a)_{a \in\mathcal A_s}
\in \mathcal V_{1,0}$
and $\vec \rho = (\rho_{\rm e})_{{\rm e} \in \Gamma(\Sigma_1)}
\in \mathcal V_{1,1}(\delta)$.
\par
We consider 
$(\Sigma_1({\bf x}),\vec z_1({\bf x})\cup \vec w_1({\bf x}))$.
\par
Hereafter we denote by $\Sigma_1({\bf x})(\delta)$ 
\index{00S5igma1xdelat@$\Sigma_1({\bf x})(\delta)$} the image of 
the map $\phi\circ \Phi_{1,\bf x,\delta} : 
\Sigma_1(\delta) \to \Sigma_1({\bf x}) \to \Sigma'$.
Let $\hat u' : \Sigma_1(\delta) \to X$ be an $L^{2}_{m+1}$
map which is close to $u_1$ in $L^{2}_{m+1}$ norm.
We consider
\begin{equation}\label{defnofuprime}
u' = \hat u' 
\circ \Phi_{1,{\bf x},\delta}^{-1} : \Sigma_1({\bf x})(\delta) \to X.
\end{equation}
By Lemma \ref{lem76} the subspace
$$
E((\Sigma_1({\bf x}),\vec z_1({\bf x})),u')
\subset
L^2_{m+1}(\Sigma_1({\bf x})(\delta);(u')^*TX \otimes \Lambda^{01})
$$
is well-defined.
(Here we use the fact that $u' \circ \Phi_{1,{\bf x},\delta}$ 
is $C^2$ close to $u_1$. Note $m$ in $L^2_{m+1}$ is chosen sufficiently 
large. So $L^2_{m+1} \subset C^2$ in particular.)
\par
By assumption
$$
d_X(u'(\Phi_{1,{\bf x},\delta}(z)),u_1(z))
$$
is small. Therefore we can use 
parallel transportation (with respect to a $G$-invariant hermitian connection of $TX$) 
along the minimal geodesic 
joining 
$u'(\Phi_{1,{\bf x},\delta}(z)))$ to $u_1(z)$
to obtain
\begin{equation}\label{form77}
{\rm Pal} :
L^2_{m+1}(\Sigma_1({\bf x})(\delta);(u')^*TX)
\to 
L^2_{m+1}(\Sigma_1(\delta);u_1^*TX).
\end{equation}
Moreover using the diffeomorphism 
$\Phi_{1,{\bf x},\delta}$ we obtain a map
\begin{equation}\label{formula66rev}
d^h\Phi_{1,{\bf x},\delta} : \Lambda_{\Phi_{1,{\bf x},\delta}(z)}^{01} 
\Sigma_1({\bf x}) \to \Lambda_z^{01}\Sigma_1.
\end{equation}
in the same way as (\ref{formula66}).
\par
Using (\ref{form77}) and (\ref{formula66rev}) we obtain:
\index{00I2hatuprime@$I_{\hat u',{\bf x}}$}
\begin{equation}\label{form711}
I_{\hat u',{\bf x}} :
L^2_{m+1}(\Sigma_1({\bf x})(\delta);(u')^*TX\otimes \Lambda^{01})
\to 
L^2_{m+1}(\Sigma_1(\delta);u_1^*TX\otimes \Lambda^{01}).
\end{equation}
\begin{defn}\label{defn}
We define \index{00E2hatu@$E(\hat u',{\bf x})$}
$$
E(\hat u',{\bf x})
=
I_{\hat u',{\bf x}}(E((\Sigma_1({\bf x}),\vec z_1({\bf x})),u'))
\subset
L^2_{m+1}(\Sigma_1(\delta);u_1^*TX\otimes \Lambda^{01}).
$$
\end{defn}
Note $E(\hat u',{\bf x})$ is a finite dimensional subspace of 
the Hilbert space 
$L^2_{m+1}(\Sigma_1(\delta);u_1^*TX\otimes \Lambda^{01})$,
which is independent of $(\hat u',{\bf x})$.
So we can discuss $(\hat u',{\bf x})$ dependence of 
$E(\hat u',{\bf x})$.
\par
Now the main new point we need to check to work out the 
gluing analysis is the following.
We put 
$$
d = \dim E((\Sigma',\vec z^{\,\prime}),u').
$$
\begin{prop}\label{prop78}
Let $U'(\epsilon)$ \index{00U2primeep@$U'(\epsilon)$} be an $\epsilon$ neighborhood of 
$u_1$ in $L^{2}_{m+1}$ norm 
and $\mathcal V_{(1)}(\epsilon)$ \index{00V31epsu@$\mathcal V_{(1)}(\epsilon)$} an $\epsilon$
neighborhood of $0$ in $\mathcal V_{(1)}$.
\par
There exists $d$ smooth maps $e_i(\hat u',{\bf x})$
from $U'(\epsilon) \times \mathcal V_{(1)}(\epsilon)$
to 
$L^2_{m+1}(\Sigma_1(\delta);u_1^*TX\otimes \Lambda^{01})$ 
such that
for each $\hat u',{\bf x}$ 
$$
(e_1(\hat u',{\bf x}),\dots,e_d(\hat u',{\bf x}))
$$
is a basis of $E(\hat u',{\bf x})$.
\par
Moreover the $C^n$ norm of the map $e_i$ is uniformly bounded on 
$U'(\epsilon) \times \mathcal V_{(1)}(\epsilon)$
for any $n$.
\end{prop}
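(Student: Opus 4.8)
The plan is to exhibit $e_i(\hat u',{\bf x})$ as an explicit composition of maps, each depending smoothly and with uniformly bounded derivatives on the parameters $(\hat u',{\bf x}) \in U'(\epsilon) \times \mathcal V_{(1)}(\epsilon)$, applied to a fixed finite basis of the obstruction space at the origin, and then to read off the assertions from the tameness of each factor. First I would fix once and for all a basis $(E_1,\dots,E_d)$ of $E((\Sigma,\vec z),u)$; recall from Definition \ref{defn42} that each $E_i$ is a fixed smooth section whose support lies in a fixed compact region of $\bigcup_a \Sigma_a$, away from the neighborhoods $\varphi_{a,i}(D^2)$ of the nodal points. Using Lemma \ref{lem64} I may fix the auxiliary data ${\bf x}_0,\phi_0$ once; then for $((\Sigma',\vec z^{\,\prime}),u') = ((\Sigma_1({\bf x}),\vec z_1({\bf x})), \hat u' \circ \Phi_{1,{\bf x},\delta}^{-1})$ the space $\mathcal W(\epsilon_1;{\bf x}_0,\phi_0;((\Sigma',\vec z^{\,\prime}),u'))$ of Definition \ref{defn63} is realized as a (varying) open subset of the single fixed finite-dimensional manifold $\{(\varphi,g)\mid {\rm Pr}_t(\varphi)={\bf x}_0,\ g\in G\}\subset \mathcal{MOR}\times G$, cut out by conditions that vary continuously with $(\hat u',{\bf x})$; likewise $\mathcal W/\widehat{\mathcal G}_c$ is an open subset of a fixed finite-dimensional manifold, and by Lemma \ref{lem610} the $\overline{\rm meandist}$-minimizer stays in a fixed compact part of its interior once $\epsilon_1,\epsilon_2$ are small.

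Next I would prove that the minimizer moves smoothly in the parameters. The integrand $\chi(z)\,d_X^2((u'\circ\phi_0\circ\varphi)(z),g\,u(z))$ of (\ref{form611}) is a smooth function of $(\varphi,g)$ and of $u'|_{\Sigma_0(\delta)}$; via (\ref{defnofuprime}) and the smooth dependence of $\Phi_{1,{\bf x},\delta}$ on ${\bf x}$ it becomes a smooth function of $(\varphi,g,\hat u',{\bf x})$, using that $L^2_{m+1}\hookrightarrow C^2$ is bounded linear and that post-composing with $d_X^2$ near the diagonal of the compact manifold $X$ and integrating against a fixed volume form is a smooth ($\Omega$-lemma type) operation. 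Hence $\overline{\rm meandist}$ is a smooth function on the total space of a bundle over $U'(\epsilon)\times\mathcal V_{(1)}(\epsilon)$ with fibre the fixed finite-dimensional manifold $\mathcal W/\widehat{\mathcal G}_c$. By Proposition \ref{prop69} (together with the convexity estimates of Section \ref{sec:centerofmass}) the fibrewise Hessian at the minimizer is positive definite, and uniformly so after shrinking $\epsilon_1,\epsilon_2$. The finite-dimensional implicit function theorem applied to the fibrewise equation $\nabla\overline{\rm meandist}=0$ then yields a smooth map $(\hat u',{\bf x})\mapsto[(\varphi(\hat u',{\bf x}),g(\hat u',{\bf x}))]$, with all derivatives bounded on the parameter domain because the Hessian is uniformly invertible there and $\overline{\rm meandist}$ has uniformly bounded derivatives.

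Finally, choosing a smooth lift $(\varphi,g)$ of this minimizer, I would set $e_i(\hat u',{\bf x}) = I_{\hat u',{\bf x}}\big(I_{{\bf x}_0,\phi_0;((\Sigma',\vec z^{\,\prime}),u')}(g_*E_i)\big)$: push $E_i$ forward by $g$, transport it to $E((\Sigma',\vec z^{\,\prime}),u')$ by the map (\ref{formula663333}) built from $(\varphi,g)$, and pull back to the fixed Hilbert space $L^2_{m+1}(\Sigma_1(\delta);u_1^*TX\otimes\Lambda^{01})$ by (\ref{form711}). By construction $(e_1(\hat u',{\bf x}),\dots,e_d(\hat u',{\bf x}))$ is a basis of $E(\hat u',{\bf x})$. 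Each of the ingredient maps — the minimizer map (previous paragraph), parallel transport along the unique short minimal geodesic in the compact manifold $X$ (smooth with bounded derivatives when endpoints are close), and the complex-linear projections of the differentials of the fixed smooth families $\Phi_{1,{\bf x},\delta}$ and $\Psi$ — depends smoothly on $(\hat u',{\bf x})$ with $C^n$-norm bounded on $U'(\epsilon)\times\mathcal V_{(1)}(\epsilon)$ for every $n$; moreover the support of $E_i$ being confined to a fixed region away from the nodes means $\Phi_{1,{\bf x},\delta}$ enters only through the fixed smooth trivializations $\Phi_{1,a}$. Composing gives the smoothness and the uniform $C^n$-bounds for $e_i$.

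The step I expect to be the main obstacle is the smooth, uniformly-bounded dependence of the minimizer $(\varphi(\hat u',{\bf x}),g(\hat u',{\bf x}))$ on the $L^2_{m+1}$-variable $\hat u'$: it rests on extracting a \emph{quantitative} (uniform in the parameters) form of the strict convexity of Proposition \ref{prop69} from the center-of-mass estimates, together with checking that ${\rm meandist}$ is genuinely smooth as a functional of $\hat u' \in L^2_{m+1}$. Everything else is the finite-dimensional implicit function theorem and routine estimates on fixed smooth families.
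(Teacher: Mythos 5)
Your overall strategy — fiberwise minimization of $\overline{\rm meandist}$, smooth dependence of the minimizer via strict convexity and the implicit function theorem, and then applying the transport operators to a fixed basis of $E((\Sigma,\vec z),u)$ — is exactly the paper's route in Subsection \ref{mainprop}: your formula for $e_i$ is (\ref{from74300}), the IFT step is Lemma \ref{lem999}, and the smoothness of the transport operators is handled by the local-coordinate computation culminating in Lemma \ref{lem740}.

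There is, however, a genuine flaw in your setup of the domain over which the implicit function theorem is applied. You claim that Lemma \ref{lem64} lets you fix $({\bf x}_0,\phi_0)$ once, so that all the spaces $\mathcal W(\epsilon_1;{\bf x}_0,\phi_0;((\Sigma',\vec z^{\,\prime}),u'))$ become open subsets of the single fixed slice $\{(\varphi,g)\mid {\rm Pr}_t(\varphi)={\bf x}_0\}\subset\mathcal{MOR}\times G$. That is not possible: by Definition \ref{defn63}, $\phi_0$ must be a bi-holomorphism $(\Sigma({\bf x}_0),\vec z({\bf x}_0))\cong(\Sigma',\vec z^{\,\prime})$, and as ${\bf x}$ ranges over $\mathcal V_{(1)}(\epsilon)$ the topological type of $\Sigma'=\Sigma_1({\bf x})$ changes (nodes are resolved when the gluing parameters are nonzero), so no single ${\bf x}_0\in\mathcal{OB}$ can parametrize them all; and Lemma \ref{lem64} only compares two admissible choices for the \emph{same} $(\Sigma',\vec z^{\,\prime})$, not for a family. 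The paper resolves this by letting $({\bf x}_0,\phi_0)=(\psi({\bf x}),\tilde{\psi}\vert_{\Sigma(\psi({\bf x}))})$ vary with ${\bf x}$ via the classifying maps $(\psi,\tilde\psi)$ to the universal family, and then \emph{assembling} the fibrewise $\mathcal W$'s into a total space $\mathcal W(\epsilon_1)$ fibering over $\mathcal V_{(1)}$ via ${\rm Pro}$; it proves separately (using that ${\rm Pr}_t$ is a submersion on the Lie groupoid) that $\mathcal W(\epsilon_1)$ is a manifold and ${\rm Pro}$ a submersion, which is exactly the hypothesis needed to invoke Lemma \ref{lem999} on $\overline{\mathcal W}(\epsilon_1)\times U'(\epsilon)\to\mathcal V_{(1)}\times U'(\epsilon)$. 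Once you replace your fixed ambient slice with this bundle, your argument coincides with the paper's and the remaining steps (strict convexity from Proposition \ref{prop69}, the formula for $e_i$, the $C^n$-bounds) go through as you describe.
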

See Figure \ref{Figure737} in Subsection \ref{mainprop}.
\par
We prove Proposition \ref{prop78} in Subsection \ref{mainprop}.
\par
To clarify the fact that Proposition \ref{prop78}
gives the control of the behavior of $E(\hat u',{\bf x})$
needed for the gluing analysis detailed in \cite{foooexp} to work, 
we change variables and restate Proposition \ref{prop78}
below.
We took ${\bf x} = (\vec x,\vec \rho) \in 
\mathcal V_{(1)} = \mathcal V_{1,0} \times \mathcal V_{1,1}$
where
$\vec x = (x_a)_{a \in\mathcal A_s}
\in \mathcal V_{1,0}$ and 
$\vec \rho = (\rho_{\rm e})_{{\rm e} \in \Gamma(\Sigma_1)}
\in \mathcal V_{1,1}(\delta)$.
We define $T_{\rm e}$ and $\theta_{\rm e}$
for each ${\rm e} \in \Gamma(\Sigma_1)$ 
by the following formula:
\begin{equation}\label{71371111}
\aligned
\exp(-10 \pi T_{\rm e}) &= \vert \rho_{\rm e}\vert \\
\exp(2\pi \theta_{\rm e}\sqrt{-1}) &= \rho_{\rm e}/\vert \rho_{\rm e}\vert.
\endaligned
\end{equation}
Compare \cite[(8.18)]{foooexp}.
We thus identify
$$
\mathcal V_{0,1}(\delta)
\cong
\prod_{{\rm e} \in \Gamma(\Sigma_1)}
(-\log \delta/10\pi,\infty] \times \R/\Z,
$$
using $T_{\rm e}$, $\theta_{\rm e}$ as coordinates.
Now we rewrite the smoothness of the map 
$e_i(\hat u',{\bf x})$
as follows.
\begin{cor}\label{cor79}
We have an inequality
\begin{equation}\label{estimate715}
\aligned
&\left\Vert
\frac{\partial}{\partial T_{{\rm e}_1}}
\dots \frac{\partial}{\partial T_{{\rm e}_{n_1}}}
\frac{\partial}{\partial \theta_{{\rm e}'_1}}
\dots \frac{\partial}{\partial \theta_{{\rm e}'_{n_2}}}
e_i
\right\Vert_{C^n} \\
&\le
C_{n,n_1,n_2} \exp\left(-\delta_{n,n_1,n_2}
\left(\sum_{i=1}^{n_1} T_{{\rm e}_i} + \sum_{i=1}^{n_2} T_{{\rm e}'_i}\right)\right).
\endaligned
\end{equation}
Here $C^n$ in the left hand side is the $C^n$ norm as 
a map from $U'(\epsilon) \times \mathcal V_{1,1}$
to $L^2_{m+1}(\Sigma_1(\delta);u_1^*TX\otimes \Lambda^{01})$.
In other words, we fix $\vec{\rho}$ (or $T_{\rm e}, \theta_{\rm e}$) 
and regard $\hat u'$ and $\vec x$ as variables to define $C^n$ norm.
\end{cor}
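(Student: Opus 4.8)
The plan is to \emph{deduce} Corollary \ref{cor79} from Proposition \ref{prop78} by the change of variables $\rho_{\rm e} \leftrightarrow (T_{\rm e},\theta_{\rm e})$; the analytic substance is entirely contained in Proposition \ref{prop78}, whose proof is deferred to Subsection \ref{mainprop}. First I would record the form of the coordinate change. From the defining relations,
\begin{equation}
\rho_{\rm e} = \exp\left(-10\pi T_{\rm e} + 2\pi \theta_{\rm e}\sqrt{-1}\right),
\end{equation}
so, writing $\rho_{\rm e} = x_{\rm e} + y_{\rm e}\sqrt{-1}$, one has $x_{\rm e} = e^{-10\pi T_{\rm e}}\cos(2\pi\theta_{\rm e})$, $y_{\rm e} = e^{-10\pi T_{\rm e}}\sin(2\pi\theta_{\rm e})$ and
\begin{equation}
\frac{\partial x_{\rm e}}{\partial T_{\rm e}} = -10\pi x_{\rm e}, \quad
\frac{\partial y_{\rm e}}{\partial T_{\rm e}} = -10\pi y_{\rm e}, \quad
\frac{\partial x_{\rm e}}{\partial \theta_{\rm e}} = -2\pi y_{\rm e}, \quad
\frac{\partial y_{\rm e}}{\partial \theta_{\rm e}} = 2\pi x_{\rm e}.
\end{equation}
Consequently every iterated derivative of $x_{\rm e}$ or of $y_{\rm e}$ in the variables $T_{\rm e},\theta_{\rm e}$, of total order $\ge 1$, is a constant-coefficient linear combination of $x_{\rm e}$ and $y_{\rm e}$, hence bounded in absolute value by a constant times $e^{-10\pi T_{\rm e}} = |\rho_{\rm e}|$. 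I note also that on $\mathcal V_{(1)}(\epsilon)$ we have $|\rho_{\rm e}| < \epsilon < 1$, so that $T_{\rm e} > 0$ throughout.

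Next I would apply the multivariate chain rule (Fa\`{a} di Bruno formula). By Proposition \ref{prop78} the map $(\hat u',{\bf x}) \mapsto e_i(\hat u',{\bf x})$ is smooth on $U'(\epsilon) \times \mathcal V_{(1)}(\epsilon)$ with all of its partial derivatives — in particular those in the real coordinates $(x_{\rm e},y_{\rm e})$ of the gluing parameters, a domain which \emph{includes} the nodal locus $\rho_{\rm e}=0$, together with their further derivatives in the variables $({\bf v},\vec x)$ — uniformly bounded in $L^2_{m+1}(\Sigma_1(\delta);u_1^*TX\otimes\Lambda^{01})$. Since the variables $\rho_{\rm e}$ depend on pairwise disjoint sets of the $(T_{\rm e},\theta_{\rm e})$, differentiating $e_i$ through the substitution above produces, for the operator $\frac{\partial}{\partial T_{{\rm e}_1}}\cdots\frac{\partial}{\partial T_{{\rm e}_{n_1}}}\frac{\partial}{\partial \theta_{{\rm e}'_1}}\cdots\frac{\partial}{\partial \theta_{{\rm e}'_{n_2}}}$, a finite sum of terms; each term is a derivative of $e_i$ of some order $\le n_1+n_2$ in the $(x_{\rm e},y_{\rm e})$-variables, evaluated at the point in question, multiplied by a product of factors each of which is a $(T_{\rm e},\theta_{\rm e})$-derivative of $x_{\rm e}$ or of $y_{\rm e}$ of total order $\ge 1$. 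The combinatorics of Fa\`{a} di Bruno guarantees that \emph{every} edge ${\rm e}$ occurring in the list ${\rm e}_1,\dots,{\rm e}_{n_1},{\rm e}'_1,\dots,{\rm e}'_{n_2}$ contributes at least one such factor to every term; hence, by the bound of the previous paragraph, each term — and therefore the whole derivative — is bounded by a constant times $\prod_{\rm e} e^{-10\pi T_{\rm e}}$, the product ranging over the distinct edges of that list. The constant absorbs the uniform bound on the derivatives of $e_i$ supplied by Proposition \ref{prop78}, and the same estimate, with a larger constant, survives after taking the $C^n$-norm in the variables $({\bf v},\vec x)$.

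It then remains only to put $\prod_{\rm e} e^{-10\pi T_{\rm e}}$ into the stated shape. Each distinct edge of the list occurs with multiplicity at most $n_1+n_2$ in the sum on the right of (\ref{estimate715}), and all $T_{\rm e} > 0$, so, writing $S$ for the sum of the $T_{\rm e}$ over the distinct edges occurring in the list,
\begin{equation}
\sum_{i=1}^{n_1} T_{{\rm e}_i} + \sum_{i=1}^{n_2} T_{{\rm e}'_i}
\le (n_1+n_2)\, S,
\end{equation}
whence $\prod_{\rm e} e^{-10\pi T_{\rm e}} = e^{-10\pi S} \le \exp\left(-\delta_{n,n_1,n_2}\left(\sum_{i=1}^{n_1} T_{{\rm e}_i} + \sum_{i=1}^{n_2} T_{{\rm e}'_i}\right)\right)$ with $\delta_{n,n_1,n_2} = 10\pi/(n_1+n_2)$ when $n_1+n_2 \ge 1$ (the case $n_1=n_2=0$ being just the uniform boundedness of $e_i$). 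This is (\ref{estimate715}). The one step I expect to require genuine care is not in the reduction above at all, but in Proposition \ref{prop78}: the smoothness of the family $e_i(\hat u',{\bf x})$ \emph{across} the degeneration locus $\rho_{\rm e}=0$, i.e. the fact that the family of obstruction spaces extends smoothly over the nodal curves. Granting that, Corollary \ref{cor79} is the familiar principle that smooth dependence on the gluing parameter $\rho_{\rm e}$ near a node is equivalent to exponential decay of all derivatives in the associated cylindrical coordinate $T_{\rm e}$.
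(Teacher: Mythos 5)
Your proposal is correct and takes exactly the approach the paper intends: the paper simply asserts that Corollary \ref{cor79} follows from Proposition \ref{prop78} by the change of variables $\rho_{\rm e} \leftrightarrow (T_{\rm e},\theta_{\rm e})$, and your argument spells out the elementary Fa\`a di Bruno bookkeeping behind that assertion. The derivative computations, the observation that each listed edge forces at least one factor bounded by $|\rho_{\rm e}| = e^{-10\pi T_{\rm e}}$, and the final conversion with $\delta_{n,n_1,n_2} = 10\pi/(n_1+n_2)$ are all sound.
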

It is easy to see that the exponential factor 
in the right hand side appears by the 
change of variables from 
$\rho_{\rm e}$ to $(T_{\rm e},\theta_{\rm e})$.
So Corollary \ref{cor79} is an immediate consequence of 
Proposition \ref{prop78}.
\par
Corollary \ref{cor79} corresponds to \cite[Proposition 8.19]{foooexp}.
This is all the properties we need for the proof of 
\cite{foooexp} to work in the case obstruction bundle is  given as
$E(\hat u',{\bf x})$.
Thus by \cite{foooexp} we obtain the next two Propositions 
\ref{prop710} and \ref{prop711}.
We need to introduce some notations to state them.

We define a linear differential operator
\begin{equation}\label{operator410rev}
D_{u_1}\overline{\partial} 
:
W^2_{m+1}(\Sigma_1;u_1^*TX)
\to L^2_{m}(\Sigma_1;u_1^*TX\otimes \Lambda^{01}),
\end{equation}
in the same way as (\ref{operator410}).
\par
Condition \ref{conds45} (1)  implies that
we can choose $\epsilon_2$ small such that 
$$
{\rm Im}(D_{u_1}\overline{\partial})
+ E((\Sigma_1,\vec z_1),u_1)
=
L^2_{m}(\Sigma_1;u_1^*TX\otimes \Lambda^{01}),
$$
if $((\Sigma_1,\vec z_1),u_1)$ is $\epsilon_2$-close to $((\Sigma,\vec z),u)$.
\par
In the same way as we did in Condition \ref{conds45} (2),
we put
\begin{equation}\label{form715}
{\rm Ker}^+ D_{u_1}\overline{\partial}
=
\{
v \in W^2_{m+1}(\Sigma;u_1^*TX)
\mid  D_{u_1}\overline{\partial}(v)
\in E((\Sigma_1,\vec z_1),u_1).
\}
\end{equation}
This is a finite dimensional space consisting of 
smooth sections. \index{00K1erD@${\rm Ker}^+ D_{u_1}\overline{\partial}$}
This space is $\mathcal G_1$ invariant.
\par
Let  $\mathcal V_{\rm map}(\epsilon)$ be the \index{00V3mapepsilon@$\mathcal V_{\rm map}(\epsilon)$}
$\epsilon$ neighborhood of $0$ in 
${\rm Ker}^+ D_{u_1}\overline{\partial}$
and $\mathcal V_{(1)}(\epsilon)$ the $\epsilon$ 
neighborhood of $o$ in $\mathcal V_{(1)}$.
\begin{prop}\label{prop710}
For sufficiently small $\epsilon$ there exists a 
family of maps \index{00U2vx@$u_{{\bf v},{\bf x}}$}
$$
u_{{\bf v},{\bf x}} : \Sigma_1({\bf x}) \to X
$$
parametrized by
$$
({\bf v},{\bf x}) \in \mathcal V_{\rm map}(\epsilon)
\times  \mathcal V_{(1)}(\epsilon)
$$
with the following properties.
\begin{enumerate}
\item The equation
$$
\overline{\partial} u_{{\bf v},{\bf x}} 
\in E((\Sigma_1({\bf x}),\vec z_1({\bf x})),u_{{\bf v},{\bf x}})
$$
is satisfied. Moreover for each connected component of $\Sigma_1({\bf x}) \setminus 
{\rm Im}(\Phi_{1,\bf x,\delta})$ 
the diameter of its image by $u_{{\bf v},{\bf x}}$ is smaller than $\epsilon$.
\item
There exists $\epsilon' > 0$ such that if $((\Sigma',\vec z^{\,\prime} \cup \vec w^{\,\prime}),u')$ 
satisfies the next four conditions 
(a)(b)(c)(d) then 
then there exists ${\bf v} \in \mathcal V_{\rm map}(\epsilon)$
such that
$$
u' \circ \phi = u_{{\bf v},{\bf x}}.
$$
\begin{enumerate}
\item
$
\overline{\partial} u' 
\in E((\Sigma',\vec z^{\,\prime}),u')
$:
\item
$[\Sigma',\vec z^{\,\prime} \cup \vec w^{\,\prime}] \in  \mathcal V_{(1)}(\epsilon)$:
\item
Let $(\Sigma',\vec z^{\,\prime} \cup \vec w^{\,\prime})
\cong (\Sigma_1({\bf x}),\vec z_1({\bf x}) \cup \vec w_1({\bf x}))$
and $\phi$ is the isomorphism. We assume
that the $C^2$ norm between
$u' \circ \phi \circ \Phi_{1,\bf x,\delta}$ and $u_1\vert_{\Sigma_1(\delta)}$
is smaller than $\epsilon'$:
\item
The map $u'$ has diameter $< \epsilon$ on $\Sigma' \setminus 
{\rm Im}(\phi \circ\Phi_{1,\bf x,\delta})$.
\end{enumerate}
\item
If ${\bf v} \ne {\bf v}'$ then 
$u_{{\bf v},{\bf x}} \ne u_{{\bf v}',{\bf x}}$ for any ${\bf x} 
\in \mathcal V_{(1)}(\epsilon)$.
\item If ${\bf v} = {\bf 0}$ and ${\bf x} = o$ (the point corresponding to 
$\Sigma_1$) then
$u_{{\bf 0},o} = u_1$.
\end{enumerate}
The map $({\bf v},{\bf x}) \mapsto u_{{\bf v},{\bf x}}$
is $\mathcal G_1$-equivariant.
\end{prop}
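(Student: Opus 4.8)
\emph{Proof strategy.} The plan is to prove Proposition \ref{prop710} by the standard pregluing/Newton iteration scheme, reducing the analytic core to the estimates of \cite{foooexp} once Corollary \ref{cor79} has been established in Subsection \ref{mainprop}. First, for each $({\bf v},{\bf x}) \in \mathcal V_{\rm map}(\epsilon) \times \mathcal V_{(1)}(\epsilon)$ I would build an approximate solution $u'_{{\bf v},{\bf x}} : \Sigma_1({\bf x}) \to X$ by transplanting $\exp_{u_1}({\bf v})$ from $\Sigma_1(\delta)$ to $\Sigma_1({\bf x})$ via $\Phi_{1,{\bf x},\delta}$ and gluing across the neck regions with cutoff functions, using the analytic families of complex coordinates in the stabilization data $\Xi$ (Definitions \ref{defn7272}, \ref{defn7372}). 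Along the main part the obstruction space $E((\Sigma_1({\bf x}),\vec z_1({\bf x})),u'_{{\bf v},{\bf x}})$ is identified, through the isomorphism $I_{\hat u',{\bf x}}$ of (\ref{form711}), with the fixed finite-dimensional subspace $E(\hat u',{\bf x}) \subset L^2_{m+1}(\Sigma_1(\delta);u_1^*TX\otimes\Lambda^{01})$ whose dependence on $(\hat u',{\bf x})$, together with its exponential decay in the neck length parameters $T_{\rm e}$, is precisely the content of Proposition \ref{prop78} and Corollary \ref{cor79}.

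Next I would construct a right inverse $Q_{{\bf v},{\bf x}}$ of $D_{u'_{{\bf v},{\bf x}}}\overline{\partial}$ modulo $E(\hat u',{\bf x})$ on the glued curve, with operator norm bounded uniformly in $(T_{\rm e})$ in the appropriate weighted Sobolev norms. This uses the surjectivity of $D_{u_1}\overline{\partial}$ modulo $E((\Sigma_1,\vec z_1),u_1)$ guaranteed by Condition \ref{conds45}(1) (for $\epsilon_2$ small), the exponential smallness of the error $\overline{\partial} u'_{{\bf v},{\bf x}}$, and the uniform control on how $E(\hat u',{\bf x})$ varies, i.e.\ Corollary \ref{cor79}. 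A contraction mapping argument in a small ball of ${\rm Im}\,Q_{{\bf v},{\bf x}}$ then yields a unique small $\xi_{{\bf v},{\bf x}}$ with $\overline{\partial}(\exp_{u'_{{\bf v},{\bf x}}}(\xi_{{\bf v},{\bf x}})) \in E((\Sigma_1({\bf x}),\vec z_1({\bf x})),\exp_{u'_{{\bf v},{\bf x}}}(\xi_{{\bf v},{\bf x}}))$, and I would set $u_{{\bf v},{\bf x}} = \exp_{u'_{{\bf v},{\bf x}}}(\xi_{{\bf v},{\bf x}})$; the diameter bound on the bubble components follows from the pregluing construction. Smoothness of $({\bf v},{\bf x}) \mapsto u_{{\bf v},{\bf x}}$, with the exponential decay of its $T_{\rm e}$-derivatives (the companion Proposition \ref{prop711}), is obtained by differentiating the fixed-point equation and again invoking Corollary \ref{cor79} together with the decay package of \cite{foooexp}.

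The remaining assertions are then formal. Property (1) is built in; for (2), conditions (a)--(d) let me identify $(\Sigma',\vec z^{\,\prime}\cup\vec w^{\,\prime})$ with some $(\Sigma_1({\bf x}),\vec z_1({\bf x})\cup\vec w_1({\bf x}))$ and write $u'\circ\phi = \exp_{u'_{{\bf v},{\bf x}}}(\xi)$ for a small $\xi$, and the uniqueness in the contraction mapping (after splitting $\xi$ along $\mathrm{Ker}^+D_{u_1}\overline{\partial} \oplus {\rm Im}\,Q_{{\bf v},{\bf x}}$) forces $u'\circ\phi = u_{{\bf v},{\bf x}}$ for the appropriate ${\bf v}$; property (3) holds because the $\mathrm{Ker}^+$-component of $\xi_{{\bf v},{\bf x}}$ equals ${\bf v}$ to leading order. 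For $\mathcal G_1$-equivariance I would note that the stabilization data $\Xi$, the trivializations and analytic coordinate systems (Definitions \ref{defn7272}(3), \ref{defn7372}(4)), the $G$-invariant metric and connection, the space $\mathrm{Ker}^+D_{u_1}\overline{\partial}$, and the family $E(\hat u',{\bf x})$ are all $\mathcal G_1$-invariant, so the approximate solution, the right inverse, and hence the unique fixed point can be chosen $\mathcal G_1$-equivariantly.

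\emph{Main obstacle.} The iteration itself is a verbatim transcription of \cite{foooexp}; the genuinely new point — and the one that must be settled before the above goes through — is that the family of obstruction spaces $E(\hat u',{\bf x})$ satisfies the hypotheses needed by that gluing package, namely the uniform $C^n$-bounds with exponential decay in $T_{\rm e}$ of Proposition \ref{prop78}/Corollary \ref{cor79}. Here the subtlety is that, unlike in the usual setup where the obstruction bundle is essentially ``constant'' near the necks, our $E(\hat u',{\bf x})$ is defined through the $\overline{\rm meandist}$-minimization of Section \ref{sec:main}, so genuine work (deferred to Subsection \ref{mainprop}) is required to show that the minimizer, the parallel transport, and the map $d^h\Phi_{1,{\bf x},\delta}$ all depend smoothly on $(\hat u',{\bf x})$ with the stated decay.
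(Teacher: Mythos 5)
Your proposal is correct and follows essentially the same route as the paper: the paper's proof of Proposition~\ref{prop710} is simply a citation to the gluing analysis of \cite[Sections 4, 5, and 7]{foooexp} (the alternating method for construction of $u_{{\bf v},{\bf x}}$, surjectivity and injectivity of the gluing map), together with the observation that Corollary~\ref{cor79} supplies exactly the estimate on the moving obstruction spaces $E(\hat u',{\bf x})$ needed for that package to apply. The only cosmetic difference is that you phrase the iteration as a Newton--Picard contraction with a right inverse $Q_{{\bf v},{\bf x}}$, whereas \cite{foooexp} organizes it as the alternating method; these are interchangeable once the uniform right-inverse bound and the exponential decay of Corollary~\ref{cor79} are in hand, and you correctly identify that decay as the genuinely new input deferred to Subsection~\ref{mainprop}.
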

\begin{proof}
The construction of the family of maps $u_{{\bf v},{\bf x}}$ 
satisfying Items (1), (4) above is by alternating method 
we detailed in \cite[Sections 4 and 5]{foooexp}.
(We use the estimate Corollary \ref{cor79} for the proof
in \cite{foooexp}.)
\par
(2)(3) are surjectivity and injectivity
of the gluing map, respectively, which are proved in 
\cite[Section 7]{foooexp}.
\end{proof}
To state the next proposition we need a notation.
We consider the family of maps $u_{{\bf v},{\bf x}}$
in Proposition \ref{prop710}.
We consider the smooth open embedding
$$
\Phi_{1,{\bf x},\delta} : \Sigma_1(\delta) \to \Sigma_1({\bf x})
$$
defined in (\ref{loctrimap3}).
We denote the composition by \index{00R1es@${\rm Res}$}
\begin{equation}
{\rm Res}(u_{{\bf v},{\bf x}})
= u_{{\bf v},{\bf x}} \circ \Phi_{1,{\bf x},\delta}
:
\Sigma_1(\delta) \to X.
\end{equation}
We remark that the domain and the target of the map 
${\rm Res}(u_{{\bf v},{\bf x}})$ is independent of 
${\bf v},{\bf x}$. So we regard 
${\bf v},{\bf x} \mapsto {\rm Res}(u_{{\bf v},{\bf x}})$ as a map:
$$
\mathcal V_{\rm map}(\epsilon)
\times   \mathcal V_{(1)}(\epsilon)
\to 
L^2_{m+1}(\Sigma_1(\delta),X).
$$
Here $L^2_{m+1}(\Sigma_1(\delta),X)$ is the Hilbert manifold of 
the maps of $L^2_{m+1}$ classes.
\begin{prop}\label{prop711}
For each $n$, $m>n+10$ the map
$$
{\bf v},{\bf x} \mapsto {\rm Res}(u_{{\bf v},{\bf x}})
$$
is of $C^n$ class as a map
$$
\mathcal V_{\rm map}(\epsilon)
\times  \mathcal V_{(1)}(\epsilon)
\to 
L^2_{m+1-n}(\Sigma_1(\delta),X).
$$
Moreover for $n_1 + n_2 \le n$, $n' \le n$, we have the next estimate
\begin{equation}\label{718777}
\aligned
&\left\Vert
\frac{\partial}{\partial T_{{\rm e}_1}}
\dots \frac{\partial}{\partial T_{{\rm e}_{n_1}}}
\frac{\partial}{\partial \theta_{{\rm e}'_1}}
\dots \frac{\partial}{\partial \theta_{{\rm e}'_{n_2}}}
u_{{\bf v},{\bf x}}
\right\Vert_{C^{n'}} \\
&\le
C_{n',n_1,n_2} \exp\left(-\delta'_{n',n_1,n_2}
\left(\sum_{i=1}^{n_1} T_{{\rm e}_i} + \sum_{i=1}^{n_2} T_{{\rm e}'_i}\right)\right).
\endaligned
\end{equation}
Here the $C^{n'}$ norm in the left hand side is defined as follows.
We regard 
the $T$ and $\theta$ differential of $u_{{\bf v},{\bf x}}$
as a map 
$$
\mathcal V_{\rm map}(\epsilon) \times \mathcal V_{(1)}(\epsilon)
\to
L^2_{m+1-n}(\Sigma_1(\delta),X).
$$
Then $\Vert \Vert_{C^{n'}}$ is the $C^{n'}$ norm the map 
in the left hand side for a fixed $T_{\rm e}$ and $\theta_{\rm e}$.
\end{prop}
\begin{proof}
This is \cite[Theorem 6.4]{foooexp}.
\end{proof}
\begin{rem}
We remark that the number $\delta'_{n',n_1,n_2}$ appearing 
in Proposition \ref{prop711} is different from 
$\delta_{n',n_1,n_2}$ in (\ref{estimate715}).
Actually (\ref{718777}) does {\it not} imply 
the smooth-ness of $u_{{\bf v},{\bf x}}$ with respect to 
$\rho_{\rm e}$ in (\ref{71371111}).
See \cite[Remark A1.63]{fooobook2}.
This is the reason why we will change the smooth structure 
in Definition \ref{defn712}.
\end{rem}

\subsection{Construction of the smooth chart 2: Construction of 
smooth chart at one point of $U(((\Sigma,\vec z),u);\epsilon_2)$}
\label{smoothchart15}

We now use Propositions \ref{prop710} and \ref{prop711}
to construct a smooth structure at each point of 
$U(((\Sigma,\vec z),u);\epsilon_2)$.
Let $((\Sigma_1,\vec z_1),u_1) \in U(((\Sigma,\vec z),u);\epsilon_2)$.
Let ${\frak W}$  be its  stabilization and trivialization data.
\par
We obtain a map
$$
{\bf v},{\bf x} \mapsto {\rm Res}(u_{{\bf v},{\bf x}})\,\,:\,
\mathcal V_{\rm map}(\epsilon)
\times  \mathcal V_{(1)}(\epsilon)
\to 
L^2_{m+1}(\Sigma_1(\delta),X).
$$
We define a smooth structure on 
$\mathcal V_{(1)}(\epsilon)$ as follows.
Note $(T_{\rm e},\theta_{\rm e})_{{\rm e} \in \Gamma(\Sigma_1)}$
is the coordinate of $\mathcal V_{(1)}(\epsilon)$,
where $T_{\rm e} \in (\log \delta/10,\infty] \times \R/\Z$.
We put 
\begin{equation}
\frak s_{\rm e} = \frac{e^{2\pi \theta_{\rm e} \sqrt{-1}}}{T_{\rm e}}
\in \C.
\end{equation}
\begin{defn}\label{defn712}
We define a $C^{\infty}$ structure on $\mathcal V_{1,1}$
such that $(\frak s_{\rm e})_{{\rm e} \in \Gamma(\Sigma_1)}$ is a smooth coordiante.
\end{defn}
We put a standard $C^{\infty}$ structure on $\mathcal V_{\rm map}(\epsilon)$ 
and $\mathcal V_{1,0}$.
Note $\mathcal V_{\rm map}(\epsilon)$ is an open subset of a finite dimensional 
vector space and  $\mathcal V_{1,0}$ is a product of open neighborhoods of smooth 
points of the moduli spaces of marked curves (without node). So they have 
canonical smooth structure. Thus the smooth structure 
of $\mathcal V_{(1)}$ and of its open subset $\mathcal V_{(1)}(\epsilon)$ is defined. 
\begin{defn}
We define the {\it evaluation map}\index{evaluation map}\index{00E1Vwij@${\rm EV}_{w_{1,j}}$}
$$
({\rm EV}_{w_{1,j}})_{j=1,\dots,k} : 
\mathcal V_{\rm map}(\epsilon)
\times \mathcal V_{(1)}(\epsilon)
\to 
X^{k}.
$$
by
$$
{\rm EV}_{w_{1,j}}({\bf v},{\bf x})
= u_{{\bf v},{\bf x}}(w_{1,j}({\bf x})).
$$
(We remark that $(w_{1,j}({\bf x}) = \frak t_{\ell+j}({\bf x})
\in \Sigma_1({\bf x})$.)
\end{defn}
\begin{lem}\label{lem713}
If $\epsilon$ is sufficiently small then 
$({\rm EV}_{w_{1,j}})_{j=1,\dots,k}$
is transversal to 
$\prod_{j}\mathcal N_j$.
\end{lem}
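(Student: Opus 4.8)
The plan is to reduce the assertion to a single computation of the differential of ${\rm EV}=({\rm EV}_{w_{1,j}})_{j=1,\dots,k}$ at the central point $({\bf v},{\bf x})=(0,0)$, and then to propagate transversality to the whole chart by openness, using that $\mathcal V_{\rm map}(\epsilon)\times\mathcal V_{(1)}(\epsilon)$ shrinks to $\{(0,0)\}$ as $\epsilon$ decreases.

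First I would collect the soft facts. The domain $\mathcal V_{\rm map}(\epsilon)\times\mathcal V_{(1)}(\epsilon)$ is a smooth manifold (with the $C^\infty$-structure of Definition \ref{defn712} on the gluing factor), and ${\rm EV}$ is a smooth map into $X^k$ by Proposition \ref{prop711} together with smoothness of evaluation. Since ${\rm EV}_{w_{1,j}}(0,0)=u_1(w_{1,j})\in\mathcal N_j$ by Definition \ref{defn71}(6), the central point lies in ${\rm EV}^{-1}(\prod_j\mathcal N_j)$, and after shrinking the ambient neighbourhood of $(u_1(w_{1,1}),\dots,u_1(w_{1,k}))$ in $X^k$ we may take each $\mathcal N_j$ to be closed there. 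Transversality of ${\rm EV}$ to $\prod_j\mathcal N_j$ at a preimage point is an open condition (pull back local defining submersions of the $\mathcal N_j$ and use openness of surjectivity for linear maps, together with continuity of $d\,{\rm EV}$); hence once it is known at $(0,0)$ there is a neighbourhood $W\ni(0,0)$ on which ${\rm EV}$ is transversal to $\prod_j\mathcal N_j$, and it then suffices to take $\epsilon$ small enough that $\mathcal V_{\rm map}(\epsilon)\times\mathcal V_{(1)}(\epsilon)\subset W$.

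The substance is transversality at $(0,0)$: writing $\nu_j=T_{u_1(w_{1,j})}X/T_{u_1(w_{1,j})}\mathcal N_j$, I must show that the composite of $d\,{\rm EV}_{(0,0)}$ with $\bigoplus_j(T_{u_1(w_{1,j})}X\to\nu_j)$ is onto $\bigoplus_j\nu_j$. For this I single out, for each $j$, the one-complex-dimensional space $N_{w_j}\subset T_0\mathcal V_{(1)}$ of deformations moving only the marked point $w_{1,j}$; these directions are available because each component $(\Sigma_{1,a},\vec z_{1,a})$ of $(\Sigma_1,\vec z_1\cup\vec w_1)$ is stable (Definition \ref{defn71}(2)), so its deformation space contains the fibre directions of ``forget $w_{1,j}$'', mapping isomorphically onto $T_{w_{1,j}}\Sigma_1$ under the ``position of $w_{1,j}$'' interpretation. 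Along $N_{w_j}$ the $\vec z$-marked glued curve, the obstruction space, and hence --- by Lemma \ref{lem76}, since $E((\Sigma',\vec z^{\,\prime}),u')$ does not involve the auxiliary marked points --- the solution $u_{{\bf v},{\bf x}}$ of $\overline{\partial} u'\in E((\Sigma',\vec z^{\,\prime}),u')$ are all unchanged up to the canonical identification of domains; so along $N_{w_j}$ the point $w_{1,j}({\bf x})\in\Sigma_1(\delta)$ moves with velocity spanning $T_{w_{1,j}}\Sigma_1$, the points $w_{1,j'}({\bf x})$ for $j'\neq j$ stay fixed, and therefore $d({\rm EV}_{w_{1,j}})|_{N_{w_j}}$ has image $Du_1(T_{w_{1,j}}\Sigma_1)$ whereas $d({\rm EV}_{w_{1,j'}})|_{N_{w_j}}=0$. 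Since $u_1$ is an immersion at $w_{1,j}$ and $u_1(U_{1,j})$ meets $\mathcal N_j$ transversally at $u_1(w_{1,j})$ (Definition \ref{defn71}(3),(6)) while ${\rm codim}_{\mathbb R}\mathcal N_j=\dim_{\mathbb R}Du_1(T_{w_{1,j}}\Sigma_1)=2$, the composite $Du_1(T_{w_{1,j}}\Sigma_1)\hookrightarrow T_{u_1(w_{1,j})}X\twoheadrightarrow\nu_j$ is an isomorphism. The contributions of the $N_{w_j}$ are thus ``block-diagonal'', so already $\bigoplus_jN_{w_j}\to\bigoplus_j\nu_j$ is an isomorphism, which a fortiori gives the required surjectivity of the full differential.

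The step I expect to be the main obstacle is the invariance claim just used: that moving the parameter $N_{w_j}$ does not alter $u_{{\bf v},{\bf x}}$ except through the position of $w_{1,j}$. This rests on the fact that the obstruction spaces $E((\Sigma',\vec z^{\,\prime}),u')$ and the whole gluing construction are arranged so that the auxiliary marked points $\vec w$ enter only as bookkeeping --- Lemma \ref{lem76}, together with the observation that Definition \ref{defn7272}(4) is imposed at the nodes only --- so that the gluing equation, and hence its solution $u_{{\bf v},{\bf x}}$, is literally constant along $N_{w_j}$. Once this is granted, the remainder (openness of transversality and the shrinking of the chart domain) is routine.
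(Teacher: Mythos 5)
Your proof follows the same strategy as the paper: reduce to the origin $(0,0)$ by openness, observe that along the slice where only the auxiliary marked points $w'_{1,j}$ move the obstruction space (and hence the glued solution $u_{{\bf v},{\bf x}}$) is unchanged and equal to $u_1$, and conclude from Definition \ref{defn71}(6). The only cosmetic difference is that you decompose the paper's $2k$-real-dimensional submanifold $Y$ into $k$ one-complex-dimensional slices $N_{w_j}$ to make the block-diagonal structure explicit, whereas the paper treats $Y$ at once; this is the same computation.
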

\begin{proof}
Proposition \ref{prop711} implies that the map
$({\rm EV}_{w_{1,j}})_{j=1,\dots,k}$ is of $C^{n}$ class
for any fixed $n$ with respect to the smooth structure 
in Definition \ref{defn712}, if $\epsilon$ is sufficiently 
small.
(We work using $L^2_{m+1}$ spaces with $m$ sufficiently large 
compared to $n$.) 
In fact
$$
{\rm EV}_{w_{1,j}}({\bf v},{\bf x})
= {\rm Res}(u_{{\bf v},{\bf x}})(\Phi^{-1}_{1,{\bf x},\delta}(w_{1,j}({\bf x})))
$$
and ${\bf x} \mapsto \Phi^{-1}_{1,{\bf x},\delta}(w_{1,j}({\bf x}))$
is a smooth map $: \mathcal V_{(1)}(\epsilon) \to 
\Sigma_1(\delta)$.
\par
Therefore it suffices to show the lemma at origin
(which corresponds to $((\Sigma_1,\vec z_1\cup \vec w_1),u_1)$).
We consider the submanifold $Y$ of 
$\mathcal V_{(1)}(\epsilon)$ which consists of  
elements $(\Sigma_1,\vec z_1\cup \vec w^{\,\prime}_1)$
where $(\Sigma_1,\vec z_1)$ is the nodal curve with   
marked points which we take at the beginning of this section, 
and $w'_{1,j}$ is in a neighborhood of $w_{1,j}$.
This is a $2k$ dimensional submanifold.
(Here $2k$ is the number of parameters to move $k$ points $w'_{1,j}$
on Riemann surface.)
The restriction of 
$({\rm EV}_{w_{1,j}})_{j=1,\dots,k}$
to $\{{\bf 0}\} \times \{{\bf 0}\} \times Y$
can be identified with the map
\begin{equation}\label{form721ni}
\vec w^{\,\prime}_1 \mapsto (u_1(w'_{1,j}))_{j=1,\dots,k}.
\end{equation}
Note $E(\Sigma_1({\bf x}),\vec z_1({\bf x}))$ is independent of 
$w'_{1,j}$.\footnote{The situation here is different from 
one in \cite[Lemma 9.11]{foooconstr}.} Therefore, by construction, 
(that is, the proof of Proposition \ref{prop710} by Newton's iteration) for all $w'_{1,j}$
$$
u_{{\bf 0},[\Sigma_1,\vec z_1\cup w'_{1,j}]}
= u_1.
$$
(Here we regard $[\Sigma_1,\vec z_1\cup w'_{1,j}]$ as an element of 
$\{{\bf 0}\} \times Y$.)
\par
By Definition \ref{defn71} (6) the map (\ref{form721ni}) 
is transversal to $\prod_{j}\mathcal N_j$.
\end{proof}
\begin{defn}\label{defn71474}
Let $({\frak W},\vec{\mathcal N})$ be strong stabilization data at $((\Sigma_1,\vec z_1),u_1)$.
We put \index{00V2sigmaazi@$V(((\Sigma_1,\vec z_1),u_1);\epsilon,({\frak W},\vec{\mathcal N}))$}
$$
\aligned
&V(((\Sigma_1,\vec z_1),u_1);\epsilon,({\frak W},\vec{\mathcal N})) \\
&=
\{({\bf v},{\bf x}) \in 
\mathcal V_{\rm map}(\epsilon)
\times \mathcal V_{(1)}(\epsilon)
\mid
{\rm EV}_{w_{1,j}}({\bf v},{\bf x}) \in \mathcal N_j,
\,\,\, j=1,\dots,k
\}.
\endaligned
$$
\end{defn}
We take $\epsilon$ sufficiently small so that 
conclusion of Lemma \ref{lem713} holds.
\begin{lem}\label{lem71533}
For each sufficiently small $\epsilon_3$ \index{00E5psilon3@$\epsilon_3$}
there exists $\epsilon$ with the following properties.
If 
$$
[(\Sigma',\vec z^{\,\prime}),u'] \in
\mathcal U(\epsilon;(\Sigma_1,\vec z_1),u_1,{\frak W})
$$
then there exists an additional marked points $\vec w^{\,\prime}$ 
and an element 
$({\bf v},{\bf x})$
of the space $V(((\Sigma_1,\vec z_1),u_1);\epsilon_3,({\frak W},\vec{\mathcal N}))
$
such that
$$
((\Sigma_1({\bf x}),\vec z_1({\bf x})\cup \vec w_1({\bf x})),u_{{\bf v},{\bf x}})
\cong
((\Sigma',\vec z^{\,\prime}\cup \vec w^{\,\prime}),u').
$$
Namely there exists a bi-holomorphic map $\phi : 
(\Sigma_1({\bf x}),\vec z_1({\bf x})\cup \vec w_1({\bf x})) \cong
(\Sigma',\vec z^{\,\prime}\cup \vec w^{\,\prime})$
with $u' \circ \phi = u_{{\bf v},{\bf x}}$.
\par
Moreover such $({\bf v},{\bf x}) \in V(((\Sigma_1,\vec z_1),u_1);\epsilon_3,({\frak W},\vec{\mathcal N}))$
is unique up to $\mathcal G_1$ action.
\end{lem}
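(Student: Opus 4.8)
The plan is to reduce the statement to the gluing results of Subsection \ref{smoothchart1}, namely Proposition \ref{prop710}, and then to combine them with the transversality statement of Lemma \ref{lem713}. First I would fix $[(\Sigma',\vec z^{\,\prime}),u'] \in \mathcal U(\epsilon;(\Sigma_1,\vec z_1),u_1,\Xi)$. By Definition \ref{defnUUUU}, $((\Sigma',\vec z^{\,\prime}),u')$ is $\epsilon$-close to $((\Sigma_1,\vec z_1),u_1)$ with respect to $\Xi_0$ (the weak stabilization data obtained from $\Xi$ by forgetting the $\mathcal N_j$), so by Definition \ref{defn412} there exist $\vec w^{\,\prime}$, $\delta < \epsilon$, an element ${\bf x} \in \mathcal V_{1,0}(\delta) \times \mathcal V_{1,1}$, and a bi-holomorphic map $\phi : (\Sigma_1({\bf x}),\vec z_1({\bf x})\cup \vec w_1({\bf x})) \cong (\Sigma',\vec z^{\,\prime}\cup \vec w^{\,\prime})$ satisfying conditions (1)--(3) there. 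Since $[(\Sigma',\vec z^{\,\prime}),u'] \in U(((\Sigma,\vec z),u);\epsilon_2)$, the equation $\overline\partial u' \in E((\Sigma',\vec z^{\,\prime}),u')$ holds by Definition \ref{defn61616}. Thus the four hypotheses (a)--(d) of Proposition \ref{prop710}(2) are met (taking $\epsilon$ small enough that $[\Sigma',\vec z^{\,\prime}\cup\vec w^{\,\prime}] \in \mathcal V_{(1)}(\epsilon)$ and the $C^2$ bound is below the $\epsilon'$ of that proposition), so there exists ${\bf v} \in \mathcal V_{\rm map}(\epsilon)$ with $u' \circ \phi = u_{{\bf v},{\bf x}}$. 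This already gives an isomorphism $((\Sigma_1({\bf x}),\vec z_1({\bf x})\cup \vec w_1({\bf x})),u_{{\bf v},{\bf x}}) \cong ((\Sigma',\vec z^{\,\prime}\cup \vec w^{\,\prime}),u')$.

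Next I would arrange that the pair $({\bf v},{\bf x})$ in fact lies in $V(((\Sigma_1,\vec z_1),u_1);\epsilon_3,\Xi)$, i.e.\ satisfies ${\rm EV}_{w_{1,j}}({\bf v},{\bf x}) \in \mathcal N_j$. Here is the one genuinely new point: the marked points $\vec w^{\,\prime}$ produced above are a priori arbitrary, not subject to the condition $u'(w'_{1,j}) \in \mathcal N_j$. So I would instead run the above construction differently: rather than taking any $\vec w^{\,\prime}$, I would first choose $\vec w^{\,\prime}$ on $\Sigma'$ so that $u'(w'_{1,j}) \in \mathcal N_j$ and $w'_{1,j}$ is close to the image of $w_{1,j}$ under the identification implicit in $\epsilon$-closeness. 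The existence of such $\vec w^{\,\prime}$, and its uniqueness, is exactly where Lemma \ref{lem713} enters: by the transversality of $({\rm EV}_{w_{1,j}})_{j}$ to $\prod_j \mathcal N_j$ (shown there by reducing to the map \eqref{form721ni} and invoking Definition \ref{defn71} (6)), and the fact that near the origin $u_1$ meets $\mathcal N_j$ transversally at $u_1(w_{1,j})$ by Definition \ref{defn71} (5)(6), the equation $u'(w') \in \mathcal N_j$ has a unique solution $w'$ in a neighborhood of (the image of) $w_{1,j}$, depending smoothly on the data. With this choice of $\vec w^{\,\prime}$, the resulting $({\bf v},{\bf x})$ from Proposition \ref{prop710}(2) automatically satisfies ${\rm EV}_{w_{1,j}}({\bf v},{\bf x}) = u_{{\bf v},{\bf x}}(w_{1,j}({\bf x})) = u'(\phi(w_{1,j}({\bf x}))) = u'(w'_{1,j}) \in \mathcal N_j$, hence $({\bf v},{\bf x}) \in V(((\Sigma_1,\vec z_1),u_1);\epsilon_3,\Xi)$ once $\epsilon$ is small enough relative to $\epsilon_3$.

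For the uniqueness up to $\mathcal G_1$ action, suppose $({\bf v},{\bf x})$ and $({\bf v}',{\bf x}')$ both have the stated property, with bi-holomorphic maps $\phi$, $\phi'$ identifying the respective curves-with-maps (and with $\vec w$'s chosen as above, i.e.\ so that $u'(w'_{1,j}) \in \mathcal N_j$). Then $(\phi')^{-1}\circ\phi$ is an isomorphism of the two stabilized stable maps $((\Sigma_1({\bf x}),\vec z_1({\bf x})\cup \vec w_1({\bf x})),u_{{\bf v},{\bf x}})$ and $((\Sigma_1({\bf x}'),\vec z_1({\bf x}')\cup \vec w_1({\bf x}')),u_{{\bf v}',{\bf x}'})$. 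Since the stabilized curve $(\Sigma_1,\vec z_1\cup\vec w_1)$ has trivial automorphism group (Definition \ref{defn71} (2)) and the family \eqref{familyon1} over $\mathcal V_{(1)}$ is a local universal family, this forces ${\bf x}$ and ${\bf x}'$ to correspond under an element of $\mathcal G_1$ acting by the permutation $\sigma_v$ on the $\vec w$-marked points (the marked points $\vec w^{\,\prime}$ having been pinned down only up to reordering by the $\mathcal N_j = \mathcal N_{\sigma_v(j)}$ ambiguity of Definition \ref{defn71} (7)); applying that element of $\mathcal G_1$ and using injectivity in ${\bf v}$ from Proposition \ref{prop710}(3) then identifies $({\bf v},{\bf x})$ with $({\bf v}',{\bf x}')$. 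I expect the main obstacle to be the careful bookkeeping in this last step — matching the $\mathcal G_1$ permutation of the added marked points with the re-labelling ambiguity of $\vec w^{\,\prime}$ coming from the coincidences $\mathcal N_{\sigma_v(i)} = \mathcal N_i$, and checking that the chosen constants $\epsilon$, $\epsilon_3$, $\epsilon_2$ can be ordered consistently so that all the "if $\epsilon$ is sufficiently small" clauses in Lemmata \ref{lem713}, \ref{lem7777} and Proposition \ref{prop710} apply simultaneously.
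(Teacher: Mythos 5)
Your existence argument is essentially the paper's: extract $\vec w^{\,\prime\prime}$, $\bf x'$, $\phi'$ from Definition~\ref{defn412}, adjust $\vec w^{\,\prime\prime}$ to $\vec w^{\,\prime}$ by the implicit function theorem so that $u'(w'_j)\in\mathcal N_j$ (using that $u'$ is $C^1$-close to $u_1$ and Definition~\ref{defn71}(6)), then invoke Proposition~\ref{prop710}(2). A small misattribution: what you actually need here is transversality of $u'$ itself to $\mathcal N_j$ near the image of $w_{1,j}$, which follows from $C^1$-closeness to $u_1$ plus Definition~\ref{defn71}(5)(6); Lemma~\ref{lem713} is about the evaluation map $({\rm EV}_{w_{1,j}})$ on the glued family $u_{{\bf v},{\bf x}}$, which is a different (though related) assertion and is used elsewhere.

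The uniqueness part has a real gap. You assert that $(\phi')^{-1}\circ\phi$ is an isomorphism of the two stabilized marked stable maps, and then appeal to the local universality of the family over $\mathcal V_{(1)}$ plus triviality of automorphisms of $(\Sigma_1,\vec z_1\cup\vec w_1)$. But $(\phi')^{-1}\circ\phi$ preserves only the $\vec z_1$-marked points; it sends $\phi^{-1}(\vec w'_{(1)})$ to $(\phi')^{-1}(\vec w'_{(1)})$, and there is no a priori reason that $\vec w'_{(1)}=\vec w'_{(2)}$. You characterize the ambiguity in $\vec w'$ as a ``relabelling'' permitted by $\mathcal N_{\sigma_v(j)}=\mathcal N_j$, but that is not the issue: the two tuples $\vec w'_{(1)},\vec w'_{(2)}$ are each produced by applying the implicit function theorem near ``the image of $w_{1,j}$'' under two \emph{a priori unrelated} identifications $\phi^{(i)}\circ\Phi_{1,{\bf x}^{(i)},\delta}$, and could be genuinely different points of $\Sigma'$. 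Since $(\Sigma_1,\vec z_1)$ alone is not stable, the group of bi-holomorphisms of $\Sigma_1(\delta)$ fixing $\vec z_1$ is positive-dimensional, and the universal family argument simply does not apply until the $\vec w$-markings are reconciled. The paper resolves this with an unnumbered Sublemma: it shows, by a Gromov-compactness contradiction argument (a reparametrization not close to $\mathcal G_1$ would force bubbling, contradicting the $C^2$-closeness to $u_1$ and the diameter control on the thin parts), that the composite source reparametrization $\Phi_{1,{\bf x}^{(2)},\delta'}^{-1}\circ(\phi^{(2)})^{-1}\circ\phi^{(1)}\circ\Phi_{1,{\bf x}^{(1)},\delta}$ is $C^2$-close to some $v\in\mathcal G_1$. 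Only after replacing one identification by its $v$-translate and using (\ref{formform23}) does one conclude $\vec w'_{(1)}$ is close to $\vec w'_{(2)}$, hence equal by the implicit-function-theorem uniqueness; then the universal-family and Proposition~\ref{prop710}(3) steps you outline close the argument. This compactness step is the heart of the uniqueness proof, not ``bookkeeping.''
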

\begin{proof}
We first prove the existence.
Let $\vec w^{\,\prime\prime}$ and $\phi' : (\Sigma_1({\bf x}'),\vec z_1({\bf x}')
\cup \vec w_1({\bf x}'))
\cong (\Sigma',\vec z^{\,\prime}\cup \vec w^{\,\prime\prime})$
be the isomorphism as in Definition \ref{defn412}.
(We write ${\bf x}'$ and $\vec w^{\,\prime\prime}$ here
instead of ${\bf x}$ and $\vec w^{\,\prime}$  in Definition \ref{defn412}.)
\par
Note $u'(w^{\,\prime\prime}_j)$\footnote{Here $w^{\,\prime\prime}_j$ 
is the $j$-th member of $\vec w^{\,\prime\prime}$.} is close to $u_1(w_{1,j})$ and 
$u_1(w_{1,j}) \in \mathcal N_j$.
Moreover $u'$ is $C^1$ close to $u_1\vert_{\Sigma_1(\delta)}$.
Therefore by Definition \ref{defn71} (6)
we can find $w^{\,\prime}_j$ which is close to $w^{\,\prime\prime}_j$
such that
$u'(w^{\,\prime}_j) \in \mathcal N_j$.
\par
Then there exists ${\bf x}$ which is close to ${\bf x}'$ and 
a bi-holomorphic map $\phi : (\Sigma_1({\bf x}),\vec z_1({\bf x})
\cup \vec w_1({\bf x}))
\cong (\Sigma',\vec z^{\,\prime}\cup \vec w^{\,\prime})$.
\par
Using Proposition \ref{prop710} (2), 
there exists ${\bf v}$ such that
$$
u' \circ \phi = u_{{\bf v},{\bf x}}. 
$$
Since $u'(w^{\,\prime}_j) \in \mathcal N_j$ we have
$$
u_{{\bf v},{\bf x}}(w^{\,\prime}_j({\bf x})) \in \mathcal N_j.
$$
Therefore $({\bf v},{\bf x}) \in V(((\Sigma_1,\vec z_1),u_1);({\frak W},\vec{\mathcal N}))$
as required.
\par
We next prove the uniqueness.
Let $({\bf v}^{(i)},{\bf x}^{(i)}) \in V(((\Sigma_1,\vec z_1),u_1);\epsilon_3,({\frak W},\vec{\mathcal N}))$
($i=1,2$) and $\vec w^{\,\prime}_{(i)}$ ($i=1,2$) both  
have the required properties. 
\par
We observe
$$
(\Sigma_1({\bf x}^{(1)}),\vec z_1({\bf x}^{(1)}))
\overset{\phi^{(1)}}\cong
(\Sigma',\vec z^{\,\prime})
\overset{(\phi^{(2)})^{-1}}\cong
(\Sigma_1({\bf x}^{(2)}),\vec z_1({\bf x}^{(2)}))
$$
and
\begin{equation}
u' \circ \phi^{(i)} = u_{{\bf v}^{(i)},{\bf x}^{(i)}}.
\end{equation} 
Moreover the $C^2$ distance between
$
u' \circ \phi^{(i)}\circ \Phi_{1,{\bf x}^{(i)},\delta}
$
and $u_1$ on $\Sigma_1(\delta)$
is smaller than $o(\epsilon_3)$.

By taking $\epsilon_3$, $\delta'$ small 
the composition
$\phi = 
\Phi_{1,{\bf x}^{(2)},\delta'}^{-1} \circ (\phi^{(2)})^{-1}\circ \phi^{(1)} \circ \Phi_{1,{\bf x}^{(1)},\delta}$
is defined on $\Sigma_1(\delta)$.
Then the  $C^2$ distance between
$
u_1 \circ \phi 
$ 
and $u_1$ as maps on $\Sigma_1(\delta)$ is smaller than $o(\epsilon_3)$.
(Note using Lemma \ref{lem7777}
the $C^2$ distance between
$
u' \circ \phi^{(i)}\circ \Phi_{1,{\bf x}^{(i)},\delta}
$
and $u_1$ on $\Sigma_1(\delta')$ is still smaller than $o(\epsilon_3)$.
We use this fact.)
\begin{sublem}
If $\epsilon$ and $\delta$ are sufficiently small then 
there exists $v \in \mathcal G_1$ such that the $C^2$ 
distance between $\phi$ and $v$ is smaller than $o(\epsilon_3) + o(\delta)$.
\end{sublem}
\begin{proof}
Suppose the sublemma is false. Then there exist
${\bf x}_c^{(i)}$, ${\bf v}_c^{(i)}$ (for $i=1,2$) and
$u'_c : \Sigma'_c \to X$ such that:
\begin{enumerate}
\item $\lim_{c\to \infty} {\bf x}_c^{(i)} =o
= [\Sigma_1,\vec z_1]$, 
$\lim_{c\to \infty} {\bf v}_c^{(i)} = {\bf 0}$.
\item
$u'_c \circ \phi_c^{(i)} = u_{{\bf v}_c^{(i)},{\bf x}_c^{(i)}}$.
\item
We consider 
$
\phi_c^{(i)}
:
(\Sigma_1({\bf x}_c^{(i)}),\vec z_1({\bf x}_c^{(i)}))
\cong
(\Sigma'_c,\vec z_c^{\,\prime})
$
and the composition
$$
\phi_c = (\phi_c^{(1)})^{-1} \circ \phi_c^{(2)}.
$$  
Then the limit $\lim_{c\to \infty}\phi_c$ is not an element of $\mathcal G_1$ even 
after taking a subsequence.
\end{enumerate}
\par
We regard $\phi_c$ as a map
$$
\phi_c : \Sigma_1({\bf x}^{(2)}_c) \to \mathcal C_{(1)}
$$
where $\mathcal C_{(1)}$ is the total space of the universal 
deformation of $(\Sigma_1,\vec z_1)$.
The energy of this map $\phi_c$ is uniformly bounded.
Therefore we can use Gromov's compactness theorem
\cite[Theorem 11.1]{FO} to find its limit
(with respect to the stable map topology), which is a stable map
$$
\phi_{\infty} : (\widehat{\Sigma}_1,\widehat{\vec z}_1) \to 
(\Sigma_1,\vec z_1)
$$
such that $\widehat{\Sigma}_1$ is ${\Sigma}_1$ plus bubbles,
namely $\widehat{\Sigma}_1 \to {\Sigma}_1$ exists.
Suppose $\widehat{\Sigma}_1 \ne {\Sigma}_1$.
Then there exists a sphere component $S^2_a$ of $\widehat{\Sigma}_1$
which is unstable. The map $\phi_{\infty}$ is non-constant on $S^2_a$.
Let $S^{2,\prime}_{a} = \phi_{\infty}(S^2_a) \subset {\Sigma}_1$
be the image. Since $S^{2,\prime}_{a}$ is unstable the map
$u_{{\bf v}^{(2)},{\bf x}_c^{(2)}}$
is non-constant there.
Therefore the diameter of the 
image of $u'_c$ on $\Phi_{1,{\bf x}^{(2)}_c,\delta}(S^2_a)$
is uniformly away from $0$. Since
$S^2_a$ shrink to a point in $\Sigma_1$ the image of 
$\Phi_{1,{\bf x}^{(2)}_c,\delta}(S^2_a)$ has diameter $\to 0$
as $c \to \infty$.
This is impossible since 
\begin{equation}\label{eq72222}
\lim_{c\to \infty}u'_c \circ \phi^{(i)}_c\circ \Phi_{1,{\bf x}_c^{(i)},\delta} = u_1.
\end{equation}
on $\Sigma_1(\delta)$.
\par
Therefore $\widehat{\Sigma}_1 = \Sigma_1$
and 
$
\phi_{\infty} : {\Sigma}_1 \to \Sigma_1
$
is an isomorphism.
We use (\ref{eq72222}) to obtain
$
u_1 \circ \phi_{\infty} = u_1
$. Namely $\phi_{\infty} \in \mathcal G_1$.
This is a contradiction.
\end{proof}
Using $\mathcal G_1$ equivariance of the map $({\bf v},{\bf x}) \mapsto u_{{\bf v},{\bf x}}$
in Proposition \ref{prop710} 
we may assume that $v = 1$, by replacing ${\bf x}^{(2)}$ 
etc. if necessary.
In other words, we may assume $\phi^{(2)}\circ \Phi_{1,{\bf x}^{(2)},\delta}$ 
is $C^2$ close to $\phi^{(1)} \circ \Phi_{1,{\bf x}^{(2)},\delta}$.
\par
By assumption 
$$
\vec w^{\,\prime}_{(i)} = \phi^{(i)}(w_{1,j}({\bf x}^{(i)})).
$$
On the other hand
\begin{equation}\label{formform23}
d(w_{1,j}({\bf x}^{(i)}),\Phi_{1,{\bf x}^{(i)},\delta}(w_{1,j}))
< o(\epsilon_3).
\end{equation}
Here $d$ is a metric on $\Sigma({\bf x}^{(i)})$
which is the restriction of a metric 
of the total space of the universal family of 
deformation of $(\Sigma_i,\vec z_i \cup \vec w_i)$.
\par
(\ref{formform23}) follows from the fact that
$\Phi_{1,{\bf x}^{(i)},\delta}$ converges to the 
identity map as ${\bf x}^{(i)}$ converges to ${\bf o}_i = [\Sigma_i,\vec z_i \cup 
\vec w_i]$ and $w_{1,j}({\bf x}^{(i)})$ converges to 
$w_{i,j}$ as ${\bf x}^{(i)}$ converges to $o_i$.
\begin{rem}
Note $w_{1,j}({\bf x}^{(i)}) \ne \Phi_{1,{\bf x}^{(i)},\delta}(w_{1,j})$
in general since we do {\it not} assume 
Definition \ref{defn7272} (4) for marked points of $(\Sigma_1)_a$ 
other than nodal points of $\Sigma_1$.
\end{rem}
Therefore $\vec w^{\,\prime}_{(1)}$ is close to $\vec w^{\,\prime}_{(2)}$
in $\Sigma'$.
Furthermore we have
$$
u'(\vec w^{\,\prime}_{(1)}), u'(\vec w^{\,\prime}_{(2)}) \in \mathcal N_j.
$$
Using also Definition \ref{defn71} (6) it implies that
$$
\vec w^{\,\prime}_{(1)} = \vec w^{\,\prime}_{(2)}.
$$
Therefore
$$
\aligned
(\Sigma_1({\bf x}^{(1)}),\vec z_1({\bf x}^{(1)})\cup \vec w_1({\bf x}^{(1)}))
&\overset{\phi^{(1)}} 
\cong
(\Sigma',\vec z^{\,\prime} \cup \vec w^{\,\prime}_{(1)})
=
(\Sigma',\vec z^{\,\prime} \cup \vec w^{\,\prime}_{(2)})\\
&\overset{(\phi^{(2)})^{-1}}
\cong
(\Sigma_1({\bf x}^{(2)}),\vec z_1({\bf x}^{(2)})\cup \vec w_1({\bf x}^{(2)})).
\endaligned
$$
Thus ${\bf x}^{(1)} = {\bf x}^{(2)}$. 
Now Proposition \ref{prop710} (3) implies ${\bf v}^{(1)} = {\bf v}^{(2)}$.
The proof of the uniqueness is complete.
\end{proof}
Lemma \ref{lem71533} implies that the set 
$\mathcal U(\epsilon;(\Sigma_1,\vec z_1),u_1,{\frak W})$ 
is identified with a neighborhood of the origin of the quotient space
\begin{equation}\label{form79}
V(((\Sigma_1,\vec z_1),u_1);\epsilon_3,({\frak W,\vec{\mathcal N})})/\mathcal G_1.
\end{equation}
Thus $\mathcal U(\epsilon;(\Sigma_1,\vec z_1),u_1,{\frak W})$ has an orbifold chart.
By Proposition \ref{prop78}
there exists a smooth vector bundle (orbibundle)
$E(((\Sigma_1,\vec z_1),u_1);\epsilon_3,{\frak W})$  on (\ref{form79}) such that the fiber of 
$((\Sigma',\vec z^{\,\prime}),u')$
is identified with
$E((\Sigma',\vec z^{\,\prime}),u')$.
Moreover it implies that
the map which associate to $((\Sigma',\vec z^{\,\prime}),u')$
the element 
$$
s((\Sigma',\vec z^{\,\prime}),u')= \overline{\partial}u' \in E((\Sigma',\vec z^{\,\prime}),u')
$$
is a smooth section of $E((\Sigma',\vec z^{\,\prime}),u')$.
\par
We define
$$
\psi((\Sigma',\vec z^{\,\prime}),u')
= [(\Sigma',\vec z^{\,\prime}),u'] 
\in \mathcal M_{g,\ell}((X,J);\alpha) 
$$
if $((\Sigma',\vec z^{\,\prime}),u')$ 
is an element of (\ref{form79})
with $s((\Sigma',\vec z^{\,\prime}),u') = 0$.
This defines a parametrization map
$$
\psi : s^{-1}(0)/\mathcal G_1 \to \mathcal M_{g,\ell}((X,J);\alpha) .
$$
Now we sum up the conclusion of this subsection as follows.
\begin{prop}\label{prop716}
For each $n$ there exists $\epsilon_{(n)}$ such that 
$$
(V(((\Sigma_1,\vec z_1),u_1);\epsilon_{(n)},
({\frak W},\vec{\mathcal N}))/\mathcal G_1,E(((\Sigma_1,\vec z_1),u_1);\epsilon_{(n)},({\frak W},\vec{\mathcal N})),s,\psi)
$$ is a Kuranishi neighborhood of $C^n$ class 
at $[((\Sigma_1,\vec z_1),u_1)]$ of $\mathcal M_{g,\ell}((X,J);\alpha)$. 
\end{prop}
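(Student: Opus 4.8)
The plan is to assemble the ingredients built in Subsections \ref{smoothchart1} and \ref{smoothchart15} into a single chart and then read off the four defining properties of a Kuranishi neighborhood from the local version of Proposition \ref{prop616}. First I would fix $n$, choose $m$ with $m > n+10$ so that Propositions \ref{prop710} and \ref{prop711} apply with the $C^n$-regularity we need, and then shrink the positive constants successively: choose $\epsilon_2$ so small that $\mathcal G_1 \subset \mathcal G((\Sigma,\vec z),u)$ and that the conclusion of Lemma \ref{lem713} holds, then choose $\epsilon = \epsilon_{(n)}$ so small that Lemma \ref{lem71533} and the surjectivity/injectivity statements Proposition \ref{prop710} (2)(3) are all valid. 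This $\epsilon_{(n)}$ is the constant claimed in the statement, and the Kuranishi neighborhood in question is (a neighborhood of the origin in) the quotient $V(((\Sigma_1,\vec z_1),u_1);\epsilon_{(n)},\Xi)/\mathcal G_1$ identified, via Lemma \ref{lem71533}, with $\mathcal U(\epsilon;(\Sigma_1,\vec z_1),u_1,\Xi)$.

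Next I would check that the underlying space is a $C^n$-orbifold. Using the $C^\infty$-structure of Definition \ref{defn712} on $\mathcal V_{1,0}$ and the standard structures on $\mathcal V_{\rm map}(\epsilon)$ and $\mathcal V_{1,1}$, Proposition \ref{prop711} shows that the evaluation map $({\rm EV}_{w_{1,j}})_{j}$ is of class $C^n$; Lemma \ref{lem713} shows it is transversal to $\prod_j \mathcal N_j$; hence by the implicit function theorem in the $C^n$ category the set $V(((\Sigma_1,\vec z_1),u_1);\epsilon,\Xi)$ of Definition \ref{defn71474} is a $C^n$-submanifold. The finite group $\mathcal G_1$ acts on it because the maps $({\bf v},{\bf x}) \mapsto u_{{\bf v},{\bf x}}$ are $\mathcal G_1$-equivariant (Proposition \ref{prop710}) and the trivialization data and the submanifolds $\mathcal N_j$ are $\mathcal G_1$-compatible by Definitions \ref{defn7272}, \ref{defn7372}, \ref{defn71}; after possibly shrinking we may assume the action is effective, so the quotient is an effective orbifold. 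Then I would produce the obstruction orbibundle: Proposition \ref{prop78} and Corollary \ref{cor79} provide $d$ maps $e_i(\hat u',{\bf x})$ whose $C^n$-norms are uniformly bounded (with the exponential decay in $T_{\rm e}$ that makes the change of variables to $\frak s_{\rm e}$ harmless) and which fiberwise give a basis of $E(\hat u',{\bf x})$; hence $E(((\Sigma_1,\vec z_1),u_1);\epsilon,\Xi)$ is a $C^n$ $\mathcal G_1$-orbibundle with fiber $E((\Sigma',\vec z^{\,\prime}),u')$ over $[((\Sigma',\vec z^{\,\prime}),u')]$. The Kuranishi map $[((\Sigma',\vec z^{\,\prime}),u')] \mapsto \overline\partial u'$ is a $C^n$-section because, by Proposition \ref{prop711}, ${\bf v},{\bf x} \mapsto {\rm Res}(u_{{\bf v},{\bf x}})$ is $C^n$ into $L^2_{m+1-n}$, and $\overline\partial$ followed by the identification $I_{\hat u',{\bf x}}$ is a $C^n$-family of operators of the right order; $\mathcal G_1$-equivariance of $s$ is again Proposition \ref{prop710}.

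Finally I would verify that the parametrization $\psi : s^{-1}(0)/\mathcal G_1 \to \mathcal M_{g,\ell}((X,J);\alpha)$ is a homeomorphism onto an open neighborhood of $[((\Sigma_1,\vec z_1),u_1)]$. Well-definedness and continuity are clear since points of $s^{-1}(0)$ are honest $J$-holomorphic stable maps; surjectivity onto a neighborhood is exactly the existence clause of Lemma \ref{lem71533}; injectivity is the uniqueness clause of Lemma \ref{lem71533} together with Proposition \ref{prop710} (3); and continuity of $\psi^{-1}$ follows, as in the sublemma inside the proof of Lemma \ref{lem71533}, from Gromov's compactness theorem \cite[Theorem 11.1]{FO} applied to the stable-map topology on $\mathcal M_{g,\ell}((X,J);\alpha)$. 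Putting these together gives the Kuranishi neighborhood asserted in the proposition.

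The part I expect to be the main obstacle is the bookkeeping of quantifiers and regularity: one must exhibit a \emph{single} $\epsilon_{(n)}$ that works simultaneously for the implicit function theorem of Lemma \ref{lem713}, for the gluing surjectivity and injectivity of Proposition \ref{prop710}, and for the $C^n$-estimates of both $e_i$ (Corollary \ref{cor79}) and ${\rm Res}(u_{{\bf v},{\bf x}})$ (Proposition \ref{prop711}), all of which demand $m$ large relative to $n$ and $\epsilon$ small depending on $n$; and one must confirm that the $C^n$-structure coming from the coordinates $\frak s_{\rm e}$ on $\mathcal V_{1,0}$ is the one with respect to which $s$ and $E$ are $C^n$, so that the chart is genuinely of class $C^n$ rather than merely topological. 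The underlying analysis — transversality, equivariance, exponential decay, gluing bijectivity — is all quoted from the preceding results, so beyond this assembly no new estimates are required.
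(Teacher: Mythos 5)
Your proposal is correct and follows essentially the same route as the paper. Proposition \ref{prop716} is stated there as a summary of Subsection \ref{smoothchart15}, and the chain you assemble — Lemma \ref{lem71533} to identify $\mathcal U(\epsilon;\dots)$ with a neighborhood of the origin in $V(\dots)/\mathcal G_1$, Proposition \ref{prop711} plus Lemma \ref{lem713} to get the $C^n$ orbifold structure via the implicit function theorem in the $\frak s_{\rm e}$ coordinates of Definition \ref{defn712}, Proposition \ref{prop78} and Corollary \ref{cor79} for the $C^n$ orbibundle, the $\mathcal G_1$-equivariance from Proposition \ref{prop710}, and the existence/uniqueness in Lemma \ref{lem71533} (with the Gromov-compactness sublemma) for the parametrization $\psi$ — is exactly the argument the paper makes in the paragraphs preceding the proposition.
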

In the next subsection we use it to define a 
$G$-equivariant Kuranishi chart containing the $G$ orbit of $[(\Sigma,\vec z),u]$.

\subsection{Construction of the smooth chart 3: Proof of Proposition \ref{prop616}}
\label{smoothchart2}
We first define a topology of the set
$U(((\Sigma,\vec z),u);\epsilon_2)$.
We use the sets $\mathcal U(\epsilon;(\Sigma_1,\vec z_1),u_1,{\frak W})$
defined in Definition \ref{defnUUUU}
for this purpose.

\begin{lem}\label{lem71818}
Suppose $((\Sigma_2,\vec z_2),u_2) \in 
\mathcal U(\epsilon;(\Sigma_1,\vec z_1),u_1,{\frak W}^{(1)})$.
\par
Then there exists
$\epsilon'>0$ such that
\begin{equation}\label{form72020}
\mathcal U(\epsilon';(\Sigma_2,\vec z_2),u_2,{\frak W}^{(2)})
\subset
\mathcal U(\epsilon;(\Sigma_1,\vec z_1),u_1,{\frak W}^{(1)}).
\end{equation}
\end{lem}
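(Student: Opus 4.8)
The plan is to show that "$\epsilon$-closeness with respect to a weak stabilization data" is essentially transitive, with a controlled loss of constant. First I would unwind Definition \ref{defn412}: the hypothesis $((\Sigma_2,\vec z_2),u_2) \in \mathcal U(\epsilon;(\Sigma_1,\vec z_1),u_1,\Xi^{(1)}_0)$ gives data $\vec w^{\,(2)}$, $\delta_2$, ${\bf x}^{(2)} \in \mathcal V_{1,0}(\delta_2) \times \mathcal V_{1,1}$ and a bi-holomorphic $\phi_{21} : (\Sigma_1({\bf x}^{(2)}),\vec z_1({\bf x}^{(2)}) \cup \vec w_1({\bf x}^{(2)})) \cong (\Sigma_2,\vec z_2 \cup \vec w^{\,(2)})$ with $u_2 \circ \phi_{21} \circ \Phi_{1,{\bf x}^{(2)},\delta_2}$ close to $u_1|_{\Sigma_1(\delta_2)}$ in $C^2$, etc. Now suppose $((\Sigma_3,\vec z_3),u_3)$ is $\epsilon'$-close to $((\Sigma_2,\vec z_2),u_2)$ with respect to $\Xi^{(2)}_0$: this gives $\vec w^{\,(3)}$, $\delta_3$, ${\bf x}^{(3)}$ and $\phi_{32}$ with the analogous properties relative to $\Sigma_2$. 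The goal is to produce data exhibiting $((\Sigma_3,\vec z_3),u_3)$ as $\epsilon$-close to $((\Sigma_1,\vec z_1),u_1)$ with respect to $\Xi^{(1)}_0$.

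The key construction is to compose the two families. Because $\Xi^{(2)}_0$ was built starting from the curve $(\Sigma_2,\vec z_2)$, and $(\Sigma_2,\vec z_2)$ is itself $(\Sigma_1({\bf x}^{(2)}),\vec z_1({\bf x}^{(2)}))$ up to the isomorphism $\phi_{21}$, I would invoke the analogue of Sublemma \ref{sublem321} (applied to the universal family over $\mathcal V_{(1)}$ near the point ${\bf x}^{(2)}$) to identify a neighborhood of ${\bf x}^{(2)}$ in $\mathcal V_{(1)}$ with a neighborhood of the origin in $\mathcal V_{(2)}$ times an auxiliary polydisc; concretely, the two local universal families of stabilized curves are canonically identified near the relevant points by the universality of Lemma \ref{lem315}. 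Under this identification the gluing/thickening maps $\Phi_{2,{\bf x},\delta_3}$ are, up to a diffeomorphism $C^2$-close to the identity, compositions with $\Phi_{1,{\bf x}',\delta}$ for a suitable ${\bf x}'$ near ${\bf x}^{(2)}$ and suitable $\delta$. Then I set ${\bf x}^{(31)} := $ (the image of ${\bf x}^{(3)}$ under this identification, which lies in $\mathcal V_{1,0}(\delta) \times \mathcal V_{1,1}$ for an appropriate $\delta < \epsilon$), $\phi_{31} := \phi_{21}' \circ \phi_{32}$ where $\phi_{21}'$ is the transported version of $\phi_{21}$, and take $\vec w^{\,(3)}_1$ to be the marked points $\vec w_1({\bf x}^{(31)})$ pushed forward to $\Sigma_3$. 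I then verify Conditions (1)(2)(3) of Definition \ref{defn412} for these data relative to $\Sigma_1$: Condition (2) on distances follows by the triangle inequality in $\mathcal M_{g,k+\ell}$ once $\epsilon'$ is small; Condition (1) on the $C^2$ smallness of $u_3 \circ \phi_{31} \circ \Phi_{1,{\bf x}^{(31)},\delta} - u_1|_{\Sigma_1(\delta)}$ follows by composing the two $C^2$ estimates (the $u_2$-to-$u_1$ one and the $u_3$-to-$u_2$ one) together with the fact that the transition diffeomorphisms are $C^2$-close to the identity; Condition (3) on diameters of $u_3 \circ \phi_{31}$ on the complement of the thickened region follows from the corresponding diameter bounds for $u_2$ and $u_3$, using that the complement of $\Phi_{1,{\bf x}^{(31)},\delta}$ is contained in (the $\phi$-image of) the complement of $\Phi_{2,{\bf x}^{(3)},\delta_3}$ together with a region where $u_2$ already has small diameter. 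Here Lemma \ref{lem7777} is the right tool for absorbing the mismatch between the various $\delta$'s (shrinking $\delta$ costs only $o(\epsilon)$, provided the relevant maps are holomorphic off the core region, which they are since $\overline\partial u_3 \in E(\cdots)$ has support in the core).

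The one genuinely delicate point, and the step I expect to be the main obstacle, is controlling the interplay between the two trivialization data $\Xi^{(1)}_0$ and $\Xi^{(2)}_0$ and the two choices of additional marked points $\vec w$: there is no reason the stabilizing marked points of $\Xi^{(2)}_0$ are compatible with those of $\Xi^{(1)}_0$, so the identification of the local universal families near ${\bf x}^{(2)}$ is only up to a diffeomorphism of the fibers that need not respect the $\Phi$-trivializations exactly. However this diffeomorphism is $C^{\infty}$-close to one respecting them (because both trivializations extend the $C^\infty$ fiber-bundle structure near the same point and differ by an element of the contractible group of fiber diffeomorphisms fixing the marked points), so the composition $\phi_{21}' \circ \phi_{32}$ still satisfies the required $C^2$ estimates after a further $o(\epsilon)$ adjustment. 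I would make this precise by choosing $\epsilon'$ so small that all the diffeomorphisms involved are within, say, $\epsilon/100$ of the identity in $C^2$ on the relevant compact pieces, and then track the constants through Lemma \ref{lem7777}. Once these estimates are assembled, the inclusion (\ref{form72020}) follows, and the choice of $\epsilon'$ depends only on $\epsilon$, $((\Sigma_1,\vec z_1),u_1)$, $\Xi^{(1)}_0$ and $((\Sigma_2,\vec z_2),u_2)$, $\Xi^{(2)}_0$, as claimed.
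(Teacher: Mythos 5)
The overall strategy you propose — show that $\epsilon$-closeness is transitive up to a controlled loss of constant, by composing the two isomorphisms and estimating via a triangle inequality, with Lemma~\ref{lem7777} absorbing the mismatch in $\delta$'s — is indeed the right idea, and your verification of Conditions (2) and (3) of Definition~\ref{defn412} (with the split between connected components that do and do not correspond to surviving nodes of $\Sigma_2$) tracks what the paper does in Subsection~\ref{indsmoothchart}.

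However, the paper does \emph{not} attempt a direct triangle inequality with arbitrary $\Xi^{(1)}_0$, $\Xi^{(2)}_0$. Instead it first proves the fixed-point cases ($\frak p_1 = \frak p_2$; Cases 1--4 of Subsection~\ref{indsmoothchart}), which establish that any two weak stabilization data at the \emph{same} $\frak p$ give the same neighborhood basis, by comparing through common refinements of the $\vec w$'s and forgetful maps of universal families. This reduction allows the general Case~5 to be proved for \emph{one specially chosen} $\Xi^{(2)}_0$ — namely, the one constructed in Lemma~\ref{lemlem762} so that the exact compatibility $\tilde\psi\circ\Phi_{2,{\bf x},\delta} = \Phi_{1,{\bf x},\delta}\circ\Phi_{1,{\bf x}_2,\delta}^{-1}\circ\phi^{-1}$ holds. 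That exact identity is what makes the composed $C^2$ and diameter estimates go through cleanly, with no hidden discrepancy between the two trivializations.

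This is precisely the point where your proposal has a genuine gap. You acknowledge the ``delicate point'' — that the trivialization data $\Xi^{(1)}_0$ and $\Xi^{(2)}_0$ need not be compatible — but the justification you give is not valid: you assert that the discrepancy between the two ways of mapping $\Sigma_1(\delta)$ into $\Sigma_3$ (through $\Phi_{1,{\bf x}^{(31)},\delta}$ directly, versus through $\Phi_{1,{\bf x}^{(2)},\delta_2}$, then $\phi_{21}$, then $\Phi_{2,{\bf x}^{(3)},\delta_3}$) is $C^2$-small because the two trivializations ``differ by an element of the contractible group of fiber diffeomorphisms fixing the marked points.'' Contractibility of that group is irrelevant to proximity to the identity; a contractible group contains arbitrarily large elements. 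Also, the two families of curves here carry different sets of auxiliary marked points $\vec w^{(1)}$ and $\vec w^{(2)}$ over different Deligne--Mumford neighborhoods, so there is no literal ``same point'' at which they can be compared. What would actually close the gap is a continuity argument: as $\epsilon'\to 0$ one has ${\bf x}^{(3)}\to 0$ so $\Phi_{2,{\bf x}^{(3)},\delta_3}$ tends to the inclusion, $\Sigma_3\to\Sigma_2$ in the stable-map topology, and (after a careful choice of $\vec w^{(31)}$) ${\bf x}^{(31)}\to{\bf x}^{(2)}$ and $\phi_{31}\to\phi_{21}$, so both routes converge to $\phi_{21}\circ\Phi_{1,{\bf x}^{(2)},\delta}$. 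This would require you to make precise the choice of the new marked points $\vec w^{(31)}$ on $\Sigma_3$ and the resulting ${\bf x}^{(31)}\in\mathcal V_{(1)}$ and $\phi_{31}$, and to quantify the convergence; none of that is in the proposal. The paper sidesteps all of it by engineering $\Xi^{(2)}_0$ so that the comparison is exact, and then invoking the separately-proved Cases 1--4 to change $\Xi^{(2)}_0$ afterwards. You should either import that reduction, or replace the contractibility claim with a genuine convergence estimate as $\epsilon'\to 0$.
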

\begin{proof}
We prove this lemma in Subsection \ref{indsmoothchart}.
\end{proof}
\begin{prop}\label{prop719}
There exists a topology of $U(((\Sigma,\vec z),u);\epsilon_2)$
such that the family of its subsets,
\begin{equation}\label{form725}
\{\mathcal U(\epsilon;(\Sigma_1,\vec z_1),u_1,{\frak W}^{(1)})
\mid 
\epsilon>0, ((\Sigma_1,\vec z_1),u_1), {\frak W}^{(1)}\}
\end{equation}
is a basis of the topology.
\par
This topology is Hausdorff.
\end{prop}
\begin{proof}
The existence of the topology for which 
(\ref{form725})
is a basis of neighborhood system
is a consequence of Lemma \ref{lem71818}.
(See for example \cite[Theorem 11, p47]{kelly}.)
\par
We also remark that Lemma \ref{lem71818}
implies that for any ${\frak W}^{(1)}$, 
the set $U$ containing $[(\Sigma_1,\vec z_1),u_1]$ is a neighborhood of 
$[(\Sigma_1,\vec z_1),u_1]$ if and only if 
$\mathcal U(\epsilon;(\Sigma_1,\vec z_1),u_1,{\frak W}^{(1)}) \subset U$
for sufficiently small $\epsilon$.
\par
We next prove that this topology is Hausdorff.
Let $[(\Sigma_i,\vec z_i),u_i] \in U(((\Sigma,\vec z),u);\epsilon_2)$
and ${\frak W}^{(i)}$  stabilization and trivialization data, for $i=1,2$.
We assume 
$[(\Sigma_1,\vec z_1),u_1] \ne [(\Sigma_2,\vec z_2),u_2]$.
It suffices to show that
$$
\mathcal U(\epsilon;(\Sigma_1,\vec z_1),u_1,{\frak W}^{(1)})
\cap 
\mathcal U(\epsilon;(\Sigma_2,\vec z_2),u_2,{\frak W}^{(2)})
= 
\emptyset
$$
for sufficiently small $\epsilon$.
Suppose this does not hold.

We consider the universal family of deformation 
of $(\Sigma,\vec z)$ produced by Theorem \ref{them35}.
Then there exist $o(c) \to 0$, 
${\bf x}_c  \in \mathcal{OB}$
and $u_c : \Sigma({\bf x}_c) \to X$, such that
$[(\Sigma({\bf x}_c),\vec z({\bf x}_c)),u_c]$
lies in 
$$
\mathcal U(\epsilon(c);(\Sigma_1,\vec z_1),u_1,{\frak W}^{(1)})
\cap 
\mathcal U(\epsilon(c);(\Sigma_2,\vec z_2),u_2,{\frak W}^{(2)}).
$$
We may take ${\bf x}_c \in \mathcal{OB}$ 
and $u_c : \Sigma({\bf x}_c) \to X$ such that 
${\rm meandist}$ attains its minimum at 
$((\Sigma({\bf x}_c),\vec z({\bf x}_c)),u_c)$.
\begin{rem}
More precisely 
`${\rm meandist}$ attains its minimum at 
$((\Sigma({\bf x}_c),\vec z({\bf x}_c)),u_c)$'
means the following.
We consider
${\rm id} : (\Sigma({\bf x}_c),\vec z({\bf x}_c)) \cong (\Sigma({\bf x}_c),
\vec z({\bf x}_c))$
in Definition \ref{defn63}. In other words we consider the case
$((\Sigma',\vec z^{\,\prime}),u')
= ((\Sigma({\bf x}_c),\vec z({\bf x}_c),u_c)$
and ${\bf x}_0 = {\bf x}_c$, $\phi_0 = {\rm id}$.
We then obtain
$$
{\rm meandist} :
\mathcal W(\epsilon_1;{\bf x}_c,{\rm id};(\Sigma({\bf x}_c),\vec z({\bf x}_c)),u_c))
\to \R
$$
by Definition \ref{defn6666}.
We require that at 
$$
(\varphi,g) = (\mathcal{ID}({\bf x}_c),g)
\in \mathcal W(\epsilon_1;{\bf x}_c,{\rm id};(\Sigma({\bf x}_c),
\vec z({\bf x}_c),u_c))
$$ 
the function
${\rm meandist}$ attains its minimum, for some $g$.
\end{rem}
By Definition \ref{defn412},
there exists $\vec w_i(c) \subset \Sigma({\bf x}_c)$,
${\bf y}_{i,c}
\in \mathcal V^{(i)}_{1,0} 
\times \mathcal V^{(i)}_{1,1}(\delta_i(c))$,
(Here we put superscript $(i)$ to indicate that the right hand 
side is associated with ${\frak W}^{(i)}$.)
and a bi-holomorphic map 
\begin{equation}\label{formform725}
\phi_{i,c} : 
(\Sigma_i({\bf y}_{i,c}),\vec z_i({\bf y}_{i,c}) \cup 
\vec w_i({\bf y}_{i,c}))
\cong (\Sigma({\bf x}_c),\vec z({\bf x}_c)
\cup \vec w_i(c))
\end{equation}
with 
the following properties.
\begin{enumerate}
\item The $C^{2}$ norm of the difference between
$u_c \circ \phi_{i,c} \circ \Phi_{i,{\bf y}_{i,c},\delta_i(c)}$ and $u_i\vert_{\Sigma_i(\delta_i(c))}$
is smaller than $o(c)$.
Here
$$
\Phi_{i,{\bf y}_{i,c},\delta_i(c)} :
\Sigma_i(\delta_i(c)) \to \Sigma_i({\bf y}_{i,c})
$$
is obtained from ${\frak W}^{(i)}$.
\item The distance between ${\bf y}_{i,c}$ and $[\Sigma_i,\vec z_i \cup \vec w_i]$
in $\mathcal M_{g,k_i+\ell}$
is smaller than $o(c)$. Moreover $\delta_i(c) < o(c)$.
(Here $k_i = \# \vec w_i$.)
\item
The map $u_c \circ \phi_{i,c}$ has diameter $<o(c)$
on $\Sigma_i({\bf y}_{i,c}) \setminus 
{\rm Im}(\Phi_{i,{\bf y}_{i,c},\delta_i(c)})$.
\end{enumerate}
\begin{rem}\label{rem730}
Here and hereafter, the positive numbers $o(c)$ \index{00O2C@$o(c)$} depend on $c$ and satisfies
$
\lim_{c \to \infty}o(c) =0 
$.
\end{rem}
Using Lemma \ref{lem7777} we may assume $\lim_{c\to \infty}\delta_i(c) = 0$.
\par
By Definition \ref{defn71} (8), 
the point $w_{i,j}(c)$ (which is the $j$-th member of 
$\vec w_i(c)$) is contained in the image of 
$\Phi_{{\bf x}_{c},\delta_i(c)} : \Sigma(\delta_i(c)) 
\to \Sigma({\bf x}_c)$.
We take $\tilde w_{i,j}(c) \in \Sigma(\delta_i(c))$
such that
$$
\Phi_{{\bf x}_{c},\delta_i(c)}(\tilde w_{i,j}(c)) 
= w_{i,j}(c).
$$
By taking a subsequence if necessary we may assume that
the limit
\begin{equation}\label{form722}
\lim_{c\to\infty}{\bf x}_c = {\bf x}_{\infty} \in \mathcal{OB}
\end{equation}
exists.
Moreover we may assume
\begin{equation}\label{form723}
\lim_{c\to\infty}\Phi_{{\bf x}_{c},\delta_i(c)}(\tilde w_{i,j}(c))
= w_{i,j}(\infty) \in \Sigma({\bf x}_{\infty})
\end{equation}
converges by taking a subsequence if necessary.
Here (\ref{form723}) is the convergence in the total space of the 
universal family of deformation of $(\Sigma,\vec z)$.
\begin{sublem}\label{sublem722}
$w_{i,j}(\infty) \ne w_{i,j'}(\infty)$
if $j \ne j'$.
\end{sublem}
\begin{proof}
We can prove the sublemma by using minimality of ${\rm meandist}$ as follows.
Suppose $w_{i,j}(\infty) = w_{i,j'}(\infty)$
with $j \ne j'$.
We may assume $w_{i,j}$ and $w_{i,j'}$ are in the 
same irreducible component of $\Sigma_i$, by replacing $j,j'$ if necessary.
(Here $w_{i,j}$ is the $j$-th member of $\vec w_i \subset \Sigma_i$.)
In fact suppose $w_{i,j}$ and $w_{i,j'}$ are in the 
different irreducible components.
If there exists $j'' \ne j$ (resp. $j'' \ne j'$) 
such that $w_{i,j}$ and $w_{i,j''}$ (resp. 
$w_{i,j'}$ and $w_{i,j''}$) are in the same irreducible
component, then we may replace $w_{i,j'}$ by $w_{i,j''}$
(resp. $w_{i,j}$ and $w_{i,j''}$).
\par
If there exists no such $j''$ then one of the following holds 
because $(\Sigma_i,\vec z_i \cup \vec w_i)$ is stable.
Let $\Sigma_{ij}$ (resp. $\Sigma_{ij'}$) be the 
irreducible component containing $w_{i,j}$ (resp. $w_{i,j'}$).
\begin{enumerate}
\item[(j,I)] There exist a marked point $z_{i,k_j}$ on $\Sigma_{ij}$.
\item[(j,II)] The genus of $\Sigma_{ij}$ is positive.
\end{enumerate}
We may assume ${\rm (j',I)}$ or ${\rm (j',II)}$ also.
\par
If ${\rm (j,I)}$ and ${\rm (j',I)}$ hold then 
since $z_{i,k_j} \ne z_{i,k_{j'}}$ then 
$w_{i,j}(\infty) \ne w_{i,j'}(\infty)$
and we are done.
In the other 3 cases we can show 
$w_{i,j}(\infty) \ne w_{i,j'}(\infty)$
in a similar way.
\par
We thus may assume that $w_{i,j}$ and $w_{i,j'}$ are in the 
same irreducible component of $\Sigma_i$.
\par
Moreover the map $u_i$ is non-constant on the irreducible component containing $w_{i,j}$ by Definition \ref{defn71} (3).
Therefore by Item (1)
the map  $u_c$ has some nontrivial energy  in a small neighborhood of 
$\{ w_{i,j}(c), w_{i,j'}(c) \}$. (The energy there 
can be estimated uniformly from below because it is larger than 
the half of the energy of nontrivial holomorphic sphere 
for example.)
\par
This implies that the total energy of $u_c$ outside a small neighborhood 
of the set $\{ w_{i,j}(c), w_{i,j'}(c) \}$ is uniformly strictly smaller than 
the energy of $u$. Therefore
${\rm meandist}$ is greater than some number independent of $\epsilon_1$.
If $\epsilon_1$ is small then ${\rm meandist}$ does not attain 
its minimum at $((\Sigma({\bf x}_c),\vec z({\bf x}_c)),u_c)$.
This contradicts to our choice.
\end{proof}
\begin{sublem}\label{sublem726}
$ (\Sigma({\bf x}_{\infty}),\vec z({\bf x}_{\infty})
\cup \vec w_i({\infty}))$ 
is stable.
\end{sublem}
\begin{proof}
Suppose there is an unstable component 
$\Sigma({\bf x}_{\infty})_a$.
Then there exists an unstable component 
$\Sigma_{\tilde a}$ of $\Sigma$ such that
$$
\Phi_{{\bf x}_{\infty},\delta}(\Sigma_{\tilde a}
\cap \Sigma(\delta)) 
\subset \Sigma({\bf x}_{\infty})_a.
$$
\begin{subsublemma}
There exists $v_-(c), v_+(c) \in \Sigma({\bf x}_c)$ with the following 
properties.
\begin{enumerate}
\item[(a)] $\lim_{c\to\infty} v_-(c)$, $\lim_{c\to\infty} v_+(c)$ converges to points 
of $\Sigma({\bf x}_{\infty})_a$.
\item[(b)]
$d_X(u_c(v_-(c)),u_c(v_+(c)))$ is uniformly 
bounded away from $0$ as $c \to \infty$. 
\item[(c)]
$v_-(c), v_+(c)$ are uniformly away from the 
nodes or marked points.
\end{enumerate}
\end{subsublemma}
\begin{proof}
(See Figure \ref{732Figure}.)
By stability of $((\Sigma,\vec z),u)$ the map $u$ is nontrivial on $\Sigma_{\tilde a}$.
Therefore there exist $v_-, v_+ \in \Sigma_{\tilde a}$
such that for each $z_- \in B_{c_0}(u(v_-))$, $z_+ \in B_{c_0}(u(v_+))$
the inequality 
$$d(u(z_-),u(z_+)) > c_0$$ 
holds. Here $c_0 > 0$ depends only on $X$. The notation $B_{c_0}(\cdot)$
stand for the metric ball of radius $c_0$.
\par
Using the fact that the ${\rm meandist}$ attains 
its minimum at $((\Sigma({\bf x}_c),\vec z({\bf x}_c)),u_c)$ and 
that we may assume the meandist $\le o(\epsilon_1)$ is small compared with $c_0$,
we can find $v_-(c)$, $v_+(c)$ satisfying (b)(c) above
in a small neighborhood of $v_-$, $v_+$ (in the universal family 
$\widetilde{\mathcal{OB}} \to \mathcal{OB}$).
By taking a subsequence if necessary we may assume (a) also.
\end{proof}
\par
Since $\lim_c v_-(c)$, $\lim_c v_+(c)$ both 
converge to points on an unstable component $\Sigma({\bf x}_{\infty})_a$
and since $(\Sigma_i,\vec z_i\cup \vec w_i)$
is stable we find that
$$
\lim_{c\to \infty}
d((\Phi_{i,{\bf y}_{i,c},\delta_i(c)}^{-1} \circ \phi_{i,c}^{-1})(v_-(c)),
(\Phi_{i,{\bf y}_{i,c},\delta_i(c)}^{-1} \circ \phi_{i,c}^{-1})(v_+(c)))
= 0.
$$
Note the distance here is the Riemannian distance in $\Sigma_i$.
In fact Sublemma \ref{sublem722} implies that there exists a holomorphic map 
$\Sigma({\bf x}_{\infty}) \to \Sigma_i$ shrinking the unstable components
of $(\Sigma({\bf x}_{\infty}),\vec z({\bf x}_{\infty}) \cup \vec w_1({\bf x}_{\infty}))$.
\par
On the other hand Item (1) and the fact 
$d_X(u_c(v_-(c)),u_c(v_+(c)))$ is uniformly 
bounded away from $0$ implies that
$$
d_X(u_i((\Phi_{i,{\bf y}_{i,c},\delta_i(c)}^{-1} \circ \phi_{i,c}^{-1})(v_-(c))),
u_i(\Phi_{i,{\bf y}_{i,c},\delta_i(c)}^{-1} \circ \phi_{i,c}^{-1})(v_+(c))))
$$
is uniformly bounded away from $0$.
Since $\{u_i\}$ is equicontinuous away from the nodes this is a contradiction.
\end{proof}
\begin{figure}[h]
\centering
\includegraphics[scale=0.3]{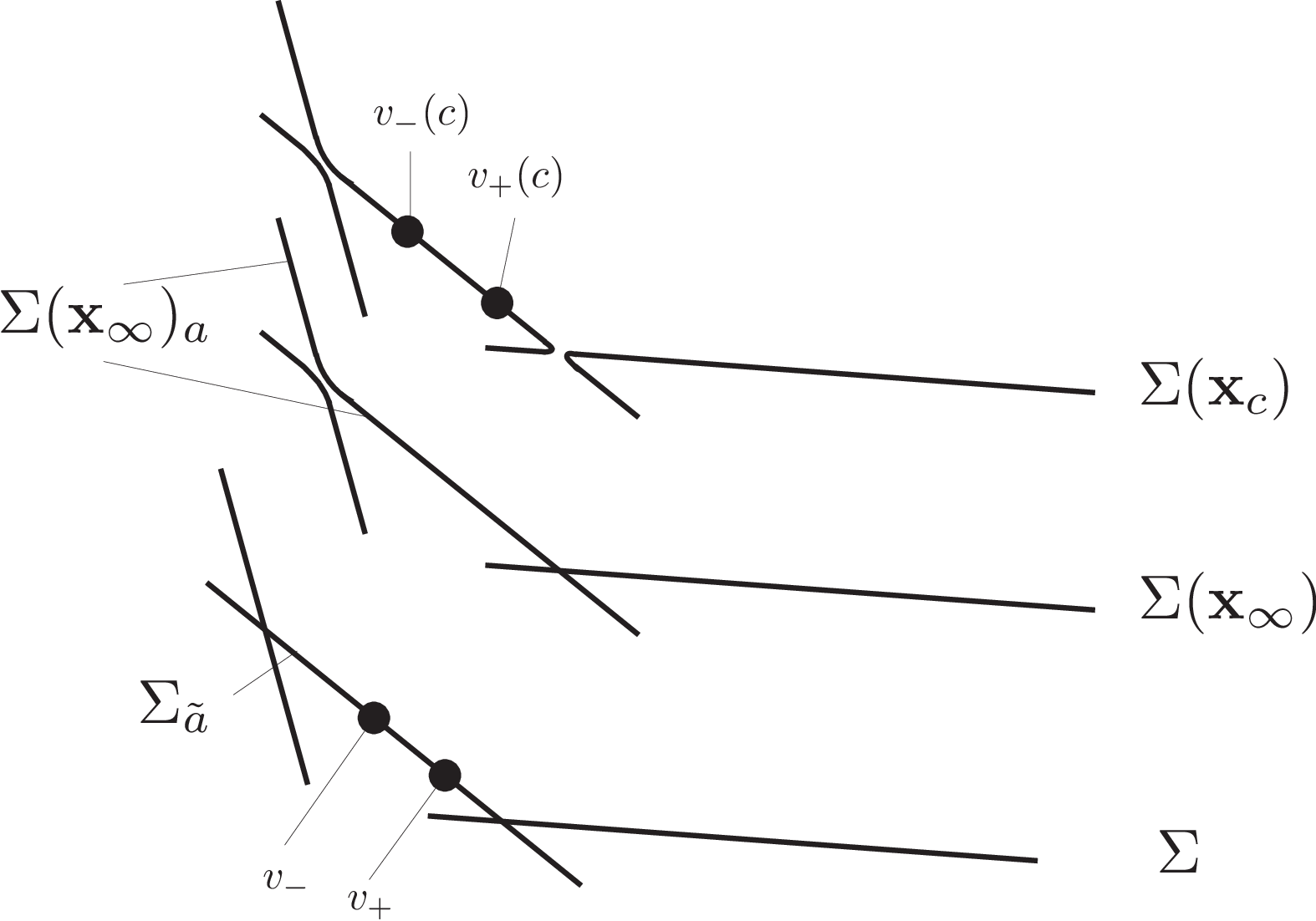}
\caption{$v_-(c)$ and $v_+(c)$}
\label{732Figure}
\end{figure}
Next we will prove that we can take a subsequence such that  there exists 
$$
u_{\infty} : \Sigma({\bf x}_{\infty}) \to X
$$
satisfying
\begin{equation}\label{form72400}
\lim_{c \to \infty} u_{c} = u_{\infty},
\end{equation}
in the following sense.
\par
The spaces $\Sigma({\bf x}_c)$ are submanifolds 
of the metric space $\widetilde{\mathcal{OB}}$, the total 
space of our universal family.
This sequence of submanifolds $\Sigma({\bf x}_c)$ converges to $\Sigma({\bf x}_{\infty})$ by Hausdorff distance
(of subsets of $\widetilde{\mathcal{OB}}$). Let $\rho_c$ be the Hausdorff distance 
between them. (Note $\lim_{c\to\infty}\rho_c = 0$.)
\par
Now (\ref{form72400}) means that
\begin{equation}\label{defform725}
\lim_{c\to\infty} 
\sup \{d_X(u_c(x),u_{\infty}(y)) \mid (x,y) 
\in \Sigma({\bf x}_c) \times \Sigma({\bf x}_{\infty}),
\,\,
d(x,y) \le 2\rho_c\} = 0.
\end{equation}
(See Figure \ref{Figure725}.)
\begin{figure}[h]
\centering
\includegraphics[scale=0.3]{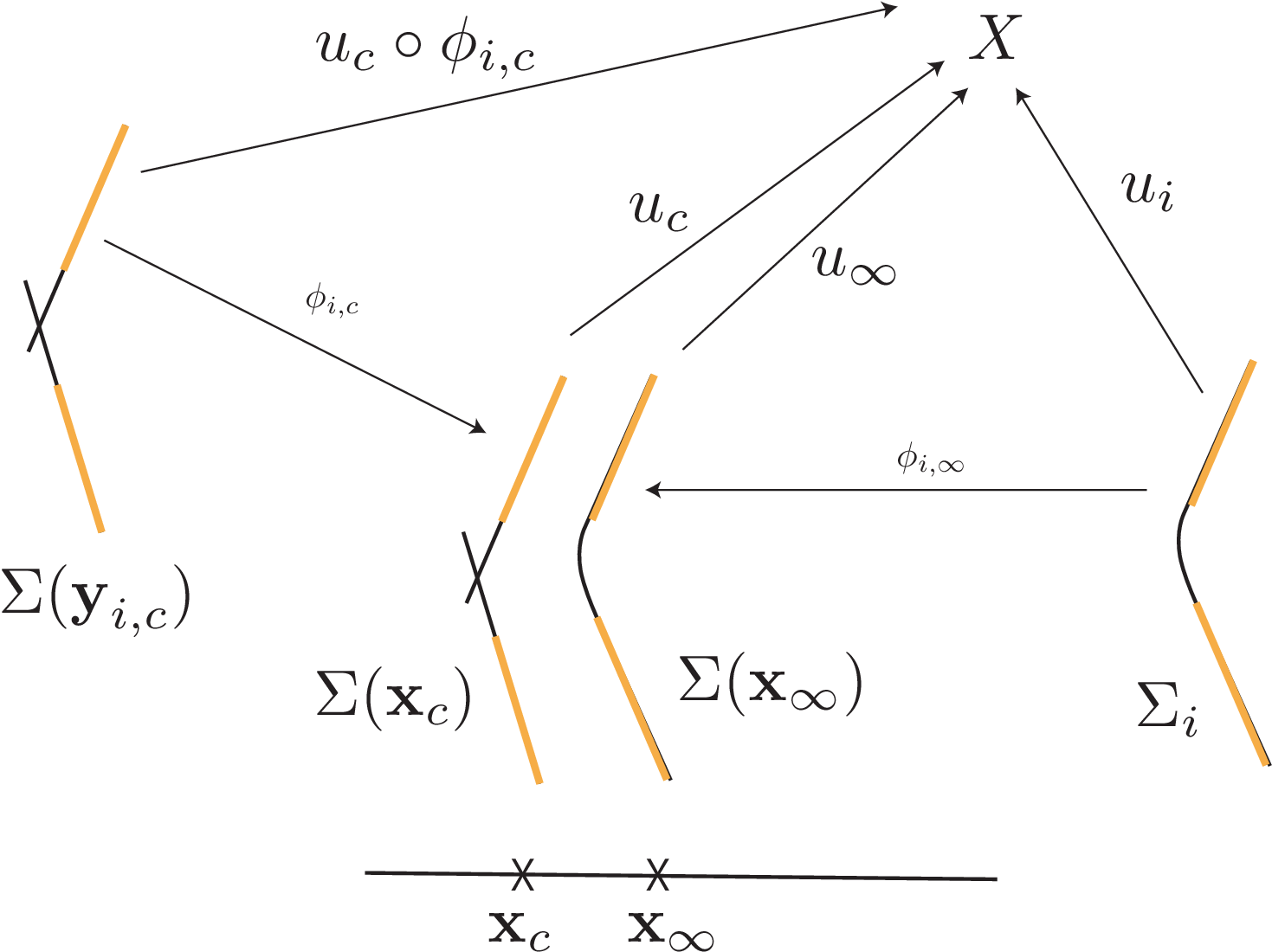}
\caption{$\lim_{c \to \infty} u_{c} = u_{\infty}$}
\label{Figure725}
\end{figure}
\par\smallskip
We now prove the existence of the limit 
$u_{\infty}$.
\par
Item (2) above, Sublemmata \ref{sublem722}, \ref{sublem726} and (\ref{form722}),(\ref{form723})
imply that, for $i=1,2$, there exists a unique isomorphism
\begin{equation}\label{formula730}
\phi_{i,\infty} : 
(\Sigma_i,\vec z_i \cup 
\vec w_i)
\cong (\Sigma({\bf x}_{\infty}),\vec z({\bf x}_{\infty})
\cup \vec w_i({\infty})).
\end{equation}
We consider 
$$
\phi_{i,c} \circ \Phi_{i,{\bf y}_{i,c},\delta_i(c)} : 
\Sigma_i(\delta_i(c)) \to \Sigma({\bf x}_c)
$$
and regard it as a map to $\widetilde{\mathcal{OB}}$.

\begin{sublem}\label{sublem7262}
There exists $\phi_{i,\infty}$ such that
$$
\lim_{c\to \infty}\phi_{i,c} \circ \Phi_{i,{\bf y}_{i,c},\delta_i(c)}
= \phi_{i,\infty}.
$$
\end{sublem}
\begin{proof}
We consider the total space of the universal deformation 
of the stable marked nodal curve $(\Sigma_i,\vec z_i \cup 
\vec w_i)$.
In this space the sequence $\vec z({\bf y}_{i,c}) \cup 
\vec w({\bf y}_{i,c})$
converges to 
$\vec z_i \cup 
\vec w_i$.
On the other hand, in the total space 
$\widetilde{\mathcal{OB}}$, 
the marked points $\vec z({\bf x}_c)
\cup \vec w_i(c)$ 
converges to 
$\vec z({\bf x}_{\infty})
\cup \vec w_i({\infty})$.
The sublemma is then an immediate
consequence of 
(\ref{formform725}), (\ref{formula730}),
the stability of $(\Sigma_i,\vec z_i \cup 
\vec w_i)$
and the fact that 
$\Phi_{i,{\bf y}_{i,c},\delta(c)}$
converges to the identity map.
\end{proof}
Item (1) above implies that
$$
\lim_{c\to\infty}\sup\{d_X((u_c \circ \phi_{i,c} \circ \Phi_{i,{\bf y}_{i,c},\delta(c)})(z),u_i(z))
\mid z \in   \Sigma_i(\delta(c)) \} = 0,
$$
and that $u_c \circ \phi_{i,c} \circ \Phi_{i,{\bf y}_{i,c},\delta(c)}$ is equicontinuous.
\par
Item (3) above implies that the diameter of the image by $u_c \circ \phi_{i,c}$
of each connected component of $\Sigma_i({\bf y}_{i,c}) \setminus 
{\rm Im}(\Phi_{i,{\bf y}_{i,c},\delta_i(c)})$ 
is smaller than $o(c)$.
\par
Therefore we can take
$$
u_{\infty} = u_i \circ (\phi_{i,\infty})^{-1}.
$$
Note $u_\infty$ is independent of $i$ since
it satisfies Formula (\ref{defform725}).
\par\smallskip
We are now in the position to complete the proof of 
Proposition \ref{prop719}.
By (\ref{defform725}) we have 
$$
((\Sigma_1,\vec z_1),u_1)
\overset{\phi_{\infty,1}}\cong 
((\Sigma({\bf x}_{\infty}),\vec z({\bf x}_{\infty})),u_{\infty})
\overset{\phi_{\infty,2}^{-1}}\cong 
((\Sigma_2,\vec z_2),u_2).
$$
This contradicts to 
$[(\Sigma_1,\vec z_1),u_1] \ne [(\Sigma_2,\vec z_2),u_2]$.
\end{proof}
\begin{rem}
In the proof of Proposition \ref{prop719} we proved 
Hausdorff-ness directly. Alternatively we can proceed as follows.
(See \cite[Section 3]{FOOOSpringer}, \cite[Section 3]{foootech2}, \cite[Part 7]{fooospectr}
and etc. 
for the definition of Kuranishi structure and good coordiante system.)
By Proposition \ref{prop716} we find a Kuranishi chart at each 
point of the $G$-orbit of $[(\Sigma,\vec z),u] \in \mathcal M_{g,\ell}((X,J);\alpha)$.
Using (the proof of) Proposition \ref{prop720} 
we can show the existence of the coordinate change and obtain a Kuranishi structure on 
the $G$-orbit of $[(\Sigma,\vec z),u] \subset \mathcal M_{g,\ell}((X,J);\alpha)$.
We take a good coordinate system compatible with it.
(See \cite[Section 11]{FOOOSpringer} etc. for the existence of such a good coordinate system.)
Then by \cite[Theorem 2.9]{foooshrink} we 
can shrink this good coordinate system
so that we obtain a Hausdorff space by gluing the Kuranishi charts which are 
members of the good coordinate system obtained by the 
above shrinking.
This will become the Kuranishi neighborhood we look for.
\footnote{Note this argument does not prove Hausdorffness of 
$U(((\Sigma,\vec z),u);\epsilon_2)$ itself. Instead it produces certain 
Hausdorff orbifold which becomes a Kuranishi neighborhood.}
\par
The proof we gave here is self-contained and does not use 
the results of \cite{foooshrink} or the existence theorem of
compatible good coordinate system.
\end{rem}
Let $({\frak W}^{(i)},\vec{\mathcal N}_{(i)})$ be strong stabilization data
at $(((\Sigma_i,\vec z_i),u_i);\epsilon,{\frak W}^{(i)})$,
for $i=1,2$.
We defined 
$V(((\Sigma_i,\vec z_i),u_i);\epsilon,({\frak W}^{(i)},\vec{\mathcal N}^{(i)}))$ in
Definition \ref{defn71474}.
\begin{prop}\label{prop720}
If $\epsilon$ is smaller than a positive number 
depending on $n$ and $\epsilon'$ is smaller 
than a positive number depending on $n$ and $\epsilon$, then
the embedding (\ref{form72020}) becomes a $C^n$ embedding
with respect to the smooth structure as 
open subsets of 
$V(((\Sigma_1,\vec z_1),u_1);\epsilon,({\frak W}^{(1)},\vec{\mathcal N}^{(1)}))/\mathcal G_1$
and
$V(((\Sigma_2,\vec z_2),u_2);\epsilon',({\frak W}^{(2)},\vec{\mathcal N}^{(2)}))/\mathcal G_2$. 
Here $\mathcal G_2= \mathcal G((\Sigma_2,\vec z_2),u_2)$,
the group of automorphisms of $((\Sigma_2,\vec z_2),u_2)$.
\end{prop}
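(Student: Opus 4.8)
The plan is to read the inclusion (\ref{form72020}) in the charts supplied by Lemma \ref{lem71533}, which identify $\mathcal U(\epsilon;(\Sigma_1,\vec z_1),u_1,\Xi^{(1)})$ and $\mathcal U(\epsilon';(\Sigma_2,\vec z_2),u_2,\Xi^{(2)})$ with neighborhoods of the base points in $V(((\Sigma_1,\vec z_1),u_1);\epsilon,\Xi^{(1)})/\mathcal G_1$ and $V(((\Sigma_2,\vec z_2),u_2);\epsilon',\Xi^{(2)})/\mathcal G_2$. Since $\mathcal G_1,\mathcal G_2$ are finite and act smoothly, it suffices to produce near any chosen point a $C^n$ local lift $\Theta : V(((\Sigma_2,\vec z_2),u_2);\epsilon',\Xi^{(2)}) \to V(((\Sigma_1,\vec z_1),u_1);\epsilon,\Xi^{(1)})$ of the inclusion, together with an injection $\mathcal G_2 \to \mathcal G_1$ intertwining the two actions. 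I would factor $\Theta$ as $({\bf v}^{(2)},{\bf x}^{(2)}) \mapsto ((\Sigma',\vec z^{\,\prime}),u') \mapsto ({\bf v}^{(1)},{\bf x}^{(1)})$, where the first arrow sends a parameter to the glued object $((\Sigma_2({\bf x}^{(2)}),\vec z_2({\bf x}^{(2)})),u_{{\bf v}^{(2)},{\bf x}^{(2)}})$ furnished by Proposition \ref{prop710} for the $\Sigma_2$-chart, and the second is the passage, via Lemma \ref{lem71533}, from this object to its $\Xi^{(1)}$-coordinates. Everything then reduces to the $C^n$-dependence of $({\bf v}^{(1)},{\bf x}^{(1)})$ on $({\bf v}^{(2)},{\bf x}^{(2)})$.

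First I would locate the added $\Xi^{(1)}$-marked points $\vec w^{\,\prime}$: as in the proof of Lemma \ref{lem71533} they are pinned down by requiring $\vec w^{\,\prime}$ near $\vec w_1$ and $u'(w^{\,\prime}_j)\in\mathcal N^{(1)}_j$. Proposition \ref{prop711} gives that ${\rm Res}(u_{{\bf v}^{(2)},{\bf x}^{(2)}})$, hence $u'$ read through the $\Xi^{(2)}$-trivialization, is a $C^n$ function of $({\bf v}^{(2)},{\bf x}^{(2)})$ once $m$ is large compared to $n$, so the transversality of Lemma \ref{lem713} for the chart of $\Sigma_1$ lets me solve $u'(w^{\,\prime}_j)\in\mathcal N^{(1)}_j$ by the implicit function theorem and conclude that $\vec w^{\,\prime}$, and with it the point ${\bf x}^{(1)} = [\Sigma',\vec z^{\,\prime}\cup\vec w^{\,\prime}] \in \mathcal V_{(1)}$, is $C^n$ in $({\bf v}^{(2)},{\bf x}^{(2)})$ — but this last step must be done with respect to the smooth structure of Definition \ref{defn712}. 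Along the directions on which no node of $\Sigma_1$ is smoothed this is the usual smooth dependence of a point of Deligne--Mumford space on the varying curve; in the neck directions the gluing coordinate $\frak s^{(1)}_{\rm e}$ of the $\Sigma_1$-chart is expressed through the corresponding $\Sigma_2$-gluing parameter composed with the coordinate-change reparametrization of $\Sigma_1$, which by the proof of Lemma \ref{lem71533} differs from an element of $\mathcal G_1$ by a quantity of size $O(e^{-cT})$ in the neck lengths, and Corollary \ref{cor79} together with Proposition \ref{prop711} absorb exactly this change of variables.

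Next I would recover $({\bf v}^{(1)},{\bf x}^{(1)})$ from Proposition \ref{prop710}(2): with ${\bf x}^{(1)}$ and the isomorphism $\phi:(\Sigma',\vec z^{\,\prime}\cup\vec w^{\,\prime}) \cong (\Sigma_1({\bf x}^{(1)}),\vec z_1({\bf x}^{(1)})\cup\vec w_1({\bf x}^{(1)}))$ in hand, there is a unique ${\bf v}^{(1)}\in\mathcal V_{\rm map}(\epsilon)$ with $u'\circ\phi = u_{{\bf v}^{(1)},{\bf x}^{(1)}}$; this ${\bf v}^{(1)}$ is the solution of ${\rm Res}(u_{{\bf v}^{(1)},{\bf x}^{(1)}}) = {\rm Res}(u'\circ\phi)$ projected to ${\rm Ker}^+ D_{u_1}\overline{\partial}$ and corrected through the implicit function theorem, using that ${\bf v}\mapsto{\rm Res}(u_{{\bf v},{\bf x}^{(1)}})$ is $C^n$ (Proposition \ref{prop711}) with ${\bf v}$-derivative at the origin equal to the injective split inclusion of ${\rm Ker}^+ D_{u_1}\overline{\partial}$ up to higher-order terms. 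This exhibits ${\bf v}^{(1)}$ as a $C^n$ function of ${\bf x}^{(1)}$ and of the $C^n$-datum $u'\circ\phi$, hence of $({\bf v}^{(2)},{\bf x}^{(2)})$. The $\mathcal G_1$- and $\mathcal G_2$-equivariance built into Propositions \ref{prop710}, \ref{prop78} and into Definition \ref{defn71}(4),(7) then supplies the homomorphism $\mathcal G_2\to\mathcal G_1$ and the equivariance of $\Theta$, so $\Theta$ descends to a $C^n$ map of the quotients; running the symmetric construction with the roles of $1$ and $2$ exchanged on the image gives a $C^n$ inverse there, so the inclusion is a $C^n$ embedding.

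The hard part will be the bookkeeping of the nested neck regions in the second paragraph. Because $\Sigma_1$ is the more degenerate curve, its nodes form a superset of those of $\Sigma_2$, so the coordinate change mixes the gluing parameters of the two charts with reparametrizations of $\Sigma_1$ that are only exponentially close to holomorphic automorphisms, and one must verify that the non-standard smooth structure of Definition \ref{defn712} is precisely the one making these exponentially small corrections smooth. This is where Corollary \ref{cor79} and Proposition \ref{prop711} — the exponential decay of the $T_{\rm e}$- and $\theta_{\rm e}$-derivatives of the obstruction frame and of the glued maps — are indispensable; modulo this point the argument runs parallel to the coordinate-change construction of \cite[Part 4]{foootech}.
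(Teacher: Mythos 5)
Your high-level strategy — recover the $\Xi^{(1)}$-coordinates from a $\Xi^{(2)}$-parametrized solution via the injectivity/surjectivity of the gluing map (Proposition \ref{prop710}) and appeal to Proposition \ref{prop711} for $C^n$-dependence — is the right family of ideas, and the factorization $({\bf v}^{(2)},{\bf x}^{(2)}) \mapsto u' \mapsto ({\bf v}^{(1)},{\bf x}^{(1)})$ is in the same spirit as the map $\mathscr J_{\frak p_1,\frak p_2;\Xi^{(1)},\Xi^{(2)};\epsilon,\epsilon'}$ the paper constructs. But the mechanism you offer for the step you yourself flag as "the hard part" is not correct as stated, and the paper's actual argument uses a structural device that is absent from your sketch.

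The paper does not attack the general pair $(\Xi^{(1)},\Xi^{(2)})$ directly. It first reduces, via Cases 1--4, to the situation where the two stabilization data are \emph{compatible}: $\vec w_2$ is obtained from $\vec w_1$ by pushing forward through the universal family and then sliding along $u_2$ so that $u_2(w_{2,j}) \in \mathcal N_j^{(1)} = \mathcal N_j^{(2)}$; the trivializations and analytic families of coordinates are chosen so that the identity (\ref{form76322}) holds. Under this compatible choice the base-level coordinate change $\psi : \mathcal V_{(2)} \to \mathcal V_{(1)}$ is a \emph{holomorphic} open embedding (coming from universality of the Deligne--Mumford family), and the ambient reparametrization $\Phi(F,{\bf x}) = (F\circ\phi\circ\Phi_{1,{\bf x}_2,\delta},\psi({\bf x}))$ of (\ref{form760rev}) is manifestly $C^\infty$: the composition with a \emph{fixed} diffeomorphism is linear, and $\psi$ is holomorphic. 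Smoothness of $\tilde{\mathscr J}$ is then read off from the commutative diagram (\ref{diagram746}), using that the $\mathcal R_{(i)}$ are $C^n$ embeddings (Lemma \ref{lemma74444}); the restriction to $V(\frak p_i;\cdot,\Xi^{(i)})$ is an open embedding by transversality as in Lemma \ref{lem713} and the equal count $\#\vec w_1 = \#\vec w_2$. No implicit-function-theorem solve for $\vec w'$ and no perturbation of the moduli coordinate is needed — the point is precisely that the compatible choice makes ${\bf x}^{(1)} = \psi({\bf x}^{(2)})$ on the nose.

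Your proposal instead tries to solve $u'(w'_j)\in\mathcal N_j^{(1)}$ for general $\Xi^{(1)},\Xi^{(2)}$, and then asserts that the resulting perturbation of ${\bf x}^{(1)}$ "differs from an element of $\mathcal G_1$ by a quantity of size $O(e^{-cT})$" and that Corollary \ref{cor79} and Proposition \ref{prop711} "absorb exactly this change of variables." This is the gap. The exponential estimates of Corollary \ref{cor79} and Proposition \ref{prop711} concern the $T_{\rm e}$- and $\theta_{\rm e}$-derivatives of the glued maps $u_{{\bf v},{\bf x}}$ and of the moving obstruction frame; they say nothing a priori about the correction term $\vec w' - \vec w''$ produced by your implicit-function-theorem step, nor about how the modified gluing coordinate $\frak s^{(1)}_{\rm e}$ of ${\bf x}^{(1)}$ depends on the $\Sigma_2$-gluing parameters once the marked points have been slid to hit $\mathcal N_j^{(1)}$. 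Establishing that this correction is $C^n$ in the non-standard smooth structure of Definition \ref{defn712} would require an additional exponential-decay argument which you do not supply — and which the paper sidesteps entirely by the compatibility reduction. (Also, the sublemma inside Lemma \ref{lem71533} only shows the reparametrization converges to an element of $\mathcal G_1$ as $\epsilon\to 0$, not that it is exponentially close in the neck length, so the $O(e^{-cT})$ claim is not supported by what you cite.) To repair the proposal you would need to either (a) add the reduction to a compatible $\Xi^{(2)}$ via the analogue of Cases 1--4, which renders the base change holomorphic and removes the need for an exponential estimate on the correction; or (b) prove a new decay estimate for the implicit-function-theorem correction itself.
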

This is  proved in \cite[Part 4]{foootech} and \cite[Section 10]{foooconstr}.
We repeat the proof in Subsection \ref{indsmoothchart} for the 
sake of completeness.
\par
We remark that by definition the restriction of the bundle 
$E(((\Sigma_1,\vec z_1),u_1);\epsilon,({\frak W}^{(1)},\vec{\mathcal N}_1))$
to 
$\mathcal U(\epsilon';(\Sigma_2,\vec z_2),u_2,{\frak W}^{(2)})$
is canonically isomorphic to the restriction of the bundle
$E(((\Sigma_2,\vec z_2),u_2);\epsilon',({\frak W}^{(2)},\vec{\mathcal N}_2))$.
\begin{lem}\label{lemprop721}
This canonical isomorphism preserves the $C^n$ structure of 
vector bundles.
\end{lem}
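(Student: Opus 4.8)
The plan is to trace through the construction of the canonical isomorphism and verify that each ingredient is smooth of class $C^n$ with bounds of the type recorded in Corollary \ref{cor79} and Proposition \ref{prop711}. Recall that on the overlap $\mathcal U(\epsilon';(\Sigma_2,\vec z_2),u_2,\Xi^{(2)})$ the fiber of either bundle at $[(\Sigma',\vec z^{\,\prime}),u')]$ is, by definition, the single vector space $E((\Sigma',\vec z^{\,\prime}),u')$; the two bundle structures differ only in the way this space is trivialized over the two charts $V(((\Sigma_i,\vec z_i),u_i);\bullet,\Xi^{(i)})/\mathcal G_i$, via the maps $I_{\hat u_i',{\bf x}^{(i)}}$ of (\ref{form711}) that carry $E((\Sigma_i({\bf x}^{(i)}),\vec z_i({\bf x}^{(i)})),u')$ to the reference Hilbert space $L^2_{m+1}(\Sigma_i(\delta);u_i^*TX\otimes \Lambda^{01})$. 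So the canonical isomorphism is the composite $I_{\hat u_2',{\bf x}^{(2)}} \circ (I_{\hat u_1',{\bf x}^{(1)}})^{-1}$ restricted to the image subspace, and the content of the lemma is that, as a function of a point in $V(((\Sigma_1,\vec z_1),u_1);\epsilon,\Xi^{(1)})/\mathcal G_1$, this composite varies $C^n$-smoothly with uniformly bounded $C^n$ norm (and with exponential decay in the gluing parameters $T_{\rm e}$).

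First I would fix a point $[(\Sigma_2,\vec z_2),u_2]$ of the overlap and the two local coordinates $({\bf v}^{(i)},{\bf x}^{(i)})$ coming from Proposition \ref{prop710}, so that the change-of-chart map between them is already known to be a $C^n$ diffeomorphism by Proposition \ref{prop720}. Next I would express the transition isomorphism on the bundle as a composition of three pieces: the identification of $E(\hat u_1',{\bf x}^{(1)})$ with $E((\Sigma',\vec z^{\,\prime}),u')$ provided by Proposition \ref{prop78}'s smooth frame $(e_1,\dots,e_d)$; the intrinsic subspace $E((\Sigma',\vec z^{\,\prime}),u')\subset C^\infty(\Sigma'(\delta);(u')^*TX\otimes\Lambda^{01})$, which is manifestly independent of the chart; and the same identification read from the $\Xi^{(2)}$ side using the frame $(e_1^{(2)},\dots,e_d^{(2)})$ of Proposition \ref{prop78} built from $\Xi^{(2)}$. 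Smoothness of each $e_i$ and $e_i^{(2)}$ in its arguments is exactly Proposition \ref{prop78} / Corollary \ref{cor79}; the exponential-decay estimate (\ref{estimate715}) transfers verbatim. The remaining point is that the coordinates $(\hat u_i',{\bf x}^{(i)})$ of one chart are $C^n$ functions of those of the other; this is guaranteed by Proposition \ref{prop711} (the resolvent maps $\mathrm{Res}(u_{{\bf v},{\bf x}})$ and their $T,\theta$-derivatives are $C^n$) together with the implicit function theorem already used to set up $\Phi_x$, $\Phi_y$ in Sublemma \ref{sublem321} and in Proposition \ref{prop720}. Composing $C^n$ maps with uniformly bounded $C^n$ norms, and noting that the $\mathcal G_1$- and $\mathcal G_2$-actions are by construction smooth (Proposition \ref{prop710}) so that everything descends to the quotient orbibundles, yields the claim.

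The main obstacle I anticipate is bookkeeping rather than a genuinely new estimate: one must make sure that the parallel-transport operator $\mathrm{Pal}$ in (\ref{form77}) and the complex-linear-part map $d^h\Phi_{1,{\bf x},\delta}$ in (\ref{formula66rev}) — the two constituents of $I_{\hat u',{\bf x}}$ — depend smoothly on $(\hat u',{\bf x})$ with the right exponential decay in the neck parameters, uniformly as $\Sigma'$ degenerates to a nodal curve. This is precisely the kind of statement Proposition \ref{prop78} was formulated to package: since $E(\hat u',{\bf x})$ is the $I_{\hat u',{\bf x}}$-image of $E((\Sigma_1({\bf x}),\vec z_1({\bf x})),u')$ and Proposition \ref{prop78} asserts smoothness with all $C^n$ bounds of a frame of $E(\hat u',{\bf x})$, the transition between two such frames (one from $\Xi^{(1)}$, one from $\Xi^{(2)}$) is automatically smooth with bounded $C^n$ norm, because both frames span the same moving subspace inside a fixed Hilbert space and the $d\times d$ matrix relating them is obtained by a (smooth) linear-algebra operation — Gram–Schmidt or solving a linear system — applied to smooth bounded inputs. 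Hence the proof reduces to assembling Propositions \ref{prop78}, \ref{prop711}, \ref{prop720} and Corollary \ref{cor79}, and I would write it in that order, ending with the observation that all constructions are $\mathcal G_i$-equivariant so the isomorphism of orbibundles over the overlap is $C^n$ as stated.
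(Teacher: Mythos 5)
Your proof is correct and follows essentially the same route as the paper: express the transition as the composite $I_{\hat u',{\bf x}} \circ (I_{\hat u'',{\bf x}'})^{-1}$ acting on the frame $e_j$, invoke Proposition~\ref{prop78} (proved in Subsection~\ref{mainprop} via the local-coordinate expressions for $\mathrm{Pal}$, $d^h\Phi_{1,{\bf x},\delta}$ and $g(\hat u',{\bf x})_*$) for $C^n$-smoothness of both factors, combine with the $C^n$ chart change from Proposition~\ref{prop720}, and descend by $\mathcal G_i$-equivariance. The only point to phrase more carefully is the clause ``both frames span the same moving subspace inside a fixed Hilbert space'': the $\Xi^{(2)}$-frame lives a priori in $L^2_{m+1}(\Sigma_2(\delta');u_2^*TX\otimes\Lambda^{01})$ and must first be carried over by the operator $I_{\hat u',{\bf x}} \circ (I_{\hat u'',{\bf x}'})^{-1}$ before the linear-algebra comparison is possible, so the smoothness of that operator family is exactly what needs to be established, not a given — but you do acknowledge this in your final paragraph, and the reduction to the Subsection~\ref{mainprop} machinery is the correct resolution.
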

The proof is also in Subsection \ref{indsmoothchart}.
\par
By Proposition \ref{prop720} and Lemma \ref{lemprop721}
we can glue $C^n$ structures to obtain a 
$C^n$ structure on $U(((\Sigma,\vec z),u);\epsilon_2)$
and on the vector bundle 
$E(((\Sigma,\vec z),u);\epsilon_2)$. \index{00E2sigmavec@$E(((\Sigma,\vec z),u);\epsilon_2)$}
(The later is obtained by gluing 
$E((\Sigma_1,\vec z_1),u_1);\epsilon,({\frak W}^{(1)},\vec{\mathcal N}_1))$.)
We can then glue the Kuranishi map $s$ and parametrization 
map $\psi$ defined for various $((\Sigma_1,\vec z_1),u_1);\epsilon,({\frak W}^{(1)},\vec{\mathcal N}^{(1)}))$
in Proposition \ref{prop716}.
\par
Thus we obtain a Kuranishi chart
\begin{equation}\label{form721}
(U(((\Sigma,\vec z),u);\epsilon_2),E(((\Sigma,\vec z),u);\epsilon_2),s,\psi),
\end{equation}
of $C^n$ class for any $n$.
\begin{lem}\label{lem735735}
The Kuranishi chart (\ref{form721}) is of $C^{\infty}$ class.
\end{lem}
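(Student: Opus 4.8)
The plan is to upgrade the $C^n$ Kuranishi chart from Proposition~\ref{prop716} and the preceding gluing results to a $C^\infty$ one by observing that the only source of finite differentiability in the construction is the loss of derivatives $m+1 \rightsquigarrow m+1-n$ in Propositions~\ref{prop78}, \ref{prop710} and \ref{prop711}, and that this loss can be absorbed by elliptic bootstrapping. First I would recall that the thickened moduli space $U(((\Sigma,\vec z),u);\epsilon_2)$ is, as a set, independent of the Sobolev exponent $m$: by elliptic regularity for the operator $D_{u'}\overline\partial$, every solution $u'$ of $\overline\partial u' \in E((\Sigma',\vec z^{\,\prime}),u')$ is automatically $C^\infty$, since the obstruction space $E((\Sigma,\vec z),u)$ consists of $C^\infty$ sections (Definition~\ref{defn42}) and hence so does $E((\Sigma',\vec z^{\,\prime}),u')$, by construction via the complex-linear parallel transport maps $I_{{\bf x}_0,\phi_0;((\Sigma',\vec z^{\,\prime}),u')}$. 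Therefore the underlying topological space and the section $s$ do not change if we replace $L^2_{m+1}$ by $L^2_{m'+1}$ for any $m' \ge m$.

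Next I would run the construction of Subsections~\ref{smoothchart1}--\ref{smoothchart15} for a sequence $m(1) < m(2) < \cdots \to \infty$ of Sobolev exponents, producing for each $n$ a Kuranishi neighborhood of $C^n$ class as in Proposition~\ref{prop716}, with chart $V(((\Sigma_1,\vec z_1),u_1);\epsilon_{(n)},\Xi)/\mathcal G_1$. The key point is that the smooth structures obtained for different values of $n$ are compatible: the coordinate changes between the $m(n)$-chart and the $m(n')$-chart are given by the identity map on the underlying set (both parametrize the same solutions $u_{{\bf v},{\bf x}}$ via the same formula), and by Propositions~\ref{prop710} and \ref{prop711} this identity is of class $C^{\min(n,n')}$ in both directions on the common domain, with the exponential decay estimates of Corollary~\ref{cor79} controlling the transition near the nodal strata uniformly. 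Hence the charts for all $n$ define one and the same germ of $C^\infty$ structure near each point; gluing over the finitely many charts covering the $G$-orbit (using the already-established $C^n$-compatibility from Proposition~\ref{prop720} and Lemma~\ref{lemprop721} for every $n$) yields a structure that is $C^n$ for all $n$, i.e. $C^\infty$. The same argument applied to the total space of $E(((\Sigma,\vec z),u);\epsilon_2)$ and to the section $s$ and parametrization $\psi$ shows these are $C^\infty$ as well.

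The main obstacle is bookkeeping: one must check that shrinking the neighborhood $\epsilon_{(n)}$ as $n$ increases (forced by Proposition~\ref{prop716}) does not prevent the resulting germs from agreeing, and that the evaluation-map transversality used to cut down $\mathcal V_{\rm map}(\epsilon)\times\mathcal V_{(1)}(\epsilon)$ to $V(((\Sigma_1,\vec z_1),u_1);\epsilon,\Xi)$ in Lemma~\ref{lem713} is stable in $n$ — this is immediate since the transversality statement is proved at the origin, where the relevant map $\vec w^{\,\prime}_1 \mapsto (u_1(w'_{1,j}))_j$ is manifestly $C^\infty$ and independent of $m$. The one genuinely analytic input, namely that $\mathrm{Res}(u_{{\bf v},{\bf x}})$ is $C^n$ into $L^2_{m+1-n}$ with the stated decay, is exactly Proposition~\ref{prop711} (from \cite[Theorem 6.4]{foooexp}), and since we are free to take $m = m(n)$ arbitrarily large, every finite order of differentiability is achieved on a fixed small neighborhood. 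I expect the proof to conclude by simply invoking that a structure which is $C^n$ for every $n$, with compatible charts, is $C^\infty$, together with the observation that all the objects ($U$, $\mathcal E$, $s$, $\psi$, and the $G$-action, whose smoothness is addressed in Subsection~\ref{smoothchart2}) are built from the same data and inherit $C^\infty$-ness verbatim.
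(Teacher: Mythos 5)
Your high-level plan — raise the Sobolev exponent $m$ to raise the differentiability class, note the solution set is independent of $m$ by elliptic regularity, and then appeal to the $C^n$-compatibility already established — is the right strategy, and you have correctly identified the crux: the chart $\mathcal R_{(1),m}$ is only a $C^n$ embedding for $m>n+10$ on a neighborhood of radius $\epsilon_m$, and $\epsilon_m\to 0$ as $m\to\infty$. But you dismiss this obstacle as ``immediate'' and invoke transversality at the origin, which does not address it at all: transversality is not the issue, the issue is that the $C^n$ statement for large $n$ only holds on a shrinking domain, so it is not clear that ``every finite order of differentiability is achieved on a fixed small neighborhood.'' Your phrase ``gluing over the finitely many charts covering the $G$-orbit'' is also pointing in a different direction from what is actually needed, which is to cover a \emph{single} chart $\mathcal V_{\rm map}^{(1)}(\epsilon_0)\times\mathcal V_{(1)}(\epsilon_0)$ by small $C^n$ charts centered at its various \emph{interior} points.

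The missing step is the argument in the proof of Lemma~\ref{lem763}. Fix $\epsilon_0<\epsilon_{10}$ once and for all. At an arbitrary interior point $\frak p_2$ of the fixed chart one chooses a stabilization data $\Xi^{(2)}$ at $\frak p_2$ (as in Case~5 of Proposition~\ref{prop720}) and compares $\mathcal R_{(1),10}$ with $\mathcal R_{(2)}$, where the latter works in $L^2_{m+1}$ for large $m$ but only on a small $\epsilon_m$-neighborhood of $\frak p_2$. The reason the regularity transfers back to the fixed chart is that the coordinate change $\Phi(F,{\bf x})=(F\circ\phi\circ\Phi_{1,{\bf x}_2,\delta},\psi({\bf x}))$ between the ambient Banach manifolds is of $C^\infty$ class, because $F\mapsto F\circ\phi\circ\Phi_{1,{\bf x}_2,\delta}$ is a bounded \emph{linear} map $L^2_{m+1}\to L^2_{11}$ (or $C^k$) and $\psi$ is holomorphic. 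This is what lets one conclude that the image of $\mathcal R_{(1),10}$ is a $C^n$ submanifold near $\frak p_2$ for every $n$, hence $C^\infty$, on the fixed $\epsilon_0$-neighborhood. A further subtlety you miss is that the resulting $C^\infty$ structure is a \emph{new} structure on $\mathcal V^{(1)}_{\rm map}(\epsilon_0)\times\mathcal V_{(1)}(\epsilon_0)$ that need not coincide with the direct-product structure of Definition~\ref{defn712} away from the center; the paper then has to check separately that the cut-out locus $V(\frak p_1;\epsilon_0,\Xi^{(1)})$, the coordinate changes $\mathscr J_{\frak p_1,\frak p_2;\Xi^{(1)},\Xi^{(2)}}$, the bundle $\frak E$, and the section $s$ are all $C^\infty$ with respect to this new structure, again by the same ``charts at various interior points plus linear transition'' argument, plus the explicit local formula~(\ref{shikifors}) for $s$. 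Your proposal asserts these conclusions but does not supply the mechanism.
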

This is proved in \cite[Section 26]{foootech}, \cite[Section 12]{foooconstr}.
We repeat the proof in Subsection \ref{subsec:Cinfinity} for the 
sake of completeness.
\par
We finally prove:
\begin{lem}\label{lem72727}
The Kuranishi chart (\ref{form721}) is $G$-equivariant.
\end{lem}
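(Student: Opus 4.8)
The plan is to show that the $G$-action on $U(((\Sigma,\vec z),u);\epsilon_2)$ (defined in Definition \ref{defn61616} by $h\cdot((\Sigma',\vec z^{\,\prime}),u') = ((\Sigma',\vec z^{\,\prime}),hu')$) is a smooth action of the Lie group $G$ on the orbifold we have just constructed, that it lifts to the bundle $E(((\Sigma,\vec z),u);\epsilon_2)$, and that the section $s$ and parametrization $\psi$ are equivariant. Most of the individual statements are already in hand: Lemma \ref{lem613233} gives $E((\Sigma',\vec z^{\,\prime}),hu') = h_*E((\Sigma',\vec z^{\,\prime}),u')$, which is exactly what is needed for the bundle lift; the section $\overline{\partial}u'$ transforms by $h_*$ under $u'\mapsto hu'$ because $J$ is $G$-invariant and $h$ is a biholomorphism of $X$, so $s$ is $G$-equivariant; and $\psi$, which sends a zero $[(\Sigma',\vec z^{\,\prime}),u')]$ to its class in $\mathcal M_{g,\ell}((X,J);\alpha)$, is manifestly equivariant since the $G$-action on the moduli space is likewise $u'\mapsto hu'$. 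So the genuine content is smoothness of the action near a point of the $G$-orbit of $[(\Sigma,\vec z),u]$, i.e. that for each $h_0\in G$ and each $((\Sigma_1,\vec z_1),u_1)$ in our chart, the map $(h,((\Sigma_1,\vec z_1),u_1))\mapsto ((\Sigma_1,\vec z_1),hu_1)$ is smooth in the $C^n$ structures built in Subsections \ref{smoothchart1}--\ref{smoothchart2}.

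The key step, and the one where the earlier remark at the head of Section \ref{sec:decay} is essential, is that the smooth structure on $U(((\Sigma,\vec z),u);\epsilon_2)$ is \emph{intrinsic}: by Proposition \ref{prop719} its topology and (after Proposition \ref{prop720}, Lemma \ref{lemprop721}) its $C^n$-structure are independent of the choice of stabilization data $\Xi$. Therefore to check smoothness of $h\mapsto hu_1$ near a given $((\Sigma_1,\vec z_1),u_1)$ I may compute in \emph{any} convenient chart at the target point $((\Sigma_1,\vec z_1),h_0u_1)$. Concretely I would fix a stabilization data $\Xi^{(1)}$ at $((\Sigma_1,\vec z_1),u_1)$ with codimension-$2$ submanifolds $\mathcal N_j$ transversal to $u_1$; then for $h$ near $h_0$ the translated submanifolds $h\mathcal N_j$ together with the same marked points $\vec w_1$ and the same trivialization/coordinate data give a stabilization data $h_*\Xi^{(1)}$ at $((\Sigma_1,\vec z_1),h u_1)$, depending smoothly on $h$. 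In the chart $V(((\Sigma_1,\vec z_1),u_1);\epsilon,\Xi^{(1)})/\mathcal G_1$ the coordinates are $({\bf v},{\bf x})$ with $u_{{\bf v},{\bf x}}$ the glued solution; applying $h$ simply replaces $u_{{\bf v},{\bf x}}$ by $h\circ u_{{\bf v},{\bf x}}$, which by Lemma \ref{lem613233} again solves $\overline\partial(h u_{{\bf v},{\bf x}})\in E((\Sigma_1({\bf x}),\vec z_1({\bf x})),h u_{{\bf v},{\bf x}})$ with the same domain parameter ${\bf x}$ and (because the linearized operator and $\mathrm{Ker}^+$ transform by $h_*$) the same essential map parameter ${\bf v}$; the evaluation condition ${\rm EV}_{w_{1,j}}\in h\mathcal N_j$ is then automatic. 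Thus in these matched charts the $G$-action is expressed by $(h,({\bf v},{\bf x}))\mapsto ({\bf v},{\bf x})$ composed with the smooth identification $h_*\Xi^{(1)}\rightsquigarrow \Xi^{(1)}$-chart, and smoothness in $h$ follows from Propositions \ref{prop710}, \ref{prop711} (which are $C^n$ for all $n$ and, by Lemma \ref{lem735735}, $C^\infty$) together with the smooth dependence of $h_*$ on $h$ as an automorphism of $(X,J)$ — here we use that $G$ acts on $X$ by smooth diffeomorphisms and that the connection used to define the parallel transport in $I_{\hat u',{\bf x}}$ was chosen $G$-equivariant (Section \ref{sec:main}, around (\ref{formula65})).

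The main obstacle I anticipate is purely bookkeeping: verifying that the chart-comparison isomorphism of Proposition \ref{prop720} applied between the $\Xi^{(1)}$-chart at $((\Sigma_1,\vec z_1),u_1)$ and the $h_*\Xi^{(1)}$-chart at $((\Sigma_1,\vec z_1),hu_1)$ carries the naive translation $u'\mapsto hu'$ to a genuinely $C^n$ (indeed $C^\infty$) map of the parameter spaces, uniformly for $h$ in a neighborhood of $h_0$, and that this is compatible across overlapping charts so that it glues to a smooth action on all of $U(((\Sigma,\vec z),u);\epsilon_2)$; once this is set up, the lift to $E(((\Sigma,\vec z),u);\epsilon_2)$ is forced by Lemma \ref{lem613233}, and the equivariance of $s$ and $\psi$ is immediate as explained above. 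I would finish by noting that the $G$-action on the orbifold is by orbifold diffeomorphisms in the sense of the definition preceding Theorem \ref{thm54} (the local expressions $f_{p,h}$ being exactly the chart-comparison maps just described, with $h_{p,h}$ an isomorphism of the relevant finite automorphism groups $\mathcal G_1$), which is precisely the content of a smooth $G$-action, completing the proof of Proposition \ref{prop616} and hence of Theorem \ref{thm54}.
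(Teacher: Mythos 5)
Your proposal is correct and follows essentially the same route as the paper: fix a stabilization data $\Xi$ at $((\Sigma_1,\vec z_1),u_1)$, translate only the submanifolds $\mathcal N_j$ by $g$ to get a stabilization data at $((\Sigma_1,\vec z_1),gu_1)$, and then use Lemma \ref{lem613233} together with $gu_{{\bf v},{\bf x}}=u_{g_*{\bf v},{\bf x}}$ to exhibit the chart map $({\bf v},{\bf x})\mapsto(g_*{\bf v},{\bf x})$, which is smooth because $g_*$ is linear on ${\rm Ker}^+$ and the gluing data are $C^\infty$. Your phrasing "the same essential map parameter ${\bf v}$" is a slightly informal version of the paper's explicit $(g_*{\bf v},{\bf x})$, and the remaining points (bundle lift, equivariance of $s$ and $\psi$) match the paper.
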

\begin{proof}
Let $((\Sigma_1,\vec z_1),u_1) \in U(((\Sigma,\vec z),u);\epsilon_2)$.
We take 
its strong stabilization data $({\frak W},\vec{\mathcal N})$ as in Definition \ref{defn720}.
Note all the data in ${\frak W}$ are independent of $u_1$.
Therefore we can define $g{\frak W}$ for $((\Sigma_1,\vec z_1),gu_1)$
so that it is the same as ${\frak W}$. We replace $\mathcal N_i$
by $g\mathcal N_i$ to define $g\vec{\mathcal N}$
\par
Then there exists an isomorphism
$$
V(((\Sigma_1,\vec z_1),u_1);({\frak W},\vec{\mathcal N})) \cong V(((\Sigma_1,\vec z_1),gu_1);(g{\frak W},g\vec{\mathcal N}))
$$
sending $({\bf v},{\bf x})$ to $(g_*{\bf v},{\bf x})$.
Note ${\bf v}$ is an element of  ${\rm Ker}^+ D_{u_1}\overline{\partial}$
defined in (\ref{form715}).
Therefore $g_*{\bf v}$ is an element of  ${\rm Ker}^+ D_{gu_1}\overline{\partial}$.
This is because
$$
g_*E((\Sigma_1,\vec z_1),u_1) = E((\Sigma_1,\vec z_1),gu_1).
$$
(Lemma \ref{lem613233}.)
\par
Furthermore, the gluing construction of $u_{{\bf v},{\bf x}}$
is invariant of $G$ action. Nameley: 
$$
gu_{{\bf v},{\bf x}} = u_{g{\bf v},{\bf x}}.
$$
Therefore 
$((\Sigma',\vec z^{\,\prime}),u') \mapsto ((\Sigma',\vec z^{\,\prime}),gu')$
defines a smooth map 
from a neighborhood of $[(\Sigma_1,\vec z_1),u_1]$ in 
$U((\Sigma,\vec z),u);\epsilon_2)$
to a neighborhood of $[(\Sigma_1,\vec z_1),gu_1]$
in $U(((\Sigma,\vec z),u);\epsilon_2)$.
Thus $G$ action is a smooth action on 
$U(((\Sigma,\vec z),u);\epsilon_2)$.
\par
Smoothness of the $G$-action on the obstruction bundle
can be proved in the same way.
$G$-equivariance of $s$ and $\psi$ is obvious from the definition.
\end{proof}
The proof of Proposition \ref{prop616} except the parts 
deferred to Subsections \ref{mainprop}, \ref{indsmoothchart} 
and \ref{subsec:Cinfinity} is now complete.
\qed

\subsection{Exponential decay estimate of obstruction bundle}
\label{mainprop}

In this section we prove Proposition \ref{prop78}.
\par
We consider the set $\mathcal V_{(1)} = \mathcal V_{1,0} \times \mathcal V_{1,1}$.
It is a set of $(\vec x,\vec{\rho})$ where $\vec{\rho}\in \mathcal V_{1,1}$ is the parameter 
to smooth the node of $(\Sigma_1,\vec z_1 \cup \vec w_1)$ and 
$\vec x = (x_{a})_{a\in \mathcal A_s} \in \mathcal V_{1,0}$ is the parameter to deform 
the complex structure of each irreducible component of 
$(\Sigma_1,\vec z_1 \cup \vec w_1)$.
It comes with the universal family 
$\pi_{(1)} : \mathcal C_{(1)} \to \mathcal V_{(1)}$.
(See (\ref{familyon1}).)
For each ${\bf x} \in \mathcal V_{(1)}$ its fiber together with 
marked points is written as $(\Sigma_1({\bf x}),\vec z_1({\bf x}) 
\cup \vec w_1({\bf x}))$.
\par
We assumed that $((\Sigma_1,\vec z_1),u_1)$ is $G$-$\epsilon_2$-close to 
$((\Sigma,\vec z),u)$. We consider the universal family of deformation 
of $(\Sigma,\vec z)$. Suppose that the universal family 
is obtained from $\pi : \mathcal C \to \mathcal V$, which is a holomorphic map 
between complex manifolds and has nodal curves as fibers, 
by Construction \ref{const2124}.
(See the proof of Theorem \ref{them35} in Subsection \ref{unideformationexi}.)
\begin{lem}
There exist holomorphic maps
$$
\tilde\psi : \mathcal C_{(1)} \to \mathcal C, \qquad
\psi : \mathcal V_{(1)} \to \mathcal V
$$
with the following properties.
\begin{enumerate}
\item
The next diagram commutes
\begin{equation}\label{diagram728888}
\begin{CD}
\mathcal C_{(1)} @ >{\tilde\psi}>> \mathcal C,\\
@ VV{\pi_{(1)}}V @ VV{\pi}V\\
\mathcal V_{(1)} @ >{\psi}>> \mathcal V,
\end{CD}
\end{equation}
and is cartesian.
\item
The next diagram commutes for $j=1,\dots,\ell$.
\begin{equation}\label{diagram7288882}
\begin{CD}
\mathcal C_{(1)} @ >{\tilde\psi}>> \mathcal C,\\
@ AA{\frak T_j}A @ AA{\frak t_j}A\\
\mathcal V_{(1)} @ >{\psi}>> \mathcal V,
\end{CD}
\end{equation}
Here $\frak T_j$ and $\frak t_j$ are sections 
which assign the marked points.
\item
$\tilde \psi$ and $\psi$ are $\mathcal G_1$ equivariant.
\end{enumerate}
\end{lem}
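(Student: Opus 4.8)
The plan is to construct $\psi$ and $\tilde\psi$ by invoking the universality of the deformation $\mathcal G$ of $(\Sigma,\vec z)$ built in Theorem \ref{them35}, applied to the family $(\widetilde{\frak G}_{(1)},\mathscr F_{(1)},{\frak G}_{(1)})$ obtained from $\pi_{(1)} : \mathcal C_{(1)} \to \mathcal V_{(1)}$ via Constructions \ref{const212} and \ref{const2124}. First I would note that because $((\Sigma_1,\vec z_1),u_1)$ is $\epsilon_2$-$G$-close to $((\Sigma,\vec z),u)$, the central fiber $(\Sigma_1,\vec z_1)$ is isomorphic (after forgetting $\vec w_1$) to a fiber $(\Sigma({\bf x}_1),\vec z({\bf x}_1))$ of the universal family over a point ${\bf x}_1 \in \mathcal V$ near $o$; fix such an isomorphism $\iota_1$. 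Restricting $\mathcal V_{(1)}$ to a small enough neighborhood of its base point, $(\widetilde{\frak G}_{(1)},\mathscr F_{(1)},{\frak G}_{(1)},\vec{\frak T}_{(1)})$ is a family of nodal curves of genus $g$ with $\ell$ marked points whose fiber over the base point is $(\Sigma_1,\vec z_1)$. Equip it with the base point and the identification $\iota_1$ to make it a deformation of $(\Sigma,\vec z)$ in the sense of Definition \ref{def218}. (The only subtlety is that $\mathcal V_{(1)}$ carries the extra marked points $\vec w_1$; as in the proof of existence of the universal family and in Lemma \ref{lem315}, one forgets these, using stability of $(\Sigma_1,\vec z_1\cup\vec w_1)$ so that the forgetful map is a submersion and the resulting object is genuinely a family of nodal curves with just $\ell$ marked points.)

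Next I would apply Definition \ref{defn34} (2): since $\mathcal G$ is a universal deformation of $(\Sigma,\vec z)$ and the family just described is a deformation of $(\Sigma,\vec z)$ which is a family of nodal curves, there exists a morphism $(\widetilde{\mathcal H},\mathcal H)$ from it to $\mathcal G$. Unwinding the definitions of morphism of families of complex analytic spaces: the object part $\mathcal H_{ob}$ is exactly a holomorphic map $\psi : \mathcal V_{(1)} \to \mathcal V$ (using $\mathcal{OB} = \mathcal V$, $\widetilde{\mathcal{OB}} = \mathcal C$), and $\widetilde{\mathcal H}_{ob}$ is a holomorphic map $\tilde\psi : \mathcal C_{(1)} \to \mathcal C$; the condition that the square of object spaces is cartesian (Item (2) of the definition of morphism of families) is precisely the cartesian condition on diagram (\ref{diagram728888}), and the commutation of $\psi, \tilde\psi$ with $\pi_{(1)}, \pi$ is Item (1) together with the defining square (\ref{diagram212}). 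Compatibility with the marked-point sections $\vec{\frak T}$, i.e. diagram (\ref{diagram7288882}), is condition (3) in the definition of a morphism of marked families.

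For the $\mathcal G_1$-equivariance in Item (3), recall that $\mathcal G_1 = \mathcal G((\Sigma_1,\vec z_1),u_1)$ acts on $\mathcal V_{(1)}$ and $\mathcal C_{(1)}$ by permuting $\vec w_1$ (as set up after Definition \ref{defn71} and in the $\mathcal G_1$-action on $\mathcal V_{(1)}$). The point is that this action covers the trivial action on the $\vec w$-forgotten family, so $\psi$ and $\tilde\psi$, being determined by universality applied to that forgotten family, are automatically $\mathcal G_1$-invariant up to conjugation; the residual ambiguity is killed by Lemma \ref{lem37} (minimality at the base point) exactly as in the uniqueness argument of Theorem \ref{them35}, so after shrinking $\mathcal V_{(1)}$ we may take $\psi, \tilde\psi$ strictly $\mathcal G_1$-equivariant. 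Concretely: for $v \in \mathcal G_1$, both $(\tilde\psi, \psi)$ and $(\tilde\psi \circ v, \psi \circ v)$ are morphisms of families inducing the same morphism on the forgotten family, hence are conjugate by Definition \ref{defn34} (3), hence equal on a neighborhood of the base point by Lemmata \ref{lem37} and \ref{lem218}.

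The main obstacle I expect is bookkeeping rather than conceptual: one must check carefully that the family over $\mathcal V_{(1)}$, after forgetting $\vec w_1$, really does satisfy the hypotheses of Definition \ref{defn34} (2) — in particular that it is a \emph{family of nodal curves} in the sense required (fibers nodal, $\ell$ regular distinct marked sections) and that the base-point/$\iota$ data make it a deformation of $(\Sigma,\vec z)$ — and that the cartesian and equivariance conditions in the target statement match, term by term, the cartesian and equivariance conditions packaged in "morphism of (marked) families of complex analytic spaces parametrized by $\frak G$." None of this requires new analysis; it is an application of Theorem \ref{them35} and an inspection of the definitions in Section \ref{sec:groupoid}.
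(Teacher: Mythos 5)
The core structure is right (forget the extra marked points $\vec w_1$, then invoke universality of the family $\mathcal C\to\mathcal V$), and the unwinding of the definitions of morphism of marked families into the cartesian and marked-point conditions is correct. But there is a genuine gap in the middle step.

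You write: ``Equip it with the base point and the identification $\iota_1$ to make it a deformation of $(\Sigma,\vec z)$.'' This does not work as stated. After forgetting $\vec w_1$, the central fiber of the family over $\mathcal V_{(1)}$ is $(\Sigma_1,\vec z_1)$, and your $\iota_1$ identifies it with $(\Sigma({\bf x}_1),\vec z({\bf x}_1))$ for some ${\bf x}_1\in\mathcal V$ near $o$ --- \emph{not} with $(\Sigma,\vec z)=(\Sigma(o),\vec z(o))$. A ``deformation of $(\Sigma,\vec z)$'' in the sense of Definition~\ref{def218} requires a biholomorphism $\iota$ from $(\Sigma,\vec z)$ itself onto the central fiber, and Theorem~\ref{them35} / Definition~\ref{defn34}(2) only hands you a classifying morphism into $\mathcal G$ for such objects, with central fiber sent to $o$. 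In general $(\Sigma_1,\vec z_1)$ is a different nodal curve from $(\Sigma,\vec z)$, so the forgotten family is a deformation of $(\Sigma_1,\vec z_1)$, not of $(\Sigma,\vec z)$, and you cannot directly invoke the universality of $\mathcal G$ at $o$ to produce a map into $\mathcal V$.

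This is exactly the distinction the paper makes, and is why the paper's proof has two steps: first it treats the case $(\Sigma_1,\vec z_1)=(\Sigma,\vec z)$ (here your argument is fine, since ${\bf x}_1=o$), and then it reduces the general case to this one using Sublemma~\ref{sublem321}, the local product structure $\Phi_x: W_x\times\mathcal V_x\to\mathcal V$, $\widetilde\Phi_x:W_x\times\mathcal C_x\to\mathcal C$ near an arbitrary fiber. Concretely, by universality of the deformation $\mathcal C_{{\bf x}_1}\to\mathcal V_{{\bf x}_1}$ of $(\Sigma_1,\vec z_1)$ you get a classifying map $\mathcal V_{(1)}\to\mathcal V_{{\bf x}_1}$, and then you include via $\mathcal V_{{\bf x}_1}\cong\{0\}\times\mathcal V_{{\bf x}_1}\hookrightarrow W_{{\bf x}_1}\times\mathcal V_{{\bf x}_1}\overset{\Phi_{{\bf x}_1}}{\to}\mathcal V$; similarly on the level of total spaces. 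Without Sublemma~\ref{sublem321} (or an equivalent argument), your appeal to Theorem~\ref{them35} does not go through. Your sketch of the $\mathcal G_1$-equivariance via conjugacy and minimality is plausible once the classifying maps exist, but it sits downstream of this missing step.
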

\begin{proof}
By forgetting $\frak t_j$ for $j=\ell+1,\dots,\ell+k$ 
(namely the marked points $\vec{w}({\bf x})$),
the family $\mathcal C_{(1)} \to \mathcal V_{(1)}$ becomes a
deformation of $(\Sigma_1,\vec z_1)$.
Therefore in case $(\Sigma_1,\vec z_1) = (\Sigma,\vec z)$
the lemma is a consequence of the universality of 
$\pi : \mathcal C \to \mathcal V$.
\par
The general case can be reduced to the case $(\Sigma_1,\vec z_1) = (\Sigma,\vec z)$
by using Sublemma \ref{sublem321}.
\end{proof}
\begin{rem}\label{rem741}
We use the universality in Theorem \ref{them35} here.
We remark that here the universality we use is one for the complex analytic family.
A similar universality for $C^{\infty}$ family also holds. This is a part of classical 
theory by Kodaira-Spencer in the case of non-singular curves. 
In the general case where curves are nodal, one can prove it, for example, by working out the 
study of a neighborhood of a point of Deligne-Mumford moduli space that corresponds 
to a nodal curve by an analytic method of gluing. (See for example \cite[Section 25]{fooo:techI}
and compare \cite[Remark 8.36]{foooexp}.)
\end{rem}
We are given a map $u_1 : \Sigma_1 \to X$. 
Note that $\Sigma_1 = \Sigma({\bf x}_1)$. 
We regard it as a subset $\pi^{-1}({\bf x}_1)$ of the total space $\mathcal C$ of the universal family
and extends it to a smooth map $F$ from a neighborhood of 
$\pi^{-1}({\bf x}_1)$ in  $\mathcal C$. Let $F_1 = F\circ \tilde \psi$ be its composition 
with $\tilde \psi$.
For ${\bf x} \in \mathcal V_{(1)}$ such that $\psi({\bf x})$ is close to ${\bf x}_1$, 
we denote the restriction of 
$F_1$ to $\pi^{-1}({\bf x})$ by $u_{{\bf x}} : \Sigma_1({\bf x}) \to 
X$.

\begin{defn}\label{defn7422}
For $\epsilon_4 > 0$, 
we define 
$\mathcal W(\epsilon_4)$ as follows:\index{00W3EPSILONi@$\mathcal W(\epsilon_4)$}
\begin{equation}
\mathcal W(\epsilon_4)
=
\bigcup_{{\bf x} \in \mathcal V_{(1)}}
\mathcal W(\epsilon_4;\psi({\bf x}),(\tilde{\psi}\vert_{
\Sigma_1({\bf x})})^{-1};((\Sigma_1({\bf x}),\vec z_1({\bf x})),u_{{\bf x}}))
\times \{{\bf x}\}.
\end{equation}
See Definition \ref{defn63} for the notation appearing in the right hand side.
Note that $(\tilde{\psi}\vert_{
\Sigma_1({\bf x})})^{-1} : \Sigma(\psi({\bf x})) \to \Sigma_1({\bf x})$ is an 
isomorphism which plays the role of $\phi_0$ in Definition \ref{defn63}.
\par
We define 
${\rm Pro} : \mathcal W(\epsilon_4) \to \mathcal V_{(1)}$ by 
assigning $\bf x$ to all the elements of the subset
$\mathcal W(\epsilon_4;\psi({\bf x}),(\tilde{\psi}\vert_{
\Sigma_1({\bf x})})^{-1};((\Sigma_1({\bf x}),\vec z_1({\bf x})),u_{{\bf x}}))
\times \{{\bf x}\}$.
\par
In a similar way as Definition \ref{defn686861} we define a 
right $\widehat{\mathcal G}_c$
action on $\mathcal W(\epsilon_4)$ as follows. 
Let $(\varphi,g,{\bf x}) \in \mathcal W(\epsilon_4)$ and $\upsilon = (\gamma,h) \in \widehat{\mathcal G}_c$.
We have ${\rm Pr}_t(\varphi) = \psi({\bf x})$.
We put $\overline{\bf y} = {\rm Pr}_s(\varphi)$.
Then $\gamma \in \mathcal{MOR}$
induces an isomomrphism $\gamma_* : (\Sigma(\overline{\bf y}),\vec z(\overline{\bf y})) 
\cong (\Sigma(\psi({\bf x})),\vec z(\psi({\bf x}))$.
We put $\vec w' = \gamma_*^{-1}(\tilde{\psi}\vert_{
\Sigma_1({\bf x})}(\vec w(\bf x)))$.
There exists a unique ${\bf y}$ such that
$$
(\Sigma(\overline{\bf y}),\vec z(\overline{\bf y}),\vec w')
\cong (\Sigma_1({\bf y}),\vec z_1({\bf y}),\vec w_1({\bf y})).
$$
In particular $\psi({\bf y}) = \overline{\bf y}$. 
We now put 
\begin{equation}\nonumber
\upsilon(\varphi,g,{\bf x})
= (\varphi \circ \gamma_*,gh,{\bf y}).
\end{equation}
The $\widehat{\mathcal G}_c$ action is free and smooth.
We denote 
$$
\overline{\mathcal W}(\epsilon_4) = \mathcal W(\epsilon_4)/\widehat{\mathcal G}_c.
$$
${\rm Pro}$ induces a map 
$\overline{\rm Pro} : \overline{\mathcal W}(\epsilon_4) \to \mathcal V_{(1)}$.
\index{00P1roover@$\overline{\rm Pro}$}
\end{defn}
\begin{lem}
$\mathcal W(\epsilon_4)$ has a structure of complex manifold 
and $\overline{\rm Pro}$ is a submersion.
\end{lem}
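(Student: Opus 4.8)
The plan is to realise $\mathcal W(\epsilon_1)$ as an open subset of a transversal fibre product, so that both conclusions reduce to the submersivity of ${\rm Pr}_t : \mathcal{MOR} \to \mathcal{OB}$. First I would form
\[
P \;=\; \mathcal{MOR} \,\,{}_{{\rm Pr}_t}\times_{\psi} \mathcal V_{(1)}
\;=\;\{(\varphi,{\bf x}) \mid {\rm Pr}_t(\varphi) = \psi({\bf x})\},
\]
where $\psi : \mathcal V_{(1)} \to \mathcal V = \mathcal{OB}$ is the holomorphic map from the lemma just above. Since ${\rm Pr}_t$ is a submersion (Proposition \ref{prop312} and the definition of Lie groupoid), this fibre product is transversal, hence $P$ is a complex manifold, and the projection $P \to \mathcal V_{(1)}$ is a submersion: at $(\varphi_0,{\bf x}_0)$ and for any $w \in T_{{\bf x}_0}\mathcal V_{(1)}$, surjectivity of $d{\rm Pr}_t$ supplies $v \in T_{\varphi_0}\mathcal{MOR}$ with $d{\rm Pr}_t(v) = d\psi(w)$, so $(v,w)\in T_{(\varphi_0,{\bf x}_0)}P$ projects to $w$. (This is the one point where one must resist using that $\psi$ is a submersion — it need not be; it is the submersivity of ${\rm Pr}_t$ that does the work. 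If the relevant version of $\mathcal W$ also records a group element, one simply replaces $P$ by $P \times G$ throughout, and the projection $P\times G \to \mathcal V_{(1)}$ is still a submersion.)

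Next I would match this up with the pointwise description: for fixed ${\bf x}$ the fibre of $P \to \mathcal V_{(1)}$ over ${\bf x}$ is ${\rm Pr}_t^{-1}(\psi({\bf x}))$, i.e.\ the set of morphisms with target $\psi({\bf x})$, and by Definition \ref{defn63} together with (the proof of) Lemma \ref{lem6262} the set $\mathcal W(\epsilon_1;\psi({\bf x}),\tilde\psi\vert_{\Sigma(\psi({\bf x}))};(\Sigma_1({\bf x}),\vec z_1({\bf x})))$ is precisely the subset of this fibre cut out by the $\epsilon$-closeness condition of Definition \ref{defn63}(3) (which unwinds to conditions (2)--(4) of Definition \ref{defn41}). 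Therefore $\mathcal W(\epsilon_1)$ sits inside $P$ set-theoretically, and the restriction of the projection $P \to \mathcal V_{(1)}$ to $\mathcal W(\epsilon_1)$ is exactly the map ${\rm Pro}$. Granting that $\mathcal W(\epsilon_1)$ is open in $P$, it follows at once that $\mathcal W(\epsilon_1)$ is a complex manifold and that ${\rm Pro}$, as the restriction of a submersion to an open set, is a submersion.

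The remaining — and only genuine, though still routine — step is to check that the condition defining $\mathcal W(\epsilon_1)$ inside $P$ is open \emph{jointly} in $(\varphi,{\bf x})$. This is where I expect the actual work to lie: one must verify that all the data entering Definitions \ref{defn41} and \ref{defn63} — the curve $\Sigma_1({\bf x})$ with its marked points $\vec z_1({\bf x})$, the trivialisation $\Phi_{1,{\bf x},\delta}$, the identification $\tilde\psi\vert_{\Sigma(\psi({\bf x}))}$, and the choice of $\delta$ and of the biholomorphism $\phi$ — depend continuously (and holomorphically in the holomorphic variables) on the pair $(\varphi,{\bf x})$ through the universal family $\mathcal C_{(1)}\to\mathcal V_{(1)}$ and the morphism $\tilde\psi$; the requirements themselves are strict $C^{2}$-estimates and diameter bounds on these continuously varying quantities together with an open existence clause. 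Once the ${\bf x}$-dependence of $\Sigma_1({\bf x})$ and of $\Phi_{1,{\bf x},\delta}$ is written out, this is the same openness argument already used in Lemma \ref{lem6262} and in the discussion of $\epsilon$-closeness, and carries over verbatim, completing the proof.
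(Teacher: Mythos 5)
Your fibre-product argument is a correct and explicit unpacking of the paper's one-line proof, which simply says the proof is the same as that of Lemma \ref{lem6262} using the fact that ${\rm Pr}_t$ is a submersion: both hinge on exactly that submersivity, applied here to the map $({\rm Pr}_t,\psi)$ rather than to the fibre ${\rm Pr}_t^{-1}({\bf x}_0)$ over a single point. Realising $\mathcal W(\epsilon_1)$ as an open subset of $\mathcal{MOR}\,{}_{{\rm Pr}_t}\!\times_{\psi}\mathcal V_{(1)} \times G$ and reading off the submersivity of ${\rm Pro}$ from the projection is the natural formalization of what the paper leaves implicit, not a different route.
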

\begin{proof}
The proof is the same as the proof of Lemma \ref{lem6262}, 
using the fact that ${\rm Pr}_t$ is a submersion.
\end{proof}
Let $U'(\epsilon)$ be the $\epsilon$ neighborhood of $u_1\vert_{\Sigma_1(\delta)}$ 
in $L^2_{m+1}$ norm (as in Proposition \ref{prop78}).
(By taking $\epsilon$ small we may regard $U'(\epsilon)$
as an open subset of an appropriate Hilbert space (that is, $L^2_{m+1}$ space).)
\begin{defn}\label{defn744}
We define a function \index{00M1@${\rm meandist}$}
$$
{\rm meandist} : \mathcal W(\epsilon_4) \times U'(\epsilon) \to \R
$$
as follows. 
Let 
$(\varphi,g,{\bf x},\hat u')  \in \mathcal W(\epsilon_4;\psi({\bf x}),(\tilde{\psi}\vert_{
\Sigma_1({\bf x})})^{-1};((\Sigma_1({\bf x}),\vec z_1({\bf x})),u_{{\bf x}}))
\times \{{\bf x}\} \times U'(\epsilon)$.
We slightly modify (\ref{form611}) to set:
\begin{equation}\label{form611mod}
\aligned
&{\rm meandist}(\varphi,g,{\bf x},\hat u') \\
&= 
\int_{z \in \Sigma(\sigma)}
\chi(z)\,\, d_X^2((\hat u' \circ \Phi_{1,{\bf x},\delta}^{-1}\circ (\tilde{\psi}\vert_{
\Sigma_1({\bf x})})^{-1} \circ \varphi 
\circ \Phi_{\overline{\bf x}',\sigma})(z),gu(z))\,\, \Omega_{\Sigma},
\endaligned
\end{equation}
where $\overline{\bf x}' = {\rm Pr}_s(\varphi)$.
Note that the map  
$\hat u' \circ \Phi_{1,{\bf x},\delta}^{-1}\circ (\tilde{\psi}\vert_{
\Sigma_1({\bf x})})^{-1} \circ \varphi 
\circ \Phi_{\overline{\bf x}',\sigma}$ is a composition
$$
\Sigma(\sigma) \longrightarrow  \Sigma(\overline{\bf x}')(\sigma) 
\longrightarrow \Sigma(\psi({\bf x}))(\sigma)
\longrightarrow \Sigma_1({\bf x})(\delta)
\longrightarrow \Sigma_1(\delta)
\longrightarrow X,
$$
which is defined if $\delta$ is small compared with $\sigma$.
\par
It is $\widehat{\mathcal G}_c$ invariant and induces \index{00M1over@$\overline{\rm meandist}$}
$$
\overline{\rm meandist} : \overline{\mathcal W}(\epsilon_4) \times U'(\epsilon) \to \R.
$$
\end{defn}
\begin{lem}\label{lem732}
We assume $\epsilon_4$ and $\epsilon$ are sufficiently small.
Then the functions ${\rm meandist}$ and $\overline{\rm meandist}$ are smooth 
functions. The restriction of  $\overline{\rm meandist}$
to the fibers of $\overline{\rm Pro} : \overline{\mathcal W}(\epsilon_4) \to \mathcal V_{(1)}$
are strictly convex.
Moreover the restriction of $\overline{\rm meandist}$ to the 
fibers of $\overline{\rm Pro}$ attains its minimum at a unique point.
\end{lem}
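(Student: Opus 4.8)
The plan is to deduce Lemma \ref{lem732} by combining three facts: the smoothness of the gluing/parallel-transport constructions, the $\widehat{\mathcal G}_c$-invariance already recorded, and a strict-convexity estimate that is essentially a parametrized version of the one proved in Proposition \ref{prop69}. First I would verify smoothness of $\mathrm{meandist}$ on $\mathcal W(\epsilon_1)\times U'(\epsilon)$: the integrand in \eqref{form611} is $\chi(z)\,d_X^2\big((u'\circ\phi_0\circ\varphi)(z),g\,u(z)\big)$, and here $u'=\hat u'\circ(\Phi_{1,{\bf x},\delta})^{-1}$ depends smoothly on $\hat u'\in U'(\epsilon)$ and on ${\bf x}$, while $\varphi,g$ vary in the smooth manifold $\mathcal W(\epsilon_1)$ whose smoothness and the submersivity of ${\rm Pro}$ were just established. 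Since $\epsilon_1$ is small the relevant points stay in a region where $d_X^2$ is smooth (away from the cut locus), so differentiation under the integral sign gives smoothness of $\mathrm{meandist}$; $\widehat{\mathcal G}_c$-invariance (the first half of Proposition \ref{prop69}, applied fiberwise) then descends it to a smooth $\overline{\mathrm{meandist}}$ on the quotient $\overline{\mathcal W}(\epsilon_1)\times U'(\epsilon)$.

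Next I would prove fiberwise strict convexity of $\overline{\mathrm{meandist}}$. A fiber of $\overline{\rm Pro}$ over ${\bf x}$ is $\mathcal W(\epsilon_1;\psi({\bf x}),\tilde\psi|;(\Sigma_1({\bf x}),\vec z_1({\bf x})))/\widehat{\mathcal G}_c$ times $\{{\bf x}\}\times U'(\epsilon)$, so I must show strict convexity in the $\mathcal W/\widehat{\mathcal G}_c$ directions for each fixed $({\bf x},\hat u')$. This is exactly the situation of Proposition \ref{prop69} with $((\Sigma',\vec z^{\,\prime}),u')$ replaced by $((\Sigma_1({\bf x}),\vec z_1({\bf x})),\hat u'\circ\Phi_{1,{\bf x},\delta}^{-1})$: take a geodesic $t\mapsto(\varphi^t,h^t)$ in the fiber, perpendicular to the $\widehat{\mathcal G}_c$-orbit and of unit speed at $t=0$, and estimate $\frac{d^2}{dt^2}\mathrm{meandist}$ from below. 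The same Case 1 / Case 2 dichotomy applies — either a definite amount of the variation lives on an unstable component, where \eqref{form6464} together with Proposition \ref{prop98} and Lemma \ref{lem94} (the convexity of the distance-squared function and its positivity when the velocities genuinely differ) gives the lower bound; or, by perpendicularity to the orbit, a stable component carries a definite amount of the variation of $h^t u$, and again Proposition \ref{prop98} and Lemma \ref{lem94} apply. Uniform control over ${\bf x}$ and $\hat u'$ is available because $\hat u'$ is $C^2$-close to $u_1$ (so $u'$ is $C^2$-close to $u_1$ after reparametrization, uniformly) and because $\Phi_{1,{\bf x},\delta}$ varies smoothly, preserving the volume form up to a uniformly controlled factor; hence the bound is uniform for $\epsilon,\epsilon_1$ small, giving strict convexity on each fiber.

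Finally, uniqueness of the fiberwise minimum follows as in Lemma \ref{lem610}: for the base point $\hat u'=u_1|_{\Sigma_1(\delta)}$ with ${\bf x}=0$ the function $\overline{\mathrm{meandist}}$ attains its minimum only at the identity morphism class, and a general $\hat u'$ is $C^2$-close to that, so one may interpolate by a $C^2$-small homotopy; since strict convexity of a continuous family of functions on a fixed manifold is an open condition and prevents the merging or splitting of minima, the unique-minimum property persists along the homotopy. The main obstacle I expect is not any one of these steps individually but making the constants genuinely uniform: the fibers of $\overline{\rm Pro}$ are non-compact and the geometry of $\Sigma_1({\bf x})$ degenerates as the gluing parameters in ${\bf x}$ go to the boundary, so one must check that the lower bound on $\frac{d^2}{dt^2}\mathrm{meandist}$ coming from Proposition \ref{prop98} does not deteriorate as $\Sigma_1({\bf x})$ becomes more nodal. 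This is handled by noting that the integral \eqref{form611} is taken over the fixed compact piece $\Sigma_0(\delta)$ (on which $\chi$ is supported and on which $\Phi_{1,{\bf x},\delta}$ is a diffeomorphism onto its image with uniformly bounded derivatives), so the degeneration of the neck region never enters the estimate, and the constant $\rho$ in Cases 1 and 2 can be chosen depending only on the combinatorial type of $\Sigma_1$ and on $((\Sigma,\vec z),u)$.
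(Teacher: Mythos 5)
Your proposal is correct and follows the paper's argument closely: smoothness from the integral formula \eqref{form611}, fiberwise strict convexity by the Case~1/Case~2 dichotomy of Proposition~\ref{prop69} (using Proposition~\ref{prop98} and Lemma~\ref{lem94}), and uniqueness of the minimum by the $C^2$-small homotopy of Lemma~\ref{lem610}. The paper simply cites those results without re-deriving them; your extra remark that the integral is supported on the fixed compact piece where $\Phi_{1,{\bf x},\delta}$ has uniformly bounded derivatives, so the neck degeneration never enters the convexity estimate, is exactly the implicit uniformity check the paper is relying on.
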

\begin{proof}
The smoothness of ${\rm meandist}$ and $\overline{\rm meandist}$
is immediate from (\ref{form611}).
To show strict convexity 
it suffices to consider the case 
$\psi({\bf x}) = {\bf x}_1$, that is, 
$\Sigma_1({\bf x}) =  \Sigma_1$. 
(This is because we can then shrink $\mathcal V_{(1)}$ and use the 
fact that strict convexity is an open property.) This case is proved as Proposition \ref{prop69}.
The uniqueness of minimum is Lemma \ref{lem610}.
\end{proof}
We remark that 
${\rm Pro}^{-1}({\bf x}) \subset {\mathcal W}(\epsilon_4)$ 
is identified with an open subset 
$\mathcal{MOR} \times G$, which is nothing but 
$$
\mathcal W(\epsilon_4;\psi({\bf x}),(\tilde{\psi}\vert_{
\Sigma_1({\bf x})})^{-1};((\Sigma_1({\bf x}),\vec z_1({\bf x})),u_{{\bf x}})).
$$
\begin{lem}\label{lem733}
There exists a smooth map
$$
\overline\Phi : U'(\epsilon) \times \mathcal V_{(1)} \to (\mathcal{MOR} \times G)/\widehat{\mathcal G}_c
$$
such that for each $\hat u' \in U'(\epsilon)$, ${\bf x} \in \mathcal V_{(1)}$,
the element $\overline \Phi(\hat u',{\bf x})$ is contained in the 
subset 
$\overline{\mathcal W}(\epsilon_4)$
of $(\mathcal{MOR} \times G)/\widehat{\mathcal G}_c$
and $\overline \Phi(\hat u',{\bf x})$ is the unique point on 
$\overline{\rm Pro}^{-1}({\bf x})$ where 
$\overline{\rm meandist}$ attains its minimum.
\end{lem}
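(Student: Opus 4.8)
\textbf{Proof proposal for Lemma \ref{lem733}.}
The plan is to produce $\overline\Phi$ by applying the implicit function theorem to the fiberwise differential of $\overline{\rm meandist}$ along the fibers of $\overline{\rm Pro}$, and then to invoke the uniqueness of the minimum from Lemma \ref{lem732} to promote the local solution to a globally well-defined map. First I would recall from Lemma \ref{lem732} that, once $\epsilon_1$ is chosen small enough, the restriction of $\overline{\rm meandist}$ to each fiber $\overline{\rm Pro}^{-1}({\bf x}) \times \{\hat u'\}$ is smooth and strictly convex and attains its minimum at a unique point. Let $F$ denote the fiberwise derivative of $\overline{\rm meandist}$, a section of the vertical cotangent bundle of $\overline{\rm Pro} : \overline{\mathcal W}(\epsilon_1) \to \mathcal V_{(1)}$; the desired map $\overline\Phi$ is characterized by the equation $F(\overline\Phi(\hat u',{\bf x}),\hat u',{\bf x}) = 0$. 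At a point where the minimum is attained, strict convexity says precisely that the fiberwise Hessian of $\overline{\rm meandist}$ is positive definite, hence the vertical derivative of $F$ is invertible there. The implicit function theorem (applied on the smooth manifold $\overline{\mathcal W}(\epsilon_1)$, using that $\overline{\rm Pro}$ is a submersion so that the fibers form a smooth foliation) then yields a smooth local solution $\overline\Phi$ near any given $(\hat u'_0,{\bf x}_0)$, landing in the correct subset $\mathcal W(\epsilon_1;\psi({\bf x}),\tilde\psi\vert_{\Sigma(\psi({\bf x}))};(\Sigma_1({\bf x}),\vec z_1({\bf x})))/\widehat{\mathcal G}_c$ by construction.

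Next I would argue that these local solutions patch together into a single smooth map on all of $U'(\epsilon) \times \mathcal V_{(1)}$. This is where the uniqueness statement in Lemma \ref{lem732} does the work: any two local solutions must agree on the overlap of their domains, since each assigns to $(\hat u',{\bf x})$ the unique fiberwise minimizer of $\overline{\rm meandist}$. Therefore the local solutions glue to a globally defined map $\overline\Phi$, which is smooth because smoothness is a local property. One should also check that, after shrinking $\epsilon$ and $\epsilon_1$ if necessary, the minimizer indeed lies in the prescribed open subset for \emph{every} $(\hat u',{\bf x})$ and not merely near the base point; this follows from Lemma \ref{lem610} (the minimum is attained at a unique interior point when $((\Sigma',\vec z^{\,\prime}),u') = ((\Sigma,\vec z),gu)$, and persists under the small $C^2$-perturbation to a general $\hat u'$), together with the compactness of the relevant parameter region after the $\widehat{\mathcal G}_c$-quotient.

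The main obstacle I anticipate is not the implicit function theorem step itself but the bookkeeping needed to run it uniformly: one must verify that the fiberwise Hessian of $\overline{\rm meandist}$ is bounded below by a positive constant \emph{uniformly} in $(\hat u',{\bf x})$ over the (quotient-)compact region, so that the inverse function theorem produces a domain of definition that does not shrink to zero as ${\bf x}$ degenerates (i.e.\ as the gluing parameters $\rho_{\rm e} \to 0$). This uniform lower bound is really the quantitative content behind Proposition \ref{prop69}, and the argument there — separating into the two cases according to whether the variation is concentrated on unstable components or detected by a stable component, and using Proposition \ref{prop98} and Lemma \ref{lem94} — is exactly what gives a bound independent of the degeneration. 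Granting that uniform strict convexity, the rest of the proof is the routine gluing of local solutions described above, so I would organize the writeup as: (i) set up $F$ and its vertical derivative; (ii) quote uniform strict convexity from Proposition \ref{prop69}/Lemma \ref{lem732} to get invertibility with uniform bounds; (iii) apply the implicit function theorem locally; (iv) glue using uniqueness from Lemma \ref{lem610}; (v) record smoothness and $\widehat{\mathcal G}_c$-equivariance, the latter being immediate from the $\widehat{\mathcal G}_c$-invariance of ${\rm meandist}$ already established.
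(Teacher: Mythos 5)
Your proposal is correct and is essentially the paper's own proof, unpacked: the paper simply cites Lemma \ref{lem732} together with Lemma \ref{lem999}, and Lemma \ref{lem999} is precisely the implicit-function-theorem argument you describe (apply the IFT to the vertical derivative $\nabla^{\rm vert}\overline{\rm meandist}$, with invertibility of the vertical Hessian coming from strict convexity, and globality from uniqueness of the fiberwise minimizer). Your additional remark about uniform lower bounds on the fiberwise Hessian as the gluing parameters degenerate is a legitimate refinement that the paper addresses implicitly through Proposition \ref{prop69}, but it is not an obstacle to the argument as given.
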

\begin{proof}
This is a consequence of Lemma \ref{lem732} and 
Lemma \ref{lem999}.
\end{proof}
\begin{rem}
We remark that elements of $U'(\epsilon)$ are $L^2_{m+1}$ maps.
Nevertheless our map $\overline\Phi$ is smooth. 
This is because the function ${\rm meansdist}$ is defined by 
using integration and so is a smooth function on the space of $L^2_{m+1}$ maps.
\end{rem}
The way we define $\overline{\rm meandist}$ in this subsection is 
slightly different from the way we defined it in Section \ref{sec:main}. However 
the local minimum is assumed at the same point.
More precisely we can prove the next lemma.
\par
To state the lemma we need notations.
Suppose $((\Sigma',\vec z'),u')$ is $G$-$\epsilon_1$-close to $((\Sigma,\vec z),u)$.
We take $\overline{\bf x} \in \mathcal V$ such that there exists an isomorphism
$$
\phi_0 : (\Sigma(\overline{\bf x}),\vec z(\overline{\bf x})) \cong (\Sigma',\vec z').
$$
Then Definition \ref{defn64} defines
$$
\overline{\rm meandist}^{\S\ref{sec:main}}:
\mathcal W(\epsilon_4;\overline{\bf x},\phi_0;((\Sigma',\vec z'),u'))/\widehat{\mathcal G}_c
\to \R
$$
\par
We assume that $\overline{{\bf x}}$ is in a small neighborhood of ${\bf x}_1$ and 
take ${\bf x} \in \mathcal V_{(1)}$ such that $\psi({\bf x}) = \overline{{\bf x}}$.
We then put
\begin{equation}\label{eq737}
\hat u' = u' \circ \phi_0 \circ \tilde{\psi}\vert_{
\Sigma_1({\bf x})} \circ \Phi_{1,{\bf x},\delta}
: 
\Sigma_1(\delta) \to X.
\end{equation}
Note that this is a composition 
$$
\Sigma_1(\delta) \longrightarrow \Sigma_1({\bf x})(\delta) \longrightarrow \Sigma(\psi({\bf x}))
\longrightarrow \Sigma' \longrightarrow X.
$$
We assume $\hat u' \in U'(\epsilon)$.
Then, by Definition \ref{defn744}, we obtain a function:
$$
\overline{\rm meandist}^{\S\ref{sec:decay}}:
\mathcal W(\epsilon_4;\psi({\bf x}),(\tilde{\psi}\vert_{
\Sigma_1({\bf x})})^{-1};((\Sigma_1({\bf x}),\vec z_1({\bf x})),u_{{\bf x}}))/\widehat{\mathcal G}_c
\to \R
$$
which is nothing but
$$
[\varphi,g] \mapsto \overline{\rm meandist}(\varphi,g,{\bf x},\hat u')
$$
where $\overline{\rm meandist}$ in the right hand side is one defined in Definition \ref{defn744}.
\begin{lem}\label{lem748}
We can shrink $\mathcal V_{(1)}$ and replace $\epsilon$ by a smaller positive number 
so that the next two conditions are equicalent.
\begin{enumerate}
\item
$[\varphi,g] \in \mathcal W(\epsilon_4;\overline{\bf x},\phi;((\Sigma',\vec z'),u'))/\widehat{\mathcal G}_c$
and $\overline{\rm meandist}^{\S\ref{sec:main}}$ takes the minimum there.
\item
$[\varphi,g] \in \mathcal W(\epsilon_4;\psi({\bf x}),(\tilde{\psi}\vert_{
\Sigma_1({\bf x})})^{-1};((\Sigma_1({\bf x}),\vec z_1({\bf x})),u_{{\bf x}}))/\widehat{\mathcal G}_c$
and $\overline{\rm meandist}^{\S\ref{sec:decay}}$ takes the minimum there.
\end{enumerate}
\end{lem}
\begin{proof}
Comparing formulas (\ref{eq737}), (\ref{form611mod}) and (\ref{form611})
it is easy to see that 
$\overline{\rm meandist}^{\S\ref{sec:main}} = \overline{\rm meandist}^{\S\ref{sec:decay}}$
when both are defined.
(We remark that the domains of the both functions are quotient of 
open subsets of $\mathcal{MOR} \times G$ by $\widehat{\mathcal G}_c$.
So the intersection of the domains makes sense.) In fact
$$
\aligned
&\hat u' \circ \Phi_{1,{\bf x},\delta}^{-1} \circ (\tilde \psi\vert_{\Sigma_{1}({\bf x})})^{-1}
\circ \varphi \circ \Phi_{{\bf x}',\sigma} \\
&=
u' \circ \phi_0 \circ \tilde \psi\vert_{\Sigma_{1}({\bf x})}  
 \circ \Phi_{1,{\bf x},\delta}^{-1} \circ  \Phi_{1,{\bf x},\delta}  \circ
(\tilde \psi\vert_{\Sigma_{1}({\bf x})})^{-1}
\circ \varphi \circ \Phi_{{\bf x}',\sigma} \\
&= u' \circ \phi_0 \circ  \varphi \circ \Phi_{{\bf x}',\sigma}.
\endaligned
$$
\par
In the case when $\psi({\bf x}) = {\bf x}_1$ and $u' = u_1$, the domain also coincides. So the lemma holds in that case.
We can shrink  $\mathcal V_{(1)}$ and replace $\epsilon$ by a smaller positive number
and use the strict covexity of $\overline{\rm meandist}^{\S\ref{sec:main}}$ and of $\overline{\rm meandist}^{\S\ref{sec:decay}}$ to show the general case.
\end{proof}
\par
Proposition \ref{prop78} is a consequence of Lemmas \ref{lem733} and \ref{lem748}
and a straight forward computation based on the definition.\footnote{
Experts of geometric analysis certainly will find that Proposition \ref{prop78} 
follows from Lemma \ref{lem733} and the definition 
immediately by inspection.} 
For completeness' sake\footnote{In fact this proof is related 
to the part we were asked to provide the detail by several people 
in the easier case when $G$ is trivial.}
we provide the detail of its proof below.
\par
Let $\mathcal V_{(1)}(\epsilon)$ be the $\epsilon$ neighborhood of 
$0$ in $\mathcal V_{(1)}$ as in Proposition \ref{prop78}.
If $\epsilon$ is small we can find a lift
$$
\Phi : U'(\epsilon) \times \mathcal V_{(1)}(\epsilon) \to  \mathcal{MOR} \times G
$$
of $\overline\Phi$. We put
\begin{equation}\label{form73666}
\Phi(\hat u',{\bf x}) = (\varphi(\hat u',{\bf x}),g(\hat u',{\bf x})).
\end{equation}
Thus \index{00G2hatuu@$g(\hat u',{\bf x})$}
$$
\varphi(\cdot) : U'(\epsilon) \times \mathcal V_{(1)}(\epsilon) \to \mathcal{MOR}, 
\quad
g(\cdot) : U'(\epsilon) \times \mathcal V_{(1)}(\epsilon) \to G,
$$
are smooth maps (from a Hilbert space to finite dimensional manifolds).
\par
We may take our lift $g(\cdot) : U'(\epsilon) \times \mathcal V_{(1)}(\epsilon) \to G$
so that its image lies in a neighborhood of $g_1$.
We denote by $U_G(g_1)$ this neighborhood.
\par
We calculate the finite dimensional subspace 
$E(\hat u',{\bf x})$ using local coordinate.
We cover $\Sigma(\delta)$ by a sufficiently small 
open sets $W_{\sigma}$.
$$
\Sigma(\delta) \subset \bigcup_{\sigma \in \mathcal S} W_{\sigma}.
$$
We will specify how small $W_{\sigma}$ is later.
We fix a complex coordinate of $W_{\sigma}$ and denote it by $z_{\sigma}$.

We first assume the following:
\begin{assump}
For each $\sigma$ there exists a convex open subset
$\Omega_{\sigma}$ of $X$ in one chart such that
the following holds for $\hat u' \in U'(\epsilon)$, ${\bf x}_1 \in V_{(1)}$.
\begin{enumerate}
\item
Note $(\Sigma_1,\vec z_1) = (\Sigma({\bf x}_1),\vec z({\bf x}_1))$.
We have
$
\Phi_{{\bf x}_1,\delta} : \Sigma(\delta) \to \Sigma_1.
$
We require
$$
u_1(\Phi_{{\bf x}_1,\delta}(W_{\sigma})) \subset \Omega_{\sigma}.
$$
\item
We also require
$$
\hat u'(\Phi_{{\bf x}_1,\delta}(W_{\sigma})) \subset \Omega_{\sigma}.
$$
\item
We also require
$$
g
u(W_{\sigma}) \subset \Omega_{\sigma},
$$ 
for $g \in U_{G}(g_1)$. 
\end{enumerate}
\end{assump}
Note that if the diameter of $W_{\sigma}$ is small and 
$\epsilon$, $\epsilon_4$ are small then 
the the diameter of the union of 
$u_1(\Phi_{{\bf x}_1,\delta}(W_{\sigma}))$, 
$\hat u'(\Phi_{{\bf x}_1,\delta}(W_{\sigma}))$ and 
$g_1u(W_{\sigma})$ is small.
In fact $\hat u'$ is close to $u_1$  and $u_1\circ \Phi_{{\bf x}_1,\delta}$ is close to 
$g_1 u$.
Therefore we may assume the existence of $\Omega_{\sigma}$.
\par
Let $\partial_{\sigma}^1,\dots,\partial_{\sigma}^d$ be a 
local (complex) frame of the complex tangent bundle $TX$ on $\Omega_{\sigma}$.
\begin{defn}
We define a (complex) matrix valued smooth function 
$
({\rm Pal}^i_j(p,q))_{i,j=1}^{\dim X}
$ 
on $\Omega_{\sigma}^2$ with the following properties.
Let $p,q \in \Omega_{\sigma}^2$.
We take the shortest geodesic $\gamma$ joining $p$ to $q$.
Using local frames $\partial_{\sigma}^i$ at $p,q$ 
and the parallel transportation
(with respect to an appropriate hermitian connection)
$$
{\rm Pal}_p^q : T_pX \to T_qX
$$
along $\gamma$, we define
\begin{equation}\label{form730}
{\rm Pal}_p^q(\partial_{\sigma}^j)
= 
\sum_i  {\rm Pal}^i_j(p,q)\partial_{\sigma}^i.
\end{equation}
\end{defn}
Other than parallel transportation,
the differentials of $\Psi$ (See (\ref{psiinSec6}).)
and of $\Phi_{1,{\bf x},\delta}$  (See (\ref{formula66rev}).)
appear in the definition of $E(\hat u',{\bf x})$.
We write them by local coordinate below.
\par
Let $\hat u' \in U'(\epsilon)$, ${\bf x} \in \mathcal V_{(1)}(\epsilon)$.
We put
$$
\overline{\bf y} = \overline{\bf y}(\hat u',{\bf x})
= {\rm Pr}_s(\varphi(\hat u',{\bf x})),
$$
where $\varphi$ is defined by (\ref{form73666}).
(\ref{psiinSec6}) in this case becomes:
\begin{equation}\label{newform732}
\Psi_{\hat u',{\bf x}}
= (\Phi_{\overline{\bf y}(\hat u',{\bf x}),\delta})^{-1}
\circ (\varphi(\hat u',{\bf x}))^{-1}
\circ \tilde{\psi}\vert_{
\Sigma_1({\bf x}))}
: 
\Sigma_1({\bf x})(\delta') \to \Sigma(\delta).
\end{equation} 
Note that $\phi_0 = ( \tilde{\psi}\vert_{
\Sigma_1({\bf x}))})^{-1}$ in our case as we mentioned 
in Definition \ref{defn7422}.
(\ref{newform732}) is a composition
$$
\Sigma_1({\bf x})(\delta') \longrightarrow 
\Sigma(\psi({\bf x}))(\delta)
\longrightarrow  \Sigma(\overline{\bf y})(\delta)
\longrightarrow  \Sigma(\delta).
$$

We compose it with $\Phi_{1,{\bf x},\delta'}$
to obtain
\begin{equation}\label{form731}
\Psi_{\hat u',{\bf x}} \circ  \Phi_{1,{\bf x},\delta'}
:\Sigma_1(\delta') \to \Sigma(\delta) .
\end{equation}
Note the source and the target is independent of 
$(\hat u',{\bf x})$.
This family of maps depends smoothly on $\hat u',{\bf x}$.
\begin{assump}
There exists a coordinate chart $W_{1,\sigma}$ of 
$\Sigma_1$ independent of $\hat u',{\bf x}$ such that
$$
(\Psi_{\hat u',{\bf x}}\circ  \Phi_{1,{\bf x},\delta})
(W_{1,\sigma}) \supset W_{\sigma}.
$$
Moreover 
$$
\Psi_{\hat u',{\bf x}}(\Phi_{1,{\bf x},\delta}(W_{1,\sigma}))
$$
is contained in a coordinate chart $W^+_{\sigma}$
containing $W_{\sigma}$, to which the coordinate $z_{\sigma}$
extends.
\end{assump}
By choosing $\epsilon$ small and $W_{\sigma}$ small 
we can assume that such $W_{1,\sigma}$ exists.
We fix a complex coordinate $z_{1,\sigma}$ of $W_{1,\sigma}$.
\par
Using complex linear part of the differential of 
$\Psi_{\hat u',{\bf x}}$ we obtain a bundle map
$$
d^h\Psi_{\hat u',{\bf x}}  :  \Lambda^{01}\Sigma(\delta)
\to  \Lambda^{01}\Sigma_1({\bf x}).
$$
We also have
$$
d^h\Phi_{1,{\bf x},\delta} : \Lambda^{01}\Sigma_1({\bf x})
\to \Lambda^{01}\Sigma_1.
$$
We denote the composition of them by
$$
d^h\Phi_{1,{\bf x},\delta}\circ d^h\Psi_{\hat u',{\bf x}}
: \Lambda^{01}\Sigma(\delta)
\to \Lambda^{01}\Sigma_1.
$$
This is a bundle map which covers the (local) inverse of (\ref{form731}).
\begin{lem}
There exists a smooth function 
$$
\frak f : U'(\epsilon) \times \mathcal V_{(1)}(\epsilon)
\times W_{1,\delta} \to \C
$$
such that
\begin{equation}\label{form732}
(d^h\Phi_{1,{\bf x},\delta} \circ d^h\Psi_{\hat u',{\bf x}})
(d\overline z_{\sigma}) (w)
= \frak f(\hat u',{\bf x},w)d\overline z_{1,\sigma}(w),
\end{equation}
where $w \in  W_{1,\delta}$.
\end{lem}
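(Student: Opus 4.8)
The plan is to unwind the definitions of the two bundle maps $d^h\Phi_{1,{\bf x},\delta}$ and $d^h(\Psi_{\hat u',{\bf x}})^{-1}$ in the fixed complex coordinates $z_\sigma$ on $W_\sigma$ and $z_{1,\sigma}$ on $W_{1,\sigma}$, and to observe that the composition, being a bundle map of complex \emph{anti}-holomorphic cotangent lines, is pointwise multiplication by a single complex scalar; this scalar is precisely the value of the function $\frak f$ we must produce. First I would note that by Assumptions just made, the map (\ref{form731}), namely $\Phi_{1,{\bf x},\delta'}^{-1}\circ\Psi_{\hat u',{\bf x}}\colon \Sigma(\delta)\to\Sigma_1(\delta')$, carries $W_\sigma$ into $W_{1,\sigma}$ and depends smoothly on $(\hat u',{\bf x})\in U'(\epsilon)\times\mathcal V_{(1)}(\epsilon)$; write this map in coordinates as $z_\sigma\mapsto H(\hat u',{\bf x};z_\sigma)$, a smooth $\C$-valued function of all three arguments. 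Its differential splits into $\C$-linear and $\C$-antilinear parts, and by definition $d^h(\Psi_{\hat u',{\bf x}})^{-1}$ composed with $d^h\Phi_{1,{\bf x},\delta}$ acts on the conjugate cotangent frame by
\begin{equation}
(d^h\Phi_{1,{\bf x},\delta}\circ d^h(\Psi_{\hat u',{\bf x}})^{-1})(d\overline z_\sigma)
= \overline{\frac{\partial H}{\partial z_\sigma}(\hat u',{\bf x};z_{1,\sigma})}\; d\overline z_{1,\sigma},
\end{equation}
where the coordinate $z_{1,\sigma}$ on $W_{1,\sigma}$ is the variable denoted $w$ in the statement. Hence the desired function is
\begin{equation}
\frak f(\hat u',{\bf x},w) = \overline{\frac{\partial H}{\partial z_\sigma}(\hat u',{\bf x};w)},
\end{equation}
and (\ref{form732}) holds by construction.

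It then remains only to check that $\frak f$ is smooth as a function on $U'(\epsilon)\times\mathcal V_{(1)}(\epsilon)\times W_{1,\delta}$. This follows from the smooth dependence of the constituent maps on their parameters: $\Phi_{1,{\bf x},\delta}$ and $\Phi_{1,{\bf x},\delta'}$ depend smoothly (indeed holomorphically in the relevant variables, after the change of variables of Definition \ref{defn712}) on ${\bf x}$; $\tilde\psi$ and $\psi$ are holomorphic and fixed; $\varphi(\hat u',{\bf x})$ and $g(\hat u',{\bf x})$ are smooth in $(\hat u',{\bf x})$ by Lemma \ref{lem733}; and $\Psi_{\hat u',{\bf x}}$ is assembled from these by (\ref{newform732}). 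Taking the first-order jet in $z_\sigma$ of the resulting composition, and conjugating, preserves smoothness in all arguments. One subtlety is that $U'(\epsilon)$ is an open set in an infinite-dimensional Hilbert space ($L^2_{m+1}$), so ``smooth'' here is in the Fréchet sense; but since $\varphi(\cdot)$ and $g(\cdot)$ land in finite-dimensional manifolds and all subsequent operations are finite-dimensional, no loss of derivatives occurs, and the composite is $C^\infty$ in the Hilbert-manifold sense.

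The only genuine point requiring care — and the place I expect a reader might want more detail — is the bookkeeping of which coordinate patches and which $\delta$ versus $\delta'$ are used at each stage, i.e. verifying that the two Assumptions stated just above are compatible so that the chain $W_\sigma \to W_{1,\sigma}$, and its inverse direction $\Psi_{\hat u',{\bf x}}^{-1}(\Phi_{1,{\bf x},\delta}(W_{1,\sigma}))\subset W^+_\sigma$, are both well defined uniformly in $(\hat u',{\bf x})$ on the chosen $\epsilon$-neighborhoods. This is exactly what those Assumptions grant, once $\epsilon$ and the diameters of the $W_\sigma$ are taken small enough, so there is no real obstacle; the content of the lemma is simply that a $(0,1)$-to-$(0,1)$ bundle map over a one-dimensional base is scalar multiplication, and that the scalar varies smoothly because everything in sight does. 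This identification of $\frak f$ is precisely the local ingredient needed, together with the analogous expression (\ref{form730}) for parallel transport, to write $E(\hat u',{\bf x})$ in coordinates and thereby read off the exponential decay estimates of Proposition \ref{prop78}.
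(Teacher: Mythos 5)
Your argument is correct and takes essentially the same route as the paper, which disposes of the lemma in one line by appealing to the smooth dependence of $\Psi_{\hat u',{\bf x}}$ and $\Phi_{1,{\bf x},\delta}$ on $(\hat u',{\bf x})$: you merely make explicit that a $(0,1)$-to-$(0,1)$ bundle map over a one-dimensional base is scalar multiplication, and that the scalar is a Wirtinger derivative of the smoothly varying family of coordinate maps and hence smooth. Your explicit formula has a minor bookkeeping slip — $\partial H/\partial z_\sigma$ is a function of $z_\sigma$ rather than of $w=z_{1,\sigma}$, so one must precompose with $H^{-1}$, and depending on whether $d^h$ is taken as a pullback or pushforward the scalar may be the reciprocal of what you wrote — but this does not affect smoothness or the conclusion.
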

\begin{proof}
This is immediate from smooth dependence of 
$\Psi_{\hat u',{\bf x}}$ and 
$\Phi_{1,{\bf x},\delta}$ on $\hat u',{\bf x}$.
\end{proof}
We next write $G$ action by local coordinate.
We recall that $U_G(g_1)$ is a neighborhood of $g_1$ in $G$ such that 
the image of the map $g(\cdot)$ is contained in $U_G(g_1)$.
\begin{assump}
We take $\Omega_{\sigma}$ so that 
there exist coordinate neighborhoods $\Omega^0_{\sigma} \subset \Omega^+_{\sigma}$
such that
$$
\Omega^0_{\sigma}
\subset g^{-1} \Omega_{\sigma} \subset \Omega^+_{\sigma},
\qquad 
u(W_{\sigma}) \subset \Omega^0_{\sigma}.
$$
for any $g \in U_G(g_1)$.
\end{assump}
We can find such $\Omega^0_{\sigma}$, $\Omega^+_{\sigma}$
by taking $\epsilon$ sufficiently small.
(Note that $g_1 u$ is close to $\hat u' \circ \Phi_{{\bf x}_1,\delta}$.)
\par
Let $\partial_{\sigma,0}^i$, $i=1,\dots,d$, be a local frame of the complex tangent bundle 
$TX$ on $\Omega^+_{\sigma}$.
\begin{lem}
There exists a matrix valued smooth function 
$(G(\hat u',{\bf x},p)_i^j)_{i,j=1}^{\dim X}$
on $U'(\epsilon) \times \mathcal V_{(1)}(\epsilon) \times \Omega_{\sigma}$
such that
\begin{equation}\label{form733}
\left(d g(\hat u',{\bf x}) 
( \partial_{\sigma,0}^i)\right)(p)
=
\sum_j G(\hat u',{\bf x},p)_i^j \partial_{\sigma}^j
(p).
\end{equation}
The map $d g(\hat u',{\bf x})$ is the differential of the 
map defined by $g(\hat u',{\bf x}) \in G$ action.
\par
Note $G(\cdot)_i^j : U'(\epsilon) \times \mathcal V_{(1)}(\epsilon)
\times  \Omega_{\sigma} \to \C$
is a map from the product of Hilbert space and a finite dimensional 
manifold to the complex plane.
\end{lem}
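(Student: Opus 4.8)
The plan is to read off $(G(\hat u',{\bf x},p)_i^j)$ as the coordinate matrix of the differential of the map given by $g(\hat u',{\bf x})\in G$, so that the only thing requiring argument is joint smoothness in the parameters, which is inherited from the smoothness of the lift $g(\cdot)$ established in Lemma \ref{lem733} and from the smoothness of the $G$-action on $X$.

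First I would recall that the action map $G\times X\to X$ is smooth and that each $g\in G$ acts on $X$ with complex-linear differential, since $J$ is $G$-invariant. By the Assumption preceding the lemma the charts $\Omega^0_\sigma\subset\Omega^+_\sigma$ and $\Omega_\sigma$ can be chosen independently of $(\hat u',{\bf x})$ and of $g\in U_G(g_1)$, with $\Omega^0_\sigma\subset g^{-1}\Omega_\sigma\subset\Omega^+_\sigma$; applying $g$ this gives $g(\Omega^0_\sigma)\subset\Omega_\sigma\subset g(\Omega^+_\sigma)$, so the coordinate vector field $\partial/\partial Z_{\sigma,0}^i$ on $\Omega^+_\sigma$ pushes forward under $g(\hat u',{\bf x})$ to a vector field with complex-linear behaviour defined on a neighbourhood of $\Omega_\sigma$. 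Writing its value at $p$ in the frame $(\partial/\partial Z_{\sigma}^j)$ as in (\ref{form733}) is the definition of $G(\hat u',{\bf x},p)_i^j$; concretely, with $q=g(\hat u',{\bf x})^{-1}(p)$ one has
$$
G(\hat u',{\bf x},p)_i^j=\frac{\partial\big(Z_{\sigma}^j\circ g(\hat u',{\bf x})\big)}{\partial Z_{\sigma,0}^i}(q),
$$
the complex Jacobian of the transformation $g(\hat u',{\bf x})$ in the chosen coordinates.

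It then remains to check that this expression is smooth as a function on $U'(\epsilon)\times\mathcal V_{(1)}(\epsilon)\times W_{1,\delta}\times\Omega_\sigma$. The map $(\hat u',{\bf x},p)\mapsto g(\hat u',{\bf x})^{-1}(p)$ is smooth because $g(\cdot)$ is smooth (Lemma \ref{lem733}), inversion in $G$ is smooth, and the action is smooth; composing with $q\mapsto Z_{\sigma}^j(g(\hat u',{\bf x})(q))$ and differentiating in $q$ — which commutes with the parameters since the whole family is jointly smooth — yields a smooth matrix-valued function, which is simply taken to be independent of the $W_{1,\delta}$ variable, that factor being kept only so that $G(\cdot)$ is defined on the same product as the auxiliary functions $\frak f$ and ${\rm Pal}^i_j$, to be combined with them afterwards. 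The same reasoning, applied one derivative at a time together with the bounds on $g(\cdot)$ coming from Lemma \ref{lem733}, supplies whatever higher $C^n$-estimates are needed downstream. There is no real obstacle in this lemma: it is a bookkeeping step that records the group action in local coordinates, and all of the analytic content — existence, uniqueness and smooth dependence of the minimizer of $\overline{\rm meandist}$ — has already been absorbed into Lemma \ref{lem733}, and through it into Proposition \ref{prop69} and Lemma \ref{lem610}.
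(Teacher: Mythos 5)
Your proposal is correct and takes essentially the same approach as the paper: the paper's proof is the one-line remark that the claim is immediate from the smoothness of $(\hat u',{\bf x})\mapsto g(\hat u',{\bf x})$, which is precisely the content you unpack via the explicit Jacobian formula and the smoothness of the action map. The additional observations (complex-linearity from $G$-invariance of $J$, the role of the Assumption giving nested charts, and the $W_{1,\delta}$ factor being vacuous) are consistent with the paper and do not change the argument.
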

\begin{proof}
The proof is immediate from the 
smoothness of $(\hat u',{\bf x}) \mapsto g(\hat u',{\bf x})$, that is 
Lemma \ref{lem733} and (\ref{form73666}).
\end{proof}
We now write the map 
$$
E((\Sigma,\vec z),u) \to C^{\infty}(\Sigma_1;u_1^*TX\otimes \Lambda^{01})
$$
which we use to define
$$
E(\hat u',{\bf x}) \subset C^{\infty}(\Sigma_1;u_1^*TX\otimes \Lambda^{01})
$$
explicitly using smooth functions appearing in 
(\ref{form730}), (\ref{form732}), (\ref{form733}) etc..
\par
Let 
$$
e \in C^{\infty}(W_{\sigma};u^*TX\otimes \Lambda^{01})
$$
has compact support. We write
\begin{equation}
e = \sum_i e^i \partial_{\sigma,0}^i \otimes d\overline z_{\sigma}.
\end{equation}
Here $e^i$ is a smooth function on $W_{\sigma}$.
By (\ref{form733}) we have
\begin{equation}\label{form753}
(g(\hat u',{\bf x})_* (e))(w)
= 
\sum_{i,j} G(\hat u',{\bf x},w)_i^j 
e^i(w) \partial_{\sigma}^j,
\end{equation}
for $w \in W_{\sigma}$.
Now we apply  the maps
$$
I_{{\bf x}_0,\phi_0;((\Sigma',\vec z^{\,\prime}),u')}
: C^{\infty}(K;(gu)^*TX \otimes \Lambda^{01})
\to L^2_{m+1}(\Sigma'(\delta);(u')^*TX \otimes \Lambda^{01})
$$
and
$$
I_{\hat u',{\bf x}} :
L^2_{m+1}(\Sigma_1({\bf x})(\delta);(u')^*TX\otimes \Lambda^{01})
\to 
L^2_{m+1}(\Sigma_1(\delta);u_1^*TX\otimes \Lambda^{01})
$$
to the right hand side of (\ref{form753}).
(Note they are the maps
(\ref{formula663333}) and
(\ref{form711}), respectively.)
\footnote{We extend (\ref{formula663333}) to the case when $u'$ is 
in the Sobolev space of $L^{2}_{m+1}$ maps. So the target of 
$I_{{\bf x}_0,\phi_0;((\Sigma',\vec z^{\,\prime}),u')}$ here is 
$L^2_{m+1}$ space.}
\par
Note that we take 
$({\bf x}_0,\phi_0) = (\psi({\bf x}),(\tilde{\psi}\vert_{
\Sigma(\psi({\bf x}))})^{-1})$, 
$(\Sigma',\vec z^{\,\prime}) = (\Sigma_1({\bf x}),\vec z_1({\bf x}))$
and
$$
u' = \hat u' 
\circ \Phi_{1,{\bf x},\delta}^{-1} : \Sigma_1({\bf x})(\delta) \to X.
$$
by (\ref{defnofuprime}) and $g = g(\hat u',{\bf x})$.
\par
(\ref{form730}), (\ref{form732}) and the definition implies
that for $w\in W_{1,\sigma}$
\begin{equation}\label{form737}
\aligned
I_{\hat u',{\bf x}}
&(I_{{\bf x}_0,\phi_0;((\Sigma',\vec z^{\,\prime}),u')}
(g(\hat u',{\bf x})_* (e)))(w) \\
=\sum_{j,j_1,j_2,i}&{\rm Pal}^{j}_{j_1}(
\hat u'(w),u_1(w))
\\
&\times
{\rm Pal}^{j_1}_{j_2}(
g(\hat u',{\bf x})((u\circ \Psi_{\hat u',{\bf x}}
\circ \Phi_{1,{\bf x},\delta})(w)),\hat u'(w))
\\
&\times
G(\hat u',{\bf x},(\Psi_{\hat u',{\bf x}}
\circ \Phi_{1,{\bf x},\delta})(w))_i^{j_2}
e^i((\Psi_{\hat u',{\bf x}}
\circ \Phi_{1,{\bf x},\delta})(w))
\\
&\times
\frak f(\hat u',{\bf x},w)
\partial_{\sigma}^j\otimes d\overline z_{1,\sigma}.
\endaligned
\end{equation}
Here $\Psi_{\hat u',{\bf x}}$, is as in (\ref{newform732}).
See Figure \ref{Figure737}.
\begin{figure}[h]
\centering
\includegraphics[scale=0.3]{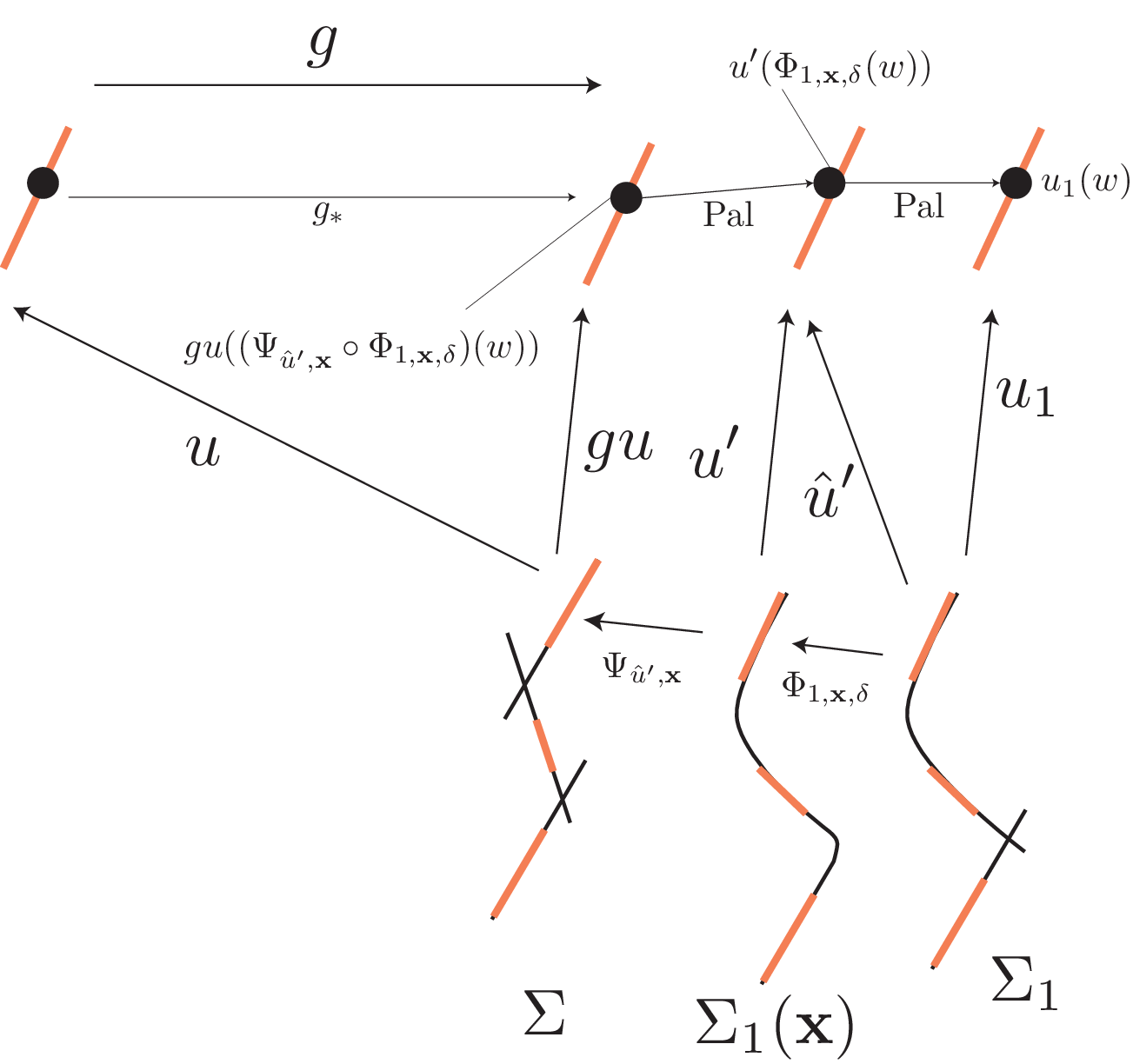}
\caption{$I_{\hat u',{\bf x}}
\circ I_{{\bf x}_0,\phi_0;((\Sigma',\vec z^{\,\prime}),u')}$}
\label{Figure737}
\end{figure}
\begin{lem}\label{lem740}
We fix $e$ and regard (\ref{form737}) as a map
$$
U'(\epsilon) \times \mathcal V_{(1)}(\epsilon)
\to L^2_{m+1}(\Sigma_1(\delta);u_1^*TX\otimes \Lambda^{01}).
$$
Then it is a smooth map between Hilbert spaces.
\end{lem}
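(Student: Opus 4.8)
The plan is to recognize the right-hand side of (\ref{form737}) as a composition of finitely many maps, each of which is manifestly smooth, and to invoke the fact that composition preserves smoothness. I will organize the verification around the list of building blocks that actually appear in (\ref{form737}): the parallel-transport coefficient functions ${\rm Pal}^i_j$, the differential-of-$G$-action coefficients $G(\cdot)^j_i$, the coefficient $\frak f$ coming from $d^h\Phi_{1,{\bf x},\delta}\circ d^h(\Psi_{\hat u',{\bf x}})^{-1}$, the reparametrization $\Psi_{\hat u',{\bf x}}^{-1}\circ\Phi_{1,{\bf x},\delta}$, and the fixed smooth function $e^i$. First I would recall that $(\hat u',{\bf x})\mapsto(\varphi(\hat u',{\bf x}),g(\hat u',{\bf x}))$ is smooth by Lemma \ref{lem733}, hence ${\bf y}(\hat u',{\bf x})={\rm Pr}_s(\varphi(\hat u',{\bf x}))$ depends smoothly on $(\hat u',{\bf x})$, and therefore by (\ref{newform732}) the family of maps $\Psi_{\hat u',{\bf x}}:\Sigma(\delta)\to\Sigma_1({\bf x})$ and the composite (\ref{form731}) depend smoothly on $(\hat u',{\bf x})$ in the sense that the associated map into $L^2_{m+1-n}(\Sigma(\delta),\Sigma_1(\delta'))$ (or into $C^k$ for any fixed $k$, since $m$ is chosen large) is smooth; similarly $g(\hat u',{\bf x})\in U_G(g_1)$ depends smoothly on $(\hat u',{\bf x})$.

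Next I would treat each coefficient factor. The function ${\rm Pal}^i_j(p,q)$ is a smooth function of $(p,q)\in\Omega_\sigma^2$ (minimal geodesics and parallel transport depend smoothly on endpoints when $\Omega_\sigma$ is convex and small), so the two parallel-transport factors in (\ref{form737}) are smooth functions of $(\hat u',{\bf x})$ once composed with the smooth evaluation maps $w\mapsto \hat u'(w)$, $w\mapsto u_1(w)$, and $w\mapsto g(\hat u',{\bf x})((u\circ\Psi_{\hat u',{\bf x}}^{-1}\circ\Phi_{1,{\bf x},\delta})(w))$ — here smoothness of the last uses that $(\hat u',{\bf x})\mapsto\hat u'$ is the identity coordinate on $U'(\epsilon)$, that pointwise evaluation $L^2_{m+1}\to C^0$ is bounded linear hence smooth, and that $G$ acts smoothly on $X$. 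The factor $G(\hat u',{\bf x},p)^j_i$ is smooth by the lemma producing (\ref{form733}); composed with the smooth map $(\hat u',{\bf x})\mapsto\Phi_{\bf x_1,\delta}^{-1}(w)$ (which is actually independent of $(\hat u',{\bf x})$) it stays smooth. The factor $\frak f(\hat u',{\bf x},w)$ is smooth by the lemma producing (\ref{form732}). Finally $e^i$ is a fixed smooth compactly supported function, and $e^i((\Psi_{\hat u',{\bf x}}^{-1}\circ\Phi_{1,{\bf x},\delta})(w))$ is smooth in $(\hat u',{\bf x})$ because precomposition by a smooth family of $C^\infty$ maps is a smooth operation on $C^\infty$ (or $L^2_{m+1}$, with a loss of finitely many derivatives absorbed by the choice of $m$) sections.

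Then I would assemble: the displayed expression (\ref{form737}) is, for each $\sigma$, a finite sum of products of the smooth maps just listed, multiplied against the fixed frame $\frac{\partial}{\partial Z^j_\sigma}\otimes d\bar z_{1,\sigma}$, which is a smooth local section of $u_1^*TX\otimes\Lambda^{01}$ over $W_{1,\sigma}$. Pointwise multiplication and finite summation are smooth (continuous multilinear) operations on the relevant Banach/Hilbert function spaces — here one uses that $L^2_{m+1}$ is a Banach algebra under multiplication for $m$ large, so products of $L^2_{m+1}$ functions stay in $L^2_{m+1}$ with continuous bilinear multiplication, and that multiplication by a fixed smooth section is bounded linear. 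Using a $\mathcal G_c$-invariant partition of unity subordinate to $\{W_{1,\sigma}\}$ (or simply patching the local formulas, which agree on overlaps by construction of $I_{\hat u',{\bf x}}\circ I_{{\bf x}_0,\phi_0;\cdots}$) the local smooth maps glue to a global smooth map $U'(\epsilon)\times\mathcal V_{(1)}(\epsilon)\to L^2_{m+1}(\Sigma_1(\delta);u_1^*TX\otimes\Lambda^{01})$, which is exactly the assertion of Lemma \ref{lem740}.

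The step I expect to be the main obstacle is the careful bookkeeping of derivative losses: each of the reparametrization maps $\Psi_{\hat u',{\bf x}}$, $\Phi_{1,{\bf x},\delta}$ and the precomposition operation $v\mapsto v\circ(\text{smooth family of maps})$ is only smooth as a map between Sobolev spaces of different orders, so one must check that, with $m$ chosen sufficiently large relative to the finite order $n$ of differentiability one wants, all the compositions land in a common space $L^2_{m+1}$ with uniformly controlled norms. This is a standard but slightly delicate argument in the calculus on manifolds of maps, and it is precisely the place where the freedom to choose $m$ large (stated repeatedly in the paper) is used; everything else is a routine verification that smoothness is inherited under composition, finite products, and finite sums. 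I would also remark that uniform $C^n$ bounds — needed for the later exponential decay corollary — come out of the same computation once one keeps track of the constants, but for Lemma \ref{lem740} itself only the qualitative smoothness statement is needed.
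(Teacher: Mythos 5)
Your proposal is correct and follows essentially the same route as the paper: decompose (\ref{form737}) into the finitely many factors ${\rm Pal}^i_j$, $G(\cdot)^j_i$, $\frak f$, the reparametrization $\Psi_{\hat u',{\bf x}}^{-1}\circ\Phi_{1,{\bf x},\delta}$, and the fixed smooth $e^i$, $u$; observe (as you do via Lemma~\ref{lem733}) that $\hat u'$ enters the reparametrizations only through the finite-dimensional quantities $g(\hat u',{\bf x})$ and $\varphi(\hat u',{\bf x})$, so $\Psi_{\hat u',{\bf x}}$ is a smooth family of \emph{smooth} maps; and conclude by smoothness of products, sums, and the post-composition operator $v\mapsto F\circ v$ on $L^2_{m+1}$ for $m$ large. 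One small remark: because $e^i$ and $u$ are fixed smooth functions and the reparametrizations are smooth families of smooth diffeomorphisms, the worry about derivative loss you flag at the end is less pressing than it might appear -- the only genuine Nemytskii-type step is the ${\rm Pal}$ factor applied to $\hat u'$, which the paper handles by the parenthetical remark about $v\mapsto F\circ v$.
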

\begin{proof}
Note $e^i$ and $u$ are fixed smooth maps. 
Moreover 
$$
(\hat u',{\bf x}) \mapsto \Psi_{\hat u',{\bf x}},
\qquad
(\hat u',{\bf x}) \mapsto \Phi_{1,{\bf x},\delta}
$$
are smooth families of smooth maps.
Note even though $\hat u'$ is only of $L^2_{m+1}$ 
class and is not smooth, the family  
$\Psi_{\hat u',{\bf x}}$ is a smooth family of smooth maps. In fact
$\hat u'$ are involved here only through  
$g(\hat u',{\bf x})$
and $\varphi(\hat u',{\bf x})$. By Lemma \ref{lem733} both
$g(\hat u',{\bf x})$
and $\varphi(\hat u',{\bf x})$ are smooth with respect to $\hat u' \in L^2_{m+1}$.
$\Phi_{1,{\bf x},\delta}$ is independent of $\hat u'$ and 
depend smoothly on ${\bf x}$.
\par
Therefore
$$
(\hat u',{\bf x}) \mapsto
e^i\circ \Psi_{\hat u',{\bf x}}
\circ \Phi_{1,{\bf x},\delta},
\quad
(\hat u',{\bf x}) \mapsto
u\circ \Psi_{\hat u',{\bf x}}
\circ \Phi_{1,{\bf x},\delta}
$$
are smooth maps.
\par
The lemma then follows immediately from 
the smoothness of $g(\cdot,\cdot)$, 
${\rm Pal}^{i}_{j}(\cdot)$, $G(\cdot)_j^i$, $\frak f(\cdot,\cdot,\cdot)$.
(We use also the fact that 
$v \mapsto F \circ v$ is a smooth map between
$L^2_{m+1}$ spaces if $F$ is a smooth map and $m$ is sufficiently 
large.)
\end{proof}
Now we are in the position to complete the proof of 
Proposition \ref{prop78}.
We take a partition of unity $\chi_{\sigma}$ subordinate 
to the covering $W_{\sigma}$.
Let $e_1,\dots,e_d$ be a basis of 
$E((\Sigma,\vec z),u)$.
We put
\begin{equation}\label{from74300}
e_i(\hat u',{\bf x})
= I_{\hat u',{\bf x}}
(I_{{\bf x}_0,\phi_0;((\Sigma',\vec z^{\,\prime}),u')}
(g(\hat u',{\bf x})_* (e_i)))
\end{equation}
as in the right hand side of (\ref{form737}).
By definition 
$(e_i(\hat u',{\bf x}))_{i=1}^d$ is a basis of 
$E(\hat u',{\bf x})$.
\par
On the other hand since
$$
e_i(\hat u',{\bf x})
= 
\sum_{\sigma }
I_{\hat u',{\bf x}}
(I_{{\bf x}_0,\phi_0;((\Sigma',\vec z^{\,\prime}),u')}
(g(\hat u',{\bf x})_* (\chi_{\sigma}e_i)))
$$
Lemma \ref{lem740}
implies that
$(\hat u',{\bf x}) \mapsto e_i(\hat u',{\bf x})$ is smooth.
The proof of Proposition \ref{prop78} is complete.
\qed

\subsection{Independence of the local smooth structure of the choices}
\label{indsmoothchart}

In this subsection we prove Proposition \ref{prop720}.
Let $\frak p_i = ((\Sigma_i,\vec z_i),u_i) \in U(((\Sigma,\vec z),u);\epsilon_2)$
for $i=1,2$ and we take 
strong stabilization data $({\frak W}^{(i)},\vec{\mathcal N}_i)$ (Definition \ref{defn720})
at $\frak p_i$ for $i=1,2$.
\par
We obtained a map \index{00I6iepsilon@$\mathscr I_{i,\epsilon}$} 
$$
\mathscr I_{i,\epsilon_{(i)}} : V(\frak p_i;\epsilon_{(i)},({\frak W}^{(i)},\vec{\mathcal N}_i))
\to U(((\Sigma,\vec z),u);\epsilon_2)
$$
which is $\mathcal G_i$-equivariant for sufficiently small
$\epsilon_2$.
(Note $\mathcal G_i$ is the group of automorphisms of 
$\frak p_i = ((\Sigma_i,\vec z_i),u_i)$ and is a finite group.
$\epsilon_{(1)} = \epsilon$, $\epsilon_{(2)} = \epsilon'$.)
\par
In fact
\begin{equation}
\mathscr I_{i,\epsilon_{(i)}}({\bf v},{\bf x})
=
[(\Sigma_i({\bf x}),\vec z_i({\bf x})),u^i_{{\bf v},{\bf x}}].
\end{equation}
See Proposition \ref{prop710} and Definition \ref{defn71474}.
Note $u^i_{{\bf v},{\bf x}}$ is $u_{{\bf v},{\bf x}}$ 
in Proposition \ref{prop710}. Since this map depends on 
$\frak p_i$ and ${\frak W}^{(i)}$ we put superscript $i$ 
and write $u^i_{{\bf v},{\bf x}}$.
\par
Suppose $\frak p_2 = ((\Sigma_2,\vec z_2),u_2)$
is $\epsilon$-close to 
$\frak p_1 = ((\Sigma_1,\vec z_1),u_1)$
for some $\epsilon$ depending on $\frak p_1$.
Then $\mathcal G_2 \subset \mathcal G_1$.
\par
To prove Proposition \ref{prop720} 
it suffices to find a $\mathcal G_2$-equivariant $C^n$ open embedding
$$
\mathscr J_{12;\epsilon,\epsilon'} : V(\frak p_2;\epsilon',({\frak W}^{(2)},\vec{\mathcal N}_2)) \to 
V(\frak p_1;\epsilon,({\frak W}^{(1)},\vec{\mathcal N}_1))
$$
such that \index{00J612epsilon@$\mathscr J_{12;\epsilon,\epsilon'}$}
\begin{equation}\label{formform740}
\mathscr I_{1,\epsilon} \circ 
\mathscr J_{12;\epsilon,\epsilon'}
= 
\mathscr I_{2,\epsilon'},
\end{equation}
for sufficiently small $\epsilon'$.
\par
Existence of such a map 
$\mathscr J_{12;\epsilon,\epsilon'}$ 
(set theoretically) 
is 
a consequence of Proposition \ref{prop710}.
We will prove that it is a $C^n$ map using the exponential 
decay estimate, Proposition \ref{prop711}.
We will prove Lemma \ref{lem71818} at the same time.
The detail follows.
\par
Our proof is divided into various cases.
In the first four cases we assume $\frak p_1 = \frak p_2$.
\par\medskip
\noindent(Case 1) 
We assume $\frak p_1 = \frak p_2 = ((\Sigma_1,\vec z_1),u_1)$.
We also require $({\frak W}^{(1)},\vec{\mathcal N}_1) \subseteq ({\frak W}^{(2)},\vec{\mathcal N}_2)$
in the following sense.
\begin{enumerate}
\item[(1-1)]
Let $\vec w_1^{\,(i)} = (w_{1,1}^{(i)},\dots,w_{1,k_i}^{(i)})$.
We assume $k_1 \le k_2$ and
$
w_{1,j}^{(1)} = w_{1,j}^{(2)}
$
for $j=1,\dots,k_1$.
\item[(1-2)]
We require
$
\mathcal N_j^{(1)} = \mathcal N_j^{(2)}
$
for $j=1,\dots,k_1$.
\end{enumerate}
We consider an open neighborhood $\mathcal V_{(i)} 
\subset \mathcal M_{g,\ell+k_i}$ of $(\Sigma_1,\vec z_1
\cup \vec w_1^{\,(i)})$ and the universal 
family of deformation $\pi_{(i)} : \mathcal C_{(i)} \to 
\mathcal V_{(i)}$ on it.
It comes with sections $\frak t^{(i)}_j :  \mathcal V_{(i)} \to \mathcal C_{(i)}
$, $j=1,\dots,\ell + k_i$,
which assigns the $j$-th marked point.
\begin{lem}
There exists holomorphic maps $\tilde\psi : \mathcal C_{(2)} \to \mathcal C_{(1)}$
and $\psi : \mathcal V_{(2)} \to \mathcal V_{(1)}$ 
such that the following holds.
\begin{enumerate}
\item
The next diagram commutes and is cartesian.
\begin{equation}\label{diagram741}
\begin{CD}
\mathcal C_{(2)} @ >{\tilde\psi}>> \mathcal C_{(1)},\\
@ VV{\pi_{(2)}}V @ VV{\pi_{(1)}}V\\
\mathcal V_{(2)} @ >{\psi}>> \mathcal V_{(1)},
\end{CD}
\end{equation}
\item
The next diagram commutes for $j=1,\dots,\ell + k_1$.
\begin{equation}\label{diagram7412}
\begin{CD}
\mathcal C_{(2)} @ >{\tilde\psi}>> \mathcal C_{(1)},\\
@ AA{\frak t_j^{(2)}}A @ AA{\frak t_j^{(1)}}A\\
\mathcal V_{(2)} @ >{\psi}>> \mathcal V_{(1)},
\end{CD}
\end{equation}
\item
$\tilde\psi$ and $\psi$ are $\mathcal G_2$ equivariant.
\item
$\psi$ is a submersion and the complex dimension of its fibers are 
$k_2 - k_1$.
\end{enumerate}
\end{lem}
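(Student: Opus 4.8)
The plan is to realize the pair $(\tilde\psi,\psi)$ as the forgetful morphism that deletes the last $k_2-k_1$ added marked points $w_{1,k_1+1}^{(2)},\dots,w_{1,k_2}^{(2)}$. First I would use Definition \ref{defn71} (2): the marked nodal curve $(\Sigma_1,\vec z_1\cup\vec w_1^{(1)})$ is already stable (with trivial automorphism group). Hence, on a small enough neighborhood of $[\Sigma_1,\vec z_1\cup\vec w_1^{(2)}]$ in $\mathcal M_{g,\ell+k_2}$, the forgetful morphism to $\mathcal M_{g,\ell+k_1}$ is given by literally dropping the points $w_{1,j}^{(2)}$ with $j>k_1$, with no stabilization or contraction of components; in particular it is holomorphic and carries $[\Sigma_1,\vec z_1\cup\vec w_1^{(2)}]$ to $[\Sigma_1,\vec z_1\cup\vec w_1^{(1)}]$. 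After shrinking $\mathcal V_{(2)}$ (and correspondingly $\mathcal C_{(2)}$) so that the image of this morphism lies in $\mathcal V_{(1)}$, this defines $\psi:\mathcal V_{(2)}\to\mathcal V_{(1)}$; the shrinking is harmless since everything in Proposition \ref{prop720} is really a germ along the relevant subset.

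Next I would invoke the standard structure theory of moduli of stable curves, in the complex analytic category, in the form: when the target curve is stable, the one-point forgetful morphism $\mathcal M_{g,n+1}\to\mathcal M_{g,n}$ agrees near the point in question with the universal curve $\pi:\mathcal C_{g,n}\to\mathcal M_{g,n}$, and the universal curve over $\mathcal M_{g,n+1}$ is canonically identified with the fibre product of $\mathcal C_{g,n}\to\mathcal M_{g,n}$ along this forgetful morphism (plus a canonical extra section). Iterating this $k_2-k_1$ times identifies (a neighborhood in) $\mathcal V_{(2)}$ with an open subset of the $(k_2-k_1)$-fold iterated universal curve over $\mathcal V_{(1)}$, with $\psi$ the iterated bundle projection. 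Since each stage is a proper submersion with one-dimensional fibres, $\psi$ is a submersion whose fibres have complex dimension $k_2-k_1$, giving Item (4). The same base-change compatibility yields a canonical isomorphism $\mathcal C_{(2)}\cong\mathcal C_{(1)}\times_{\mathcal V_{(1)}}\mathcal V_{(2)}$; composing it with the projection to $\mathcal C_{(1)}$ defines $\tilde\psi$, and then Diagram (\ref{diagram741}) commutes and is cartesian by construction. Item (2) is then immediate: for $j\le\ell+k_1$ the section $\frak t_j^{(2)}$ selects a marked point that is never deleted, and under the above identification it maps exactly to $\frak t_j^{(1)}$, i.e. $\tilde\psi\circ\frak t_j^{(2)}=\frak t_j^{(1)}\circ\psi$.

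For Item (3), recall that here $\frak p_1=\frak p_2$, so $\mathcal G_1=\mathcal G_2$, and an element $v$ acts on $\mathcal M_{g,\ell+k_i}$ through the permutation $\sigma_v$ of the $\vec w_1^{(i)}$-points of Definition \ref{defn71} (4). By condition (1-1) the points $\vec w_1^{(1)}$ are the first $k_1$ members of $\vec w_1^{(2)}$, and $v$ preserves $\vec w_1^{(1)}$ as a set (Definition \ref{defn71} (4) applied to $\Xi^{(1)}$); hence the permutation $\sigma_v$ attached to $\Xi^{(2)}$ preserves $\{1,\dots,k_1\}$ and restricts there to the permutation attached to $\Xi^{(1)}$. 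Therefore the forgetful morphism intertwines the two $\mathcal G_2$-actions, so $\psi$ is $\mathcal G_2$-equivariant; since the universal curves carry the induced actions and $\tilde\psi$ was built canonically out of the base-change isomorphism, $\tilde\psi$ is $\mathcal G_2$-equivariant as well.

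The step I expect to be the main obstacle is the precise justification of the ``no contraction'' claim together with the base-change compatibility of universal curves along forgetful morphisms — that forgetting marked points of an already-stable curve changes neither the curve nor, after base change, the universal family. These are standard facts about $\mathcal M_{g,n}$, but I would state them carefully in the analytic category and verify that the open set $\mathcal V_{(2)}$ is indeed identified, after shrinking, with the corresponding open subset of the iterated universal curve over $\mathcal V_{(1)}$ (for instance by comparing with Construction \ref{const2124}); the remaining verifications are routine bookkeeping with marked points and with the finite-group actions.
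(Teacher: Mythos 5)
Your proposal is correct and follows essentially the same route as the paper: forget the last $k_2-k_1$ marked points and invoke universality of $\pi_{(1)}:\mathcal C_{(1)}\to\mathcal V_{(1)}$ for the stable curve $(\Sigma_1,\vec z_1\cup\vec w_1^{(1)})$. The paper's own proof is a two-line appeal to this universality; you unpack it further via the iterated-universal-curve picture (base change along forgetful morphisms), which makes Item (4) and the cartesian property explicit rather than implicit consequences of the universal property.
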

\begin{proof}
By forgetting $k_1+1,\dots,k_2$-th marked points 
$\pi_{(2)} : \mathcal C_{(2)} \to 
\mathcal V_{(2)}$ becomes a deformation of 
$(\Sigma_1,\vec z_1
\cup \vec w_1^{\,(1)})$.
Therefore we obtain desired maps $\tilde \psi$ and $\psi$ by the 
universality of $\pi_{(1)} : \mathcal C_{(1)} \to 
\mathcal V_{(1)}$ (together with $\frak t_j^{(1)}$'s.)\footnote{Remark \ref{rem741} also applies here also.}
\end{proof}
\begin{proof}
[The proof of Lemma \ref{lem71818} in Case 1]
\par
For the proof of Lemma \ref{lem71818} 
we consider the situation when we are given  
stabilization and trivialization data  ${\frak W}^{(i)}$.
We assume Item (1-1) only. ((1-2) does not make sense.)
We assume
$
[(\Sigma_2,\vec z_2),u_2] = 
[(\Sigma_1,\vec z_1),u_1]
$
and 
$$
[(\Sigma(c),\vec z_c),u_c] \in \mathcal U(\epsilon(c);(\Sigma_2,\vec z_2),u_2,{\frak W}^{(2)})$$
with $\lim_{c\to\infty}\epsilon(c) \to 0$.
It suffices to show 
$[(\Sigma(c),\vec z_c),u_c] \in
\mathcal U(\epsilon;(\Sigma_1,\vec z_1),u_1,{\frak W}^{(1)})$
for sufficiently large $c$.
By assumption there exists ${\bf x}_c \in \mathcal V_{(2)}$
converging to the origin $o$, the $k_2$ extra marked points $\vec w_c \subset 
\Sigma(c)$  and isomorphisms
$$
\phi_c : (\Sigma^{(2)}({\bf x}_c),\vec z^{\,(2)}({\bf x}_c)
\cup \vec w^{\,(2)}({\bf x}_c))
\to (\Sigma(c),\vec z_c \cup \vec w_c).
$$
(Here $(\Sigma^{(2)}({\bf x}_c),\vec z^{\,(2)}({\bf x}_c)
\cup \vec w^{\,(2)}({\bf x}_c))$ is  a
marked stable curve of genus $g$ and $\ell+k_2$ marked 
points representing ${\bf x}_c$. We identify $\Sigma(c)$
with the fiber $\pi_{(2)}^{-1}({\bf x}_c)$.)
Moverover there exists $\delta_c < \epsilon(c)$ such that:
\begin{enumerate}
\item
The $C^2$ norm of the difference between $u_c \circ 
\phi_c \circ  \Phi_{2,{\bf x}_c,\delta_c}$
and $u_2$ is smaller than $o(c)$.
\footnote{See Remark \ref{rem730} for the definition of $o(c)$.}
\item
The map $u_c \circ 
\phi_c$ has diameter $< o(c)$ on 
$\Sigma^{(2)}({\bf x}_c) \setminus {\rm Im}(\Phi_{2,{\bf x}_c,\delta_c})$.
\end{enumerate}
We put ${\bf x}'_c = \psi({\bf x}_c)$.
We define $\vec w^{\, \prime}_c$ by 
forgetting the last $k_2 - k_1$ marked points of   $\vec w_c$.
We have an isomorphism
$$
\phi'_c : (\Sigma^{(1)}({\bf x}'_c),\vec z^{\,(1)}({\bf x}'_c)
\cup \vec w^{\,(1)}({\bf x}'_c))
\to (\Sigma(c),\vec z_c \cup \vec w^{\, \prime}_c).
$$
We have
$$
\phi'_c \circ \tilde{\psi}\vert_{\Sigma^{(2)}({\bf x}_c)}
= \phi_c.
$$
\par
Note $\Phi_{2,{\bf x}_c,\delta_c}$ and 
$\Phi_{1,{\bf x}'_c,\delta_c}$
both converge to the identity map as maps
$\Sigma_i(\delta_c) \to \mathcal C_{(i)}$,
in $C^2$ topology.
Therefore the $C^2$ difference between
$$
\tilde{\psi}\vert_{\Sigma^{(2)}({\bf x}_c)}
\circ \Phi_{2,{\bf x}_c,\delta_c}
\quad
\text{and}
\quad
\Phi_{1,{\bf x}'_c,\delta_c}
$$
goes to $0$ as $c\to \infty$.
Therefore the $C^2$ difference between
$$
u_c \circ 
\phi_c \circ  \Phi_{2,{\bf x}_c,\delta_c}
\quad
\text{and}
\quad
u_c \circ 
\phi'_c \circ  \Phi_{1,{\bf x}'_c,\delta_c}
$$
goes to $0$ as $c\to \infty$.
Therefore by (1) the 
$C^2$ difference between
$$
u_1
\quad
\text{and}
\quad
u_c \circ 
\phi'_c \circ  \Phi_{1,{\bf x}'_c,\delta_c}
$$
goes to $0$ as $c\to \infty$. (Note $u_1 = u_2$.)
\begin{sublem}\label{sublem751}
The map $u_c \circ \phi'_c$ has diameter $<o(c)$ 
on $\Sigma^{(1)}({\bf x}'_c) \setminus 
{\rm Im}(\Phi_{1,{\bf x}'_c,\delta_c})$.
\end{sublem}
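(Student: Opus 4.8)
\textbf{Proof proposal for Sublemma \ref{sublem751}.}

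The plan is to transport the diameter bound on $\Sigma^{(2)}({\bf x}_c)$ (Item (2) of the setup in Case 1) across the map $\tilde\psi$ to get the corresponding bound on $\Sigma^{(1)}({\bf x}'_c)$. The key observation is that the two complements are essentially the same set, reshuffled along the submersion $\psi$. First I would analyze the decomposition of $\Sigma^{(1)}({\bf x}'_c) \setminus {\rm Im}(\Phi_{1,{\bf x}'_c,\delta_c})$ into connected components. Each such component is either a thin neck region around a resolved node, or a subregion of an irreducible component of $\Sigma^{(1)}$ on which the coordinate $z_{\sigma}$ lives. Since $\tilde\psi\vert_{\Sigma^{(2)}({\bf x}_c)}$ is a biholomorphism onto $\Sigma^{(1)}({\bf x}'_c)$ (by the cartesian square \eqref{diagram741}), and forgetting the last $k_2-k_1$ marked points does not create or destroy nodes — these marked points $w_{1,j}^{(2)}$ with $j > k_1$ are regular points, so forgetting them involves no stabilization and no shrinking of components (the same observation used in the proof of Sublemma \ref{sublem321}) — the preimage under $\tilde\psi$ of ${\rm Im}(\Phi_{1,{\bf x}'_c,\delta_c})$ differs from ${\rm Im}(\Phi_{2,{\bf x}_c,\delta_c})$ only on regions whose image under $u_c\circ\phi'_c$ is controlled.

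More precisely, I would argue as follows. The map $\Phi_{1,{\bf x}'_c,\delta_c}$ is built (as in \eqref{loctrimap}, \eqref{loctrimap3}) by trivializing over the stable components and gluing in necks of radius $\ge \delta_c$; the same for $\Phi_{2,{\bf x}_c,\delta_c}$. Because $\tilde\psi$ commutes with the projections and the node-assigning sections \eqref{diagram7412}, and because $\delta_c$ is the same for both, the image $\tilde\psi(\Phi_{2,{\bf x}_c,\delta_c}(\Sigma_2(\delta_c)))$ and $\Phi_{1,{\bf x}'_c,\delta_c}(\Sigma_1(\delta_c))$ agree up to a region that shrinks with $c$. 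Hence every connected component $A$ of $\Sigma^{(1)}({\bf x}'_c) \setminus {\rm Im}(\Phi_{1,{\bf x}'_c,\delta_c})$ is contained in $\tilde\psi$ of a union of a bounded number of connected components of $\Sigma^{(2)}({\bf x}_c) \setminus {\rm Im}(\Phi_{2,{\bf x}_c,\delta_c})$ together with a thin annular region near $\partial{\rm Im}(\Phi_{2,{\bf x}_c,\delta_c})$. On the former, the diameter of $u_c\circ\phi_c = u_c\circ\phi'_c\circ\tilde\psi$ is $< o(c)$ by Item (2); on the thin annular region, the $C^2$-closeness of $u_c\circ\phi'_c\circ\Phi_{1,{\bf x}'_c,\delta_c}$ to $u_1$ (established just above the Sublemma) together with the fact that $u_1$ itself has small diameter near $\partial\Sigma_1(\delta_c)$ (which follows by continuity of $u_1$ and $\delta_c \to 0$) gives a bound of the same order. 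Adding finitely many such pieces, the total diameter of $u_c\circ\phi'_c$ on $A$ is $o(c)$.

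The main obstacle I anticipate is bookkeeping the overlap regions: one must check that the annular collars $\Phi_{1,{\bf x}'_c,\delta_c}(\Sigma_1(\delta_c)) \triangle \tilde\psi(\Phi_{2,{\bf x}_c,\delta_c}(\Sigma_2(\delta_c)))$ really do shrink, which requires that $\tilde\psi$ restricted to fibers converges $C^2$ to the identity identification — this is exactly the statement, used two lines above in the main text, that $\tilde{\psi}\vert_{\Sigma^{(2)}({\bf x}_c)} \circ \Phi_{2,{\bf x}_c,\delta_c}$ and $\Phi_{1,{\bf x}'_c,\delta_c}$ have $C^2$-difference $\to 0$. Once that is granted, the gluing-parameter neck regions match up to radius error $o(1)\cdot\delta_c$, and the sup of $u_c\circ\phi'_c$ over each component is estimated by a fixed finite sum of $o(c)$ terms. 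The rest is the routine observation that a finite union of sets each of small diameter, pairwise intersecting (or separated by small-diameter bridges), still has small diameter.
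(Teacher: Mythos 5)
Your decomposition strategy is a genuine alternative to the paper's argument, and it is in fact more elementary: the paper enlarges the deleted neighborhood by passing to a $\delta_c^+ > \delta_c$ with $\tilde\psi({\rm Im}(\Phi_{2,{\bf x}_c,\delta_c^+})) \subset {\rm Im}(\Phi_{1,{\bf x}'_c,\delta_c})$, so that the component $W_+ \supset W$ has a clean boundary, establishes ${\rm Diam}(u_c\circ\phi'_c)(\partial W_+) \to 0$, and then invokes holomorphicity of $u_c\circ\phi'_c$ on $W_+$ (the obstruction space is supported away from $W_+$) to propagate the boundary estimate to the interior. Your version never enlarges, so you never need the holomorphicity/monotonicity input; instead you must estimate the bulk via Item (3) and the collar separately, and then add diameters using connectedness of $A$ and the fact that the number of pieces is bounded (indeed, each $A$ is the neck of a single node, so there is exactly one $B$). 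That part is sound.

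However, there is a real slip in how you control the collar. You invoke ``the $C^2$-closeness of $u_c\circ\phi'_c\circ\Phi_{1,{\bf x}'_c,\delta_c}$ to $u_1$,'' but that statement lives on $\Sigma_1(\delta_c)$ and therefore controls $u_c\circ\phi'_c$ only on ${\rm Im}(\Phi_{1,{\bf x}'_c,\delta_c})$ --- precisely the region you have deleted. The collar piece $A\cap\tilde\psi({\rm Im}(\Phi_{2,{\bf x}_c,\delta_c}))$ is \emph{outside} ${\rm Im}(\Phi_{1,{\bf x}'_c,\delta_c})$, so that estimate does not apply. What you should do instead is pull the collar back by $(\tilde\psi\circ\Phi_{2,{\bf x}_c,\delta_c})^{-1}$ to a subset $D_c\subset\Sigma_2(\delta_c)$; the $C^2$-closeness of $\Phi_{1,{\bf x}'_c,\delta_c}$ and $\tilde\psi\circ\Phi_{2,{\bf x}_c,\delta_c}$ (the fact established two lines above the Sublemma) forces $D_c$ to lie in an $o(c)$-neighborhood of $\partial\Sigma_2(\delta_c)$; there Item (1) gives that $u_c\circ\phi_c\circ\Phi_{2,{\bf x}_c,\delta_c}$ is $C^2$-close to $u_2=u_1$, and $u_1$ has small diameter near $\partial\Sigma_2(\delta_c)$ since $\delta_c\to 0$. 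With that substitution your argument closes, and it is a legitimate variant of the paper's proof that trades the pseudoholomorphicity input for one more direct $C^2$ estimate. This is also essentially the same estimate the paper uses on $\partial W_+$; you are just applying it on a two-sided collar rather than a one-sided boundary.
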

\begin{proof}
There exists $\delta^+_c \to 0$ such that 
$\delta^+_c > \delta_c$ and 
$$
{\rm Im}(\Phi_{1,{\bf x}'_c,\delta_c})
\supset
\tilde\psi ({\rm Im}(\Phi_{2,{\bf x}_c,\delta^+_c})).
$$
Let $W$ be a connected component of 
$\Sigma^{(1)}({\bf x}_{c'}) \setminus 
{\rm Im}(\Phi_{1,{\bf x}'_c,\delta_c})$.
There exists a connected component $W_+$ of 
$\Sigma^{(1)}({\bf x}_{c'}) \setminus 
\tilde\psi ({\rm Im}(\Phi_{2,{\bf x}_c,\delta^+_c}))$
which contains it.
It suffices to show
\begin{equation}\label{form748}
\lim_{c\to\infty}{\rm Diam} (u_c\circ \phi'_c)(W_+) = 0.
\end{equation}
Note 
$$
\partial W_+\ = \tilde{\psi}
(\partial({\rm Im}(\Phi_{2,{\bf x}_c,\delta^+_c}))
=
(\tilde{\psi}\circ\Phi_{2,{\bf x}_c,\delta^+_c})
(\partial \Sigma_1(\delta_c^+)).
$$
On $\partial \Sigma_1(\delta_c^+)$, the map
$u_c \circ 
\phi'_c \circ \tilde{\psi}\circ \Phi_{2,{\bf x}_c,\delta^+_c}
= u_c \circ 
\phi_c \circ \Phi_{2,{\bf x}_c,\delta^+_c}
$ is $C^2$ close to $u_1$. 
Since $\delta_c^+ \to 0$, 
$$
\lim_{c\to\infty} {\rm Diam} (u_c \circ 
\phi_c \circ \Phi_{2,{\bf x}_c,\delta^+_c})(\partial \Sigma_1(\delta_c^+)) = 0.
$$
Therefore
$$
\lim_{c\to\infty}{\rm Diam}(u_c\circ \phi'_c)(\partial W_+) = 0.
$$
Since $u_c\circ \phi'_c$ is 
holomorphic on $W_+$ 
(this is because it satisfies the equation (\ref{eqmain615})
in Definition \ref{defn61616}
and the supports of the elements of the obstruction 
spaces are away from $W_+$), the formula (\ref{form748}) follows.
\end{proof}

Therefore 
$[(\Sigma(c),\vec z_c),u_c] \in
\mathcal U(\epsilon;(\Sigma_1,\vec z_1),u_1,{\frak W}^{(1)})$
for sufficiently large $c$.
\end{proof}
\begin{proof}[Proof of Proposition \ref{prop720} in Case 1]
For ${\bf x} \in \mathcal V_{(i)}$
we denote by $(\Sigma_1^{(i)}({\bf x}),\vec z_1^{\,(i)}({\bf x}) \cup 
\vec w_1^{\,(i)}({\bf x}))$ the fiber $\pi_{(i)}^{-1}({\bf x})$
together with marked points.
\par
For ${\bf x} \in \mathcal V_{(2)}$,
(\ref{loctrimap3}) defines an open embedding
\begin{equation}\label{form7431}
\Phi^{(2)}_{1,{\bf x},\delta'} : \Sigma_1(\delta') \to \Sigma_1^{(2)}({\bf x})
\end{equation}
which is canonically determined by the data ${\frak W}^{(2)}$.
The restriction of $\tilde \psi$ to the fiber 
$\Sigma_1^{(2)}({\bf x})$ defines a map (isomorphism)
\begin{equation}\label{form7442}
\tilde \psi_{\bf x} : \Sigma_1^{(2)}({\bf x}) \to \Sigma_1^{(1)}(\psi({\bf x})).
\end{equation}
If $\delta'$ is sufficiently small compared to $\delta$, 
we compose the maps 
the inverse of (\ref{form7431}), (\ref{form7442}) and 
$\Phi^{(1)}_{1,\psi({\bf x}),\delta}$ (which is defined also by (\ref{loctrimap3})) to obtain
\begin{equation}\label{newform750}
\Psi_{{\bf x}} = (\Phi^{(2)}_{1,{\bf x},\delta'})^{-1} \circ \tilde \psi_{\bf x}^{-1} \circ \Phi^{(1)}_{1,\psi({\bf x}),\delta}
: \Sigma_1(\delta) \to \Sigma_1(\delta').
\end{equation}
The next lemma is obvious.
\begin{lem}\label{lem74222}
The map $\hat\Psi: \mathcal V_{(2)} \to C^{\infty}(\Sigma_1(\delta),\Sigma_1(\delta'))$
which assigns $\Psi_{{\bf x}}$ to ${\bf x}$ 
is a $C^{\infty}$ map.
\end{lem}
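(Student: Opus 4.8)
The plan is to unwind the definition of $\Psi_{\bf x}$ in \eqref{newform750} and observe that each of the three maps being composed depends smoothly (in fact, in the case of the first and third factors, with all parameters explicit) on ${\bf x}$, so that the composite does too. First I would recall that $\Phi^{(1)}_{1,{\bf x},\delta}$ and $\Phi^{(2)}_{1,{\bf x},\delta'}$ are the smooth open embeddings constructed as in \eqref{loctrimap3}; by the construction of Construction \ref{cost314} these are obtained by gluing together the trivialization diffeomorphisms $\Phi_{1,a}$ (which depend smoothly on the stable parameters $\vec x$) and the explicit gluing coordinates coming from the compatible system of analytic families of complex coordinates (which depend smoothly on the gluing parameters $\vec\rho$). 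Hence ${\bf x}\mapsto \Phi^{(1)}_{1,{\bf x},\delta}$ and ${\bf x}\mapsto \Phi^{(2)}_{1,{\bf x},\delta'}$ are smooth families of smooth maps, with values in $C^\infty(\Sigma_1(\delta),\mathcal C_{(1)})$ and $C^\infty(\Sigma_1(\delta'),\mathcal C_{(2)})$ respectively; moreover $\Phi^{(2)}_{1,{\bf x},\delta'}$ is, for $\delta'$ small, an embedding onto an open set so its (partial) inverse, restricted to the image of $\Psi_{\bf x}$, is again smooth in ${\bf x}$.

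Next I would treat the middle factor $\tilde\psi_{\bf x}$. Since $\tilde\psi:\mathcal C_{(2)}\to\mathcal C_{(1)}$ is a holomorphic map of complex manifolds fitting into the cartesian square \eqref{diagram741}, its restriction to each fiber $\Sigma_1^{(2)}({\bf x})=\pi_{(2)}^{-1}({\bf x})$ is an isomorphism onto $\Sigma_1^{(1)}(\psi({\bf x}))$, and because the square is cartesian this family of fiberwise isomorphisms varies holomorphically — hence smoothly — with ${\bf x}$. Composing: on the image of $\Phi^{(1)}_{1,{\bf x},\delta}$ the map $\tilde\psi_{\bf x}^{-1}$ is defined and smooth in ${\bf x}$ (here I use that $\tilde\psi_{\bf x}$ is a biholomorphism, so its inverse is as smooth as $\tilde\psi_{\bf x}$ itself). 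Therefore the composite $(\Phi^{(2)}_{1,{\bf x},\delta'})^{-1}\circ\tilde\psi_{\bf x}^{-1}\circ\Phi^{(1)}_{1,{\bf x},\delta}$, which is exactly $\Psi_{\bf x}$, is a smooth family of smooth maps $\Sigma_1(\delta)\to\Sigma_1(\delta')$; equivalently the assignment $\hat\Psi:\mathcal V_{(2)}\to C^\infty(\Sigma_1(\delta),\Sigma_1(\delta'))$, ${\bf x}\mapsto\Psi_{\bf x}$, is $C^\infty$. One should check that $\delta'$ chosen small compared to $\delta$ indeed guarantees $\tilde\psi_{\bf x}^{-1}(\Phi^{(1)}_{1,{\bf x},\delta}(\Sigma_1(\delta)))$ lands inside the domain ${\rm Im}(\Phi^{(2)}_{1,{\bf x},\delta'})$ uniformly for ${\bf x}$ near the origin — this follows since at ${\bf x}=o$ all these maps are (close to) the identity and the images of $\Sigma_1(\delta)$ and $\Sigma_1(\delta')$ are nested for $\delta'<\delta$.

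The only mild obstacle is purely bookkeeping: one must make sense of ``smooth family of smooth maps'' in the way used throughout \cite{foooexp} and the present paper, i.e.\ as a smooth map from the finite-dimensional parameter space $\mathcal V_{(2)}$ into the Fréchet space $C^\infty(\Sigma_1(\delta),\Sigma_1(\delta'))$ (or, if one prefers, into each $C^k$ or $L^2_{m+1}$ space of maps), and verify that composition of such families is again such a family — but this is a standard fact about composition of smooth maps and causes no difficulty here because all the building blocks $\Phi^{(i)}_{1,{\bf x},\delta}$, $\tilde\psi$ are genuinely smooth (indeed the $\Phi^{(i)}$ are built from $C^\infty$ trivialization data and $\tilde\psi$ is holomorphic). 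Since the statement asserts only $C^\infty$-dependence with no estimates, there is nothing further to prove; the lemma is, as the paper says, obvious once the definition of $\Psi_{\bf x}$ is spelled out.
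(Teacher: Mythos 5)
Your argument is correct and is exactly the ``obvious'' proof the paper has in mind: unwind the three factors in \eqref{newform750}, note that $\Phi^{(i)}_{1,{\bf x},\delta}$ is a smooth family of smooth open embeddings by construction (Definition~\ref{defn7272} plus the analytic family of coordinates), that $\tilde\psi$ is holomorphic and restricts to fiberwise biholomorphisms by the cartesian square \eqref{diagram741}, and that composing and inverting such families preserves smoothness of the family. The paper offers no proof beyond the word ``obvious,'' and your write-up supplies precisely the bookkeeping it elides, including the point that $\delta'$ small relative to $\delta$ keeps the relevant images nested uniformly near the origin.
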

We next recall the following standard fact.
\begin{lem}\label{lem743111}
The map 
$$
{\rm comp} : L^2_{m+n+1}(\Sigma_1(\delta'),X) \times C^{\infty}(\Sigma_1(\delta),\Sigma_1(\delta'))
\to L^2_{m+1}(\Sigma_1(\delta),X)
$$
defined by
$$
{\rm comp}(F,\phi) = F \circ \phi
$$
is a $C^n$ map in a neighborhood of $(F_0,\phi_0)$
if $m > 10$ and $\phi_0$ is an open embedding.
\end{lem}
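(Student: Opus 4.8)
The plan is to reduce the assertion to a local Euclidean model and then run the standard Sobolev calculus, the content being essentially bookkeeping: differentiating $j$ times in the $\phi$-slot brings out $j$ derivatives of $F$, while ${\rm comp}$ is linear---hence smooth with no loss---in the $F$-slot, so that $n$ derivatives of ${\rm comp}$ require $F\in L^2_{m+n+1}$ in order to land in $L^2_{m+1}$.

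First I would trivialize by charts. Cover the compact surfaces with boundary $\Sigma_1(\delta)$, $\Sigma_1(\delta')$ by finitely many coordinate disks with a subordinate partition of unity, and cover a neighborhood of $F_0(\Sigma_1(\delta'))$ in $X$ by finitely many charts; this is legitimate after shrinking the neighborhood of $(F_0,\phi_0)$. Since $\delta'<\delta$ the set $\Sigma_1(\delta)$ is relatively compact in $\Sigma_1(\delta')$, so $\phi_0$ and every nearby $\phi$ maps $\Sigma_1(\delta)$ into a fixed relatively compact subset of $\Sigma_1(\delta')$ with $|\det D\phi|$ bounded below; Sobolev spaces on compact manifolds with boundary behave as usual (extension operators, embedding and multiplication theorems). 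We are reduced to the model statement: for bounded open sets $\Omega,\Omega'\subset\R^2$ and a fixed smooth embedding $\phi_0\colon\overline\Omega\to\Omega'$ with relatively compact image, the map $(F,\phi)\mapsto F\circ\phi$ is $C^n$ near $(F_0,\phi_0)$ from $L^2_{m+n+1}(\Omega',\R^N)\times C^\infty(\Omega,\Omega')$ to $L^2_{m+1}(\Omega,\R^N)$, where $C^\infty(\Omega,\Omega')$ carries its usual Fr\'echet topology. In the intended application (Lemma \ref{lem74222}) the family of $\phi$'s is the image of the finite-dimensional manifold $\mathcal V_{(2)}$ under a $C^\infty$ map, so only the dependence on a finite-dimensional parameter is actually needed, and the argument below specializes to that easier case.

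The core estimate is the Fa\`a di Bruno identity
\begin{equation}
\partial^\alpha(F\circ\phi)\;=\;\sum_{1\le|\beta|\le|\alpha|}\big((\partial^\beta F)\circ\phi\big)\,P_{\alpha\beta}\!\left(\partial\phi,\dots,\partial^{|\alpha|}\phi\right),
\end{equation}
with each $P_{\alpha\beta}$ a universal polynomial in the derivatives of $\phi$ of order at most $|\alpha|$. Since $m+1>10$, the embedding $L^2_j(\Omega)\hookrightarrow C^0(\overline\Omega)$ for $j\ge2$, the multiplication property $L^2_j\cdot C^j\subset L^2_j$, and the change of variables formula yield, for $2\le k\le m+n+1$,
\begin{equation}
\|F\circ\phi\|_{L^2_k(\Omega)}\;\le\;C\!\left(\|\phi\|_{C^k},\,\inf|\det D\phi|^{-1}\right)\,\|F\|_{L^2_k(\Omega')}.
\end{equation}
Approximating $F$ in $L^2_{m+1}$ by smooth maps (for which continuity in $\phi$ is elementary) and controlling the error by this bound gives the case $n=0$, i.e.\ joint continuity of ${\rm comp}$. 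For the inductive step, the formal derivative is
\begin{equation}
D\,{\rm comp}(F,\phi)(\widehat F,\widehat\phi)\;=\;\widehat F\circ\phi\;+\;\big((dF)\circ\phi\big)\cdot\widehat\phi ,
\end{equation}
where $dF\in L^2_{m+n}$ is the Euclidean differential and $\widehat\phi$ a section along $\phi$. The first summand is ${\rm comp}$ at $(\widehat F,\phi)$, the second is ${\rm comp}$ at $(dF,\phi)$---one derivative of $F$ spent---multiplied by the smooth datum $\widehat\phi$; both are $C^{n-1}$ by the inductive hypothesis (applied with $(n-1,m)$ to $dF\in L^2_{m+(n-1)+1}$). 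Organizing the induction so that the $k$-th total derivative of ${\rm comp}$ is a finite sum of terms, multilinear in the increments and of the form $\big((\partial^\beta F)\circ\phi\big)$ times a tensor depending smoothly on $\phi$ with $|\beta|\le k\le n$, each such term lands continuously in $L^2_{m+1}$ by the displayed bound applied to $\partial^\beta F\in L^2_{m+n+1-|\beta|}\subset L^2_{m+1}$.

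The one genuinely non-formal point---and the main obstacle---is that the Taylor remainder in the $\phi$-variable is truly $o$ of the increment in the $L^2_{m+1}$-norm; equivalently, that precomposition $R_\phi\colon F\mapsto F\circ\phi$ depends $C^n$-smoothly on $\phi$ as an operator $L^2_{m+n+1}\to L^2_{m+1}$. This is the classical ``$\Omega$-lemma'' of global nonlinear analysis; if one wants it self-contained, it follows from the pointwise identity $F(\phi+\widehat\phi)-F(\phi)-dF(\phi)\widehat\phi=\int_0^1\big(dF(\phi+t\widehat\phi)-dF(\phi)\big)\widehat\phi\,dt$, pulling the $L^2_{m+1}$-norm inside the integral and invoking the $n=0$ continuity statement for $dF$---exactly the place where one sees that each order of differentiation consumes one derivative of $F$. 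The hypothesis $m>10$ is used only to keep all the Sobolev embeddings $L^2_{m+1-n}\hookrightarrow C^0$ and multiplication estimates met along the way comfortably valid; one could track the precise minimal threshold, but there is no need.
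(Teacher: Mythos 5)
The paper does not actually supply a proof of Lemma \ref{lem743111}: it is introduced with the phrase ``We next recall the following standard fact'' and is used immediately without argument. Your proof is the classical $\Omega$-lemma (composition lemma) of global nonlinear analysis, which is exactly the standard fact the paper is invoking, and your treatment of it---reduction by local charts, Fa\`a di Bruno, the change-of-variables bound (justified because $C^1$-close maps from the compact $\Sigma_1(\delta)$ remain embeddings into the interior of $\Sigma_1(\delta')$), density to get the $n=0$ base case, and induction with each order of differentiation in the $\phi$-slot consuming one Sobolev derivative of $F$---is the expected reference argument and is correct. There is no divergence in approach to report since the paper records no proof.
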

We take sufficiently large $m$ and put $m_1 = m+n$, $m_2 = m+2n$.
Note
$\mathcal V_{\rm map}(\epsilon)$ is the 
$\epsilon$ neighborhood of $0$ in 
${\rm Ker}^+ D_{u_1}\overline{\partial}$.
So this space is the same for ${\frak W}^{(1)}$ and 
${\frak W}^{(2)}$.
\par
We next define a map
$$
\mathcal R_{(i)} :
\mathcal V_{\rm map}(\epsilon)
\times \mathcal V_{(i)}(\epsilon)
\to 
L^2_{m_i+1-n}(\Sigma_1(\delta^{(i)}),X) 
\times \mathcal V_{(i)}(\epsilon).
$$
Here $\delta^{(1)} = \delta$, $\delta^{(2)} = \delta'$ 
and $\mathcal V_{(i)}(\epsilon)$ is defined as follows.
Recall for ${\bf x} \in \mathcal V_{(1)}(\epsilon)$ and 
${\bf v} \in \mathcal V_{\rm map}(\epsilon)$ 
the map $u^i_{{\bf v},{\bf x}}$
is defined. (See Proposition \ref{prop710}.)
$\mathcal V_{(1)}(\epsilon)$ is an 
open neighborhood of $(\Sigma_1,\vec z_1
\cup \vec w_i)$ in $\mathcal M_{g,\ell+k_i}$. 
Note $\mathcal V_{(1)}(\epsilon)$ are actually ${\frak W}^{(i)}$
and $i$ dependent.
We  define: \index{00R3o@$\mathcal R_{(i)}$}
\begin{equation}\label{753753}
\mathcal R_{(i)}({\bf v},{\bf x})
= (u^i_{{\bf v},{\bf x}}\circ \Phi^{(i)}_{1,{\bf x},\delta^{(i)}},{\bf x})
\end{equation}
\begin{lem}\label{lemma74444}
We put the smooth structure on $\mathcal V_{(i)}(\epsilon)$ as in 
Definition \ref{defn712}.
Then $\mathcal R_{(i)}$ is a $C^n$ embedding for sufficiently 
small $\epsilon$.
\end{lem}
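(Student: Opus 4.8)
The plan is to verify three things for $\mathcal R_{(i)}$: that it is of $C^n$ class, that it is an immersion, and that it is injective; an injective $C^n$ immersion of a finite dimensional manifold which, after shrinking $\epsilon$, has relatively compact domain is proper onto its image and hence a $C^n$ embedding, so these three points suffice. Note at the outset that the second component of $\mathcal R_{(i)}$ in (\ref{753753}) is the identity map of $\mathcal V_{(i)}(\epsilon)$; therefore the differential of $\mathcal R_{(i)}$ is block triangular with the identity in the $\mathcal V_{(i)}(\epsilon)$-block, and it suffices to control the first component $({\bf v},{\bf x})\mapsto u^i_{{\bf v},{\bf x}}\circ\Phi^{(i)}_{1,{\bf x},\delta^{(i)}}$ together with its partial differential in the $\mathcal V_{\rm map}$-directions.

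For $C^n$-regularity of the first component, I would observe that $u^i_{{\bf v},{\bf x}}\circ\Phi^{(i)}_{1,{\bf x},\delta^{(i)}}$ is precisely the map ${\rm Res}(u^i_{{\bf v},{\bf x}})$ of Proposition \ref{prop711}, the only differences being the value $\delta^{(i)}$ of the neck cutoff (for $i=2$ one has $\delta'<\delta$, and $\Phi^{(2)}_{1,{\bf x},\delta'}$ is simply the restriction of $\Phi^{(2)}_{1,{\bf x},\delta}$) and the bookkeeping data $\Xi^{(i)}$, neither of which affects the estimate. Hence Proposition \ref{prop711} (applied with $m$ large enough that $m_i+1-n\ge m$, which holds since $m_i=m+in$) gives that $({\bf v},{\bf x})\mapsto u^i_{{\bf v},{\bf x}}\circ\Phi^{(i)}_{1,{\bf x},\delta^{(i)}}$ is $C^n$ as a map into $L^2_{m_i+1-n}(\Sigma_1(\delta^{(i)}),X)$; equivalently one may factor it through ${\rm comp}$ of Lemma \ref{lem743111} and the $C^{\infty}$ family ${\bf x}\mapsto\Phi^{(i)}_{1,{\bf x},\delta^{(i)}}$. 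Together with the identity second component this shows $\mathcal R_{(i)}$ is $C^n$.

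For the immersion property, by the block triangular form it is enough to show that for each fixed ${\bf x}$ the partial differential $\partial_{\bf v}\big(u^i_{{\bf v},{\bf x}}\circ\Phi^{(i)}_{1,{\bf x},\delta^{(i)}}\big)=\big(\partial_{\bf v}u^i_{{\bf v},{\bf x}}\big)\circ\Phi^{(i)}_{1,{\bf x},\delta^{(i)}}$ is injective. At the origin ${\bf v}=0,{\bf x}=0$, the family $u^i_{{\bf v},0}$ is constructed in \cite{foooexp} from the approximate solution $\exp_{u_1}({\bf v})$ with ${\bf v}\in{\rm Ker}^+ D_{u_1}\overline{\partial}$ by a Newton type correction lying in a complement of ${\rm Ker}^+ D_{u_1}\overline{\partial}$, so $\partial_{\bf v}u^i_{{\bf v},0}\vert_{{\bf v}=0}$ is the tautological inclusion of ${\rm Ker}^+ D_{u_1}\overline{\partial}$ into the sections of $u_1^*TX$. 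Since $\Phi^{(i)}_{1,0,\delta^{(i)}}$ is the inclusion $\Sigma_1(\delta^{(i)})\hookrightarrow\Sigma_1$, and a nonzero $v\in{\rm Ker}^+ D_{u_1}\overline{\partial}$ cannot vanish on $\Sigma_1(\delta^{(i)})$ — by unique continuation for the elliptic operator $D_{u_1}\overline{\partial}$ on each irreducible component, using that $E((\Sigma,\vec z),u)$ is supported away from the node neighborhoods (Definition \ref{defn42}(2)) — the partial differential at the origin is injective. Injectivity of the partial differential is an open condition in the source, so after shrinking $\epsilon$ it holds on all of $\mathcal V_{\rm map}(\epsilon)\times\mathcal V_{(i)}(\epsilon)$. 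For injectivity of $\mathcal R_{(i)}$ itself, since the second component is the identity it suffices to fix ${\bf x}$ and argue that $u^i_{{\bf v},{\bf x}}\circ\Phi^{(i)}_{1,{\bf x},\delta^{(i)}}=u^i_{{\bf v}',{\bf x}}\circ\Phi^{(i)}_{1,{\bf x},\delta^{(i)}}$ forces ${\bf v}={\bf v}'$: since $\Phi^{(i)}_{1,{\bf x},\delta^{(i)}}$ is an open embedding the two maps $u^i_{{\bf v},{\bf x}},u^i_{{\bf v}',{\bf x}}$ agree on its image, on each connected component of the complement both are genuinely $J$-holomorphic (the obstruction elements being supported inside the image) and they agree along the boundary, so by unique continuation for $J$-holomorphic maps they agree everywhere, whence ${\bf v}={\bf v}'$ by Proposition \ref{prop710}(3). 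Shrinking $\epsilon$ so that the domain is relatively compact in a slightly larger chart then makes $\mathcal R_{(i)}$ proper onto its image, hence a $C^n$ embedding. I expect the only genuinely delicate point to be the identification of $\partial_{\bf v}u^i_{{\bf v},{\bf x}}$ at the origin with the inclusion of ${\rm Ker}^+ D_{u_1}\overline{\partial}$ and the verification that restriction to $\Sigma_1(\delta^{(i)})$ preserves injectivity; once Proposition \ref{prop711} is in hand the remaining steps are routine.
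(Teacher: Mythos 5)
Your proof is correct and follows essentially the same route as the paper's: $C^n$-regularity comes from Proposition \ref{prop711}, the derivative at the origin is injective on the $\mathcal V_{\rm map}$-factor by unique continuation (using that the obstruction space is supported away from the nodes), the $\mathcal V_{(i)}$-component is the identity, and one concludes after shrinking $\epsilon$. The paper compresses the final step into a single appeal to the inverse function theorem (local immersion theorem), whereas you spell out the immersion/injectivity/properness checks explicitly; the extra pointwise-injectivity argument via Proposition \ref{prop710}(3) and unique continuation for $J$-holomorphic maps is not needed once $\epsilon$ is small, but it is a valid and harmless strengthening.
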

\begin{proof}
The fact that $\mathcal R_{(i)}$ is a $C^n$ map is 
a consequence of Proposition \ref{prop711}.
The derivative of the first factor of $\mathcal R_{(i)}$ at $(0,o_i)$
restricts to an embedding
$$
T_0V_{\rm map}(\epsilon)
\to L^2_{m_i+1-n}(\Sigma_1(\delta^{(i)}),u_i^*TX) .
$$
Here $\mathcal R_{(i)}(0,o_i) = (u_i,o_i)$ and $o_i = [\Sigma_1,\vec z_1 \cup \vec w_i]$.
The injectivity of this map is a consequence of unique continuation.
Note $\mathcal V_{(i)}(\epsilon)$ factor of $\mathcal R_{(i)}$
is $({\bf v},{\bf x}) \mapsto \bf x$. 
Therefore the derivative of $\mathcal R_{(i)}$
is injective at $(0,o_i)$.
The lemma now follows from the inverse function theorem.
\end{proof}
We define a map
$$
\Phi : L^2_{m_2+1-n}(\Sigma_1(\delta^{(2)}),X) 
\times \mathcal V_{(2)}(\epsilon)
\to 
L^2_{m_1+1-n}(\Sigma_1(\delta^{(1)}),X) 
\times \mathcal V_{(1)}(\epsilon)
$$
by 
\begin{equation}\label{defnofPhi}
\Phi(F,{\bf x}) = (F\circ \Psi_{{\bf x}},\psi({\bf x}))
\end{equation}
\begin{lem}\label{lemma753}
For small $\epsilon$ there exist positive numbers $\epsilon'$,$\delta'$  and 
a $C^n$-map 
$$
\tilde{\mathscr J} : \mathcal V_{\rm map}(\epsilon') \times \mathcal V_{(2)}(\epsilon') \to \mathcal V_{\rm map}(\epsilon) \times \mathcal V_{(1)}(\epsilon)
$$
such that the next diagram commutes.
\begin{equation}\label{diagram7460}
\begin{CD}
\mathcal V_{\rm map}(\epsilon') \times \mathcal V_{(2)}(\epsilon') @ >{\mathcal R_{(2)}}>> L^2_{m_2+1-n}(\Sigma_1(\delta'),X) 
\times \mathcal V_{(2)}(\epsilon')\\
@ VV{\tilde{\mathscr J}}V @ VV{\Phi}V\\
\mathcal V_{\rm map}(\epsilon) \times \mathcal V_{(1)}(\epsilon) @ >{\mathcal R_{(1)}}>> L^2_{m_1+1-n}(\Sigma_1(\delta),X) 
\times \mathcal V_{(1)}(\epsilon).
\end{CD}
\end{equation}
\end{lem}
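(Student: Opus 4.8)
The plan is to build $\tilde{\mathscr J}$ as the composite $\mathcal R_{(1)}^{-1}\circ\Phi\circ\mathcal R_{(2)}$, so that the commutativity of \eqref{diagram7460} holds by construction. Two things then have to be established: first, that the image of $\Phi\circ\mathcal R_{(2)}$ is contained in the image of $\mathcal R_{(1)}$ (so that the composite is a genuine map into $\mathcal V_{\rm map}(\epsilon)\times\mathcal V_{(1)}(\epsilon)$), and second, that the composite is of class $C^n$. The identification of $\Phi\circ\mathcal R_{(2)}$ with points of the image of $\mathcal R_{(1)}$ will come from the surjectivity statement Proposition \ref{prop710} (2), applied with the stabilization data $\Xi^{(1)}$, while uniqueness of the preimage (hence well-definedness of $\tilde{\mathscr J}$ as a map) is Proposition \ref{prop710} (3).

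For the image containment, fix $({\bf v},{\bf x})\in\mathcal V_{\rm map}(\epsilon')\times\mathcal V_{(2)}(\epsilon')$ and set $\frak q=[(\Sigma_1^{(2)}({\bf x}),\vec z_1^{\,(2)}({\bf x})),u^2_{{\bf v},{\bf x}}]$. By Proposition \ref{prop710} (1), $\overline\partial u^2_{{\bf v},{\bf x}}\in E((\Sigma_1^{(2)}({\bf x}),\vec z_1^{\,(2)}({\bf x})),u^2_{{\bf v},{\bf x}})$, so $\frak q\in U(((\Sigma,\vec z),u);\epsilon_2)$, and for $\epsilon'$ small $\frak q$ is $\epsilon$-close to $\frak p_1$ with respect to $\Xi^{(1)}_0$ — this is exactly the inclusion of Lemma \ref{lem71818} already proved in the case $\Xi^{(1)}\subseteq\Xi^{(2)}$, with Lemma \ref{lem7777} used to manage the passage between the gluing radii $\delta'$ and $\delta$ (here one also shrinks $\delta'$ relative to $\delta$). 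The isomorphism $\tilde\psi_{\bf x}\colon\Sigma_1^{(2)}({\bf x})\to\Sigma_1^{(1)}(\psi({\bf x}))$ of \eqref{form7442} carries the first $\ell+k_1$ marked points to the first $\ell+k_1$ marked points by \eqref{diagram7412}; so, taking $\vec w^{\,\prime}$ to be the first $k_1$ entries of $\vec w_1^{\,(2)}({\bf x})$ and $\phi=\tilde\psi_{\bf x}^{-1}$, Proposition \ref{prop710} (2) applied with $\Xi^{(1)}$ and the base point $\psi({\bf x})\in\mathcal V_{(1)}(\epsilon)$ yields a ${\bf v}^{\,\prime}\in\mathcal V_{\rm map}(\epsilon)$, unique by (3), with $u^2_{{\bf v},{\bf x}}\circ\tilde\psi_{\bf x}^{-1}=u^1_{{\bf v}^{\,\prime},\psi({\bf x})}$. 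Since $\Phi^{(2)}_{1,{\bf x},\delta'}\circ\Psi_{\bf x}=\tilde\psi_{\bf x}^{-1}\circ\Phi^{(1)}_{1,{\bf x},\delta}$ by \eqref{newform750}, and $\Phi^{(1)}_{1,{\bf x},\delta}$ here is $\Phi^{(1)}_{1,\psi({\bf x}),\delta}$, combining with \eqref{753753} and \eqref{defnofPhi} gives
\[
\Phi\bigl(\mathcal R_{(2)}({\bf v},{\bf x})\bigr)
=\bigl(u^2_{{\bf v},{\bf x}}\circ\Phi^{(2)}_{1,{\bf x},\delta'}\circ\Psi_{\bf x},\,\psi({\bf x})\bigr)
=\bigl(u^1_{{\bf v}^{\,\prime},\psi({\bf x})}\circ\Phi^{(1)}_{1,\psi({\bf x}),\delta},\,\psi({\bf x})\bigr)
=\mathcal R_{(1)}({\bf v}^{\,\prime},\psi({\bf x})),
\]
which gives both the containment and the formula $\tilde{\mathscr J}({\bf v},{\bf x})=({\bf v}^{\,\prime},\psi({\bf x}))$.

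For smoothness, $\mathcal R_{(2)}$ is $C^n$ by Lemma \ref{lemma74444}; $\Phi$ is $C^n$ because it is the product of the holomorphic (hence smooth) map $\psi$, the identity on the $\mathcal V$-factor, and $(F,{\bf x})\mapsto F\circ\Psi_{\bf x}$, which is $C^n$ by Lemma \ref{lem743111} (the composition map $L^2_{m_2+1-n}\times C^\infty\to L^2_{m_1+1-n}$ is $C^n$, since $m_2+1-n=(m_1+1-n)+n$) together with the $C^\infty$-dependence of $\Psi_{\bf x}$ on ${\bf x}$ from Lemma \ref{lem74222}. Hence $\Phi\circ\mathcal R_{(2)}$ is $C^n$ into $L^2_{m_1+1-n}(\Sigma_1(\delta),X)\times\mathcal V_{(1)}(\epsilon)$. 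Since $\mathcal R_{(1)}$ is a $C^n$ embedding of the finite dimensional manifold $\mathcal V_{\rm map}(\epsilon)\times\mathcal V_{(1)}(\epsilon)$ (Lemma \ref{lemma74444}), it has a $C^n$ inverse on (a neighbourhood of its image in) its image by the inverse function theorem, and therefore $\tilde{\mathscr J}=\mathcal R_{(1)}^{-1}\circ\Phi\circ\mathcal R_{(2)}$ is $C^n$, with \eqref{diagram7460} commuting by construction.

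The delicate point — and the one I expect to cost the most care — is the first step: certifying that $\frak q$, transported through the non-holomorphic change of coordinates $\tilde\psi_{\bf x}$ and with the gluing region shrunk from $\delta'$ to $\delta$, still satisfies all four hypotheses (a)--(d) of Proposition \ref{prop710} (2) with respect to $\Xi^{(1)}$, in particular the $C^2$-estimate (c) and the diameter bound (d) on the complement of the gluing region. This is precisely where Lemma \ref{lem7777} and the exponential decay estimates of Proposition \ref{prop711} are invoked, and it is also what pins down the bookkeeping of Sobolev indices $m_1=m+n$, $m_2=m+2n$ forced by the derivative loss in Lemma \ref{lem743111}; once these closeness bounds are secured, everything else is formal manipulation of the cartesian square \eqref{diagram741} and of the definitions of $\mathcal R_{(i)}$, $\Phi$ and $\Psi_{\bf x}$.
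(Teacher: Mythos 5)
Your proposal is correct and follows essentially the same route as the paper's own proof: define $u' = u^2_{{\bf v},{\bf x}}\circ\tilde\psi_{\bf x}^{-1}$, check via (\ref{diagram741}), $C^2$-closeness, and the diameter bound (using Lemma~\ref{lem7777}) that the hypotheses of Proposition~\ref{prop710}~(2) are met for $\Xi^{(1)}$ at $\psi({\bf x})$, then read off $\tilde{\mathscr J}({\bf v},{\bf x})=({\bf v}',\psi({\bf x}))$, and finally obtain $C^n$ smoothness from Lemmata~\ref{lem74222}, \ref{lem743111}, \ref{lemma74444}. The only cosmetic difference is that you route the closeness verification through Lemma~\ref{lem71818} rather than re-deriving the estimates in place; the substance is the same.
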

\begin{proof}
The existence of a map $\tilde{\mathscr J}$ such that the diagram (\ref{diagram7460})
commutes is a consequence of Proposition \ref{prop710}
as follows.
\par
Let ${\bf v} \in \mathcal V_{\rm map}(\epsilon)$, 
${\bf x} \in \mathcal V_{(i)}(\epsilon)$.
Then
$$
\Phi (\mathcal R_{(2)}({\bf v},{\bf x}))
=
(u^2_{{\bf v},{\bf x}}\circ \Phi^{(2)}_{1,{\bf x},\delta'}\circ \Psi_{\bf x},\psi({\bf x}))
$$
Note $\Psi_{\bf x} = (\Phi^{(2)}_{1,{\bf x},\delta'})^{-1}
\circ \tilde\psi_{\bf x}^{-1} 
\circ \Phi^{(1)}_{1,\psi({\bf x}),\delta}$.
\par
We put
$$
u' = u^2_{{\bf v},{\bf x}} \circ \tilde{\psi}_{\bf x}^{-1} 
: \Sigma_1(\psi({\bf x})) \to X.
$$
Since 
$
\tilde{\psi}_{\bf x}
 : 
(\Sigma_2({\bf x}),\vec z_2({\bf x}))
\cong (\Sigma_1(\psi({\bf x})), \vec z_1(\psi({\bf x}))$ 
is a bi-holomorphic map
$$
\overline{\partial} u^2_{{\bf v},{\bf x}}
\in 
E((\Sigma_2({\bf x}),\vec z_2({\bf x})),u^2_{{\bf v},{\bf x}})
$$
implies
\begin{equation}
\overline{\partial} u' 
\in E((\Sigma_1(\psi({\bf x})),\vec z_1(\psi({\bf x})),u').
\end{equation}
Moreover the $C^2$ distance between
$$
u' \circ \Phi^{(1)}_{1,\psi({\bf x}),\delta}
\qquad 
\text{and}
\qquad
u^2_{{\bf v},{\bf x}}
\circ \Phi^{(2)}_{1,{\bf x},\delta}
$$
goes to $0$  as $\epsilon \to 0$ by Lemma \ref{prop710} (4).
By assumption the $C^2$ distance between
$$
u^2_{{\bf v},{\bf x}}
\circ \Phi^{(2)}_{1,{\bf x},\delta}
\qquad \text{and}
\qquad
u_2 = u_1
$$
is smaller than $\epsilon'$.
Therefore the $C^2$ distance between
$$
u' \circ \Phi^{(1)}_{1,\psi({\bf x}),\delta}
\qquad 
\text{and}
\qquad
u_2 = u_1
$$
is small. 
We can show that 
the map $u'$ has
diameter $<\epsilon$ on the complement of the image of 
$\Phi^{(1)}_{1,\psi({\bf x}),\delta}$
if $\epsilon'$ is sufficiently small,
using the fact
that $\frak p_2$ is $\epsilon$ close to $\frak p_1$ with 
respect to ${\frak W}^{(1)}$.
(We use Lemma \ref{lem7777} here.)
Moreover $d(o,\psi({\bf x}))$ goes to $0$ as 
$d(o,{\bf x})$ goes to zero.
\par
Therefore by Proposition \ref{prop710} (2)
there exists ${\bf v}'$ such that
\begin{equation}\label{neweq578}
u' = u^1_{{\bf v}',\psi({\bf x})}.
\end{equation}
Then
\begin{equation}\label{neweq578+1}
u^1_{{\bf v}',\psi({\bf x})} \circ \Phi^{(1)}_{1,\psi({\bf x}),\delta}
=
u^2_{{\bf v},{\bf x}} \circ \tilde{\psi}_{\bf x}^{-1} 
\circ \Phi^{(1)}_{1,\psi({\bf x}),\delta}
=
u^2_{{\bf v},{\bf x}} \circ \Phi^{(2)}_{1,{\bf x},\delta}
\circ \Psi_{\bf x}.
\end{equation}
By putting
$$
\tilde{\mathscr J}({\bf v},{\bf x})
= ({\bf v}',\psi({\bf x}))
$$
Diagram \ref{diagram7460}
commutes.
\par
Lemmata \ref{lem74222} and \ref{lem743111} then imply that $\Phi$ 
is a $C^n$ map.
Lemma \ref{lemma74444} implies that $\mathcal R_{(i)}$ are $C^n$ embedding.
Therefore the commutativity of Diagram \ref{diagram7460} implies that 
$\tilde{\mathscr J}$ is a $C^n$ map.
\end{proof}
By definition
$V(\frak p_i;\epsilon_{(i)},({\frak W}^{(i)},\vec{\mathcal N}_i))$ 
(See Definition \ref{defn71474}) is a submanifold of 
$\mathcal V_{\rm map}(\epsilon_{(i)}) \times \mathcal V_{(i)}(\epsilon_{(i)})$.
(Here $\epsilon_{(1)} = \epsilon$, $\epsilon_{(2)} = \epsilon'$.) 
\begin{lem}\label{lem753753}
There exists a map
$
\mathscr J_{12;\epsilon,\epsilon'} : V(\frak p_2;\epsilon',({\frak W}^{(2)},\vec{\mathcal N}_2)) \to 
V(\frak p_1;\epsilon,({\frak W}^{(1)},\vec{\mathcal N}_1))
$
such that the next diagram commutes.
\begin{equation}\label{diagram7477}
\begin{CD}
V(\frak p_2;\epsilon',({\frak W}^{(2)},\vec{\mathcal N}_2)) @ >{}>> \mathcal V_{\rm map}(\epsilon') \times \mathcal V_{(2)}(\epsilon')\\
@ V{\mathscr J_{12;\epsilon,\epsilon'}}VV @ V{\tilde{\mathscr J}}VV\\
V(\frak p_1;\epsilon,({\frak W}^{(1)},\vec{\mathcal N}_1))@ >{}>> \mathcal V_{\rm map}(\epsilon) \times \mathcal V_{(1)}(\epsilon),
\end{CD}
\end{equation}
where horizontal arrows are canonical inclusions.
Moreover $\mathscr J_{12;\epsilon,\epsilon'}$ 
is of $C^n$ class.
\end{lem}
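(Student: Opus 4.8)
The plan is to define $\mathscr J_{\frak p_1,\frak p_2;\Xi^{(1)},\Xi^{(2)};\epsilon,\epsilon'}$ as the restriction of the map $\tilde{\mathscr J}$ constructed in Lemma \ref{lemma753} to the submanifold $V(\frak p_2;\epsilon',\Xi^{(2)}) \subset \mathcal V_{\rm map}(\epsilon') \times \mathcal V_{(2)}(\epsilon')$, and then to check two points: that the image of this restriction lies in $V(\frak p_1;\epsilon,\Xi^{(1)})$, and that the resulting map is of $C^n$ class. Commutativity of Diagram (\ref{diagram7477}) will then be immediate, since the horizontal arrows are the canonical inclusions and $\mathscr J_{\frak p_1,\frak p_2;\Xi^{(1)},\Xi^{(2)};\epsilon,\epsilon'}$ is by construction the restriction of $\tilde{\mathscr J}$.

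To see that the image lands in $V(\frak p_1;\epsilon,\Xi^{(1)})$, recall that $V(\frak p_i;\epsilon,\Xi^{(i)})$ is cut out of $\mathcal V_{\rm map}(\epsilon) \times \mathcal V_{(i)}(\epsilon)$ by the conditions that the evaluation maps at the added marked points $w^{(i)}_{1,j}$ take values in $\mathcal N^{(i)}_j$, $j = 1,\dots,k_i$. Writing $\tilde{\mathscr J}({\bf v},{\bf x}) = ({\bf v}',\psi({\bf x}))$ as in the proof of Lemma \ref{lemma753}, the map ${\bf v}'$ is characterized there by $u^1_{{\bf v}',\psi({\bf x})} = u^2_{{\bf v},{\bf x}}\circ \tilde\psi_{\bf x}^{-1}$, with $\tilde\psi_{\bf x}$ the isomorphism of (\ref{form7442}). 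By Diagram (\ref{diagram7412}) the map $\tilde\psi$ carries the $j$-th marked-point section of $\mathcal C_{(2)}$ to that of $\mathcal C_{(1)}$ for $j = 1,\dots,\ell + k_1$, so in particular, using Item (1-1) above ($w^{(1)}_{1,j} = w^{(2)}_{1,j}$ for $j\le k_1$), we get $w^{(1)}_{1,j}(\psi({\bf x})) = \tilde\psi_{\bf x}(w^{(2)}_{1,j}({\bf x}))$ for $j = 1,\dots,k_1$. Therefore
\[
{\rm EV}_{w^{(1)}_{1,j}}({\bf v}',\psi({\bf x}))
= u^1_{{\bf v}',\psi({\bf x})}\bigl(w^{(1)}_{1,j}(\psi({\bf x}))\bigr)
= u^2_{{\bf v},{\bf x}}\bigl(w^{(2)}_{1,j}({\bf x})\bigr)
= {\rm EV}_{w^{(2)}_{1,j}}({\bf v},{\bf x})
\]
for $j = 1,\dots,k_1$. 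If $({\bf v},{\bf x}) \in V(\frak p_2;\epsilon',\Xi^{(2)})$ then the last term lies in $\mathcal N^{(2)}_j$, which coincides with $\mathcal N^{(1)}_j$ by Item (1-2) above; hence $({\bf v}',\psi({\bf x})) \in V(\frak p_1;\epsilon,\Xi^{(1)})$, after taking $\epsilon'$ small enough that $({\bf v}',\psi({\bf x}))$ already lies in the coordinate range $\mathcal V_{\rm map}(\epsilon) \times \mathcal V_{(1)}(\epsilon)$, which is part of the output of Lemma \ref{lemma753}.

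For the $C^n$ regularity, I would invoke that $\tilde{\mathscr J}$ is $C^n$ between the ambient spaces (Lemma \ref{lemma753}), together with Lemma \ref{lem713} applied to $\Xi^{(1)}$ and to $\Xi^{(2)}$, which says that the relevant evaluation maps are transverse to $\prod_j \mathcal N_j$; consequently $V(\frak p_1;\epsilon,\Xi^{(1)})$ and $V(\frak p_2;\epsilon',\Xi^{(2)})$ are $C^n$ submanifolds carrying the induced differentiable structure, and a $C^n$ map of ambient manifolds mapping one submanifold into another restricts to a $C^n$ map between them. The $\mathcal G_2$-equivariance needed for the application to Proposition \ref{prop720} follows the same way from the $\mathcal G_2$-equivariance of $\tilde\psi$, $\psi$ and of $({\bf v},{\bf x})\mapsto u_{{\bf v},{\bf x}}$. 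I expect the only real bookkeeping obstacle to be keeping the hierarchy of constants $\epsilon,\epsilon',\delta,\delta'$ consistent so that all of $\Phi^{(i)}_{1,{\bf x},\delta^{(i)}}$, $\tilde\psi_{\bf x}$ and the composite $\Psi_{\bf x}$ in (\ref{newform750}) are simultaneously defined on the required domains; but this range of parameters has already been pinned down in the proof of Lemma \ref{lemma753}, so no new analytic estimate enters here.
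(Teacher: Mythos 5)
Your proposal is correct and follows essentially the same route as the paper's own proof: restrict $\tilde{\mathscr J}$, verify that the image lies in $V(\frak p_1;\epsilon,\Xi^{(1)})$ by tracking the evaluation conditions through $\tilde\psi_{\bf x}$ and using (1-1), (1-2), and then deduce $C^n$ regularity from the fact that the horizontal arrows are $C^n$ embeddings (via the transversality of Lemma \ref{lem713}) and $\tilde{\mathscr J}$ is a $C^n$ map of ambient spaces. Your spelling-out of the submanifold/restriction step is a slightly more explicit rendering of the paper's one-line conclusion, and the closing remark on $\mathcal G_2$-equivariance goes a little beyond what the lemma statement requires, but there is no difference in substance.
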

\begin{proof}
Let $({\bf v},{\bf x}) \in V(\frak p_2;\epsilon',({\frak W}^{(2)},\vec{\mathcal N}_2))$.
By Definition \ref{defn71474} we have
$$
u^2_{{\bf v},{\bf x}}(w_{2,j}({\bf x})) \in \mathcal N_j^{(2)},
$$
for $j=1,\dots,k_2$. We remark $\mathcal N_j^{(1)} = \mathcal N_j^{(2)}$ 
for $j=1,\dots,k_1$ by our choice.
Let $({\bf v}',{\psi}({\bf x})) = \tilde{\mathscr J}({\bf v},{\bf x})$.
Then by the commutativity of Diagram (\ref{diagram7460})
and (\ref{neweq578}) (\ref{neweq578+1}), 
we have
$$
u^1_{{\bf v}',{\psi}({\bf x})}
= u^2_{{\bf v},{\bf x}} \circ \tilde{\psi}_{\bf x}^{-1}.
$$
By the commutativity of Diagram (\ref{diagram741})
$$
u^1_{{\bf v}',{\psi}({\bf x})}(w_{1,j}({\bf x}))
= u^1_{{\bf v}',{\psi}({\bf x})}(\tilde{\psi}_{\bf x}(w_{2,j}({\bf x}))
= u^2_{{\bf v},{\bf x}}(w_{2,j}({\bf x})) \in \mathcal N_j^{(2)},
$$
for $j=1,\dots,k_1$.
Therefore by Definition \ref{defn71474}
$$
\tilde{\mathscr J}({\bf v},{\bf x}) = ({\bf v}',{\psi}({\bf x}))
\in V(\frak p_1;\epsilon,({\frak W}^{(1)},\vec{\mathcal N}_1)).
$$
We thus find the map 
$\mathscr J_{12;\epsilon,\epsilon'}$ such that Diagram 
(\ref{diagram7477}) commutes.  Since the horizontal arrows are $C^n$ embeddings 
and right vertical
arrow is a $C^n$ map, the map $\mathscr J_{12;\epsilon,\epsilon'}$ 
is of $C^n$ class as required.
\end{proof}
Commutativity of Diagrams (\ref{diagram7460}) and (\ref{diagram7477})
implies that $\mathscr J_{12;\epsilon,\epsilon'}$ 
is $\mathcal G_1$ equivariant and (\ref{formform740})
commutes.
\par
We finally 
show that $\mathscr J_{12;\epsilon,\epsilon'}$ 
is an open embedding. We consider two sub-cases.
\par\smallskip
\noindent(Case 1-1) 
$k_1 = k_2$.
\par
In this case $\vec w_1 = \vec w_2$, $\vec{\mathcal N}_1 = \vec{\mathcal N}_2$. 
The difference of ${\frak W}^{(1)}$ and ${\frak W}^{(2)}$ is the
trivialization data and the analytic families of  coordinates.
\begin{lem}\label{loem757}
In Case 1-1, the map $\tilde{\mathscr J}$ in Diagram (\ref{diagram7460}) is an open embedding.
\end{lem}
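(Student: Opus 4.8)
\textbf{Proof proposal for Lemma \ref{loem757} (the map $\tilde{\mathscr J}$ is an open embedding in Case 1-1).}

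The plan is to exploit the commuting square \eqref{diagram7460} together with the fact that, when $k_1=k_2$, the map $\psi:\mathcal V_{(2)}\to\mathcal V_{(1)}$ is an isomorphism (its fibers have dimension $k_2-k_1=0$, and it is a submersion between manifolds of the same dimension, hence a local biholomorphism near the origin, which after shrinking we may take to be an open embedding). Similarly, since $\vec w_1=\vec w_2$ and the obstruction spaces $E((\Sigma_1,\vec z_1),u_1)$ and the kernel space ${\rm Ker}^+D_{u_1}\overline\partial$ are intrinsically defined from $((\Sigma_1,\vec z_1),u_1)$ and independent of the choice of trivialization data or analytic families of coordinates, the factor $\mathcal V_{\rm map}(\epsilon)$ is literally the same for $\Xi^{(1)}$ and $\Xi^{(2)}$. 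So the only genuine change between the two charts is a reparametrization of the source curve encoded in the diffeomorphism $\Psi_{\bf x}$ of \eqref{newform750}, which for each $\bf x$ is an open embedding $\Sigma_1(\delta)\to\Sigma_1(\delta')$ depending smoothly on $\bf x$ (Lemma \ref{lem74222}).

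The key steps, in order: \emph{(i)} Show $\Phi$ of \eqref{defnofPhi} is a $C^n$ open embedding near $(u_1,o)$. Injectivity: if $F\circ\Psi_{\bf x}=F'\circ\Psi_{{\bf x}'}$ and $\psi({\bf x})=\psi({\bf x}')$ then ${\bf x}={\bf x}'$ (as $\psi$ is injective), hence $\Psi_{\bf x}=\Psi_{{\bf x}'}$ is a fixed open embedding and $F=F'$ on its image; since in our situation $F,F'$ arise as restrictions of solutions $u'$ which are holomorphic outside a compact set and agree on an open set, unique continuation forces $F=F'$ everywhere they are both defined (alternatively, one shrinks $\delta'$ so that $\Psi_{\bf x}$ is surjective onto $\Sigma_1(\delta')$, making the point moot). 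Openness of the image and the $C^n$-diffeomorphism property follow from the inverse function theorem in Banach manifolds once one checks the derivative at $(u_1,o)$ is an isomorphism; this derivative is, block-wise, the identity on the $\mathcal V_{\rm map}$-direction composed with $d\Psi_o$ (which is an isomorphism of tangent spaces since $\Psi_o$ is an open embedding, in fact close to the identity) and $d\psi_o$ on the $\mathcal V_{(i)}$-direction. \emph{(ii)} Recall from Lemma \ref{lemma74444} that $\mathcal R_{(1)}$ and $\mathcal R_{(2)}$ are $C^n$ embeddings. \emph{(iii)} Conclude from the commuting square \eqref{diagram7460}, $\tilde{\mathscr J}=\mathcal R_{(1)}^{-1}\circ\Phi\circ\mathcal R_{(2)}$ on the image of $\mathcal R_{(2)}$, that $\tilde{\mathscr J}$ is a composition of $C^n$ open embeddings, hence a $C^n$ open embedding onto its image.

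The main obstacle I expect is step \emph{(i)}: verifying that $\Phi$, which is built from the smoothly varying family of reparametrizations $\Psi_{\bf x}$ together with the composition map ${\rm comp}(F,\phi)=F\circ\phi$, is not merely $C^n$ but an \emph{open embedding} with $C^n$ inverse. This requires (a) the loss-of-derivatives bookkeeping — one must work with $m_2=m+2n$ versus $m_1=m+n$ as in the ambient proof so that ${\rm comp}$ of Lemma \ref{lem743111} applies with $n$ derivatives to spare — and (b) identifying the derivative $D\Phi_{(u_1,o)}$ and checking its invertibility; the subtlety is that $D({\rm comp})$ at $(u_1,\Psi_o)$ involves the pullback $v\mapsto v\circ\Psi_o$ on sections, which is an isomorphism precisely because $\Psi_o$ is an open embedding close to the identity (it sends $\Sigma_1(\delta)$ into $\Sigma_1(\delta')$, and by choosing $\delta'$ appropriately we arrange the relevant restriction maps to be isomorphisms between the Sobolev spaces in play). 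Once $D\Phi_{(u_1,o)}$ is seen to be an isomorphism of the relevant Banach spaces, the inverse function theorem gives the local $C^n$-diffeomorphism, and shrinking $\epsilon,\epsilon'$ completes the argument; the remaining bookkeeping (compatibility of the $\mathcal G_1$-actions, which is immediate from the $\mathcal G_2=\mathcal G_1$-equivariance of $\psi$, $\tilde\psi$, and $\Psi_{\bf x}$) is routine.
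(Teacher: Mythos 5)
Your approach has a genuine gap at step \emph{(i)}. You try to show that $\Phi$ of (\ref{defnofPhi}) is an open embedding of Banach manifolds via the inverse function theorem, but $\Phi$ cannot be an open embedding and its derivative cannot be a Banach isomorphism, for two independent reasons. First, the source and target Sobolev levels are different by design: $\Phi$ goes from $L^2_{m_2+1-n}(\Sigma_1(\delta'),X)\times\mathcal V_{(2)}$ to $L^2_{m_1+1-n}(\Sigma_1(\delta),X)\times\mathcal V_{(1)}$ with $m_2=m+2n>m_1=m+n$, so $F\mapsto F\circ\Psi_{\bf x}$ lands in a strictly larger Banach space (lower regularity) and its linearization is not surjective. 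Second, and even if you tried to force a single Sobolev level, $\Psi_{\bf x}:\Sigma_1(\delta)\to\Sigma_1(\delta')$ with $\delta'<\delta$ is an open embedding into a \emph{strictly larger} domain ($\Sigma_1(\delta)\subsetneq\Sigma_1(\delta')$), so $F\mapsto F\circ\Psi_{\bf x}$ is a restriction-type map that loses information about $F$ near the removed neck regions; it is not injective, let alone an isomorphism. You acknowledge the loss-of-derivatives bookkeeping as an ``obstacle,'' but the suggestion that ``by choosing $\delta'$ appropriately we arrange the relevant restriction maps to be isomorphisms between the Sobolev spaces in play'' is not available here — no choice of $\delta'$ makes $L^2_{m_2+1-n}\to L^2_{m_1+1-n}$ by composition an isomorphism.

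The paper's proof sidesteps all of this with a short symmetry argument. In Case 1-1 one has $\vec w_1=\vec w_2$ and the same underlying $\frak p_1=\frak p_2$, so the roles of $\Xi^{(1)}$ and $\Xi^{(2)}$ are literally interchangeable: running the very same construction with the labels swapped replaces $\Psi_{\bf x}$ by its inverse and $\Phi$ by its inverse, and produces the map $\tilde{\mathscr J}$ (equivalently $\mathscr J_{\frak p_2,\frak p_1;\Xi^{(2)},\Xi^{(1)};\epsilon',\epsilon}$) in the opposite direction, which by the commutativity of (\ref{diagram7460}) on both sides is the two-sided inverse of $\tilde{\mathscr J}$. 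Since the swapped map is $C^n$ by exactly the argument already established in Lemma \ref{lemma753}, $\tilde{\mathscr J}$ has a $C^n$ inverse between finite-dimensional manifolds, hence is an open embedding — no inverse function theorem on the infinite-dimensional $\Phi$ is needed. If you want to repair your route, the cleanest fix is to abandon the attempt to make $\Phi$ itself invertible and instead invoke this swap symmetry to produce the inverse of the \emph{finite-dimensional} $\tilde{\mathscr J}$ directly.
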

\begin{proof}
Since $\vec w_1 = \vec w_2$ we can exchange the role of ${\frak W}^{(1)}$ and ${\frak W}^{(2)}$.
Then by definition $\Phi_{{\bf x}}$ will become $\Phi_{\psi({\bf x})}^{-1}$.
Thus $\mathscr J_{21;\epsilon',\epsilon}$ obtained by  exchanging the role of ${\frak W}^{(1)}$ and ${\frak W}^{(2)}$
is the inverse of $\mathscr J_{12;\epsilon,\epsilon'}$.
\end{proof}
Since $\mathcal N_j^{(1)} = \mathcal N_j^{(2)}$ and $\# \vec w_1 = \# \vec w_2$, 
the equations to cut down 
$V(\frak p_i;\epsilon,({\frak W}^{(i)},\vec N_i))$ from 
$\mathcal V_{\rm map}(\epsilon) \times \mathcal V_{(i)}(\epsilon)$ coincide
each other for $i=1,2$.
Therefore 
$\mathscr J_{12;\epsilon,\epsilon'}$ 
is an open embedding in Case 1-1.
We thus proved Proposition \ref{prop720}
in Case 1-1.
\qed
\par\smallskip
\noindent(Case 1-2)
We show that, in case $\frak p_1=\frak p_2$  and Case 1, we can 
change the trivialization data and analytic families 
of  coordinates of ${\frak W}^{(2)}$ to obtain 
${\frak W}^{(3)}$ so that $\mathscr J_{13;\epsilon,\epsilon''}$ 
is an open embedding.
\par
We consider  irreducible component $\Sigma_{1,a}$ of $\Sigma_1$ and 
corresponding irreducible component $\Sigma_{3,a}$ of $\Sigma_3 =\Sigma_1$.
Forgetful map of the marked points determines the following commutative diagram.
\begin{equation}\label{diagram741irr}
\begin{CD}
\mathcal C_{g_a,\ell_a + k_{2,a}} @ >{\tilde\psi}>> \mathcal C_{g_a,\ell_a + k_{1,a}},\\
@ VV{\pi}V @ VV{\pi}V\\
\mathcal M_{g_a,\ell_a + k_{2,a}} @ >{\psi}>> \mathcal M_{g_a,\ell_a + k_{1,a}},
\end{CD}
\end{equation}
Here $\ell_a + k_{1,a}$ (resp. $\ell+k_{2,a} = \ell+k_{3,a}$) is the number of marked or nodal points on $\Sigma_{1,a}$
(resp. $\Sigma_{3,a}$). 
(Note $k_a = \#(\vec w_1 \cap \Sigma_{1,a})$.)
The vertical arrows are projections of the universal families
of deformations of $\Sigma_{i,a}$ together with marked points.
\begin{lem}\label{lem7555}
We may take the trivialization data of ${\frak W}^{(3)}$ so that 
the next diagram commutes.
\begin{equation}\label{diagram74555}
\begin{CD}
\mathcal V_{3,a} \times \Sigma_{3,a} @ >{\phi_a^{(3)}}>> \mathcal C_{g_a,\ell_a + k_{2,a}}
\\
@ V{\psi \times {\rm id}}VV @ V{\tilde{\psi}}VV\\
\mathcal V_{1,a} \times \Sigma_{1,a} @ >{\phi_a^{(1)}}>> \mathcal C_{g_a,\ell_a + k_{1,a}}.
\end{CD}
\end{equation}
Here the maps $\phi_a^{(3)}$, $\phi_a^{(1)}$ have the properties of 
the maps $\phi_a$ in 
Definition \ref{defn7272} (2).
\end{lem}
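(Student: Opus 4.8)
The plan is to construct the trivialization data $\Phi_a^{(3)}$ for $\Xi^{(3)}$ directly from the already-chosen $\Phi_a^{(1)}$, using the forgetful map $\tilde\psi$ as a transport mechanism. First I would observe that since $\Xi^{(3)}$ differs from $\Xi^{(2)}$ only in the trivialization data and the system of analytic families of complex coordinates (while keeping $\vec w_2 = \vec w_3$ and $\mathcal N_j^{(3)} = \mathcal N_j^{(2)}$), we are free to {\it define} $\Phi_a^{(3)}$ rather than to take whatever was given. The natural definition is forced by Diagram (\ref{diagram74555}): for $({\bf x},p) \in \mathcal V_{3,a} \times \Sigma_{3,a} = \mathcal V_{3,a} \times \Sigma_{1,a}$ we set
\begin{equation}
\Phi_a^{(3)}({\bf x},p) = \tilde\psi^{-1}_{\bf x}\bigl(\Phi_a^{(1)}(\psi({\bf x}),p)\bigr),
\end{equation}
where $\tilde\psi_{\bf x}$ is the restriction of $\tilde\psi$ to the fiber $\pi^{-1}({\bf x}) = \Sigma_{3,a}({\bf x})$, which is an isomorphism onto $\pi^{-1}(\psi({\bf x})) = \Sigma_{1,a}(\psi({\bf x}))$ by the cartesian property of Diagram (\ref{diagram741irr}) coming from the universality of the Deligne--Mumford family. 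With this definition the commutativity of (\ref{diagram74555}) is tautological.

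Next I would check that the object so defined is in fact a legitimate trivialization data, i.e.\ that it satisfies Definition \ref{defn7272} (1)--(4). Condition (1), that $\Phi_a^{(3)}$ is a diffeomorphism onto $\pi^{-1}(\mathcal V_{3,a})$ making the square (\ref{diagram51622}) commute, follows because $\Phi_a^{(1)}$ has this property and $\tilde\psi$, restricted fiberwise, is a fiberwise isomorphism covering $\psi$; composing a fiberwise diffeomorphism with a fiber-preserving diffeomorphism again gives a diffeomorphism compatible with the projections. Condition (4), the normalization (\ref{form7575}) at the nodal marked points, is inherited from the corresponding property of $\Phi_a^{(1)}$ together with the fact that $\tilde\psi$ is compatible with the sections $\frak t_j$ assigning the marked (hence nodal) points — this is exactly the content of Diagram (\ref{diagram7412}) in the ambient lemma of Case 1, restricted to the $a$-th component. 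Conditions (2)(3), the $\mathcal G_i$-equivariance statements, follow from the $\mathcal G_2$-equivariance ($=\mathcal G_3$-equivariance) of $\tilde\psi$ and $\psi$ combined with the equivariance already built into $\Phi_a^{(1)}$; here one uses that $\mathcal G_3 = \mathcal G_1$ as abstract groups since $\frak p_1 = \frak p_2 = \frak p_3$, so the permutation actions on the enumerations of the added marked points are compatible under $\psi$. I would also need to choose the system of analytic families of complex coordinates of $\Xi^{(3)}$ compatibly: simply pull back $\varphi_{1,a,j}^{(1)}$ by the same recipe, $\varphi_{3,a,j}^{(3)}({\bf x},z) = \tilde\psi_{\bf x}^{-1}(\varphi_{1,a,j}^{(1)}(\psi({\bf x}),z))$, which automatically satisfies Definition \ref{defn7372} (3) relative to the $\Phi_a^{(3)}$ just defined, and (4) by equivariance.

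The main obstacle I anticipate is purely bookkeeping rather than conceptual: one must verify that the forgetful-map square (\ref{diagram741irr}) really is cartesian at the relevant level, so that $\tilde\psi_{\bf x}$ is an isomorphism and not merely a map, and that the coordinate $z_{1,a,j}$ appearing in (\ref{form7575}) is genuinely carried to $z_{3,a,j}$ under this identification — i.e.\ that the identification $\Sigma_{3,a} = \Sigma_{1,a}$ used implicitly in writing Diagram (\ref{diagram74555}) is the one induced by $\tilde\psi$ at the base point $o$, and is consistent with how the enumerations of marked points match up under forgetting the last $k_2 - k_1$ points. Once this compatibility of base points and enumerations is nailed down, the diagram chase is immediate and there is no analysis involved. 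The payoff, as used in the surrounding argument for Proposition \ref{prop720}, is that with this choice $\Psi_{\bf x}$ in (\ref{newform750}) becomes the identity on $\Sigma_1(\delta)$ whenever $\delta' = \delta$, so the map $\Phi$ in (\ref{defnofPhi}) is (a restriction of) a coordinate projection, hence $\tilde{\mathscr J}$ and therefore $\mathscr J_{\frak p_1,\frak p_2;\Xi^{(1)},\Xi^{(3)};\epsilon,\epsilon'}$ is manifestly an open embedding.
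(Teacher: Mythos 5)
Your proposal is correct and is precisely the paper's own proof, which reads in full: ``We can define $\Phi_a^{(3)}$ by Diagram (\ref{diagram74555}) itself.'' Your explicit formula $\Phi_a^{(3)}({\bf x},p)=\tilde\psi_{\bf x}^{-1}\bigl(\Phi_a^{(1)}(\psi({\bf x}),p)\bigr)$ is exactly what that sentence means, and your subsequent verification of Definition \ref{defn7272} is the part the paper leaves implicit (though note you have shifted the item numbers of Definition \ref{defn7272} by one: the diffeomorphism/projection condition is item~(2), not~(1), and equivariance is item~(3), not~(2)(3)).
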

\begin{proof}
We can define $\phi_a^{(3)}$ by Diagram  (\ref{diagram74555}) itself.
\end{proof}
\begin{rem}\label{rem756}
We remark that when we make the choice as in Lemma \ref{lem7555}
the $\Sigma_{3,a}$ factor of $(\phi_a^{(3)})^{-1}(w_{3,j}({\bf x}))$ cannot 
be independent of ${\bf x}$ for $j > k_1$.
Namely (\ref{form7575}) does not hold for those marked points of $\Sigma_3$.
This is {\it the} reason why we do {\it not} assume (\ref{form7575}) 
for marked points of $\Sigma_i$ but only for nodal points.
\end{rem}
We next choose the analytic  family of  coordinates
$\varphi_{3,a,j} : \mathcal V_{3,a} \times D^2(2) \to \mathcal C_{g_a,\ell_a + k_{2,a}}$
for marked points on $\Sigma_3$ corresponding to nodal points of $\Sigma_3$ as follows.
Let $\varphi_{1,a,j} : \mathcal V_{1,a} \times D^2(2) \to \mathcal C_{g_a,\ell_a + k_{1,a}}$
be the  analytic  family of  coordinates associated to ${\frak W}^{(1)}$ for the 
corresponding nodal of $\Sigma_1$. (Note $\Sigma_1 = \Sigma_3$.)
We require
\begin{equation}\label{form757575}
\tilde\psi(\varphi_{3,a,j}({\bf x},z)) = \varphi_{1,a,j}(\psi({\bf x}),z).
\end{equation}
It is obvious that there is such choice of $\varphi_{3,a,j}$.
By construction, the commutativity of Diagram (\ref{diagram74555}) and 
 (\ref{form757575}) imply the next formula.
 \begin{equation}\label{form758585}
 \Phi_{1,\psi({\bf x}),\delta} =  \tilde{\psi} \circ  \Phi_{3,{\bf x},\delta}.
\end{equation}
(\ref{form758585}) and (\ref{newform750}) imply that 
the map $\Phi$ defined in (\ref{defnofPhi}) is :
\begin{equation}\label{form760}
\Phi(F,{\bf x}) = (F,\psi({\bf x})).
\end{equation}
We remark that the obstruction bundle $E((\Sigma',\vec z^{\,\prime}),u')$ is independent of the 
extra marked points $\vec w^{\,\prime}$. Moreover the 
commutativity of Diagram  (\ref{diagram74555}) implies that the identification of 
the source curve with $\Sigma_1 = \Sigma_3$ we use during the gluing process is 
independent of the $\ell + k_1 +1$-th,\dots,$\ell + k_2$ marked points.
Therefore we have the next formula:
\begin{equation}\label{form761}
u^1_{{\bf v},\psi({\bf x})} \circ  \Phi_{1,\psi({\bf x}),\delta}
=
u^3_{{\bf v},{\bf x}} \circ  \Phi_{3,{\bf x},\delta}.
\end{equation}
Formulae (\ref{form760}) and (\ref{form761}) imply
\begin{equation}\label{form762}
\tilde{\mathscr J}({\bf v},{\bf x}) = ({\bf v},\psi({\bf x}))
\end{equation}
Using (\ref{form761}), (\ref{form762}) we can easily prove that 
$\mathscr J_{13;\epsilon,\epsilon''}$
is an open embedding in the same way as Lemma \ref{lem713}.
We thus proved Proposition \ref{prop720}
in Case 1-2.
\par
Now we consider the general case of Case 1.
Suppose $\frak p_1$ $\frak p_2$ and $({\frak W}^{(1)},\vec{\mathcal N}_1)$, $({\frak W}^{(2)},\vec{\mathcal N}_2)$
are as in Case 1.
Then we can take $\frak p_3$ and  $({\frak W}^{(3)},\vec{\mathcal N}_3)$
such that $\frak p_1$, $\frak p_3$ and $({\frak W}^{(1)},\vec{\mathcal N}_1)$, $({\frak W}^{(3)},\vec{\mathcal N}_3)$
are as in Case 1-2.
Moreover $\frak p_2$, $\frak p_3$ and $({\frak W}^{(2)},\vec{\mathcal N}_2)$, $({\frak W}^{(3)},\vec{\mathcal N}_3)$
are as in Case 1-1.
(Note $\frak p_1=\frak p_2=\frak p_3$ in this case.)
Therefore we obtain required 
$\mathscr J_{12;\epsilon,\epsilon'}$ 
by composing 
$\mathscr J_{13;\epsilon,\epsilon''}$
and an inverse of 
$\mathscr J_{23;\epsilon',\epsilon''}$.
The proof of Proposition \ref{prop720}
in Case 1 is complete.
\end{proof}
\par\medskip
\noindent(Case 2) 
We assume $\frak p_1 = \frak p_2 = ((\Sigma_1,\vec z_1),u_1)$.
We also require $({\frak W}^{(1)},\vec{\mathcal N}_1) \supseteq ({\frak W}^{(2)},\vec{\mathcal N}_2)$
\par
The proof of this case is entirely similar to Case 1 and so is 
omitted.
\par\medskip
\noindent(Case 3) 
We assume $\frak p_1 = \frak p_2 = ((\Sigma_1,\vec z_1),u_1)$.
We also require $\vec w^{\,(1)} \cap \vec w^{(2)} = \emptyset$.
\par
We define $({\frak W}^{(3)},\vec{\mathcal N}_3)$ as follows.
$\vec w^{\,(3)} = \vec w^{\,(1)} \cup \vec w^{(2)}$.
$\mathcal N^{(3)}_j$ is $\mathcal N^{(1)}_{j'}$ (resp.
$\mathcal N^{(2)}_{j'}$)
if $w^{(3)}_{1,j} = w^{(1)}_{1,j'}$ (resp. 
$w^{(3)}_{1,j} = w^{(2)}_{1,j'}$).
We take any choice of the trivialization data and 
of analytic families of  coordinates.
\par
Then the triples $\frak p_1,({\frak W}^{(1)},\vec{\mathcal N}_1)$, and $\frak p_3,({\frak W}^{(3)},\vec{\mathcal N}_3)$
(resp. $\frak p_2,({\frak W}^{(2)},\vec{\mathcal N}_2)$ and $\frak p_3,({\frak W}^{(3)},\vec{\mathcal N}_3)$)
satisfy the conditions for (Case 1) or (Case 2).
Therefore we obtain required 
$\mathscr J_{12;\epsilon,\epsilon'}$ 
by composing 
$\mathscr J_{23;\epsilon',\epsilon''}$
and an inverse of 
$\mathscr J_{13;\epsilon,\epsilon''}$.
\par
We can prove Lemma \ref{lem71818} also in the same way.
\par\medskip
\noindent(Case 4) 
We assume $\frak p_1 = \frak p_2 = ((\Sigma_1,\vec z_1),u_1)$ only.
\par
We can find $({\frak W}^{(3)},\vec{\mathcal N}_3)$ with $\frak p_3 = \frak p_1$
such that 
$\vec w^{\,(1)} \cap \vec w^{(3)} = \emptyset
= \vec w^{\,(2)} \cap \vec w^{(3)}$.
We then apply (Case 3) twice and compose the resulting maps 
to obtain the required 
$\mathscr J_{12;\epsilon,\epsilon'}$.
\par\medskip
We can prove Lemma \ref{lem71818} also in the same way.
We thus completed the case $\frak p_1 =  \frak p_2$.
\par\medskip
\noindent(Case 5)
We consider the general case where 
$\frak p_1 \ne \frak p_2$.
\par
\begin{proof}[The proof of Lemma \ref{lem71818} in Case 5]
Using (Case 4) it suffices to show the following.
For given stabilization and trivialization data ${\frak W}^{(1)}$ at $\frak p_1$ 
we can find stabilization  and trivialization data ${\frak W}^{(2)}$
at $\frak p_2$ such that 
Lemma \ref{lem71818} holds.
We will prove this statement below.
\par
We fixed ${\frak W}^{(1)}$, in particular we fixed 
$\vec w_1$. We take the universal family of deformation 
of $(\Sigma_1,\vec z_1 \cup \vec w_1)$ and denote it by $\pi : \mathcal C_{(1)} \to \mathcal V_{(1)}$.
(It comes with sections assigning marked points.)
\par
Since $\frak p_2 = [\Sigma_2,\vec z_2]$ is $\epsilon$-close to 
$\frak p_1$ with respect to ${\frak W}^{(1)}$
there exists ${\bf x}_2$ and $\vec w_2$ such that
$$
\phi : (\Sigma_1({\bf x}_2),\vec z_1({\bf x}_2) \cup \vec w_1({\bf x}_2)) \cong  (\Sigma_2, \vec z_2 \cup \vec w_2).
$$
\par
We take this $\vec w_2$ as a part of the data consisting ${\frak W}^{(2)}$.
Let $\pi : \mathcal C_{(2)} \to \mathcal V_{(2)}$ be the universal 
family of deformation of $(\Sigma_2, \vec z_2 \cup \vec w_2)$.
Then we have an open embeddings 
$\psi : \mathcal V_{(2)} \to \mathcal V_{(1)}$ and 
$\tilde\psi : \mathcal C_{(2)} \to \mathcal C_{(1)}$ 
such that Diagrams (\ref{diagram741}) and (\ref{diagram7412}) 
commute.
\begin{lem}\label{lemlem762}
There exist  stabilization and trivialization data ${\frak W}^{(2)}$ so that
\begin{equation}\label{form763}
\tilde{\psi}_{\bf x} \circ \Phi_{2,{\bf x},\delta'} = \Phi_{1,\psi({\bf x}),\delta} \circ \Phi_{1,{\bf x}_2,\delta}^{-1} \circ \phi^{-1}
\end{equation}
holds on 
$
\phi(\Phi_{1,{\bf x}_2,\delta}(\Sigma_1(\delta)))
$ if $\delta'$ is small.
Here $\tilde{\psi}_{\bf x}$ is the restriction of $\tilde\psi$ to $\Sigma_2({\bf x})$.
\end{lem}
\begin{equation}\label{diagram769}
\begin{CD}
(\Sigma_1({\bf x}_2)(\delta) \,\,\overset{\phi}\hookrightarrow\,\, \Sigma_2(\delta')) @ >{\Phi_{2,{\bf x},\delta'}}>> \Sigma_2({\bf x})\\
@ AA{\Phi_{1,{\bf x}_2,\delta}}A @ VV{\tilde\psi_{\bf x}}V\\
\Sigma_1(\delta) @ >{\Phi_{1,\psi({\bf x}),\delta}}>> \Sigma_1(\psi({\bf x})),
\end{CD}
\end{equation}
\begin{proof}
$\Sigma_1({\bf x}_2) \cong \Sigma_2$ is obtained 
by deforming the complex structure of irreducible components of $\Sigma_1$ and gluing several 
irreducible components along (some of) marked points. 
\par
Note the weak stabilization data $\vec w_2$ was defined by sending 
weak stabilization data $\vec w_1$ of ${\frak W}^{(1)}$ by $\Phi_{1,{\bf x}_2,\delta}$.
\par
We define families of coordinates at the nodes which are parts of ${\frak W}^{(2)}$ 
as follows.
Suppose $\Sigma_2({\bf x}_0)$ has the same number of nodes as $\Sigma_2$.
Note each node of $\Sigma_2({\bf x}_0)$ corresponds to an node of $\Sigma_1(\psi({\bf x}_0))$.
Therefore we pull back families of coordinates at the nodes of $\Sigma_1(\psi({\bf x}_0))$
by $\tilde\psi_{\bf x}$ to obtain the families of coordinates at the nodes we look for. 
We remark that according to the definition the families of coordinates at the nodes of $\Sigma_1({\bf x}_0')$ 
is given only in case $\Sigma_1({\bf x}_0')$ has the same number of 
nodes as $\Sigma_1$. (See Definition \ref{defn7372}.)
However they  canonically induce one of $\Sigma_1({\bf x}')$ for any ${\bf x}'$.
In fact,  $\Sigma_1({\bf x}')$ is obtained from some $\Sigma_1({\bf x}'_0)$
(with the same number of nodes as $\Sigma_1$) by performing the gluing 
at several nodes. The nodes of $\Sigma_1({\bf x}')$ correspond to the nodes of 
$\Sigma_1({\bf x}'_0)$ where such gluing construction were not performed. So neighborhoods of the nodes of 
$\Sigma_1({\bf x}')$ are canonically identified with neighborhoods of certain nodes of $\Sigma_1({\bf x}'_0)$.
\par
We finally define local trivializations. 
Suppose $\Sigma_2({\bf x}_0)$ has the same number of nodes as $\Sigma_2$.
We use 
$
\phi \circ \Phi_{1,{\bf x}_2,\delta} \circ \Phi_{1,\psi({\bf x}_0),\delta}^{-1} 
\circ \tilde\psi_{{\bf x}_0}
$
to define a diffeomorphism 
$$
\phi   \circ \Phi_{1,{\bf x}_2,\delta} \circ \Phi_{1,\psi({\bf x}_0),\delta}^{-1} 
\circ \tilde\psi_{{\bf x}_0} : 
\tilde\psi^{-1}_{{\bf x}_0}(\Phi_{1,\psi({\bf x}_0),\delta}(\Sigma_1(\delta))) \to 
\Sigma_1(\delta)  \to
\Sigma_1({\bf x}_2) 
\to \Sigma_2.
$$
Here $\tilde\psi^{-1}_{{\bf x}_0}(\Phi_{1,\psi({\bf x}_0),\delta}(\Sigma_1(\delta)))
\subset \Sigma_2({\bf x}_0)$. 
The complement
$$
\Sigma_2({\bf x}_0)
\setminus \tilde\psi^{-1}_{{\bf x}_0}(\Phi_{1,\psi({\bf x}_0),\delta}(\Sigma_1(\delta)))
$$
is the union of the following two types of 
connected components. (See Figure \ref{FigureOctGroupoind}.)
\begin{enumerate}
\item[(I)]
A neighborhood of the nodal points of $\Sigma_2({\bf x}_0)$.
\item[(II)]
A neck region. It is a part which we obtain by performing the gluing at certain 
nodes of $\Sigma_1(\psi({\bf x}_0))$.
\end{enumerate}
We extend $\phi_0$ to those connected components as follows.
\par
To the part (I) we extend so that it is compatible with the 
families of coordinates at the nodes we produced above.
There is a unique way to do so.
\par
To the part (II) we extend it in an arbitrary way. 
By taking $\mathcal V_2$ small we can show the existence of extension.
\par
Thus we defined ${\frak W}^{(2)}$.
The commutativity of (\ref{diagram769}) is an immediate consequence of the 
construction if $\Sigma_2({\bf x})$ has the same number of nodes as $\Sigma_2$.
\par
The commutativity of (\ref{diagram769}) in the general case then follows from the 
fact that the families of coordinates at the nodes 
we use to perform the gluing for $\Sigma_2$ and 
one at the nodes of $\Sigma_1$ which remain to be nodal in $\Sigma_2({\bf x}_0)$ are 
compatible by the construction.
\begin{figure}[h]
\centering
\includegraphics[scale=0.7]{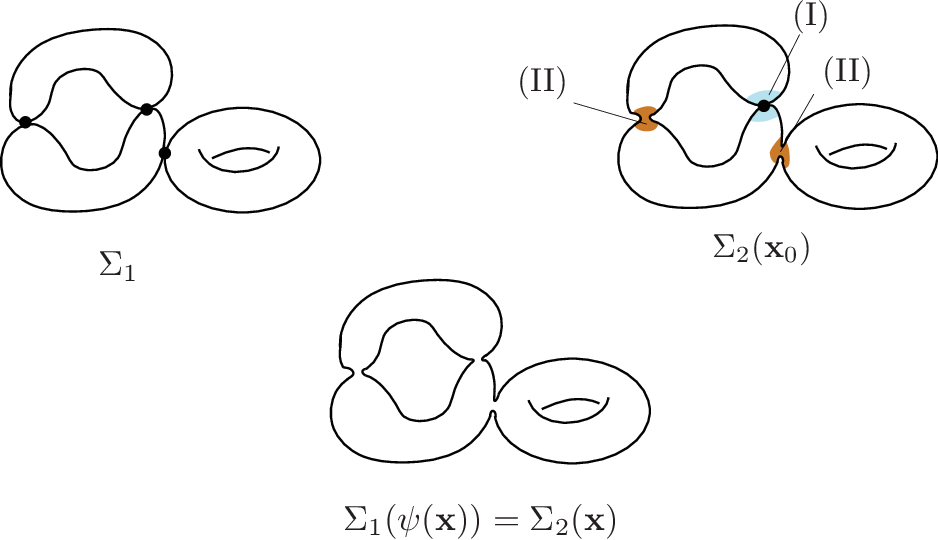}
\caption{Part (I) and (II)}
\label{FigureOctGroupoind}
\end{figure}
\end{proof}
We take the choice of ${\frak W}^{(2)}$ as in Lemma \ref{lemlem762}.
Let
$$
[(\Sigma_c,\vec z_c),u_c] \in \mathcal U(\epsilon(c);(\Sigma_2,\vec z_2),u_2,
{\frak W}^{(2)}).
$$
By definition there exists $\vec w_c \subset \Sigma_c$,
${\bf x}_c \in \mathcal V_{(2)}(\epsilon_c)$, 
a bi-holomorphic map
$$
\phi_c : (\Sigma_2({\bf x}_c),\vec z_2({\bf x}_c)\cup \vec w_2({\bf x}_c)) \to
(\Sigma_c,\vec z_c \cup \vec w_c),
$$
and $\delta_c$,
such that the following holds.
\begin{enumerate}
\item
The $C^2$ distance between $u_2$ and $u_c \circ \phi_c 
\circ \Phi_{2,{\bf x}_c,\delta_c}$ is smaller than $o(c)$.
\item $d({\bf x}_c,o)$ goes to zero as $c$ goes to infinity.
\item
The map $u_c \circ \phi_c$ has diameter $<o(c)$ on 
$\Sigma_2({\bf x}_c) \setminus {\rm Im}(\Phi_{2,{\bf x}_c,\delta_c})$.
\end{enumerate}
By Lemma \ref{lem7777} we may assume that $\delta_c \to 0$.
\par
By Item (2) we have $\psi({\bf x}_c) \in \mathcal V^{(1)}$ for large $c$ 
and an isomorphism
$$
\tilde{\psi}_{{\bf x}_c} :
(\Sigma_2({\bf x}_c),\vec z_2({\bf x}_c)\cup \vec w_2({\bf x}_c))
\to 
(\Sigma_1(\psi({\bf x}_c)),\vec z_1(\psi({\bf x}_c))\cup \vec w_1(\psi({\bf x}_c))).
$$
We put 
$$
\phi'_c = \phi_c \circ (\tilde{\psi}_{{\bf x}_c})^{-1}.
$$
By our choice of ${\frak W}^{(2)}$, the next diagram commutes.
\begin{equation}\label{diagram769}
\xymatrix{
&\Sigma_1({\bf x}_2)(\delta)  \ar[r]^{\phi} &\Sigma_2(\delta_c)\ar[r]^{\Phi_{2,{\bf x}_c,\delta_c}} &\Sigma_2({\bf x}_c)\ar[rd]^{\phi_c}\ar[dd]_{\tilde{\psi}_{{\bf x}_c}}  \\
&&&&
\Sigma_c \ar[r]^{u_c} &X
\\
&\Sigma_1(\delta)\ar[uu]^{\Phi_{1,{\bf x}_2,\delta}}\ar[rr]_{\Phi_{1,\psi({\bf x}_c),\delta}} 
&& 
\Sigma_1(\psi({\bf x}_c))\ar[ru]_{\phi'_c}
}
\end{equation}
Moreover the $C^2$ distance between 
$$
u_2 \circ \phi \circ \Phi_{1,{\bf x}_2,\delta}
\qquad
\text{and}
\qquad
u_1
$$
is smaller than $\epsilon$.
Therefore (using Item (1) above also)  for sufficiently large $c$ 
the $C^2$ distance between 
$$
u_c \circ \phi'_c \circ \Phi_{1,\psi({\bf x}_c),\delta}
\qquad
\text{and}
\qquad
u_1
$$
is smaller than $\epsilon$.
Thus we checked Definition \ref{defn412} (1). Definition \ref{defn412} (2) is easy to check.
\par
We will check Definition \ref{defn412} (3).
Let $C_c$ be a connected component of 
$\Sigma_1(\psi({\bf x}_c)) \setminus  {\rm Im}(\Phi_{1,\psi({\bf x}_c),\delta})$.
There is a unique nodal point $p_{C_c}$ of $\Sigma_1$ which corresponds to $C_c$.
(See Figure \ref{Figurep71}.)
By our choice of ${\frak W}^{(2)}$
one of the following holds.
\begin{enumerate}
\item[(a)]
$$
C_c = \tilde\psi_{{\bf x}_c}(C_c')
$$
for some 
connected component $C'_c$ of 
$\Sigma_2({\bf x}_c) \setminus  \Phi_{2,{\bf x}_c,\delta}$.
\item[(b)]
There is no nodal point corresponding to $p_{C_c}$ in $\Sigma_2$. 
\end{enumerate}
Suppose we are in case (a).
By the commutativity of Diagram (\ref{diagram769})
$$
(u_c \circ \phi_c)(C_c) = (u_c \circ \phi'_c)(C_c').
$$
Let 
$$
C''_c = C'_c \setminus {\rm Im}(\Phi_{2,{\bf x}_c,\delta_c}).
$$
$C''_c$ is connected and hence  the diameter of 
$(u_c \circ \phi'_c)(\tilde{\psi}_{{\bf x}_c}(C''_c))$
is smaller than $o(c)$.
\par
On the other hand, we have 
$$
C'_c \setminus C''_c \subseteq {\rm Im}
(\Phi_{2,{\bf x}_c,\delta_c} \setminus \Sigma_2(\delta)).
$$
We define $D_c$ by:
$$
C'_c \setminus C''_c = 
(\Phi_{2,{\bf x}_c,\delta_c})(D_c).
$$
On $D_c$ the $C^2$ distance between
$$
u_2 
\qquad
\text{and}
\qquad
u_c \circ \phi_c \circ \Phi_{2,{\bf x}_c,\delta_c}
$$
is smaller than $o(c)$.
On the other hand since $D_c$ is contained in a connected component 
of $\Sigma_2 \setminus \Sigma_2(\delta)$ 
the diameter of $u_2(D_c)$ is smaller than $\epsilon$.
\par
Therefore the diameter of $(u_c \circ \phi'_c)(C_c)$
is smaller than $\epsilon$ for sufficiently large $c$
in case (a).
\par
Suppose we are in case (b).
Note 
$$
u_c \circ \phi_c \circ \tilde{\psi}^{-1}_{{\bf x}_c}
 = u_c \circ \phi'_c
$$
holds on $C_c$. (In fact they both are defined there.)
Therefore
$$
\lim_{c \to \infty} {\rm Diam}(u_c \circ \phi'_c)(C_c)
= \lim_{c \to \infty} {\rm Diam}(u_c \circ \phi_c)(\tilde C_c)
$$
where $\tilde{\psi}_{{\bf x}_c}(\tilde C_c) = C_c$.
Since we are in case (b), $\tilde C_c = 
\Phi_{2,{\bf x}_c,\delta}(\hat C_c)$ for some $\hat C_c \subset \Sigma_2(\delta_c)$.
We may assume that $\hat C_c$ lies in the $o(c)$ neighborhood of 
$\phi(C_{0,c})$, where $C_{0,c}$ is a component of $\Sigma_1({\bf x}_2) \setminus {\rm Im}(\Phi_{1,{\bf x}_2,\delta})$.
Moreover on a neighborhood of $C_{0,c}$ 
the maps $u_c \circ \phi_c \circ \Phi_{2,{\bf x}_c,\delta_c}$ converges to $u_2$.
Thus
$$
{\rm Diam}(u_c \circ \phi_c)(\tilde C_c)
\le 
{\rm Diam}(u_2(C_{0,c})) + o(c).
$$
By assumption ${\rm Diam}(u_2(C_{0,c})) < \epsilon$.
So we conclude 
${\rm Diam}(u_c \circ \phi_c)(\tilde C_c) < \epsilon$
for sufficiently large $c$.
\begin{figure}[h]
\centering
\includegraphics[scale=0.5]{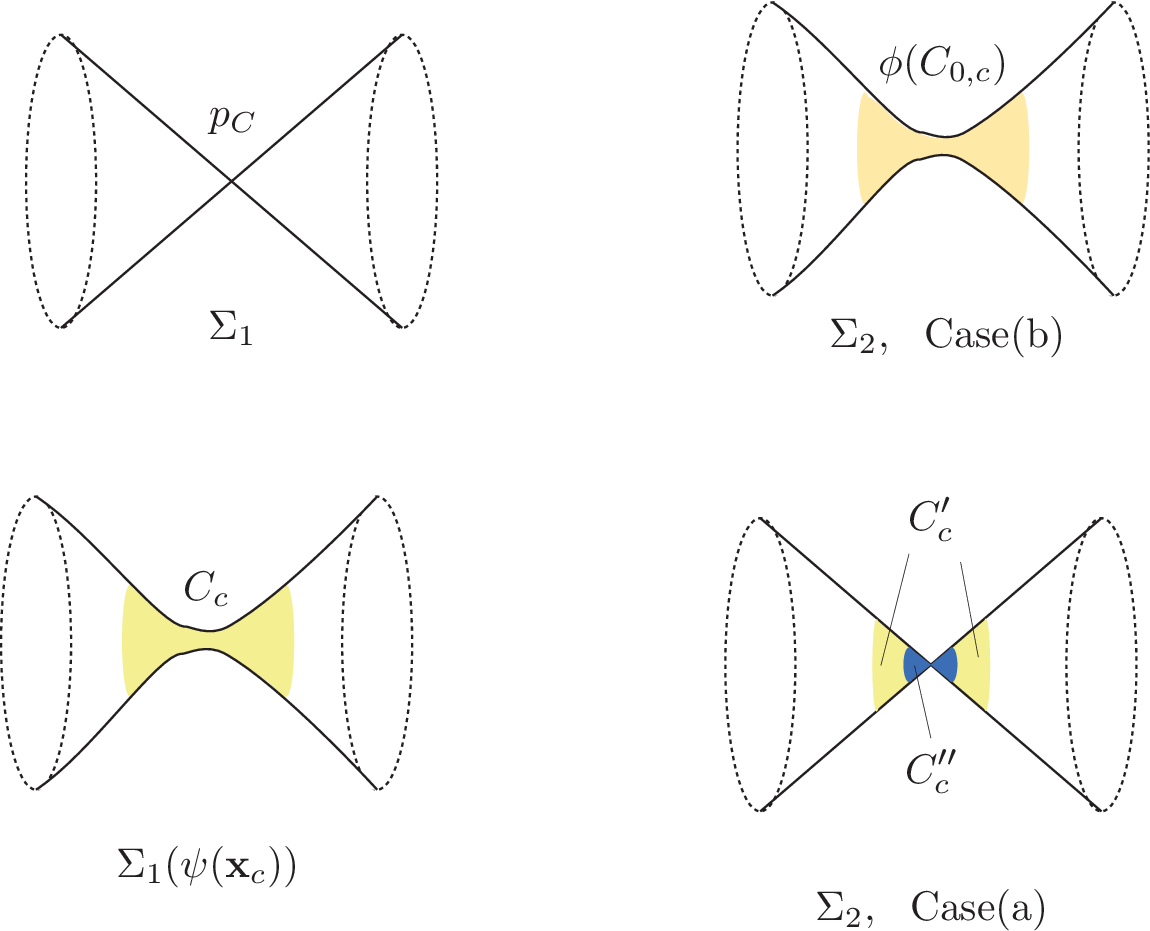}
\caption{Estimate of the diameter of 
$u_c \circ \phi'_c(C_c)$}
\label{Figurep71}
\end{figure}
Thus, we have proved:
$$
[(\Sigma_c,\vec z_c),u_c] \in 
\mathcal U(\epsilon(c);(\Sigma_1,\vec z_1),u_1,{\frak W}^{(1)}).
$$
for sufficiently large $c$, as required.
\end{proof}
\begin{proof}[The proof of Proposition \ref{prop720} in Case 5]
Using (Case 4) it suffices to show the following.
For given strong stabilization data $({\frak W}^{(1)},\vec{\mathcal N}_1)$ at $\frak p_1$ we can 
find strong stabilization data $({\frak W}^{(2)},\vec{\mathcal N}_2)$
at $\frak p_2$ such that 
$\mathscr J_{12;\epsilon,\epsilon'}$
exists.
We will prove this statement below.
\par
Since we fixed ${\frak W}^{(1)}$, in particular we fixed 
$\vec w_1$. We take the universal family of deformation 
of $(\Sigma_1,\vec z_1 \cup \vec w_1)$ and denote it by $\pi : \mathcal C_{(1)} \to \mathcal V_{(1)}$.
(It comes with sections assigning marked points.)
\par
Since $\frak p_2 = [\Sigma_2,\vec z_2]$ is $\epsilon$ close to 
$\frak p_1$ with respect to ${\frak W}^{(1)}$
there exists ${\bf x}'_2$ and $\vec w^{\,\prime}_2$ such that
$$
\phi' : (\Sigma_1({\bf x}'_2),\vec z_1({\bf x}'_2) \cup \vec w_1({\bf x}'_2)) \cong  (\Sigma_2, \vec z_2 \cup \vec w^{\,\prime}_2).
$$
and it satisfies other conditions related to the maps $u_1$ and $u_2$.
(See Definition \ref{defn412}.)
It implies that
$u_2(w'_{2,j})$ is close to $u_1(w_{1,j}) \in \mathcal N_j^{(1)}$.
Since $u_1$ intersects transversaly with $\mathcal N_j^{(1)}$
and $u_2 \circ \phi' \circ \Phi_{1,{\bf x}'_2,\delta'}$
is $C^2$ close to $u_1$, 
we can take $w_{2,j}$ such that
$u_2(w_{2,j}) \in \mathcal N_j^{(2)}$ and $d(w_{2,j},w'_{2,j}) < o(\epsilon)$.
We take $(w_{2,1},\dots,w_{2,k})$ as our $\vec w_2$.
Moreover we take $\mathcal N_j^{(2)} = \mathcal N_j^{(1)}$.
There exists ${\bf x}_2$ and $\vec w_2$ such that
$$
\phi : (\Sigma_1({\bf x}_2),\vec z_1({\bf x}_2) \cup \vec w_1({\bf x}_2)) \cong  (\Sigma_2, \vec z_2 \cup \vec w_2).
$$
In the same way as Lemma \ref{lemlem762}
we can find a system of analytic families of  coordinates and 
trivialization data consisting ${\frak W}^{(2)}$ so that
\begin{equation}\label{form76322}
\tilde{\psi}_{\bf x} \circ \Phi_{2,{\bf x},\delta'} = \Phi_{1,\psi({\bf x}),\delta} \circ \Phi_{1,{\bf x}_2,\delta}^{-1} \circ \phi^{-1}
\end{equation}
holds on 
$
\phi(\Phi_{1,{\bf x}_2,\delta}(\Sigma_1(\delta))).
$
\par
We remark that the $C^2$ distance between 
$$
u_2 \circ \phi\circ \Phi_{1,{\bf x}_2,\delta}
\qquad
\text{and}
\qquad
u_1
$$
is smaller than $o(\epsilon)$.
Note this $C^2$ distance may not be smaller than $\epsilon$,
since we changed $\vec w^{\,\prime}_2$ to $\vec w_2$.
However the $C^2$ distance  can certainly be estimated by $o(\epsilon)$.
\par
Therefore in the same way as the proof of Lemma \ref{lem71818}
we have
\begin{equation}\label{form72020222}
\mathcal U(\epsilon';(\Sigma_2,\vec z_2),u_2,{\frak W}^{(2)})
\subset
\mathcal U(o(\epsilon);(\Sigma_1,\vec z_1),u_1,{\frak W}^{(1)})
\end{equation}
if $\epsilon'$ is sufficiently small.
\par
We define $\Psi : \Sigma_1(\delta) \to \Sigma_2$ by
\begin{equation}\label{form773}
\Psi = (\Phi_{2,{\bf x},\delta'})^{-1} \circ (\tilde{\psi}_{\bf x})^{-1}
\circ \Phi_{1,{\psi}({\bf x}),\delta}
=
\phi \circ \Phi_{1,{\bf x}_2,\delta}.
\end{equation}
Here the right equality follows from (\ref{form763}). 
Compare (\ref{newform750}). Here however contrary to (\ref{newform750})
$\Psi$ is independent of ${\bf x}$.
\par
We define
$$
\Phi : L^2_{m+1}(\Sigma_2(\delta'),X) 
\times \mathcal V_{2}(\epsilon')
\to 
L^2_{m+1}(\Sigma_1(\delta),X) 
\times \mathcal V_{1}(\epsilon)
$$
by
\begin{equation}\label{form774}
\Phi(F,{\bf x}) = (F\circ \Psi,\psi({\bf x})).
\end{equation}
Compare (\ref{defnofPhi}).
Since $\Psi$ is independent of ${\bf x}$ and $\psi$ is smooth,
the map $\Phi$ is smooth.
\par
We define 
\begin{equation}
\aligned
&\mathcal R_{(1)} :
\mathcal V^{(1)}_{\rm map}(\epsilon) \times \mathcal V_{(1)}(\epsilon)
\to 
L^2_{m+1}(\Sigma_1(\delta),X)  \times \mathcal V_{(1)}(\epsilon)\\
&\mathcal R_{(2)} :
\mathcal V^{(2)}_{\rm map}(\epsilon') \times \mathcal V_{(2)}(\epsilon')
\to 
L^2_{m+1}(\Sigma_2(\delta'),X)  \times \mathcal V_{(2)}(\epsilon')\\
\endaligned
\end{equation}
by \index{00R3o@$\mathcal R_{(i)}$}
\begin{equation}\label{75375322}
\mathcal R_{(i)}({\bf v},{\bf x})
= (u^i_{{\bf v},{\bf x}}\circ \Phi^{(i)}_{1,{\bf x},\delta^{(i)}},{\bf x}).
\end{equation}
Here $\delta^{(1)} = \delta$, $\delta^{(2)} = \delta'$.
Compare (\ref{753753}).
\par 
In the same way as the proof of 
Lemma \ref{lemma753} we can show that there exists $\tilde{\mathscr J}$ such that 
the next diagram commutes.
\begin{equation}\label{diagram746}
\begin{CD}
\mathcal V^{(2)}_{\rm map}(\epsilon') \times \mathcal V_{(2)}(\epsilon') @ >{\mathcal R_{(2)}}>> L^2_{m+1}(\Sigma_2(\delta'),X) 
\times \mathcal V_{(2)}(\epsilon')\\
@ VV{\tilde{\mathscr J}}V @ VV{\Phi}V\\
\mathcal V^{(1)}_{\rm map}(\epsilon) \times \mathcal V_{(1)}(\epsilon) @ >{\mathcal R_{(1)}}>> L^2_{m+1}(\Sigma_1(\delta),X) 
\times \mathcal V_{(1)}(\epsilon).
\end{CD}
\end{equation}
The commutativity of Diagram (\ref{diagram746}) implies that 
$\tilde{\mathscr J}$ is of $C^n$ class.\footnote{Actually it is smooth.}
$\tilde{\mathscr J}$ induce a map $\mathscr J_{12;\epsilon,\epsilon'}$ 
in the same way as Lemma \ref{lem753753}.
Using the fact that $\#\vec w_1 = \# \vec w_2$ we can show that 
$\mathscr J_{12;\epsilon,\epsilon'}$  
is a $C^n$ embedding in the same way as the proof of Lemma \ref{loem757}, 
that is,  Case 1-1.
The proof of Proposition \ref{prop720}  now complete.
\end{proof}
The proof of Proposition \ref{prop720} and Lemma \ref{lem71818} 
is complete.
We turn to the proof of Lemma \ref{lemprop721}.

\begin{proof}[Proof of Lemma \ref{lemprop721}]
Let ${\mathscr L}^2_{m+1}(\Sigma_1(\delta),X)$ be an $\epsilon_0$ neighborhood of $u_1$ in 
$L^2_{m+1}(\Sigma_1(\delta),X)$.
We consider the direct product
\begin{equation}\label{form770}
{\mathscr L}^2_{m+1}(\Sigma_1(\delta),X) \times \mathcal V_{(1)}
\end{equation}
and a bundle 
$\frak E_1$ on it such that its total space is
\begin{equation}
\frak E_1
= 
\{ ((\hat u',{\bf x},V) \mid
(\hat u',{\bf x}) \in (\ref{form770}), 
\,
V \in L^2_m(\Sigma_1(\delta);(\hat u')^*TX \otimes \Phi_{1,{\bf x},\delta}^*\Lambda^{01})
\}
\end{equation}
with obvious projection. $\frak E_1
\to {\mathscr L}^2_{m+1}(\Sigma_1(\delta),X) \times \mathcal V_{(1)}$.
\begin{lem}
If $m$ is larger than 10, then
$\frak E_1$ has a structure of smooth vector bundle
and $\mathcal G_1$ acts on it.
\end{lem}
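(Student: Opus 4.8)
The plan is to build $\frak E_1$ as a smooth Hilbert vector bundle by producing local trivializations \emph{separately} over the two factors of the base $L^2_{m+1}(\Sigma_1(\delta),X)\times\mathcal V_{(1)}$ and then checking that the resulting transition maps are smooth. Over the first factor we use the standard construction of the bundle of $L^2_m$-sections over a Hilbert manifold of maps: fix a reference map $u_0\in L^2_{m+1}(\Sigma_1(\delta),X)$ and, using the exponential map of the $G$-invariant metric on $X$ together with parallel transport along the short geodesic from $u_0(z)$ to $\hat u'(z)$, obtain for each $\hat u'$ in an $L^2_{m+1}$-neighborhood of $u_0$ a bundle isomorphism $(\hat u')^*TX\cong u_0^*TX$ covering the identity of $\Sigma_1(\delta)$, depending smoothly on $\hat u'$. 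Here $m>10$ enters so that $L^2_{m+1}(\Sigma_1(\delta),X)\hookrightarrow C^k$ with $k$ large and so that the composition and Sobolev-multiplication operators entering parallel transport are smooth on $L^2_m$; this is exactly the fact quoted in the proof of Lemma \ref{lem740}.

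Over the second factor, recall from (\ref{loctrimap3}) that ${\bf x}\mapsto\Phi_{1,{\bf x},\delta}$ is a smooth family of $C^\infty$ open embeddings $\Sigma_1(\delta)\to\Sigma_1({\bf x})$; taking the complex-linear part of its differential, as in (\ref{formula66rev}), gives a smooth family of $C^\infty$ bundle isomorphisms $\Phi_{1,{\bf x},\delta}^*\Lambda^{01}\Sigma_1({\bf x})\cong\Lambda^{01}\Sigma_1(\delta)$ over $\Sigma_1(\delta)$ (after shrinking $\mathcal V_{(1)}$: invertibility holds at ${\bf x}=o$ and is an open condition). Combining the two identifications yields, over a neighborhood of each $(u_0,{\bf x}_0)$, an isomorphism of the fibre $L^2_m(\Sigma_1(\delta);(\hat u')^*TX\otimes\Phi_{1,{\bf x},\delta}^*\Lambda^{01})$ with the fixed Hilbert space $L^2_m(\Sigma_1(\delta);u_0^*TX\otimes\Lambda^{01}\Sigma_1(\delta))$. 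The transition maps between two such charts are fibrewise composition with smooth bundle maps, hence smooth as maps of Hilbert spaces by the same Sobolev facts (using $v\mapsto F\circ v$ smooth for $m$ large); this gives the smooth vector bundle structure on $\frak E_1$.

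For the $\mathcal G_1$-action, we use that the coordinates $\varphi_{1,a,j}$ at the nodal points may be chosen $\mathcal G_1$-equivariantly (Definition \ref{defn7372}~(4)), so each $v\in\mathcal G_1$ restricts to a biholomorphism $v:\Sigma_1(\delta)\to\Sigma_1(\delta)$ and, via the equivariance of the trivialization data (Definition \ref{defn7272}~(3), diagram (\ref{diagram51699})) and of the family $\mathcal C_{(1)}\to\mathcal V_{(1)}$, satisfies $\tilde v\circ\Phi_{1,{\bf x},\delta}=\Phi_{1,v{\bf x},\delta}\circ v$ with $\tilde v:\Sigma_1({\bf x})\to\Sigma_1(v{\bf x})$ the induced isomorphism. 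Since $u_1\circ v=u_1$, the group $\mathcal G_1$ acts on the base by $(\hat u',{\bf x})\mapsto(\hat u'\circ v^{-1},v{\bf x})$, and this lifts to $\frak E_1$ by $(\hat u',{\bf x},V)\mapsto(\hat u'\circ v^{-1},v{\bf x},(v^{-1})^*V)$; the identity above, together with $\tilde v$ being biholomorphic, is exactly what makes $(v^{-1})^*V$ land in $L^2_m(\Sigma_1(\delta);(\hat u'\circ v^{-1})^*TX\otimes\Phi_{1,v{\bf x},\delta}^*\Lambda^{01})$. As $v$ is a fixed smooth diffeomorphism, pre-composition with $v$ is a bounded smooth operator on the Sobolev spaces ($m>10$), so the action is smooth and covers the smooth $\mathcal G_1$-action on the base. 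The only nontrivial point is the bookkeeping: one must verify that all choices entering the two kinds of trivialization (reference maps, exponential identifications, the family $\Phi_{1,{\bf x},\delta}$ and its complex-linear part, the $\delta$-disks around the nodes) can be made compatibly with the $\mathcal G_1$-action, and that the transition functions are genuinely smooth between infinite-dimensional spaces; both are routine—the former by the equivariance already built into Definitions \ref{defn7272} and \ref{defn7372} (as in the construction of $\mathcal C_{(1)}\to\mathcal V_{(1)}$ itself), the latter by the standard theory of Hilbert manifolds of sections used repeatedly in \cite{foooexp}, \cite{foootech}.
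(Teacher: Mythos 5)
Your proof is correct and uses the same essential machinery as the paper's: parallel transport along short geodesics in $X$ to identify the fibres of $(\hat u')^*TX$ as $\hat u'$ varies, and the complex-linear part of $D\Phi_{1,{\bf x},\delta}$ to identify the $\Lambda^{01}$ factors, with $m>10$ guaranteeing that composition and multiplication operators on the Sobolev spaces involved are smooth. The difference is mainly one of packaging. You cover the base with local trivializations anchored at varying reference maps $u_0$ and verify that the transition maps are smooth; the paper instead observes that the already-constructed map $I_{\hat u',{\bf x}}$ of \eqref{form711} (parallel transport from $\hat u'\circ\Phi^{-1}_{1,{\bf x},\delta}$ to the fixed $u_1$, combined with $d^h\Phi_{1,{\bf x},\delta}$) gives a single \emph{global} trivialization $\frak E_1\cong L^2_{m+1}(\Sigma_1(\delta),X)\times\mathcal V_{(1)}\times L^2_m(\Sigma_1(\delta);u_1^*TX\otimes\Lambda^{01})$ over the relevant (small) neighborhood of $(u_1,o)$, and \emph{defines} the smooth structure to be the one pulled back from this trivialization, so that there is nothing to check about transitions. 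Since the bundle is only used near $u_1$, the single-chart approach suffices and is cleaner; your multi-chart version is logically complete and would also cover a larger base, at the cost of the extra transition-function bookkeeping you flag. The treatment of the $\mathcal G_1$-action is the same in both: it rests on the $\mathcal G_1$-equivariance of the analytic families of coordinates and of the trivialization data (Definitions \ref{defn7272}(3) and \ref{defn7372}(4)), which makes $\Sigma_1(\delta)$ invariant and produces the commutation relation $\tilde v\circ\Phi_{1,{\bf x},\delta}=\Phi_{1,v{\bf x},\delta}\circ v$ that you correctly identify as the key point.
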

\begin{proof}
Let $(\hat u',{\bf x}) \in (\ref{form770})$.
We put $\Phi_{1,{\bf x},\delta}(\Sigma_1(\delta))
= \Sigma_1({\bf x})(\delta)$.
There exists a canonical identification 
$$
L^2_m(\Sigma_1({\bf x})(\delta);(\hat u')^*TX \otimes \Phi_{1,{\bf x},\delta}^*\Lambda^{01})
\cong
L^2_m(\Sigma_1(\delta);(u')^*TX \otimes \Lambda^{01})
$$
where $u' = \hat u' \circ \Phi_{1,{\bf x},\delta}^{-1}$.
We defined 
$$
I_{\hat u',{\bf x}} :
L^2_{m}(\Sigma_1({\bf x})(\delta);(u')^*TX\otimes \Lambda^{01})
\to 
L^2_{m}(\Sigma_1(\delta);u_1^*TX\otimes \Lambda^{01}).
$$
in (\ref{form711}). Combining them we obtain a \index{00E41@$\frak E_1$}
bijection
\begin{equation}\label{form772222}
\frak E_1 \cong 
{\mathscr L}^2_{m+1}(\Sigma_1(\delta),X) \times \mathcal V_{(1)}
\times L^2_{m}(\Sigma_1(\delta);u_1^*TX\otimes \Lambda^{01}).
\end{equation}
We define $C^{\infty}$ structure of $\frak E_1$ by this isomorphism.
$\mathcal G_1$ invariance of this trivialization is immediate 
from definition.
\end{proof}
The vector space $E(\hat u',{\bf x})$ is identified with a finite dimensional 
linear subspace of 
$L^2_{m+1}(\Sigma_1(\delta);u_1^*TX\otimes \Lambda^{01})$ by (\ref{loctrimap3}).
\begin{lem}\label{lem762762}
\begin{equation}\label{formulaforlem762}
\bigcup_{(\hat u',{\bf x}) \in (\ref{form770})}
\{(\hat u',{\bf x})\} \times E(\hat u',{\bf x})
\end{equation}
become a smooth subbundle of the right hand side of 
(\ref{form772222}) by this identification.
\end{lem}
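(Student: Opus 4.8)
\textbf{Proof proposal for Lemma \ref{lem762762}.}
The plan is to reduce the statement to Proposition \ref{prop78}, which already provides, for each basis element $e_i$ of the obstruction space at origin $E((\Sigma,\vec z),u)$, a smooth map
$$
(\hat u',{\bf x}) \mapsto e_i(\hat u',{\bf x}) \in L^2_{m+1}(\Sigma_1(\delta);u_1^*TX \otimes \Lambda^{01})
$$
on $U'(\epsilon) \times \mathcal V_{(1)}(\epsilon)$ such that $(e_1(\hat u',{\bf x}),\dots,e_d(\hat u',{\bf x}))$ is, for each $(\hat u',{\bf x})$, a basis of $E(\hat u',{\bf x})$. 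The content of Lemma \ref{lem762762} is precisely that a finite-dimensional subbundle of a trivial Hilbert-space bundle is smooth as soon as it admits a fiberwise basis depending smoothly (in the relevant $C^n$ sense, for each $n$, using $m$ large) on the base point. So the first step is to recall the trivialization (\ref{form772222}) of $\frak E_1$, under which the right-hand side becomes the trivial bundle with fiber the fixed Hilbert space $H := L^2_m(\Sigma_1(\delta);u_1^*TX \otimes \Lambda^{01})$; note that the subbundle (\ref{formulaforlem762}) does land in this trivial bundle since the supports of elements of $E(\hat u',{\bf x})$ lie in the image of $\phi \circ \Phi_{\bf x,\delta}$ by construction (Lemma \ref{lem76}), and these elements are in fact of $L^2_{m+1}$ class, hence in $H$.

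Next I would invoke Proposition \ref{prop78} to get the smooth frame $e_1,\dots,e_d$. The one small wrinkle is bookkeeping: Proposition \ref{prop78} is stated with target $L^2_{m+1}(\Sigma_1(\delta);u_1^*TX\otimes \Lambda^{01})$ and the trivialization (\ref{form772222}) of $\frak E_1$ is stated with $L^2_m$; since $m$ was taken sufficiently large and $L^2_{m+1} \subset L^2_m$ continuously, the $e_i$ are also smooth as maps into $H$, and composing with the identification (\ref{form772222}) we may regard them as sections of $\frak E_1$ over $U'(\epsilon) \times \mathcal V_{(1)}(\epsilon)$. The subset (\ref{formulaforlem762}) is then exactly the image of these $d$ sections under fiberwise linear span.

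Finally I would carry out the standard local-triviality argument for the span of a smooth frame. Fix a point $(\hat u'_0,{\bf x}_0)$; the vectors $e_i(\hat u'_0,{\bf x}_0)$ are linearly independent in $H$, so we may choose a bounded complementary closed subspace $N \subset H$ with $H = \mathrm{span}\{e_i(\hat u'_0,{\bf x}_0)\} \oplus N$, equivalently a continuous projection $P_0 : H \to H$ with image the span and kernel $N$. For $(\hat u',{\bf x})$ near $(\hat u'_0,{\bf x}_0)$ the composite $\mathrm{span}\{e_i(\hat u'_0,{\bf x}_0)\} \to H \to H/N$ sending $e_i(\hat u'_0,{\bf x}_0) \mapsto [e_i(\hat u',{\bf x})]$ is invertible (being a small perturbation of the identity), and inverting it and applying it coordinatewise produces a smooth local frame $\tilde e_i(\hat u',{\bf x})$ for (\ref{formulaforlem762}) that is in "graph form" over the fixed model $\mathrm{span}\{e_i(\hat u'_0,{\bf x}_0)\}$; this exhibits (\ref{formulaforlem762}) as a smooth ($C^n$ for every $n$, and hence $C^\infty$ by the same argument used for the ambient chart in Lemma \ref{lem735735}) subbundle near $(\hat u'_0,{\bf x}_0)$. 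Since the point was arbitrary this gives the global conclusion. The only place any real work is hidden is the invocation of Proposition \ref{prop78}: the smoothness of the frame $e_i(\hat u',{\bf x})$ is the substantive analytic input — it is proved there from Lemma \ref{lem733} (smooth dependence of the minimizer $(\varphi(\hat u',{\bf x}),g(\hat u',{\bf x}))$ of $\overline{\rm meandist}$) together with the explicit local-coordinate formula (\ref{form737}) and the composition smoothness Lemma \ref{lem740}. Everything after that, as sketched above, is the routine linear-algebra fact that the span of a smooth frame is a smooth subbundle, so I expect no further obstacle.
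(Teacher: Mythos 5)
Your proof is correct and follows the same route as the paper, whose entire proof of Lemma \ref{lem762762} is the single sentence ``This is nothing but Proposition \ref{prop78}.'' You merely make explicit the routine linear-algebra step --- that a smoothly-varying finite frame in a trivial Hilbert bundle spans a smooth subbundle, via a fixed complementary closed subspace and graph form near each base point --- which the paper leaves implicit, together with the harmless $L^2_{m+1}$ vs.\ $L^2_{m}$ bookkeeping between Proposition \ref{prop78} and the trivialization (\ref{form772222}).
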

\begin{proof}
This is nothing but Proposition \ref{prop78}.
\end{proof}
We pull back the bundle in Lemma \ref{lem762762}
by the $C^n$ embedding
$$
V(\frak p_1;\epsilon;({\frak W}^{(1)},\vec{\mathcal N}_1))
\to \mathcal V_{\rm map} \times \mathcal V_{(1)}(\epsilon)
\overset{\mathcal R_{(1)}}\to
{\mathscr L}^2_{m+1}(\Sigma_1(\delta),X) \times \mathcal V_{(1)}
$$
to obtain a (finite dimensional) vector bundle 
on $V(\frak p_1;\epsilon;({\frak W}^{(1)},\vec{\mathcal N}_1))$
of $C^n$ class,
which we write $\frak E(\frak p_1;\epsilon;({\frak W}^{(1)},\vec{\mathcal N}_1))$. 
$\mathcal G_1$ acts on it and the action is of $C^n$ class.
\par
To complete the proof of Lemma \ref{lemprop721}
it suffices to show that 
$\frak E(\frak p_1;\epsilon;({\frak W}^{(1)},\vec{\mathcal N}_1))$
can be glued to give a vector bundle of $C^n$ class 
on $U(((\Sigma,\vec z),u);\epsilon_2)$.
\par
Suppose $\frak p_2$ is $\epsilon$ close to $\frak p_1$
with respect to ${\frak W}^{(1)}$.
We choose strong stabilization data $({\frak W}^{(2)},\vec{\mathcal N}_2)$ at $\frak p_2$  
(Definition \ref{defn720}).
\par
Let 
$
\mathscr J_{12;\epsilon,\epsilon'} : V(\frak p_2;\epsilon',({\frak W}^{(2)},\vec{\mathcal N}_2)) \to 
V(\frak p_1;\epsilon,({\frak W}^{(1)},\vec{\mathcal N}_1))
$
be the map we produced during the proof of 
Proposition \ref{prop720}. We show that 
there exists a canonical lift of this map
to the fiber-wise linear map
\begin{equation}\label{maptildeJ}
\widetilde{
\mathscr J}_{12;\epsilon,\epsilon'}
:
\frak E(\frak p_2;\epsilon';({\frak W}^{(2)},\vec{\mathcal N}_2))
\to 
\frak E(\frak p_1;\epsilon;({\frak W}^{(1)},\vec{\mathcal N}_1)).
\end{equation}
\par
We take $e_1,\dots,e_d$ a basis of $E((\Sigma,\vec z),u)$.
Then a frame  
of $\frak E(\frak p_1;\epsilon;{\frak W}^{(1)})$
is given by 
\begin{equation}\label{form740000}
\aligned
e^1_j({\bf v},{\bf x})
&= I_{\hat u',{\bf x}}
(I^{(1)}_{{\bf x}^{(1)}_0,\phi^{(1)}_0;((\Sigma',\vec z^{\,\prime}),u')}
(g^{(1)}(\hat u',{\bf x})_* (e_j))) \\
&\in 
L^2_{m+1}(\Sigma_1(\delta);u_1^*TX\otimes \Lambda^{01})
\endaligned
\end{equation}
in (\ref{from74300}).
Here we write 
$I^{(1)}_{{\bf x}^{(1)}_0,\phi^{(1)}_0;((\Sigma',\vec z^{\,\prime}),u')}$
and 
$g^{(1)}(\hat u',{\bf x})_* (e_j))$ 
to indicate that they are associated to $({\frak W}^{(1)},\vec{\mathcal N}_1)$.
\footnote{Here $I^{(1)}_{{\bf x}^{(1)}_0,\phi^{(1)}_0;((\Sigma',\vec z^{\,\prime}),u')}$
is defined in (\ref{formula663333}) and $g^{(1)}(\hat u',{\bf x})_* (e_j))$  
is defined in (\ref{form73666}).}
Note
$({\bf x}_0,\phi_0) = (\psi({\bf x}),\tilde{\psi}\vert_{
\Sigma(\psi({\bf x}))})$, 
$(\Sigma',\vec z^{\,\prime}) = (\Sigma_1({\bf x}),\vec z_1({\bf x}))$
and
$$
u' = u^1_{{\bf v},{\bf x}} : \Sigma_1({\bf x})(\delta) \to X,
\qquad 
\hat u' =
u' \circ \Phi_{1,{\bf x},\delta} 
= u^1_{{\bf v},{\bf x}} \circ \Phi_{1,{\bf x},\delta} : \Sigma_1(\delta) \to X.
$$

\par
$e^1_j({\bf v},{\bf x})$ is a $C^{n}$ frame of $\frak E(\frak p_1;\epsilon;({\frak W}^{(1)},\vec{\mathcal N}_1))$
\index{00E4p1wepsilon@$\frak E(\frak p_1;\epsilon;{\frak W}^{(1)})$}
since it is a pull back of $C^{\infty}$ frame 
of the bundle (\ref{lem762762}).
Note (\ref{form740000}) depends on $\frak p_1$ and $({\frak W}^{(1)},\vec{\mathcal N}_1)$ 
but is independent of the various choices we made in 
Subsection \ref{mainprop}. (Those choices are used to 
{\it prove} the smoothness of right hand side.)
\par
When we use $\frak p_2$ and $({\frak W}^{(2)},\vec{\mathcal N}_2)$,
and $({\bf v}',{\bf x}') \in \mathcal V^{(2)}_{\rm map} \times \mathcal V_{(2)}(\epsilon')$
we put
$$
u'' = u^2_{{\bf v}',{\bf x}'} : \Sigma_2({\bf x}')(\delta') \to X,
\qquad 
\hat u'' =
u'' \circ \Phi_{2,{\bf x}',\delta'} 
= u^2_{{\bf v}',{\bf x}'} \circ \Phi_{2,{\bf x}',\delta} : \Sigma_2(\delta') \to X.
$$
Suppose 
$$
({\bf v}',{\bf x}') = {
\mathscr J}_{12;\epsilon,\epsilon'}({\bf v},{\bf x}).
$$
Then $\psi_1({\bf x}) = \psi_2({\bf x}')$. Note
$\psi_1$ in the left hand side is the map $\psi$ obtained from the deformation 
theory of $\Sigma_1$ and $\psi_2$ in the right hand side is the map  $\psi$ obtained from the deformation 
theory of $\Sigma_2$. It induces an isomorphism
$$
(\tilde{\psi}_2\vert_{
\Sigma(\psi_2({\bf x}'))})^{-1} \circ \tilde{\psi}_1\vert_{
\Sigma(\psi_1({\bf x}))}
: (\Sigma_1({\bf x}),\vec z_1({\bf x})) \cong (\Sigma_2({\bf x}'),\vec z_2({\bf x}'))
$$
such that
$$
u'' \circ (\tilde{\psi}\vert_{
\Sigma(\psi_2({\bf x}'))})^{-1} \circ \tilde{\psi}\vert_{
\Sigma(\psi_1({\bf x}))}= u'.
$$
Therefore the frame $e^2_j({\bf v}',{\bf x}')$ is given by
\index{00J6ytildep1p2@$\widetilde{
\mathscr J}_{\frak p_1,\frak p_2;{\frak W}^{(1)},{\frak W}^{(2)};\epsilon',\epsilon}$}
$$
\left(\widetilde{
\mathscr J}_{12;\epsilon,\epsilon'}
(e^2_j)\right)({\bf v},{\bf x})
=(I_{\hat u',{\bf x}} \circ (I_{\hat u'',{\bf x}'})^{-1})(e^1_j({\bf v},{\bf x}))
$$
In the same way as 
Subsection \ref{mainprop}
we can write the right hand side 
using local coordinates and prove that
$\widetilde{
\mathscr J}_{12;\epsilon,\epsilon'}$ 
is of $C^n$ class.
\end{proof}
\begin{proof}[Proof of Lemma \ref{lem735735}]
It remains to prove that
the Kuranishi map $s$ is of $C^n$ class.
We use the trivialization (\ref{form772222}) 
and regard $\frak E(\frak p_1;\epsilon;{\frak W}^{(1)})$ as a subbundle 
of the trivial bundle (the right hand side of (\ref{form772222})).
\par
For ${\bf x} \in \mathcal V_{(1)}$
we take the complex structure of $\Sigma_1({\bf x})$
and pull it back to $\Sigma_1(\delta)$ by 
$\Phi_{1,{\bf x},\delta}$. 
We thus obtain a $(\hat u',{\bf x})$ 
parametrized family of complex structures on $\Sigma_1(\delta)$,
which we denote by $j_{(\hat u',{\bf x})}$.
This is a family of complex structures 
depending smoothly on $(\hat u',{\bf x})$.
By definition
\begin{equation}\label{shikifors}
\aligned
&s(\hat u',(\hat u',{\bf x}))\\
&= \overline{\partial}_{j_{(\hat u',{\bf x})}}(\hat u')
\\
&=
\left(A^i_{\sigma}(\hat u',{\bf x})(\hat u'(z_{\sigma}))\frac{\partial \hat u'_i}{\partial x_{\sigma}}
+ 
B^i_{\sigma}(\hat u',{\bf x})(\hat u'(z_{\sigma}))\frac{\partial \hat u'_i}{\partial y_{\sigma}} \right)
\partial^i_{\sigma} \otimes d\overline z_{\sigma}.
\endaligned
\end{equation}
on a (sufficiently small) 
coordinate chart $W_{\sigma}$ on $\Sigma_1(\delta)$ and 
$\Omega_{\sigma}$ of $X$ containing a neighborhood of $u_1(W_{\sigma})$.
Here $z_{\sigma} = x_{\sigma} + \sqrt{-1} y_{\sigma}$ is a complex 
coordinate of $W_{\sigma}$ and $\partial^i_{\sigma}$ is a frame of the complex tangent bundle 
$TX$ on 
$\Omega_{\sigma}$.
$A^i_{\sigma}$ and $B^i_{\sigma}$ are smooth functions 
$$
\mathcal V_{\rm map} \times \mathcal V_{(1)}(\epsilon) \times \Omega_{\sigma}
\to \C.
$$
Therefore the Kuranshi map $s$ is of $C^{\infty}$ class in terms 
of this trivialization.
\end{proof}

The proof of Proposition \ref{prop616} is complete.
In fact, Proposition \ref{prop616} (5) holds at 
$[(\Sigma,\vec z),u]$ by Condition \ref{conds45} (2) and hence 
holds everywhere by taking $\epsilon_1$ small.
\qed

\subsection{From $C^n$ structure to $C^{\infty}$ structure}
\label{subsec:Cinfinity}
This subsection is  similar to \cite[Section 26]{foootech} and \cite[Section 12]{foooconstr}.
\par
So far we have constructed a $G$-equivariant Kuranishi chart of 
$C^n$ class for any $n$.
In this subsection, we show how we obtain one in $C^{\infty}$
class. 
\par
We consider the embedding
$$
\mathcal R_{(1),m}
:
\mathcal V^{(1)}_{\rm map}(\epsilon) \times \mathcal V_{(1)}(\epsilon) \to L^2_{m+1}(\Sigma_1(\delta),X) 
\times \mathcal V_{(1)}(\epsilon)
$$
as in (\ref{753753}) and (\ref{75375322}). We put $m$ in the suffix to specify 
the Hilbert space $L^2_{m+1}$ we use.
We proved that this is a smooth embedding of $C^n$ class if 
$m > n+10$ and $\epsilon < \epsilon_m$.
We fix $\epsilon_0 < \epsilon_{10}$ and show the next lemma.
\begin{lem}\label{lem763}
The image of 
$$
\mathcal R_{(1),10}
:
\mathcal V^{(1)}_{\rm map}(\epsilon_0) \times \mathcal V_{(1)}(\epsilon_0) \to L^2_{11}(\Sigma_1(\delta),X) 
\times \mathcal V_{(1)}(\epsilon_0)
$$
is contained in 
$
C^{k}(\Sigma_1(\delta),X) 
\times \mathcal V_{(1)}(\epsilon_0)
$
for any $k$
and is 
a smooth submanifold of 
$L^2_{11}(\Sigma_1(\delta),X) 
\times \mathcal V_{(1)}(\epsilon_0)
$ of $C^{\infty}$ class.
\end{lem}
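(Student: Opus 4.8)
\textbf{Proof proposal for Lemma \ref{lem763}.}
The plan is to establish the two assertions --- smoothness of the image curves and $C^\infty$ regularity of the submanifold structure --- by elliptic bootstrapping applied fiberwise, together with the exponential decay estimates already in hand. First I would recall that a point in the image $\mathcal R_{(1),10}({\bf v},{\bf x})$ is (up to the $\mathcal V_{(1)}(\epsilon_0)$ factor, which contributes nothing to regularity) the restriction ${\rm Res}(u^{(1)}_{{\bf v},{\bf x}}) = u^{(1)}_{{\bf v},{\bf x}}\circ\Phi_{1,{\bf x},\delta}$, where $u^{(1)}_{{\bf v},{\bf x}}$ solves $\overline\partial u^{(1)}_{{\bf v},{\bf x}} \in E((\Sigma_1({\bf x}),\vec z_1({\bf x})),u^{(1)}_{{\bf v},{\bf x}})$. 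The key point is that the right-hand side $E((\Sigma_1({\bf x}),\vec z_1({\bf x})),u^{(1)}_{{\bf v},{\bf x}})$ consists of \emph{smooth} sections: this follows because $E((\Sigma,\vec z),u)$ was required to be a subspace of $C^\infty(\Sigma;u^*TX\otimes\Lambda^{01})$ in Definition \ref{defn42}, and the transports $I_{{\bf x}_0,\phi_0;(\cdot)}$ and $I_{\hat u',{\bf x}}$ are built from parallel transport along geodesics and from the complex-linear parts of the (smooth) transition maps $\Psi$, $\Phi_{1,{\bf x},\delta}$, hence preserve $C^\infty$. Thus each $u^{(1)}_{{\bf v},{\bf x}}$ satisfies a first-order elliptic equation $\overline\partial u = (\text{smooth section depending on }u)$, and the standard elliptic bootstrap for the Cauchy--Riemann operator upgrades an $L^2_{m+1}$ solution to one of class $C^k$ for every $k$, hence $C^\infty$. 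Since $\Phi_{1,{\bf x},\delta}$ is a smooth embedding, ${\rm Res}(u^{(1)}_{{\bf v},{\bf x}}) \in C^k(\Sigma_1(\delta),X)$ for all $k$; this gives the first assertion, that the image lies in $C^k(\Sigma_1(\delta),X)\times\mathcal V_{(1)}(\epsilon_0)$ for every $k$.

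For the second assertion I would argue that the image, as a subset of the fixed $C^\infty$ manifold $C^k(\Sigma_1(\delta),X)\times\mathcal V_{(1)}(\epsilon_0)$, is a $C^\infty$ submanifold. The natural way is to exhibit the map $({\bf v},{\bf x})\mapsto \mathcal R_{(1),m}({\bf v},{\bf x})$ as a $C^n$ map into $L^2_{m+1-n}$ for \emph{every} pair $(m,n)$ with $m>n+10$ (this is exactly Proposition \ref{prop711} together with Lemma \ref{lemma74444}), and then to observe that all these maps have the same image and are compatible under the inclusions $L^2_{m+1}\hookrightarrow L^2_{m'+1}$ for $m\ge m'$. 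One then runs the standard argument that a subset which is simultaneously a $C^n$ submanifold of $L^2_{m+1-n}$ for all $n$ (via a coherent family of parametrizations given by a single underlying map $\mathcal R_{(1)}$) is a $C^\infty$ submanifold of the smooth Fréchet/inverse-limit manifold $C^\infty(\Sigma_1(\delta),X) = \bigcap_k C^k$. Concretely: since $\mathcal R_{(1),m}$ is a $C^n$ embedding for $m>n+10$, its image is locally the graph of a $C^n$ map; increasing $n$ and using that the domain parametrization $({\bf v},{\bf x})$ is fixed, these local graph descriptions agree and refine each other, so the image is a $C^\infty$ submanifold. I would also invoke the inverse function theorem argument of Lemma \ref{lemma74444} --- injectivity of the derivative at the base point by unique continuation, and the $\mathcal V_{(1)}(\epsilon_0)$-factor being the identity --- applied now at the $C^\infty$ level, shrinking $\epsilon_0$ once more if necessary.

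The main obstacle, and the place requiring genuine care rather than a soft argument, is verifying that the elliptic bootstrap is \emph{uniform in the parameters} $({\bf v},{\bf x})$ and that the smooth dependence is not lost when one passes from finite $C^n$ to $C^\infty$: a priori the constants in the elliptic estimates could degenerate as ${\bf x}$ approaches the boundary of $\mathcal V_{(1)}(\epsilon_0)$, i.e.\ as the gluing parameter $\rho_{\rm e}\to 0$ and the neck stretches. This is precisely why the exponential decay estimates of \cite{foooexp}, reformulated here as Corollary \ref{cor79} and Proposition \ref{prop711}, are indispensable: they provide exactly the uniform-in-$T_{\rm e}$ control (with the change of variable $\frak s_{\rm e} = e^{2\pi\theta_{\rm e}\sqrt{-1}}/T_{\rm e}$ of Definition \ref{defn712}) needed to see that ${\rm Res}(u_{{\bf v},{\bf x}})$ is jointly smooth in the glued coordinates. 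So the structure of the proof is: (i) smoothness of the obstruction spaces $\Rightarrow$ elliptic bootstrap $\Rightarrow$ image in $C^\infty$ fiberwise; (ii) Proposition \ref{prop711} / Corollary \ref{cor79} $\Rightarrow$ the family $\mathcal R_{(1)}$ is $C^n$ into $L^2_{m+1-n}$ for all $n$ with uniform control; (iii) a coherence argument across $n$ $\Rightarrow$ $C^\infty$ submanifold. Steps (i) and (iii) are routine given the earlier results; the burden of (ii) has already been discharged in \cite{foooexp}, and what remains is only to assemble these ingredients.
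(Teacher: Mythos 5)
Your handling of the first assertion (the image lies in $C^k$ for every $k$) is correct and matches the paper: $u_{{\bf v},{\bf x}}$ is smooth by elliptic regularity, $\Phi_{1,{\bf x},\delta}$ is smooth, so ${\rm Res}(u_{{\bf v},{\bf x}})$ is smooth. The elaboration about the obstruction space consisting of smooth sections is a fine way to say this.

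For the second assertion there is a genuine gap in step (iii), and you have misidentified where the difficulty lies. You say the main obstacle is uniformity of elliptic constants as the gluing parameter $\rho_{\rm e}\to 0$, and that this is discharged by the exponential decay estimates. But the exponential decay estimates are already exactly what gives you the statement ``$\mathcal R_{(1),m}$ is a $C^n$ embedding on $\mathcal V^{(1)}_{\rm map}(\epsilon)\times\mathcal V_{(1)}(\epsilon)$ for $m>n+10$ and $\epsilon<\epsilon_m$'' --- and that statement is still insufficient. The actual obstacle, which the paper flags explicitly in the remark inside the proof, is that the radius $\epsilon_m$ on which the gluing construction delivers a $C^n$ embedding shrinks to zero as $m\to\infty$. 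Consequently your ``coherence argument across $n$'' --- increase $n$, observe the graph descriptions agree and refine each other --- a priori only yields $C^\infty$ regularity at the single base point $(0,o)$: for large $n$ the domain where you have a $C^n$ graph has collapsed onto the origin, and there is nothing left to refine at the other points of the fixed set $\mathcal V^{(1)}_{\rm map}(\epsilon_0)\times\mathcal V_{(1)}(\epsilon_0)$.

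The step you are missing is the re-centering argument. Given an arbitrary point $\frak p_2\in\mathcal V^{(1)}_{\rm map}(\epsilon_0)\times\mathcal V_{(1)}(\epsilon_0)$, one chooses a stabilization data $\Xi^{(2)}$ at $\frak p_2$ as in Case 5 of Proposition \ref{prop720} and uses the commutative diagram
$$
\begin{CD}
\mathcal V^{(2)}_{\rm map}(\epsilon_m) \times \mathcal V_{(2)}(\epsilon_m) @ >{\mathcal R_{(2)}}>> L^2_{m+1}(\Sigma_2(\delta_m),X)
\times \mathcal V_{(2)}(\epsilon_m)\\
@ VV{\tilde{\mathscr J}}V @ VV{\Phi}V\\
\mathcal V^{(1)}_{\rm map}(\epsilon_0) \times \mathcal V_{(1)}(\epsilon_0)
@ >{\widehat{\mathcal R}^k_{(1)}}>> C^k(\Sigma_1(\delta),X)
\times \mathcal V_{(1)}(\epsilon_0),
\end{CD}
$$
with $\Phi(F,{\bf x})=(F\circ\phi\circ\Phi_{1,{\bf x}_2,\delta},\psi({\bf x}))$. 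The indispensable point is that $\Phi$ is of $C^\infty$ class --- the first factor is a fixed linear composition operator, and $\psi$ is holomorphic --- so composing the $C^n$ embedding $\mathcal R_{(2)}$ (valid on a neighborhood of the origin in the $\frak p_2$-centered chart, which corresponds to a neighborhood of $\frak p_2$ under $\tilde{\mathscr J}$) with $\Phi$ exhibits the image of $\widehat{\mathcal R}^k_{(1)}$ as a $C^n$ submanifold near $\frak p_2$. Since $\frak p_2$, $n$, and $m$ are arbitrary, the image is a $C^\infty$ submanifold of the fixed target at every point of the fixed domain. Without this re-parametrization at each $\frak p_2$, your argument proves less than the lemma claims.
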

\begin{proof}
By elliptic regularity $u_{{\bf v},{\bf x}}$ is a smooth map.
Moreover $\Phi_{1,{\bf x},\delta}$ is a smooth map.
Therefore by definition the image of $\mathcal R_{(1),m}$ 
is contained in  
$C^{\infty}(\Sigma_1(\delta),X) 
\times \mathcal V_{(1)}(\epsilon_0)$.
\begin{rem}\label{rem768}
We remark that the image of $\mathcal R_{(1),m}$ coincides with
the image of $\mathcal R_{(1),m'}$ in a neighborhood of 
$((\Sigma_1,\vec z_1\cup \vec w_1),u_1)$.
\par
In fact they coincides with the set of 
all the pairs $(u'\circ \Phi_{a,{\bf x},\delta},{\bf x})$, where
$u' : \Sigma_1({\bf x}) \to X$ such that
$
\overline{\partial}u' \in E((\Sigma_1({\bf x}),\vec z_1({\bf x})),u')
$
and  $(\Sigma_1({\bf x}),\vec z_1({\bf x})\cup \vec w_1({\bf x}),u')$ 
is $\epsilon$-close to $(\Sigma_1,\vec z_1\cup \vec w_1,u_1)$.
(This is a consequence of Lemma \ref{lem71533}.)
\end{rem}
We put
$$
\widehat{\mathcal R}_{(1)}^k
:
\mathcal V^{(1)}_{\rm map}(\epsilon_0) \times \mathcal V_{(1)}(\epsilon_0) \to
C^{k}(\Sigma_1(\delta),X) 
\times \mathcal V_{(1)}(\epsilon_0)
$$
\par
Note the map
$$
L^2_{m}(\Sigma_1(\delta),X) 
\times \mathcal V_{(1)}(\epsilon_0)
\to 
C^{k}(\Sigma_1(\delta),X) 
\times \mathcal V_{(1)}(\epsilon_0)
$$
is a smooth embedding for $k > m + 10$.
In fact the first factor is a smooth 
embedding between Banach manifolds 
and the second factor is the identity map.
It implies that the image 
$
{\mathcal R}_{(1),10}
(\mathcal V^{(1)}_{\rm map}(\epsilon_m) \times \mathcal V_{(1)}(\epsilon_m))
$ 
is a submanifold of $C^n$ class if $n > m+10$.
The issue is $\epsilon_m \to 0$ as $m\to \infty$.
\footnote{In other words the Newton iteration 
we used in \cite{foooexp} converges in $L^2_{m}$ topology 
for  $({\bf v},{\bf x}) \in \mathcal V^{(1)}_{\rm map}(\epsilon_m) \times \mathcal V_{(1)}(\epsilon_m)$
where $\epsilon_m \to 0$.}
So to prove the lemma we consider also the charts 
centered at various points altogether.

Let $\frak p_2 \in \mathcal V^{(1)}_{\rm map}(\epsilon_0) \times \mathcal V_{(1)}(\epsilon_0)$.
We fixed a stabilization data ${\frak W}^{(1)}$ at $\frak p_1$.
We take a stabilization data ${\frak W}^{(2)}$ at $\frak p_2$ as in 
Case 5 of the proof of Proposition \ref{prop720}.
We have a commutative diagram similar to (\ref{diagram746}):
\begin{equation}\label{diagram74622}
\begin{CD}
\mathcal V^{(2)}_{\rm map}(\epsilon_m) \times \mathcal V_{(2)}(\epsilon_m) @ >{\mathcal R_{(2)}}>> L^2_{m+1}(\Sigma_2(\delta_m),X) 
\times \mathcal V_{(2)}(\epsilon_m)\\
@ VV{\tilde{\mathscr J}}V @ VV{\Phi}V\\
\mathcal V^{(1)}_{\rm map}(\epsilon_0) \times \mathcal V_{(1)}(\epsilon_0) 
@ >{\widehat{\mathcal R}^k_{(1)}}>> C^k(\Sigma_1(\delta),X) 
\times \mathcal V_{(1)}(\epsilon_0)
\end{CD}
\end{equation}
with 
\begin{equation}\label{form760rev00}
\Phi(F,{\bf x}) = (F\circ \phi \circ \Phi_{1,{\bf x}_2,\delta},\psi({\bf x})).
\end{equation}
(See (\ref{form773}) and (\ref{form774}).)
The map in the right vertical arrow is of $C^{\infty}$ class
since $F \mapsto F\circ \phi \circ \Phi_{1,{\bf x}_2,\delta}$ 
with $\phi \circ \Phi_{1,{\bf x}_2,\delta}$ is a smooth map independent of ${\bf x}$ and $\psi$ is smooth.
It follows that the image of 
${\mathcal R}_{(1),10}$ is of $C^{n}$ class at $\frak p_2$ 
for $m > n+k+10$.
Since this holds for any $\frak p_2$ and $m$, Lemma \ref{lem763} follows.
\end{proof}
We regard 
$\mathcal V^{(1)}_{\rm map}(\epsilon_0) \times \mathcal V_{(1)}(\epsilon_0)$ 
as a manifold of $C^{\infty}$ class
so that the embedding
$\mathcal R_{(1),10}$ becomes an embedding of $C^{\infty}$
class.
Note this $C^{\infty}$ structure is different from 
previously defined one, which is the direct product 
structure using Definition \ref{defn712}.
They coincide each other at the origin $\frak p_1$ and 
also the underlying $C^1$ structure coincides everywhere.
We call this $C^{\infty}$ structure the {\it new $C^{\infty}$ structure}.
\index{new $C^{\infty}$ structure}
\par
We remark 
that 
$$
V(\frak p_1;\epsilon_0,({\frak W}^{(1)},\vec{\mathcal N}_1))
= 
\{({\bf v},{\bf x})
\mid u^1_{{\bf v},{\bf x}}(w_{1,j}({\bf x})) \in \mathcal N_j^{(1)}, j=1,\dots,k_1\}.
$$
by definition.
We consider the next commutative diagram.
\begin{equation}\label{diagram777}
\xymatrix{
&
\mathcal V^{(2)}_{\rm map}(\epsilon_m) \times \mathcal V_{(2)}(\epsilon_m)
\ar[r]_{\widehat{\mathcal R}_{(2)}^{k}}\ar[dd]_{\tilde{\mathscr J}}
&C^{k}(\Sigma_2(\delta),X) 
\times \mathcal V_{(2)}(\epsilon_m)\ar[dd]_{\Phi}
\ar[rd]  \\
&
&&
X
\\
&
\mathcal V^{(1)}_{\rm map}(\epsilon_0) \times \mathcal V_{(1)}(\epsilon_0)\ar[r]_{{\mathcal R}_{(1),10}} & 
L^2_{11}(\Sigma_1(\delta),X)
\times \mathcal V_{(1)}(\epsilon_0)\ar[ru]_{}
}
\end{equation}
where $m > k+10$.
Here the two maps to $X$ appearing in 
Diagram (\ref{diagram777}) is given by 
$
(\hat u',{\bf x}) \mapsto \hat u'((\Phi_{i,{\bf x},\delta})^{-1}(w_{i,j}({\bf x})))
$.
Note this map
$$
C^{k}(\Sigma_2(\delta),X) 
\times \mathcal V_{(2)}(\epsilon_m)
\to 
X
$$
is of $C^{k}$ class.
Therefore 
the composition
$$
\mathcal V^{(1)}_{\rm map}(\epsilon_0) \times \mathcal V_{(1)}(\epsilon_0)
\to
L^2_{11}(\Sigma_1(\delta),X)
\times \mathcal V_{(1)}(\epsilon_0)
\to X
$$
(which is nothing but the map $({\bf v},{\bf x})
\mapsto u^1_{{\bf v},{\bf x}}(w_{1,j}({\bf x}))$
is of $C^{k}$ class with respect to the 
new $C^{\infty}$ structure of 
$\mathcal V^{(1)}_{\rm map}(\epsilon_0) \times \mathcal V_{(1)}(\epsilon_0)$
at 
$\frak p_2$. 
(Here we use the fact that $\Phi$ is of $C^{\infty}$ class,
$\tilde{\mathscr J}$ is an open embedding of $C^k$ class, and the 
commutativity of the Diagram (\ref{diagram777}).)
\par
Since this holds for any $\frak p_2$ and $k$,
the submanifold 
$V(\frak p_1;\epsilon_0,({\frak W}^{(1)},\vec{\mathcal N}_1))$
is a submanifold of $C^{\infty}$ 
class of 
$\mathcal V^{(1)}_{\rm map}(\epsilon_0) \times \mathcal V_{(1)}(\epsilon_0)$
equipped with the new $C^{\infty}$ structure.
\par
We thus defined a $C^{\infty}$
structure on 
$V(\frak p_1;\epsilon_0,({\frak W}^{(1)},\vec{\mathcal N}_1))$.
Here $\epsilon_0$ is $\frak p_1$ dependent.
So we write 
$V(\frak p_1;\epsilon_{\frak p_1},({\frak W}^{(1)},\vec{\mathcal N}_1))$
from now on.
\par
We next show that the coordinate change
$$
\mathscr J_{12;\epsilon_{\frak p_1},\epsilon_{\frak p_2}} : V(\frak p_2;\epsilon_{\frak p_2},({\frak W}^{(2)},\vec{\mathcal N}_2)) \to 
V(\frak p_1;\epsilon_{\frak p_1},({\frak W}^{(1)},\vec{\mathcal N}_1))
$$
is of $C^{\infty}$ class with respect to the new $C^{\infty}$ structure.
\par
Let $\frak p_3 \in V(\frak p_2;\epsilon_{\frak p_2},({\frak W}^{(2)},\vec{\mathcal N}_2))$
be an arbitrary point. It suffices to prove  
that
$\mathscr J_{12;\epsilon_{\frak p_1},\epsilon_{\frak p_2}}$ is of $C^{\infty}$ class at $\frak p_3$.
We take two strong stabilization data $({\frak W}^{(3,1)},\vec{\mathcal N}_{3,1})$ and $({\frak W}^{(3,2)},\vec{\mathcal N}_{3,2})$ at $\frak p_3$ as follows.
\begin{enumerate}
\item
$\frak p_2, ({\frak W}^{(2)},\vec{\mathcal N}_2)$ and $({\frak W}^{(3,2)},\vec{\mathcal N}_{3,2})$ are 
as in Case 5 of the proof of Proposition \ref{prop720}.
\item
$\frak p_1, ({\frak W}^{(1)},\vec{\mathcal N}_1)$ and $({\frak W}^{(3,1)},\vec{\mathcal N}_{3,1})$ are 
as in in Case 5 of the proof of Proposition \ref{prop720}. 
\end{enumerate}
We consider the next diagram.
\begin{equation}\label{diagram778}
\!\!\!\!\!\!\!\!\!\!\!\!\!\!\!\!\!\!\!\!\!\!\!\!
\begin{CD}
V(\frak p_3;\epsilon',({\frak W}^{(3,2)},\vec{\mathcal N}_{3,2})) @ >{\mathscr J_{23;\epsilon_{\frak p_2},\epsilon'}}>> 
V(\frak p_2;\epsilon_{\frak p_2},({\frak W}^{(2)},\vec{\mathcal N}_{2}))
\\
@ VV{\mathscr J_{33;\epsilon,\epsilon'}}V @ VV{\mathscr J_{12;\epsilon_{\frak p_1},\epsilon_{\frak p_2}}}V\\
V(\frak p_3;\epsilon,({\frak W}^{(3,1)},\vec{\mathcal N}_{3,1})) @ >{\mathscr J_{13;\epsilon_{\frak p_1},\epsilon}}>> 
V(\frak p_1;\epsilon_{\frak p_1},({\frak W}^{(1)},\vec{\mathcal N}_{1})).
\end{CD}
\end{equation}
The commutativity of Diagram (\ref{diagram778}) up to $\mathcal G_1$ action is a consequence of 
(\ref{formform740}).
\par
By the proof of Lemma \ref{lem763} and 
the definition of the $C^{\infty}$ structures, 
the horizontal arrows are smooth at the origin $\frak p_3$.
Therefore it suffices to prove that
the left vertical arrow is of $C^{\infty}$ class at $\frak p_3$.
\par
This is actually a consequence of the proof of Proposition \ref{prop720}.
To carry out the proof of Proposition \ref{prop720}
we take $L^2_{m+2n+1}$ space for ${\frak W}^{(3,2)}$ 
and $L^2_{m+n+1}$ space for ${\frak W}^{(3,1)}$. Then the coordinate change 
$\mathscr J_{33;\epsilon,\epsilon'}$
is of $C^n$ class if $m > n+10$.
Therefore 
$\mathscr J_{12;\epsilon_{\frak p_1},\epsilon_{\frak p_2}}$
is of $C^n$ class for any $n$ at $\frak p_3$.
So it is of $C^{\infty}$ class at $\frak p_3$.
(We use Remark \ref{rem768} here with ${\frak p_1}$ replaced by ${\frak p_3}$.)
\par
We thus proved that our 
smooth structures on $V(\frak p_1;\epsilon_0,({\frak W}^{(1)},\vec{\mathcal N}_{1}))$ 
can be glued to give a smooth structure 
on $U(((\Sigma,\vec z),u);\epsilon_2)$.
\par
The proof that we can define a smooth structure on 
our obstruction bundle 
$E(((\Sigma,\vec z),u);\epsilon_2)$
is similar.
Note the new $C^{\infty}$ structure on $V(\frak p_1;\epsilon_0,({\frak W}^{(1)},\vec{\mathcal N}_1))$ 
is defined so that the map 
$$
V(\frak p_1;\epsilon_0,({\frak W}^{(1)},\vec{\mathcal N}_1))
\to
\mathcal V^{(1)}_{\rm map}(\epsilon_0) \times \mathcal V_{(1)}(\epsilon_0)
\to
L^2_{11}(\Sigma_1(\delta),X)
\times \mathcal V_{(1)}(\epsilon_0)
$$
is an embedding of $C^{\infty}$ class.
Therefore we can define a smooth structure on 
$\frak E(\frak p_1;\epsilon;{\frak W}^{(1)})$
so that the trivialization 
(\ref{form772222}) is a trivialization of $C^{\infty}$ class.
\par
The proof that the map
$$
\widetilde{
\mathscr J}_{12;\epsilon_{\frak p_1},\epsilon_{\frak p_2}}
:
\frak E(\frak p_1;\epsilon;({\frak W}^{(1)},\vec{\mathcal N}_1))
\to 
\frak E(\frak p_2;\epsilon;({\frak W}^{(2)},\vec{\mathcal N}_2)).
$$
(\ref{maptildeJ}) is of $C^{\infty}$ class is the same
as the proof that 
${
\mathscr J}_{12;\epsilon_{\frak p_1},\epsilon_{\frak p_2}}$
is of $C^{\infty}$ class.
\par
The smoothness of the Kuranishi map is immediate from 
(\ref{shikifors}).
\par
The proof of Lemma \ref{lem735735} is now complete.
\qed

\section{Convex function and Riemannian center of Mass: Review}
\label{sec:centerofmass} 

This section is a review of 
convex function and center of mass technique, 
which are classical topics in Riemannian 
geometry. (See \cite{grokar}.) 
We include this review in this paper since this topic is not so familiar  
among the researchers of pseudo holomorphic curve, 
Gromov-Witten theory, or Floer homology.
(For example Proposition \ref{prop98} is hard to find in the literature though 
this proposition is certainly regarded as `obvious' by experts.)
\par
Let $M$ be a Riemannian manifold. We use Levi-Civita connection 
$\nabla$.
A geodesic is a map $\ell : [a,b] \to M$ such that $\nabla_{\Dot{\ell}}\Dot{\ell}
= 0$
and $\Vert \Dot{\ell}(t) \Vert$ is a nonzero constant.
\begin{defn}
A function $f : M \to \R$ is said to be {\it convex}\index{convex}
if for any geodesic $\ell : [a,b] \to M$ we have
\begin{equation}
\frac{d^2}{dt^2} (f\circ \ell) \ge 0.
\end{equation}
$f$ is said to be {\it strictly convex}\index{strictly convex} if the strict inequality $>$ holds.
\par
In case 
\begin{equation}\nonumber
\frac{d^2}{dt^2} (f\circ \ell) \ge c > 0
\end{equation}
for all geodesic $\ell$ with $\Vert \Dot{\ell}\Vert = 1$, 
we say $f$ is $c$-strictly convex.\index{00C2strictly@$c$-strictly convex}
\end{defn}
The usefulness of strict convex function for our purpose is 
the following:
\begin{lem}
Let $f : M \to \R$ be a strictly convex function.
Suppose $f$ assumes its local minimum at both $p,q \in M$.
We also assume that there exists a geodesic joining $p$ and $q$.
Then $p=q$.
\end{lem}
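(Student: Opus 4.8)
The statement to prove is the following classical uniqueness fact: if $f : M \to \R$ is strictly convex, assumes a local minimum at both $p$ and $q$, and there is a geodesic joining $p$ to $q$, then $p = q$.

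The plan is to argue by contradiction. Suppose $p \ne q$. Take a geodesic $\ell : [a,b] \to M$ with $\ell(a) = p$ and $\ell(b) = q$; since $p \ne q$ this geodesic is nonconstant, so $\Vert \Dot{\ell}(t)\Vert$ is a nonzero constant. Consider the one-variable function $\varphi = f \circ \ell : [a,b] \to \R$. By the definition of strict convexity, $\varphi''(t) > 0$ for all $t \in [a,b]$, so $\varphi$ is a strictly convex function on the interval $[a,b]$ in the elementary sense. The heart of the argument is then purely one-dimensional: a strictly convex function on an interval cannot have an interior-type local minimum at both endpoints.

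The key steps, in order, are as follows. First, since $f$ has a local minimum at $p = \ell(a)$, restricting to the curve $\ell$ shows that $\varphi$ has a local minimum at $t = a$ from the right, hence $\varphi'(a) \ge 0$; similarly, since $f$ has a local minimum at $q = \ell(b)$, the function $\varphi$ has a local minimum at $t = b$ from the left, hence $\varphi'(b) \le 0$. Second, strict convexity of $\varphi$ means $\varphi'$ is strictly increasing on $[a,b]$, so $\varphi'(a) < \varphi'(b)$ (using $a < b$). Combining, $0 \le \varphi'(a) < \varphi'(b) \le 0$, which is a contradiction. Therefore $p = q$.

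I do not expect any serious obstacle here; the only point requiring a little care is the passage from ``$f$ has a local minimum at the endpoint $p$'' to ``$\varphi'(a) \ge 0$''. This is justified because for small $t - a > 0$ the point $\ell(t)$ lies in any prescribed neighborhood of $p$, so $f(\ell(t)) \ge f(\ell(a))$, giving $\varphi(t) \ge \varphi(a)$ for $t$ near $a$ on the right, and hence the one-sided derivative $\varphi'(a^+) = \varphi'(a) \ge 0$ (the derivative exists since $f$ and $\ell$ are smooth). The symmetric statement at $b$ is identical. If one wished to avoid even mentioning one-sided derivatives, one can instead invoke the mean value theorem: there is $t_0 \in (a,b)$ with $\varphi'(t_0) = (\varphi(b) - \varphi(a))/(b-a)$, and then use that $\varphi'$ is strictly increasing together with the sign information at the endpoints to derive a contradiction; either route is routine.
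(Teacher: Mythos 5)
Your proof is correct and follows essentially the same route as the paper, which simply cites the one-dimensional fact that a strictly convex function on $[a,b]$ cannot have local minima at both endpoints; you supply the short derivative argument ($\varphi'(a)\ge 0$, $\varphi'(b)\le 0$, yet $\varphi'$ strictly increasing) that the paper leaves as immediate.
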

\begin{proof}
This is an immediate consequence from the fact if 
$h : [a,b] \to \R$ is a strictly convex function and $h$ assume 
local minimum at both $a,b$ then $a=b$.
\end{proof}
A typical example of a convex function is the Riemannian 
distance. 
We denote by $d_M : M \times M \to \R_{\ge 0}$ the 
Riemannian distance function.
Let $U \subset M$ be a relatively compact open subset.
\begin{lem}
There exists $\epsilon > 0$ such that
on
$$
\{(p,q) \mid p,q \in U, d_M(p,q) < \epsilon\}
$$
the function
$$
(p,q) \mapsto d_M(p,q)^2
$$
is smooth and convex.
\end{lem}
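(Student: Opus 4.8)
The plan is to reduce the statement to a local computation in normal coordinates, using the standard second-variation formula for the energy/distance functional along geodesics. First I would fix a relatively compact open set $U$ and choose $\epsilon$ smaller than the injectivity radius of (a neighborhood of) $\overline U$, and also small enough that any two points $p,q \in U$ with $d_M(p,q) < \epsilon$ are joined by a unique minimizing geodesic depending smoothly on $(p,q)$; this is the usual consequence of the inverse function theorem applied to $\exp$, and it gives smoothness of $(p,q) \mapsto d_M(p,q)^2$ on the set in question. The square is essential here: $d_M$ itself is not smooth on the diagonal, but $d_M^2$ is, since in normal coordinates centered at $p$ one has $d_M(p,q)^2 = \sum_i (x^i(q))^2$ with $x^i$ the normal coordinates of $q$, a manifestly smooth function of $(p,q)$.

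For convexity, I would argue that it suffices to check $\frac{d^2}{dt^2}\bigl(d_M(\ell(t),\ell'(t))^2\bigr) > 0$ for pairs of geodesics $\ell, \ell'$ in $M \times M$ (with $M \times M$ given the product metric, whose geodesics are exactly pairs of geodesics of $M$) that stay in the relevant small set. By a compactness argument it is enough to verify strict positivity on the diagonal $p = q$, i.e.\ at $t$-values where $\ell(t) = \ell'(t)$, provided $\epsilon$ is small: the second derivative is a continuous function of the data, it is bounded below by a positive constant on the compact locus $\{p = q,\ \Vert\dot\ell\Vert^2 + \Vert\dot\ell'\Vert^2 = 1\}$, and hence remains positive on a neighborhood, which is what $d_M(p,q) < \epsilon$ cuts out. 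On the diagonal the computation is clean: writing $F(t) = d_M(\ell(t),\ell'(t))^2$ and using that near a point the function $d_M(\cdot,\cdot)^2$ has Hessian equal to (twice) the identity at the diagonal to leading order, one gets $F''(0) = 2\Vert \dot\ell(0) - \dot\ell'(0)\Vert^2 + (\text{curvature terms vanishing at } t=0)$, so $F''(0) > 0$ unless $\dot\ell(0) = \dot\ell'(0)$, in which case $\ell \equiv \ell'$ and the pair is a constant point of $M\times M$, not a genuine geodesic with $\Vert\dot\ell\Vert^2 + \Vert\dot\ell'\Vert^2 = 1$.

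The main obstacle is making the "reduce to the diagonal" step fully rigorous: away from the diagonal $F''$ involves the Hessian of $d_M^2$ at a genuinely separated pair of points, which for large distances or in the presence of conjugate points need not be positive, so one genuinely needs the smallness of $\epsilon$ and a uniform estimate. I would handle this by first establishing on $\overline U$ a uniform lower bound $\mathrm{Hess}(d_M^2)(p,q) \ge c_0 > 0$ for $d_M(p,q) \le \epsilon_0$ — itself a compactness statement once one knows the Hessian at $p = q$ is positive definite — and then noting that the Hessian of $d_M^2$ on $M \times M$ restricted to any geodesic computes exactly $F''$, so the bound transfers. Everything else (smoothness, the product-metric identification of geodesics, the normal-coordinate formula for $d_M^2$) is routine and I would not spell it out in detail. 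If one prefers to avoid Hessian estimates entirely, an alternative is the classical Jacobi-field/second-variation computation as in \cite{grokar}, which yields the same conclusion; I would cite that rather than reproduce it.
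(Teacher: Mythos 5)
The paper does not prove this lemma --- it says only ``This is a standard fact in Riemannian geometry.'' --- so there is no source argument to compare against step by step. Your smoothness argument is standard and fine. The convexity half, however, has a genuine gap, and it occurs exactly at the step you flag as the main obstacle.

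At a diagonal point $p=q$ you compute $F''(0)=2\Vert\dot\ell(0)-\dot\ell'(0)\Vert^2+(\text{curvature terms vanishing at }t=0)$ and dispose of the case $\dot\ell(0)=\dot\ell'(0)$ by asserting that then the pair is ``a constant point of $M\times M$, not a genuine geodesic.'' That is false: if $\ell(0)=\ell'(0)=p$ and $\dot\ell(0)=\dot\ell'(0)=V\ne 0$, then $\ell=\ell'$ is a nonconstant geodesic of $M$, the pair $(\ell,\ell)$ is a perfectly good nonconstant unit-speed geodesic of $M\times M$ with the product metric (take $\Vert V\Vert = 1/\sqrt{2}$), and along it $F(t)=d_M(\ell(t),\ell(t))^2\equiv 0$, hence $F''\equiv 0$, not $>0$. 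The same misconception underlies your compactness step: you write ``once one knows the Hessian at $p=q$ is positive definite,'' but the Hessian of $d_M^2$ at a diagonal point $(p,p)$ is only positive \emph{semi}-definite, with kernel the $n$-dimensional diagonal subspace $\{(V,V):V\in T_pM\}$, so the uniform lower bound $\mathrm{Hess}(d_M^2)\ge c_0>0$ you want to propagate by compactness does not hold even at the base points. In fact $(p,q)\mapsto d_M(p,q)^2$ is \emph{not} strictly convex on $M\times M$ in the sense of the paper's Definition in Section 9 (strict inequality for every unit-speed geodesic); it fails along every diagonal geodesic. The imprecision is partly in the paper's phrasing, but note that what the paper actually uses downstream --- Proposition \ref{prop98}, invoked in Lemma \ref{lem67} and Proposition \ref{prop69} --- is the correctly quantified form: $(\mathrm{Hess}\,d_M^2)(V,V)\ge\sigma'\Vert V\Vert^2$ only under the extra hypothesis $d_{TM}(V_1,V_2)\ge\rho$, which excludes precisely the degenerate diagonal direction. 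If you want your argument to land, you should prove that version (or ``strict convexity transverse to the diagonal''), not the literal statement.
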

This is a standard fact in Riemannian geometry.
Note this function $(p,q) \mapsto d_M(p,q)^2$ 
is not strictly convex. However for a fixed $q$, the function
$p \mapsto d_M(p,q)^2$ is strictly convex. 
We need to restrict the direction for the positivity of the 
second derivative of $(p,q) \mapsto d_M(p,q)^2$.
See the proof of 
Proposition \ref{prop98} below.

We use the next lemma also.
\begin{lem}\label{lem94}
Let $N$ be an oriented manifold with 
volume form $\Omega_N$ and $f : M\times N \to \R$  a smooth
function. Suppose that for each $y \in N$ $x\mapsto f(x,y)$ is 
convex and for each $x_0 \in M$ there exists $y$ such that 
$x \mapsto f(x,y)$ is strictly convex in a neighborhood of $x_0$.
Then the function $F : M \to \R$
$$
F(x) = \int_{y \in N} f(x,y) \Omega_N
$$ 
is strictly convex.
\end{lem}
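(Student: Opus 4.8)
The plan is to reduce the integral statement to the pointwise convexity hypotheses by differentiating twice along an arbitrary geodesic and using the sign information we are given. Let $\ell : [a,b] \to M$ be a geodesic with $\Vert \dot\ell \Vert = 1$, and set $h(t) = F(\ell(t)) = \int_N f(\ell(t),y)\,\Omega_N$. The first step is to justify differentiating under the integral sign twice, so that $h''(t) = \int_N \frac{d^2}{dt^2} f(\ell(t),y)\,\Omega_N$; since $f$ is smooth and, in the situation we care about, the relevant portion of $N$ over which we integrate is relatively compact (or we restrict to such a piece, which is all that is used in Section \ref{sec:main}), the integrand and its $t$-derivatives up to order two are continuous in $(t,y)$ and locally bounded, so the standard dominated-convergence differentiation lemma applies. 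For each fixed $y$, convexity of $x \mapsto f(x,y)$ gives $\frac{d^2}{dt^2} f(\ell(t),y) \ge 0$, hence $h''(t) \ge 0$, i.e. $F$ is at least convex.

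The second step upgrades this to strict convexity. Fix $t_0 \in [a,b]$ and let $x_0 = \ell(t_0)$. By hypothesis there is a $y_0 \in N$ and a neighborhood $W$ of $x_0$ on which $x \mapsto f(x,y_0)$ is strictly convex; along the geodesic this means $\frac{d^2}{dt^2} f(\ell(t),y_0) > 0$ at $t = t_0$. By continuity of the second $t$-derivative of $f(\cdot,y)$ in $(t,y)$, there is a neighborhood $N_0$ of $y_0$ in $N$ and an interval around $t_0$ on which $\frac{d^2}{dt^2} f(\ell(t),y) \ge c_0 > 0$ for all $y \in N_0$. Since $N_0$ has positive volume with respect to $\Omega_N$ (here I use that $\Omega_N$ is a volume form, so nonempty open sets have positive measure), and the integrand is everywhere $\ge 0$, we get
\begin{equation}
h''(t_0) = \int_N \frac{d^2}{dt^2} f(\ell(t_0),y)\,\Omega_N \ge \int_{N_0} \frac{d^2}{dt^2} f(\ell(t_0),y)\,\Omega_N \ge c_0 \cdot \mathrm{vol}(N_0) > 0.
\end{equation}
Since $t_0$ and the geodesic $\ell$ were arbitrary, $\frac{d^2}{dt^2}(F \circ \ell) > 0$ for every unit-speed geodesic, which is exactly strict convexity of $F$.

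The only genuine subtlety — and the step I would be most careful about — is the differentiation under the integral sign when $N$ is noncompact, which is the case relevant to the application (there $N$ is a domain curve, compact, so actually there is no issue; but if one states the lemma for general oriented $N$ one must assume enough integrability). In the use made in Proposition \ref{prop69}, $N$ is $\Sigma(\delta)$ with a volume form and $f$ is built from $d_X^2$ composed with smooth maps, all continuous on a relatively compact set, so the hypotheses of the dominated-convergence differentiation lemma are automatic and the interchange is legitimate. Thus the proof is essentially: (1) differentiate twice under the integral; (2) use pointwise convexity for the weak inequality; (3) use pointwise strict convexity on a set of positive measure, together with continuity, to make the second derivative strictly positive. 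No further input is needed.
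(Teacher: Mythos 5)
Your argument is correct and fills in exactly the content that the paper leaves implicit (the paper's text for Lemma \ref{lem94} is just ``The proof is obvious''). The two steps you identify --- differentiate twice under the integral to get $h''(t) \ge 0$ from pointwise convexity, then use smoothness of $f$ to find a positive-volume neighborhood $N_0$ of the witness $y_0$ on which the second $t$-derivative is bounded below by $c_0>0$, so that $h''(t_0)\ge c_0\,\mathrm{vol}(N_0)>0$ --- are precisely what is needed, and your remark that in the application $N=\Sigma(\delta)$ is compact (so the interchange of derivative and integral is unproblematic) correctly addresses the only technical point worth flagging.
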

The proof is obvious.
\par
For our application we need to show 
convexity of certain functions induced by a 
distance function.
We use Proposition \ref{prop98} for this purpose.
We also need to ensure uniformity of various 
constants obtained.
We use a version of boundedness of geometry 
for this purpose.
The next definition is a bit extravagant
for our purpose. However the situation we use 
certainly satisfies this condition.
\begin{defn}\label{defn95}
A family $\{(N_{b},K_b) \mid b \in \mathcal B\}$ of a pair 
of Riemannian manifolds $N_b$ and its compact subsets $K_b$ 
is said to be of {\it bounded geometry in all degree}
\index{bounded geometry in all degree}
if there exists $\mu >0$ and $C_k$ ($k=0,1,\dots,$) with 
the following properties.
\begin{enumerate}
\item
The injectivity radius is greater than $\mu$ at all points 
$x \in K_b \subset N_b$. 
\item
Moreover the metric ball of radius $\mu$ centered at $x \in K_b \subset N_b$
are relatively compact in $N_b$.
\item
We have estimate
$$
\Vert \nabla^k R^{N_b} \Vert \le C_k,
$$
for $k=0,1,2,\dots$. Here $R^{N_b}$ is the 
Riemann curvature tensor of ${N_b}$ and $\nabla$ is 
the Levi-Civita connection.
The inequality holds everywhere (point-wise) on ${N_b}$.
\end{enumerate}
\par
When we need to specify $\mu$, $\{C_k\}$
we say bounded geometry in all degree by $\mu$, $\{C_k\}$.
\end{defn}
\begin{rem}
We consider a pair $(N_b,K_b)$ rather than 
a single Riemannian manifold $N_b$, in order to include the 
case when our Riemannian manifold is not complete.
\end{rem}
We use geodesic coordinate $\exp : B_{\mu/2,x}{N_b} \to {N_b}$.
Here $B_{\mu/2,x}{N_b}$ is the metric ball of radius 
$\mu/2$ centered at $0$ in the tangent space $T_x{N_b}$.
Item (3) implies that the coordinate change of 
geodesic coordinate has uniformly bounded $C^k$ norm for any $k$.
\begin{defn}
Let $\{(N_{b},K_b) \mid b \in \mathcal B\}$ be as in Definition \ref{defn95}
and $M$ a Riemannian manifold.
Let $\delta < \mu/2$.
A family of smooth maps $f_b : N_b \to M$ is said to have 
uniform $C^k$ norm on the $\delta$ neighborhood of $K_b$ 
if the composition
\begin{equation}\label{form92}
f_b\circ\exp : B_{\delta,x}{N_b} \to M
\end{equation}
has uniformly bounded $C^k$ norm for $x \in K_b$. 
Here we regard $B_{\delta,x}{N_b}
= \{V \in T_xN_b \mid \Vert V\Vert < \delta\}$ as an open subset of 
the Euclid space by isometry.
\par
When we specify the $C^k$ bound, we say 
has uniformly bounded $C^k$ norm $\le B_k$.
It means that the $C^k$ norm of (\ref{form92}) 
is not greater than $B_k$.
\end{defn}
\begin{prop}\label{prop98}
Given $\mu$, $\{C_k\}$, $B$, $\delta$, $\rho$
there exists $\epsilon$ with the following 
properties.
\par
Let $\{(N_{b},K_b) \mid b \in \mathcal B\}$
have bounded geometry in all degree by $\mu$, $\{C_k\}$
and 
$f_b,  g_b :  N_{b} \to M$
be a pair of smooth maps such that they have 
uniform $C^2$ bound by $B$ on $\delta$ neighborhood of $K_b$. 
Suppose 
$$
d_M(f_b(x),g_b(x)) \le \epsilon
$$
holds on $\delta$ neighborhoods of $K_b$.
Moreover we assume
$$
d_{TM}(D_Vf _b,D_Vg_b) \ge \rho
$$
for all $V \in T_xN_b$, $\Vert V\Vert =1$, $d(x,K_b)< \delta$.
(Here $d$ is the Riemannian distance in the tangent bundle of $M$.)
\par
Then the function
\begin{equation}\label{form93}
x \mapsto d_M(f_b(x),g_b(x))^2
\end{equation}
on $K_b$ is strictly convex.
Moreover there exists $\sigma$ depending only on 
$\mu$, $\{C_k\}$, $B$, $\delta$, $\rho$ such that 
(\ref{form93}) is $\sigma$-strictly convex.
\end{prop}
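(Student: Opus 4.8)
The plan is to reduce Proposition \ref{prop98} to a pointwise computation in geodesic normal coordinates on $M$, using the hypothesis $d_M(f_b(x),g_b(x))\le\epsilon$ to ensure that, for $\epsilon$ small relative to the injectivity radius of $M$ (which enters through the bounded-geometry constants), the squared-distance function $(p,q)\mapsto d_M(p,q)^2$ is smooth and strictly convex on the relevant small neighborhood of the diagonal, with a Hessian lower bound depending only on the geometry. First I would fix a point $x_0\in K_b$ and a unit-speed geodesic $\ell:(-\eta,\eta)\to N_b$ of $N_b$ through $x_0$; since $\{(N_b,K_b)\}$ has bounded geometry, I can work in the geodesic coordinate chart $\exp:B_{\delta,x_0}N_b\to N_b$ where $\ell$ becomes a curve whose first and second derivatives are controlled by $\mu,\{C_k\}$, and the maps $f_b\circ\exp$, $g_b\circ\exp$ have $C^2$ norm $\le B$. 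Then I would write $h(t)=d_M(f_b(\ell(t)),g_b(\ell(t)))^2$ and compute $\frac{d^2h}{dt^2}$ at $t=0$ via the chain rule.

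The key step is the expansion of the Hessian of $h$. Writing $\Phi(p,q)=d_M(p,q)^2$, one has
\begin{equation}
\frac{d^2h}{dt^2}
= \mathrm{Hess}\,\Phi\big((\dot f,\dot g),(\dot f,\dot g)\big)
+ d\Phi\big(\tfrac{D}{dt}\dot f,\tfrac{D}{dt}\dot g\big),
\end{equation}
where $\dot f=D_{\dot\ell}f_b$, $\dot g=D_{\dot\ell}g_b$. The second term is bounded by $C\cdot d_M(f_b,g_b)\cdot\big(\|\tfrac{D}{dt}\dot f\|+\|\tfrac{D}{dt}\dot g\|\big)\le C'B\epsilon$ because $d\Phi$ vanishes on the diagonal and is Lipschitz there, and the second covariant derivatives of $f_b,g_b$ along $\ell$ are bounded by $B$ together with the bound on the curvature of $N_b$ (which controls $\tfrac{D}{dt}\dot\ell$). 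For the first term, the crucial classical fact is that at $p=q$, $\mathrm{Hess}\,\Phi$ restricted to the anti-diagonal is $4$ times the identity, and more generally for $d_M(p,q)\le\epsilon$ it is within $O(\epsilon)$ of $2\big(\mathrm{id}\oplus\mathrm{id}-\text{(parallel transport terms)}\big)$; comparing $\dot f$ and $\dot g$ via parallel transport along the short minimal geodesic from $f_b(x)$ to $g_b(x)$, one gets
\begin{equation}
\mathrm{Hess}\,\Phi\big((\dot f,\dot g),(\dot f,\dot g)\big)
\ge 2\,\big\|\dot f-\mathrm{Pal}(\dot g)\big\|^2 - C''\epsilon\big(\|\dot f\|^2+\|\dot g\|^2\big).
\end{equation}
The hypothesis $d_{TM}(D_Vf_b,D_Vg_b)\ge\rho$ for unit $V$ says precisely that $\|\dot f-\mathrm{Pal}(\dot g)\|\ge\rho$ (since $d_{TM}$ measures exactly this, up to the base-point displacement which is $\le\epsilon$), while $\|\dot f\|,\|\dot g\|\le B$. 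Hence $\frac{d^2h}{dt^2}\ge 2\rho^2 - C''\epsilon B^2 - C'B\epsilon \ge \rho^2$ once $\epsilon$ is chosen small in terms of $\mu,\{C_k\},B,\delta,\rho$; this gives the $\sigma$-strict convexity with $\sigma=\rho^2$ (or any fixed fraction of it).

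The main obstacle I expect is making the $O(\epsilon)$ error terms genuinely uniform over the family $\{(N_b,K_b)\}$ and over all choices of unit geodesic and base point, i.e.\ checking that every constant in the Taylor expansion of $\mathrm{Hess}\,\Phi$ near the diagonal and in the chain-rule remainder depends only on the stated data. This is where the bounded-geometry hypotheses are used in full: they guarantee that the transition functions of geodesic charts on $N_b$, hence $\tfrac{D}{dt}\dot\ell$ and the second derivatives of $f_b,g_b$ in a fixed chart, have bounds independent of $b$, and the compactness/relative-compactness in Items (1)(2) of Definition \ref{defn95} lets one invoke the local smoothness and convexity of the squared distance on $M$ uniformly (here one also uses that $M$ is fixed, so its own geometry near the relevant compact region is controlled). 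I would organize this by first proving the pointwise Hessian estimate for a single pair of maps with explicit constants, then observing that the estimate only invoked quantities bounded by $\mu,\{C_k\},B,\delta$, and finally noting that $\epsilon$ can be taken as the infimum of the finitely many smallness thresholds that appeared — which is positive and depends only on the allowed data.
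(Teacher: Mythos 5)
Your proof takes essentially the same route as the paper's: parametrize by a unit-speed geodesic $\ell$ in $N_b$, split $\frac{d^2}{dt^2}\,d_M^2(f_b\circ\ell,g_b\circ\ell)$ into the Hessian-of-$d_M^2$ term and the lower-order term coming from the second covariant derivatives of $(f_b,g_b)\circ\ell$, lower-bound the Hessian term using $d_{TM}(D_Vf_b,D_Vg_b)\ge\rho$ and the near-diagonal convexity of the squared distance, and absorb the lower-order term into $O(\epsilon)$ because $\nabla d_M^2$ vanishes on the diagonal. The main difference is cosmetic: you make the parallel-transport comparison and the explicit lower bound $\approx 2\rho^2$ explicit, whereas the paper compresses this into a single constant, and you note (correctly) that uniformity over $b$ rests on the bounded-geometry hypotheses; one small point is that since $\ell$ is a geodesic the term $\frac{D}{dt}\dot\ell$ vanishes identically, so no curvature bound on $N_b$ is needed for that step, only the $C^2$ bound $B$ on $f_b\circ\exp$, $g_b\circ\exp$.
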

\begin{proof}
Let $\ell : [-c,c] \to B_{\delta}K_b
= \{x \in N_b \mid d_{N_b}(x,K_b) < \delta\}$ be a geodesic of 
unit speed.
We put $\gamma_b(t) = (f_b(\ell(t)),g_b(\ell(t))$.
Note 
$$
({\rm Hess} h)(V,V') = \nabla_V \nabla_V' h - \nabla_{\nabla_VV'}h
$$
is a symmetric 2 tensor. If $h = d_M^2$ then 
$$
({\rm Hess}\, d_M^2)(V,V)  \ge \sigma'
$$
if $V = (V_1,V_2) \in T_{(p,q)}M^2$, 
$\Vert V \Vert = 1$, $d_M(p,q) < \epsilon_1$ and 
$d_{TM}(V_1,V_2) \ge \rho$.
Therefore
$$
\frac{d^2}{dt} (d_M^2 \circ \gamma_b)
\ge
C \rho - C\nabla_{\nabla_{\Dot{\gamma_b}}\Dot{\gamma_b}}d_M^2.
$$
The second term can be estimated by
$$
C \vert d_M\vert \vert \nabla d_M\vert
\le C \epsilon_1.
$$
The proposition follows.
\end{proof}
We also use the following lemma in Subsection \ref{mainprop}.
\begin{lem}\label{lem999}
Let $\pi : M \to N$ be a smooth fiber bundle 
on an open subset of a Hilbert space.
We assume that the fibers are finite dimensional
and take a smooth family of Riemannian metrics 
of the fibers.
\par
Let $f: M \to \R$ be a smooth function.
We assume:
\begin{enumerate}
\item
The restriction of $f$ to the fibers are strictly convex.
\item
The  minimum of the restriction of $f$ to the fibers 
$\pi^{-1}(x)$ 
is attained at the unique point $g(x) \in \pi^{-1}(x)$
for each $x \in N$.
\end{enumerate}
\par
Then the map $g : N \to M$ is smooth.
\end{lem}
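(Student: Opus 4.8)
The plan is to prove Lemma \ref{lem999} by the implicit function theorem. Since the assertion is local on $N$, I would work near a fixed point $x_0 \in N$ and use a local trivialization $\pi^{-1}(U) \cong U \times F$ of the fiber bundle, where $F$ is a finite dimensional manifold and $U$ is an open neighborhood of $x_0$ in the Hilbert space. Under this trivialization $f$ becomes a smooth function on $U \times F$, and for each $x \in U$ the function $f(x, \cdot)$ on $F$ is strictly convex with a unique minimum at a point $g(x)$. The goal is to show $x \mapsto g(x)$ is smooth.

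First I would observe that $g(x)$ is characterized as the unique critical point of $f(x,\cdot)$, i.e. the unique solution $y \in F$ of the equation
\begin{equation}
d_F f(x,y) = 0,
\end{equation}
where $d_F$ denotes the differential in the $F$-direction (taking values in $T^*_y F$). Working in a local coordinate chart of $F$ around $g(x_0)$, this becomes a system of finitely many equations $\Phi(x,y) = 0$ with $\Phi : U \times F_{\mathrm{loc}} \to \R^{\dim F}$ smooth, $\Phi(x,y) = \partial f/\partial y(x,y)$. We have $\Phi(x_0, g(x_0)) = 0$. The partial derivative $\partial \Phi/\partial y$ at $(x_0, g(x_0))$ is the Hessian of $f(x_0,\cdot)$ at its minimum, which is positive definite by strict convexity, hence invertible. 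The implicit function theorem (valid with the Hilbert space $U$ as parameter space, since everything is smooth) then produces a smooth map $x \mapsto \tilde g(x)$, defined near $x_0$, with $\Phi(x, \tilde g(x)) = 0$ and $\tilde g(x_0) = g(x_0)$.

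It remains to check that $\tilde g(x) = g(x)$ for $x$ near $x_0$, i.e. that the critical point produced by the implicit function theorem is in fact the global minimum on the fiber. This is where a small amount of care is needed: the implicit function theorem only gives a local solution of the critical point equation, whereas $g(x)$ is defined by a global minimization over $\pi^{-1}(x)$. However, by hypothesis (1) the restriction of $f$ to each fiber is strictly convex, so it has at most one critical point on that fiber; since $g(x)$ is a critical point (being a minimum, at least when $F$ has no boundary, which is the relevant case here) and $\tilde g(x)$ is a critical point, they must coincide. Thus $g = \tilde g$ near $x_0$ is smooth. Since $x_0$ was arbitrary, $g$ is smooth on all of $N$.

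The main obstacle is the bookkeeping around the last step: one must make sure that strict convexity of $f$ along fibers really does force uniqueness of critical points — this uses that a strictly convex function on a (connected, say geodesically convex in the given fiber metric) manifold has at most one critical point, which is the argument already used in the excerpt to deduce uniqueness of minima. One should also note that in applications (as in Subsection \ref{mainprop}) the relevant fibers are the sets $\mathcal W(\epsilon_1;\dots)/\widehat{\mathcal G}_c$, which are genuine manifolds without boundary, so "minimum" and "critical point" coincide; if one wanted to allow manifolds with boundary one would need to assume the minimum is attained in the interior. Modulo this, the proof is a routine application of the implicit function theorem with a Banach parameter, and I would not expect any further difficulty.
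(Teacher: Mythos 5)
Your proposal is correct and takes essentially the same route as the paper: both characterize $g$ as the zero set of the fiberwise (vertical) differential of $f$, observe that the relevant partial derivative is the fiberwise Hessian, which is invertible by strict convexity, and conclude by the implicit function theorem with Hilbert-space parameter. The paper is terser (it phrases the argument via a section $\nabla^{\mathrm{vert}} f$ of $({\rm Ker}\,D\pi)^*$ and remarks that continuity of $g$ is a general-topology exercise), whereas you make explicit the step that strict convexity forces uniqueness of the fiberwise critical point so that the IFT solution coincides with $g$; that extra care is warranted but the underlying argument is the same.
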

\begin{proof}
The proof of continuity of $g$ is an exercise of general 
topology, which we omit.
\par
Let ${\rm Ker D\pi} \subset TM$ be the 
subbundle consisting of the vectors of vertical direction.
We define a section of its dual ${\rm Ker D\pi}^*$ by
$$
\nabla^{\rm vert} f : 
y \mapsto (V \mapsto V(f)).
$$
Here $y \in M$, $V\in {\rm Ker D_y\pi} \subset T_yM$.
By assumption $(\nabla^{\rm vert} f)(y) = 0$ 
if and only if $y = g(x)$ for $x = \pi(y)$.
\par
The differential of $\nabla^{\rm vert} f$ at $g(x)$ 
is the Hessian of $f\vert_{\pi^{-1}(x)}$ at $x$
and so is non-degenerate by strict convexity of 
$f\vert_{\pi^{-1}(x)}$.
\par
The smoothness of $g$ now is a consequence of implicit 
function theorem.
\end{proof}

\include{index}
\printindex

\bibliographystyle{amsalpha}

\end{document}